%-----------------------------------------------------------------------
% Beginning of memo-l-template.tex
%-----------------------------------------------------------------------
%
% This is a template file for AMS Memoirs for use with AMS-LaTeX 2.0.
% Separate chapters should be included at the appropriate position.
%
%%%%%%%%%%%%%%%%%%%%%%%%%%%%%%%%%%%%%%%%%%%%%%%%%%%%%%%%%%%%%%%%%%%%%%%%

\documentclass{amsart}

%    For use when working on individual chapters
%\includeonly{}

%    Include referenced packages here.
\def\abb{{\mathbb{A}}}

\def\cbb{{\mathbb{C}}}

\def\fbb{{\mathbb{F}}}

\def\lbb{{\mathbb{L}}}

\def\nbb{{\mathbb{N}}}

\def\qbb{{\mathbb{Q}}}
\def\rbb{{\mathbb{R}}}

\def\zbb{{\mathbb{Z}}}

\newcommand{\bvec}[1]{\mbox{\boldmath $#1$}}
\def\abf{{\mathbf{A}}}

\def\sbf{{\mathbf{S}}}

\def\acal{{\mathcal{A}}}
\def\bcal{{\mathcal{B}}}
\def\ccal{{\mathcal{C}}}
\def\dcal{{\mathcal{D}}}
\def\ecal{{\mathcal{E}}}
\def\fcal{{\mathcal{F}}}

\def\hcal{{\mathcal{H}}}
\def\ical{{\mathcal{I}}}
\def\jcal{{\mathcal{J}}}
\def\kcal{{\mathcal{K}}}

\def\mcal{{\mathcal{M}}}
\def\ncal{{\mathcal{N}}}

\def\pcal{{\mathcal{P}}}

\def\scal{{\mathcal{S}}}
\def\tcal{{\mathcal{T}}}
\def\ucal{{\mathcal{U}}}
\def\vcal{{\mathcal{V}}}
\def\wcal{{\mathcal{W}}}

\def\sd{{\rm sd}}
%To change not Italics
\def\rmone{{\rm (1)}}
\def\rmtwo{{\rm (2)}}
\def\rmthree{{\rm (3)}}
\def\rmfour{{\rm (4)}}

\def\rmi{{\rm (i)}}
\def\rmii{{\rm (ii)}}
\def\rmiii{{\rm (iii)}}
\def\rmiv{{\rm (iv)}}

\def\czero{{\mathcal{C}^0}}
\def\cinf{{\mathcal{C}^\infty}}

\def\sdcal{{{S}^\dcal}}
\def\sub{{\mathrm{sub}}}
\def\upsub{{{\mathrm{SUB}}}}
\def\top{{\mathrm{top}}}
\def\Hom{{\mathrm{Hom}}}

\def\cfra{{\mathfrak{C}}}
\def\xfra{{\mathfrak{X}}}
\def\ofrack{{\mathfrak{o}}}
\newcommand{\xoverright}[1]{\xrightarrow[\ \ \ \ \ ]{#1}}

\newcommand{\xhookoverright}[2]{\lhook\hspace{-.#2mm}\xrightarrow[\ \ \ \ \ ]{#1}}

\newcommand{\equivalence}[2]{#1$\Leftrightarrow$#2}
\def\wczero{{\wcal_{\czero}}}
\newcommand{\absno}[1]{\left|#1\right|}
\newcommand{\cdarrow}[1]{\arrow[#1]}

\newcommand*{\longhookrightarrow}{\ensuremath{\lhook\joinrel\relbar\joinrel\rightarrow}}
\newcommand\phantomarrow[2]{%
\setbox0=\hbox{$\displaystyle #1\to$}%
\hbox to \wd0{%
	$#2\mapstochar
	\cleaders\hbox{$\mkern-1mu\relbar\mkern-3mu$}\hfill
	\mkern-7mu\rightarrow$}%
\,}

\def\colim{{\mathrm{colim}}}
\newcommand{\colimunder}[1]{\underset{#1}{\colim}}

% 2015/2/27 Katayama
%
\def\ho{{\mathrm{Ho}}}
\def\cel{{c\ell}}

% 2015/2/28 Namae
%

%% 2015/5/31
% braket.sty          Macros for Dirac bra-ket <|> notation and sets {|}
% Donald Arseneau     asnd@triumf.ca     Last modified 12-Sept-2006.
% This is free, unencumbered, unsupported software.
%
% Commands defined are:
% \bra{ }   \ket{ }   \braket{ }   \set{ }    (small versions)
% \Bra{ }   \Ket{ }   \Braket{ }   \Set{ }    (expanding versions)
%
% The "small versions" use fixed-size brackets independent of their
% contents, whereas the "expanding versions" make the brackets and
% vertical lines expand to envelop their contents (internally using
% the \left and \right commands).  You should use the vertical bar
% character "|" to input any extra vertical lines.  In \Braket these
% vertical lines will expand to match the arguments, and in \Set the
% first vertical will expand.  E.g.,
%
%   \Braket{ \phi | \frac{\partial^2}{\partial t^2} | \psi }
%   \Set{ x\in\mathbf{R}^2 | 0<{|x|}<5 }
%
% Likewise, you may make an expandable double-bar using either
% the "\|" command or its local alias "||".
%
% NOT defined is "\ketbra" (for projection operators) because I prefer
% \ket{ } \bra{ }.
%
% Because each definition is so small, it makes no sense to have a
% complicated generic version for many bracket styles.  Instead,
% you can just copy the definitions and change \langle or \rangle,
% to what you like.
%

%
\let\protect\relax
{\catcode`\|=\active
	\xdef\Braket{\protect\expandafter\noexpand\csname Braket \endcsname}
	\expandafter\gdef\csname Braket \endcsname#1{\begingroup
		\ifx\SavedDoubleVert\relax
		\let\SavedDoubleVert\|\let\|\BraDoubleVert
		\fi
		\mathcode`\|32768\let|\BraVert
		\left\langle{#1}\right\rangle\endgroup}
}
\def\BraVert{\@ifnextchar|{\|\@gobble}% turn || into \|
	{\egroup\,\mid@vertical\,\bgroup}}
\def\BraDoubleVert{\egroup\,\mid@dblvertical\,\bgroup}
\let\SavedDoubleVert\relax

% The \mid@vertical is \vrule with ordinary TeX but \middle| in eTeX.
% We always avoid a \mathchoice in making the inner vertical lines.
% Note that \right\rangle is used now due to a failing in fourier.sty.
%
% \def\ketbra#1#2{\ket{#1}\bra{#2}}
% \def\Ketbra#1#2{\left|{#1}\vphantom{#2}\right\rangle\left\langle{#2}\vphantom{#1}\right|}

% \Set{...|...} Only the first | is treated specially.
{\catcode`\|=\active
	\xdef\set{\protect\expandafter\noexpand\csname set \endcsname}
	\expandafter\gdef\csname set \endcsname#1{\mathinner
		{\lbrace\,{\mathcode`\|32768\let|\midvert #1}\,\rbrace}}
	\xdef\Set{\protect\expandafter\noexpand\csname Set \endcsname}
	\expandafter\gdef\csname Set \endcsname#1{\left\{%
		\ifx\SavedDoubleVert\relax \let\SavedDoubleVert\|\fi
		\:{\let\|\SetDoubleVert
			\mathcode`\|32768\let|\SetVert
			#1}\:\right\}}
}
\def\midvert{\egroup\mid\bgroup}
\def\SetVert{\@ifnextchar|{\|\@gobble}% turn || into \|
	{\egroup\;\mid@vertical\;\bgroup}}
\def\SetDoubleVert{\egroup\;\mid@dblvertical\;\bgroup}

% If the user is using e-TeX with its \middle primitive, use that for
% verticals instead of \vrule.
%
\begingroup
\edef\@tempa{\meaning\middle}
\edef\@tempb{\string\middle}
\expandafter \endgroup \ifx\@tempa\@tempb
\def\mid@vertical{\middle|}
\def\mid@dblvertical{\middle\SavedDoubleVert}
\else
\def\mid@vertical{\mskip1mu\vrule\mskip1mu}
\def\mid@dblvertical{\mskip1mu\vrule\mskip2.5mu\vrule\mskip1mu}
\fi

% May 2005: Double verticals; tweak spacing.
% June 2005: Make robust
% Sept 2006: fourier

\usepackage{amsmath,amsfonts,amssymb,amsthm}
\usepackage{enumerate}
\usepackage{titletoc}
\usepackage{tikz}
\usetikzlibrary{backgrounds}
\usepackage{mathtools}
\usetikzlibrary{shapes.geometric, arrows, calc, positioning, decorations.markings, patterns}
\usepackage{graphicx}
\usepackage[all]{xy}
\usepackage{braket}
\usepackage{tikz-cd,calc}
\usepackage{ulem}
\usepackage{etoolbox}
\usepackage{hyperref}
\usepackage{tikz}
\usepackage{fancyhdr}

\tikzstyle{startstop} = [rectangle, minimum width=2.5cm, minimum height=0.5cm,text centered, draw=black]

\tikzstyle{arrow} = [thick,->,>=stealth]

\tikzset{middlearrow/.style={
		decoration={markings,
			mark= at position 0.5 with {\arrow{#1}} ,
		},
		postaction={decorate}
	}
}

\numberwithin{equation}{section}

\newtheorem{thm}{Theorem}[section]

\newtheorem{cor}[thm]{Corollary}
\newtheorem{rem}[thm]{Remark}
\newtheorem{prop}[thm]{Proposition}
\newtheorem{lem}[thm]{Lemma}

\newtheorem{axiom}{Axiom}
\newtheorem{exa}[thm]{Example}

\theoremstyle{definition}
\newtheorem{defn}[thm]{Definition}

\theoremstyle{remark}

\pagestyle{fancy}
\fancyhf{}
\lhead[\fontsize{7}{9} \selectfont\thepage]{}
\rhead[]{\fontsize{7}{9} \selectfont\thepage}

\chead[\fontsize{7}{9} \selectfont HIROSHI KIHARA]{\fontsize{7}{9} \selectfont \rightmark}

\usepackage[foot]{amsaddr}

\makeatletter
\renewcommand{\email}[2][]{%
	\ifx\emails\@empty\relax\else{\g@addto@macro\emails{,\space}}\fi%
	\@ifnotempty{#1}{\g@addto@macro\emails{\textrm{(#1)}\space}}%
	\g@addto@macro\emails{#2}%
}
\makeatother

%\tracingmacros=1

%    For a single index; for multiple indexes, see the manual
%    "AMS Author Handbook, Memoirs Class", included in the
%    author package).
\makeindex

\begin{document}

%\frontmatter

\title{Smooth Homotopy of Infinite-Dimensional $C^{\infty}$-Manifolds}

%    Remove any unused author tags.

%    author one information
\author{Hiroshi Kihara}
\address{Center for Mathematical Sciences, University of Aizu, Tsuruga, Ikki-machi, Aizu-Wakamatsu City, Fukushima, 965-8580, Japan}
%\curraddr{}
\email{(kihara@u-aizu.ac.jp)}
%\thanks{}

%    author two information

%    \date is required; it is the date received by the editor.
%\date{\today}

\subjclass[2010]{Primary 58B05; Secondary 58A40,18G55}
%    Recognition of the 2010 edition of the Mathematics Subject
%    Classification requires a version of amsbook.cls from July 2009
%    or later.  If "2010" is not recognized, please upgrade.

\keywords{Smooth homotopy, $C^\infty$-manifolds, convenient calculus, diffeological spaces, model category.}

%\dedicatory{Dedication text (use \\[2pt] for line break if necessary)}

\begin{abstract}
	In this paper, we use homotopical algebra (or abstract homotopical methods) to study smooth homotopical problems of infinite-dimensional $C^{\infty}$-manifolds in convenient calculus. More precisely, we discuss the smoothing of maps, sections, principal bundles, and gauge transformations.\par
	We first introduce the notion of hereditary $C^\infty$-paracompactness along with the semiclassicality condition on a $C^\infty$-manifold, which enables us to use local convexity in local arguments. Then, we prove that for $C^\infty$-manifolds $M$ and $N$, the smooth singular complex of the diffeological space $C^\infty(M,N)$ is weakly equivalent to the ordinary singular complex of the topological space $\czero(M,N)$ under the hereditary $C^\infty$-paracompactness and semiclassicality conditions on $M$. We next generalize this result to sections of fiber bundles over a $C^\infty$-manifold $M$ under the same conditions on $M$. Further, we establish the Dwyer-Kan equivalence between the simplicial groupoid of smooth principal $G$-bundles over $M$ and that of continuous principal $G$-bundles over $M$ for a Lie group $G$ and a $C^\infty$-manifold $M$ under the same conditions on $M$, encoding the smoothing results for principal bundles and gauge transformations.\par
	For the proofs, we fully faithfully embed the category $C^{\infty}$ of $C^{\infty}$-manifolds into the category $\dcal$ of diffeological spaces and develop the smooth homotopy theory of diffeological spaces via a homotopical algebraic study of the model category $\dcal$ and the model category $\czero$ of arc-generated spaces, also known as $\Delta$-generated spaces. Then, the hereditary $C^\infty$-paracompactness and semiclassicality conditions on $M$ imply that $M$ has the smooth homotopy type of a cofibrant object in $\dcal$. This result can be regarded as a smooth refinement of the results of Milnor, Palais, and Heisey, which give sufficient conditions under which an infinite-dimensional topological manifold has the homotopy type of a $CW$-complex. We also show that most of the important $C^\infty$-manifolds introduced and studied by Kriegl, Michor, and their coauthors are hereditarily $C^\infty$-paracompact and semiclassical, and hence, results can be applied to them.
	\newpage
\end{abstract}

\maketitle
\setcounter{tocdepth}{2}
\tableofcontents

%    Include unnumbered chapters (preface, acknowledgments, etc.) here.
%\include{}

%\mainmatter
%    Include main chapters here.
% comparisonSection1.tex

\section{Introduction}
%%%%%%%%%%%%%%%
This paper aims to develop a smooth homotopy theory of diffeological spaces and apply it to global analysis on infinite-dimensional $C^{\infty}$-manifolds by embedding $C^\infty$-manifolds fully faithfully into the category of diffeological spaces.
\par\indent
In Section 1.1, we formulate fundamental problems on $C^{\infty}$-manifolds, and in Section 1.2, we provide answers to these problems, as the main results on $C^{\infty}$-manifolds. In Section 1.3, we outline the results obtained herein on smooth homotopy for diffeological spaces, to which most of this paper is devoted, and explain how they yield the results in Section 1.2.
\if0
\begin{thm}
	The class $\wcal$ is closed under finite products and contains the following $C^{\infty}$-manifolds:
	\begin{itemize}
		\item[$(\mathrm{a})$] Finite dimentional $C^{\infty}$-manifolds.
		\item[$(\mathrm{b})$] $EFD$-manifolds.
		\item[$(\mathrm{c})$] $C^{\infty}$-paracompact $C^{\infty}$-manifolds which are modeled on Fr\'{e}chet spaces and admit local additions.
	\end{itemize}
\end{thm}
Many infinite dimensional $C^{\infty}$-manifolds used in algebraic topology are $EFD$-manifolds (\cite[Section 47]{KM}). Many infinite dimensional $C^{\infty}$-manifolds used in infinite-dimensional calculus (e.g. $C^{\infty}(S, M)$ with $S$ compact $C^{\infty}$-manifold and $M$ finite dimensional $C^{\infty}$-manifold) are ones as in $(c)$ of Theorem ?. The class $\wcal$ contains $C^{\infty}(S, M)$ for a compact $C^{\infty}$-manifold $S$ and an $EFD$-manifold $M$ (see ?).

We also have the following corollary.
\begin{cor}
	The class $\wcal$ contains the following $C^{\infty}$-manifolds:
	\begin{itemize}
		\item[$(\mathrm{d})$] Paracompact $C^{\infty}$-manifolds modeled on Hilbert spaces.
		\item[$(\mathrm{e})$] $C^{\infty}$-paracompact Lie groups modeled on Fr\'{e}chet spaces.
	\end{itemize}
\end{cor}
For a compact $C^{\infty}$-manifold $M$, the diffeomorphism group $Aut_{C^{\infty}}(M)$ is a $\cinf$-manifold as in $(e)$ of Corollary ?. Further, if $M$ has a symplectic (resp. contact) structure, the symplectomorphism group $Aut^{symp}_{\cinf}(M)$ is also a $C^{\infty}$-manifold as in $(e)$ of Corollary ?
\begin{thm}
	Let $M$ be a $C^{\infty}$-paracompact $C^{\infty}$-manifold modeled on Fr\'{e}chet spaces or $\rbb^{\infty}$. Then, $M$ is in $\wcal$.
\end{thm}
From Theorem ?, we can derive the following corollary.
\begin{cor} (1) If $M$ is a paracompact $C^{\infty}$-manifold modeled on Hilbert spaces or nuclear Fr\'{e}chet spaces, then $M$ is in $\wcal$.\\
	(2) If $M$ is a Lindel\"{o}f $C^{\infty}$-manifold modeled on $\rbb^{\infty}$, then $M$ is in $\wcal$.
\end{cor}
\fi
%%%%%%%%%%%%%%%%%
\subsection{Fundamental problems on $C^{\infty}$-manifolds}
Fr\"{o}licher, Kriegl, and Michor \cite{KM} established the foundation of infinite-dimensional calculus, which is called convenient calculus and is regarded as a prime candidate for the final theory of infinite-dimensional calculus. However, it has no efficient approach for solving one of the most critical problems: to investigate how many smooth maps exist between the given infinite-dimensional $C^{\infty}$-manifolds $M$ and $N$. Since the study of continuous maps between $M$ and $N$ is done by topological homotopy theory (or algebraic topology), we formulate the problem as follows:
\begin{itemize}
	\item[$(\rm a)$] When do the smooth homotopy classes of smooth maps between $M$ and $N$ bijectively correspond to the continuous homotopy classes of continuous maps ?
\end{itemize}
The following two problems are also important; Problem $(\rm b)$ is a generalization of Problem $(\rm a)$, and Problem $(\rm c)$ is closely related to Problems $(\rm a)$ and $(\rm b)$.
\begin{itemize}
	\item[$(\rm b)$] Let $p: E \longrightarrow M$ be a smooth fiber bundle. When do the vertical smooth homotopy classes of smooth sections of $E$ bijectively correspond to the vertical continuous homotopy classes of continuous sections ?
	\item[$(\rm c)$] Let $G$ be a Lie group. When do the isomorphism classes of smooth principal $G$-bundles over $M$ bijectively correspond to those of continuous principal $G$-bundles over $M$? Let $\pi: P \longrightarrow M$ be a smooth principal $G$-bundle. When do the isotopy classes of smooth gauge transformations of $P$ bijectively correspond to those of continuous gauge transformations of $P$?
\end{itemize}

If all $C^\infty$-manifolds involved are finite-dimensional, then the correspondences in Problems (a) and (b) are always bijective by the Steenrod approximation theorem (\cite[Section 6.7]{Steenrod}), which is one of the most basic results in differential topology. In more general settings, Problems $\rm (a)$, $(\rm b)$, and $(\rm c)$ were addressed in \cite{KMSC}, \cite{Wockel09}, and \cite{MW, Wockel05}, respectively. Roughly speaking, as answers to these questions, it has been shown that the correspondences in the questions are bijective, provided that $M$ is finite-dimensional. Precisely, M\"{u}ller and Wockel \cite{MW, Wockel05, Wockel09} did not work in convenient calculus but in Keller's $C^{\infty}_{c}$-theory; moreover, to prove the smoothing result of continuous gauge transformations, Wockel imposed even compactness condition on $M$ along with additional conditions on $G$ \cite[Proposition 1.20]{Wockel05}. However, no essential answer is known in the case where $M$ is infinite-dimensional since the existing approaches are essentially based on the finite dimensionality (or local compactness) of $M$ (cf. \cite[Section 1]{Wockel09} and \cite[Introduction]{MW}).
\par\indent
In the rest of this subsection, we more precisely formulate the fundamental problems mentioned above; we actually address the higher homotopical versions of Problems $(\rm a)$-$(\rm c)$ by observing that the involved relevant categories and functors can be enriched over the category $\scal$ of simplicial sets.
\par\indent
Throughout this paper, $C^{\infty}$-manifolds are ones in the sense of \cite[Section 27]{KM} unless stated otherwise, and $C^{\infty}$ denotes the category of (separated) $C^{\infty}$-manifolds (see Section 2.2). Lie groups are defined as groups in $C^{\infty}$ (\cite[p. 75]{Mac}). The underlying topological space $\widetilde{M}$ of a $C^{\infty}$-manifold $M$ is defined as the set $M$ endowed with the final topology for the smooth curves (\cite[27.4]{KM}). Then, we have the underlying topological space functor $\widetilde{\cdot}: C^{\infty} \longrightarrow \czero$, where $\czero$ is the category of arc-generated spaces and continuous maps (see Section 2.1).

Since the category $C^{\infty}$ is not closed under various categorical constructions, we fully faithfully embed $C^{\infty}$ into the category $\dcal$ of diffeological spaces (see Section 2.2). Recall that the underlying topological space $\widetilde{X}$ of a diffeological space $X$ is defined to be the set $X$ endowed with the final topology for the diffeology $D_X$. Then, we can see that the fully faithful embedding $C^\infty \longhookrightarrow \dcal$ and the underlying topological space functors for $C^\infty$ and $\dcal$ form the commutative diagram
\[
\begin{tikzcd}
C^{\infty} \arrow[hook]{rr} \arrow[swap]{rd}{\widetilde{\cdot}} & & \arrow{ld}{\widetilde{\cdot}} \dcal \\
 & \czero &
\end{tikzcd}
\]
consisting of functors that preserve finite products (Proposition \ref{dmfd}). Because of this fact and the convenient properties of $\dcal$ and $\czero$ (Section 2.1), we use the categories $\dcal$ and $\czero$ as convenient categories of smooth spaces and topological spaces, respectively.

Now, we shall discuss the $\scal$-enrichment of categories embedded into $\dcal$ and $\czero$. In \cite{origin}, we constructed the standard simplices $\Delta^{p}$ $(p \geq 0)$, which satisfy several good properties, and used them to introduce the adjoint pair
\[
|\ |_{\dcal} : \scal \rightleftarrows \dcal : S^{\dcal}
\]
of the realization and singular functors; this is a diffeological analog of the adjoint pair
\[
|\ | : \scal \rightleftarrows \czero : S
\]
of the topological realization and singular functors (see Section 2.3). Let $\mcal$ denote one of the categories $\dcal$ and $\czero$ and set
\[
(|\ |_{\mcal}, S^{\mcal}) =
\begin{cases*}
(|\ |_{\dcal}, S^{\dcal}) & \text{for $\mcal = \dcal$,} \\
(|\ |, S) & \text{for $\mcal = \czero$}.
\end{cases*}
\]
Since $\mcal$ is cartesian closed, $\mcal$ is itself an $\mcal$-category (\cite[Proposition 6.2.6]{Borceux}) and hence an $\scal$-category via $S^{\mcal}$. Further, every category endowed with a faithful functor into $\mcal$ has the canonical $\mcal$-category and hence $\scal$-category structure (see Section 7.1), and the underlying topological space functor $\widetilde{\cdot}:\dcal \longrightarrow \czero$ and its restrictions to subcategories are $\scal$-functors (see Lemma \ref{enrich} and Section 7.1). Using these facts, we formulate higher homotopical versions of Problems (a)-(c).\par
Since $\widetilde{\cdot}:\dcal \longrightarrow \czero$ is an $\scal$-functor and $C^\infty$ is an $\scal$-full subcategory of $\dcal$ via the fully faithful embedding $C^\infty\longhookrightarrow \dcal$, we have the $\scal$-functor
\[
	\widetilde{\cdot}:C^\infty \longrightarrow \czero,
\]
which induces the natural inclusion of function complexes
\[
	\widetilde{\cdot}:S^\dcal C^\infty(M,N)=S^\dcal \dcal(M,N) \longhookrightarrow S\czero(\widetilde{M},\widetilde{N}).
\]
Furthermore, since $\pi_{0} (\widetilde{\cdot})$ is just the natural map from the smooth homotopy classes of smooth maps to the continuous homotopy classes of continuous maps (see Section \ref{4.2}), the following is a precise and higher homotopical formulation of Problem $(\rm a)$.\vspace{0.2cm} \\
\textbf{Problem A.} Under what condition is the map $\widetilde{\cdot}: S^{\dcal}C^{\infty}(M, N) \longhookrightarrow S\czero(\widetilde{M}, \widetilde{N})$ a weak equivalence of simplicial sets ?\vspace{0.2cm}\\

Since the overcategory $\mcal / X$ has the canonical faithful functor into $\mcal$ for any $X \in \mcal$, $\mcal / X$ has the canonical $\mcal$-category and hence $\scal$-category structure. The hom-object $\mcal / X (D, E)$ of the $\mcal$-category $\mcal/X$ is often denoted by $\Gamma(X, E)$ in the case where $D = X$. Since $\widetilde{\cdot}:\dcal/X \longrightarrow \czero/\widetilde{X}$ is an $\scal$-functor for any $X\in \dcal$ and $C^\infty/M$ is an $\scal$-full subcategory of $\dcal/M$ via the fully faithful embedding $C^\infty/M \longhookrightarrow \dcal/M$, we have the $\scal$-functor
\[
	\widetilde{\cdot}: C^\infty/M \longrightarrow \czero/\widetilde{M},
\]
which induces the natural inclusion of function complexes
\[
	\widetilde{\cdot}:S^\dcal \Gamma(M,E) \longhookrightarrow S\Gamma(\widetilde{M},\widetilde{E}).
\]
Furthermore, since $\pi_{0} (\widetilde{\cdot})$ is just the natural map from the vertical smooth homotopy classes of smooth sections to the vertical continuous homotopy classes of continuous sections (Section \ref{6.3}), the following is a precise and higher homotopical formulation of Problem $(\rm b)$.\vspace{0.2cm}\\
\textbf{Problem B.} Let $p : E \longrightarrow B$ be a smooth fiber bundle of $C^{\infty}$-manifolds. Under what condition is the map $\widetilde{\cdot}: S^{\dcal}\Gamma (M, E) \longhookrightarrow S\Gamma(\widetilde{M}, \widetilde{E})$ a weak equivalence of simplicial sets ?\vspace{0.2cm}\\

For an object $X$ of $\mcal$ and a group $G$ in $\mcal$, $\mathsf{P}\mcal G / X$ denotes the category of principal $G$-bundles over $X$ in $\mcal$ (see Definition \ref{bdle} for the notions of smooth and continuous principal bundles referred to here). Since $\mathsf{P} \mcal G / X$ has the canonical faithful functor into $\mcal$, $\mathsf{P} \mcal G / X$ is an $\mcal$-category and hence an $\scal$-category. Moreover, it is shown that the underlying topological space functor defines the simplicial functor
\[
\widetilde{\cdot}: \mathsf{P}\dcal G / X \longrightarrow \mathsf{P} \czero \widetilde{G} / \widetilde{X}
\]
(see Lemma \ref{bdleforget} and Remark \ref{bdlerem}). If $G$ is a Lie group, then the category $\mathsf{P} C^{\infty} G/ M$ of principal $G$-bundles over $M$ in $C^{\infty}$ is regarded as a $\dcal$-category and hence an $\scal$-category via the isomorphism $\mathsf{P} C^\infty G/M \cong \mathsf{P}\dcal G / M$ (see Remark \ref{4properties}(3)). Therefore, we have the simplicial functor
\[
\widetilde{\cdot} : \mathsf{P} C^{\infty} G / M \longrightarrow \mathsf{P}\czero \widetilde{G} / \widetilde{M}.
\]
We thus consider the following problem. See Section 7.4 for the definition of a Dwyer-Kan equivalence between simplicial categories.\vspace{0.2cm}\\
\textbf{Problem C} Let $G$ be a Lie group. Under what condition is the simplicial functor
\[
\widetilde{\cdot} : \mathsf{P} C^{\infty} G / M \longrightarrow \mathsf{P} \czero \widetilde{G} / \widetilde{M}
\]
a Dwyer-Kan equivalence? \vspace{0.2cm}

Now, we explain that Problem ${\rm C}$ is a precise formulation of a higher homotopical version of Problem $(\rm c)$. Observing that $\mathsf{P}\mcal H/X$ is an $\mcal$-groupoid, we define the gauge group ${\rm Gau}_\mcal (P)$ of a principal $H$-bundle $\pi:P\longrightarrow X$ in $\mcal$ to be the automorphism group ${\rm Aut}_{\mathsf{P}\mcal H/X}(P)$ (see Section 7.2). If $\mathsf{P}\mcal H/X$ is regarded as an $\scal$-groupoid (or simplicial groupoid) via $S^\mcal$, then the automorphism group ${\rm Aut}_{\mathsf{P}\mcal H/X}(P)$ is just $S^\mcal{\rm Gau}_\mcal(P)$. Thus, that the simplicial functor $\widetilde{\cdot} : \mathsf{P} C^{\infty} G / M = \mathsf{P}\dcal G/M \longrightarrow \mathsf{P}\czero \widetilde{G} / \widetilde{M}$ between simplicial groupoids is a Dwyer-Kan equivalence is equivalent to the following:
\begin{itemize}
	\item Every continuous principal $\widetilde{G}$-bundle over $\widetilde{M}$ admits a unique (up to isomorphism) smooth principal $G$-bundle structure over $M$.
	\item For any smooth principal $G$-bundle $P$ over $M$, the natural inclusion
	\[
	S^\dcal{\rm Gau}_{C^\infty}(P) = S^{\dcal}{\rm Gau}_{\dcal}(P) \longhookrightarrow S {\rm Gau}_{\czero}(\widetilde{P})
	\]
	is a weak equivalence of simplicial sets.
\end{itemize}
(See Section 7.4).
\subsection{Main results on $C^{\infty}$-manifolds}\label{1.2}
In this subsection, we state the main results on $C^{\infty}$-manifolds, which constitute answers to Problems A, B, and C. To this end, we begin by introducing the basic notions on $C^{\infty}$-manifolds.
\par\indent
A $C^{\infty}$-manifold $M$ is called {\sl hereditarily $C^{\infty}$-paracompact} if any open submanifold of $M$ is $C^{\infty}$-paracompact (see Definition \ref{partition}). A $C^{\infty}$-manifold $M$ is called {\sl semiclassical} if $M$ admits an atlas $\{ (U_{\alpha}, u_{\alpha}) \}_{\alpha \in A}$ such that $u_{\alpha} (U_{\alpha})$ and $u_{\alpha}(U_{\alpha} \cap U_{\beta})$ are open in the model vector space $E_{\alpha}$ with respect to the locally convex topology for any $\alpha, \beta \in A$. (See Section 11.2 for the notion of a classical $C^\infty$-manifold.)\par\indent
The following theorem answers Problem A.
\begin{thm}\label{mapsmoothing}
	Let $M$ and $N$ be $C^{\infty}$-manifolds. If $M$ is hereditarily $C^{\infty}$-paracompact and semiclassical, then the natural inclusion
	\[
	\sdcal C^{\infty}(M, N) \longhookrightarrow S \czero (\widetilde{M}, \widetilde{N})
	\]
	is a weak equivalence of simplicial sets.
\end{thm}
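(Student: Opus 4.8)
The plan is to read both function complexes as homotopy mapping spaces in simplicial model categories and to exploit that the underlying topological space functor $\widetilde{\cdot}\colon\dcal\to\czero$ is a Quillen equivalence. Equip $\dcal$ and $\czero$ with the model structures for which the realization--singular adjunctions $|\ |_\dcal\dashv \sdcal$ and $|\ |\dashv S$ are Quillen equivalences with source $\scal$. Since $\widetilde{\cdot}\circ|\ |_\dcal=|\ |$ and $\widetilde{\cdot}$ carries the generating cofibrations $|\partial\Delta^n|_\dcal\to|\Delta^n|_\dcal$ and the horn realizations to the corresponding maps in $\czero$, the functor $\widetilde{\cdot}$ is left Quillen; applying two--out--of--three to the induced triangle of homotopy categories then shows it is a Quillen equivalence. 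Both categories are simplicial, the comparison in the statement is the natural transformation $\sdcal\Rightarrow S\,\widetilde{\cdot}$ read on mapping complexes, and $N$ is fibrant since its smooth singular complex $\sdcal N$ is a Kan complex.

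First I would reduce to the case that $M$ is cofibrant, and this is exactly where the two hypotheses enter. Hereditary $C^\infty$--paracompactness together with semiclassicality yield a cofibrant object $M_c\in\dcal$ and a \emph{smooth} homotopy equivalence $M_c\xrightarrow{\simeq}M$ (the assertion singled out in the abstract and established in the later sections). Applying $\widetilde{\cdot}$, which preserves homotopies, turns this into a continuous homotopy equivalence $\widetilde{M_c}\to\widetilde{M}$. As precomposition with a smooth (resp.\ continuous) homotopy equivalence induces a simplicial homotopy equivalence on $\sdcal\dcal(-,N)$ (resp.\ on $S\czero(\widetilde{-},\widetilde{N})$), and the comparison is natural in the source, it suffices to treat $M_c$; thus I may assume $M$ is cofibrant.

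For cofibrant $M$ the left side $\sdcal\dcal(M,N)$ computes the derived mapping space $\mathbb{R}\mathrm{Map}_\dcal(M,N)$, and, $\widetilde{\cdot}$ being left Quillen, $\widetilde{M}$ is cofibrant, so the right side computes $\mathbb{R}\mathrm{Map}_\czero(\widetilde{M},\widetilde{N})$. I would then run a cellular induction on $M$. Let $\wcal$ be the class of cofibrant objects for which the comparison is a weak equivalence. Because a cofibration in the source induces a fibration of function complexes into the fibrant $\sdcal N$ (and likewise over $\czero$), mapping complexes send coproducts to products, homotopy pushouts along generating cofibrations to homotopy pullbacks, and transfinite compositions to homotopy limits of towers of fibrations; since $\widetilde{\cdot}$ preserves all these colimits and the comparison is natural, $\wcal$ is closed under the operations building cofibrant objects and under retracts. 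Hence it suffices to place each cell $|\Delta^n|_\dcal$ in $\wcal$, and as $|\Delta^n|_\dcal$ is smoothly contractible with $\widetilde{|\Delta^n|_\dcal}=|\Delta^n|$ contractible, evaluation at a vertex reduces the comparison for $|\Delta^n|_\dcal$ to the comparison for a point.

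What remains is the base case $M=\ast$, that $\eta_N\colon\sdcal N\to S\widetilde{N}$ is a weak equivalence for an arbitrary $C^\infty$--manifold $N$ --- equivalently, that cofibrant replacement in $\dcal$ does not disturb the underlying topological homotopy type. This is the genuine geometric input: the standard simplices $\Delta^p$ are finite--dimensional and compact, so a continuous map $|\Delta^p|\to\widetilde{N}$, and a continuous homotopy between two smooth such maps, can be smoothed by mollifying along the finite--dimensional source and patching with a smooth partition of unity on $\Delta^p$, the chart structure of $N$ keeping the approximations inside the charts; this gives surjectivity and injectivity of $\eta_N$ on homotopy. This base case is hypothesis--free in $N$ and belongs to the foundational part of the paper. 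Consequently the main obstacle is Step~2: producing the cofibrant smooth homotopy type of $M$, where hereditary $C^\infty$--paracompactness supplies the partitions of unity that glue local data and semiclassicality makes the charts convex in the locally convex topology, so that the local pieces are smoothly contractible and $M$ is modelled by an honest cell complex in $\dcal$; without both conditions this reduction --- and hence the theorem --- fails.
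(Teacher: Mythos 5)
Your high-level architecture agrees with the paper's: reduce to a cofibrant smooth model of $M$ using the class $\wcal_{\dcal}$ (the paper's Theorem \ref{mfdhcofibrancy}, whose proof consumes Sections 8--9), identify the residual content as the statement that $\sdcal N\hookrightarrow S\widetilde{N}$ is a weak equivalence for \emph{every} $C^\infty$-manifold $N$ (the paper's Theorem \ref{mfdV}, i.e.\ $N\in\vcal_{\dcal}$ via Corollary \ref{W}(3)), and exploit the Quillen equivalence $\widetilde{\cdot}:\dcal\rightleftarrows\czero:R$. However, there are two genuine gaps in the middle and at the base of your argument.

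First, your entire inductive engine presupposes that $\dcal$ is a simplicial model category: you use that $\sdcal\dcal(M,N)$ computes the derived mapping space for cofibrant $M$ ``for free,'' that a cofibration $A\to B$ induces a Kan fibration $\sdcal\dcal(B,N)\to\sdcal\dcal(A,N)$, and that function complexes turn cell attachments and transfinite compositions into homotopy pullbacks and towers of fibrations. All of these are instances of axioms M6/M7, and the paper proves they \emph{fail}: $|\ |_{\dcal}$ does not preserve finite products, so the $\scal$-category $\dcal$ is neither tensored nor cotensored (Proposition \ref{enrich2}(2)) and $\dcal$ is not a simplicial model category (Corollary \ref{notenrich}(2)); e.g.\ your fibration claim unwinds to a pushout-product statement $\Lambda^n_k\times B\cup_{\Lambda^n_k\times A}\Delta^n\times A\to\Delta^n\times B$ being a trivial cofibration, which is exactly the unavailable SM7. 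The paper's substitute is Theorem \ref{dfctcpx}: a nontrivial comparison ${\rm map}_{\dcal}(A,X)\simeq \sdcal\dcal(A,X)$ for cofibrant $A$, proved by a bisimplicial diagonal argument resting on Proposition \ref{trivialcofibr} and on Theorem \ref{Quillenequiv}(1). Note also that once this comparison is in hand your cellular induction becomes unnecessary: the paper finishes in three lines using the adjunction ${\rm map}_{\dcal}(A,R\widetilde{X})\cong{\rm map}_{\czero}(\widetilde{A},\widetilde{X})$ together with $X\to R\widetilde{X}$ being a weak equivalence.

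Second, your base case by mollification does not work in convenient calculus, and this is precisely the obstruction the paper is organized around. Chart images in a convenient manifold are only $c^\infty$-open; the $c^\infty$-topology is generally strictly finer than the locally convex topology and is not a vector topology (Remark \ref{semiclassical}), so source-mollification gives no uniform, seminorm-controlled approximation keeping values inside a $c^\infty$-open chart --- to make such an argument run you would need local convexity of the charts of $N$, i.e.\ semiclassicality of the \emph{target}, contradicting your claim that the base case is hypothesis-free in $N$. The paper instead proves $N\in\vcal_{\dcal}$ hypothesis-free via local $\dcal$-contractibility of $C^\infty$-manifolds and a purely homotopy-theoretic smoothing-by-subdivision argument (the polyhedra $(\Delta^p\times I)^{(k)}$, Lemma \ref{sdsmoothing}, and the obstruction-style induction in the proof of Theorem \ref{V}), with no analytic smoothing at all. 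As written, your proposal therefore has an unjustified core (the SM7-based steps) and a failing base case, even though its skeleton correctly mirrors the paper's reduction.
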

%%%%%%%%%%%%%%%%%%%%
\if0
\begin{thm}
Let $M$ and $N$ be $C^{\infty}$-manifolds. Suppose that $M$ satisfies the following conditions:
\begin{itemize}
\item[$(a)$] $M$ admits a numerable semiclassical atlas.
\item[$(b)$] $M$ is modeled on hereditarily $C^{\infty}$-paracompact convenient vector spaces.
\end{itemize}
Then, the inclusion
\[
\sdcal\dcal(M, N) \longhookrightarrow S \czero (\widetilde{M}, \widetilde{N})
\]
is a homotopy equivalence of simplicial sets.
\end{thm}
\fi
%%%%%%%%%%%%%%%%%%%
The following theorem, which is a smoothing result of continuous sections, answers Problem B, generalizing Theorem \ref{mapsmoothing}.
\begin{thm}\label{sectionsmoothing}
	Let $p: E \longrightarrow M$ be a smooth fiber bundle of $C^{\infty}$-manifolds. If $M$ is hereditarily $C^{\infty}$-paracompact and semiclassical, then the natural inclusion
	\[
	S^{\dcal}\Gamma (M, E) \longhookrightarrow S\Gamma (\widetilde{M}, \widetilde{E})
	\]
	is a weak equivalence of simplicial sets.
\end{thm}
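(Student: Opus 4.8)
The strategy is to bootstrap from Theorem~\ref{mapsmoothing}, which is exactly the case of a product bundle $E=M\times N$: there a section is a map $M\to N$ and $\Gamma(M,E)=C^\infty(M,N)$. To pass from product bundles to arbitrary ones I would exploit that a fiber bundle is trivial over any contractible base, together with the result recalled above that under the present hypotheses $M$ has the smooth homotopy type of a cofibrant object of $\dcal$, assembled from the standard simplices $\Delta^p$. Concretely, choose a cofibrant model $q\colon Q\xrightarrow{\ \sim\ }M$ in $\dcal$ and form the pullback fibration $q^*E\to Q$ with fiber $F$. Since base change along a weak equivalence is homotopically invisible for derived section complexes, and since $\widetilde{\,\cdot\,}$ carries this data to the corresponding topological situation over $\widetilde{Q}\simeq\widetilde{M}$, the comparison map of the theorem is identified with
\[
S^{\dcal}\Gamma(Q,q^*E)\ \longhookrightarrow\ S\Gamma(\widetilde{Q},\widetilde{q^*E}),
\]
so it suffices to treat the cofibrant base $Q$.

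Next I would run the skeletal induction. Write $Q=\mathrm{colim}_n Q^{(n)}$ for the skeletal filtration, where each inclusion $Q^{(n-1)}\hookrightarrow Q^{(n)}$ is a pushout of boundary inclusions $\partial\Delta^p\hookrightarrow\Delta^p$ along the attaching maps. The section functor turns these base pushouts into pullbacks of section complexes, and because $\partial\Delta^p\hookrightarrow\Delta^p$ is a cofibration while $q^*E\to Q$ is a fibration, the restriction
\[
S^{\dcal}\Gamma(\Delta^p,q^*E)\ \longrightarrow\ S^{\dcal}\Gamma(\partial\Delta^p,q^*E)
\]
is a fibration of simplicial sets and the resulting square is a homotopy pullback; the same holds verbatim on the topological side. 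Passing to the limit, $S^{\dcal}\Gamma(Q,q^*E)$ is the homotopy limit of the tower $\{S^{\dcal}\Gamma(Q^{(n)},q^*E)\}_n$, and likewise for $S\Gamma(\widetilde{Q},\widetilde{q^*E})$, compatibly with the comparison map.

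It then remains to compare over a single cell. Each attaching map $\Delta^p\to Q\to M$ has finite-dimensional, smoothly contractible source, so the restriction of $q^*E$ to $\Delta^p$ is a trivial bundle $\Delta^p\times F$; hence sections over $\Delta^p$ and over $\partial\Delta^p$ reduce to smooth maps into $F$, that is $\Gamma(\Delta^p,q^*E)\cong\dcal(\Delta^p,F)$ and $\Gamma(\partial\Delta^p,q^*E)\cong\dcal(\partial\Delta^p,F)$, and the comparison maps on these are weak equivalences by the finite-dimensional case of Theorem~\ref{mapsmoothing} (equivalently, by the foundational properties of the standard simplices underlying it). Since a map of homotopy pullback squares that is a weak equivalence on three corners is one on the fourth, induction on the skeleta shows the comparison is a levelwise weak equivalence of the two towers; a levelwise weak equivalence of towers of fibrations induces a weak equivalence on homotopy limits, which are the two sides of the theorem. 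This completes the argument.

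The main obstacle is the homotopical bookkeeping of section complexes inside the model category $\dcal$: one must verify that restriction along the skeletal cofibrations yields fibrations of the simplicial section complexes, that $S^{\dcal}$ and $S$ together with $\widetilde{\,\cdot\,}$ carry the pushout/pullback and the skeletal limit to matching homotopy pullbacks and homotopy limits, and---most delicately---that the local trivializations of $q^*E$ over the simplices can be chosen so that the smooth and continuous comparisons agree on the nose, reducing each inductive step to the already known case of maps. Controlling the passage to the homotopy limit of the (possibly infinite) skeletal tower, including the requisite fibrancy and $\lim^1$-type vanishing, is where the real work lies.
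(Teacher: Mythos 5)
Your overall architecture (reduce to a cofibrant base, then induct over the skeleta, comparing mapping spaces into the fiber cell by cell) is plausible on its face, but the inductive engine fails at its central joint: you assert that restriction along $\partial\Delta^p\hookrightarrow\Delta^p$ induces a Kan fibration $S^{\dcal}\Gamma(\Delta^p,q^*E)\longrightarrow S^{\dcal}\Gamma(\partial\Delta^p,q^*E)$ and that the attaching squares are homotopy pullbacks ``verbatim'' as on the topological side. On the topological side this does follow from axiom M7, because $\czero$ and $\czero/X$ are simplicial model categories (Corollaries \ref{notenrich}(1) and \ref{notenrich/X}(1)); on the smooth side it is exactly what is unavailable, since neither $\dcal$ nor $\dcal/X$ is a simplicial model category (Corollaries \ref{notenrich}(2) and \ref{notenrich/X}(2)). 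Concretely, unwinding your fibration claim via cartesian closedness leads to smooth extension problems over pushout-products such as $\Lambda^{n}_{k}\times\Delta^p\,\cup\,\Delta^n\times\partial\Delta^p\hookrightarrow \Delta^n\times\Delta^p$, and because $|\ |_{\dcal}$ does not preserve finite products (Remark \ref{proof}; $|\Delta[1]\times\Delta[1]|_{\dcal}\neq\Delta^1\times\Delta^1$), products of (trivial) cofibrations need not be (trivial) cofibrations in $\dcal$, so there is no pushout-product axiom to invoke. Without the levelwise fibrations, you get neither the homotopy-pullback squares nor the identification of $S^{\dcal}\Gamma(Q,q^*E)$ with the homotopy limit of your tower; this is precisely the difficulty the paper flags in Remark \ref{svsm}. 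A smaller, fixable gloss occurs in your first reduction: the theorem concerns the actual section complexes, not derived ones, so transporting along $q\colon Q\to M$ requires the homotopy invariance of honest section spaces for $\dcal$-numerable bundles (the smooth Dold-type Lemma \ref{dgammahomotopy}, applied to the $\dcal$-homotopy equivalence $q$, together with $\widetilde{q^*E}\cong\widetilde{q}^{\,*}\widetilde{E}$), not a statement about derived section complexes.

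For comparison, the paper's proof avoids cellwise induction entirely. It reduces the base to $|K|_{\dcal}$ as in your first step, then proves by a bisimplicial diagonal argument with the cosimplicial resolution $|K^\ast|_{\dcal}$ that the honest section complex $S^{\dcal}\Gamma(|K|_{\dcal},E)$ computes the homotopy function complex ${\rm map}_{\dcal/|K|_{\dcal}}(|K|_{\dcal},E)$ (Theorem \ref{dfctcpx/X}); a single application of the Quillen equivalence $\widetilde{\cdot}:\dcal/|K|_{\dcal}\rightleftarrows\czero/|K|:R'$ (Proposition \ref{Quillenequiv/X}(2)), together with the fact that $id\colon E\to R'\widetilde{E}$ is a weak equivalence of fibrations over $|K|_{\dcal}$ --- this is where the condition $F\in\vcal_{\dcal}$ (via Theorem \ref{mfdV}) actually enters --- then finishes the argument, with Proposition \ref{cfctcpx/X} handling the topological side. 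To salvage your plan you would first have to supply a substitute for M7 for the $\scal$-enrichment of $\dcal/X$, which the paper shows cannot hold in its tensored/cotensored form (Proposition \ref{enrich2/X}).
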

Lastly, we provide an answer to Problem C.
\begin{thm}\label{DK}
	Let $M$ be a $C^{\infty}$-manifold and $G$ a Lie group. If $M$ is hereditarily $C^{\infty}$-paracompact and semiclassical, then the functor
	\[
	\widetilde{\cdot}: \mathsf{P} C^{\infty}G / M \longrightarrow \mathsf{P} \czero \widetilde{G} / \widetilde{M}
	\]
	is a Dwyer-Kan equivalence of simplicial groupoids.
\end{thm}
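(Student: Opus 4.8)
The plan is to verify the two conditions characterizing a Dwyer--Kan equivalence of simplicial groupoids (see Section 7.4): that the functor $\widetilde{\cdot}$ be homotopically fully faithful, i.e.\ that it induce a weak equivalence on every function complex, and that it be essentially surjective, i.e.\ that it induce a surjection on isomorphism classes of objects. I would establish these two properties separately, treating the first via the section-smoothing Theorem \ref{sectionsmoothing} and the second by a classifying-space argument. For groupoids every hom-complex is either empty or a torsor over an automorphism complex, so once essential surjectivity is known, full faithfulness forces matching empty/non-empty homs and the two conditions together suffice.

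For homotopical full faithfulness I would reduce the hom-objects to section spaces. For smooth principal $G$-bundles $P$ and $Q$ over $M$, the bundle isomorphisms $P\to Q$ covering $\mathrm{id}_M$ are precisely the sections of the associated isomorphism bundle
\[
\mathrm{Iso}(P,Q):=(P\times_M Q)/G\longrightarrow M,
\]
a smooth fiber bundle of $C^\infty$-manifolds with fiber the Lie group $G$; hence $\mathsf{P}C^\infty G/M(P,Q)=\Gamma(M,\mathrm{Iso}(P,Q))$ in $\dcal$, and similarly in $\czero$ after applying $\widetilde{\cdot}$. Since $\widetilde{\cdot}$ carries $\mathrm{Iso}(P,Q)$ to $\mathrm{Iso}(\widetilde P,\widetilde Q)$ (Lemma \ref{bdleforget}, Remark \ref{bdlerem}), Theorem \ref{sectionsmoothing} applied to $\mathrm{Iso}(P,Q)\to M$ gives that
\[
\sdcal\Gamma(M,\mathrm{Iso}(P,Q))\longhookrightarrow S\Gamma(\widetilde M,\widetilde{\mathrm{Iso}(P,Q)})
\]
is a weak equivalence, which is exactly the map $\widetilde{\cdot}$ induces on the function complex $\mathsf{P}C^\infty G/M(P,Q)$. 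A single application thus treats isomorphic and non-isomorphic pairs uniformly, and the case $P=Q$ specializes, via the adjoint bundle $\mathrm{Ad}(P)=P\times_G G$, to the asserted weak equivalence $\sdcal\,\mathrm{Gau}_{C^\infty}(P)\longhookrightarrow S\,\mathrm{Gau}_{\czero}(\widetilde P)$ of gauge complexes.

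The remaining and principal task is essential surjectivity: every continuous principal $\widetilde G$-bundle $\xi$ over $\widetilde M$ must be isomorphic to $\widetilde P$ for some smooth principal $G$-bundle $P$ over $M$. I would construct, inside $\dcal$, a classifying object $BG$ together with a universal smooth principal $G$-bundle $EG\longrightarrow BG$ (e.g.\ via Milnor's infinite join internal to $\dcal$), and verify that $\widetilde{\cdot}$ sends it to a classifying space $\widetilde{BG}$ for numerable continuous $\widetilde G$-bundles with $\widetilde{EG}\to\widetilde{BG}$ universal. Because $\widetilde M$ is paracompact, $\xi$ is numerable and hence classified by a continuous map $f:\widetilde M\longrightarrow\widetilde{BG}$. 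Since $M$ is hereditarily $C^\infty$-paracompact and semiclassical, it has the smooth homotopy type of a cofibrant object of $\dcal$, so the comparison underlying Theorem \ref{mapsmoothing} lets me smooth $f$ to a smooth map $g:M\longrightarrow BG$ with $\widetilde g\simeq f$; then $P:=g^{*}EG$ is a smooth principal $G$-bundle with $\widetilde P\cong f^{*}\widetilde{EG}\cong\xi$. Injectivity on isomorphism classes (uniqueness of the smooth structure) is then free: if $\widetilde P\cong\widetilde Q$, then $\Gamma(\widetilde M,\widetilde{\mathrm{Iso}(P,Q)})$ is non-empty, whence by the full-faithfulness step $\Gamma(M,\mathrm{Iso}(P,Q))$ is non-empty, i.e.\ $P\cong Q$ smoothly.

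The main obstacle is this last step. The function-complex part is essentially a corollary of Theorem \ref{sectionsmoothing} once the isomorphism bundle is set up, but essential surjectivity requires genuinely global input: building a diffeological classifying space whose underlying topological space is a classifying space for numerable bundles, checking that $\widetilde{\cdot}$ commutes with the relevant bar or join construction and with pullback, and---most delicately---smoothing the classifying map into the \emph{non-manifold} target $BG$, which forces me to use the map-smoothing comparison in its full diffeological generality (for cofibrant $M$ and a general diffeological target) rather than the manifold-to-manifold statement of Theorem \ref{mapsmoothing}. Verifying numerability and that the smoothing is compatible with $\widetilde{\cdot}$ up to continuous bundle isomorphism is where the argument will demand the most care.
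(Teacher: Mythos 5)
Your overall route is the paper's: it deduces Theorem \ref{DK} from the diffeological statement (Theorem \ref{dDK}, fed by Theorems \ref{mfdhcofibrancy} and \ref{mfdV}, with Remark \ref{4properties}(3) identifying $\mathsf{P}C^\infty G/M\cong\mathsf{P}\dcal G/M$ and Remark \ref{bdleparacpt} removing numerability via $C^\infty$-paracompactness), and Theorem \ref{dDK} splits exactly as your plan does: bijectivity on isomorphism classes by smoothing classifying maps (Theorem \ref{dbdlesmoothing}, using Christensen--Wu's universal bundle $EG\to BG$ and the fact that $\widetilde{EG}\to\widetilde{BG}$ is universal, Proposition \ref{prop6.6}(3), proved from contractibility of $\widetilde{EG}$ and Dold's theorem), plus a weak equivalence on automorphism complexes by applying the section-smoothing theorem to the conjugation bundle $P[G,\mathrm{conj}]$ (Lemmas \ref{equivar}--\ref{conjbdle} and Proposition \ref{gaugesmoothing}). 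Your variations are harmless and essentially equivalent: working with $\mathrm{Iso}(P,Q)$ for all pairs rather than reducing to automorphisms via the groupoid structure (note $\mathrm{Iso}(P,P)$ is precisely the paper's $P[G,\mathrm{conj}]$), and extracting injectivity on isomorphism classes from $\pi_0$ of the Iso-section complex rather than from the classifying-space bijection.

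The one point you must not elide is the hypothesis on the \emph{target} of the map-smoothing comparison. Theorem \ref{dmapsmoothing} does not hold ``for a general diffeological target,'' as your last paragraph suggests: it requires the target to lie in $\vcal_\dcal$, and this genuinely fails in general (Appendix A gives counterexamples such as $\rbb/\qbb$ and the irrational torus). So before smoothing the classifying map $f:\widetilde M\to\widetilde{BG}$ you must prove $BG\in\vcal_\dcal$; this is not automatic, since $BG$ is not a manifold and Theorem \ref{mfdV} does not apply to it. The paper supplies exactly this step as Corollary \ref{BV}: since $S^\dcal EG\simeq S\widetilde{EG}\simeq\ast$, the morphism of simplicial fiber bundles $S^\dcal EG\longrightarrow S\widetilde{EG}$ over $S^\dcal BG\longrightarrow S\widetilde{BG}$ (Corollary \ref{bdlecpx}, whose proof needs that $S^\dcal$ carries a fiber bundle in $\dcal$ to a simplicial fiber bundle, established via smooth paracompactness of $\Delta^p$) transfers $G\in\vcal_\dcal$ to $BG\in\vcal_\dcal$. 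With that lemma inserted, your plan goes through; the rest of your checklist (numerability over the paracompact $\widetilde M$, compatibility of $\widetilde{\cdot}$ with pullbacks, associated bundles, and local trivializations) is covered by the paper's Lemma \ref{bdleforget}, Lemma \ref{equivar}, and Proposition \ref{prop6.6}.
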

Kriegl, Michor, and their coauthors introduced and studied many $C^\infty$-manifolds which play vital roles in global analysis and differential geometry (\cite[Chapter IX and Section 47]{KM}). In this paper, we show that most of those $C^\infty$-manifolds are hereditarily $C^\infty$-paracompact and semiclassical (see Section 11.4); therefore, Theorems \ref{mapsmoothing}-\ref{DK} apply to them.
%\begin{thm}\label{bdlesmoothing} Let $M$ be a $C^{\infty}$-manifold and $G$ a Lie group. If $M$ is hereditarily $C^{\infty}$-paracompact and semiclassical, then the natural map
%	\[
%	P^{C^{\infty}}_{G}(M) \longrightarrow P^{\czero}_{\widetilde{G}}(\widetilde{M})
%	\]
%	is bijective.
%\end{thm}
\begin{rem}\label{smparacpt}
	In spite of the importance of $C^\infty$-paracompactness in global analysis (\cite[Chapter III]{KM}), few $C^\infty$-paracompact $C^\infty$-manifolds are known except finite dimensional $C^\infty$-manifolds and several kinds of convenient vector spaces. We thus develop the basics of hereditary $C^\infty$-paracompactness (Appendix C and Section 11.3), and then establish the hereditary $C^\infty$-paracompactness of important $C^\infty$-manifolds such as the $C^\infty$-manifolds $\cfra^\infty(M,N)$ and $C^\omega(M,N)$ of maps and the $C^\infty$-manifolds $B(M,N)$ and $B^\omega(M,N)$ of submanifolds (see Section 11.4); our approach also applies to paracompact Hilbert manifolds and direct limit manifolds of finite dimensional $C^\infty$-manifolds. Note also that a $C^\infty$-manifold $X$ is hereditarily $C^\infty$-paracompact if and only if every submanifold of $X$ is $C^\infty$-paracompact (Corollary \ref{submfd}).
\end{rem}
\subsection{Smooth homotopy theory of diffeological spaces}
Since $C^{\infty}$ is a full subcategory of $\dcal$ (Proposition \ref{dmfd}) and $\dcal$ is a model category which has various convenient properties, we develop smooth homotopy theory of diffeological spaces and prove Theorems \ref{mapsmoothing}-\ref{DK} by applying the theory to $C^{\infty}$-manifolds. Precisely, in this subsection, we outline the smooth homotopical results on diffeological spaces, and explain how Theorems \ref{mapsmoothing}-\ref{DK} are deduced from them.\par
Our smooth homotopy theory primarily concerns simplicial categorical properties of objects and morphisms of $\dcal$ and $\czero$ (see Problems A-C and Theorems \ref{mapsmoothing}-\ref{DK}). However, the model categorical study of $\dcal$ and $\czero$ plays a crucial role in many aspects of the theory. Thus, we begin by outlining the model categorical relations between $\scal$, $\dcal$, and $\czero$.\par
%An atlas $\{ (U_{\alpha}, u_{\alpha} ) \}_{\alpha \in \Sigma}$ of a $C^{\infty}$-manifold $M$ is called classical if the image $U_{\alpha} (U_{\alpha})$ and $u_{\alpha}(U_{\alpha} \cap U_{\beta})$ are open in the model vector space $E_{\alpha}$ with respect to the locally convex topology for any $\alpha$ and $\beta$.
In \cite{origin}, a compactly generated model structure on the category $\dcal$ of diffeological spaces was constructed. In the study, the adjoint pair
$$
|\ |_\dcal : \scal \rightleftarrows \dcal : S^\dcal
$$
of the realization and singular functors, and the adjoint pair
$$
\widetilde{\cdot} : \dcal \rightleftarrows \czero: R
$$
of the underlying topological space functor and its right adjoint were introduced. The composite of these adjoint pairs is just the adjoint pair
%recall that a topological space $X$ is called {\sl arc-generated} if the topology of $X$ is final for $C^0_X \coloneqq \text{\{continuous curves from } \rbb \text{ to } X \text{\}}$. The composite of these adjoint pairs is just the adjoint pair
$$
|\ | : \scal \rightleftarrows \ccal^0 : S
$$
of the topological realization and singular functors (see Section 2.3).
\par\indent
In the following theorem, we show that the model categories $\scal$, $\dcal$, and $\ccal^0$ are Quillen equivalent via $(|\ |_\dcal, S^\dcal)$ and $(\tilde{\cdot}, R)$. See \cite{Hi, MP} for model categories and Quillen equivalences, \cite{K, GJ, MP} for the model structure of $\scal$, and Remark \ref{arc} for the model structure of $\ccal^0$.
\begin{thm}\label{Quillenequiv}
	\begin{itemize}
		\item[$(1)$]
		$|\ |_{\dcal}: \scal \rightleftarrows \dcal: S^{\dcal}$ is a pair of Quillen equivalences.
		\item[$(2)$]
		$\tilde{\cdot}: \dcal \rightleftarrows \ccal^{0}: R$ is a pair of Quillen equivalences.
	\end{itemize}
\end{thm}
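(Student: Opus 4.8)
The plan is to deduce both statements from the single fact that the composite adjunction $(|\ |, S) = (\widetilde{\cdot}, R)\circ(|\ |_\dcal, S^\dcal)$ is the classical realization--singular Quillen equivalence between $\scal$ and $\czero$, together with the two-out-of-three property for Quillen equivalences (\cite{Hi, MP}). The real content is therefore part $(1)$; part $(2)$ will then follow formally. First I would check that both adjunctions are Quillen adjunctions. For $(|\ |_\dcal, S^\dcal)$ this is built into the construction of the compactly generated model structure on $\dcal$ in \cite{origin}, whose generating cofibrations and generating trivial cofibrations are the diffeological realizations $|\partial\Delta^n|_\dcal\hookrightarrow|\Delta^n|_\dcal$ and $|\Lambda^n_k|_\dcal\hookrightarrow|\Delta^n|_\dcal$, so that $|\ |_\dcal$ is left Quillen. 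For $(\widetilde{\cdot}, R)$ I would use the identity $\widetilde{\cdot}\,|\ |_\dcal = |\ |$ of left adjoints: it carries these generating (trivial) cofibrations of $\dcal$ to $|\partial\Delta^n|\hookrightarrow|\Delta^n|$ and $|\Lambda^n_k|\hookrightarrow|\Delta^n|$, which are generating (trivial) cofibrations of $\czero$ (Remark \ref{arc}), so $\widetilde{\cdot}$ is left Quillen as well.

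Next I would record that the composite $(|\ |, S)\colon \scal\rightleftarrows\czero$ is a Quillen equivalence. This is the classical equivalence between simplicial sets and spaces, which applies verbatim to the convenient category $\czero$ of arc-generated spaces, since $\czero$ contains every realization $|K|$ and carries the model structure transported from $\scal$ along this very adjunction (Remark \ref{arc}); see \cite{GJ, MP}.

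To prove part $(1)$ I would use that, by construction, the weak equivalences of $\dcal$ are exactly the maps $f$ with $S^\dcal f$ a weak equivalence of simplicial sets; thus $S^\dcal$ both preserves and reflects weak equivalences. Since every simplicial set is cofibrant, the standard recognition criterion for Quillen equivalences (\cite{Hi, MP}) reduces the claim to showing that the derived unit is a weak equivalence for each $X\in\scal$, and because $S^\dcal$ sends the fibrant-replacement map $|X|_\dcal\to(|X|_\dcal)^{\mathrm{fib}}$ to a weak equivalence, this further reduces to showing that the unit $\eta_X\colon X\to S^\dcal|X|_\dcal$ is a weak equivalence. This is the crux of the whole theorem. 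I would argue by induction along the skeletal filtration of $X$: the functor $|\ |_\dcal$, being a left adjoint, preserves the pushouts and filtered colimits that build $X$ from its nondegenerate simplices, and in the compactly generated setting $S^\dcal$ preserves these filtered colimits and sends the cell-attaching pushouts to homotopy pushouts, so it suffices to treat a single cell. The cell-wise statement reduces to the homotopical computation that $S^\dcal|\Delta^n|_\dcal$ is weakly contractible and compatible with the boundary and horn inclusions; here I would invoke the fine geometric properties of the diffeological standard simplices established in \cite{origin}---their smooth contractibility and the existence of smooth collar and horn retractions---which ensure that the smooth singular complex of $|\Delta^n|_\dcal$ has the expected homotopy type.

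Finally, with $(|\ |_\dcal, S^\dcal)$ shown to be a Quillen equivalence and its composite with $(\widetilde{\cdot}, R)$ equal to the Quillen equivalence $(|\ |, S)$, the two-out-of-three property (\cite{Hi, MP}) immediately yields part $(2)$. The main obstacle is the cell-wise step of part $(1)$: controlling the smooth homotopy type of the diffeological simplices $|\Delta^n|_\dcal$ and verifying that $S^\dcal$ does not distort the colimits assembling $|X|_\dcal$. Once that smooth input from \cite{origin} is in hand, everything else is formal.
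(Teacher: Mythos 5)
Your global architecture coincides with the paper's: both adjunctions are Quillen pairs, the composite is the classical equivalence $(|\ |,S)\colon \scal\rightleftarrows\czero$, two-out-of-three reduces everything to Part (1), and Part (1) reduces (since all simplicial sets are cofibrant and all diffeological spaces are fibrant) to showing the unit $i_K\colon K\to S^\dcal|K|_\dcal$ is a weak equivalence. Up to that point your proposal matches Lemmas \ref{Quillenpairs}, \ref{clequiv}, and \ref{1streduction}.

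The gap is in your crux step: the assertion that $S^\dcal$ ``sends the cell-attaching pushouts to homotopy pushouts,'' which you use to reduce the unit statement to a single cell. $S^\dcal$ is a right adjoint and does not preserve pushouts, and the claim that it carries a cellular pushout to a homotopy pushout of simplicial sets is a form of excision \emph{for weak equivalences}, which is essentially equivalent to the theorem you are proving: realizing the comparison map $S^\dcal X\cup^{h}_{S^\dcal A}S^\dcal B\to S^\dcal(X\cup_A B)$ and invoking the unit shows that its being a weak equivalence amounts to the very statement under induction. Concretely, homotopy groups satisfy no Mayer--Vietoris; only homology does. Your induction therefore cannot control $\pi_1$ (or higher $\pi_i$) of an attachment, and no smooth van Kampen theorem is available in the paper to substitute. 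Note also Remark \ref{proof}: the other standard route (Quillen's, via ``realization preserves fibrations'') is explicitly unavailable here because $|\ |_\dcal$ does \emph{not} preserve finite products, so formal gluing arguments are precisely where this theorem is delicate.

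What the paper does instead, following May's proof of \cite[Theorem 16.1]{May}, is to make the excision-type input honest only at the homology level and then upgrade it by covering-space theory and the Whitehead theorem. It develops smooth singular homology with excision and homotopy invariance (Proposition \ref{homologytheory}, proved by checking that the barycentric-subdivision and prism operators restrict to smooth chains, using Axioms 1--2), proves $H_\ast(K)\cong H_\ast(|K|_\dcal)$ via the $CW$-structure and cellular homology of $|K|_\dcal$ (Lemmas \ref{realization}--\ref{cellular}, Proposition \ref{homologyisone}), reduces via fibrant approximation and minimal subcomplexes (Lemma \ref{2ndreduction}) to a Kan complex with one vertex, constructs the universal covering and shows $|\varpi|_\dcal$ is a covering in $\dcal$ (Lemma \ref{covering}, itself requiring nontrivial work with $\dcal$-quotient maps), thereby reducing to $1$-connected $K$, and concludes with Lemma \ref{1-connected} (which uses transversality) and the Whitehead theorem. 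Your homology-level intuition is salvageable, but the passage from homology to weak equivalence is exactly the content your proposal elides.
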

%%%%%%%%%%%%%%%%%%%%
\if0
In addition to determining these Quillen equivalences, we also study the relationship between Chen spaces and diffeological spaces (\cite{BH}) from a model-categorical point of view. Since our construction of a model structure (\cite{origin}) also applies to the category $\ccal h$ of Chen spaces (Theorem \ref{modelCh}), we have a result analogous to Theorem \ref{Quillenequiv} (Theorem \ref{analogue}) in the context of Chen spaces. Further, we establish the direct Quillen equivalence between $\ccal h$ and $\dcal$ via the adjoint pair $\mathfrak{S}\ofrack : \ccal h \rightleftarrows \dcal : \mathfrak{Ch}^\sharp$ introduced by Stacey \cite{St}.
\begin{thm}\label{ChD}
	$\mathfrak{S}\ofrack : \ccal h \rightleftarrows \dcal : \mathfrak{Ch}^\sharp$ is a pair of Quillen equivalences.
\end{thm}
The relevant pairs of Quillen equivalences are put together in the commutative diagram in the following proposition, in which an adjoint pair $F:\acal \rightleftarrows \bcal: G$ is denoted by $\acal \xrightharpoonup{ \text{\tt <} F, G \text{\tt >} } \bcal$.
\begin{prop}\label{adjunction}
	The following diagram of pairs of Quillen equivalences commutes.
	\begin{equation*}
	\begin{tikzcd}
	& \arrow[swap, rightharpoondown]{dl}{\text{\tt <}|\ |_{\ccal h}, S^{\ccal h}\text{\tt >}} \scal \arrow[harpoon]{dr}{\text{\tt <}|\ |_\dcal, S^\dcal\text{\tt >}} \arrow[dash,rr, xshift = 8.2]  &   & \arrow[dash,yshift = 6]{dd}{\text{\tt <}|\ |, S\text{\tt >}} \phantom A \\
	\ccal h \arrow[harpoon]{rr}{\text{\tt <}\mathfrak{S}\ofrack, \mathfrak{Ch}^\sharp\text{\tt >}} \arrow[swap, rightharpoondown]{dr}{\text{\tt <}\tilde{\cdot}, R\text{\tt >}} & & \arrow[harpoon]{dl}{<\tilde{\cdot}, R>} \dcal & \arrow[dash,d,yshift = -8] \\
	& \ccal^0 . & & \arrow[harpoon,ll, xshift = 8 ] \phantom A
	\end{tikzcd}
	\end{equation*}
\end{prop}
\fi
%%%%%%%%%%%%%%%%%%%%%
Theorem \ref{Quillenequiv} is the starting point of our homotopical study of diffeological spaces, which shows that the Quillen homotopy categories of $\dcal$ and $\ccal^0$ are equivalent via the total derived functors of $\tilde{\cdot}$ and $R$; however, it does not guarantee that $S^\dcal \dcal (X,Y)$ is homotopically equivalent to $S\ccal^0 (\tilde{X}, \tilde{Y})$ for $X,Y \in \dcal$. In fact, the natural inclusion $S^\dcal(X,Y)\hookrightarrow S\czero(\widetilde{X},\widetilde{Y})$ need not be a weak equivalence (see \cite[Examples 3.12 and 3.20]{CW} and Appendix A). Thus, to establish a smoothing theorem for continuous maps, we must give a sufficient condition under which
\[
\widetilde{\cdot{}} : S^{\dcal} \dcal (X, Y) \longhookrightarrow S\czero (\widetilde{X}, \widetilde{Y})
\]
is a weak equivalence in $\scal$. For this, we introduce and study two subclasses of $\dcal$.

One is the subclass $\wcal_{\dcal}$ defined by
\[
\wcal_{\dcal} = \{A \in \dcal \ | \ A\ \text{has the $\dcal$-homotopy type of a cofibrant diffeological space} \},
\]
where the notion of a $\dcal$-homotopy type (or smooth homotopy type) is defined using the unit interval $I\ (\cong \Delta^{1})$ in the obvious manner (Section 2.3 and \cite[Section 2.4]{origin}). The class $\wcal_{\dcal}$ is a diffeological version of Milnor's class $\wcal$ of topological spaces that have the homotopy type of a $CW$-complex \cite{Mi}. 
%where $\wcal$ is the Milnor's class of topological spaces having the homotopy type of a $CW$-complex \cite{Mi}; $\wcal_{\dcal}$ is of course a diffeological version of $\wcal$.
\par\indent
The other is the subclass $\vcal_{\dcal}$ defined by
\[
\vcal_\dcal = \bigl \{ A \in \dcal\ |\ id : A \longrightarrow R\widetilde{A}\ \text{ is a weak equivalence in }\ \dcal \bigr \};
\]
where the map $id: A \longrightarrow R\widetilde{A}$ is the unit of the adjoint pair $(\, \widetilde{\cdot}, R)$, which is set-theoretically the identity map (see Remark \ref{convenrem}(2)).
\par\indent
For the description of the following corollary, we also introduce the subclass $\wcal_{\czero}$ of $\czero$ defined by
$$
\wcal_{\ccal^0} = \{X \in \ccal^0 \ | \ X \ \text{has the homotopy type of a cofibrant arc-generated space} \}.
$$
It should be noted that $\wcal_{\czero}$ is just the intersection $\wcal\, \cap\, \czero$ (see Remark \ref{arc} and \cite[Section 2.2]{origin}). Also note that the singular complex $S^{\dcal}A$ is a subcomplex of $S\widetilde{A}$ (Lemma \ref{prism}(1)) and let $\pi^{\dcal}_{p}(A, a)$ denote the smooth homotopy group of a pointed diffeological space $(A, a)$ (\cite{CW}, \cite{IZ}).
\begin{cor}\label{W} \begin{itemize} \item[$(1)$] If $A$ is in $\wcal_{\dcal}$, then $\widetilde{A}$ is in $\wcal_{\czero}$.
		\item[$(2)$] If $A$ is in $\wcal_\dcal$, then $A$ is in $\vcal_\dcal$.
		\item[$(3)$] The following conditions are equivalent:
		\begin{itemize}
			\item[$\mathrm{(i)}$] $A$ is in $\vcal_{\dcal}$;
			\item[$\mathrm{(ii)}$] $\text{The inclusion } S^\dcal A \longhookrightarrow S\widetilde{A} \text{ is a weak equivalence in } \scal$;
			\item[$\mathrm{(iii)}$] The natural homomorphism
			$$
			\pi^{\dcal}_{p}(A, a) \longrightarrow \pi_{p}(\widetilde{A}, a)
			$$
			is an isomorphism for any $a \in A$ and any $p \geq 0$.
		\end{itemize}
	\end{itemize}
\end{cor}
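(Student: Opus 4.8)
The plan is to deduce all three parts from the Quillen-equivalence machinery of Theorem \ref{Quillenequiv} together with the elementary behavior of $\widetilde{\cdot}$ on smooth homotopies. Throughout I would use the following facts, all available from the earlier material: since $\widetilde{\cdot}$ preserves finite products and sends the unit interval $I$ to the topological interval, it carries smooth homotopies to continuous homotopies, and hence smooth homotopy equivalences to homotopy equivalences in $\czero$; smooth homotopy equivalences are weak equivalences in $\dcal$, and every object of $\czero$ is fibrant; weak equivalences in $\dcal$ are exactly the maps inverted by $S^{\dcal}$ (the model structure of \cite{origin}); and the two adjunctions compose to $(|\ |,S)$, so in particular $S^{\dcal}R=S$ (Section 2.3).

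For part $(3)$ I would first establish $\mathrm{(i)}\Leftrightarrow\mathrm{(ii)}$. Applying $S^{\dcal}$ to the unit $id:A\longrightarrow R\widetilde{A}$ and using $S^{\dcal}R=S$ identifies $S^{\dcal}(id)$ with the natural inclusion $S^{\dcal}A\longhookrightarrow S\widetilde{A}$ of Lemma \ref{prism}(1). Since weak equivalences in $\dcal$ are precisely the $S^{\dcal}$-weak equivalences, $A\in\vcal_{\dcal}$ holds iff this inclusion is a weak equivalence in $\scal$, which is $\mathrm{(ii)}$. For $\mathrm{(ii)}\Leftrightarrow\mathrm{(iii)}$ I would invoke that $S^{\dcal}A$ and $S\widetilde{A}$ are Kan complexes with $\pi_{p}(S^{\dcal}A,a)\cong\pi^{\dcal}_{p}(A,a)$ and $\pi_{p}(S\widetilde{A},a)\cong\pi_{p}(\widetilde{A},a)$ (from the properties of $S^{\dcal}$ in \cite{origin} and of smooth homotopy groups in \cite{CW}), under which the inclusion induces exactly the natural map in $\mathrm{(iii)}$; a map of Kan complexes is a weak equivalence precisely when it is a bijection on $\pi_{0}$ and an isomorphism on all higher homotopy groups at every basepoint, giving the equivalence.

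For part $(2)$, let $A\in\wcal_{\dcal}$ and choose a smooth homotopy equivalence $f:A\longrightarrow C$ with $C$ cofibrant. Because $(\widetilde{\cdot},R)$ is a Quillen equivalence (Theorem \ref{Quillenequiv}(2)) and $\widetilde{C}$ is fibrant, the weak equivalence $id:\widetilde{C}\longrightarrow\widetilde{C}$ is adjoint to the unit $C\longrightarrow R\widetilde{C}$, which is therefore a weak equivalence; thus $C\in\vcal_{\dcal}$. Naturality of the unit gives $\eta_{C}\circ f=R\widetilde{f}\circ\eta_{A}$. Here $f$ is a weak equivalence (being a smooth homotopy equivalence), $\widetilde{f}$ is a homotopy equivalence in $\czero$ so that $R\widetilde{f}$ is a weak equivalence ($R$ is right Quillen and every object of $\czero$ is fibrant), and $\eta_{C}$ is a weak equivalence; by the $2$-out-of-$3$ property $\eta_{A}$ is a weak equivalence, i.e. $A\in\vcal_{\dcal}$. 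For part $(1)$, writing again $A\simeq C$ (smooth homotopy equivalence) with $C$ cofibrant and applying $\widetilde{\cdot}$ yields a homotopy equivalence $\widetilde{A}\simeq\widetilde{C}$ in $\czero$; since $\widetilde{\cdot}$ is left Quillen (Theorem \ref{Quillenequiv}(2)) it preserves cofibrant objects, so $\widetilde{C}$ is cofibrant and $\widetilde{A}$ has the homotopy type of the cofibrant arc-generated space $\widetilde{C}$, that is $\widetilde{A}\in\wcal_{\czero}$.

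The steps needing the most care are the two inputs imported from prior work in part $(3)$: the identification $S^{\dcal}(id)$ with the inclusion of Lemma \ref{prism}(1), and the Kan property of $S^{\dcal}A$ together with the isomorphism $\pi_{p}(S^{\dcal}A,a)\cong\pi^{\dcal}_{p}(A,a)$ that makes $\mathrm{(ii)}\Leftrightarrow\mathrm{(iii)}$ purely a statement about weak equivalences of Kan complexes. I also expect the fibrancy of all objects of $\czero$ to be the load-bearing point in part $(2)$, since it is exactly what guarantees that the right Quillen functor $R$ preserves the weak equivalence $\widetilde{f}$; once these are in hand, the three assertions reduce to the formal arguments above.
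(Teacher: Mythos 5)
Your proof is correct and follows essentially the same route as the paper's: part (3) via the identity $S^{\dcal}R=S$ together with the Kan-complex and homotopy-group identifications of Theorem \ref{homotopygp}; part (2) via the cofibrant case (the Quillen-equivalence unit criterion, using that every object of $\czero$ is fibrant) followed by the unit naturality square and 2-out-of-3; and part (1) via the left Quillen functor $\widetilde{\cdot}$ together with its preservation of smooth homotopies. The only cosmetic difference is in part (2): where you conclude that $R\widetilde{f}$ is a weak equivalence by Ken Brown's lemma (a right Quillen functor preserves weak equivalences between fibrant objects, and all objects of $\czero$ are fibrant), the paper instead shows directly that $R\widetilde{f}$ is a $\dcal$-homotopy equivalence, using that $R$ preserves products and that $id:I\longrightarrow R\widetilde{I}$ is smooth — both justifications are valid.
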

To understand why the class $\wcal_{\dcal}$ is needed, see Remark \ref{WD}. See also Appendix A for diffeological spaces which are not in $\wcal_{\dcal}$ and $\vcal_{\dcal}$.

We can now state the smoothing theorem for continuous maps.

\begin{thm}\label{dmapsmoothing}
	Let $A$ and $X$ be diffeological spaces. If $A$ is in $\wcal_{\dcal}$ and $X$ is in $\vcal_{\dcal}$, then the natural inclusion
	$$
	\sdcal\dcal(A,X)\longhookrightarrow S\czero(\tilde{A},\tilde{X})
	$$
	is a weak equivalence in $\scal$.
	%In particular, the natural map
	%$$
	%[A,X]_{\dcal} \longrightarrow [\tilde{A},\tilde{X}]_{\czero}
	%$$
	%is a bijection.
\end{thm}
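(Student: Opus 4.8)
The plan is to combine a cofibrant replacement of $A$ with an adjunction computation that turns the topological function complex into a diffeological one, reducing the theorem to a single homotopy-invariance statement. First I would use $A\in\wcal_{\dcal}$ to reduce to the case that $A$ is cofibrant. By definition there are a cofibrant diffeological space $A'$ and a smooth homotopy equivalence $A\simeq A'$. Since $\dcal$ is cartesian closed, such an equivalence induces a simplicial homotopy equivalence $S^{\dcal}\dcal(A',X)\to S^{\dcal}\dcal(A,X)$; and since $\widetilde{\cdot}$ preserves finite products and sends the smooth interval $I\cong|\Delta^{1}|_{\dcal}$ to the topological interval $|\Delta^{1}|$, it carries smooth homotopy equivalences to continuous ones, hence induces a homotopy equivalence $S\czero(\widetilde{A'},\widetilde{X})\to S\czero(\widetilde{A},\widetilde{X})$. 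As these sit in a commutative square with the two comparison maps, the comparison map for $A$ is a weak equivalence if and only if that for $A'$ is, so I may assume $A$ cofibrant.

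Second I would identify the target. Using cartesian closedness of $\dcal$, the adjunction $(\widetilde{\cdot},R)$, product-preservation of $\widetilde{\cdot}$, and the identity $\widetilde{\cdot}\circ|\ |_{\dcal}=|\ |$ (whence $\widetilde{|\Delta^{n}|_{\dcal}}=|\Delta^{n}|$), one computes, naturally in $n$,
\[
S^{\dcal}\dcal(A,R\widetilde{X})_{n}=\dcal(|\Delta^{n}|_{\dcal}\times A,R\widetilde{X})\cong\czero(|\Delta^{n}|\times\widetilde{A},\widetilde{X})=S\czero(\widetilde{A},\widetilde{X})_{n}.
\]
These assemble into an isomorphism $S^{\dcal}\dcal(A,R\widetilde{X})\xrightarrow{\cong}S\czero(\widetilde{A},\widetilde{X})$ compatible with the comparison maps, the counit $\widetilde{R}\Rightarrow\mathrm{id}$ being an isomorphism so that $\widetilde{\eta}$ is invertible. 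Post-composing with the unit $\eta\colon X\to R\widetilde{X}$ yields a commutative square in which the lower horizontal and right vertical maps are isomorphisms; hence the comparison map $S^{\dcal}\dcal(A,X)\hookrightarrow S\czero(\widetilde{A},\widetilde{X})$ is a weak equivalence if and only if $\eta_{*}\colon S^{\dcal}\dcal(A,X)\to S^{\dcal}\dcal(A,R\widetilde{X})$ is.

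Finally I would show $\eta_{*}$ is a weak equivalence. By $X\in\vcal_{\dcal}$ the unit $\eta\colon X\to R\widetilde{X}$ is a weak equivalence in $\dcal$, so it is enough that $S^{\dcal}\dcal(A,-)$ preserve it. For cofibrant $A$ the functor $K\mapsto|K|_{\dcal}\times A$ is left Quillen (pushout-product with $\emptyset\to A$), so its right adjoint $S^{\dcal}\dcal(A,-)\colon\dcal\to\scal$ is right Quillen and, by Ken Brown's lemma, sends weak equivalences between fibrant objects to weak equivalences. The one nontrivial input, which I expect to be the main obstacle, is that every diffeological space is fibrant in $\dcal$, equivalently that $S^{\dcal}Y$ is a Kan complex for all $Y$; this holds in the present framework by the construction of the standard simplices in \cite{origin}. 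Granting it, $X$ and $R\widetilde{X}$ are fibrant, $\eta_{*}$ is a weak equivalence, and the theorem follows. Equivalently, this last step can be phrased as the invariance of homotopy function complexes under the Quillen equivalence $(\widetilde{\cdot},R)$, using that $A$ is cofibrant and $R\widetilde{X}$ is fibrant.
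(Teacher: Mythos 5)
Your first two steps are sound and in fact parallel the paper: the reduction to cofibrant $A$ is the paper's opening move (via Lemma \ref{homotopyrel} and Corollary \ref{4equiv}), and your levelwise computation $S^{\dcal}\dcal(A,R\widetilde{X})_{n}=\dcal(\Delta^{n}\times A,R\widetilde{X})\cong\czero(\Delta^{n}_{\rm top}\times\widetilde{A},\widetilde{X})$ is a correct simplicial-level form of the paper's isomorphism ${\rm map}_{\dcal}(A,R\widetilde{X})\cong{\rm map}_{\czero}(\widetilde{A},\widetilde{X})$. The genuine gap is in your final step, and you have also mislocated the obstacle: fibrancy of every object of $\dcal$ is the easy part (Theorem \ref{originmain}), whereas your claim that $K\mapsto|K|_{\dcal}\times A$ is left Quillen ``by pushout-product with $\emptyset\to A$'' is unsupported. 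That argument presupposes an SM7-type pushout-product axiom for the cartesian structure on $\dcal$, which is precisely what is unavailable: $\dcal$ is not a simplicial model category (Corollary \ref{notenrich}(2)), $|\ |_{\dcal}$ does not preserve finite products (Remark \ref{proof}), and nothing establishes that $-\times A$ preserves (trivial) cofibrations for cofibrant $A$ --- the paper's deliberately roundabout proof that $\wcal_{\dcal}$ is closed under products (Corollary \ref{product}) is a symptom of exactly this failure. Nor can the step be rescued by upgrading $\eta\colon X\to R\widetilde{X}$ to a $\dcal$-homotopy equivalence and invoking Lemma \ref{homotopyrel}: for $X=S^{n}$, every smooth map $RS^{n}_{\rm top}\to S^{n}$ is locally constant (Lemma \ref{R}, Remark \ref{mfdtarget}), so $\eta$ is a weak equivalence between fibrant objects with no smooth homotopy inverse, and $\eta_{*}$ on function complexes cannot be handled by elementary homotopy-invariance.

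What the paper supplies at this point, and what your proposal is missing, is Theorem \ref{dfctcpx}: for cofibrant $A$ there is a homotopy equivalence ${\rm map}_{\dcal}(A,X)\simeq S^{\dcal}\dcal(A,X)$, natural up to homotopy. Its proof is the technical core of Section 4 --- a bisimplicial diagonal argument on $\dcal(\Delta^{\ast}\times|K^{\ast}|_{\dcal},X)$, resting on the Quillen equivalence $(|\ |_{\dcal},S^{\dcal})$ (Theorem \ref{Quillenequiv}(1)) and on Proposition \ref{trivialcofibr} (trivial cofibrations in $\dcal$ are $\dcal$-deformation retract inclusions). Once function complexes are identified with homotopy function complexes, the weak equivalence $\eta$ between fibrant objects induces a weak equivalence ${\rm map}_{\dcal}(A,X)\to{\rm map}_{\dcal}(A,R\widetilde{X})$ by \cite[Theorem 17.6.3(1)]{Hi}, and the theorem follows. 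Your closing sentence, that the last step ``can equivalently be phrased as the invariance of homotopy function complexes under the Quillen equivalence,'' is therefore not an equivalence: your reduction concerns the genuine simplicial function complexes $S^{\dcal}\dcal(A,-)$, and the passage between these and ${\rm map}_{\dcal}(A,-)$ is itself the main missing content, not a reformulation.
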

Theorem \ref{dmapsmoothing} generalizes to the following smoothing theorem for continuous sections. See Section 5 for the notion of a $\mathcal{D}$-numerable $F$-bundle in $\dcal$.
\begin{thm}\label{dsectionsmoothing}
	Let $p:E\longrightarrow X$ be a $\mathcal{D}$-numerable $F$-bundle in $\mathcal{D}$. If $X$ is in $\mathcal{W}_\mathcal{D}$ and $F$ is in $\mathcal{V}_\mathcal{D}$, then the natural inclusion
	\[
	S^\mathcal{D}\Gamma(X,E)\longhookrightarrow S\Gamma(\widetilde{X},\widetilde{E})
	\]
	is a weak equivalence in $\scal$.
\end{thm}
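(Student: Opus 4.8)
The plan is to reduce the statement to the case of a trivial bundle over a single cell, where it becomes an instance of the map-smoothing theorem (Theorem \ref{dmapsmoothing}), and then to propagate this case along a cell decomposition of the base by a gluing argument. The key structural fact I would exploit throughout is that the comparison map
\[
\widetilde{\cdot{}}: S^\dcal\Gamma(X,E)\longhookrightarrow S\Gamma(\widetilde{X},\widetilde{E})
\]
is natural in the pair $(X,E)$, so that every construction performed on bases and bundles is mirrored on both the smooth and the topological section complexes. The base case is exactly Theorem \ref{dmapsmoothing}: over a contractible cell the bundle is trivial, sections are maps, and $F\in\vcal_\dcal$ supplies the fiber hypothesis.

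\emph{First reduction, to a cofibrant base.} Since $X\in\wcal_\dcal$, there are a cofibrant diffeological space $Q$ and a smooth homotopy equivalence $h:Q\longrightarrow X$. I would form the pullback bundle $h^{*}E\longrightarrow Q$, which is again $\dcal$-numerable with the same fiber $F\in\vcal_\dcal$, since both the trivializing cover and a subordinate partition of unity pull back along $h$. Using the smooth homotopy invariance of $\dcal$-numerable bundles from Section 5 — that a smooth homotopy equivalence of bases induces a weak equivalence of section complexes, compatibly with $\widetilde{\cdot{}}$ on both sides — the comparison map for $(X,E)$ is a weak equivalence if and only if the one for $(Q,h^{*}E)$ is. Hence I may assume from the outset that the base is a cofibrant diffeological space, i.e. a cell complex built by attaching standard simplices $\Delta^{p}=|\Delta^{p}|_\dcal$.

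\emph{Cell induction.} Write the cofibrant base as $\colimunder{n}Q_{n}$, where $Q_{n}$ is obtained from $Q_{n-1}$ by a pushout $Q_{n}=Q_{n-1}\cup_{\coprod_\alpha\partial\Delta^{p_\alpha}}\bigl(\coprod_\alpha\Delta^{p_\alpha}\bigr)$, and prove by induction on $n$ that the comparison map is a weak equivalence for every $\dcal$-numerable bundle with fiber in $\vcal_\dcal$ over a cofibrant complex of dimension $\le n$. For the inductive step, applying the section functor to this pushout yields the pullback
\[
\Gamma(Q_{n},E)=\Gamma(Q_{n-1},E)\times_{\Gamma(\coprod_\alpha\partial\Delta^{p_\alpha},E)}\Gamma\Bigl(\coprod_\alpha\Delta^{p_\alpha},E\Bigr),
\]
which is a homotopy pullback because restriction along the cofibration $\coprod_\alpha\partial\Delta^{p_\alpha}\longhookrightarrow\coprod_\alpha\Delta^{p_\alpha}$ is a fibration of section complexes (the covering homotopy property of $\dcal$-numerable bundles). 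The same holds on the topological side, and since $S^\dcal$ and $S$ preserve these fibrations and pullbacks, I obtain homotopy pullback squares of Kan complexes. On the three corners the comparison map is a weak equivalence: over $Q_{n-1}$ and over $\coprod_\alpha\partial\Delta^{p_\alpha}$ by the inductive hypothesis, and over each contractible cell $\Delta^{p_\alpha}$ because there the bundle is trivial, so $\Gamma(\Delta^{p_\alpha},E)\cong\dcal(\Delta^{p_\alpha},F)$ and Theorem \ref{dmapsmoothing} applies with $\Delta^{p_\alpha}\in\wcal_\dcal$ and $F\in\vcal_\dcal$. The gluing lemma for homotopy pullbacks then gives a weak equivalence on the remaining corner $\Gamma(Q_{n},E)$. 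Passing to the colimit, the section complexes form towers of fibrations whose inverse limits are $S^\dcal\Gamma(Q,E)$ and $S\Gamma(\widetilde{Q},\widetilde{E})$, both functors preserving limits; since the comparison is a levelwise weak equivalence of towers of fibrations, it is a weak equivalence on the limits.

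The main obstacle I expect lies in the smooth input from the theory of $\dcal$-numerable bundles, rather than in the simplicial bookkeeping: establishing (i) the homotopy invariance used in the first reduction, (ii) triviality over a smoothly contractible base, so that the cell base case reduces to maps, and (iii) the covering homotopy property making restriction along a cofibration a fibration of section complexes. Each is a diffeological analogue of a classical fact from Dold's theory of numerable bundles, but it must be proved inside $\dcal$ and, crucially, shown to be compatible with the underlying-topological-space functor so that the comparison map is respected at every stage. Verifying this naturality — together with the fibrancy of the section complexes needed to read the relevant pullbacks as homotopy pullbacks — is the technical heart of the argument.
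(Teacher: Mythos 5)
Your reductions (i) and (ii) are sound and in fact mirror the paper: homotopy invariance of section spaces along a smooth homotopy equivalence of bases is exactly Lemma \ref{dgammahomotopy} (proved via the bundle-homotopy theorem of Christensen--Wu, and used in the paper, together with its arc-generated analogue in Remark \ref{cgammahomotopy}, to reduce to a base of the form $|K|_{\dcal}$), and triviality over $\Delta^{p}$ follows from smooth paracompactness of $\Delta^{p}$ plus the classification theory, as in the proof of Corollary \ref{bdlecpx}(1); your cell base case via Theorem \ref{dmapsmoothing} is then correct. The genuine gap is your step (iii): the claim that restriction along $\coprod_\alpha \dot{\Delta}^{p_\alpha}\longhookrightarrow \coprod_\alpha \Delta^{p_\alpha}$ induces a fibration of section complexes, which is the load-bearing wall of your entire skeletal induction (it is what makes your squares homotopy pullbacks and your towers towers of fibrations). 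In $\dcal$ this is not a routine verification of a Dold-type fact; it runs against structural failures documented in the paper itself. After trivializing, the needed statement is that $S^{\dcal}\dcal(\Delta^{p},F)\longrightarrow S^{\dcal}\dcal(\dot{\Delta}^{p},F)$ is a Kan fibration, which by adjunction amounts to the pushout-product inclusion
\[
\Delta^{q}\times\dot{\Delta}^{p}\ \cup\ \Lambda^{q}_{k}\times\Delta^{p}\ \longhookrightarrow\ \Delta^{q}\times\Delta^{p}
\]
being a trivial cofibration (or at least anodyne-like) in $\dcal$. But $\dcal$ is not a simplicial model category and neither is $\dcal/X$ (Corollaries \ref{notenrich}(2) and \ref{notenrich/X}(2)); the realization $|\ |_{\dcal}$ does not preserve finite products, so $|\Delta[1]\times\Delta[1]|_{\dcal}\neq\Delta^{1}\times\Delta^{1}$ (Remark \ref{proof}); and even $\dot{\Delta}^{1}\longhookrightarrow\Delta^{1}$ fails the homotopy extension property, which is why $h$-cofibrations are defined by the \emph{weak} HEP (Remark \ref{difference}(2)). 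Every standard route to your covering-homotopy property (retraction of $X\times(0)\cup A\times I$, SM7, pushout-product) is therefore blocked, and no substitute is supplied.

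This is precisely the obstruction the paper's proof is engineered to avoid (see Remark \ref{svsm}): after the reduction to $X=|K|_{\dcal}$, instead of inducting over skeleta it works with the cosimplicial resolution $|K^{\ast}|_{\dcal}\longrightarrow c|K|_{\dcal}$ of the terminal object of $\dcal/|K|_{\dcal}$ and the bisimplicial diagonal argument of Theorem \ref{dfctcpx/X}, which identifies $S^{\dcal}\Gamma(|K|_{\dcal},E)$ with the homotopy function complex ${\rm map}_{\dcal/|K|_{\dcal}}(|K|_{\dcal},E)$ without ever needing restriction maps of section complexes to be fibrations; the only fibrancy used is that fiber bundles themselves are fibrations (Corollary \ref{bdlecpx}(1)). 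The comparison with the topological side is then done at the level of homotopy function complexes via the Quillen pair $\widetilde{\cdot}:\dcal/|K|_{\dcal}\rightleftarrows\czero/|K|:R'$, the hypothesis $F\in\vcal_{\dcal}$ entering through the weak equivalence $id:E\longrightarrow R'\widetilde{E}$ between fibrant objects over $|K|_{\dcal}$, with Proposition \ref{cfctcpx/X} handling the $\czero$ side. To salvage your proposal you would have to prove the pushout-product property for the diffeological standard simplices from scratch, which is a substantial open-ended task, not the ``technical verification'' your last paragraph anticipates.
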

Next, we state a smoothing theorem for principal bundles and gauge transformations. Let $\mathcal{M}$ denote one of the categories $\mathcal{D}$ and $\czero$. $(\mathsf{P}\mathcal{M} G/X)_{\rm num}$ denotes the full subcategory of $\mathsf{P}\mathcal{M} G/X$ consisting of $\mathcal{M}$-numerable principal $G$-bundles over $X$ in $\mathcal{M}$ (Definition \ref{bdle}), which has the canonical $\mathcal{M}$-full subcategory and hence $\mathcal{S}$-full subcategory structure of $\mathsf{P}\mathcal{M} G/X$. For a diffeological space $X$ and a diffeological group $G$, the $\mathcal{S}$-functor $\widetilde{\cdot}:\mathsf{P}\mathcal{D} G/X \longrightarrow \mathsf{P}\czero \widetilde{G}/\widetilde{X}$ (see Section 1.1) obviously restricts to the $\mathcal{S}$-functor
\[
\widetilde{\cdot}:(\mathsf{P}\mathcal{D} G/X)_{\rm num} \longrightarrow (\mathsf{P}\czero \widetilde{G} / \widetilde{X})_{\rm num}.
\]
\begin{thm}\label{dDK}
	Let $X$ be a diffeological space and $G$ a diffeological group. If $X$ is in $\wcal_{\dcal}$ and $G$ is in $\vcal_{\dcal}$, then the functor
	\[
	\widetilde{\cdot} : (\mathsf{P} \dcal G / X)_{\rm num} \longrightarrow (\mathsf{P} \czero \widetilde{G} / \widetilde{X})_{\rm num}
	\]
	is a Dwyer-Kan equivalence of simplicial groupoids.
\end{thm}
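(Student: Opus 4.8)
The plan is to prove the two conditions into which a Dwyer--Kan equivalence of simplicial groupoids decomposes, exactly as sketched for the $C^\infty$ case in Section 1.1, and to feed each of them into one of the two smoothing theorems already established. Recall that a simplicial functor between simplicial groupoids is a Dwyer--Kan equivalence precisely when it is homotopically fully faithful (a weak equivalence on every hom-complex) and essentially surjective on homotopy categories. First I would reduce the full-faithfulness to the automorphism complexes: since $\mathsf{P}\dcal G/X$ and $\mathsf{P}\czero\widetilde{G}/\widetilde{X}$ are groupoids, a choice of isomorphism $P\cong Q$ identifies by post-composition the hom-complex $S^{\dcal}\mathsf{P}\dcal G/X(P,Q)$ with $S^{\dcal}{\rm Gau}_{\dcal}(P)$ (and likewise downstairs via the image isomorphism); moreover a nonempty hom-complex forces $P\cong Q$, so on the remaining components both hom-complexes are empty provided $\widetilde{\cdot}$ is injective on isomorphism classes. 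Thus the theorem reduces to (I) $\widetilde{\cdot}$ induces a \emph{bijection} on isomorphism classes of numerable bundles, and (II) for every $\dcal$-numerable $P$ the inclusion $S^{\dcal}{\rm Gau}_{\dcal}(P)\longhookrightarrow S\,{\rm Gau}_{\czero}(\widetilde{P})$ is a weak equivalence in $\scal$.

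For (II) I would pass from gauge transformations to sections. The gauge group ${\rm Gau}_{\mcal}(P)={\rm Aut}_{\mathsf{P}\mcal G/X}(P)$ is naturally isomorphic, as a group object in $\mcal$, to the group $\Gamma(X,\mathrm{Ad}\,P)$ of sections of the associated conjugation bundle $\mathrm{Ad}\,P=P\times_G G$ (with $G$ acting on itself by conjugation), and this identification is compatible with $\widetilde{\cdot}$, so that the inclusion in (II) is exactly $S^{\dcal}\Gamma(X,\mathrm{Ad}\,P)\longhookrightarrow S\Gamma(\widetilde{X},\widetilde{\mathrm{Ad}\,P})$. Since $P$ is $\dcal$-numerable, $\mathrm{Ad}\,P\to X$ is a $\dcal$-numerable $F$-bundle with fiber $F=G$; as $G\in\vcal_{\dcal}$ by hypothesis and $X\in\wcal_{\dcal}$, Theorem \ref{dsectionsmoothing} applies verbatim and yields (II).

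For (I) I would argue through a classifying object. The aim is a natural bijection, for $\mcal\in\{\dcal,\czero\}$, between isomorphism classes of $\mcal$-numerable principal $G$-bundles over $X$ and $\pi_0 S^{\mcal}\mcal(X,B_{\mcal}G)$, with the two classifications compatible along $\widetilde{\cdot}$ and with $\widetilde{B_{\dcal}G}$ weakly equivalent to $B_{\czero}\widetilde{G}$. Granting this, the map of (I) is identified with $\pi_0$ of the inclusion $S^{\dcal}\dcal(X,B_{\dcal}G)\longhookrightarrow S\czero(\widetilde{X},\widetilde{B_{\dcal}G})$, which is a weak equivalence by Theorem \ref{dmapsmoothing} as soon as $X\in\wcal_{\dcal}$ and $B_{\dcal}G\in\vcal_{\dcal}$; taking $\pi_0$ then delivers both surjectivity (existence of a smooth model, hence essential surjectivity) and injectivity (uniqueness up to isomorphism, which closed the empty-hom case above).

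The main obstacle is precisely the construction and analysis of the diffeological classifying object underlying (I). I would build $B_{\dcal}G$ as the diffeological Milnor join $E_{\dcal}G=G^{*\infty}$ modulo the $G$-action (or the bar construction $\overline{W}G$), prove the smooth numerable classification theorem in $\dcal$ paralleling the topological one in $\czero$, and---the genuinely delicate point---show $B_{\dcal}G\in\vcal_{\dcal}$. The latter should follow from $G\in\vcal_{\dcal}$ by checking compatibility of the construction with the underlying-space functor and the realization machinery: since $\widetilde{\cdot}$ and $|\ |_{\dcal}$ are left adjoints and the relevant realizations are homotopy colimits of copies of $G$, one expects $S^{\dcal}B_{\dcal}G\simeq B(S^{\dcal}G)\simeq B(S\widetilde{G})\simeq S\,B_{\czero}\widetilde{G}\simeq S\,\widetilde{B_{\dcal}G}$, the middle weak equivalence being exactly the hypothesis $G\in\vcal_{\dcal}$. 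Verifying numerability and local triviality of the universal bundle in $\dcal$, and that these match the continuous picture under $\widetilde{\cdot}$, is where the structure of $\dcal$-numerable bundles from Section 5 must be used with care.
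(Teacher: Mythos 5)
Your overall architecture is exactly the paper's: it too reduces a Dwyer--Kan equivalence of simplicial groupoids to ${\rm (i)}'$ a bijection on isomorphism classes and ${\rm (ii)}'$ a weak equivalence on automorphism complexes (Section 7.4), proves ${\rm (ii)}'$ by identifying ${\rm Gau}_{\mcal}(P)$ with the section space $\Gamma(X,P[G,{\rm conj}])$ as group objects in $\mcal$ compatibly with $\widetilde{\cdot}$ (Lemmas \ref{equivar} and \ref{conjbdle}, Proposition \ref{gaugesmoothing}) and then applies Theorem \ref{dsectionsmoothing}, and proves ${\rm (i)}'$ by the classifying-space argument you describe (Theorem \ref{dbdlesmoothing}), combining Theorem \ref{dmapsmoothing} with $BG\in\vcal_{\dcal}$. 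The one substantive divergence is how the classifying-space input is secured: the paper does not rebuild this theory but cites Christensen--Wu's universal bundle $\pi:EG\longrightarrow BG$, which classifies $\dcal$-numerable principal $G$-bundles over arbitrary bases, and deduces that $\widetilde{\pi}:\widetilde{EG}\longrightarrow\widetilde{BG}$ is universal for $\widetilde{G}$ from the contractibility of $\widetilde{EG}$ (Proposition \ref{prop6.6}), so that the two classifications match under $\widetilde{\cdot}$ without any comparison map $\widetilde{B_{\dcal}G}\to B_{\czero}\widetilde{G}$.

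The fragile point in your plan is precisely the step you flag: the chain $S^{\dcal}B_{\dcal}G\simeq B(S^{\dcal}G)$ ``by realization machinery.'' In $\dcal$ the realization functor $|\ |_{\dcal}$ does \emph{not} preserve finite products (Remark \ref{proof}; this is exactly why $\dcal$ fails to be a simplicial model category, Corollary \ref{notenrich}(2)), so the standard simplicial-space comparison of the bar construction with $B(S^{\dcal}G)$ breaks down, and the middle equivalences in your chain do not come for free. The robust argument --- available inside your own setup, since the universal bundle is a numerable fiber bundle --- is the paper's: $S^{\dcal}$ carries fiber bundles in $\dcal$ to Kan fibrations, because $\Delta^{p}$ is smoothly paracompact, so every bundle over it is $\dcal$-numerable and hence trivial (Corollary \ref{bdlecpx}(1)); comparing the homotopy exact sequences of $S^{\dcal}(G\to EG\to BG)$ and $S(\widetilde{G}\to\widetilde{EG}\to\widetilde{BG})$, with both total complexes contractible, then yields $BG\in\vcal_{\dcal}$ if and only if $G\in\vcal_{\dcal}$ (Corollaries \ref{bdlecpx}(2) and \ref{BV}). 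With that substitution your proof closes and coincides with the paper's.
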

To deduce Theorems \ref{mapsmoothing}-\ref{DK} from Theorems \ref{dmapsmoothing}-\ref{dDK}, we must establish the following two theorems. See Definition \ref{partition} for a $\dcal$-numerable covering and Definition \ref{loccontr} for a locally contractible diffeological space.
\begin{thm}\label{hcofibrancy}
	Let $X$ be a diffeological space and $U = \{U_{\alpha} \}_{\alpha \in A}$ a $\mathcal{D}$-numerable covering of $X$ by subsets. If $U_{\sigma}\ (:= \underset{\alpha \in \sigma}{\cap} U_{\alpha}) $ is in $\wcal_{\dcal}$ for any nonempty finite $\sigma \subset A$, then $X$ is in $\wcal_{\dcal}$.
\end{thm}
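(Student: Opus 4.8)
The plan is to realize $X$, up to smooth homotopy, as a homotopy colimit of the finite intersections $U_\sigma$, and then to deduce $X\in\wcal_\dcal$ from a closure property of $\wcal_\dcal$ under that construction. First I would attach to the $\dcal$-numerable covering $U=\{U_\alpha\}_{\alpha\in A}$ its \v{C}ech nerve $E_\bullet$, the simplicial diffeological space whose space of $n$-simplices is the coproduct $\coprod_{(\alpha_0,\dots,\alpha_n)}U_{\alpha_0\cdots\alpha_n}$ over all tuples with $U_{\alpha_0}\cap\cdots\cap U_{\alpha_n}\neq\emptyset$, the faces and degeneracies being induced by the inclusions among the $U_\sigma$. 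Forming its geometric realization (bar construction) $|E_\bullet|$ by means of the standard simplices $\Delta^p$ of \cite{origin}, one obtains a diffeological space equipped with a canonical augmentation $\pi\colon|E_\bullet|\longrightarrow X$ induced by the inclusions $U_\sigma\longhookrightarrow X$; up to smooth homotopy, $|E_\bullet|$ is the homotopy colimit of the diagram $\sigma\longmapsto U_\sigma$ indexed by the nonempty finite subsets $\sigma\subset A$.

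The crux of the argument is to prove that $\pi$ is a $\dcal$-homotopy equivalence, i.e.\ a diffeological analogue of tom~Dieck's nerve theorem for numerable covers. This is exactly where $\dcal$-numerability enters: the subordinate smooth partition of unity $\{\phi_\alpha\}_{\alpha\in A}$ furnished by Definition~\ref{partition} defines a smooth map $s\colon X\longrightarrow|E_\bullet|$ sending $x$ to the point of $|E_\bullet|$ carried by $x\in U_{\sigma(x)}$ together with the barycentric coordinates $(\phi_\alpha(x))_\alpha$, where $\sigma(x)=\{\alpha\mid\phi_\alpha(x)>0\}$. One checks that $\pi\circ s=\mathrm{id}_X$, so that $s$ is a smooth section of $\pi$, and then constructs a smooth homotopy $s\circ\pi\simeq\mathrm{id}_{|E_\bullet|}$ by linearly interpolating the partition-of-unity coordinates along the simplices of the realization, thereby exhibiting $\pi$ as a $\dcal$-homotopy equivalence with homotopy inverse $s$. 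Verifying that these interpolations are smooth in the diffeological sense and assemble coherently over $|E_\bullet|$ is the main obstacle: it is the smooth counterpart of the classical partition-of-unity homotopy, and the smoothness of the $\phi_\alpha$ (hence the availability of local convexity) is what makes the interpolation legitimate.

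It then remains to show that $|E_\bullet|\in\wcal_\dcal$, which follows from the closure of $\wcal_\dcal$ under this realization. Each space of $n$-simplices of $E_\bullet$ is a coproduct of spaces $U_\sigma$, hence lies in $\wcal_\dcal$, since every $U_\sigma\in\wcal_\dcal$ by hypothesis and $\wcal_\dcal$ is closed under coproducts. Replacing $E_\bullet$ by a Reedy-cofibrant simplicial diffeological space with the same levelwise $\dcal$-homotopy type exhibits $|E_\bullet|$ as a genuine homotopy colimit of cofibrant objects, which is again cofibrant; thus $|E_\bullet|$ has the $\dcal$-homotopy type of a cofibrant diffeological space. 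Combining this with the nerve theorem of the previous paragraph gives $X\simeq|E_\bullet|\in\wcal_\dcal$, and hence $X\in\wcal_\dcal$. As a warm-up that guides the homotopies of the crucial step, I would first treat the two-set case $A=\{1,2\}$, where $|E_\bullet|$ reduces to the double mapping cylinder of $U_1\hookleftarrow U_{12}\hookrightarrow U_2$ and the Mayer--Vietoris pushout argument is transparent, before handling the general (possibly infinite, locally finite) numerable cover by the bar construction above.
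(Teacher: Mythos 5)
Your overall strategy coincides with the paper's: Section 9 realizes $X$ up to $\dcal$-homotopy as the realization $BX_U$ of the nerve of the cover (Proposition \ref{dSegal}, a diffeological version of Segal's nerve theorem, with the partition of unity furnishing the section), and then runs a skeletal induction to put $BX_U$ in $\wcal_\dcal$. However, the step you flag as ``the main obstacle'' genuinely fails in the form you propose, and your stated reason for why it works is wrong. With the realization formed from the standard simplices $\Delta^p$ of Definition \ref{simplices}, the barycentric map $x\mapsto(\phi_\alpha(x))_\alpha$ is in general \emph{not} smooth: Remark \ref{nonsmooth}(1) constructs an explicit smooth partition of unity on $\Delta^2$ whose barycentric map into $|N\ucal|_\dcal$ is not smooth, and Remark \ref{nonsmooth}(2) shows that linear interpolation between smooth maps into $|K|_\dcal$ need not be smooth either. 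Smoothness of the $\phi_\alpha$ is not the issue; the problem is that the diffeology of $\Delta^p$ near its boundary is strictly finer than the sub-diffeology of the affine simplex, so curves crossing between faces break smoothness. The paper's repair occupies all of Section 8: introduce the second polyhedron $|K|'_\dcal$ built from $\Delta^p_{\rm sub}$, for which barycentric maps are smooth (Lemma \ref{barycentricmap}) and linear homotopies are legitimate (Lemma \ref{linhomotopy}), and then prove the nontrivial Theorem \ref{homotopyequiv} that $id:|K|_\dcal\longrightarrow|K|'_\dcal$ is a $\dcal$-homotopy equivalence, together with Lemma \ref{homotopyequiv2} for barycentric subdivision and Corollary \ref{linhomotopy2}. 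A further trap your sketch skips: the natural injection $BX_U\longhookrightarrow X\times BR_U$ along which one builds the section and the homotopy is \emph{not} a $\dcal$-embedding (Remark \ref{prBpi}), so factoring them through $BX_U$ requires the plot-by-plot verifications carried out in the proof of Proposition \ref{dSegal}, not continuity-style reasoning.

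Your final step also has a gap: the Reedy argument you invoke is not available in $\dcal$. Since $\dcal$ is not a simplicial model category (Corollary \ref{notenrich}(2)) and $|\ |_\dcal$ does not preserve finite products (Remark \ref{proof}), one cannot quote the standard facts that realization of a Reedy cofibrant simplicial object is cofibrant and takes levelwise equivalences to equivalences; nor is it automatic that your \v{C}ech nerve admits a levelwise cofibrant replacement compatible with all faces and degeneracies. The paper substitutes an elementary but careful skeletal induction: it works with the nerve of the category $X_U$, whose nondegenerate simplices are strict chains $\sigma_n\supsetneq\cdots\supsetneq\sigma_0$ (avoiding the messier degeneracy structure of the unordered \v{C}ech nerve), shows the skeletal inclusions are $h$-cofibrations in the sense of the weak homotopy extension property (Definition \ref{hcofibr}, Lemma \ref{basicshcofibr}, Proposition \ref{gimpliesh}), proves $\wcal_\dcal$ is closed under pushouts along $h$-cofibrations and sequential colimits of $h$-cofibrations (Lemmas \ref{homotopypushout}, \ref{homotopyseqcolim}, \ref{posc}), builds the compatible cofibrant approximations inductively via Lemma \ref{cofibrfactorization}, and uses Corollary \ref{product} (itself nontrivial, precisely because $|\ |_\dcal$ fails to preserve products) to put $U_{\sigma_n}\times\Delta^n$ in $\wcal_\dcal$. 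So while your outline traces the paper's route faithfully, the two places you treat as routine verifications are exactly where the diffeological setting diverges from topology, and each requires the machinery of Sections 8 and 9.2 to close.
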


\begin{thm}\label{V}
	Every locally contractible diffeological space is in $\vcal_{\dcal}$.
\end{thm}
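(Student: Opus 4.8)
The plan is to reduce the statement, via the characterization in Corollary \ref{W}(3), to showing that for a locally contractible diffeological space $A$ the inclusion $S^\dcal A \hookrightarrow S\widetilde{A}$ of Lemma \ref{prism}(1) induces isomorphisms $\pi_p^\dcal(A,a) \xrightarrow{\ \cong\ } \pi_p(\widetilde{A},a)$ for every basepoint $a$ and every $p \ge 0$. Since both $S^\dcal A$ and $S\widetilde{A}$ are Kan complexes (by the properties of the standard simplices of \cite{origin}), this is exactly what it means for the inclusion to be a weak equivalence. Everything then rests on a single relative smoothing statement, which I would isolate first.

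\emph{Relative smoothing.} Let $f\colon |K| \to \widetilde{A}$ be continuous, where $K$ is a finite simplicial complex, and suppose the restriction of $f$ to a subcomplex $L$ underlies a smooth map $|L|_\dcal \to A$. Then $f$ is homotopic rel $|L|$ to a map $g$ underlying a smooth map $|K|_\dcal \to A$. Granting this, both the surjectivity and the injectivity of $\pi_p^\dcal \to \pi_p$ follow in the usual way: one applies the statement with $(K,L)=(\Delta^p,\partial\Delta^p)$ to smooth a continuous representative rel its (already smooth, constant) boundary, and with $(K,L)=(\Delta^p\times I,\ \Delta^p\times\partial I\cup\partial\Delta^p\times I)$ to smooth a continuous homotopy between two smooth simplices rel its ends.

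The proof of the smoothing statement is an induction over a sufficiently fine triangulation. Using local contractibility (Definition \ref{loccontr}) I first fix a basis of the $D$-topology of $A$ consisting of smoothly contractible $D$-open subsets; note that a smoothly contractible $U$ satisfies $\pi_*^\dcal(U)=0$ and has contractible underlying space $\widetilde{U}$. Pulling this basis back along $f$ yields an open cover of the compact space $|K|$, so by the Lebesgue number I may pass to an iterated barycentric subdivision $K'$ of $K$, with $L$ refined to a subcomplex $L'$, each of whose closed simplices $f$ carries into some chosen smoothly contractible open $U_\sigma$. I then modify $f$ skeleton by skeleton over the simplices of $K'$ off $|L|$, keeping $f$ fixed on $|L|$: at a simplex $\sigma$ whose boundary already underlies a smooth map into $U_\sigma$, the vanishing of $\pi_{\dim\sigma-1}^\dcal(U_\sigma)$ lets me extend it to a smooth map $\sigma\to U_\sigma$, and the topological contractibility of $\widetilde{U_\sigma}$ supplies a homotopy rel $\partial\sigma$ back to $f|_\sigma$ inside $U_\sigma$; assembling these gives the homotopy $f\simeq g$ rel $|L|$.

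Two points require the special geometry of the standard simplices $\Delta^p$ of \cite{origin}, and this is where I expect the real work. The conclusion ``underlies a smooth map $|K'|_\dcal\to A$'' is not pointwise: a map smooth on each closed simplex of $K'$ need not be smooth on the realization unless the pieces are compatible along shared faces. I would invoke the gluing and subdivision properties of the standard simplices from \cite{origin} (the realization is the colimit of its simplices, a map out is smooth iff its restriction to each simplex is, and subdivision is compatible with the diffeology), together with $\widetilde{|K'|_\dcal}=|K'|$, to pass from the piecewise-smooth $g$ to a genuine smooth map and to read it as a point of $S^\dcal A\subseteq S\widetilde{A}$. The main obstacle is thus the inductive extension carried out \emph{smoothly and compatibly along faces}: the naive cone-on-$\partial\sigma$ extension furnished by a contraction of $U_\sigma$ is in general not smooth where faces meet, so the extensions must be produced within the tangency/collar structure of $\Delta^p$, interpolating via smooth cut-off functions between the contraction and the already fixed boundary values, so that the assembled global map is smooth. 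Once this compatibility is arranged, the topological half (the homotopy $f\simeq g$) is routine, since each $U_\sigma$ is contractible.
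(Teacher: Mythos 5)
Your global reduction (Corollary \ref{W}(3), Kan complexes, smoothing rel a subcomplex after passing to a fine subdivision) is the same skeleton as the paper's proof, but the engine you propose for the inductive step does not work, and the paper's proof is structured differently precisely to avoid this. You extend over a simplex $\sigma$ of the subdivision using the vanishing of $\pi^{\dcal}_{\ast}(U_\sigma)$ for a single contractible basis element $U_\sigma \supset f(\sigma)$. However, when you modify $f$ on a face $\tau$ shared by several simplices, the new values must remain inside $U_\sigma$ for \emph{every} $\sigma \supset \tau$, i.e., inside the finite intersection $\bigcap_{\sigma \supset \tau} U_\sigma$ --- and a finite intersection of contractible open sets is in general only \emph{locally} contractible, so no vanishing of homotopy groups is available there, and no refinement of the cover fixes this (the images $f(\tau)$ are continua, not points, so local contractibility does not furnish a single contractible open containing them inside the intersection). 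What one actually needs at these intersections is exactly the injectivity/surjectivity of $\pi_i(S^{\dcal}U \hookrightarrow S\widetilde{U})$ in lower dimensions for the locally contractible space $U = \bigcap U_\sigma$ --- that is, the theorem itself, one dimension down. This is why the paper runs an interleaved induction on two families of statements $({\rm A}_p)$ (injectivity in degree $p-1$ and surjectivity in degree $p$ of $\pi_\ast(\iota_X)$, quantified over \emph{all} locally contractible $X$) and $({\rm B}_p)$ (smoothing of maps $\Delta^{p+1}\to X$ rel $\dot{\Delta}^{p+1}$), keeping track that the homotopy on each $l$-simplex of ${\rm sd}^k\Delta^{p+1}$ stays in the intersection of the $U_\alpha$'s of the top simplices containing it, and invoking $({\rm A}_{l+1})$ for those intersections; plain contractibility is used only for the top-dimensional cells. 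Your single ``relative smoothing'' lemma, grounded in contractibility of individual $U_\sigma$'s, cannot be closed as stated.

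The second issue is the one you yourself flag as ``the real work'' but leave unresolved, and it is not merely a matter of collars and cut-off functions: a map that is smooth on each closed simplex of ${\rm sd}^k\Delta(p)$ is smooth on the $\dcal$-polyhedron $|{\rm sd}^k\Delta(p)|_{\dcal}$, but the natural bijection $|{\rm sd}^k\Delta(p)|_{\dcal} \to \Delta^p$ is \emph{not} a diffeomorphism (it is only a smooth bijection with a $\dcal$-homotopy inverse, Theorem \ref{homotopyequiv} and Lemma \ref{homotopyequiv2}), so ``smooth on the subdivision'' does not yield a simplex of $S^{\dcal}A$. The paper bridges this with a dedicated device: the interpolating $\dcal$-polyhedra $(\Delta^p\times I)^{(k)}$ whose two ends are ${\rm sd}^k\Delta^p$ and $\Delta^p$, and Lemma \ref{sdsmoothing}, which solves a lifting problem (trivial cofibration of $\dcal$-subpolyhedra against the fibration $X\to\ast$) to replace a map smooth on ${\rm sd}^k\Delta^p$ and on $\dot{\Delta}^p$ by a genuinely smooth map on $\Delta^p$, $\czero$-homotopic rel $\dot{\Delta}^p$. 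Without both this lemma and the restructured induction of the first paragraph, your outline does not yield a proof.
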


Theorems \ref{hcofibrancy}-\ref{V} imply the following important facts:
\begin{itemize}
	\item Every hereditarily $C^\infty$-paracompact, semiclassical $C^\infty$-manifold is in $\wcal_{\dcal}$ (Theorem \ref{mfdhcofibrancy}).
	\item Every $C^\infty$-manifold is in $\vcal_{\dcal}$ (Theorem \ref{mfdV}).
\end{itemize}
By these facts, Theorems \ref{dmapsmoothing}-\ref{dDK} easily imply Theorems \ref{mapsmoothing}-\ref{DK} (see Section 11.1).\par
The smooth homotopical resutls outlined here are also applicable to the theory of orbifolds \cite{IKZ}, which we shall discuss in a subsequent study in this direction.
The following remark relates to the relationship between the smooth homotopy theory discussed in this paper and topological homotopy theory.
\begin{rem}\label{MPH}
	\begin{itemize}
		\item[$\rmone$] Theorem \ref{hcofibrancy} is a diffeological version of a theorem of tom Dieck \cite[Theorem 4]{tom}.
		\item[$\rmtwo$] Theorem \ref{mfdhcofibrancy} mentioned above and its variant (Proposition \ref{countable}) can be regarded as smooth refinements of  the results of Palais, Heisey, and Milnor on the homotopy types of infinite-dimensional topological manifolds (see Remark \ref{topmfd}).
		\item[$\rmthree$] Since the underlying topological space $\widetilde{M}$ of a  hereditarily $C^\infty$-paracompact, semiclassical $C^{\infty}$-manifold $M$ has the homotopy type of a $CW$-complex (Theorem \ref{mfdhcofibrancy} and Corollary \ref{W}(1)), Theorem \ref{mapsmoothing} enables us to study the homotopy of $S^{\dcal}C^{\infty}(M, N)$ via algebraic topological methods.
	\end{itemize}
\end{rem}
We end this subsection with the following remark on the relation between the simplicial category structure and the model structure on $\mcal=\dcal,\czero$.
\begin{rem}\label{svsm}
	The categories $\dcal$ and $\czero$ have simplicial category structures and model structures, both of which define the homotopical notions on objects and morphisms (see Section 4.3).\par
	As mentioned in the beginning of this subsection, we are primarily concerned with simplicial categorical properties of objects and morphisms of $\dcal$ and $\czero$. However, the methods and results of model category theory are needed in many aspects of our smooth homotopy theory. Since $\czero$ and its overcategory $\czero/X$ are simplicial model categories (Corollaries \ref{notenrich} and \ref{notenrich/X}), we can do homotopy theory in these categories without considering the differences between simplicial categorical notions and model categorical notions (see Propositions \ref{cfctcpx} and \ref{cfctcpx/X} and their proofs). However, since neither $\dcal$ nor its overcategory $\dcal/X$ is a simplicial model category (Corollaries \ref{notenrich}, \ref{notenrich/X} and Remark \ref{inevitable}), we need subtle arguments comparing the function complexes with the homotopy function complexes for these categories (see Theorems \ref{dfctcpx} and \ref{dfctcpx/X} and their proofs).
\end{rem}
\subsection{Notation and terminology}
We follow the notation and terminology of \cite{origin}, much of which is reviewed in Section 2 of this paper. Here, we recall some of the most important ones while introducing new notations.\par
The most important categories are the following:
\begin{itemize}
	\item[] The category $\dcal$ of diffeological spaces (Section 2.1),
	\item[] The category $\czero$ of arc-generated spaces (Section 2.1),
	\item[] The category $\scal$ of simplicial sets.
\end{itemize}
In this paper, $\mcal$ denotes one of the categories $\dcal$ and $\czero$. $|\ |_{\mcal} : \scal \rightleftarrows \mcal : S^{\mcal}$ denotes the adjoint pair of the realization and singular functors for $\mcal$; explicitly,
\[
(|\ |_{\mcal}, S^{\mcal}) = 
\begin{cases*}
(|\ |_{\dcal}, S^{\dcal}) & \text{for $\mcal = \dcal$,} \\
(|\ |, S) & \text{for $\mcal = \czero$}
\end{cases*}
\]
(see Section 2.3).\par
The categories $\dcal$ and $\czero$ have the obvious underlying set functors. Thus, we use the terminology of \cite[Sections 8.7-8.8]{FK}, which relates to initial and final structures with respect to the underlying set functor, to deal with the categories $\dcal$ and $\czero$ simultaneously (see Proposition \ref{conven} and Remarks \ref{convenrem}(1) and \ref{suitable}).\par
For a concrete category $\xfra$, the following notation is used in this paper:
\begin{itemize}
	\item[(1)] For $X\in \xfra$, $1_X$ (or $1$) denotes the identity morphism of $X$.
	\item[(2)] For $X_1$, $X_2 \in \xfra$ having the same underlying set, $id : X_1 \longrightarrow X_2$ is the set-theoretic identity map or the morphism whose underlying set-theoretic map is the identity map.
\end{itemize}\par
Our basic reference for enriched category theory is \cite[Chapter 6]{Borceux}. An $\scal$-category and an $\scal$-functor are often called a simplicial category and a simplicial functor respectively. However, by a diffeological category, we mean a category object in $\dcal$, not a $\dcal$-category (see Section 9.1).\par
Our basic references for model category theory are \cite{Hi,MP}. For a model category $\ecal$, the symbols $\ecal_{c}$, $\ecal_{f}$, and $\ecal_{cf}$ denote the full subcategories of $\ecal$ consisting of cofibrant objects, fibrant objects, and fibrant-cofibrant objects, respectively.\par
The symbol $\emptyset$ (resp. $\ast$) is used to denote an initial (resp. terminal) object of a category.\par
For $\czero$-homotopies $h$, $k:X\times I \longrightarrow Y$ with $h(\cdot, 1)= k(\cdot,0)$, the (vertical) composite of $h$ and $k$ is defined in the obvious manner (see \cite[p. 273]{Mac}). On the other hand, we always compose two $\dcal$-homotopies $h$, $k:X\times I \longrightarrow Y$ with $h(\cdot,1)=k(\cdot,0)$ smoothly using a cut-off function (\cite[Definition 3.1]{CW}).\par
We end this subsection with the topological notation. For a subset $S$ of a topological space $X$, $\bar{S}$ and $S^\circ$ denote the closure and the interior of $S$ respectively. For a continuous function $f:X \longrightarrow \mathbb{R}$, we define $\mathrm{carr}\ f$ and $\mathrm{supp}\ f$ by
\[
\mathrm{carr}\ f = \{x\in X |\ f(x)\neq 0 \} \ \mathrm{and}\ \mathrm{supp}\ f = \overline{ \mathrm{carr}\ f}
\]
(cf. \cite[p. 165]{KM}).

\subsection{Organization of the paper}
This paper is organized as follows.
\par\indent
In Section 2, we recall the basic notions and results of diffeological spaces and arc-generated spaces from \cite{origin} and observe that the category $C^{\infty}$ can be fully faithfully embedded into $\dcal$. We also see that $|\ |_{\dcal}: \scal \rightleftarrows \dcal : S^{\dcal}$ and $\widetilde{\cdot} : \dcal \rightleftarrows \czero : R$ are Quillen pairs.

Sections 3 and 4 contain the main ideas of the smooth homotopy theory of diffeological spaces. In Section 3, we prove Theorem \ref{Quillenequiv} and Corollary \ref{W}. In Section 4, we study the simplicial category structures on $\czero$ and $\dcal$. Then, we compare function complexes and homotopy function complexes for $\mcal = \czero, \dcal$, which enables us to prove Theorem \ref{dmapsmoothing} by replacing the function complexes with the homotopy function complexes. Since $\dcal$ is not a simplicial model category (Corollary \ref{notenrich}(2)), the arguments for $\dcal$, which use Theorem \ref{Quillenequiv}, are much more subtle than those for $\czero$.
\par\indent
Sections 5-7 contain applications and generalizations of the results established in Section 4. In Section 5, we recall the basic facts on the classification theory of (smooth and continuous) principal bundles and use Theorem \ref{dmapsmoothing} to prove that under the assumption as in Theorem \ref{dDK}, the functor $\widetilde{\cdot}: (\mathsf{P}\dcal G/ X)_{\rm num} \longrightarrow (\mathsf{P}\czero \widetilde{G}/\widetilde{X})_{\rm num}$ induces the bijection on the isomorphism classes of objects (Theorem \ref{dbdlesmoothing}). In Section 6, we compare function complexes and homotopy function complexes for $\mcal / X\ (= \czero/ X , \dcal/ X),$ generalizing several results in Section 4. This leads to the proof of Theorem \ref{dsectionsmoothing}. In Section 7, we prove Theorem \ref{dDK} using Theorem \ref{dsectionsmoothing} along with Theorem \ref{dbdlesmoothing}.
\par\indent
Sections 8-10 are devoted to the proofs of Theorems \ref{hcofibrancy} and \ref{V}, which enable us to apply smooth homotopical results of diffeological spaces to $C^\infty$-manifolds. In Section 8, we introduce and study the two kinds of notions of diffeological polyhedra for the proofs of Theorems \ref{hcofibrancy} and \ref{V}. In Section 9, we first establish a diffeological version of \cite[Proposition 4.1]{Seg} (Proposition \ref{dSegal}) using the results in Section 8, and then introduce and study the notion of a Hurewicz cofibration in $\dcal$. Using the results of these studies, we prove Theorem \ref{hcofibrancy}. In Section 10, we introduce the notion of a locally contractible diffeological space and prove Theorem \ref{V}.
\par\indent
In Section 11, we first establish two theorems (Theorems \ref{mfdhcofibrancy} and \ref{mfdV}) from Theorems \ref{hcofibrancy} and \ref{V}. Then, Theorems \ref{mapsmoothing}-\ref{DK} follow immediately from Theorems \ref{dmapsmoothing}-\ref{dDK}. We also precisely investigate semiclassicality and hereditary $C^{\infty}$-paracompactness conditions, showing that many important $C^{\infty}$-manifolds satisfy these two conditions.

\hspace{2.5cm}\hrulefill\hspace{3.5cm} \par
The following chart indicates the interdependencies of the sections.
\begin{center}
	\begin{tikzpicture}[node distance=0.9cm]
	\node (sec2) [startstop] {Section 2};
	\node (sec3) [startstop, below of =sec2] {Section 3};
	\node (sec4) [startstop, below of =sec3] {Section 4};
	\node (sec5) [startstop, below of =sec4, xshift=-4cm] {Section 5};
	\node (sec6) [startstop, below of =sec5] {Section 6};
	\node (sec7) [startstop, below of =sec6] {Section 7};
	\node (sec11) [startstop, below of =sec7, xshift=4cm] {Section 11};
	\node (sec8) [startstop, right of =sec4, xshift=2cm] {Section 8};
	\node (sec9and10) [startstop, right of =sec6, xshift=6cm] {Sections 9 and 10};
	
	\draw [arrow] (sec2) -- (sec3);
	\draw [arrow] (sec3) -- (sec4);
	\draw [arrow] (sec5) -- (sec6);
	\draw [arrow] (sec6) -- (sec7);
	\draw [arrow] (sec8) -- (sec9and10);
	\draw [arrow] (sec2) -| (sec8);
	\draw [arrow] (sec4) -- (sec5);
	\draw [arrow] (sec7) -- (sec11);
	\draw [arrow] (sec9and10) -- (sec11);
	\draw [arrow] (sec4) -- (sec9and10);
	
	\end{tikzpicture}
\end{center}
\section{Diffeological spaces, arc-generated spaces, and $C^{\infty}$-manifolds}
We recall the convenient properties of diffeological spaces and arc-generated spaces, and then show that the category $C^{\infty}$ of (separated) $C^{\infty}$-manifolds can be fully faithfully embedded into the category $\dcal$ of diffeological spaces. We also make a review on the compactly generated model structure on $\dcal$ (\cite{origin}) and observe that the adjoint pairs $|\ |_{\dcal} : \scal \rightleftarrows \dcal : S^{\dcal}$ and $\widetilde{\cdot} : \dcal \rightleftarrows \czero : R$ are Quillen pairs.
\subsection{Categories $\dcal$ and $\czero$}
In this subsection, we recall the definition of a diffeological space and summarize the basic properties of the categories $\dcal$ and $\czero$, and the underlying topological space functor $\widetilde{\cdot} : \dcal \longrightarrow \czero$.
\par\indent
Let us begin with the definition of a diffeological space. A {\sl parametrization} of a set $X$ is a (set-theoretic) map $p: U \longrightarrow X$, where $U$ is an open subset of $\rbb^{n}$ for some $n$.
\begin{defn}\label{diffeological}
	\begin{itemize}
		\item[(1)] A {\sl diffeological space} is a set $X$ together with a specified set $D_X$ of parametrizations of $X$ satisfying the following conditions:
		\begin{itemize}
			\item[(i)](Covering)  Every constant parametrization $p:U\longrightarrow X$ is in $D_X$.
			\item[(ii)](Locality) Let $p :U\longrightarrow X$ be a parametrization such that there exists an open cover $\{U_i\}$ of $U$ satisfying $p|_{U_i}\in D_X$. Then, $p$ is in $D_X$.
			\item[(iii)](Smooth compatibility) Let $p:U\longrightarrow X$ be in $D_X$. Then, for every $n \geq 0$, every open set $V$ of $\rbb^{n}$ and every smooth map $F  :V\longrightarrow U$, $p\circ F$ is in $D_X$.
		\end{itemize}
		The set $D_X$ is called the {\sl diffeology} of $X$, and its elements are called {\sl plots}.
		\item[(2)] Let $X=(X,D_X)$ and $Y=(Y,D_Y)$ be diffeological spaces, and let $f  :X\longrightarrow Y$ be a (set-theoretic) map. We say that $f$ is {\sl smooth} if for any $p\in D_X$, \ $f\circ p\in D_Y$.
	\end{itemize}
\end{defn}
The convenient properties of $\dcal$ are summarized in the following proposition. Recall that a topological space $X$ is called {\sl arc-generated} if its topology is final for the continuous curves from $\rbb$ to $X$ and that $\czero$ denotes the category of arc-generated spaces and continuous maps. See \cite[pp. 230-233]{FK} for initial and final structures with respect to the underlying set functor.
\begin{prop}\label{conven}
	\begin{itemize}
		\item[$(1)$] The category ${\dcal}$ has initial and final structures with respect to the underlying set functor. In particular, ${\dcal}$ is complete and cocomplete.
		\item[$(2)$] The category $\mathcal{D}$ is cartesian closed.
		\item[$(3)$] The underlying set functor $\dcal \longrightarrow Set$
		is factored as the underlying topological space functor
		$\widetilde{\cdot}:\dcal \longrightarrow \czero$
		followed by the underlying set functor
		$\czero \longrightarrow Set$.
		Further, the functor
		$\widetilde{\cdot}:\dcal \longrightarrow \czero$
		has a right adjoint
		$R:\czero \longrightarrow \dcal$.
	\end{itemize}
	\begin{proof}
		See \cite[p. 90]{CSW}, \cite[pp. 35-36]{IZ}, and \cite[Propositions 2.1 and 2.10]{origin}.
	\end{proof}
\end{prop}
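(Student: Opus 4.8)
The plan is to show that the underlying set functor $\dcal \longrightarrow \mathrm{Set}$ is a topological functor, i.e.\ that every structured source and sink admits an initial and a final lift, and then to read off the remaining assertions from this. For part $(1)$, given a set $X$ and a source $(f_i\colon X \longrightarrow X_i)_i$ with each $X_i \in \dcal$, I would define $D_X$ to be the set of parametrizations $p$ with $f_i\circ p \in D_{X_i}$ for every $i$; the three axioms of Definition \ref{diffeological} transfer immediately from the $X_i$ because constancy, restriction, and precomposition are tested coordinatewise. This is the coarsest diffeology making all $f_i$ smooth, i.e.\ the initial structure. Dually, given a sink $(g_j\colon X_j \longrightarrow X)_j$, I would take $D_X$ to consist of those $p\colon U \longrightarrow X$ that are, locally on $U$, either constant or of the form $g_j\circ q$ with $q \in D_{X_j}$; smooth compatibility holds because $F^{-1}(V)$ is open and $q\circ(F|_{F^{-1}(V)})$ is again a plot, and this is the finest diffeology making all $g_j$ smooth. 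Once initial and final lifts exist over the complete and cocomplete category $\mathrm{Set}$, completeness and cocompleteness of $\dcal$ follow by the standard argument: limits and colimits are computed on underlying sets and equipped with the initial, respectively final, structure.

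For part $(2)$, the product $X\times Y$ is the initial lift along the two projections, so it is already available from $(1)$; it remains to produce an internal hom. I would endow the set $C^\infty(X,Y)$ of smooth maps with the \emph{functional diffeology}, declaring $p\colon U \longrightarrow C^\infty(X,Y)$ to be a plot precisely when the adjoint map $U\times X \longrightarrow Y,\ (u,x)\mapsto p(u)(x)$, is smooth, where $U$ carries its standard diffeology and $U\times X$ the product diffeology. Checking the three axioms for this collection is routine, each reducing to a smoothness statement in the extra $U$-variable. The crux of $(2)$ is the exponential law $\dcal(Z\times X, Y)\cong \dcal(Z, C^\infty(X,Y))$: a set map $h\colon Z \longrightarrow C^\infty(X,Y)$ is smooth exactly when $(z,x)\mapsto h(z)(x)$ carries every product plot of $Z\times X$ to a plot of $Y$, and unwinding the definition of the functional diffeology shows this is the same condition as smoothness of the uncurried map $\hat h\colon Z\times X \longrightarrow Y$. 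Naturality in $Z$, $X$, $Y$ is then formal, so $(-)\times X \dashv C^\infty(X,-)$ and $\dcal$ is cartesian closed.

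For part $(3)$, I would equip the set $X$ with the \emph{$D$-topology}, the final topology for the plots $D_X$; every smooth map is then continuous for these topologies, so $\widetilde{\cdot}$ becomes a functor and the underlying set functor visibly factors through it. The point requiring care is that $\widetilde{X}$ is \emph{arc-generated}: one must show that the final topology for all plots coincides with the final topology for the smooth curves $\rbb \longrightarrow X$, and that the latter is in turn final for the continuous curves. This is exactly where the cited analysis of the $D$-topology enters, and I expect it to be the main obstacle, since it is the only step that is not a purely formal manipulation of diffeologies. Granting it, $\widetilde{\cdot}\colon \dcal \longrightarrow \czero$ is well defined.

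Finally, to build the right adjoint $R$, I would send $T\in\czero$ to the set $T$ equipped with the \emph{continuous diffeology} $D_{R(T)}=\{\,p\colon U\longrightarrow T \mid p\text{ continuous}\,\}$; constancy, locality, and smooth compatibility of this family all follow from the corresponding stability properties of continuous maps, so $R(T)\in\dcal$. The adjunction isomorphism $\dcal(X,R(T))\cong\czero(\widetilde{X},T)$ is then immediate: a set map $f\colon X\longrightarrow T$ satisfies $f\circ p$ continuous for every $p\in D_X$ if and only if $f\circ p$ is a plot of $R(T)$ for every such $p$, i.e.\ $f$ is smooth into $R(T)$, and by the universal property of the final ($D$-)topology this holds if and only if $f\colon\widetilde{X}\longrightarrow T$ is continuous. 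Naturality is clear, which establishes $\widetilde{\cdot}\dashv R$ and completes $(3)$.
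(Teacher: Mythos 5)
Your proposal is correct and follows essentially the same route as the paper, whose ``proof'' is simply a pointer to the standard constructions you reconstruct: initial and final diffeologies making the underlying set functor topological over $Set$ (hence completeness and cocompleteness), the functional diffeology with the exponential law for cartesian closedness, and the $D$-topology together with the continuous diffeology $R$ for part (3). The one input you defer --- that the $D$-topology is determined by (smooth) curves --- is exactly the cited result of Christensen--Sinnamon--Wu, though for arc-generatedness alone the weaker elementary fact that open domains of $\rbb^n$ are arc-generated already suffices, since every plot is continuous for the $D$-topology and one can test openness of $p^{-1}(A)$ along continuous curves into the domain.
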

The following remark relates to Proposition \ref{conven}
\begin{rem}\label{convenrem}
	\begin{itemize}
	\item[$\rmone$] Let $\xfra$ be a concrete category (i.e., a category equipped with a faithful functor to Set); the faithful functor $\xfra \longrightarrow Set$ is called the underlying set functor. See \cite[Section 8.8]{FK} for the notions of an $\xfra$-embedding, an $\xfra$-subspace, an $\xfra$-quotient map, and an $\xfra$-quotient space. $\dcal$-subspaces and $\dcal$-quotient spaces are usually called diffeological subspaces and quotient diffeological spaces, respectively.
	\item[$\rmtwo$] For Proposition \ref{conven}(3), recall that the underlying topological space $\tilde{A}$ of a diffeological space $A = (A, D_A)$ is defined to be the set $A$ endowed with the final topology for $D_A$ and that $R$ assigns to an arc-generated space $X$ the set $X$ endowed with the diffeology
	$$
	D_{RX} = \text{\{continuous parametrizations of } X \text{\}.}
	$$
	Then, we can easily see that $\tilde{\cdot} \circ R = Id_{\ccal^0}$ and that the unit $A \longrightarrow R\widetilde{A}$ of the adjoint pair $(\widetilde{\cdot}, R)$ is set-theoretically the identity map.
	\item[$\rmthree$] Iglesias-Zemmour and many authors following him used the categorical convenience of $\dcal$ to deal with various differential-geometric and global-analytic problems (see, eg, \cite{IglesiasM, IZ, IKZ}).
	\end{itemize}
\end{rem}
The following remark explains that $\czero$ is the most suitable category as a target category of the underlying topological space functor for diffeological spaces.
\begin{rem}\label{suitable}
	The category $\czero$ has the following good properties:
	\begin{itemize}
		\item[{\rm (1)}] $\czero$ has initial and final structures with respect to the underlying set functor. In particular, $\czero$ is complete and cocomplete (\cite[Proposition 2.6]{origin}).
		\item[{\rm (2)}] $\czero$ is cartesian closed (\cite[Proposition 2.9]{origin}).
		\item[{\rm (3)}] A topological space $X$ is in $\czero$ if and only if $X$ is the underlying topological space of some diffeological space (Remark \ref{convenrem}(2)).
		\item[{\rm (4)}] The underlying topological space functor $\widetilde{\cdot} : \dcal \longrightarrow \czero$ preserves finite products (\cite[Proposition 2.13]{origin}).
	\end{itemize}

The category $\tcal$ of topological spaces does not have property $(2)$, $(3)$, or $(4)$. The most important property is property $\rmtwo$, which is essential for the proof of property $\rmfour$. However, other well-known categories of topological spaces having property $\rmtwo$ usually consist of topological spaces satisfying some separation axiom (\cite[Chapter 5]{MayAT}, \cite{GZ}), and hence cannot be the target category of the underlying topological space functor for $\dcal$.

The above good properties of $\czero$ are efficiently used in this paper. Properties $(2)$ and $(4)$ lead us to Lemmas \ref{enrich}, \ref{enrich/X}, and \ref{bdleforget}, which are the starting points of our smoothing theorems for maps, sections, and principal bundles respectively (see also Remark \ref{bdlerem}).

The categories $\dcal$ and $\czero$ share properties $(1)$ and $(2)$, which often enables us to deal with $\dcal$ and $\czero$ simultaneously (see Sections 5.3, 6, and 7).
\end{rem}
We end this subsection by introducing the separation condition for arc-generated spaces. An arc-generated space $X$ is called {\sl separated} if the diagonal is closed in the product $X \times_{\czero} X$. Separatedness is stronger than $T_{1}$-axiom and weaker than Hausdorff property. For a group $G$ in $\czero$, separatedness is equivalent to $T_{1}$-axiom. We can also observe the following closure properties of separatedness:
\begin{itemize}
	\item If arc-generated spaces $X$ and $Y$ are separated, then $X \times_\czero Y$ is also separated.
	\item If an arc-generated space $A$ admits a continuous injection into a separated arc-generated space, then $A$ is also separated.
\end{itemize}
\subsection{Fully faithful embedding of $C^{\infty}$ into $\dcal$}
In this subsection, we recall the notion of a $C^{\infty}$-manifold (in convenient calculus) from \cite{KM} and observe that the category $C^{\infty}$ of (separated) $C^{\infty}$-manifolds can be fully faithfully embedded into the category $\dcal$.
\par\indent
First, we make a review on the local calculus. Let $E$ be a locally convex vector space. A set-theoretic curve $c: \rbb \longrightarrow E$ is called {\sl smooth} if all derivatives exist and are continuous - this is a concept without problems. The final topology for the set of smooth curves into $E$ is called the {\sl $c^{\infty}$-topology} of $E$, which is finer than the original locally convex topology; for a Fr\'{e}chet space, the $c^{\infty}$-topology coincides with the original locally convex topology (\cite[Theorem 4.11(1)]{KM}). A set-theoretic map $f: U \longrightarrow V$ between $c^{\infty}$-open sets of locally convex vector spaces are defined to be {\sl smooth} or $C^{\infty}$ if $f$ maps smooth curves in $U$ to smooth curves in $V$. In the finite dimensional case, this gives the usual notion of smooth mappings (\cite[Corollary 3.14]{KM}). Smooth maps are obviously continuous for the $c^{\infty}$-topologies; however, a smooth map need not be continuous for the original locally convex topologies (\cite[Corollary 2.11]{KM}). 
\par\indent
Next, we recall the notion of a $C^\infty$-manifold. In convenient calculus, $C^\infty$-manifolds are defined by gluing $c^{\infty}$-open sets of convenient vector spaces, which are locally convex vector spaces satisfying a week completeness condition (\cite[Theorem 2.14]{KM}). The precise definition of a $C^\infty$-manifold is as follows:
\par\indent
A {\sl chart} $(U,u)$ on a set $M$ is a bijection $M\supset U \overset{u}{\longrightarrow} u(U)\subset E_U$ from a subset $U$ of $M$ onto a $c^\infty$-open subset of a convenient vector space $E_U$. A family $\{(U_\alpha,u_\alpha)\}_{\alpha\in A}$ of {\sl charts} on $M$ is called an {\sl atlas} for $M$ if $M=\cup\ U_\alpha$ and all chart changings
\[
	u_{\alpha\beta}:= u_\alpha \circ u_\beta^{-1}:u_\beta(U_\alpha \cap U_\beta)\longrightarrow u_\alpha(U_\alpha \cap U_\beta)
\]
are smooth maps between $c^\infty$-open subsets. Two atlas are called {\sl equivalent} if their union is again an atlas for $M$. An equivalence class of atlas is called a $C^\infty$-{\sl structure} on $M$. A $C^\infty$-{\sl manifold} $M$ is a set together with a $C^\infty$-structure on it. \par\indent
Smooth maps between $C^\infty$-manifolds $M$ and $N$ are defined using charts in the obvious manner. Thus, the category $C^\infty$-$mfd$ of $C^\infty$-manifolds and smooth maps are defined. We can easily see that a set-theoretic map $f: M \longrightarrow N$ is smooth if and only if $f$ preserves smooth curves.\par
The underlying topological space $\widetilde{M}$ of a $C^\infty$-manifold $M$ is defined to be the set $M$ endowed with the final topology for the smooth curves (\cite[27.4]{KM}).\par
Now we introduce a fully faithful embedding of $C^\infty$-${mfd}$ into $\dcal$. Define the functor $I: C^{\infty}$-${mfd} \longrightarrow \dcal$ by assigning to a $C^{\infty}$-manifold $M$ the set $M$ endowed with the diffeology
\[
D_{IM} = \{ \text{  $C^{\infty}$-parametrizations of $M$ } \}.
\]
\begin{lem}\label{predmfd}
	The functor $I:C^\infty$-${mfd} \longrightarrow \dcal$  is a fully faithful functor which preserves finite products. Further, the underlying topological space $\widetilde{M}$ of a $C^\infty$-manifold $M$ is just the underlying topological space $\widetilde{IM}$ of the diffeological space $IM$.
	\begin{proof}
		Recall that a (set-theoretic) map $f : M \longrightarrow N$ between $C^\infty$-manifolds is smooth if and only if $f$ preserves smooth curves and note that the smooth curves of a $C^\infty$-manifold $M$ are just the plots of $IM$ with source $\mathbb{R}$. Then, we see that $I$ is fully faithful.\par
		We have the equality $C^{\infty} (U, M \times N) = C^{\infty} (U, M) \times C^{\infty} (U, N)$ for an open set $U$ of $\rbb^{n}$ and $M, N \in C^{\infty}$-$mfd$, from which we see that $I$ preserves finite products.\par
		To prove that $\widetilde{M} = \widetilde{IM}$, it suffices to show that the ordinary topology of an open set $U$ of $\rbb^{n}$ is the final topology for the smooth curves, which is easily shown by Special Curve Lemma \cite[p. 18]{KM} (alternatively, see \cite[Theorem 4.11(1)]{KM}).
	\end{proof}
\end{lem}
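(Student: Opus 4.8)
The plan is to verify the three assertions---full faithfulness, preservation of finite products, and the identification $\widetilde{M}=\widetilde{IM}$---separately, with essentially all of the genuine content concentrated in the last one.

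For full faithfulness, I would first observe that $I$ is the identity on underlying sets, so faithfulness is automatic and the only issue is fullness: I must show that $C^{\infty}\text{-}mfd(M,N)$ and $\dcal(IM,IN)$ coincide as subsets of the set maps $M\to N$. The key input is the convenient-calculus fact recalled above, that a set-theoretic map $f:M\longrightarrow N$ between $C^{\infty}$-manifolds is smooth if and only if it preserves smooth curves, together with the observation that the smooth curves of $M$ are precisely the plots of $IM$ with source $\rbb$. If $f:IM\longrightarrow IN$ is smooth in $\dcal$, then $f$ carries plots to plots, in particular plots with source $\rbb$, so it preserves smooth curves and is a morphism in $C^{\infty}\text{-}mfd$. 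Conversely, if $f$ is a smooth map of manifolds and $p:U\longrightarrow M$ is any plot of $IM$ (that is, a smooth parametrization), then $f\circ p$ is again smooth, hence a plot of $IN$, so $f$ is smooth in $\dcal$. This yields the equality of hom-sets.

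For the preservation of finite products, I would note that $I(M\times N)$ and $IM\times_{\dcal}IN$ share the underlying set $M\times N$, so it suffices to compare diffeologies. The product diffeology $D_{IM\times IN}$ consists of the parametrizations $p:U\longrightarrow M\times N$ whose two components are plots of $IM$ and $IN$, i.e.\ smooth maps into $M$ and $N$, whereas $D_{I(M\times N)}$ consists of the smooth parametrizations of the product manifold. The two descriptions agree by the standard componentwise criterion
\[
C^{\infty}(U,M\times N)=C^{\infty}(U,M)\times C^{\infty}(U,N),
\]
valid for every open $U\subseteq\rbb^{n}$. Hence $I(M\times N)=IM\times IN$, and since $I$ also sends a one-point manifold to a terminal object of $\dcal$, it preserves finite products.

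The last assertion is where I expect the main obstacle to lie. Both $\widetilde{M}$ and $\widetilde{IM}$ carry the same set $M$; the former has the final topology for the smooth curves $\rbb\to M$, the latter the final topology for all plots $U\to M$ with $U$ open in some $\rbb^{n}$. Since every smooth curve is a plot, one inclusion of topologies is trivial, so the content is to show that a subset $V$ whose preimage under every smooth curve is open already has open preimage under every plot. Given a plot $p:U\longrightarrow M$ and a smooth curve $\gamma:\rbb\longrightarrow U$, the composite $p\circ\gamma$ is a smooth curve of $M$, so $\gamma^{-1}(p^{-1}(V))=(p\circ\gamma)^{-1}(V)$ is open; thus $p^{-1}(V)$ is open for the final topology on $U$ determined by smooth curves. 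The decisive point is therefore that on an open set $U$ of $\rbb^{n}$ this curve-final topology coincides with the ordinary topology, which I expect to be the one step requiring real input rather than formal manipulation: a priori the curve-final topology could be strictly finer, and ruling this out is exactly the content of the Special Curve Lemma (equivalently, the coincidence of the $c^{\infty}$-topology with the locally convex topology on a Fr\'echet space). Granting this, $p^{-1}(V)$ is open in the usual sense, $V$ is open in $\widetilde{IM}$, and the two topologies agree.
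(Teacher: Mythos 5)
Your proposal is correct and follows essentially the same route as the paper: full faithfulness via the curve characterization of smooth maps, products via the componentwise equality $C^{\infty}(U,M\times N)=C^{\infty}(U,M)\times C^{\infty}(U,N)$, and the topology statement reduced to the coincidence of the curve-final and ordinary topologies on open subsets of $\rbb^{n}$, settled by the Special Curve Lemma. You merely spell out in more detail the reduction step that the paper compresses into ``it suffices to show,'' which is fine.
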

From now on, {\bf we require that the underlying topological spaces of $C^\infty$-manifolds are separated as arc-generated spaces} (see the end of Section 2.1); the category of (separated) $C^\infty$-manifolds are denoted by $C^\infty$.

From Lemma \ref{predmfd}, we obtain the following result on the functor $I:C^\infty \longrightarrow \dcal$, which is the restriction of $I:C^\infty$-${mfd}\longrightarrow \dcal$ to $C^\infty$. 
\begin{prop}\label{dmfd}
	\begin{itemize}
	\item[{\rm (1)}] The category $C^\infty$ is closed under finite products and the functor $I: C^{\infty} \longrightarrow \dcal$ is a fully faithful functor which preserves finite products.
	\item[{\rm (2)}] The functor $I: C^{\infty} \longrightarrow \dcal$ restricts to the fully faithful functor from the category of Lie groups to that of diffeological groups.
	\item[{\rm (3)}] The functor $I: C^{\infty} \longrightarrow \dcal$ is compatible with the underlying topological space functors. In other words, the following diagram commutes:
	\[
	\begin{tikzcd}
	C^{\infty} \arrow[hook]{rr}{I} \arrow[swap]{rd}{\widetilde{\cdot}} & & \arrow{ld}{\widetilde{\cdot}} \dcal \\
	& \czero &
	\end{tikzcd}
	\]
	\end{itemize}
\begin{proof}
	{\rm (1)} We see from Lemma \ref{predmfd} and property (4) in Remark \ref{suitable} that $\tilde{\cdot}:C^\infty$-${mfd} \longrightarrow \czero$ preserves finite products, and hence that $C^\infty$ is closed under finite products (see the end of Section 2.1). The rest of the statement follows immediately from Lemma \ref{predmfd}.\par
	{\rm (2)} Since Lie groups and diffeological groups are defined to be groups in $C^{\infty}$ and $\dcal$ respectively, the result is immediate from Part 1.\par
	{\rm (3)} The result follows immediately from Lemma \ref{predmfd}.
\end{proof}
\end{prop}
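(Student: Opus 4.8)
The plan is to let Lemma \ref{predmfd} carry the weight: it already establishes that $I: C^{\infty}$-$mfd \longrightarrow \dcal$ is fully faithful, preserves finite products, and satisfies $\widetilde{M}=\widetilde{IM}$. The only point not immediate from that lemma is the passage from $C^{\infty}$-$mfd$ to its full subcategory $C^{\infty}$ of separated manifolds, so the real work in Part (1) is to check that this subcategory is closed under finite products. Given separated $C^{\infty}$-manifolds $M$ and $N$, I would identify the underlying topological space of their product $M\times N$ as follows: by Lemma \ref{predmfd} it equals $\widetilde{I(M\times N)}$, by product preservation of $I$ this is $\widetilde{IM\times_{\dcal} IN}$, and by property (4) in Remark \ref{suitable} the functor $\widetilde{\cdot}:\dcal\longrightarrow\czero$ carries this to $\widetilde{IM}\times_{\czero}\widetilde{IN}=\widetilde{M}\times_{\czero}\widetilde{N}$. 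The closure of separatedness under finite products in $\czero$ (recorded at the end of Section 2.1) then shows that $\widetilde{M\times N}$ is separated, so $M\times N$ lies in $C^{\infty}$.

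Once closure is in hand, the remaining assertions of Part (1) are inherited from Lemma \ref{predmfd}: a restriction of a fully faithful functor to a full subcategory is fully faithful, and the product-preservation identity is simply the restriction of the corresponding identity on $C^{\infty}$-$mfd$. Part (3) requires nothing further, being literally the equality $\widetilde{M}=\widetilde{IM}$ of Lemma \ref{predmfd}, which is exactly the commutativity of the stated triangle.

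For Part (2), I would invoke the general principle that a fully faithful functor preserving finite products (including the terminal object as the empty product) sends group objects to group objects and induces a fully faithful functor between their categories. Since Lie groups and diffeological groups are by definition the group objects in $C^{\infty}$ and $\dcal$, the multiplication, unit, and inversion of a Lie group $G$ transport along $I$ to endow $IG$ with a diffeological group structure, and the group axioms---being commutative diagrams built from finite products---are preserved because $I$ preserves those products. Fully faithfulness descends because a morphism is a group homomorphism precisely when a diagram involving the multiplications commutes, a condition that the fully faithful $I$ both preserves and reflects.

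The \emph{main obstacle} is the product-closure step in Part (1), and its only subtlety is that the product of underlying spaces must be formed in $\czero$ rather than in $\tcal$; this is precisely why property (4) in Remark \ref{suitable} (product preservation by $\widetilde{\cdot}$ into $\czero$) and the $\czero$-level closure of separatedness are the correct tools, and why the analogous statement would fail if one tried to work directly with ordinary topological products.
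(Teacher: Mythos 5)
Your proposal is correct and follows essentially the same route as the paper's proof: Lemma \ref{predmfd} together with property (4) in Remark \ref{suitable} yields product preservation of $\widetilde{\cdot}:C^{\infty}\text{-}mfd\longrightarrow\czero$, the $\czero$-level closure of separatedness under finite products (from the end of Section 2.1) then gives closure of $C^{\infty}$, and Parts (2) and (3) are deduced from Part (1) and Lemma \ref{predmfd} exactly as in the paper. Your write-up merely makes explicit the group-object argument and the chain of identifications $\widetilde{M\times N}=\widetilde{IM\times_{\dcal}IN}=\widetilde{M}\times_{\czero}\widetilde{N}$ that the paper leaves implicit.
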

The diffeological space $IM$ associated to a $C^{\infty}$-manifold $M$ is often denoted simply by $M$ if there is no confusion in context.
\begin{rem}\label{fullembedding}
	\begin{itemize}
	\item[{\rm (1)}] It is obvious that finite dimensional $C^\infty$-manifolds embed fully faithfully into the category $\dcal$ via the functor $I$. It was shown by Losik \cite{Losik} that Fr\'{e}chet manifolds embed fully faithfully into $\dcal$ via $I$.
	\item[{\rm (2)}] Though the definition of $I$ applies to $C^{\infty}$-manifolds in classical differential calculi, the functor $I$ need not be full. In fact, $I: C^{\infty}_{c} \longrightarrow \dcal$ is not full, where $C^{\infty}_{c}$ denotes the category of $C^{\infty}$-manifolds in Keller's $C^{\infty}_{c}$-theory (see Appendix B).
	\end{itemize}
\end{rem}
The following remark relates to separation conditions for $C^{\infty}$-manifolds.

\begin{rem}\label{KMsepar}
	In \cite[27.4]{KM}, Kriegl and Michor proposed the following three separation conditions for $C^{\infty}$-manifolds:
	\begin{itemize}
	\item[{\rm (i)}] $\widetilde{M}$ is separated as an arc-generated space.
	\item[{\rm (ii)}] $\widetilde{M}$ is Hausdorff as a topological space.
	\item[{\rm (iii)}] $M$ is smoothly Hausdorff (i.e., the smooth functions in $C^{\infty}(M, \rbb)$ separate points in $M$).
	\end{itemize}
	(The implications $\rmiii \Longrightarrow \rmii \Longrightarrow \rmi$ hold obviously. Use the fact that $\tilde{\cdot}:C^\infty$-${mfd}\longrightarrow \czero$ preserves finite products to see the equivalence of separation condition $(\rm i)$ and condition $(2)$ in \cite[27.4]{KM}.) They honestly wrote ``it is not so clear which separation property should be required for a manifold" and  temporarily required that $C^{\infty}$-manifolds are smoothly Hausdorff. Whichever separation condition is selected, all the essential results in this paper remain true. However, we choose separation condition $\rmi$, which seems most natural for our categorical setting; it also simplifies some results and definitions (see Remarks \ref{4properties}(3)-(4), the definition of the functor $J: C^{\infty}_{c\, {conv}} \longrightarrow C^{\infty}$ in Appendix B, and \cite{CGA}).
\end{rem}
We end this subsection with a remark concerning the semiclassicality condition on a $C^\infty$-manifold (see Section \ref{1.2}).
\begin{rem}\label{semiclassical}
	The $c^\infty$-topology of a locally convex vector space can be strictly finer than the original locally convex topology (\cite[Section 4]{KM}). Thus, we need the notion of a semiclassical $C^\infty$-manifold to use local convexity in local arguments.
\end{rem}
\subsection{Standard $p$-simplices and model structure on $\dcal$}
We recall the basics of the model structure on $\dcal$ introduced in \cite{origin}.\par
The principal part of our construction of a model structure on $\dcal$ is the construction of good diffeologies on the sets
$$
\Delta^p=\{(x_0,\ldots,x_p)\in\mathbb{R}^{p+1} \ |\ \underset{i}{\sum} x_i = 1,\ x_i\geq 0 \}\ \ \ (p\geq 0)
$$
which enable us to define weak equivalences, fibrations, and cofibrations and to verify the model axioms (see Definition \ref{WFC} and Theorem \ref{originmain}). The required properties of the diffeologies on $\Delta^{p} \ (p \geq 0)$ are expressed in the following four axioms:
\begin{axiom}
	The underlying topological space of $\Delta^p$ is the topological standard $p$-simplex for $p\geq 0$.
\end{axiom}
Recall that
$f:\Delta^p \longrightarrow \Delta^q$
is an {\sl affine map} if $f$ preserves convex combinations.
\begin{axiom}
	Any affine map $f:\Delta^p\longrightarrow \Delta^q$ is smooth.
\end{axiom}
For $K \in \scal$, the {\sl simplex category} $\Delta\downarrow K$ is defined to be the full subcategory of the overcategory $\scal \downarrow K$ consisting of maps $\sigma : \Delta[n] \rightarrow K$. By Axiom 2, we can consider the diagram $\Delta\downarrow K \longrightarrow \mathcal{D}$ sending $\sigma :\Delta[n] \longrightarrow K$ to $\Delta^n$. Thus, we define the {\sl realization functor}
$$
|\ |_{\dcal}: \mathcal{S}\longrightarrow \mathcal{D}
$$
by $|K|_{\mathcal{D}}= \underset{\mathrm{\Delta\downarrow} K}{\mathrm{colim}} \ \Delta^n$.
\par\indent
Consider the smooth map $|\dot{\Delta}[p]|_{\dcal} \longhookrightarrow |\Delta[p]|_{\dcal} = \Delta^{p}$ induced by the inclusion of the boundary $\dot{\Delta}[p]$ into $\Delta[p]$.
\begin{axiom}
	The canonical smooth injection
	$$\left| \dot{\Delta}[p] \right|_{\dcal} \longhookrightarrow \Delta^p$$
	is a $\dcal$-embedding.
\end{axiom}
The $\dcal$-homotopical notions, especially the notion of a $\dcal$-deformation retract, are defined in the same manner as in the category of topological spaces by using the unit interval $I=[0,1]$ endowed with a diffeology via the canonical bijection with $\Delta^{1}$ (\cite[Section 2.4]{origin}).
The {\sl $k^{th}$ horn} of $\Delta^p$ is a diffeological subspace of $\Delta^p$ defined by
\begin{equation*}
	\Lambda^{p}_{k} =  \{(x_0,\ldots,x_p)\in\Delta^p \ |\ x_i=0 \hbox{ for some }i\neq k\}.
\end{equation*}
\begin{axiom}
	The $k^{th}$ horn $\Lambda^p_k$ is a $\dcal$-deformation retract of $\Delta^p$ for $p \geq 1$ and $0 \leq k \leq p$.
\end{axiom}
For a subset $A$ of the affine $p$-space $\abb^{p} = \{(x_0, \ldots, x_p) \in \rbb^{p+1} \ | \ \sum x_i = 1 \}$, $A_{\mathrm{sub}}$ denotes the set $A$ endowed with the sub-diffeology of $\abb^{p} \ (\cong \rbb^{p})$.
The diffeological space $\Delta^{p}_{\mathrm{sub}}$, used in \cite{H} to study diffeological spaces by homotopical means, satisfies neither Axiom 3 nor 4 for $p \geq 2$ (\cite[Proposition A.2]{origin}). Thus, we must construct a new diffeology on $\Delta^p$, at least for $p \geq 2$.
\par\indent
Let $(i)$ denote the vertex $(0, \ldots, \underset{(i)}{1}, \ldots, 0)$ of $\Delta^p$, and let $d^i$ denote the affine map from $\Delta^{p-1}$ to $\Delta^p$, defined by
\begin{equation*}
	d^i((k))= \left \{
	\begin{array}{ll}
		(k) & \text{for} \ k<i,\\
		(k+1)& \text{for} \ k\geq i.
	\end{array}
	\right.
\end{equation*}

\begin{defn}\label{simplices}
	We define the {\sl standard $p$-simplices} $\Delta^p$ ($p\geq 0$) inductively. Set $\Delta^p=\Delta_{\mathrm{sub}}^p$ for $p\leq 1$. Suppose that the diffeologies on $\Delta^k$ ($k<p$) are defined.
	We define the map
	\begin{eqnarray*}
		\varphi_i: \Delta^{p-1}\times [0,1) & \longrightarrow &  \Delta^p
	\end{eqnarray*}
	by $\varphi_{i}(x, t) = (1-t)(i)+td^{i}(x)$, and endow $\Delta^p$ with the final structure for the maps $\varphi_{0}, \ldots, \varphi_{p}$.
\end{defn}
The following result is established in \cite[Propositions 3.2, 5.1, 7.1, and 8.1]{origin}.
\begin{prop}\label{axioms}
	The standard $p$-simplices $\Delta^p\ (p \geq 0)$ in Definition \ref{simplices} satisfy Axioms 1-4.
\end{prop}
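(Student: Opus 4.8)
The plan is to argue by induction on $p$, the cases $p\le 1$ being immediate since $\Delta^p=\Delta^p_{\mathrm{sub}}$ carries the subspace diffeology of $\abb^p\cong\rbb^p$. Two formal observations drive the inductive step and justify checking everything chart-by-chart against the generating maps $\varphi_0,\dots,\varphi_p$. First, since $\dcal$ is cartesian closed (Proposition \ref{conven}(2)), the functor $-\times[0,1)$ preserves colimits, so $\Delta^p\times[0,1)$ carries the final structure for the maps $\varphi_i\times\mathrm{id}_{[0,1)}$; likewise $\widetilde{\cdot}$, being a left adjoint (Proposition \ref{conven}(3)) that preserves finite products (Remark \ref{suitable}(4)), sends final structures to final topologies. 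Second, a direct computation shows that each $\varphi_i$ restricts to a homeomorphism of underlying spaces from $\widetilde{\Delta^{p-1}}\times[0,1)$ onto the open star $U_i=\{y\in\Delta^p\mid y_i>0\}$: from $\varphi_i(x,t)=y$ one reads off $t=1-y_i$ and $d^i(x)=(y-y_i\,(i))/(1-y_i)$, so the inverse is continuous. Since the $U_i$ cover $\Delta^p$, this is the geometric backbone of the argument.

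Axiom 1 then follows immediately: by induction $\widetilde{\Delta^{p-1}}\times[0,1)$ is the standard space, $\widetilde{\cdot}$ turns the defining final structure into the final topology for the $\widetilde{\varphi_i}$, and the previous paragraph identifies this as the topology glued from the open stars, i.e. the topological $p$-simplex. For Axiom 2, let $f:\Delta^p\to\Delta^q$ be affine; by the universal property of the final structure it suffices that each $f\circ\varphi_i$ be a plot of $\Delta^q$. Affineness gives
\[
(f\circ\varphi_i)(x,t)=(1-t)\,f\bigl((i)\bigr)+t\,(f\circ d^i)(x)=\bigl(c_{b_i}\circ(g_i\times\mathrm{id}_{[0,1)})\bigr)(x,t),
\]
where $b_i=f\bigl((i)\bigr)\in\Delta^q$, the map $g_i=f\circ d^i:\Delta^{p-1}\to\Delta^q$ is affine and hence smooth by the inductive hypothesis, and $c_b(z,t)=(1-t)b+tz$ denotes contraction of $\Delta^q$ toward the point $b$. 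Thus Axiom 2 reduces to the single assertion that every such contraction $c_b:\Delta^q\times[0,1)\to\Delta^q$ is smooth.

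This contraction lemma is the main obstacle, and I would isolate and prove it first, by an auxiliary induction on $q$ using the cartesian-closedness observation above: $c_b$ is smooth iff each $c_b\circ(\varphi^q_j\times\mathrm{id}_{[0,1)})$ is a plot, which is a ``double cone''
\[
(y,s,t)\longmapsto (1-t)\,b+t(1-s)\,(j)+ts\,d^j(y).
\]
When $b$ is the vertex $(j)$ this collapses to $\varphi^q_j(y,ts)$ and is manifestly a plot; the work lies in the case of a general anchor $b$, where one must analyze how a plot approaches the faces of $\Delta^q$ and patch local factorizations through the $\varphi^q_j$ together with a cut-off in the $[0,1)$-direction. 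This is exactly the point at which the delicacy of the iterated final structure near the faces is felt, and where the bulk of the estimates live; I expect the proofs of Axioms 2 and 4 to be genuinely interdependent with it.

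Granting the contraction lemma, the remaining two axioms are geometric. For Axiom 4 I would write down the classical radial deformation retraction of $\Delta^p$ onto $\Lambda^p_k$ --- projection away from a point placed beyond the vertex $(k)$, on the far side of the barycenter of the omitted face $\{x_k=0\}$ --- and verify its smoothness on each chart $\varphi_i$ using Axiom 2 together with the contraction lemma, the homotopy being composed smoothly via a cut-off as in the paper's convention. For Axiom 3 I would show that the canonical smooth injection $|\dot{\Delta}[p]|_\dcal\hookrightarrow\Delta^p$ is an induction onto its image: a parametrization into $\partial\Delta^p=\{\min_i x_i=0\}$ that is a plot of $\Delta^p$ is, locally near a point in the relative interior of a face $F_i=d^i(\Delta^{p-1})$, seen through the charts $\varphi_j$ $(j\ne i)$ to factor smoothly through $F_i\cong\Delta^{p-1}$, so that by the inductive hypothesis it is a plot of the colimit $|\dot{\Delta}[p]|_\dcal$; the compatibility over the lower-dimensional strata where several coordinates vanish is handled by the same induction applied to the boundaries of the faces.
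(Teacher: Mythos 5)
Your outline mirrors the actual development (the paper itself does not prove this proposition in situ but cites \cite[Propositions 3.2, 5.1, 7.1, and 8.1]{origin}, whose proofs do proceed by induction on $p$, testing against the generating maps $\varphi_i$, with the smoothness of cone-type maps as the crux), but the proposal has one outright error and one decisive gap. The error: $\varphi_i$ is \emph{not} a homeomorphism onto the open star $U_i=\{y\mid y_i>0\}$. Since $\varphi_i(x,0)=(i)$ for every $x\in\Delta^{p-1}$, the map collapses $\Delta^{p-1}\times\{0\}$ to the vertex, and your inverse formula $d^i(x)=(y-y_i\,(i))/(1-y_i)$ is undefined at $y=(i)$, where $y_i=1$ and $t=0$; what is true is that $\varphi_i$ is a non-injective topological quotient (cone) map onto the half-open star. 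Axiom 1 can be repaired with this weaker statement, but the error propagates to your Axiom 3 sketch, where you treat the $\varphi_j$ as charts through which plots near the boundary ``factor'': lifting a plot locally along a non-injective quotient, particularly near the vertex and the $(p-2)$-skeleton where several such identifications interact, is exactly the delicate point, and the factorization you assert is not automatic.

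The decisive gap is that the analytic core is deferred rather than proved. You correctly isolate the contraction lemma (smoothness of $c_b:\Delta^q\times[0,1)\to\Delta^q$ for a general anchor $b$) as the obstacle on which Axiom 2 rests, but the proposal stops at ``this is where the bulk of the estimates live''; that analysis is precisely the content of \cite[Proposition 5.1]{origin}, so Axiom 2 remains unproven. The plan for Axiom 4 is worse off: radial projection away from a point beyond the vertex $(k)$ is piecewise \emph{fractional-linear} (one solves $P_j+s(x_j-P_j)=0$ for the parameter $s$, a rational function of $x$), with fold loci where the target face changes, so it is not assembled from affine maps and contractions, and Axiom 2 plus the contraction lemma cannot certify its smoothness on each chart as you claim. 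A structural reason that no such soft argument can suffice: $\Delta^p_{\rm sub}$ satisfies Axioms 1 and 2 but fails Axioms 3 and 4 (Lemma \ref{simplex}(2)), so any correct proof of Axioms 3 and 4 must exploit the finer final structure of Definition \ref{simplices} in an essential, quantitative way --- which is what the inductive constructions of \cite[Propositions 7.1 and 8.1]{origin} do, and what is missing here.
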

Without explicit mention, the symbol $\Delta^p$ denotes the standard $p$-simplex defined in Definition \ref{simplices} and a subset of $\Delta^p$ is endowed with the sub-diffeology of $\Delta^p$. Since the diffeology of $\Delta^p$ is the sub-diffeology of $\abb^{p}$ for $p \leq 1$, the $\dcal$-homotopical notions, especially the notion of a $\dcal$-deformation retract, coincide with the ordinary smooth homotopical notions in the theory of diffeological spaces (see \cite[p. 108]{IZ} and \cite[Remark 2.14]{origin}).
\par\indent
By Axiom 2, we can define the singular complex $S^{\dcal}X$ of a diffeological space $X$ to have smooth maps $\sigma : \Delta^p \longrightarrow X$ as $p$-simplices, thereby defining the {\sl singular functor} $S^{\mathcal{D}} :\mathcal{D} \longrightarrow \mathcal{S}$.
%\par\indent
%Recall the adjoint pair
%$$
%|\ | : \scal \rightleftarrows \ccal^0 : S
%$$
%of the topological realization functor $|\ |$ and the topological singular functor $S$ (\cite[Section 9.1]{origin}).
We have the following result (\cite[Proposition 9.1 and Lemma 9.5]{origin}).
\begin{prop}\label{adjoint1}
	\begin{itemize}
	\item[{\rm (1)}] $|\ |_{\dcal}: \scal \rightleftarrows \dcal: S^{\dcal}$ is an adjoint pair.
	\item[{\rm (2)}] The composite of the two adjoint pairs
	$$
	|\ |_{\dcal}: \scal \rightleftarrows \dcal: S^{\dcal} \text{ and } \tilde{\cdot}: \dcal \rightleftarrows \ccal^{0}: R
	$$
	is just the adjoint pair
	$$
	|\ | : \scal \rightleftarrows \ccal^0 : S
	$$
	of the topological realization and singular functors.
	\end{itemize}
\end{prop}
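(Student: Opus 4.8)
The plan is to treat Part (1) as an instance of the general realization--nerve adjunction, and then to deduce Part (2) by composing adjunctions together with two explicit identifications of functors. The only geometric inputs I expect to need are Axiom 2 (which makes the defining colimit functorial) and Axiom 1 (which identifies the underlying topological spaces of the diffeological simplices).

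For Part (1), I would start from the density (co-Yoneda) presentation $K\cong\colimunder{\Delta\downarrow K}\Delta[n]$ of an arbitrary $K\in\scal$ as a colimit of its representable simplices. By Axiom 2 the assignment $(\sigma:\Delta[n]\to K)\mapsto\Delta^{n}$ defines a functor $\Delta\downarrow K\longrightarrow\dcal$, so $|K|_{\dcal}=\colimunder{\Delta\downarrow K}\Delta^{n}$ is exactly the realization, and this colimit exists because $\dcal$ is cocomplete (Proposition \ref{conven}(1)). Since the contravariant hom-functor $\dcal(-,X)$ carries colimits to limits, for every $X\in\dcal$ there are natural bijections
\[
\dcal(|K|_{\dcal},X)\;\cong\;\lim_{\Delta\downarrow K}\dcal(\Delta^{n},X)\;=\;\lim_{\Delta\downarrow K}(S^{\dcal}X)_{n}\;\cong\;\scal(K,S^{\dcal}X),
\]
where the first isomorphism is the colimit--hom interchange, the middle equality is the definition $(S^{\dcal}X)_{n}=\dcal(\Delta^{n},X)$ of the singular complex, and the last isomorphism is again the density presentation of $K$ (via the Yoneda identity $\scal(\Delta[n],Y)=Y_{n}$). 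Naturality in $K$ and $X$ is routine, giving the adjunction $|\ |_{\dcal}\dashv S^{\dcal}$.

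For Part (2), I would invoke the fact that a composite of left adjoints is left adjoint to the composite of the corresponding right adjoints, so that the composite of $(|\ |_{\dcal},S^{\dcal})$ and $(\widetilde{\cdot},R)$ is the adjoint pair with left adjoint $\widetilde{\cdot}\circ|\ |_{\dcal}$ and right adjoint $S^{\dcal}\circ R$; it then remains to identify these with $|\ |$ and $S$. For the left adjoints, $\widetilde{\cdot}$ preserves colimits, being a left adjoint (Proposition \ref{conven}(3)), so $\widetilde{|K|_{\dcal}}=\colimunder{\Delta\downarrow K}\widetilde{\Delta^{n}}$; applying Axiom 1 (Proposition \ref{axioms}), which identifies $\widetilde{\Delta^{n}}$ with the topological standard $n$-simplex $\Delta^{n}_{\top}$, gives
\[
\widetilde{|K|_{\dcal}}\;=\;\colimunder{\Delta\downarrow K}\widetilde{\Delta^{n}}\;=\;\colimunder{\Delta\downarrow K}\Delta^{n}_{\top}\;=\;|K|,
\]
the last equality being the definition of the topological realization as the colimit of topological simplices computed in $\czero$. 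For the right adjoints, the $(\widetilde{\cdot},R)$ adjunction together with Axiom 1 yields, naturally in $X$,
\[
(S^{\dcal}RX)_{n}\;=\;\dcal(\Delta^{n},RX)\;\cong\;\czero(\widetilde{\Delta^{n}},X)\;=\;\czero(\Delta^{n}_{\top},X)\;=\;(SX)_{n},
\]
so $S^{\dcal}\circ R=S$. This identifies the composite adjoint pair with $(|\ |,S)$.

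The computations are formal; the only points that require care are keeping every colimit computed in the correct category --- in particular checking that $\widetilde{\cdot}$, as a left adjoint, genuinely commutes with the defining colimit of $|\ |_{\dcal}$, and that the topological realization $|\ |$ is taken as the colimit in $\czero$ rather than in the category of all topological spaces --- together with the two appeals to Axiom 1 that match the underlying topological spaces of the diffeological simplices with the topological simplices. I do not expect a genuine obstacle beyond this bookkeeping: in effect Axiom 2 supplies the functoriality that defines the realization and powers Part (1), while Axiom 1 supplies the pointwise identification of spaces that powers Part (2).
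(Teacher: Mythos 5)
Your proposal is correct and is essentially the argument the paper relies on: here Proposition \ref{adjoint1} is proved by citation to \cite[Proposition 9.1 and Lemma 9.5]{origin}, where the adjunction is obtained by exactly this nerve--realization (density plus colimit--hom interchange) argument powered by Axiom 2, and the composite identification by the colimit-preservation of the left adjoint $\widetilde{\cdot}$ together with Axiom 1, including the bookkeeping point that the relevant colimits are computed in $\czero$. No gaps; your reconstruction matches the intended proof.
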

Weak equivalences, fibrations, and cofibrations in $\mathcal{D}$ are defined as follows.
\begin{defn}\label{WFC}
	Define a map $f :X\longrightarrow Y$ in $\mathcal{D}$ to be
	\begin{itemize}
		\item[$(1)$]
		a {\sl weak equivalence} if $S^{\mathcal{D}} f:S^{\mathcal{D}} X\longrightarrow S^{\mathcal{D}} Y$ is a weak equivalence in the category of simplicial sets,
		\item[$(2)$]
		a {\sl fibration} if the map $f$ has the right lifting property with respect to the inclusions $\Lambda^p_k \longhookrightarrow\Delta^p$ for all $p>0$ and $0\leq k\leq p$, and
		\item[$(3)$]
		a {\sl cofibration} if the map $f$ has the left lifting property with respect to all maps that are both fibrations and weak equivalences.
	\end{itemize}
\end{defn}
Define the sets $\ical$ and $\jcal$ of morphisms of $\dcal$ by
\begin{eqnarray*}
	\ical & = & \{\dot{\Delta}^{p} \longhookrightarrow \Delta^{p} \ | \ p\geq 0 \},\\
\jcal & = & \{\Lambda^{p}_{k} \longhookrightarrow \Delta^{p} \ | \ p>0,\ 0 \leq k \leq p \}.
\end{eqnarray*}
\if0
\begin{lem}\label{generating}
	Let $f  :X\longrightarrow Y$ be a morphism of $\mathcal{D}$.
	\begin{itemize}
		\item[$(1)$]
		The following conditions are equivalent:
		\begin{itemize}
			\item[$(\mathrm{i})$]
			$f  :X\longrightarrow Y$ is a fibration;
			\item[$(\mathrm{ii})$]
			$S^{\mathcal{D}} f : S^{\mathcal{D}} X\longrightarrow S^{\mathcal{D}} Y$ is a fibration;
			\item[$(\mathrm{iii})$]
			$f$ has the right lifting property with respect to $\jcal$.
		\end{itemize}
		\item[$(2)$]
		The following conditions are equivalent:
		\begin{itemize}
			\item[$(\mathrm{i})$]
			$f  :X\longrightarrow Y$ is both a fibration and a weak equivalence;
			\item[$(\mathrm{ii})$]
			$S^{\mathcal{D}} f : S^{\mathcal{D}} X\longrightarrow S^{\mathcal{D}} Y$ is both a fibration and weak equivalence;
			\item[$(\mathrm{iii})$]
			$f$ has the right lifting property with respect to the $\ical$.
		\end{itemize}
	\end{itemize}
\end{lem}
\begin{proof}
	\cite[Lemma 9.6]{origin}.
\end{proof}
\fi
Then, we have the following theorem (\cite[Theorem 1.3]{origin}). See \cite[Definition 15.2.1]{MP} for a compactly generated model category.
\begin{thm}\label{originmain}
	With Definition \ref{WFC}, $\mathcal{D}$ is a compactly generated model category whose object is always fibrant. $\ical$ and $\jcal$ are the sets of generating cofibrations and generating trivial cofibrations respectively.
	%%%%%%%%%%%%%%%%%%%%%%%%%%%%%%%%%%%%%%%%%%%%%%%%%
	\if0
	\item[(2)] Let
	$f  : X \longrightarrow Y$ be a smooth map between diffeological spaces. Then, $f$ is a weak equivalence if and only if
	$$
	\pi^{\dcal}_{p}(f):\pi^{\dcal}_{p}(X, x) \longrightarrow \pi^{\dcal}_{p}(Y, f(x))
	$$
	is bijective for any $p \geq 0$ and any $x \in X$.
	\fi
	%%%%%%%%%%%%%%%%%%%%%%%%%%%%%%%%%%%%%%%%%%%%%%%%%
\end{thm}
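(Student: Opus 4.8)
The plan is to realize the structure of Definition \ref{WFC} as a model structure transferred from the Kan--Quillen model structure on $\scal$ along the adjoint pair $|\ |_{\dcal}:\scal \rightleftarrows \dcal:S^{\dcal}$, and to verify the hypotheses of the recognition theorem for compactly generated model categories (\cite[Definition 15.2.1]{MP}; cf. \cite{Hi}). Here the weak equivalences are precisely the maps created by $S^{\dcal}$, and the candidate generating sets are $\ical=|I_{\scal}|_{\dcal}$ and $\jcal=|J_{\scal}|_{\dcal}$, where $I_{\scal}=\{\dot{\Delta}[p]\hookrightarrow \Delta[p]\}$ and $J_{\scal}=\{\Lambda[p,k]\hookrightarrow \Delta[p]\}$ generate $\scal$; by Axioms 2 and 3 the realization functor carries these to the $\dcal$-embeddings $\dot{\Delta}^{p}\hookrightarrow \Delta^{p}$ and $\Lambda^{p}_{k}\hookrightarrow \Delta^{p}$, so that $|I_{\scal}|_{\dcal}=\ical$ and $|J_{\scal}|_{\dcal}=\jcal$. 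Completeness and cocompleteness of $\dcal$ are supplied by Proposition \ref{conven}(1), and the two-out-of-three property and closure under retracts of the weak equivalences are inherited from $\scal$ through the functor $S^{\dcal}$.

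The first substantive step is to set up the adjunction dictionary. Since $|\ |_{\dcal}$ is left adjoint to $S^{\dcal}$ and sends $J_{\scal}$ (resp.\ $I_{\scal}$) to $\jcal$ (resp.\ $\ical$), a map $f$ in $\dcal$ has the right lifting property with respect to $\jcal$ (resp.\ $\ical$) if and only if $S^{\dcal}f$ has the right lifting property with respect to $J_{\scal}$ (resp.\ $I_{\scal}$); that is, $f$ is a fibration exactly when $S^{\dcal}f$ is a Kan fibration, and $f$ is $\ical$-injective exactly when $S^{\dcal}f$ is a trivial Kan fibration. As trivial Kan fibrations are weak equivalences, this identifies the $\ical$-injective maps with the maps that are simultaneously fibrations and weak equivalences. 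It also makes the ``every object is fibrant'' clause immediate from Axiom 4: a smooth map $\Lambda^{p}_{k}\to X$ extends over $\Delta^{p}$ by composing with the deformation retraction $\Delta^{p}\to \Lambda^{p}_{k}$, so $X\to \ast$ lifts against every horn inclusion. Finally, the domains $\dot{\Delta}^{p}$ and $\Lambda^{p}_{k}$ are compact relative to the respective cell complexes, which is what licenses the compact small object argument and hence the two functorial factorizations and the ``compactly generated'' conclusion.

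The crux, and the step I expect to be the main obstacle, is the acyclicity condition: every relative $\jcal$-cell complex must be a weak equivalence. Each generating map $\Lambda^{p}_{k}\hookrightarrow \Delta^{p}$ is individually a trivial cofibration --- it is a cofibration because it lies in the saturation of $\ical$ (the horn is anodyne, and the left adjoint $|\ |_{\dcal}$ sends $I_{\scal}$-cofibrations to $\ical$-cofibrations), and it is a weak equivalence because Axiom 4 provides a smooth deformation retraction, whose image under $S^{\dcal}$ is a simplicial homotopy equivalence. The difficulty is that $S^{\dcal}$ is only a right adjoint and therefore does not preserve the pushouts and transfinite composites out of which a cell complex is assembled, so one cannot simply transport the weak equivalence through the colimit. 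I would resolve this by showing that the class of maps that are simultaneously deformation-retract inclusions and Hurewicz cofibrations in $\dcal$ --- in particular the horn inclusions, using the embedding property of Axiom 3 to supply a smooth homotopy extension property --- is closed under pushout along arbitrary attaching maps and under transfinite composition. A relative $\jcal$-cell complex is then itself such a trivial Hurewicz cofibration, hence a $\dcal$-deformation retract and a fortiori a weak equivalence. Equivalently, one may phrase this as left properness together with the stability of weak equivalences under transfinite composition of cofibrations; either way, the gluing argument for pushouts is where essentially all of the geometric input --- the fine diffeology on $\Delta^{p}$ and Axioms 3 and 4 --- is consumed.

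With these ingredients the recognition theorem assembles the model structure: the factorization into a cofibration followed by a trivial fibration comes from the small object argument on $\ical$; the factorization into a trivial cofibration followed by a fibration comes from the small object argument on $\jcal$, the left factor being a weak equivalence by acyclicity; and the lifting axiom for trivial cofibrations against fibrations follows from the identification of $\ical$-injectives with trivial fibrations by the standard retract argument. This simultaneously certifies that $\ical$ and $\jcal$ are sets of generating cofibrations and generating trivial cofibrations, and that every object is fibrant.
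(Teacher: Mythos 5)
Your overall architecture coincides with the paper's own proof (which lives in \cite[Theorem 1.3]{origin}; the present paper only quotes it): the recognition theorem for compactly generated model categories, the adjunction dictionary along $(|\ |_{\dcal},S^{\dcal})$ identifying fibrations (resp.\ trivial fibrations) with the maps whose image under $S^{\dcal}$ is a Kan fibration (resp.\ trivial Kan fibration) --- this is \cite[Lemma 9.6]{origin} --- fibrancy of every object via the retraction from Axiom 4, and acyclicity via showing that relative $\jcal$-cell complexes are $\dcal$-deformation retract inclusions. However, your justification of the crucial closure step is wrong as stated: horn inclusions do \emph{not} satisfy a smooth homotopy extension property in $\dcal$, and Axiom 3 cannot supply one. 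The HEP for $i:A\hookrightarrow X$ is equivalent to the canonical map $X\times(0)\cup_{A\times(0)}A\times I\hookrightarrow X\times I$ admitting a retraction, which forces it to be a $\dcal$-embedding; this already fails for $\dot{\Delta}^{1}\hookrightarrow\Delta^{1}$ (Remark \ref{difference}(2)), for the same reason that $|\Delta[1]\times\Delta[1]|_{\dcal}\neq\Delta^{1}\times\Delta^{1}$ (Remark \ref{proof}) and that $\dcal$ is not a simplicial model category. So the class ``deformation-retract inclusions that are Hurewicz cofibrations'' is unavailable; even the weak HEP of Definition \ref{hcofibr} is established for these maps via NDR-pairs, and the statement that $q$-cofibrations are $h$-cofibrations (Proposition \ref{gimpliesh}) is proved \emph{using} Theorem \ref{originmain}, so it cannot be invoked here without circularity.

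The step is nevertheless salvageable, and the actual proof repairs it by elementary gluing rather than by any extension property: for a pushout of $\underset{\lambda}{\coprod}\,\Lambda^{p_\lambda}_{k_\lambda}\hookrightarrow\underset{\lambda}{\coprod}\,\Delta^{p_\lambda}$ along an arbitrary attaching map, one glues the deformation retractions supplied by Axiom 4 with the constant homotopy on the base, using that $-\times I$ preserves pushouts because $\dcal$ is cartesian closed (Proposition \ref{conven}(2)) --- this is \cite[Lemma 9.7]{origin}; for the sequential composite, one stacks the resulting deformations over shrinking intervals $[\frac{1}{2^{n}},\frac{1}{2^{n-1}}]$, concatenated smoothly with cut-off functions, and uses $(\varinjlim B_{n})\times I=\varinjlim\,(B_{n}\times I)$ --- exactly the construction reproduced in Step 1 of the proof of Proposition \ref{trivialcofibr} above. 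Your alternative phrasing via left properness plus transfinite stability of weak equivalences would require this same gluing analysis, so nothing is gained by it. Two smaller omissions: the dictionary requires $|\Lambda[p,k]|_{\dcal}\hookrightarrow\Delta^{p}$ to be a $\dcal$-embedding onto $\Lambda^{p}_{k}$, and Axiom 3 as stated covers only the boundary, the horn case being a separate (if similar) verification in \cite{origin}; and the compactness of the domains $\dot{\Delta}^{p}$, $\Lambda^{p}_{k}$, which you merely assert, rests on the fact that relative cell complexes in $\dcal$ are closed $\dcal$-embeddings whose underlying continuous maps are closed topological embeddings.
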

\begin{rem}\label{ingredients}
	The proofs of Proposition \ref{adjoint1} and Theorem \ref{originmain} are constructed using only the convenient properties of the category $\dcal$ (Proposition \ref{conven}) and Axioms 1-4 for the standard simplices.
	%Theorem \ref{homotopygp} is established for the standard simplices in Definition \ref{simplices}.
\end{rem}
The following theorem, which is \cite[Theorem 1.4]{origin}, shows that weak equivalences in $\dcal$ are just smooth maps inducing isomorphisms on smooth homotopy groups. See \cite[Section 3.1]{CW} or \cite[Chapter 5]{IZ} for the smooth homotopy groups $\pi^{\dcal}_{p}(X, x)$ of a pointed diffeological space $(X, x)$. Note that $S^{\dcal}X$ is always a Kan complex (Theorem \ref{originmain}) and see \cite[p. 25]{GJ} for the homotopy groups $\pi_p(K, x)$ of a pointed Kan complex $(K, x)$.
\begin{thm}\label{homotopygp}
	Let $(X, x)$ be a pointed diffeological space. Then, there exists a natural bijection
	$$
	\varTheta_{X} : \pi^{\dcal}_{p}(X, x) \longrightarrow \pi_{p}(S^{\dcal}X, x) \ \  \text{for} \ \ p \geq 0,
	$$
	that is an isomorphism of groups for $p > 0$.
\end{thm}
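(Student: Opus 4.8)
The plan is to identify both $\pi^\dcal_p(X,x)$ and $\pi_p(S^\dcal X,x)$ as quotients of the \emph{same} set of representatives and then to verify that the two equivalence relations coincide. Since $S^\dcal X$ is a Kan complex (Theorem \ref{originmain}), its homotopy groups admit the usual combinatorial description: an element of $\pi_p(S^\dcal X,x)$ is represented by a $p$-simplex $\sigma\in (S^\dcal X)_p$ all of whose faces equal the degenerate simplex at $x$, modulo simplicial homotopy rel the boundary. Unravelling the face operators $d_i$ (and identifying the boundary via Axiom 3), such a $\sigma$ is precisely a smooth map $\sigma:\Delta^p\to X$ with $\sigma|_{\partial\Delta^p}\equiv x$. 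On the other hand, a representative of $\pi^\dcal_p(X,x)$ is a smooth map of the same type, now taken up to smooth homotopy rel boundary (if the chosen model for $\pi^\dcal_p$ uses cubes, one first passes to simplices through the affine reparametrizations supplied by Axioms 1 and 2). I would therefore define $\varTheta_X$ by sending the smooth homotopy class of $\sigma$ to its simplicial homotopy class; naturality in $X$ is then immediate, since both constructions are functorial in smooth maps $f:X\to Y$ via postcomposition and $S^\dcal f$.

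The heart of the argument is the equivalence, for $\sigma_0,\sigma_1:(\Delta^p,\partial\Delta^p)\to(X,x)$, between being smoothly homotopic rel $\partial$ and being simplicially homotopic in $S^\dcal X$. For the forward direction I would take a smooth homotopy $H:\Delta^p\times I\to X$, constant at $x$ on $\partial\Delta^p\times I$, and precompose it with the affine simplices of the standard prism decomposition of $\Delta^p\times\Delta^1$; these maps are smooth by Axiom 2, so the resulting $(p+1)$-simplices of $S^\dcal X$ assemble into a simplicial homotopy. This simultaneously shows that $\varTheta_X$ is well defined and, together with the surjectivity built into the Kan description, that it is onto. For the converse I would start from a simplicial homotopy, i.e.\ a compatible system of smooth maps on the prism simplices, and reassemble them into a single smooth map on $\Delta^p\times I$; injectivity of $\varTheta_X$ follows. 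Bijectivity is thus reduced entirely to this equivalence of relations.

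For $p>0$ it remains to check that $\varTheta_X$ is a homomorphism. Both group operations are concatenation: on the source via the smooth concatenation using a cut-off function (as in the conventions of Section 1.4), and on the target via the standard simplicial horn-filling product in the Kan complex $S^\dcal X$. I would match them by exhibiting, for given representatives $\sigma$ and $\tau$, a single smooth $(p+1)$-simplex $\Delta^{p+1}\to X$ whose relevant faces are $\sigma$, $\tau$, and their smooth concatenation; such a simplex is built by precomposing the concatenation homotopy with affine prism maps, again smooth by Axiom 2. Combined with the equivalence of relations, this yields the group isomorphism for $p>0$.

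The main obstacle is the converse direction above (and the analogous reassembly in the group-structure step): converting simplexwise-smooth combinatorial data into an honestly smooth map on $\Delta^p\times I$, whose diffeology is the nonstandard one of Definition \ref{simplices} rather than the subspace diffeology for $p\geq 2$. This amounts to a smooth triangulation statement, namely that $\Delta^p\times I$ carries the final diffeology for the affine inclusions of its prism decomposition, so that smoothness may be tested simplex by simplex. This is exactly the kind of compatibility the inductive construction of $\Delta^p$ and Axioms 1--4 are designed to furnish, and I expect it to rest on the prism machinery already developed (cf.\ Lemma \ref{prism}); nevertheless it is the step requiring genuine care.
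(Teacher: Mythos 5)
Your forward direction is fine: precomposing a smooth homotopy $H:\Delta^p\times I\to X$ with the affine simplices of the prism decomposition is smooth by Axiom 2 (this is exactly Lemma \ref{prism}(2), which the paper uses the same way in the proof of Proposition \ref{homologytheory}(3)). The genuine gap is the converse, and it sits precisely where you flagged it — but the resolution you propose is not merely delicate, it is false in this framework. Your ``smooth triangulation statement,'' that $\Delta^p\times I$ carries the final diffeology for the affine inclusions of its prism simplices, is equivalent to saying that the canonical smooth bijection $|\Delta[p]\times\Delta[1]|_{\dcal}\longrightarrow \Delta^p\times\Delta^1$ is a diffeomorphism. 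The paper explicitly denies this: Remark \ref{proof} states that $|\ |_{\dcal}$ does not preserve finite products and that $|\Delta[1]\times\Delta[1]|_{\dcal}$ is two copies of $\Delta^2$ patched along the diagonal, \emph{not} $\Delta^1\times\Delta^1$; see also Remark \ref{difference}(1) and the Fact recorded in Remark \ref{nonsmooth}, which exhibit curves smooth for the product diffeology but not simplexwise smooth near the diagonal. Since the product diffeology has strictly more plots than the final diffeology of the prism decomposition, a map that is smooth on each closed prism simplex need not be smooth on $\Delta^p\times I$; so a simplicial homotopy in $S^{\dcal}X$ cannot be reassembled into a smooth homotopy by your method, and injectivity of $\varTheta_X$ (as well as the analogous reassembly in your group-structure step) does not follow. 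Note also that the paper itself contains no proof of this theorem: it is imported verbatim as \cite[Theorem 1.4]{origin}, and the proof there does not proceed by the equivalence-of-relations scheme you propose, exactly because of this obstruction; it relies instead on the specific deformation and extension properties of the standard simplices (Axioms 1--4 and the machinery of \cite{origin}).

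A secondary, smaller gap: identifying representatives of $\pi^{\dcal}_p(X,x)$ with smooth maps $(\Delta^p,\dot{\Delta}^p)\to(X,x)$ is not the ``affine reparametrization'' you suggest. The smooth homotopy groups of \cite{CW} and \cite{IZ} are defined via cubes, $\rbb^p$, or iterated loop spaces, and there is no affine comparison between $I^p$ and $\Delta^p$; moreover, since the diffeology of Definition \ref{simplices} differs from the subdiffeology $\Delta^p_{\sub}$ for $p\geq 2$ (Lemma \ref{simplex}), the class of smooth simplex-based representatives does not obviously match the cube-based one in either direction. Any repair of your argument would have to replace the false triangulation claim by the Kan-complex and deformation-retract machinery (Axiom 4, Proposition \ref{trivialcofibr}-type retractions, and the extension results of \cite{origin}) to compare the two homotopy relations without ever producing a smooth map on the product $\Delta^p\times I$ from simplexwise data.
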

\if0
For the proof of Theorem \ref{Quillenequiv}, we also need the following lemma.
\begin{lem}\label{delta}
	\begin{itemize}
		\item[$(1)$] If $f  :\Delta^r \longrightarrow \Delta^p\times I$ is an affine map (i.e., a map preserving convex combinations), then $f$ is smooth.
		\item[$(2)$] The horn $\Lambda^p_k$, and hence, the standard $p$-simplex $\Delta^p$ is contractible in $\dcal$.
		\item[$(3)$] The map $id : \Delta^p \longrightarrow \Delta_{{\rm sub}}^p$ is smooth, which restricts to the diffemorphism $id : \Delta^p - {\rm sk}_{p-2} \ \Delta^p \xrightarrow[\cong]{} (\Delta^p - {\rm sk}_{p-2} \ \Delta^p)_{\rm sub}$, where ${\rm sk}_{p-2} \ \Delta^p$ denotes the $(p-2)$-skeleton of $\Delta^p$.
		\item[$(4)$] $\dot{\Delta}^p$ is a deformation retract of $\Delta^p - \{b_p\}$ in $\dcal$, where $b_p$ is the barycenter of $\Delta^p$.
	\end{itemize}
	\begin{proof}
		Part 1 follows immediately from Proposition \ref{axioms} (Axiom 2). See \cite[Corollary 8.3, Lemmas 3.1 and 4.2, and Proposition 6.2]{origin} for Parts 2-4.
	\end{proof}
\end{lem}
\fi
The standard simplices $\Delta^{p}$ define not only the model structure on $\dcal$ but also a simplicial category structure on $\mathcal{D}$ via Proposition \ref{adjoint1}(1); the relation between these two structures on $\mathcal{D}$ is one of our greatest concerns (see Section 4). Furthermore, the standard simplices $\Delta^{p}_{\sub}$ are also used for constructing smooth maps and smooth homotopies (see Section 8). Thus, we end this subsection with the following results on the standard simplices.\par
A subset $A$ of $\Delta^p$ endowed with the sub-diffeology of $\Delta^p_{\sub}$ is denoted by $A_{\sub}$, which is compatible with the notation introduced above.
\begin{lem}\label{simplex}
	\begin{itemize}
	\item[{\rm (1)}] The horn $\Lambda^{p}_{k}$, and hence, the standard $p$-simplex $\Delta^{p}$ is $\dcal$-contractible.
	\item[{\rm (2)}] The set $\{ \Delta^{p}_{\sub} \}_{p \geq 0}$ satisfies Axioms 1 and 2 for the standard simplices but satisfies neither Axiom 3 nor 4.
	\item[{\rm (3)}] The map $id : \Delta^{p} \longrightarrow \Delta^{p}_{\sub}$ is smooth, which restricts to the diffeomorphism $id: \Delta^{p}-{\rm sk}_{p-2}\ \Delta^{p} \xrightarrow[\cong]{} (\Delta^{p} - {\rm sk}_{p-2}\ \Delta^{p})_{\sub}$, where ${\rm sk}_{p-2}$ $\Delta^{p}$ denotes the $(p-2)$-skeleton of $\Delta^{p}$.
	\end{itemize}
\end{lem}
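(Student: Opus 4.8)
The plan is to establish the three parts in the order (3), (2), (1), leaning on the defining final structure of $\Delta^p$ (Definition \ref{simplices}), on Axioms 1--4 (Proposition \ref{axioms}), and on the technical computations of \cite{origin}. For the smoothness of $id:\Delta^p\to\Delta^p_\sub$ in Part (3), I would argue by induction on $p$. The case $p\leq 1$ is trivial, since there $\Delta^p=\Delta^p_\sub$. For the inductive step, $\Delta^p$ carries the final diffeology for the maps $\varphi_0,\dots,\varphi_p$, so it suffices that each composite $\varphi_i:\Delta^{p-1}\times[0,1)\to\Delta^p_\sub$ be smooth; writing $\varphi_i(x,t)=(1-t)(i)+t\,d^i(x)$ with $d^i$ affine and using the inductive smoothness of $id:\Delta^{p-1}\to\Delta^{p-1}_\sub$, this composite is the restriction of a Euclidean-smooth map into $\abb^p$ landing in $\Delta^p$, hence a plot of $\Delta^p_\sub$.

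That the same map restricts to a diffeomorphism away from the $(p-2)$-skeleton amounts to the reverse statement on $\Delta^p-{\rm sk}_{p-2}\,\Delta^p$: every Euclidean-smooth plot into this open set is already a plot of $\Delta^p$. On the complement of the codimension $\geq 2$ skeleton --- that is, over the open top cell and the open facets --- the fine diffeology and the sub-diffeology coincide, the discrepancy being concentrated at the lower-dimensional corners; I would carry out this local analysis near the open facets, or simply invoke the corresponding result of \cite{origin}. Part (2) is then short. Axiom 2 is immediate, since an affine map $\Delta^p_\sub\to\Delta^q_\sub$ is the restriction of a smooth affine map of Euclidean spaces and sub-diffeologies inherit smoothness. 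For Axiom 1 I would sandwich topologies: the $\dcal$-topology always refines the subspace topology, which here is the topological standard simplex, while Part (3) provides a smooth map $id:\Delta^p\to\Delta^p_\sub$ and hence shows the $\dcal$-topology of $\Delta^p_\sub$ is coarser than that of $\Delta^p$, the latter being the topological simplex by Axiom 1; the two inclusions force equality. The failure of Axioms 3 and 4 for $p\geq 2$ is exactly \cite[Proposition A.2]{origin}.

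For Part (1) I would first record that, by Axiom 4, $\Lambda^p_k$ is a $\dcal$-deformation retract of $\Delta^p$; the two spaces are therefore $\dcal$-homotopy equivalent, so one is $\dcal$-contractible exactly when the other is, which accounts for the ``and hence''. It thus remains to contract $\Delta^p$. The natural candidate is the straight-line homotopy $H(y,s)=(1-s)y+s\,v$ onto a vertex $v=(i)$, and its smoothness with respect to the \emph{fine} diffeology of $\Delta^p$ is the main obstacle. Against the chart $\varphi_i$ the homotopy reparametrizes cleanly, since a direct computation gives $H(\varphi_i(x,t),s)=\varphi_i(x,(1-s)t)$ with $(1-s)t\in[0,1)$; but against a chart $\varphi_j$ with $j\neq i$ the image generally leaves every single $\varphi$-chart, so smoothness cannot be verified chart-by-chart in the naive way, and by Part (3) Euclidean smoothness alone does not suffice precisely along the skeleton that the contraction must sweep across. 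This is the genuinely technical point of the lemma; I would resolve it via the explicit contraction built in \cite{origin} (Corollary 8.3 and its supporting lemmas) and then transport $\dcal$-contractibility from $\Delta^p$ to $\Lambda^p_k$ along the deformation retraction supplied by Axiom 4.
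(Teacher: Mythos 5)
Your proposal is correct and matches the paper's proof in substance: the paper disposes of this lemma entirely by citing \cite[Remark 9.3, Lemma A.1, Proposition A.2, and Lemmas 3.1 and 4.2]{origin}, and your sketches of Parts (3), (2), and (1) reconstruct exactly the content of those cited results, deferring to the same source (\cite[Proposition A.2]{origin} for the failure of Axioms 3--4, and the explicit contraction of \cite{origin} for Part (1)) at precisely the technical points where the paper does. Your inductive argument for the smoothness of $id:\Delta^p\to\Delta^p_{\sub}$ via the final structure for the $\varphi_i$, and your topology-sandwich for Axiom 1, are sound and consistent with \cite[Lemmas 3.1 and A.1]{origin}.
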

\begin{proof}
See \cite[Remark 9.3, Lemma A.1, Proposition A.2, and Lemmas 3.1 and 4.2]{origin}.
\end{proof}
\subsection{Quillen pairs $(|\ |_{\dcal}, S^{\dcal})$ and $(\ \widetilde{\cdot}, R)$}
In this subsection, we see that the adjoint pairs $(|\ |_{\dcal}, S^{\dcal})$ and $(\widetilde{\cdot}, R)$ are Quillen pairs whose composite is a pair of Quillen equivalences.

We begin with the following remark which relates to the model structure on $\czero$.
\begin{rem}\label{arc}
	The category $\ccal^{0}$ is a compactly generated model category whose object is always fibrant (cf. \cite[Section 7.10]{Hi}); this is shown in the same manner as in the cases of the category $\tcal$ of topological spaces (\cite[Section 8]{DS}) and that of compactly generated Hausdorff spaces (\cite[Proposition 9.2 in Chapter I]{GJ}). We can easily see that the adjoint pair $I :\czero \rightleftarrows \tcal : \alpha$ (\cite[Lemma 2.7]{origin}) is a pair of Quillen equivalences.
\end{rem}
For the adjoint pairs $(|\ |_\dcal, S^\dcal)$ and $(\tilde{\cdot}, R)$, we have the following result.
\begin{lem}\label{Quillenpairs}
	$|\ |_{\dcal} : \scal \rightleftarrows \dcal : S^{\dcal}$ and $\widetilde{\cdot} : \dcal \rightleftarrows \czero : R$ are Quillen pairs.
\end{lem}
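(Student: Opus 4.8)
The plan is to verify that each left adjoint is a left Quillen functor by invoking the standard recognition criterion for cofibrantly generated model categories: since $\scal$, $\dcal$, and $\czero$ are all (compactly, hence cofibrantly) generated (Theorem \ref{originmain} and Remark \ref{arc}), a left adjoint out of any of them is left Quillen as soon as it carries a set of generating cofibrations into cofibrations and a set of generating trivial cofibrations into trivial cofibrations (see \cite{Hi,MP}). So it suffices to evaluate $|\ |_{\dcal}$ and $\widetilde{\cdot}$ on the boundary inclusions $\dot\Delta[p]\hookrightarrow\Delta[p]$ and the horn inclusions $\Lambda_k[p]\hookrightarrow\Delta[p]$ of simplicial sets, and on the corresponding elements of $\ical$ and $\jcal$.

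For the pair $(|\ |_{\dcal},S^{\dcal})$, I would argue as follows. By definition $|\Delta[p]|_{\dcal}=\Delta^p$, and Axiom 3 (Proposition \ref{axioms}) identifies the canonical injection $|\dot\Delta[p]|_{\dcal}\hookrightarrow\Delta^p$ as a $\dcal$-embedding onto the set-theoretic boundary, that is, as the element $\dot\Delta^p\hookrightarrow\Delta^p$ of $\ical$. The analogous identification of $|\Lambda_k[p]|_{\dcal}\hookrightarrow\Delta^p$ with the horn inclusion $\Lambda^p_k\hookrightarrow\Delta^p$ in $\jcal$ then follows, the horn being a union of faces sitting inside the boundary. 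Hence $|\ |_{\dcal}$ sends the generating cofibrations and generating trivial cofibrations of $\scal$ into $\ical$ and $\jcal$, respectively, so it is left Quillen.

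For the pair $(\widetilde{\cdot},R)$, I would take as generators of $\dcal$ the very sets $\ical=\{|\dot\Delta[p]|_{\dcal}\hookrightarrow|\Delta[p]|_{\dcal}\}$ and $\jcal=\{|\Lambda_k[p]|_{\dcal}\hookrightarrow|\Delta[p]|_{\dcal}\}$ obtained in the previous step. By Proposition \ref{adjoint1}(2) the composite $\widetilde{\cdot}\circ|\ |_{\dcal}$ is the topological realization $|\ |$, so $\widetilde{\cdot}$ carries these generators to the topological boundary and horn inclusions $|\dot\Delta[p]|\hookrightarrow|\Delta[p]|$ and $|\Lambda_k[p]|\hookrightarrow|\Delta[p]|$ (using Axiom 1 to identify $\widetilde{\Delta^p}$ with $|\Delta[p]|$). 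Since the model structure on $\czero$ is built exactly as in the topological case (Remark \ref{arc}), these are a cofibration and a trivial cofibration, respectively; thus $\widetilde{\cdot}$ is left Quillen as well.

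The only genuinely nontrivial input will be the geometric identification of realizations of simplicial boundaries and horns with their diffeological counterparts $\dot\Delta^p$ and $\Lambda^p_k$ as subspaces of $\Delta^p$; the boundary case is precisely Axiom 3, and the horn case should reduce to it since the horn is a subcomplex of the boundary. Everything else is the formal recognition criterion together with the composite identity $\widetilde{\cdot}\circ|\ |_{\dcal}=|\ |$.
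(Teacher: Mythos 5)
Your overall strategy (the recognition criterion for cofibrantly generated model categories, applied to the left adjoints) is legitimate, but it is not the paper's route. The paper disposes of both pairs in two sentences through the \emph{right} adjoints: $S^{\dcal}$ preserves fibrations and trivial fibrations by \cite[Lemma 9.6]{origin}, and $R$ does as well (Proposition \ref{conven}(3) and \cite[Lemma 9.8]{origin}); by the standard characterization of Quillen pairs (see \cite{Hi,MP}) this is equivalent to your left-adjoint criterion. What your version would buy is an explicit description of where the generating (trivial) cofibrations go, and your use of the composite identity $\widetilde{\cdot}\circ|\ |_{\dcal}=|\ |$ (Proposition \ref{adjoint1}(2)) together with Remark \ref{arc} does handle $(\widetilde{\cdot},R)$ cleanly once the first step is secured. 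Your boundary case is also correct as stated: Axiom 3 is precisely the assertion that $|\dot{\Delta}[p]|_{\dcal}\hookrightarrow\Delta^{p}$ is a $\dcal$-embedding, so with the subspace-diffeology convention it is the element $\dot{\Delta}^{p}\hookrightarrow\Delta^{p}$ of $\ical$.

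The gap is the horn step. The identification $|\Lambda_{k}[p]|_{\dcal}\cong\Lambda^{p}_{k}$ does \emph{not} ``reduce to'' Axiom 3 merely because the horn is a subcomplex of the boundary: the realization of a subcomplex is a colimit whose diffeology can be strictly finer than the sub-diffeology induced from the ambient realization, so subcomplex inclusions need not realize to $\dcal$-embeddings. The paper itself exhibits exactly this phenomenon for the horn: Remark \ref{embedding} shows that the subcomplex of $\Delta(2)$ whose geometric realization is the set $\Lambda^{2}_{0}$ fails to include as a $\dcal$-embedding under the realization $|\ |'_{\dcal}$ built from the simplices $\Delta^{p}_{\rm sub}$ (via the proof of \cite[Proposition A.2(1)]{origin}). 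For the standard simplices the identification is true, but it is a substantive result of \cite{origin} --- Proposition 9.2 there, that realizations of monomorphisms of simplicial sets are $\dcal$-embeddings, which this paper cites for exactly this purpose in the proof of Lemma \ref{Dsub}; equivalently the needed lifting-property translation is packaged in \cite[Lemma 9.6]{origin}, the very result the paper's own proof invokes. Without that input, your argument shows only that $|\Lambda_{k}[p]|_{\dcal}\hookrightarrow\Delta^{p}$ is a cofibration (it is a relative $\ical$-cell complex, by Axiom 3 and the fact that $|\ |_{\dcal}$ preserves colimits), not that it is a trivial cofibration; and the same unproved identification is silently reused in your second step when you present $\jcal$ as $\{|\Lambda_{k}[p]|_{\dcal}\hookrightarrow|\Delta[p]|_{\dcal}\}$. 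Citing \cite[Proposition 9.2]{origin} at that point closes the gap; alternatively, dualize and argue as the paper does.
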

\begin{proof}
	Since $S^{\mathcal{D}}$ preserves both fibrations and trivial fibrations (\cite[Lemma 9.6]{origin}), $(|\ |_{\mathcal{D}},S^{\mathcal{D}})$ is a Quillen pair. Since $R$ also preserves both fibrations and trivial fibrations (Proposition \ref{conven}(3) and \cite[Lemma 9.8]{origin}), $(\widetilde{\cdot},R)$ is a Quillen pair.
\end{proof}
The following lemma states that the composite of the adjoint pairs $(|\ |_\dcal, S^\dcal)$ and $(\tilde{\cdot}, R)$ is a pair of Quillen equivalences (see Proposition \ref{adjoint1}(2)).
\begin{lem}\label{clequiv}
	$|\ |: \scal \rightleftarrows \ccal^{0}:S$ is a pair of Quillen equivalences.
\end{lem}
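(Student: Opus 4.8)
The plan is to reduce Lemma \ref{clequiv} to two facts already available: the classical Quillen equivalence $|\ |:\scal\rightleftarrows\tcal:S$ between simplicial sets and topological spaces (\cite{GJ,MP}) and the Quillen equivalence $I:\czero\rightleftarrows\tcal:\alpha$ noted in Remark \ref{arc}. By Lemma \ref{Quillenpairs} together with Proposition \ref{adjoint1}(2), the pair $|\ |:\scal\rightleftarrows\czero:S$ is the composite of two Quillen pairs and is therefore itself a Quillen pair; so it remains only to prove that it is a Quillen equivalence.

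First I would identify the composite of the Quillen pairs $(|\ |,S):\scal\rightleftarrows\czero$ and $(I,\alpha):\czero\rightleftarrows\tcal$. By Proposition \ref{adjoint1}(2) the functor $|\ |:\scal\to\czero$ is the topological realization, computed as the colimit $\colimunder{\Delta\downarrow K}\Delta^n$ of topological simplices formed in $\czero$. Since $I:\czero\to\tcal$ is a left adjoint it preserves colimits, and each topological standard simplex $\Delta^n$ is arc-generated, so $I(|K|)=\colimunder{\Delta\downarrow K}\Delta^n$ now formed in $\tcal$, which is exactly the classical topological realization of $K$. Thus $I\circ|\ |=|\ |_{\tcal}$, and passing to right adjoints (or noting directly that $\czero(\Delta^n,\alpha X)=\tcal(\Delta^n,X)$ because $\Delta^n$ is arc-generated) gives $S^{\czero}\circ\alpha=S$. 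Hence the composite Quillen pair is precisely the classical Quillen equivalence $(|\ |,S):\scal\rightleftarrows\tcal$.

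Finally I would conclude by the two-out-of-three property for Quillen equivalences, or equivalently by passing to total left derived functors: composition of left Quillen functors gives $\mathbf{L}(I\circ|\ |)=\mathbf{L}I\circ\mathbf{L}|\ |$ on homotopy categories. Since both $\mathbf{L}I$ and $\mathbf{L}(I\circ|\ |)=\mathbf{L}|\ |_{\tcal}$ are equivalences of homotopy categories, so is $\mathbf{L}|\ |:\ho(\scal)\to\ho(\czero)$, and therefore $(|\ |,S):\scal\rightleftarrows\czero$ is a pair of Quillen equivalences. The hard part---really the only step needing care---is the identification $I\circ|\ |=|\ |_{\tcal}$: it rests on the fact that geometric realizations of simplicial sets are $CW$ complexes and hence already arc-generated, combined with preservation of colimits by the left adjoint $I$, so that the coreflection into $\czero$ leaves the realization untouched.
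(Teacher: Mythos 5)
Your proof is correct, but it takes a genuinely different route from the paper's. The paper proves Lemma \ref{clequiv} by re-running a classical argument inside $\czero$: it observes that the proof that $|\ |:\scal\rightleftarrows\bcal:S$ is a pair of Quillen equivalences for $\bcal=\tcal$ (\cite[Theorem 16.1]{May}) or $\bcal=\kcal$ (\cite[Theorem 11.4 in Chapter I]{GJ}) goes through in the same manner in the arc-generated setting, since all spaces and homotopies involved stay within $\czero$. You instead transfer the classical result formally along the coreflection: you identify the composite of $(|\ |,S):\scal\rightleftarrows\czero$ with $(I,\alpha):\czero\rightleftarrows\tcal$ as the classical pair of realization and singular functors for $\tcal$ --- correctly, via preservation of colimits by the left adjoint $I$, the fact that each topological simplex (indeed every CW complex) is arc-generated, and the identity $\czero(\Delta^n,\alpha X)=\tcal(\Delta^n,X)$ --- and then apply two-out-of-three for Quillen equivalences at the level of total derived functors, i.e.\ \cite[Corollary 1.3.15]{Hovey}, which the paper itself invokes immediately after this lemma for the analogous reduction of Theorem \ref{Quillenequiv} to its first part. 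Your input that $(I,\alpha)$ is a Quillen equivalence is exactly what Remark \ref{arc} records, and it is independent of the lemma (one sees it directly from the on-the-nose identities $S\circ\alpha=S_{\tcal}$ and $S_{\tcal}\circ I=S$ on singular complexes), so no circularity arises. What your route buys is a short, purely formal proof that never reopens the classical argument; what the paper's route buys is independence from the assertion in Remark \ref{arc} and uniformity with the later, harder case of $\dcal$, where (as Remark \ref{proof} explains) only the May-style argument adapts, since $|\ |_{\dcal}$ fails to preserve finite products.
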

\begin{proof}
	This result is shown in the same manner as in the case of $|\ |:\scal \rightleftarrows \bcal:S$, where $\bcal$ is the category $\tcal$ of topological spaces or the category $\kcal$ of compactly generated Hausdorff spaces; see \cite[Theorem 16.1]{May} and \cite[Theorem 11.4 in Chapter I]{GJ} for the cases of $\tcal$ and $\kcal$ respectively.
\end{proof}
\begin{rem}\label{proof}
	As mentioned in the proof of Lemma \ref{clequiv}, two proofs of the fact that $(|\ |, S)$ is a pair of Quillen equivalences are known (\cite[Theorem 16.1]{May}, \cite[Theorem 11.4 in Chapter I]{GJ}). We prove Part 1 of Theorem \ref{Quillenequiv} in a way analogous to the proof of \cite[Theorem 16.1]{May} (see Section 3).
	\par\indent
	The other proof is based on Quillen's result that the topological realization of a Kan fibration is a Serre fibration (\cite[Theorem 10.10 in Chapter I]{GJ}). For the realization functor $|\ |_{\mathcal{D}}:\scal\longrightarrow\mathcal{D}$, the analogous result does not seem to hold; note that $|\ |_{\mathcal{D}}$ does not preserve finite products (in fact, $|\Delta[1] \times \Delta[1]|_{\dcal}$ is not the product $|\Delta[1]|_{\dcal} \times |\Delta[1]|_{\dcal}$ but a diffeological space obtained by patching together two copies of $\Delta^2$; see the argument in the proof of \cite[Proposition A.2(1)]{origin}). Thus, it does not seem that Theorem \ref{Quillenequiv}(1) can be proved according to the idea of the proof of \cite[Theorem 11.4 in Chapter I]{GJ}.
\end{rem}
\section{Quillen equivalences between $\scal$, $\dcal$, and $\czero$}
In this section, we develop the theory of singular homology for diffeological spaces and use it to establish the Quillen equivalences between $\scal$, $\dcal$, and $\czero$ (Theorem \ref{Quillenequiv}) and the basic properties of the subclasses $\wcal_\dcal$ and $\vcal_\dcal$ (Corollary \ref{W}).
%%%%%%%%%%%%%%%%%%%%%%%%%%%%%%%
\if0
We end this section with the following remark concerning the categories $C^{\infty}, \dcal,$ and $\czero$.
\begin{rem}(1) The category $C^{\infty}$ can be regarded as a full subcategory of the category $\dcal$. In fact, for a $C^{\infty}$-manifold $M$, we define $D_{M}$, we define $D_{M}$ to be the set of $C^{\infty}$-parametrizations of $M$. Then the assignment
	\[
	M \longmapsto (M, D_{M})
	\]
	defines the fully faithful functor
	\[
	C^{\infty} \longhookrightarrow \dcal
	\]
	(\cite[p.264]{KM}). Since the inclusion functor $C^{\infty} \longhookrightarrow \dcal$ obviously preserves finite products, the category of Lie groups can be regarded as a full subcategory of the category of diffeological groups.\\
	(2) The functor $\widetilde{\cdot}: \dcal \longrightarrow \czero$ defines the set-theoretic map $\widetilde{\cdot}:\dcal(A, B) \longhookrightarrow \czero(\widetilde{A}, \widetilde{B})$ for $A, B \in \dcal$. Since a morphism $\Delta^{p} \times A \longrightarrow B$ of $\dcal$ defines a morphism $\Delta^{p}_{\rm top} \times \widetilde{A} \longrightarrow \widetilde{B}$ of $\czero$ (\cite[Proposition 2.13]{origin} and Proposition \ref{axioms}), we have the inclusion $\widetilde{\cdot}: S^{\dcal}\dcal(A, B) \longhookrightarrow S\czero(\widetilde{A}, \widetilde{B})$ of simplicial sets. For $C^{\infty}$-manifolds $M, N$, we thus have the inclusion
	\[
	\widetilde{\cdot}: S^{\dcal}\dcal(M, N) \longhookrightarrow S\czero(\widetilde{M}, \widetilde{N})
	\]
	by Part 1, which is just the inclusion introduced in Section 1.
\end{rem}
\fi
%%%%%%%%%%%%%%%%%%%%%%%%%%%%%%%%
%Section3==============================================================================
\subsection{Singular homology of a diffeological space}
In this subsection, we introduce the singular homology of a diffeological pair (i.e., a pair of a diffeological space and its diffeological subspace) and establish the results used in the proof of Theorem \ref{Quillenequiv}.
\par\indent
To define the homology $H_{\ast}(X, A)$ of a diffeological pair $(X, A)$, we begin by recalling the homology groups of a simplicial pair (i.e., a pair of a simplicial set and its simplicial subset). For a category $\mathcal{A}$, $s\mathcal{A}$ denotes the category of simplicial objects in $\mathcal{A}$ (\cite[Section 2]{May}). Let $\zbb$-$mod$ denote the category of $\zbb$-modules, and let $\mathsf{Kom}_{\geq0}(\zbb$-$mod)$ denote the category of non-negatively graded chain complexes of $\zbb$-modules.
The free $\zbb$-module functor $\zbb\cdot: Set \longrightarrow \zbb\text{-}mod$ extends to the functor $\mathbb{Z}\cdot:\scal = sSet \longrightarrow s\zbb\text{-}mod$. The composite
\begin{center}
	$\scal \xrightarrow{\zbb\cdot} s\zbb\text{-}mod \longrightarrow \mathsf{Kom}_{\geq 0}(\zbb\text{-}mod)$
\end{center}
is also denoted by $\zbb\cdot$, where the functor $s\zbb\text{-}mod$ $\longrightarrow \mathsf{Kom}_{\geq 0}(\zbb\text{-}mod)$ is defined by assigning to a simplicial $\zbb$-module $M$ the chain complex
\begin{center}
	$\cdots \xrightarrow{\partial_1} M_1 \xrightarrow{\partial_0} M_0 \longrightarrow 0 $
\end{center}
with $\partial_n = \sum_{i=0}^{n} (-1)^i d_i$. Then, the {\sl homology} $H_{\ast}(K,L)$ of a simplicial pair $(K,L)$ is defined by
$$
H_{\ast}(K,L)=H_{\ast}(\zbb(K)/\zbb(L)).
$$
The homology of a simplicial pair $(K, \emptyset)$ is usually referred to as the homology of a simplicial set $K$, written $H_{\ast}(K)$.
\par\indent
Using the homology of a simplicial pair, we define the {\sl singular homology} $H_{\ast}(X,A)$ of a diffeological pair $(X,A)$ by
$$
H_{\ast}(X,A)=H_{\ast}(S^{\dcal} X,S^{\dcal} A).
$$
The singular homology of a diffeological pair $(X,\emptyset)$ is usually referred to as the {\sl singular homology} of a diffeological space $X$, written $H_{\ast}(X)$.
\par\indent
The singular homology for diffeological pairs has the following desirable properties. For a subset $S$ of a topological space $Z$, $\overline{S}$ and $S^{\circ}$ denote the closure and the interior of $S$ respectively.

\begin{prop}\label{homologytheory}
	\begin{itemize}
		\item[$(1)$]
		(Exactness) For a diffeological pair $(X,A)$, there exists a natural exact sequence
		\begin{center}
			\begin{tikzcd}
				&\cdots
				\arrow[d, phantom, ""{coordinate, name=Z}]
				&\phantom{H_p(X,A)} \arrow[dll,
				"\partial",
				rounded corners,
				to path={ -- ([xshift=2ex]\tikztostart.east)
					|- (Z) [near end]\tikztonodes
					-| ([xshift=-2ex]\tikztotarget.west)
					-- (\tikztotarget)}] \\
				H_p(A) \arrow[r]
				& H_p(X) \arrow[r]
				\arrow[d, phantom, ""{coordinate, name=Z}]
				& H_p(X,A) \arrow[dll,
				"\partial",
				rounded corners,
				to path={ -- ([xshift=2ex]\tikztostart.east)
					|- (Z) [near end]\tikztonodes
					-| ([xshift=-2ex]\tikztotarget.west)
					-- (\tikztotarget)}] \\
				H_{p-1}(A) \arrow[r]
				&  \cdots
				&
			\end{tikzcd}
		\end{center}
		\item[$(2)$]
		(Excision) If $(X,A)$ is a diffeological pair and $U$ is a diffeological subspace of $X$ such that $\overline{U}\subset A^{\circ}$, then the inclusion $(X-U,A-U)\longhookrightarrow (X,A)$ induces an isomorphism $H_{\ast}(X-U,A-U)\xrightarrow[\ \cong\ ]{} H_{\ast}(X,A)$.
		\item[$(3)$]
		(Homotopy) Let $f,g:(X,A)\longrightarrow (Y,B)$ be maps of diffeological pairs. If $f\simeq_{\dcal} g$, then the induced maps $f_{\ast},g_{\ast}:H_{\ast}(X,A)\longrightarrow H_{\ast}(Y,B)$ coincide.\vspace{2mm}
		\item[$(4)$]
		(Dimension)
		Let $\ast$ be a terminal object of $\mathcal{D}$. Then
		\[
		H_p({\ast})=
		\begin{cases}
		\zbb & p=0 \\
		0 & p\neq 0.
		\end{cases}
		\]
		\item[$(5)$]
		(Additivity) If $X$ is the coproduct of diffeological spaces $\{X_i \}$, then the inclusions $X_i \longrightarrow X$ induce an isomorphism
		$$
		\underset{i}{\bigoplus}\ H_\ast(X_i) \xrightarrow[\cong]{} H_\ast(X).
		$$
	\end{itemize}
\end{prop}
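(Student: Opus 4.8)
The plan is to transfer the classical proofs of the Eilenberg--Steenrod axioms for topological singular homology to the present setting. The decisive observation is that every chain-level operator occurring in those proofs (faces and degeneracies, the prism operator, barycentric subdivision and its associated chain homotopy) is assembled from \emph{affine} maps between standard simplices, and that such maps are smooth by Axiom 2 (Proposition \ref{axioms}). Consequently each of these operators restricts to the smooth singular chain complexes $\zbb(S^{\dcal}X)$, and the purely algebraic parts of the classical arguments apply unchanged; the only genuinely geometric inputs are compactness and connectedness of $\widetilde{\Delta^{p}}$ (Axiom 1).

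I would dispose of parts (1), (4), and (5) first, as they are formal. For exactness, note that for a diffeological subspace $A\subset X$ the map $S^{\dcal}A\longhookrightarrow S^{\dcal}X$ is an inclusion of simplicial sets: a smooth $\sigma:\Delta^{p}\to X$ whose image lies set-theoretically in $A$ is smooth into $A$ precisely because $A$ carries the sub-diffeology. Hence $\zbb(S^{\dcal}A)$ is a subcomplex of $\zbb(S^{\dcal}X)$ with quotient $\zbb(S^{\dcal}X)/\zbb(S^{\dcal}A)$ computing $H_{\ast}(X,A)$, and the resulting short exact sequence of chain complexes yields the long exact sequence by the snake lemma; naturality follows from functoriality of $S^{\dcal}$ and $\zbb(\cdot)$. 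For dimension, there is a unique smooth map $\Delta^{p}\to\ast$ for every $p$, so $S^{\dcal}(\ast)=\Delta[0]$ and $H_{\ast}(\ast)=H_{\ast}(\Delta[0])$ is computed directly. For additivity, recall that $\widetilde{\cdot}$ is a left adjoint and so preserves coproducts, giving $\widetilde{\coprod_{i}X_{i}}=\coprod_{i}\widetilde{X_{i}}$, in which each summand is clopen; since $\widetilde{\Delta^{p}}$ is connected, every smooth $\Delta^{p}\to\coprod_{i}X_{i}$ lands in a single summand, whence $S^{\dcal}(\coprod_{i}X_{i})=\coprod_{i}S^{\dcal}X_{i}$. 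As $\zbb(\cdot)$ sends coproducts of simplicial sets to direct sums and homology commutes with direct sums, the claim follows.

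For the homotopy axiom (3), I would construct the prism operator raising degree by one, $P:\zbb(S^{\dcal}X)\to\zbb(S^{\dcal}Y)$, by the classical formula attached to the standard affine triangulation of $\Delta^{p}\times I$ into $(p+1)$-simplices. Each triangulating map $\Delta^{p+1}\to\Delta^{p}\times I$ is smooth, because a map into a product is smooth iff its two components are, and here both components are affine maps into $\Delta^{p}$ and into $I\cong\Delta^{1}$, hence smooth by Axiom 2; composing with $\sigma\times\mathrm{id}_{I}$ and with the smooth homotopy $H:X\times I\to Y$ keeps everything smooth. The identity $\partial P+P\partial=g_{\#}-f_{\#}$ then holds verbatim, giving $f_{\ast}=g_{\ast}$, and when $f,g$ are maps of pairs with $H(A\times I)\subset B$ the operator respects the subcomplexes and yields the relative statement.

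The main obstacle is excision (2), which requires the full barycentric subdivision machinery. I would define the subdivision operator $Sd$ and a chain homotopy $T$ between $Sd$ and the identity by the classical explicit formulas; being $\zbb$-linear combinations of precompositions $\sigma\mapsto\sigma\circ\ell$ with affine self-maps $\ell$ of $\Delta^{p}$, they preserve smoothness by Axiom 2 and thus act on $\zbb(S^{\dcal}X)$. The one non-formal ingredient is the small simplices theorem for the cover $\mathcal U=\{A^{\circ},\,X-\overline{U}\}$, whose members have interiors covering $X$ thanks to $\overline{U}\subset A^{\circ}$: I would show that the inclusion of the subcomplex generated by $\mathcal U$-small smooth simplices into $\zbb(S^{\dcal}X)$ is a chain homotopy equivalence. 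The requisite Lebesgue-number argument is carried out on the underlying topological space, where a smooth $\sigma:\Delta^{p}\to X$ induces a continuous map out of the compact space $\widetilde{\Delta^{p}}$ (Axiom 1), so that after finitely many subdivisions each piece maps into a single member of $\mathcal U$; the bound is furnished by iterating $T$. Excision then follows from the standard chain of identifications $H_{\ast}(X,A)\cong H_{\ast}\bigl(\zbb^{\mathcal U}(S^{\dcal}X)/\zbb(S^{\dcal}A)\bigr)\cong H_{\ast}(X-U,A-U)$, where the first isomorphism is the small simplices theorem and the second is the algebraic isomorphism (second isomorphism theorem) induced by the inclusion, using $B=X-U$ and $A\cap B=A-U$ together with the fact that subspaces carry the sub-diffeology.
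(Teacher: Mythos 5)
Your proposal is correct and takes essentially the same route as the paper: parts (1), (4), (5) are disposed of formally, and for (2) and (3) the paper likewise observes that the prism operator and the subdivision operator $Sd$ with its chain homotopy $T$ are assembled from affine maps into $\Delta^p$ and $\Delta^p\times I$, hence smooth by Axiom 2 (Lemma \ref{prism}(2)), so that the classical operators on $\zbb S\widetilde{X}$ restrict to the subcomplex $\zbb S^{\dcal}X$. Your additional detail on the small-simplices/Lebesgue-number step, carried out on the compact underlying space $\widetilde{\Delta^p}$ (Axiom 1), merely makes explicit what the paper delegates to the classical argument in Bredon.
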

For the proof of Proposition \ref{homologytheory}, we need the following lemma.
\begin{lem}\label{prism}
	\begin{itemize}
	\item[{\rm (1)}] Let $X$ be a diffeological space. Then the singular complex $S^{\dcal}X$ is a subcomplex of the topological singular complex $S\widetilde{X}$.
	\item[{\rm (2)}] If $f  :\Delta^r \longrightarrow \Delta^p\times I$ is an affine map (i.e., a map preserving convex combinations), then $f$ is smooth.
	\end{itemize}
\end{lem}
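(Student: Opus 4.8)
The plan is to obtain both parts as direct consequences of Axioms 1 and 2 for the standard simplices (Proposition \ref{axioms}), together with the facts that $\widetilde{\cdot}:\dcal \longrightarrow \czero$ factors the underlying set functor (Proposition \ref{conven}(3)) and that products in $\dcal$ carry the initial diffeology with respect to the projections (Proposition \ref{conven}(1)).

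For Part (1), recall that the $p$-simplices of $S^{\dcal}X$ are the smooth maps $\sigma:\Delta^p \longrightarrow X$, whereas those of $S\widetilde{X}$ are the continuous maps $\widetilde{\Delta^p} \longrightarrow \widetilde{X}$, where $\widetilde{\Delta^p}$ is the topological standard $p$-simplex by Axiom 1. First I would send each smooth $\sigma$ to its underlying continuous map $\widetilde{\sigma}:\widetilde{\Delta^p} \longrightarrow \widetilde{X}$; since $\widetilde{\cdot}$ does not alter the underlying set map, the assignment $\sigma \mapsto \widetilde{\sigma}$ is injective, so $(S^{\dcal}X)_p$ sits inside $(S\widetilde{X})_p$ as a subset. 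It then remains to verify that this level-wise inclusion commutes with the simplicial operators. Both singular complexes take faces and degeneracies by precomposition with the coface maps $d^i$ and codegeneracy maps $s^j$; these are affine, hence smooth by Axiom 2, and their underlying continuous maps are exactly the topological coface and codegeneracy maps of $\widetilde{\Delta^{\bullet}}$. Functoriality of $\widetilde{\cdot}$ then gives $\widetilde{\sigma \circ d^i} = \widetilde{\sigma}\circ \widetilde{d^i}$, and likewise for $s^j$, so the inclusion is a morphism of simplicial sets and $S^{\dcal}X$ is a subcomplex of $S\widetilde{X}$.

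For Part (2), I would use the identification $I \cong \Delta^1 = \Delta^1_{\sub}$ together with the product diffeology on $\Delta^p \times I$. By the universal property of the product in $\dcal$, a map $f:\Delta^r \longrightarrow \Delta^p \times I$ is smooth if and only if both components $\pi_1\circ f$ and $\pi_2 \circ f$ are smooth. If $f$ is affine, then, since the two projections themselves preserve convex combinations, the components $\pi_1\circ f:\Delta^r \longrightarrow \Delta^p$ and $\pi_2 \circ f:\Delta^r \longrightarrow I \cong \Delta^1$ are affine as well; Axiom 2 makes each of them smooth, and hence $f$ is smooth.

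Neither part poses a serious difficulty. The only step requiring a moment's care is the simplicial compatibility in Part (1), where one must confirm that the underlying topological map of each affine coface and codegeneracy map is the corresponding topological structure map; this is immediate from Axioms 1 and 2, since an affine, continuous map between topological simplices is pinned down by its effect on vertices. Part (2) is then the short computation that supplies the affine pieces of the prism triangulation of $\Delta^p \times I$, on which the homotopy property of the singular homology (Proposition \ref{homologytheory}(3)) ultimately rests.
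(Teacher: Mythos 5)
Your proposal is correct and follows exactly the route the paper intends: the paper's proof of Lemma \ref{prism} simply cites Proposition \ref{axioms} (Axiom 1 for Part (1), Axiom 2 for Part (2)), and your argument fills in precisely those details — the level-wise inclusion and its compatibility with the affine coface and codegeneracy maps for Part (1), and the componentwise smoothness via the product diffeology and the identification $I \cong \Delta^1 = \Delta^1_{\sub}$ for Part (2).
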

\begin{proof} Parts 1 and 2 follow immediately from Proposition \ref{axioms} (Axioms 1 and 2, respectively).
\end{proof}
\begin{proof}[Proof of Proposition \ref{homologytheory}]
	Parts 1, 4, and	5 are easily verified. For the proof of Parts 2 and 3, we note that $\zbb S^{\dcal}X$ is a chain subcomplex of $\zbb S\widetilde{X}$ (Lemma \ref{prism}(1)), and verify that the relevant chain map and chain homotopies on $\zbb S\widetilde{X}$ restrict to ones on $\zbb S^{\dcal}X$.
	\par\indent
	(2) To verify the excision axiom for the topological singular homology, we use the chain map $Sd$ on $\zbb S\widetilde{X}$ and the chain homotopy $T$ on $\zbb S\widetilde{X}$ connecting $1_{\zbb S\widetilde{X}}$ and $Sd$, where $Sd$ and $T$ are defined using the barycentric subdivision of $\Delta^p$ and the triangulation of $\Delta^p \times I$ connecting the trivial triangulation of $\Delta^p$ and the barycentric subdivision of $\Delta^p$, respectively (\cite[Section 17 in Chapter IV]{Br}). Thus, $Sd$ and $T$ restrict to a chain map and a chain homotopy on $\zbb S^{\dcal}X$ respectively (see Proposition \ref{axioms}(Axiom 2) and Lemma \ref{prism}(2)).

	(3) To show that $\tilde{f}_\sharp \simeq \tilde{g}_\sharp:\zbb S\widetilde{X} \longrightarrow \zbb S\widetilde{Y}$, we use the chain homotopy $D$ on $\zbb S\widetilde{X}$ defined using the prism decomposition of $\Delta^p \times I$ (\cite[Section 11]{G}), which restricts to a chain homotopy on $\zbb S^\dcal X$ (see Lemma \ref{prism}(2)).
\end{proof}
The Mayer-Vietoris exact sequence and the homology exact sequence of a triple are also deduced from Proposition \ref{homologytheory} via the usual arguments (cf. \cite[p. 37]{Selick}); these exact sequences can be also derived from the consideration of the singular chain complexes.
\par\indent
Recall that $\mathring{\Delta}^p$ denotes the set $\Delta^p \backslash \dot{\Delta}^p$ endowed with the subdiffeology of $\Delta^p$ (\cite[Definition 4.1]{origin}). For a $\zbb$-module $M$ and $n \geq 0$, let $M[n]$ denote the graded module with $M[n]_n=M$ and $M[n]_i=0$ ($i\neq n$).

%%%%%%%%%%%%%%%%%%%%%%%%%%%%%%%%%%%%%%%%%%%%%%%%%%%%%%
\if0
\begin{lem}\label{homologicallemma}
	Let $b$ denote the barycenter of $\Delta^p$. Then
	\[
	H_{\ast}(\Delta^p,\Delta^p-\{b\})\cong H_{\ast}(\Delta^p,\dot{\Delta}^p)\cong\zbb[p].
	\]
	\begin{proof}
		The first ismorphism follows from Lemma \ref{delta}(3) and Propositions \ref{homologytheory}(3). Define the reduced homology $\widetilde{H}_\ast(X)$ of a diffeological space $X$ in the same manner as the reduced homology of a topological space (cf. \cite[p. 38]{Selick}). Since $\Delta^p$ is contractible in $\dcal$ (Lemma \ref{delta}(2)), we have only to show that $\widetilde{H}_\ast(\dot{\Delta}^p) \cong \zbb[p-1]$ (see Proposition \ref{homologytheory}(1)). \\ \\
		Consider the covering $\{\dot{\Delta}^p-\{b_0 \}, \Delta^{p-1}_{(0)} \}$ of $\dot{\Delta}^p$, where $b_0$ denotes the barycenter of the $0^{th}$ face $\Delta^{p-1}_{(0)}$, and observe from Proposition \ref{axioms} (Axiom 2 or 3) that $\Delta^{p-1}_{(0)}$ is diffeomorphic to $\Delta^{p-1}$. Using Lemma \ref{delta}, we then have
		\begin{center}
			$(\dot{\Delta}^p - \{b_0 \}) \cap \Delta^{p-1}_{(0)} = \Delta^{p-1}_{(0)} - \{b_0 \} \simeq \dot{\Delta}^{p-1}$,
		\end{center}
		\begin{center}
			$\dot{\Delta}^p - \{b_0 \} \simeq \Lambda^{p}_{0} \simeq \ast$
		\end{center}
		hold in $\dcal$; for the $\dcal$-contractibility of $\Delta^p - \text{\{} b_0 \text{\}}$, see the argument in Step 1 in the proof of \cite[Proposition 7.1]{origin}. Then we have the isomorphisms
		$$
		\widetilde{H}_\ast(\dot{\Delta}^p) \cong \widetilde{H}_{\ast-1}(\dot{\Delta}^{p-1})\cong \cdots \cong \widetilde{H}_{\ast -p+1}(\dot{\Delta}^1)\cong \zbb[p-1]
		$$
		by the Mayer-Vietoris exact sequence.\hfill \qed \\ \\
		We have proven all the results on the singular homology which are used in the proof of Theorem \ref{Quillenequiv}(1). The following are additional remarks.\color{white}
	\end{proof}
\end{lem}
\fi
%%%%%%%%%%%%%%%%%%%%%%%%%%%%%%%%%%%%%%%%%%%%%%%%%%
\begin{lem}\label{homologicallemma}
	Let $b$ denote the barycenter of $\Delta^p$. Then
	\[
	H_{\ast}(\mathring{\Delta}^p,\mathring{\Delta}^p-\{b\})\cong\zbb[p].
	\]
	\begin{proof}
		Define the reduced homology $\widetilde{H}_\ast(X)$ of a diffeological space $X$ in the same manner as the reduced homology of a topological space (see, eg, \cite[p. 38]{Selick}). Since $\mathring{\Delta}^p$ is $\dcal$-contractible (Lemma \ref{simplex}(3)), we have only to show that $\widetilde{H}_\ast(\mathring{\Delta}^p - \{b\}) \cong \zbb[p-1]$ (see Proposition \ref{homologytheory}(1)).
		\par\indent
		Since $\mathring{\Delta}^p - \{b\}$ contains a diffeological subspace which is diffeomorphic to $S^{p-1}$ as a $\dcal$-deformation retract, we can use Proposition \ref{homologytheory} and the Mayer-Vietoris exact sequence to see that $\widetilde{H}_\ast(\mathring{\Delta}^p - \{b\}) \cong \widetilde{H}_\ast (S^{p-1}) \cong \zbb [p-1]$.
	\end{proof}
\end{lem}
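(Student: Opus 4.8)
The plan is to reduce the relative group to a reduced homology group of a sphere, using the axioms collected in Proposition \ref{homologytheory} together with the identification of the diffeology on the open simplex furnished by Lemma \ref{simplex}(3). First I would introduce reduced homology $\widetilde{H}_\ast$ for diffeological spaces exactly as in the topological case, so that Proposition \ref{homologytheory}(1) yields the reduced long exact sequence of the pair $(\mathring{\Delta}^p,\mathring{\Delta}^p-\{b\})$. Since $\mathring{\Delta}^p$ is $\dcal$-contractible by Lemma \ref{simplex}(3), homotopy invariance (Proposition \ref{homologytheory}(3)) and the dimension axiom (Proposition \ref{homologytheory}(4)) give $\widetilde{H}_\ast(\mathring{\Delta}^p)=0$, so the connecting homomorphism produces a natural isomorphism
\[
H_n(\mathring{\Delta}^p,\mathring{\Delta}^p-\{b\})\cong \widetilde{H}_{n-1}(\mathring{\Delta}^p-\{b\}).
\]
Thus it suffices to establish $\widetilde{H}_\ast(\mathring{\Delta}^p-\{b\})\cong\zbb[p-1]$.

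Next I would pin down the smooth structure on the relevant region. The open simplex $\mathring{\Delta}^p$ is disjoint from every proper face, hence from ${\rm sk}_{p-2}\,\Delta^p$, so by Lemma \ref{simplex}(3) its diffeology coincides with the subspace diffeology of $\abb^p\cong\rbb^p$; in particular $\mathring{\Delta}^p$ is $\dcal$-diffeomorphic to an open convex subset of $\rbb^p$, and $\mathring{\Delta}^p-\{b\}$ to $\rbb^p\setminus\{0\}$. On this ordinary open set the radial map is a smooth, hence $\dcal$-, deformation retraction onto a round sphere $S^{p-1}$ centered at $b$ carrying its standard smooth structure. Homotopy invariance then reduces the computation to $\widetilde{H}_\ast(S^{p-1})$.

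Finally I would compute $\widetilde{H}_\ast(S^{p-1})$ by induction on $p$ using the Mayer--Vietoris sequence, which is derived from Proposition \ref{homologytheory} in the usual way. Covering $S^{p-1}$ by the two open sets obtained by deleting the north and south poles, each of which is $\dcal$-contractible and whose intersection $\dcal$-deformation retracts onto an equatorial $S^{p-2}$, Mayer--Vietoris gives $\widetilde{H}_n(S^{p-1})\cong\widetilde{H}_{n-1}(S^{p-2})$; the base case $S^0$ is handled by the additivity axiom (Proposition \ref{homologytheory}(5)), giving $\widetilde{H}_\ast(S^0)\cong\zbb[0]$, and iterating yields $\widetilde{H}_\ast(S^{p-1})\cong\zbb[p-1]$. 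The hard part is not the homological bookkeeping but checking that every geometric move genuinely takes place in $\dcal$: that the radial retraction and the hemisphere covering, together with the sphere's smooth structure, are literally the classical ones. This is precisely what Lemma \ref{simplex}(3) secures, by identifying the diffeology on the top-dimensional open region with the ordinary Euclidean subspace diffeology, after which the classical arguments transfer verbatim; one must also arrange the open covers so that the excision hypothesis $\overline{U}\subset A^{\circ}$ of Proposition \ref{homologytheory}(2) holds when extracting the Mayer--Vietoris sequence.
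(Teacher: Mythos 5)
Your proposal is correct and follows essentially the same route as the paper's own proof: reduce via the exact sequence and the $\dcal$-contractibility of $\mathring{\Delta}^p$ (Lemma \ref{simplex}(3)) to computing $\widetilde{H}_\ast(\mathring{\Delta}^p-\{b\})$, deformation retract onto an $S^{p-1}$, and run the Mayer--Vietoris induction. You merely make explicit the details the paper leaves implicit, namely that Lemma \ref{simplex}(3) identifies the diffeology on $\mathring{\Delta}^p$ with the standard Euclidean one (so the radial retraction is genuinely smooth) and that the open covers satisfy the excision hypothesis needed to extract Mayer--Vietoris from Proposition \ref{homologytheory}.
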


Let us introduce the notions of a $CW$-complex in $\dcal$ and its cellular homology.
\begin{defn}\label{CW}
	A {\sl $CW$-complex} $X$ in $\dcal$ is a diffeological space $X$ which is the colimit of a sequence in $\dcal$
	$$
	\emptyset = X^{-1}\overset{i_0}{\longhookrightarrow}X^0\overset{i_1}{\longhookrightarrow}X^1\overset{i_2}{\longhookrightarrow}X^2\overset{i_3}{\longhookrightarrow}\cdots
	$$
	such that each $i_n$ fits into a pushout diagram of the form
	\begin{center}
		\begin{tikzcd}
			\underset{\lambda\in\Lambda_n}{\coprod} \dot{\Delta}^n_\lambda \arrow{d} \arrow[hook]{r} & \underset{\lambda\in\Lambda_n}{\coprod} \Delta^n_\lambda \arrow{d}\\
			X^{n-1} \arrow[hook]{r}{i_n} & X^n.
		\end{tikzcd}
	\end{center}
\end{defn}
Recall that $\dcal$ is a cofibrantly generated model category having $\ical = \{\dot{\Delta}^p \longhookrightarrow \Delta^p\ |\ p \geq 0\}$ as the set of generating cofibrations. Thus, a $CW$-complex in $\dcal$ is a special type of $\ical$-cell complex, and hence a cofibrant object in $\dcal$. Let us see that the realizations of simplicial sets form an important class of $CW$-complexes in $\dcal$.

\begin{lem}\label{realization}
	Let $K$ be a simplicial set. Then, the realization $|K|_\dcal$ is a $CW$-complex in $\dcal$ having one $n$-cell for each non-degenerate $n$-simplex of $K$.
	\begin{proof}
		Let $K^n$ denote the $n$-skeleton of $K$. Then, $K$ is the colimit of the sequence
		$$
		\emptyset = K^{-1}\overset{i_0}{\longhookrightarrow} K^0 \xhookrightarrow{\ \ i_1\ \ } K^1 \xhookrightarrow{\ \ i_2\ \ } K^2 \xhookrightarrow{\ \ i_3\ \ } \cdots
		$$
		and each $i_n$ fits into the pushout diagram
		\[
		\begin{tikzcd}
		\underset{\lambda \in NK_n}{\coprod} \dot{\Delta}[n]_\lambda \arrow{d} \arrow[hook]{r} & \underset{\lambda \in NK_n}{\coprod} \Delta[n]_\lambda \arrow{d}\\
		K^{n-1} \arrow[hook]{r}{i_n} & K_n,
		\end{tikzcd}
		\]
		where $NK_n$ is the set of non-degenerate $n$-simplices of $K$ (see \cite[p. 8]{GJ}). Applying the realization functor $|\ |_\dcal$, we obtain the presentation of the $CW$-structure of $|K|_\dcal$ (see Propositions \ref{adjoint1}(1) and \ref{axioms}(Axiom 3)).
	\end{proof}
\end{lem}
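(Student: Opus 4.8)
The plan is to transport the standard skeletal $CW$-presentation of the simplicial set $K$ across the realization functor $|\ |_\dcal$, using that this functor is a left adjoint and hence preserves every colimit in sight. First I would recall the skeletal filtration $\emptyset = K^{-1} \hookrightarrow K^0 \hookrightarrow K^1 \hookrightarrow \cdots$ with $K = \colim_n K^n$, together with the classical fact (e.g. \cite[p. 8]{GJ}) that each inclusion $K^{n-1} \hookrightarrow K^n$ fits into a pushout square
\[
\begin{tikzcd}
\underset{\lambda \in NK_n}{\coprod} \dot{\Delta}[n]_\lambda \arrow{d} \arrow[hook]{r} & \underset{\lambda \in NK_n}{\coprod} \Delta[n]_\lambda \arrow{d}\\
K^{n-1} \arrow[hook]{r} & K^n,
\end{tikzcd}
\]
where $NK_n$ indexes the non-degenerate $n$-simplices of $K$. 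Because the attaching datum is governed precisely by the non-degenerate simplices, the bookkeeping of cells is already encoded at the simplicial level.

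Next I would apply $|\ |_\dcal$ to the entire filtration. By Proposition \ref{adjoint1}(1) this functor is a left adjoint, so it commutes with the sequential colimit and with each coproduct and pushout above. Hence $|K|_\dcal = \colim_n |K^n|_\dcal$, and realizing the square displayed above yields, for every $n$, a pushout square in $\dcal$ presenting $|K^{n-1}|_\dcal \hookrightarrow |K^n|_\dcal$.

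The only non-formal point is to identify the realized building blocks with the standard diffeological simplices. By the very definition of $|\ |_\dcal$ one has $|\Delta[n]|_\dcal = \Delta^n$, and realization carries the coproducts to coproducts of such simplices. For the boundary term I would invoke Axiom 3 (Proposition \ref{axioms}): the canonical smooth injection $|\dot{\Delta}[n]|_\dcal \longhookrightarrow \Delta^n$ is a $\dcal$-embedding, so its image is exactly the diffeological subspace $\dot{\Delta}^n$, giving $|\dot{\Delta}[n]|_\dcal \cong \dot{\Delta}^n$. With these identifications the realized squares become verbatim the attaching pushouts of Definition \ref{CW}, and the sequential colimit exhibits $|K|_\dcal$ as a $CW$-complex in $\dcal$ with one $n$-cell for each element of $NK_n$.

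I expect the main obstacle to be this last identification of $|\dot{\Delta}[n]|_\dcal$ with the boundary $\dot{\Delta}^n$ carrying the correct sub-diffeology: without Axiom 3 one would know only that the underlying sets agree, not that the diffeologies match, and it is precisely the $\dcal$-embedding property that licenses rewriting each realized square as a genuine cell attachment. Everything else reduces to the formal assertion that a left adjoint preserves the colimits defining the skeletal filtration.
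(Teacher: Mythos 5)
Your proposal is correct and follows exactly the paper's own argument: realize the skeletal filtration with its standard pushout presentation over non-degenerate simplices, use Proposition \ref{adjoint1}(1) (left adjointness of $|\ |_\dcal$) to preserve the colimits, and invoke Axiom 3 of Proposition \ref{axioms} to identify $|\dot{\Delta}[n]|_\dcal$ with the diffeological subspace $\dot{\Delta}^n$. You even highlight the same non-formal point the paper cites Axiom 3 for, so there is nothing to add.
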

For a $CW$-complex $X$ in $\dcal$, define the {\sl cellular chain complex} $(C(X), \partial_C)$ by
\[
	\begin{split}
	C_n(X) =& H_n (X^n, X^{n-1}), \\
	\partial_C =& \text{the connecting homomorphism coming from the } \\
	&\text{homology exact sequence of the triple } (X^n, X^{n-1}, X^{n-2}).
\end{split}
\]
(See \cite[p. 39]{Selick} for the verification of $\partial^2_C = 0$.) The homology $H_\ast C(X)$ is called the {\sl cellular homology} of $X$.

\begin{lem}\label{cellular}
	Let $X$ be a $CW$-complex in $\dcal$.
	\begin{itemize}
		\item[$(1)$] The $n^{th}$ component $C_n(X)$ is a free $\zbb$-module on the set of $n \text{-cells of } X$.
		\item[$(2)$] $H_\ast C(X)$ is isomorphic to $H_\ast(X)$.
	\end{itemize}
	\begin{proof}
		Express $X$ as in Definition \ref{CW}, and let $b_\lambda$ denote the barycenter of $\Delta^n_\lambda$. The point of $X^n$ corresponding to $b_\lambda$ is also denoted by $b_\lambda$. From \cite[Proposition 6.2, Lemmas 6.3(2) and 7.2]{origin}, we observe that
		\begin{center}
			\begin{tikzcd}
				X^n - \{b_\lambda | \lambda \in \Lambda_n\} &=& X^{n-1} \underset{\underset{\lambda \in \Lambda_n}{\coprod}\dot{\Delta}^n_\lambda}{\cup} \underset{\lambda\in\Lambda_n}{\coprod} \Delta^n_\lambda - \{b_\lambda\} \nonumber \\[-7mm]
				&\simeq& X^{n-1}. \ \ \ \ \ \ \ \ \ \ \ \ \ \ \ \ \ \ \ \ \ \ \ \ \ \ \ \ \
			\end{tikzcd}
		\end{center}
		By Proposition \ref{homologytheory} and Lemma \ref{homologicallemma}, we can prove the result in the same manner as in the topological case (see the proof of \cite[Theorem 5.4.2]{Selick}).
	\end{proof}
\end{lem}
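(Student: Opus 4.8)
The plan is to transport the classical proof that cellular homology computes singular homology (e.g. \cite[Theorem 5.4.2]{Selick}) into $\dcal$, where all the required machinery --- exactness, excision, homotopy invariance, dimension, and additivity --- is now available through Proposition \ref{homologytheory}, and where the one local computation needed is Lemma \ref{homologicallemma}. The only genuinely diffeological inputs are the $\dcal$-deformation retraction $X^n - \{b_\lambda\} \simeq_\dcal X^{n-1}$ (cf. \cite{origin}) and the compatibility of $H_\ast$ with the skeletal colimit.

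For part $(1)$ I would compute $H_\ast(X^n, X^{n-1})$ explicitly. Write $B = \{b_\lambda \mid \lambda \in \Lambda_n\}$ for the set of barycenters of the attached $n$-cells. The deformation retraction $X^n - B \simeq_\dcal X^{n-1}$, fed through the exactness and homotopy axioms (Proposition \ref{homologytheory}(1),(3)) and the five lemma, gives $H_\ast(X^n, X^{n-1}) \cong H_\ast(X^n, X^n - B)$. Since $X^{n-1}$ is closed in $X^n$ and lies in the open set $X^n - B$, the excision axiom (Proposition \ref{homologytheory}(2)) removes $X^{n-1}$, yielding $H_\ast(X^n, X^n - B) \cong H_\ast(X^n - X^{n-1}, (X^n - B) - X^{n-1})$. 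Here $X^n - X^{n-1} = \coprod_\lambda \mathring{\Delta}^n_\lambda$ and $(X^n - B) - X^{n-1} = \coprod_\lambda (\mathring{\Delta}^n_\lambda - \{b_\lambda\})$, so additivity (Proposition \ref{homologytheory}(5)) together with Lemma \ref{homologicallemma} gives $H_\ast(X^n, X^{n-1}) \cong \bigoplus_\lambda \zbb[n]$. This shows at once that $C_n(X) = H_n(X^n, X^{n-1})$ is free on the set of $n$-cells and that $H_k(X^n, X^{n-1}) = 0$ for $k \neq n$, the vanishing that drives part $(2)$.

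For part $(2)$ the argument becomes purely homological. Substituting $H_k(X^m, X^{m-1}) = 0$ $(k \neq m)$ into the long exact sequences of the pairs $(X^m, X^{m-1})$ shows that $H_k(X^m) = 0$ for $k > m$ and that $H_k(X^{m-1}) \to H_k(X^m)$ is an isomorphism for $k < m-1$; from these one runs the standard diagram chase, using the injection $H_n(X^n) \hookrightarrow H_n(X^n, X^{n-1}) = C_n(X)$ (valid since $H_n(X^{n-1}) = 0$) and the surjection $H_n(X^n) \twoheadrightarrow H_n(X^{n+1})$, to identify the cellular homology $H_n C(X)$ with $H_n(X^{n+1})$. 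Finally one upgrades $H_n(X^{n+1})$ to $H_n(X)$ through the stabilization $H_k(X^m) \xrightarrow{\cong} H_k(X)$ for $k < m$.

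The main obstacle is precisely this last stabilization. It rests on $H_\ast(X) \cong \colimunder{m} H_\ast(X^m)$, equivalently on $S^\dcal X = \colimunder{m} S^\dcal X^m$, i.e. on the assertion that every smooth simplex $\Delta^k \to X = \colimunder{m} X^m$ factors through some finite skeleton. In topology this is the weak-topology compactness argument; in $\dcal$ it is instead supplied by the compact generation of the model structure (Theorem \ref{originmain}): the domains of the generating cofibrations $\ical$ are small relative to $\ical$-cell complexes, and the skeletal filtration of Definition \ref{CW} is built by successive pushouts along maps in $\ical$. Granting this smallness, filtered colimits of simplicial abelian groups are exact and commute with homology, so the topological proof transfers without change.
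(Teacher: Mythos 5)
Your proposal is correct and follows essentially the same route as the paper, which simply defers the details to the topological argument of \cite[Theorem 5.4.2]{Selick}: the chain of isomorphisms $H_\ast(X^n,X^{n-1})\cong H_\ast(X^n,X^n-B)\cong\bigoplus_\lambda H_\ast(\mathring{\Delta}^n_\lambda,\mathring{\Delta}^n_\lambda-\{b_\lambda\})\cong\bigoplus_\lambda\zbb[n]$ via the deformation retraction from \cite{origin}, excision, additivity, and Lemma \ref{homologicallemma} is exactly what the paper's citations are set up to deliver. You also correctly identified the one point where the topological compactness argument must be replaced, namely that every smooth simplex into $X$ factors through a finite skeleton; the paper supplies this via the compact generation of $\dcal$ (in the form of \cite[Lemma 9.9]{origin}), just as you propose.
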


\begin{prop}\label{homologyisone}
	For a simplicial set $K$, there exists a natural isomorphism of graded modules
	$$
	H_\ast (K) \underset{\cong}{\longrightarrow} H_\ast (|K|_\dcal).
	$$
	\begin{proof}
		Since $(|\ |_\dcal, S^\dcal)$ is an adjoint pair (Proposition \ref{adjoint1}(1)), we have the natural map $i_K : K \longrightarrow S^\dcal |K|_\dcal$, which induces the homomorphism
		$$
		H_\ast (K) \xrightarrow{\ \ i_{K_\ast}\ \ } H_\ast (S^\dcal |K|_\dcal) = H_\ast (|K|_\dcal).
		$$
		With the help of Lemma \ref{cellular}, we can show that $i_{K_\ast}$ is an isomorphism by an argument similar to that in the proof of \cite[Proposition 16.2(i)]{May}.
	\end{proof}
\end{prop}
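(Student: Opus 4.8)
The plan is to show that the natural map $i_{K*}$ is an isomorphism by a skeletal induction combined with the five lemma and the singular homology machinery of Proposition \ref{homologytheory}, in close parallel with the topological argument. Since $|\ |_\dcal$ is a left adjoint (Proposition \ref{adjoint1}(1)), it preserves the colimit presentation of the skeletal filtration in Lemma \ref{realization}, so $(|K|_\dcal)^n = |K^n|_\dcal$, and Axiom 3 (Proposition \ref{axioms}) identifies $|\dot\Delta[n]|_\dcal$ with the boundary $\dot\Delta^n$ and $|\Delta[n]|_\dcal$ with $\Delta^n$. The transformation $i$ is natural and respects these filtrations, so it suffices to prove: for every simplicial set $K$ and every $n$, the map $i_{K^n}:H_*(K^n)\longrightarrow H_*(|K^n|_\dcal)$ is an isomorphism. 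I would prove this by induction on $n$, the case $n=0$ being immediate from additivity and the dimension axiom (Proposition \ref{homologytheory}(4),(5)), since $K^0$ is discrete and $i_{K^0}$ sends vertices to the corresponding points.

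For the inductive step I would compare the long exact homology sequences of the pairs $(K^n,K^{n-1})$ and $(|K^n|_\dcal,|K^{n-1}|_\dcal)$ (Proposition \ref{homologytheory}(1)) connected by $i$. By the inductive hypothesis $i_{K^{n-1}}$ is an isomorphism, so by the five lemma the step reduces to showing that $i$ is an isomorphism on the relative terms. On the simplicial side, the Eilenberg--Zilber decomposition of simplices into degeneracies of non-degenerate ones identifies $\zbb K^n/\zbb K^{n-1}$ with $\bigoplus_{\sigma\in NK_n}\zbb\Delta[n]/\zbb\dot\Delta[n]$, so $H_*(K^n,K^{n-1})\cong\bigoplus_{NK_n}H_*(\Delta[n],\dot\Delta[n])$, naturally in the characteristic maps $\bar\sigma$. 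On the diffeological side, the pushout of Lemma \ref{realization} together with excision and additivity (Proposition \ref{homologytheory}(2),(5)) gives $H_*(|K^n|_\dcal,|K^{n-1}|_\dcal)\cong\bigoplus_{NK_n}H_*(\Delta^n,\dot\Delta^n)$; this is exactly the cellular computation of Lemma \ref{cellular}(1). By naturality of $i$ with respect to the characteristic maps, $i$ on the relative terms is the direct sum over $NK_n$ of the single-cell map $i_{\Delta[n]}:H_*(\Delta[n],\dot\Delta[n])\longrightarrow H_*(\Delta^n,\dot\Delta^n)$, so it remains to treat this one map.

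To show $i_{\Delta[n]}$ is an isomorphism on relative homology I would again use the five lemma, now on the pair $(\Delta[n],\dot\Delta[n])$. Both $\Delta[n]$ and $\Delta^n$ are acyclic: $\Delta[n]$ is contractible as a simplicial set, while $\Delta^n$ is $\dcal$-contractible by Lemma \ref{simplex}(1), so homotopy invariance (Proposition \ref{homologytheory}(3)) gives $H_*(\Delta^n)\cong\zbb[0]$; since $i_{\Delta[n]}$ is visibly an isomorphism on $H_0$, it is an isomorphism on absolute homology. Because $\dot\Delta[n]$ has dimension $n-1$ (so $\dot\Delta[n]=(\dot\Delta[n])^{n-1}$), the inductive hypothesis applied to $\dot\Delta[n]$ shows that $i_{\dot\Delta[n]}$ is an isomorphism, and by Axiom 3 its target is $H_*(\dot\Delta^n)$. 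The five lemma for the two long exact sequences of the pair then yields the desired isomorphism on $H_*(\Delta[n],\dot\Delta[n])$, closing the induction on finite skeleta.

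Finally I would pass to the colimit $K=\colim_n K^n$. On the simplicial side $\zbb K=\colim_n\zbb K^n$ and homology commutes with this filtered colimit. On the diffeological side I must check that $S^\dcal|K|_\dcal=\colim_n S^\dcal|K^n|_\dcal$: any singular simplex $\Delta^n\to|K|_\dcal$ is in particular a continuous map into $\widetilde{|K|_\dcal}=|K|$ (Proposition \ref{adjoint1}(2)), whose compact image lies in a finite topological skeleton; and since each $|K^m|_\dcal\longhookrightarrow|K|_\dcal$ is a $\dcal$-embedding, the map corestricts smoothly, hence factors through some $|K^m|_\dcal$. Thus both homologies and the map $i$ commute with the colimit, and the isomorphisms on skeleta assemble to the claimed natural isomorphism $H_*(K)\cong H_*(|K|_\dcal)$. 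I expect the main obstacle to be organizing the colimit-commutation on the diffeological side together with the single-cell reduction so that the five-lemma induction is genuinely closed; all of the homological inputs (excision, homotopy invariance, additivity, and the cellular identification) are already supplied by Proposition \ref{homologytheory}, Lemma \ref{homologicallemma}, and Lemma \ref{cellular}.
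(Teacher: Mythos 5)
Your proposal is correct and is essentially the argument the paper intends: its proof simply defers to the skeletal-filtration comparison in the proof of \cite[Proposition 16.2(i)]{May} together with Lemma \ref{cellular}, which is exactly the five-lemma induction over skeleta, the reduction to the single-cell map $i_{\Delta[n]}$ via the Eilenberg--Zilber decomposition on one side and the cellular computation behind Lemma \ref{cellular}(1) on the other, and the colimit passage that you carry out (your factorization of smooth simplices through finite skeleta, using compactness of the image in $\widetilde{|K|_{\dcal}}=|K|$ and the fact that $|K^{m}|_{\dcal}\longhookrightarrow |K|_{\dcal}$ is a $\dcal$-embedding, handles the only genuinely diffeological point). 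The lone organizational difference is cosmetic: your induction consumes only the relative computations underlying Lemma \ref{cellular}(1) and never needs the cellular-homology identification of Lemma \ref{cellular}(2) or an explicit identification of the cellular differential.
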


We have proven all the results on the singular homology and the realization of a simplicial set which are used in the proof of Theorem \ref{Quillenequiv}(1). The following are additional remarks.

\begin{rem}\label{cohomology}
	\begin{itemize}
		\item[{\rm (1)}] Let $\pi$ be an abelian group. For a diffeological pair $(X, A)$, the {\sl singular homology} $H_\ast(X, A;\pi)$ {\sl with coefficients in} $\pi$ is defined by
		$$
		H_\ast(X, A;\pi) = H_\ast(\zbb S^\dcal X/\zbb S^\dcal A\otimes \pi ).
		$$
		Then, Proposition \ref{homologytheory} generalizes to the case of an arbitrary coefficient group $\pi$ (see \cite[p. 183]{Br} and the proof of Proposition \ref{homologytheory}).
		\par\indent
		For a diffeological pair $(X, A)$, the {\sl singular cohomology} $H^{\ast}(X, A;\pi)$ {\sl with coefficients in} $\pi$ is defined by
		$$
		H^{\ast}(X, A;\pi) = H^{\ast} \Hom(\zbb S^{\dcal}X/\zbb S^{\dcal}A, \pi).
		$$
		Then, it has desirable properties analogous to Proposition \ref{homologytheory} (see \cite[pp. 285-286]{Br} and the proof of Proposition \ref{homologytheory}).
		\item[{\rm (2)}] The universal coefficient theorems obviously hold for the singular homology and cohomology of diffeological spaces (cf. \cite[pp. 281-285]{Br}). Since the Eilenberg-Zilber theorem holds for simplicial sets (\cite[Corollary 29.6]{May}), the K\"{u}nneth theorems hold for singular homology and cohomology of diffeological spaces, and hence, the cross products on homology and cohomology, and the cup product on cohomology are defined and satisfy the same formulas as those in the topological case.
	%These results can be also deduced from the corresponding topological results via Theorem \ref{Quillenequiv}.
		\item[{\rm (3)}] Since the functor $S^{\dcal}$ preserves fibrations (Lemma \ref{Quillenpairs}), the Serre spectral sequence works for fibrations in $\dcal$ (see \cite[Section 32]{May}).
	\end{itemize}
\end{rem}

\begin{rem}\label{coincide}
	Hector \cite{H} defined the singular complex of a diffeological space $X$ using $\Delta^p_{\rm sub} \ (p \geq 0)$; his singular complex and its homology are denoted by $S^{\dcal}_{\mathrm{sub}}(X)$ and $H'_{\ast}(X)$ respectively. Iglesias-Zemmour defined the complex $\bvec{\mathsf{C}}_\star (X)$ of reduced groups of cubic chains for a diffeological space $X$, and called its homology $\bvec{\mathsf{H}}_{\ast}(X)$ the cubic homology of $X$ (\cite[pp. 182-183]{IZ}).
	\par\indent
	Similarly to the topological case, we can see that $H'_{\ast}(\Delta^{p}) \cong \bvec{\mathsf{H}}_{\ast} (\Delta^{p}) \cong \zbb [0]$, and hence that $\zbb S^\dcal (X)$, $\zbb S^\dcal_{\rm sub} (X)$, and $\bvec{\mathsf{C}}_\star (X)$ are naturally chain homotopy equivalent (see \cite{EM}). Thus, our singular homology, Hector's singular homology, and the cubic homology are naturally isomorphic, which justifies the comment in the footnote of \cite[p. 183]{IZ}.
	\par\indent
	It should be noted that the canonical natural chain homotopy equivalence $\zbb S^{\dcal}_{\mathrm{sub}}(X) \longhookrightarrow \zbb S^{\dcal}(X)$ exists (see \cite[Lemma 3.1 and Remark A.5]{origin}).
\end{rem}

%Section4===============================================================================
%\subsection{Quillen equivalences between $\scal$, $\dcal$, and $\ccal^0$}
%In this section, we prove Theorem \ref{Quillenequiv}.
%First, we prove Part 1 of Theorem \ref{Quillenequiv} in Section 4.1 using the results in Section 3. Then, Part 2 of Theorem \ref{Quillenequiv} is deduced from Part 1 of Theorem \ref{Quillenequiv} in Section 4.2.
%In Section 4.3, we study diffeological spaces having the $\dcal$-homotopy type of a cofibrant object.

%SubSection--------------------------------------------------------------------------
\subsection{Proof of Theorem \ref{Quillenequiv}}
In this subsection, we give a proof of Theorem \ref{Quillenequiv}. Since the composite of the Quillen pairs
\[
\begin{tikzcd}
\scal \arrow[yshift = 0.1cm]{r}{\absno{\ }_{\dcal} } & \arrow[yshift = -0.1cm]{l}{S^{\dcal}}  \dcal  \arrow[yshift = 0.1cm]{r}{\widetilde{\cdot}} & \czero \arrow[yshift = -0.1cm]{l}{R}
\end{tikzcd}
\]
is just the pair of Quillen equivalences $\absno{\ }:\scal \rightleftarrows \czero:S$ (Lemmas \ref{Quillenpairs}-\ref{clequiv} and Proposition \ref{adjoint1}), we have only to prove Theorem \ref{Quillenequiv}(1) (see \cite[Corollary 1.3.15]{Hovey}).
\par\indent
We can reduce the proof of Theorem \ref{Quillenequiv}(1) by the following lemma on the Quillen pair $(|\ |_{\dcal}, S^{\dcal})$.
\begin{lem}\label{1streduction}
	The following are equivalent:
	\begin{itemize}
		\item[$\mathrm{(i)}$]
		$(|\ |_{D},S^{\mathcal{D}})$ is a pair of Quillen equivalences.
		\item[$\mathrm{(ii)}$]
		For $K\in\scal$ and $X\in\mathcal{D}$, the natural maps
		\[
		i_K:K\longrightarrow S^{\mathcal{D}}|K|_{\mathcal{D}}\hbox{  and  } p_X:|S^{\mathcal{D}} X|_{\mathcal{D}}\longrightarrow X
		\]
		are weak equivalences in $\scal$ and $\mathcal{D}$ respectively.

		\item[$\mathrm{(iii)}$]
		For $K\in\scal$, the natural map $i_K:K\longrightarrow S^{\mathcal{D}}|K|_{\mathcal{D}}$ is a weak equivalence in $\scal$.
	\end{itemize}
	\begin{proof}
		Note that every object of $\scal$ is cofibrant and that every object of $\dcal$ is fibrant. Then the implications (i)$\implies$(ii)$\implies$(iii) are obvious.\\
		(iii)$\implies$(i) Let $X$ be a diffeological space. For a map $f:K\longrightarrow S^{\mathcal{D}} X$, consider the commutative diagram
		\[
		\begin{tikzpicture}
		\node at (0,0)
		{\begin{tikzcd}[column sep=large]
			K \arrow{r}{f} \arrow[d,"i_K"'] & S^{\mathcal{D}} X\:\: \arrow[d,"i_{S^{\mathcal{D}} X}"] \\
			S^{\mathcal{D}} |K|_{\mathcal{D}} \arrow{r}{S^{\mathcal{D}}|f|_{\dcal}} \arrow[dr,"S^{\mathcal{D}} f\hat{\phantom{g}}"']&
			S^{\mathcal{D}}|S^{\mathcal{D}} X \arrow[d,"{S^{\mathcal{D}} p_X}"]|_{\dcal}\\
			& S^{\mathcal{D}} X,\:\:
			\end{tikzcd}};
		\draw[->] (1.75,1.20)--(2.45,1.20)--(2.45,-1.25)--(1.75,-1.25);
		\node [right] at (2.5,0) {$1_{S^{\mathcal{D}} X}$};
		\end{tikzpicture}
		\]
		where $f\hat{\phantom{g}}:|K|_{\mathcal{D}}\longrightarrow X$ is the left adjunct of $f:K\longrightarrow S^{\mathcal{D}} X$. Since $i_K$ is a weak equivalence, we have the equivalences
		\begin{center}
			\begin{tabular}{rcl}
				$f$ is a weak equivalence &$\Leftrightarrow$& $S^{\mathcal{D}} f\hat{\phantom{g}}$ is a weak equivalence\\
				&$\Leftrightarrow$&
				$f\hat{\phantom{g}}$ is a weak equivalence,
			\end{tabular}
		\end{center}
		which show that $(|\ |_{\mathcal{D}},S^{\mathcal{D}})$ is a pair of Quillen equivalences.
	\end{proof}
\end{lem}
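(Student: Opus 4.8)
The plan is to exploit two structural facts: every object of $\scal$ is cofibrant, and, by Theorem \ref{originmain}, every object of $\dcal$ is fibrant. As a consequence, for the Quillen pair $(|\ |_\dcal, S^\dcal)$ the derived unit and derived counit coincide with the underived natural transformations $i_K$ and $p_X$, since a fibrant replacement of $|K|_\dcal$ and a cofibrant replacement of $S^\dcal X$ may both be taken to be the identity. Hence the standard characterization of a Quillen equivalence specializes to the assertion that, for all $K \in \scal$ and $X \in \dcal$, a map $f : K \to S^\dcal X$ is a weak equivalence in $\scal$ if and only if its left adjunct $\hat f : |K|_\dcal \to X$ is a weak equivalence in $\dcal$.

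Granting this reformulation, the implications $\mathrm{(i)} \Rightarrow \mathrm{(ii)} \Rightarrow \mathrm{(iii)}$ are routine: for a Quillen equivalence the derived unit and counit are weak equivalences, and here these are exactly $i_K$ and $p_X$, while $\mathrm{(iii)}$ is obtained from $\mathrm{(ii)}$ by discarding the condition on $p_X$. The content of the lemma lies in the reverse implication $\mathrm{(iii)} \Rightarrow \mathrm{(i)}$. Here I would fix $X \in \dcal$ and a simplicial map $f : K \to S^\dcal X$ with left adjunct $\hat f : |K|_\dcal \to X$, and combine the naturality square of the unit $i$ applied to $f$ with the triangle identity $S^\dcal p_X \circ i_{S^\dcal X} = 1_{S^\dcal X}$; together these produce the single identity $f = S^\dcal \hat f \circ i_K$. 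Since $\mathrm{(iii)}$ guarantees that $i_K$ is a weak equivalence, the two-out-of-three property shows that $f$ is a weak equivalence if and only if $S^\dcal \hat f$ is, and by the definition of weak equivalences in $\dcal$ (Definition \ref{WFC}(1)) the latter holds if and only if $\hat f$ is a weak equivalence. This chain of equivalences is precisely the reformulated condition $\mathrm{(i)}$.

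The main obstacle I anticipate is purely organizational: assembling the commutative diagram so that the naturality of $i$ and the triangle identity interlock to produce the identity $f = S^\dcal \hat f \circ i_K$, while keeping track of the fact that, although $i_{S^\dcal X}$ appears in the triangle identity, only $i_K$ itself need be a weak equivalence --- which is exactly what hypothesis $\mathrm{(iii)}$ supplies. Once this identity is in hand, the rest is a formal two-out-of-three argument together with the definition of weak equivalence via $S^\dcal$; no property of the standard simplices $\Delta^p$ or of the singular homology developed in Section 3.1 is needed, since those ingredients enter only in the separate verification that $i_K$ is a weak equivalence.
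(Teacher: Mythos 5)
Your proposal is correct and follows essentially the same route as the paper: after noting that all objects of $\scal$ are cofibrant and all objects of $\dcal$ are fibrant (so the derived unit and counit are $i_K$ and $p_X$ themselves), the paper's proof of $\mathrm{(iii)} \Rightarrow \mathrm{(i)}$ assembles exactly the identity $f = S^{\dcal}\hat{f} \circ i_K$ from the naturality square for $i$ and the triangle identity $S^{\dcal}p_X \circ i_{S^{\dcal}X} = 1_{S^{\dcal}X}$, then concludes by two-out-of-three and the definition of weak equivalences in $\dcal$ via $S^{\dcal}$. No gaps; your observation that only $i_K$ (not $i_{S^{\dcal}X}$) needs to be a weak equivalence matches the paper's argument precisely.
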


Thus, we have only to show that for $K\in\scal$, the natural map $i_K:K\longrightarrow S^{\mathcal{D}}|K|_{\mathcal{D}}$ is a weak equivalence in $\scal$. Since $|\ |_\dcal$ and $S^\dcal$ preserve coproducts, we may assume that $K$ is connected.

Let us further reduce the proof of Theorem \ref{Quillenequiv}(1) to simpler cases.

\begin{lem}\label{2ndreduction}
	Consider the following conditions:
	\begin{itemize}
		\item[$\mathrm{(i)}$]
		$f:K\longrightarrow K'$ is a weak equivalence in $\scal$.
		\item[$\mathrm{(ii)}$]
		$|f|_{\dcal}:|K|_{\dcal}\longrightarrow|K'|_{\dcal}$ is a weak equivalence in $\mathcal{D}$.
		\item[$\mathrm{(iii)}$]
		$S^{\mathcal{D}}|f|_{\mathcal{D}}:S^{\mathcal{D}}|K|_{\mathcal{D}}\longrightarrow S^{\mathcal{D}}|K'|_{\mathcal{D}}$ is a weak equivalence in $\scal$.
	\end{itemize}
	Then, the implications $\rm (i)$$\implies$$\rm (ii)$$\implies$$\rm (iii)$ hold.
\end{lem}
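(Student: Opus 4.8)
The plan is to treat the two implications separately, noting that $\rmii\implies\rmiii$ is essentially definitional while the genuine content is $\rmi\implies\rmii$, namely that realization sends weak equivalences to weak equivalences.

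For $\rmii\implies\rmiii$ I would simply unwind Definition \ref{WFC}(1): a morphism $g$ of $\dcal$ is declared to be a weak equivalence exactly when $S^\dcal g$ is a weak equivalence in $\scal$. Taking $g=|f|_\dcal$, the hypothesis that $|f|_\dcal$ is a weak equivalence in $\dcal$ \emph{is} the statement that $S^\dcal|f|_\dcal$ is a weak equivalence in $\scal$; thus $\rmii$ and $\rmiii$ are in fact equivalent, with nothing to prove beyond citing the definition.

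The substance lies in $\rmi\implies\rmii$. By Lemma \ref{Quillenpairs}, $|\ |_\dcal:\scal\rightleftarrows\dcal:S^\dcal$ is a Quillen pair, so $|\ |_\dcal$ is a left Quillen functor and in particular carries trivial cofibrations between cofibrant objects to trivial cofibrations, hence to weak equivalences. Ken Brown's lemma (\cite{Hi,MP}) then guarantees that $|\ |_\dcal$ carries every weak equivalence between cofibrant objects to a weak equivalence. Since every object of $\scal$ is cofibrant, $|\ |_\dcal$ preserves all weak equivalences, and in particular $|f|_\dcal$ is a weak equivalence in $\dcal$ whenever $f$ is one in $\scal$.

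The only delicate point, and the reason a naive factorization argument does not immediately work, is that a left Quillen functor need not preserve fibrations or trivial fibrations; Ken Brown's lemma is precisely the device that circumvents this, so I anticipate no genuine obstacle. If a self-contained argument is preferred to the citation, I would factor $f$ through its mapping cylinder as $K\xrightarrow{i}M_f\xrightarrow{r}K'$, where $i$ and the section $s:K'\to M_f$ are trivial cofibrations and $r\circ i=f$, $r\circ s=1$; applying $|\ |_\dcal$ (which preserves trivial cofibrations) together with the $2$-out-of-$3$ property to $|i|_\dcal$, $|s|_\dcal$, and $|r|_\dcal$ recovers that $|f|_\dcal=|r|_\dcal\circ|i|_\dcal$ is a weak equivalence. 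This is merely Ken Brown's lemma specialized to the case at hand.
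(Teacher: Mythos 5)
Your proof is correct and follows essentially the same route as the paper: the paper also derives $\rm (i)\Rightarrow(ii)$ from Lemma \ref{Quillenpairs} together with the fact that every simplicial set is cofibrant, citing \cite[Corollary 7.7.2]{Hi} (which is exactly the Ken Brown--type statement you invoke), and likewise observes that $\rm (ii)\Rightarrow(iii)$ is immediate from Definition \ref{WFC}(1). Your added remark that $\rm (ii)$ and $\rm (iii)$ are in fact equivalent, and your optional mapping-cylinder unwinding of Ken Brown's lemma, are both accurate refinements of the same argument.
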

\begin{proof}
	Since every object of $\scal$ is cofibrant, the implication $\rm (i) \Longrightarrow (ii)$ holds by Lemma \ref{Quillenpairs} and \cite[Corollary 7.7.2]{Hi}. The implication $\rm (ii) \Longrightarrow (iii)$ is immediate from the definition of a weak equivalence in $\dcal$.
\end{proof}
For a connected simplicial set $K$, consider weak equivalences in $\scal$
$$
K \ \hookrightarrow \ L \ \hookleftarrow \ M,
$$
where $L$ is a fibrant approximation of $K$ and $M$ is a minimal subcomplex of $L$ (\cite[$\mathsection$ 9]{May}). Then, by Lemma \ref{2ndreduction}, we have only to show that for a Kan complex $K$ with only one vertex,
$i_{K}: K \longrightarrow S^{\dcal}|K|_{\dcal}$ is a weak equivalence in $\scal$.

A simplicial map $\varpi:L\longrightarrow K$ is called a {\sl covering} if $\varpi$ is a fiber bundle with discrete fiber. For a Kan complex $K$ with only one vertex, a Kan fibration $\varpi:\tilde{K}\longrightarrow K$ with $\tilde{K}$ $1$-connected is constructed in a natural way (\cite[Definition 16.4]{May}). Since the fiber of $\varpi$ is a minimal complex, $\varpi$ is a covering in our sense (see \cite[Corollary 10.8 in Chapter I]{GZ}). Thus, we call the fibration $\varpi: \widetilde{K}\longrightarrow K$ the {\sl universal covering} of $K$. 

A smooth map $p:E\longrightarrow B$ is called a {\sl fiber bundle} if any $b\in B$ has an open neighborhood $U$ such that $p^{-1}(U)\cong U\times F$ over $U$ for some $F\in\mathcal{D}$. A smooth map $p:E\longrightarrow B$ is called a {\sl covering} if $p$ is a fiber bundle with discrete fiber.

\begin{lem}\label{covering}
	Let $K$ be a Kan complex with only one vertex, and let $\varpi:\tilde{K}\longrightarrow K$ be the universal covering of $K$.
	\begin{itemize}
		\item[$(1)$]
		$|\varpi|_{\mathcal{D}}:|\tilde{K}|_{\mathcal{D}}\longrightarrow |K|_{\mathcal{D}}$ is a covering.
		\item[$(2)$]
		$S^{\mathcal{D}}|\varpi|_{\mathcal{D}}:S^{\mathcal{D}}|\tilde{K}|_{\mathcal{D}}\longrightarrow S^{\mathcal{D}}|K|_{\mathcal{D}}$ is a covering.
	\end{itemize}
	\begin{proof}
		(1) We begin by showing the following three facts on $|\varpi|_{\dcal}: |\tilde{K}|_{\dcal} \longrightarrow |K|_{\dcal}$.
		\par\indent
		{\sl Fact 1: $|\varpi|_\dcal$ is a $\dcal$-quotient map.}~Recall from Section 2.3 that the realization $|L|_{\dcal}$ of a simplicial set $L$ is defined by $|L|_{\dcal} = \underset{\Delta\downarrow L}{\colim} \ \Delta^n$. Then, we have the commutative diagram in $\dcal$
		\begin{equation*}
			\begin{tikzcd}
				\underset{\mathrm{ob}(\Delta\downarrow\tilde{K})}{\coprod}{\Delta}^{n}\arrow{d}\arrow{r} & {|\tilde{K}|}_\dcal \arrow{d}{\mathrm{|\varpi|_\dcal}}\\
				\underset{\mathrm{ob}(\Delta\downarrow K)}{\coprod}{\Delta}^{n}\arrow[r] & {|K|}_\dcal
			\end{tikzcd}
		\end{equation*}
		consisting of the canonical surjective morphisms, where $\underset{\mathrm{ob}(\Delta\downarrow L)}{\coprod}\Delta^n$ denotes the coproduct of the standard simplices indexed by the objects of $\Delta\downarrow{L}$. Since the horizontal arrows and the left vertical arrow are $\dcal$-quotient maps, $|\varpi|_\dcal$ is also a $\dcal$-quotient map.
		\par\indent
		\noindent{\sl Fact 2: Each $g \in \pi_1 (K)$ acts on $|\tilde{K}|_\dcal$ as a diffeomorphism over $|K|_{\dcal}$.}~This fact is easily seen since the action of $\pi_1 (K)$ on $|\tilde{K}|_{\dcal}$ is induced from the obvious action of $\pi_1 (K)$ on $\tilde{K}$ via the realization functor.
		\par\indent
		\noindent{\sl Fact 3: $|\varpi|_{\dcal}$ is a topological covering with fiber $\pi_{1}(K)$.}~The fact follows from Proposition \ref{adjoint1}(2) and \cite[Theorem 4.2 in Chapter III]{GZ}.
		\vspace{0.1in}\par\indent
		Let us prove the result using Facts 1-3.
		For $x\in |K|_\dcal$, choose an open neighborhood $U$ which is topologically contractible (\cite[1.9 in Chapter III]{GZ}). Then
		$$
		|\varpi|_\dcal : |\varpi|^{-1}_{\dcal}(U) \longrightarrow U
		$$
		is a $\dcal$-quotient map (Fact 1 and \cite[Lemma 2.4(2)]{origin}) and $|\varpi|^{-1}_{\dcal}(U)$ is isomorphic to $\underset{g\in\pi_{1} (K)}{\coprod}U_g$, where each $U_g$ is an open diffeological subspace of $|\tilde{K}|_\dcal$ which is topologically isomorphic to $U$ (Fact 3). Since each $g\in\pi_1 (K)$ acts on $|\varpi|^{-1}_{\dcal}(U)$ as a diffeomorphism over $U$ (Fact 2), each $U_g$ is diffeomorphic to $U$ via $|\varpi|_\dcal$.
		\par\indent
		(2) Let $\sigma:\Delta^n \longrightarrow |K|_\dcal$ be a singular simplex, and consider the following lifting problem
		$$
		\begin{tikzcd}
		& {|\tilde{K}|_\dcal} \arrow[d,"\mathrm{|\varpi|_\dcal}"]\\
		\Delta^n \arrow[ur,dashed] \arrow[r,"\sigma"]& {|K|_\dcal}.
		\end{tikzcd}
		$$
		By Proposition \ref{axioms}(Axiom 1) and Fact 3 in the proof of Part 1, there is a continuous solution $\tau: \Delta^n \longrightarrow |\tilde{K}|_\dcal$ and $\{\tau \cdot g \}_{g\in\pi_1 (K)}$ is the set of all continuous solutions. Part 1 shows that the elements $\tau \cdot g$ are smooth, which completes the proof.%\hfill \qed
		%\color{white}
		%\color{black}
	\end{proof}
\end{lem}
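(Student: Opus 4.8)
The plan is to prove part (1) first and then deduce part (2) from it. For part (1), I would establish three structural facts about the realized map $|\varpi|_\dcal : |\tilde{K}|_\dcal \to |K|_\dcal$ and then combine them to exhibit smooth local triviality with discrete fiber $\pi_1(K)$. First I would observe that $|\varpi|_\dcal$ is a $\dcal$-quotient map: since realization is the colimit $|L|_\dcal = \colimunder{\Delta\downarrow L}\Delta^n$, the canonical maps $\coprod_{\mathrm{ob}(\Delta\downarrow L)}\Delta^n \to |L|_\dcal$ are $\dcal$-quotient maps, and comparing these presentations for $\tilde{K}$ and $K$ forces $|\varpi|_\dcal$ to be a $\dcal$-quotient map as well. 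Second, since the deck group $\pi_1(K)$ acts simplicially on $\tilde{K}$ over $K$ and $|\ |_\dcal$ is a functor, $\pi_1(K)$ acts on $|\tilde{K}|_\dcal$ by diffeomorphisms over $|K|_\dcal$. Third, applying the underlying-space functor and using Proposition \ref{adjoint1}(2), the composite $\widetilde{\cdot}\circ |\ |_\dcal = |\ |$ is the ordinary topological realization, so $\widetilde{|\varpi|_\dcal} = |\varpi|$; since the topological realization of a simplicial covering is a topological covering, $|\varpi|_\dcal$ is a topological covering with fiber $\pi_1(K)$.

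The heart of the argument, and what I expect to be the main obstacle, is to promote this topological local triviality to smooth local triviality. Around any point of $|K|_\dcal$ I would choose a topologically contractible open neighborhood $U$ (these exist because the underlying space is a $CW$-complex). By the topological covering property, $|\varpi|^{-1}_\dcal(U) \cong \coprod_{g\in\pi_1(K)} U_g$ with each open sheet $U_g$ mapping homeomorphically to $U$, and by the first fact the restriction $|\varpi|^{-1}_\dcal(U) \to U$ is still a $\dcal$-quotient map (restriction of a $\dcal$-quotient map to a saturated open subspace, cf.\ \cite[Lemma 2.4(2)]{origin}). The delicate step is to show that each $U_g \to U$ is a \emph{diffeomorphism}, not merely a continuous bijection: using the second fact, the sheets are carried into one another by diffeomorphisms over $U$, so a plot of $U$ lifts locally through the quotient map into a single sheet and can be transported to a fixed sheet $U_e$; this exhibits $|\varpi|_\dcal|_{U_e}:U_e \to U$ as a bijective $\dcal$-quotient map, hence a diffeomorphism. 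Consequently $|\varpi|^{-1}_\dcal(U) \cong U \times \pi_1(K)$ over $U$ with $\pi_1(K)$ discrete, which is exactly the assertion that $|\varpi|_\dcal$ is a covering in $\dcal$.

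For part (2), I would argue via unique simplex lifting, which should be routine once part (1) is available. Given an $n$-simplex $\sigma:\Delta^n \to |K|_\dcal$ of $S^\dcal|K|_\dcal$, its underlying space is the topological standard simplex (Axiom 1, Proposition \ref{axioms}), which is simply connected, so by the topological covering property of the third fact there is a continuous lift $\tau$, and $\{\tau\cdot g\}_{g\in\pi_1(K)}$ is precisely the set of all continuous lifts. Part (1) guarantees that each $\tau\cdot g$ is in fact smooth, hence a genuine simplex of $S^\dcal|\tilde{K}|_\dcal$. Thus every simplex of the base lifts, the set of lifts over $\sigma$ is a free transitive $\pi_1(K)$-set, and the pullback of $S^\dcal|\varpi|_\dcal$ along any $\sigma$ is the trivial covering $\Delta[n]\times \pi_1(K)$; since a simplicial covering is detected simplex by simplex, this shows $S^\dcal|\varpi|_\dcal$ is a covering of simplicial sets.
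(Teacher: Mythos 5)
Your proposal is correct and follows essentially the same route as the paper's proof: the same three preliminary facts ($|\varpi|_{\dcal}$ is a $\dcal$-quotient map, the deck group $\pi_1(K)$ acts by diffeomorphisms over $|K|_{\dcal}$, and $|\varpi|_{\dcal}$ is a topological covering since $\widetilde{\cdot}\circ|\ |_{\dcal}=|\ |$), the same local argument over a topologically contractible open set, and the same smooth-lifting argument for part (2). The only difference is presentational: you make explicit the step that a bijective $\dcal$-quotient map is a diffeomorphism, which the paper leaves implicit when concluding that each sheet $U_g$ maps diffeomorphically onto $U$.
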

Consider the morphism between homotopy exact sequences induced by the morphism of coverings
\begin{center}
	\begin{tikzcd}
		\pi_1(K) \arrow[d] \arrow[r,"="'] &\pi_1(K) \arrow[d]\\
		\tilde{K} \arrow[d,"\varpi"'] \arrow[r,"i_{\tilde{K}}"]& S^{\mathcal{D}}|\tilde{K}|_{\mathcal{D}} \arrow[d, "S^{\mathcal{D}}|\varpi|_{\mathcal{D}}"] \\
		K \arrow[r,"i_K"]& S^{\mathcal{D}} |K|_{\mathcal{D}}.
	\end{tikzcd}
\end{center}
For the proof of Theorem \ref{Quillenequiv}(1), we then have only to show that for a  1-connected Kan complex $K$, $i_{K}: K \longrightarrow S^{\dcal}|K|_{\dcal}$ is a weak equivalence in $\scal$.
Thus, the following lemma and Proposition \ref{homologyisone} complete the proof of Theorem \ref{Quillenequiv}(1) via the Whitehead theorem (\cite[Theorem 13.9]{May}).
\begin{lem}\label{1-connected}
	If $K$ is a $1$-connected Kan complex, then $S^{\mathcal{D}}|K|_{\mathcal{D}}$ is also a $1$-connected Kan complex.
\end{lem}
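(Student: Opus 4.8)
The plan is to verify the two conditions defining $1$-connectedness of a Kan complex separately. The complex $S^{\dcal}|K|_{\dcal}$ is automatically fibrant, i.e. a Kan complex, by Theorem \ref{originmain}, so it remains only to show that $\pi_0(S^{\dcal}|K|_{\dcal}) = \ast$ and $\pi_1(S^{\dcal}|K|_{\dcal}) = 0$. By Theorem \ref{homotopygp} these are naturally identified with the smooth homotopy groups $\pi^{\dcal}_0(|K|_{\dcal})$ and $\pi^{\dcal}_1(|K|_{\dcal})$, so the whole statement reduces to showing that the realization $|K|_{\dcal}$ is smoothly $0$- and $1$-connected. Throughout I would exploit two structural facts: by Lemma \ref{realization}, $|K|_{\dcal}$ is a $CW$-complex in $\dcal$ whose cells are the $\dcal$-contractible simplices $\Delta^n$ (Lemma \ref{simplex}(1)); and by Proposition \ref{adjoint1}(2) its underlying topological space is the ordinary realization $\widetilde{|K|_{\dcal}} = |K|$, which is $1$-connected since $K$ is (being the topological realization of a $1$-connected simplicial set).

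For $\pi_0$ I would argue directly that $|K|_{\dcal}$ is $\dcal$-path-connected. Every point lies in the image of some cell $\Delta^n$, and since each $\Delta^n$ is $\dcal$-contractible it is joined by a smooth path to a vertex of the $CW$-structure; these vertices are the realizations of the $0$-simplices of $K$, and the realizations of the $1$-simplices ($\cong \Delta^1 \cong I$) furnish smooth paths between them. As $K$ is connected, all vertices lie in a single $\dcal$-path-component, whence $\pi^{\dcal}_0(|K|_{\dcal}) = \ast$.

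The genuine content is the vanishing of $\pi^{\dcal}_1(|K|_{\dcal})$, and this is where I expect the main obstacle. Homology alone does not suffice: Proposition \ref{homologyisone} gives $H_1(|K|_{\dcal}) \cong H_1(K) = 0$, but by Hurewicz this only forces $\pi_1$ to be perfect, not trivial, so geometric input is unavoidable. Recalling from Lemma \ref{prism}(1) that $S^{\dcal}|K|_{\dcal}$ is a subcomplex of $S|K|$ and that $\pi_1(S|K|) = \pi_1(|K|) = 0$, the goal is equivalent to showing that the inclusion $S^{\dcal}|K|_{\dcal} \hookrightarrow S|K|$ is injective on $\pi_1$; that is, a smooth loop in $|K|_{\dcal}$ which bounds a continuous disk in $|K|$ must already be $\dcal$-null-homotopic.

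I would establish this by an inductive computation of $\pi^{\dcal}_1$ along the skeletal filtration of the $CW$-complex $|K|_{\dcal}$. On the $1$-skeleton (the realization of a $1$-dimensional simplicial set) the smooth fundamental group should be the free group on the edges, the $2$-cells then impose exactly the relations coming from the $2$-simplices of $K$ — so that $\pi^{\dcal}_1(|K^2|_{\dcal})$ is the edge-path presentation of $\pi_1(K) = 0$ — and cells of dimension $\geq 3$ leave $\pi_1$ unchanged. The delicate step, and the one I expect to be hardest, is the smooth realization of these attaching homotopies: one must push a given smooth loop and its topological filling off the higher cells and smooth the resulting disk relative to its already-smooth boundary, gluing the cell-by-cell homotopies into a single globally smooth null-homotopy. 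Because the patched diffeology on $|K|_{\dcal}$ does not split as a product (cf. Remark \ref{proof}), controlling this smoothing across the cell attachments is the core difficulty; the $\dcal$-contractibility of each $\Delta^n$ (Lemma \ref{simplex}(1)) is what makes it feasible. Once $\pi^{\dcal}_1(|K|_{\dcal}) = 0$ is secured, Theorem \ref{homotopygp} finishes the proof.
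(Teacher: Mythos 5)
Your reduction via Theorem \ref{homotopygp} to the vanishing of $\pi^{\dcal}_1(|K|_{\dcal})$ is exactly right, as are the observation that homology only forces perfectness of $\pi_1$ and the easy $\pi_0$ argument. But the plan for the key step has a genuine gap, and you have in fact flagged it yourself: the entire content of the lemma is concentrated in the ``smooth realization of attaching homotopies'' that your skeletal induction requires, and you provide no mechanism for it. Worse, the intermediate claims of the induction are themselves unproven and nontrivial in $\dcal$: that $\pi^{\dcal}_1$ of the $1$-skeleton is free on edges, and that $2$-cells impose exactly the edge-path relations, would require a smooth van Kampen theorem or a covering-space computation for $\dcal$-polyhedra, neither of which is available here; the standard topological proofs break down precisely because the realization does not preserve products (Remark \ref{proof}) and smooth maps on pieces of a subdivided disk do not glue to smooth maps on $\Delta^2$. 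So your outline reduces the lemma to statements that are essentially as hard as the lemma itself.

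The step you are missing is a preliminary reduction that makes the whole van Kampen-style bookkeeping unnecessary: replace $K$ by its minimal subcomplex, which is legitimate by Lemma \ref{2ndreduction}, so that $K$ has a single nondegenerate simplex in dimensions $\leq 1$. Then by Lemma \ref{realization} the $CW$-structure of $|K|_{\dcal}$ has one $0$-cell and no $1$-cells at all, so there are no edge generators and no relations to track: one only has to show that every smooth loop is smoothly null-homotopic, i.e.\ smoothly deformable into $X^1 = X^0 = \ast$. This is done by the paper's actual mechanism, which supplies the geometric input you were looking for: a smooth loop (and a smooth homotopy) has compact domain and hence lands in a finite skeleton $X^n$ by \cite[Lemma 9.9]{origin}; since the loop is $1$-dimensional and all cells have dimension $\geq 2$, the transversality theorem \cite[(14.7)]{BJ} --- applicable because the open top cells carry the standard diffeology by Lemma \ref{simplex}(3) --- pushes it off the barycenters $b_\lambda$; and the smooth deformation retraction $X^n - \{b_\lambda \mid \lambda \in \Lambda_n\} \simeq_{\dcal} X^{n-1}$ from the proof of Lemma \ref{cellular} then lowers the skeleton inductively down to the basepoint. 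Without the minimal-model reduction, transversality alone would still leave you with loops in a $1$-skeleton and the unresolved relation-counting problem, which is exactly where your proposal stalls.
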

\begin{proof}
	By replacing $K$ with its minimal subcomplex, we may assume that $K$ has only one non-degenerate simplex of dimension $\leq 1$ (see Lemma \ref{2ndreduction}). Then $|K|_{\mathcal{D}}$ is a diffeological space obtained from the $0$-simplex $\ast$ by attaching simplices of dimension $\geq 2$ (see Lemma \ref{realization}). Since $\pi_{1}(S^{\dcal}|K|_{\dcal}) \cong \pi^{\dcal}_{1}(|K|_{\dcal})$ (Theorem \ref{homotopygp}), we can easily see that $\pi_{1}(S^{\dcal}|K|_{\dcal}) = 0$ by \cite[Lemma 9.9]{origin}, the transversality theorem (\cite[(14.7)]{BJ}), and the argument in the proof of Lemma \ref{cellular}.
\end{proof}

%SubSection--------------------------------------------------------------------------
\if0
\subsection{Proof of Part 2 of Thorem \ref{Quillenequiv}}
In this subsection, we prove Part 2 of Theorem \ref{Quillenequiv}. We need the following lemma.
\par\indent
For a model category $\mcal$, $\mcal_c$ and $\mcal_f$ denote the full subcategories of $\mcal$ consisting of cofibrant objects and fibrant objects, respectively.

\begin{lem}\label{Quillenpairreduction}
	Let $\acal$, $\bcal$, and $\ccal$ be model categories, and
	\[
	\begin{tikzpicture} %[shorten >=1pt,node distance=2cm,on grid]
	\node [below] at (0,0.25) {\begin{tikzcd}
		{\acal\ }\arrow[r, shift left, "F"]
		& \bcal \arrow[l, shift left, "G"] \arrow[r, shift left, "L"]
		& {\ \ccal} \arrow[l, shift left, "R"]
		\end{tikzcd}};
	\draw[->] (-1.5,0) -- (-1.5,0.25)--(1.5,0.25)-- (1.5,0);
	\draw[<-] (-1.5,-.5) -- (-1.5,-.75)--(1.5,-.75)-- (1.5,-0.5);
	\node at (0,.5) {$LF$};
	\node at (0,-1.) {$GR$};
	\end{tikzpicture}
	\]
	a diagram of functors such that $(F,G)$ and $(L,R)$ are Quillen pairs. Suppose that $(F,G)$ and $(LF,GR)$ are pairs of Quillen equivalences, and that $\acal_c=\acal$ and $\bcal_f=\bcal$. Then $(L,R)$ is also a pair of Quillen equivalences.
\end{lem}
\begin{proof}
	Let $X$ be a cofibrant object of $\bcal$ and $U$ a fibrant object of $\ccal$. Let $f:X\longrightarrow RU$ be a morphism of $\bcal$. Then we show that $f:X\longrightarrow RU$ is a weak equivalence if and only if the left adjunct $f\ \hat{}:LX\longrightarrow U$ is a weak equivalence.
	\par\indent
	{\sl Step 1: The case of $X=FA$ with $A\in\acal$.} Since $(F,G)$ is a pair of Quillen equivalences, $f:FA\longrightarrow RU$ is a weak equivalence if and only if the right adjunct $f\ \check{}: A\longrightarrow GRU$ is a weak equivalence. Since $(LF,GR)$ is a pair of Quillen equivalences, $f\ \check{}: A\longrightarrow GRU$ is a weak equivalence if and only if $f\ \hat{}: LFA\longrightarrow U$ is a weak equivalence.
	\par\indent
	{\sl Step 2: The case of a general cofibrant object $X$.} Set $A=GX$. Then the canonical map $p_X:FA\longrightarrow X$, which is the left adjunct of $1_{GX}: A = GX \longrightarrow GX$, is a weak equivalence between cofibrant objects in $\bcal$, and hence, $Lp_X:LFA\longrightarrow LX$ is a weak equivalence by \cite[Corollary 7.7.2]{Hi}. Thus, we have the equivalences
	\[
	\begin{tabular}{rl}
	& $f:X\longrightarrow RU$ is a weak equivalence in $\bcal$ \\
	$\Longleftrightarrow$
	& the composite $FA\xrightarrow[]{\ p_X\ }X\xrightarrow[]{\ f\ }RU$ is a weak equivalence in $\bcal$ \\
	$\Longleftrightarrow$
	& the composite $LFA\xrightarrow[]{\ Lp_X\ }LX\xrightarrow[]{\ f\ \hat{}\ }U$ is a weak equivalence in $\ccal$ \\
	$\Longleftrightarrow$
	& $f\ \hat{}: LX\longrightarrow U$ is a weak equivalence in $\ccal$,
	\end{tabular}
	\]
	using Step 1.
\end{proof}
\begin{proof}[\sl Proof of Theorem \ref{Quillenequiv}(2).] Recall that $\scal_c = \scal$ and that $\dcal_f = \dcal$ (Theorem \ref{originmain}). From Lemma \ref{Quillenpairs}, Proposition \ref{adjoint1}(2), Lemma \ref{clequiv} and Theorem \ref{Quillenequiv}(1), we obtain the result via Lemma \ref{Quillenpairreduction}.
\end{proof}
\fi
\subsection{Proof of Corollary \ref{W}}
In this subsection, we give a proof of Corollary \ref{W}.
\begin{proof}[Proof of Corollary \ref{W}] (1) Since $A$ is in $\wcal_{\dcal}$, there are a cofibrant diffeological space $A'$ and a $\dcal$-homotopy equivalence $f: A' \longrightarrow A$. Then, $\widetilde{A'}$ is a cofibrant arc-generated space (Lemma \ref{Quillenpairs}) and $\widetilde{f}: \widetilde{A'} \longrightarrow \widetilde{A}$ is a $\czero$-homotopy equivalence (see \cite[Proposition 2.13 and Lemma 2.12]{origin}).\par
	(2) We prove the result in two steps.\\
{\sl Step 1: The case where $A$ is cofibrant.} Since $\widetilde{\cdot}: \dcal \rightleftarrows \ccal^{0}: R$ is a pair of Quillen equivalences (Theorem \ref{Quillenequiv}(2)) and $\ccal^{0}_{f} = \ccal^{0}$ holds, $id: A \longrightarrow R\widetilde{A}$ is a weak equivalence in $\dcal$.

	\noindent {\sl Step 2: The case where $A$ is in $\wcal_{\dcal}$.} Let $A'$ and $f: A' \longrightarrow A$ be as in the proof of Part 1. Since $R$ preserves products and $id: I \longrightarrow R\tilde{I}$ is smooth (Proposition \ref{conven}(3)), the $\czero$-homotopy equivalence $\widetilde{f}: \widetilde{A'} \longrightarrow \widetilde{A}$ defines the $\dcal$-homotopy equivalence $R\widetilde{f}: R\widetilde{A'} \longrightarrow R\widetilde{A}.$ Thus, we have the commutative diagram in $\dcal$
	$$
	\begin{tikzcd}
	A' \arrow[d,swap,"id"]\arrow[r,"f"] \arrow[r,swap,"\simeq"] & A \arrow[d,"id"]\\
	R\widetilde{A}'\arrow[r,"R\widetilde{f}"]\arrow[r,swap,"\simeq"] & R\widetilde{A}.
	\end{tikzcd}
	$$
	Hence, $id : A \longrightarrow R\widetilde{A}$ is a weak equivalence by Step 1 and \cite[Lemma 9.4(2)]{origin}.

	(3) $\mathrm{(i)} \Longleftrightarrow \mathrm{(ii)}$ From the definition of a weak equivalence in $\dcal$ (Definition \ref{WFC}) and the equality $S^\dcal R = S$ (Proposition \ref{adjoint1}(2)), we see that $\mathrm{(i)}$ is equivalent to $\mathrm{(ii)}$.\\
	$\mathrm{(ii)}\Longleftrightarrow\mathrm{(iii)}$ Recall from Theorem \ref{originmain} and Remark \ref{arc} that $S^\dcal A$ and $S\widetilde{A}$ are Kan complexes. Since there are natural isomorphisms
	$$
	\pi^{\dcal}_{p}(A, a) \xrightarrow[\cong]{} \pi_{p}(S^{\dcal}A, a),
	$$
	$$
	\pi_{p}(\widetilde{A}, a) \xrightarrow[\cong]{} \pi_{p}(S\widetilde{A}, a)
	$$
	(Theorem \ref{homotopygp}), the equivalence $\mathrm{(ii)}\Longleftrightarrow\mathrm{(iii)}$ is obvious (see, eg, \cite[Section 3]{K}).
\end{proof}
We end this subsection with a remark on the subclass $\wcal_{\dcal}$.
\begin{rem}\label{WD}
In principle, the subclass $\ecal_{cf}$ of fibrant-cofibrant objects of a model category $\ecal$ is regarded as a subclass of objects behaving best from a model-categorical viewpoint. However, for the category $\dcal$, we adopt not $\dcal_{c}\ (= \dcal_{cf})$ but $\wcal_{\dcal}$ as a subclass of smooth homotopically good objects. The reasons are as follows:
	\begin{itemize}
		\item We are primarily concerned with $\dcal$-homotopical (or simplicial categorical) properties of diffeological spaces, not with model categorical properties of diffeological spaces (see Remark \ref{svsm}).
		\item We cannot expect that good diffeological spaces such as finite dimensional $C^{\infty}$-manifolds are cofibrant; in fact, cofibrant objects are far from seamless since they are sequential $\mathcal{I}$-cell complexes or their retracts (see Theorem \ref{originmain} for the set $\mathcal{I}$). However, we can show that a large class of $C^{\infty}$-manifolds is contained in $\wcal_{\dcal}$ (see Section 11).
	\end{itemize}

Note that the notion of the subclass $\wcal_{\dcal}$ is a hybrid of model categorical and simplicial categorical notions.
\end{rem}
%%%%%%%%%%%%%%%%%%%%%%%% Section 5 %%%%%%%%%%%%%%%%%%%%%%%%%%%
\section{Smoothing of continuous maps}
We are concerned with function complexes $S^{\dcal}\dcal(A, B)$ and $S\czero(X, Y)$, and their relation (Theorem \ref{dmapsmoothing}). However, model category theory mainly deals with homotopy function complexes (\cite[Chapter 17]{Hi}). This section is devoted to the study of the relation between function complexes and homotopy function complexes for the category $\mcal$ $(= \czero, \dcal)$, which leads us to the proof of the smoothing theorem for continuous maps (Theorem \ref{dmapsmoothing}). More precisely, it enables us to reduce the proof of Theorem \ref{dmapsmoothing} to the proof of the weak equivalence between the homotopy function complexes of $\dcal$ and $\czero$.\par
We begin by clarifying significant differences between the simplicial categories $\czero$ and $\dcal$, which explains why subtle arguments are needed to obtain results for the category $\dcal$.

\subsection{Enrichment of cartesian closed categories}\label{4.1}
In this paper, we often deal with the categories $\czero$ and $\dcal$ as categories enriched over themselves and over $\scal$. In this subsection, we thus recall the basics of enrichment of a cartesian closed category over itself and establish a result on base change, which is used in this and later sections.\par
Let $\vcal$ be a cartesian closed category. Then, $\vcal$ itself is an $\vcal$-category (\cite[Proposition 6.2.6]{Borceux}). Further, $\vcal$ is both a tensored and cotensored $\vcal$-category whose tensor and cotensor are given by
\[
	V \otimes X = V \times X \:\:\:\:\:\text{and}\:\:\:\:\: Y^V = \vcal(V,Y)
\]
(\cite[Proposition 6.5.3]{Borceux}); in other words, the pairs
\[
	\cdot \times X:\vcal \rightleftarrows \vcal:\vcal(X,\cdot)\:\:{\rm and}\:\:\vcal(\cdot,Y):\vcal \rightleftarrows \vcal^\ast:\vcal(\cdot,Y) 
\]
are $\vcal$-adjoint pairs for $X$, $Y\in \vcal$, where $\vcal^\ast$ is the dual of $\vcal$ (see \cite[Propositions 6.2.2 and 6.7.4]{Borceux}).\par
The following result on base change is used in the next subsection to clarify differences between the simplicial categories $\czero$ and $\dcal$. See \cite[Section 6.4]{Borceux} for the basics of base change.
\begin{prop}\label{tensor}
	Let $L:\ucal \rightleftarrows \vcal:R$ be an adjoint pair between cartesian closed categories and regard $\vcal$ as a $\ucal$-category via $R$. Suppose that $L$ preserves terminal objects. Then, the following are equivalent:
	\begin{itemize}
		\item[{\rm (i)}] $L$ preserves finite products.
		\item[{\rm (ii)}] The adjoint pair $(L,R)$ can be enriched over $\ucal$.
		\item[{\rm (iii)}] $\vcal$ is a tensored $\ucal$-category.
		\item[{\rm (iv)}] $\vcal$ is a cotensored $\ucal$-category.
	\end{itemize}
\end{prop}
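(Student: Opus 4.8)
The plan is to isolate a single natural isomorphism that governs all four conditions simultaneously. Write $Y^{X}$ for the internal hom of $\vcal$ and $V^{U}$ for that of $\ucal$, so that the $\ucal$-category structure on $\vcal$ obtained by base change along $R$ has hom-objects $R(Y^{X})$. Since $R$ is a right adjoint it preserves all limits, in particular finite products, and a product-preserving functor between cartesian closed categories is strong monoidal; this both legitimises the base change (the lax structure needed is automatic) and makes $R$ itself a $\ucal$-functor $\vcal\longrightarrow\ucal$. I would therefore first record the identity
\[
(\star)\qquad R\bigl(Z^{LU}\bigr)\;\cong\;(RZ)^{U}\qquad(U\in\ucal,\ Z\in\vcal),
\]
natural in $U$ and $Z$ and built from the evaluation/counit data, and reduce the whole proposition to the claim that $(\star)$ is equivalent to each of (i)--(iv).

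The key lemma is that $(\star)$ holds if and only if $L$ preserves binary products. To prove it I would map an arbitrary $W\in\ucal$ into both sides. Using $L\dashv R$ and the exponential law in $\vcal$, the left-hand side gives $\ucal(W,R(Z^{LU}))\cong\vcal(LW,Z^{LU})\cong\vcal(LW\times LU,Z)$, whereas the right-hand side gives $\ucal(W,(RZ)^{U})\cong\ucal(W\times U,RZ)\cong\vcal(L(W\times U),Z)$. Thus, by the Yoneda lemma in $\vcal$, $(\star)$ is exactly the assertion that the canonical comparison $L(W\times U)\longrightarrow LW\times LU$ is invertible for all $W,U$, i.e.\ that $L$ preserves binary products; combined with the standing hypothesis that $L$ preserves terminal objects, this is precisely condition (i), giving (i)$\Leftrightarrow(\star)$. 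The one delicate point -- the main obstacle -- is verifying that the Yoneda correspondent of the isomorphism $(\star)$ really is the canonical product comparison assembled from $L\pi_{1}$ and $L\pi_{2}$, rather than some unrelated natural isomorphism; this I would settle by tracing identity morphisms through the two adjunctions, and it is the only step where genuine care is needed.

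Finally I would deduce each of (ii)--(iv) by reducing it back to $(\star)$, so that no further bookkeeping about product comparisons recurs. For (ii): as $R$ is already a $\ucal$-functor, enriching the pair amounts to producing a $\ucal$-left adjoint of $R$ (whose underlying functor is then forced to be $L$), and by the enriched representability criterion this exists iff each $\ucal$-functor $Y\mapsto(RY)^{U}$ is representable as $R_{*}\vcal(LU,-)$, which is exactly $(\star)$ with $Z=Y$. For (iii): applying $\ucal(\ast,-)$ to the defining isomorphism of a tensor and using $L\ast\cong\ast$ yields $\vcal(U\otimes X,Y)\cong\vcal(LU\times X,Y)$ naturally in $Y$, so any tensor must be $LU\times X$; specialising its defining enriched isomorphism at $X=\ast$ is exactly $(\star)$, while conversely $(\star)$ together with $Y^{LU\times X}\cong(Y^{X})^{LU}$ exhibits $LU\times X$ as a tensor. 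For (iv): dually a cotensor is forced to be $X^{LU}$, its defining isomorphism specialised at $Y=\ast$ is $(\star)$, and $(\star)$ with $X^{LU\times Y}\cong(X^{Y})^{LU}$ conversely exhibits $X^{LU}$ as a cotensor. This closes the chain (i)$\Leftrightarrow(\star)\Leftrightarrow$(ii), (iii), (iv) and proves the proposition.
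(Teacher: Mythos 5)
Your proof is correct, but it is organized genuinely differently from the paper's. The paper proves the cycle (i)$\Rightarrow$(ii)$\Rightarrow$(iii)$\Rightarrow$(i) (and similarly (ii)$\Rightarrow$(iv)$\Rightarrow$(i)): for (i)$\Rightarrow$(ii) it enriches the adjunction data directly; for (ii)$\Rightarrow$(iii) it composes the enriched pair $(L,R)$ with the $\vcal$-adjunction $(\cdot\times X,\vcal(X,\cdot))$ to exhibit $LU\times X$ as the tensor, citing \cite[Proposition 6.7.4]{Borceux}; and for (iii)$\Rightarrow$(i) it identifies $U\otimes X\cong LU\times X$ by applying global points (using $L\ast\cong\ast$) and then invokes the associativity of tensors, \cite[Proposition 6.5.4]{Borceux}, to get $L(U\times U')\cong LU\times LU'$. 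You instead route all four conditions through the single isomorphism $(\star)\colon R(Z^{LU})\cong(RZ)^{U}$ --- which is exactly the enriched adjunction isomorphism, i.e.\ the monoidal-closed form of condition (ii) --- proving (i)$\Leftrightarrow(\star)$ by a Yoneda/global-points computation and then each of (ii)--(iv)$\Leftrightarrow(\star)$. This hub structure buys symmetry between (iii) and (iv) and replaces the appeal to tensor associativity by the exponential law $Y^{LU\times X}\cong(Y^{X})^{LU}$, at the cost of the one canonicity verification you flag. That worry is real but discharges cleanly, and more easily than by chasing identities: naturality of the induced isomorphism $\theta_{W,U}\colon L(W\times U)\cong LW\times LU$ in each variable against the maps $W\times U\to W\times\ast$ and $W\times U\to\ast\times U$, together with $L\ast\cong\ast$, gives $\pi_{1}\theta=\alpha_{W}\circ L\pi_{1}$ and $\pi_{2}\theta=\beta_{U}\circ L\pi_{2}$ for natural isomorphisms $\alpha,\beta$, whence $\theta=(\alpha_{W}\times\beta_{U})\circ c$ and the canonical comparison $c$ is invertible. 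It is worth noting that the paper's own step (iii)$\Rightarrow$(i) produces only an abstract natural isomorphism $L(U\times U')\cong LU\times LU'$ and silently treats it as product preservation, so on this one point your proposal is the more careful of the two.
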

\begin{proof}
	We prove the implications $({\rm i})\Rightarrow ({\rm ii})\Rightarrow ({\rm iii})\Rightarrow({\rm i})$; we can prove the implications $({\rm ii})\Rightarrow ({\rm iv})\Rightarrow({\rm i})$ similarly. Note that $\vcal$ is regarded as a $\ucal$-category with hom-object $R\vcal(X,Y)$.\\
	$({\rm i})\Rightarrow({\rm ii})$ Noticing that $R$ is a right adjoint, we can see that the functor $R:\vcal \rightarrow \ucal$ is enriched over $\ucal$ in a canonical manner. Using condition $({\rm i})$, we also see that the functor $L:\ucal \longrightarrow \vcal$ and the natural bijection $\ucal(U,RX)\cong \vcal(LU,X)$ are enriched over $\ucal$ in a canonical manner.\\
	$({\rm ii})\Rightarrow({\rm iii})$ The pair $\cdot \times X:\vcal \rightleftarrows \vcal:\vcal(X,\cdot)$ is a $\vcal$-(and hence $\ucal$-)adjoint pair. By composing $(L,R)$ with $(\cdot \times X,\vcal(X,\cdot))$, we thus obtain the $\ucal$-adjoint pair
	\[
		L\cdot\times X:\,\ucal \rightleftarrows \vcal:R\vcal(X,\cdot),
	\]
	which shows that $\vcal$ is a tensored $\ucal$-category (\cite[Proposition 6.7.4]{Borceux}).\\
	$({\rm iii})\Rightarrow ({\rm i})$ Let $U\otimes X$ denote the tensor of $U\in \ucal$ and $X\in \vcal$. Then, we have the isomorphism in $\ucal$
	\[
		\ucal(U,R\vcal(X,Y))\cong R\vcal(U\otimes X,Y).
	\]
	Since $L$ preserves terminal objects, this isomorphism implies the bijection
	\[
		\ucal(U,R\vcal(X,Y))\cong \vcal(U\otimes X,Y),
	\]
	which shows that $U\otimes X\cong LU\times X$. Thus, we see from \cite[Proposition 6.5.4]{Borceux} that $LU\times LU' \cong L(U\times U')$.
\end{proof}
%\begin{cor}\label{tensorcor}
%	Let $L:\ncal \rightleftarrows \mcal:R$ be an adjoint pair between cartesian closed categories. Suppose that $L$ preserves finite products. If $\xfra$ is a tensored (resp. cotensored) $\mcal$-category, then $\xfra$ is a tensored (resp. cotensored) $\ncal$-category.
%	\begin{proof}
%		We prove the result for the tensored case; the proof for the cotensored case is similar.\par
%		Let $M\otimes X$ denote the tensor of $M\in \mcal$ and $X \in \xfra$. Then, the pair $\cdot \otimes X:\mcal \rightleftarrows \xfra:\xfra(X,\cdot)$ is an $\mcal$-(and hence, $\ncal$-)adjoint pair (\cite[Proposition 6.7.4]{Borceux}). By composing the $\ncal$-adjoint pair $(L,R)$ with $(\cdot\otimes X, \xfra(X,\cdot))$ (Proposition \ref{tensor}), we thus obtain the $\ncal$-adjoint pair
%		\[
%			L\cdot \otimes X:\ncal \rightleftarrows \xfra:R\xfra(X,\cdot),
%		\]
%		which shows that $\xfra$ is a tensored $\ncal$-category, with the tensor $LN\otimes X$.
%	\end{proof}
%\end{cor}
\subsection{Simplicial categories $\czero$ and $\mathcal{D}$}\label{4.2}
The categories $\scal$, $\mathcal{D}$, and $\czero$ are cartesian closed and the composite of the adjoint pairs
\[
\begin{tikzcd}
\scal \arrow[yshift = 0.1cm]{r}{\absno{\ }_{\dcal} } & \arrow[yshift = -0.1cm]{l}{S^{\dcal}}  \dcal  \arrow[yshift = 0.1cm]{r}{\widetilde{\cdot}} & \czero \arrow[yshift = -0.1cm]{l}{R}
\end{tikzcd}
\]
is just the adjoint pair $\absno{\ }:\scal \rightleftarrows \czero:S$ (Proposition \ref{conven}, Remark \ref{suitable}, and Proposition \ref{adjoint1}). Thus, $\mathcal{D}$ and $\czero$ are $\scal$-categories with function complexes $S^\dcal \dcal(X,Y)$ and $S\czero(X,Y)$, respectively. In this subsection, we give a precise definition of the map $\widetilde{\cdot}: S^{\dcal} \dcal (A, B) \longhookrightarrow S\czero(\widetilde{A}, \widetilde{B})$ dealt with in Theorem \ref{dmapsmoothing}, and discuss the differences between the simplicial categories $\czero$ and $\dcal$.
\par\indent
First, we prove the following result.
\begin{lem}\label{enrich}
The adjoint pair
\[
\widetilde{\cdot} : \dcal \rightleftarrows \czero : R
\]
can be enriched over $\scal$.
\end{lem}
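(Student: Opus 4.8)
The plan is to reduce the statement to the base-change result Proposition \ref{tensor}, which was established precisely in order to enrich adjoint pairs, followed by a change of enriching category from $\dcal$ to $\scal$ along the singular functor.

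First I would apply Proposition \ref{tensor} to the adjoint pair $\widetilde{\cdot}:\dcal \rightleftarrows \czero:R$, taking $\ucal=\dcal$ and $\vcal=\czero$. Both categories are cartesian closed (Proposition \ref{conven}(2) and Remark \ref{suitable}(2)), and the left adjoint $\widetilde{\cdot}$ preserves finite products, in particular terminal objects, by Remark \ref{suitable}(4). Hence condition (i) of Proposition \ref{tensor} holds, and by the implication (i)$\Rightarrow$(ii) the pair $(\widetilde{\cdot},R)$ can be enriched over $\dcal$; here $\czero$ is regarded as a $\dcal$-category with hom-object $R\czero(X,Y)$.

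Next I would transport this $\dcal$-enrichment to an $\scal$-enrichment by base change along $S^{\dcal}:\dcal \longrightarrow \scal$. Since $S^{\dcal}$ is a right adjoint (Proposition \ref{adjoint1}(1)), it preserves finite products and is therefore strong monoidal for the cartesian monoidal structures; base change along a (lax) monoidal functor carries $\dcal$-categories, $\dcal$-functors, and $\dcal$-adjunctions to their $\scal$-analogues (\cite[Section 6.4]{Borceux}). Applying this to the $\dcal$-adjunction of the previous step yields an $\scal$-adjunction whose underlying ordinary adjunction is again $(\widetilde{\cdot},R)$.

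The one point that requires care, and the main thing to verify, is that the $\scal$-enrichments produced by this base change are exactly the standard ones, so that the resulting $\scal$-adjunction is genuinely an enrichment of $(\widetilde{\cdot},R)$ relative to the function complexes $S^{\dcal}\dcal(X,Y)$ and $S\czero(\widetilde{X},\widetilde{Y})$. For $\dcal$ this is immediate, since base-changing its self-enrichment along $S^{\dcal}$ produces the hom-object $S^{\dcal}\dcal(X,Y)$. For $\czero$ the $\dcal$-hom-object $R\czero(X,Y)$ is sent to $S^{\dcal}R\czero(X,Y)$, which equals $S\czero(X,Y)$ because $S^{\dcal}R=S$ (Proposition \ref{adjoint1}(2)); by the pseudofunctoriality of base change this coincides with base-changing the self-enrichment of $\czero$ directly along $S=S^{\dcal}R$, i.e. with the standard $\scal$-enrichment. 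Thus the bookkeeping of iterated base change, together with the identity $S^{\dcal}R=S$, is the only delicate ingredient, while the conceptual content is entirely supplied by Proposition \ref{tensor}.
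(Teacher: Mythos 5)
Your proposal is correct and follows essentially the same route as the paper: Proposition \ref{tensor} gives the $\dcal$-enrichment of $(\widetilde{\cdot},R)$ from the fact that $\widetilde{\cdot}$ preserves finite products, and the $\scal$-enrichment is then obtained by base change along the product-preserving functor $S^{\dcal}$ together with the identity $S^{\dcal}\circ R = S$. Your version merely spells out the base-change bookkeeping that the paper leaves implicit.
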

\begin{proof}
	Since $\widetilde{\cdot}:\dcal\longrightarrow \czero$ preserves finite products (Remark \ref{suitable}), the adjoint pair $\widetilde{\cdot}:\dcal \rightleftarrows \czero:R$ can be enriched over $\dcal$ (see Proposition \ref{tensor}). Thus, we obtain the result using the equality $S^\dcal\circ R=S$.
\end{proof}
The simplicial functor $\widetilde{\cdot}: \dcal \longrightarrow \czero$ defines the natural inclusion between function complexes
\[
\widetilde{\cdot}: S^{\dcal}\dcal(A, B) \longrightarrow S\czero(\widetilde{A}, \widetilde{B}),
\]
which is the map that we are interested in (see Theorem \ref{dmapsmoothing}). Note that $\pi_{0}(\widetilde{\cdot})$ is just the natural map
\[
[A, B]_{\dcal} \longrightarrow [\widetilde{A}, \widetilde{B}]_{\czero}
\]
(\cite[Section 2.4]{origin}).

Next, we show the following basic result on the simplicial categories $\czero$ and $\dcal$.
\begin{prop}\label{enrich2}
	\begin{itemize}
	\item[{\rm (1)}] The $\scal$-category $\czero$ is both tensored and cotensored.
	\item[{\rm (2)}] The $\scal$-category $\dcal$ is neither tensored nor cotensored.
	\end{itemize}
	\begin{proof}
		{\rm (1)} Consider the adjoint pair $|\ |: \scal \rightleftarrows \czero:S$. Since $\czero$ is cartesian closed, the topological realization functor $|\ |: \scal \longrightarrow \czero$ preserves finite products (see the argument in \cite[3.5 in Chapter III]{GZ}). Thus, the result follows from Proposition \ref{tensor}.\par
		{\rm (2)} Consider the adjoint pair $|\ |_\dcal: \scal \rightleftarrows \dcal:S^\dcal$. Since $|\ |_{\dcal}: \scal \longrightarrow \dcal$ does not preserve finite products (Remark \ref{proof}), the result follows from Proposition \ref{tensor}.
	\end{proof}
\end{prop}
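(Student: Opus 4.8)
The plan is to apply Proposition \ref{tensor} to the two realization--singular adjoint pairs. Recall that the $\scal$-category structures on $\czero$ and $\dcal$ arise, by base change along the singular functors $S$ and $S^\dcal$, from their cartesian-closed self-enrichments. So in the notation of Proposition \ref{tensor} I would take $\ucal = \scal$ throughout, with $\vcal = \czero$ and $(L,R) = (|\ |, S)$ for part (1), and $\vcal = \dcal$ and $(L,R) = (|\ |_\dcal, S^\dcal)$ for part (2). All three of $\scal$, $\czero$, $\dcal$ are cartesian closed, and in each case $\vcal$ is regarded as an $\ucal$-category precisely via $R$, so the standing hypotheses of Proposition \ref{tensor} apply once I note that $L$ preserves terminal objects; this is clear, since the realization of $\Delta[0]$ is a one-point object in either target.

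For part (1), I would verify hypothesis (i) of Proposition \ref{tensor} for $L = |\ |$, namely that the topological realization functor $|\ |:\scal \longrightarrow \czero$ preserves finite products. This is the one genuinely non-formal input, and I would obtain it from the classical argument that realization commutes with products (as in \cite[3.5 in Chapter III]{GZ}), the cartesian closedness of $\czero$ ensuring that the product of realizations is computed in the correct (arc-generated) sense. Granting this, the implications (i)$\Rightarrow$(iii) and (i)$\Rightarrow$(iv) of Proposition \ref{tensor} at once give that $\czero$ is both a tensored and a cotensored $\scal$-category.

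For part (2), I would instead observe that hypothesis (i) fails for $L = |\ |_\dcal$: the diffeological realization does not preserve finite products, since $|\Delta[1]\times\Delta[1]|_\dcal$ is not the product $|\Delta[1]|_\dcal \times |\Delta[1]|_\dcal$ (Remark \ref{proof}). Reading Proposition \ref{tensor} in contrapositive form --- the negation of (i) forces the negation of each of (iii) and (iv) --- then yields that $\dcal$ is neither tensored nor cotensored over $\scal$.

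The only real content lies in the product behaviour of the two realization functors; everything else is a formal application of the equivalence packaged in Proposition \ref{tensor}. The point worth emphasizing is that the failure of $|\ |_\dcal$ to preserve finite products --- an intrinsic feature of the diffeological realization, not an accident --- is exactly what obstructs the $\scal$-enrichment as a tensored or cotensored structure; this is the technical source of the asymmetry between $\czero$ and $\dcal$ that makes the later comparison of function complexes with homotopy function complexes delicate for $\dcal$.
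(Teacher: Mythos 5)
Your proof is correct and follows essentially the same route as the paper: both parts reduce to Proposition \ref{tensor} applied to the realization--singular adjunctions, with the product-preservation of $|\ |$ (via the argument of \cite[3.5 in Chapter III]{GZ}) for part (1) and the failure of $|\ |_\dcal$ to preserve finite products (Remark \ref{proof}) for part (2). Your explicit checks that the realizations preserve terminal objects and that the $\scal$-enrichments arise by base change along $S$ and $S^\dcal$ are details the paper leaves implicit, but they do not change the argument.
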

Let $\ecal$ be both a simplicial category and a model category. If $\ecal$ satisfies axioms M6 and M7 in \cite[Definition 9.1.6]{Hi}, then $\ecal$ is called a simplicial model category; axiom M6 in \cite[Definition 9.1.6]{Hi} requires that the simplicial category $\ecal$ is tensored and cotensored.\par
Recall the model structures on $\czero$ and $\dcal$ from Sections 2.3 and 2.4.
\begin{cor}\label{notenrich}
	\begin{itemize}
	\item[{\rm (1)}] $\czero$ is a simplicial model category.
	\item[{\rm (2)}] $\dcal$ is not a simplicial model category.
	\end{itemize}
	\begin{proof}
		{\rm (1)} By Proposition \ref{enrich2}(1), $\czero$ satisfies axiom M6. Axiom M7 is easily verified using \cite[Proposition 3.13 in Chapter II]{GJ}.\par
		{\rm (2)} By Proposition \ref{enrich2}(2), $\dcal$ does not satisfy axiom M6.
	\end{proof}
\end{cor}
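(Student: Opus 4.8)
The plan is to unwind the definition of a simplicial model category. By \cite[Definition 9.1.6]{Hi}, a category that is simultaneously a simplicial category and a model category qualifies as a simplicial model category exactly when it satisfies axioms M6 and M7, where M6 demands that the underlying $\scal$-category be tensored and cotensored and M7 is the compatibility (pushout--product) axiom relating cofibrations, fibrations, and the simplicial structure. Both $\czero$ and $\dcal$ are already known to be model categories (Remark \ref{arc} and Theorem \ref{originmain}) and $\scal$-categories (Section \ref{4.2}), so the corollary reduces entirely to checking M6 and M7. The key point is that Proposition \ref{enrich2} already decides M6 in both cases---affirmatively for $\czero$, negatively for $\dcal$---so part (2) will be immediate and only part (1) requires genuine verification, namely that of M7 for $\czero$.

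For part (1), M6 holds by Proposition \ref{enrich2}(1), so the remaining task is M7. I would first record that the tensor and cotensor making $\czero$ an $\scal$-category are $K \otimes X = |K| \times X$ and $X^{K} = \czero(|K|, X)$, both built from the topological realization $|\ |: \scal \longrightarrow \czero$, which preserves finite products (see the proof of Proposition \ref{enrich2}(1)). Axiom M7 then becomes the assertion that for a cofibration $i: A \longhookrightarrow B$ and a fibration $p: X \longrightarrow Y$ in $\czero$, the induced map of function complexes
\[
S\czero(B, X) \longrightarrow S\czero(A, X) \times_{S\czero(A, Y)} S\czero(B, Y)
\]
is a fibration of simplicial sets, and a trivial fibration whenever $i$ or $p$ is a weak equivalence. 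Since the model structure and the simplicial enrichment on $\czero$ mirror those of topological spaces, I would reduce this to the analogous statement for Kan fibrations of simplicial sets; the decisive input is \cite[Proposition 3.13 in Chapter II]{GJ}. I expect this verification to be routine once the tensor is identified, but it is the only step carrying real content.

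For part (2), the deduction is immediate: Proposition \ref{enrich2}(2) states that the $\scal$-category $\dcal$ is neither tensored nor cotensored, so $\dcal$ fails M6 and hence cannot be a simplicial model category, irrespective of whether M7 might hold. The conceptual source of this failure, traced through Proposition \ref{tensor} and Remark \ref{proof}, is that the diffeological realization $|\ |_{\dcal}: \scal \longrightarrow \dcal$ does not preserve finite products, which is precisely what obstructs the tensoring. Thus the main---indeed the only---obstacle in the whole corollary is the verification of M7 for $\czero$, which I would handle by transporting the classical simplicial pushout--product argument through the product-preserving realization $|\ |$.
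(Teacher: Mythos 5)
Your proposal is correct and follows essentially the same route as the paper: part (2) is immediate from Proposition \ref{enrich2}(2) via the failure of M6, and part (1) combines Proposition \ref{enrich2}(1) for M6 with a verification of M7 resting on \cite[Proposition 3.13 in Chapter II]{GJ}, exactly the input the paper invokes. Your identification of the tensor and cotensor as $K \otimes X = |K| \times X$ and $X^{K} = \czero(|K|, X)$, enabled by the product-preserving topological realization, is precisely the point that makes the reduction to the simplicial pushout--product statement work.
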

\begin{rem}\label{inevitable}
	It is natural to develop smooth homotopy theory using the notions and results of model category theory. Then, it seems to be an inevitable feature of homotopical algebra of smooth spaces that one must deal with a category of smooth spaces which is not a simplicial model category. In fact, if a convenient category $\acal$ of smooth spaces is regarded as a simplicial category via a singular functor $S^\acal$ having a left adjoint, then $\acal$ does not seem to be a tensored or cotensored simplicial category (see Remark \ref{proof} and Proposition \ref{tensor}).
\end{rem}
\begin{rem}\label{homobj}
	This remark relates to the notation for hom-objects of enriched categories.
	
	Suppose that a category $\acal$ is enriched over a cartesian closed category $\vcal$. The hom-object of $\acal$ is denoted by $\acal (A, A')$. In the case of $\vcal = \mcal\: (= \dcal, \czero),$ the hom-object $\acal(A, A')$ is just the hom-set $\acal(A, A')$ endowed with the canonical $\mcal$-space structure, and hence this notation is no problem. However, in the case of $\vcal = \scal$, the hom-object (or function complex) $\acal(A, A')$ is a simplicial set whose $0^{\rm th}$ component is the hom-set $\acal(A, A')$, and hence we often write ${\rm Map}_{\acal}(A, A')$ for the function complex of $\acal$. Further if the $\scal$-category structure of $\acal$ is induced from an $\mcal$-category structure on $\acal$ via the singular functor $S^{\mcal} : \mcal \longrightarrow \scal$, then we usually write $S^{\mcal}\acal(A, A')$ for the function complex of $\acal$.
\end{rem}
	%%%%%%%%%%%%%%%%%%%%%%%%%%%%%%%%%%%%%%%%
	\if0
	(2) Suppose that $|\ |_{\dcal}$ is enriched over $\scal$. Define the $1$-simplex $\sigma$ of ${\rm Map}_{\scal} (\Delta[1], \Delta[1] \times \Delta[1])$ by $\sigma = {id}: \Delta[1] \times \Delta[1] \longrightarrow \Delta[1] \times \Delta[1]$. Considering the composites of $\sigma$ with the projections $p_{i}$ onto the $i^{\rm th}$ factor $(i = 1, 2),$ we have the following factorizations:
	\[
	\begin{tikzcd}
	\Delta[1] \times \Delta[0] \arrow{r}{id} & \Delta[1]\\
	\arrow{d} \Delta[1] \times \Delta[1] \arrow{r}{\sigma} \arrow{u} & \Delta[1] \times \Delta[1] \arrow[swap]{u}{p_{1}} \arrow{d}{p_{2}}\\
	\ast \times \Delta[1] \arrow{r}{\tau} & \Delta[1].
	\end{tikzcd}
	\]
	Note that the upper horizontal arrow {$id$} is a $0$-simplex of ${\rm Map}_{\scal} (\Delta[1], \Delta[1])$ and that the lower horizontal arrow $\tau$ is a $1$-simplex of ${\rm Map}_{\scal} (\ast, \Delta[1])$. By applying the $\scal$-functor $|\ |_{\dcal}$, we see that the $1$-simplex $|\sigma|_{\dcal}: \Delta^{1} \times \Delta^{1} \longrightarrow |\Delta[1] \times \Delta[1]|_{\dcal}$ of $S^{\dcal}\dcal (\Delta^{1}, |\Delta[1] \times \Delta[1]|_{\dcal})$ is given by $|\sigma|_{\dcal} (x, y) = (x, |\tau|_{\dcal}(y))$ under the topological identification of $|\Delta[1] \times \Delta[1]|_{\dcal}$ with $\Delta^{1} \times \Delta^{1}$. Since $|\tau|_{\dcal}: \Delta^{1} \longrightarrow \Delta^{1}$ is a smooth map with $|\tau|_{\dcal} ( (i) ) = (i)$ for $i = 0, 1$, $|\sigma|_{\dcal}$ restricts to the smooth map
	\[
	\Delta^{1} \times \Delta^{1} \supset \{ (x, \alpha) \ | \ x \in \Delta^{1} \} \xrightarrow{\ \ \ \theta \ \ \ } \{ (y, \frac{1}{2})\ | \ y \in \Delta^{1} \} \subset |\Delta[1] \times \Delta[1]|_{\dcal}
	\]
	for some $\alpha \in \Delta^{1}$. Note that $\theta (x, \alpha) = (x, \frac{1}{2})$ and observe that the source and target of $\theta$ are diffeomorphic to $\Delta^{1}$ and $[0, \frac{1}{2}] \underset{\{\frac{1}{2}\}}{\cup} [\frac{1}{2}, 1]$ (Remark \ref{proof}, \cite[Lemma 2.4]{origin}) respectively, which is a contradiction.
\end{proof}
\fi
%%%%%%%%%%%%%%%%%%%%%%%%%%%%%%%%%%%%%%%%%%%
\subsection{Function complexes and homotopy function complexes for $\czero$ and $\dcal$}
In this subsection, we show that under the cofibrancy condition on the source, the function complexes $S^{\dcal}\dcal(A, X)$ and $S\czero(A, X)$ are homotopy equivalent to the homotopy function complexes ${\rm map}_{\dcal}(A, X)$ and ${\rm map}_{\czero}(A, X)$ respectively. After briefly discussing the case of $\czero$ (Proposition \ref{cfctcpx}), we establish the result for $\dcal$ (Theorem \ref{dfctcpx}).
\par\indent
First, recall the following basic properties of the categories $\scal$, $\czero$, and $\dcal$:
\begin{itemize}
	\item The category $\scal$ is a simplicial model category whose objects are cofibrant (\cite[Example 9.1.13]{Hi}).
	\item The category $\czero$ is a simplicial model category whose objects are fibrant (Remark \ref{arc} and Corollary \ref{notenrich}(1)).
	\item The category $\dcal$ is both a simplicial category and a model category, but it is not a simplicial model category (Corollary \ref{notenrich}(2)). All objects of $\dcal$ are fibrant.
\end{itemize}

Next, let us review the basics of function complexes in a simplicial category and homotopy function complexes in a model category.

Let $\ecal$ be a simplicial category. Then, the function complex $\mathrm{Map}_{\ecal}(A,X)$ is assigned to $A,X\in\ecal$. The $\ecal$-homotopy set $[A,X]_\ecal$ is defined by
$$
[A,X]_\ecal=\pi_0\ \mathrm{Map}_{\ecal}(A,X).
$$
Let us call a $1$-simplex of ${\rm Map}_{\ecal} (A, X)$ an $\ecal$-{\sl homotopy}. Then, the $\ecal$-{\sl homotopy set} $[A, X]_{\ecal}$ is just the quotient set of $\ecal (A, X)$ by the equivalence relation $\simeq_{\ecal}$ generated by $\ecal$-homotopies.
We simply write $\simeq$ for $\simeq_{\ecal}$ if there is no confusion in context. The homotopy category $\pi_0\ecal$ of $\ecal$ is defined to be the category in which the objects are the same as those of $\ecal$ and the hom-sets are the $\ecal$-homotopy sets. These $\ecal$-homotopical notions specialize to the ordinary ones in the case of $\ecal = \mcal\: (= \czero,\dcal)$ (see \cite[Section 2.4]{origin}).

Next, let $\ecal$ be a model category. Then the homotopy function complex $\mathrm{map}_\ecal(A,X)$ is assigned to $A,X\in \ecal$. For simplicity, we assume that $X$ is fibrant. Then, $\mathrm{map}_\ecal(A,X)$ can be defined by
$$
\mathrm{map}_\ecal(A,X)=\ecal(\abf,X),
$$
where $\abf$ is a cosimplicial resolution of $A$, i.e., a cofibrant approximation to the constant cosimplicial object $cA$ at $A$ in the Reedy model category $\ecal^\Delta$ (\cite[p. 318]{Hi}). We further assume that $A$ is cofibrant. Then the set $\pi_0(\mathrm{map}_\ecal(A,X))$ is isomorphic to the classical homotopy set $\pi_\ecal(A,X)=\ecal(A,X)/\simeq_{\cel}$ (\cite[Proposition 17.7.1]{Hi}), and hence, to the hom-set $\ho\ \ecal(A,X)$ of the Quillen homotopy category $\ho\ \ecal$ of $\ecal $ (\cite[Theorem 8.3.9]{Hi}); see \cite[Definition 7.3.2(5) and Theorem 7.4.9]{Hi} for the relation $\simeq_{\cel}$ on $\ecal (A, X)$. (Throughout this paper, we refer to the homotopy relation in \cite[Definition 7.3.2(5)]{Hi} as the classical homotopy relation and write $\simeq_{\cel}$ for it in order to distinguish it from the $\ecal$-homotopy relation defined in the case where $\ecal$ is also a simplicial category.)  The classical homotopy category $\pi\ \ecal_{cf}$ of $\ecal$ is defined to be the category whose objects and hom-sets are fibrant-cofibrant objects of $\ecal$ and classical homotopy sets respectively. In the case of $\ecal = \czero$ or $\dcal$, we write $\pi\ \ecal_{c}$ for $\pi\ \ecal_{cf}$, since every object is fibrant.
\par\indent
Last, let $\ecal$ be a simplicial model category. For any object $A$, $A^\ast$ denotes the cosimplicial object in $\ecal$ whose $n^{\rm th}$ component is $A\otimes\Delta[n]$. If $A$ is cofibrant, the canonical map $A^\ast\longrightarrow cA$ in $\ecal^\Delta$ is a cosimplicial resolution for $A$ (\cite[Proposition 16.1.3]{Hi}), and hence the homotopy function complex $\mathrm{map}_\ecal(A,X)$ coincides with the function complex $\mathrm{Map}_\ecal(A,X)$ for fibrant $X$.
\par\indent
For the category $\czero$, we obtain the following result.
\begin{prop}\label{cfctcpx}
Let $A$ be a cofibrant arc-generated space and $X$ an arc-generated space. Then, the equality
$$
\begin{array}{rcl}
{\rm map}_{\czero} (A, X) & = & S\czero(A, X),
\end{array}
$$
and hence the equality
$$\pi_\czero(A,X) =  [A,X]_\czero$$
holds.
\end{prop}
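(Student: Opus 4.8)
The plan is to deduce both equalities directly from the general comparison between function complexes and homotopy function complexes in a simplicial model category, recorded in the paragraph immediately preceding the statement. The crucial structural input is that $\czero$ is a simplicial model category (Corollary \ref{notenrich}(1)) in which \emph{every} object is fibrant (Remark \ref{arc}); in particular the target $X$ is fibrant, while the source $A$ is cofibrant by hypothesis. These are exactly the two conditions under which the standard comparison applies.

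First I would invoke the cosimplicial object $A^\ast$ whose $n^{\rm th}$ component is $A\otimes\Delta[n]$. Since $A$ is cofibrant, the canonical map $A^\ast\longrightarrow cA$ is a cosimplicial resolution of $A$ in the Reedy model category $\czero^\Delta$ by \cite[Proposition 16.1.3]{Hi}. Because $X$ is fibrant, the homotopy function complex ${\rm map}_\czero(A,X)$ may be computed as $\czero(A^\ast,X)$, and this coincides with the function complex ${\rm Map}_\czero(A,X)$ of the simplicial category $\czero$. As the $\scal$-enrichment of $\czero$ has function complexes $S\czero(\cdot,\cdot)$ by construction (Section \ref{4.2}), we have ${\rm Map}_\czero(A,X)=S\czero(A,X)$, which yields the first equality ${\rm map}_\czero(A,X)=S\czero(A,X)$.

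For the second equality I would apply $\pi_0$ to both descriptions of the same complex. On one side, $\pi_0\,{\rm Map}_\czero(A,X)=[A,X]_\czero$ by the definition of the $\czero$-homotopy set. On the other side, since $A$ is cofibrant and $X$ fibrant, \cite[Proposition 17.7.1]{Hi} identifies $\pi_0\,{\rm map}_\czero(A,X)$ with the classical homotopy set $\pi_\czero(A,X)$. Combining these two identifications with the first equality gives $\pi_\czero(A,X)=[A,X]_\czero$.

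There is essentially no genuine obstacle here: the substantive work — establishing that $\czero$ is a simplicial model category and that $|\ |:\scal\rightleftarrows\czero:S$ realizes its simplicial enrichment — has already been carried out in Proposition \ref{enrich2}(1) and Corollary \ref{notenrich}(1). The only point requiring care is confirming that the hypotheses (``$A$ cofibrant, $X$ fibrant'') match precisely those of the general comparison and that $A\otimes\Delta[n]$ is the correct cosimplicial replacement. The sharp contrast with the category $\dcal$, where the analogous statement (Theorem \ref{dfctcpx}) is genuinely delicate, arises exactly because $\dcal$ fails to be a simplicial model category (Corollary \ref{notenrich}(2)), so none of this machinery is available there.
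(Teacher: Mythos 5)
Your proposal is correct and follows essentially the same route as the paper: the paper's proof likewise cites Corollary \ref{notenrich}(1) together with the fibrancy of all objects of $\czero$ and then appeals to the very comparison paragraph (the cosimplicial resolution $A^\ast\to cA$ via \cite[Proposition 16.1.3]{Hi}, and \cite[Proposition 17.7.1]{Hi} for $\pi_0$) that you spell out. Your write-up merely makes explicit the steps the paper compresses into one line.
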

\begin{proof}
Since $\czero$ is a simplicial model category (Corollary \ref{notenrich}(1)) and every object of $\czero$ is fibrant, the result follows from the comment above.
\end{proof}
The following theorem is a diffeological analogue of Proposition \ref{cfctcpx}.

\begin{thm}\label{dfctcpx}
Let $A$ be a cofibrant diffeological space and $X$ a diffeological space. Then there exists a homotopy equivalence
$$
\begin{array}{rcl}
{\rm map}_{\dcal}(A, X) & \simeq & S^{\dcal}\dcal(A, X)
\end{array}
$$
which is natural up to homotopy. Hence, the equality
$$
\pi_{\dcal}(A,X) = [A,X]_{\dcal}
$$
holds.
\end{thm}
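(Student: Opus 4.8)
The plan is to construct a natural (up to homotopy) comparison map $\psi_A \colon S^{\dcal}\dcal(A,X) \longrightarrow {\rm map}_{\dcal}(A,X)$ and to prove it is a weak equivalence by induction over a cell structure of the cofibrant object $A$. The source is the function complex $\dcal(A\times\Delta^{\bullet},X)$ attached to the cosimplicial diffeological space $n\mapsto A\times\Delta^{n}$, while ${\rm map}_{\dcal}(A,X)=\dcal(\mathbf A,X)$ is computed from a genuine cosimplicial resolution $\mathbf A\to cA$ (legitimate since every $X$ is fibrant by Theorem \ref{originmain}). A convenient resolution is $\mathbf A^{n}=|S^{\dcal}A\times\Delta[n]|_{\dcal}$: because $|\ |_{\dcal}$ is left Quillen (Lemma \ref{Quillenpairs}) it carries the standard resolution $S^{\dcal}A\times\Delta[\bullet]$ of $S^{\dcal}A$ in $\scal$ to a Reedy-cofibrant one, and its augmentation factors as a levelwise weak equivalence through the counit $p_{A}\colon|S^{\dcal}A|_{\dcal}\to A$, which is a weak equivalence by Theorem \ref{Quillenequiv}(1). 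The canonical comparison $|S^{\dcal}A\times\Delta[n]|_{\dcal}\to|S^{\dcal}A|_{\dcal}\times\Delta^{n}\xrightarrow{p_{A}\times 1}A\times\Delta^{n}$ is a map of cosimplicial objects over $cA$, and applying $\dcal(-,X)$ produces $\psi_{A}$; unwinding the adjunction $(|\ |_{\dcal},S^{\dcal})$ identifies the target with ${\rm Map}_{\scal}(S^{\dcal}A,S^{\dcal}X)$, which explains the phrase \emph{natural up to homotopy}.

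For the base case $A=\ast$ the cosimplicial object $\Delta^{\bullet}=|\Delta[\bullet]|_{\dcal}$ is itself a resolution of $\ast$, so $\psi_{\ast}$ is an isomorphism and ${\rm map}_{\dcal}(\ast,X)=S^{\dcal}\dcal(\ast,X)=S^{\dcal}X$. For a single cell $A=\Delta^{p}$ I would invoke that $\Delta^{p}$ is $\dcal$-contractible (Lemma \ref{simplex}(1)): since $\dcal(-,X)$ is an enriched functor it takes the $\dcal$-homotopy equivalence $\Delta^{p}\simeq\ast$ to a simplicial homotopy equivalence, so both $S^{\dcal}\dcal(\Delta^{p},X)$ and ${\rm map}_{\dcal}(\Delta^{p},X)$ are equivalent to $S^{\dcal}X$, and naturality of $\psi$ against the vertex inclusion $\ast\hookrightarrow\Delta^{p}$ forces $\psi_{\Delta^{p}}$ to be a weak equivalence.

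The inductive step runs over the skeleta $A^{(k)}$ of a cell complex $A$, each built from $A^{(k-1)}$ by a pushout along $\coprod(\dot\Delta^{k}\hookrightarrow\Delta^{k})$. The contravariant homotopy function complex ${\rm map}_{\dcal}(-,X)$ sends this homotopy pushout to a homotopy pullback. The functor $S^{\dcal}\dcal(-,X)$ sends it to a strict pullback, and to recognize that pullback as a homotopy pullback one needs $S^{\dcal}\dcal(\Delta^{k},X)\to S^{\dcal}\dcal(\dot\Delta^{k},X)$ to be a fibration. Granting this, the gluing lemma for homotopy pullbacks together with $\psi$ being a weak equivalence on the three corners (using the cell and base cases, coproduct-compatibility, and a lower-dimensional induction for $\dot\Delta^{k}$) yields $\psi_{A^{(k)}}$; passing to the limit over the skeletal tower — whose maps are again fibrations for the same reason — gives $\psi_{A}$, and a retract argument covers every cofibrant $A$. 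Taking $\pi_{0}$ of the resulting homotopy equivalence then gives $\pi_{\dcal}(A,X)=\pi_{0}\,{\rm map}_{\dcal}(A,X)=\pi_{0}\,S^{\dcal}\dcal(A,X)=[A,X]_{\dcal}$, using \cite[Proposition 17.7.1]{Hi} for the first identification and the definition of the $\dcal$-homotopy set for the last.

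The main obstacle is exactly this fibrancy statement: that $S^{\dcal}\dcal(\Delta^{p},X)\to S^{\dcal}\dcal(\dot\Delta^{p},X)$ is a fibration for every $X$, equivalently that $X$ has the right lifting property against each pushout--product $(\dot\Delta^{p}\hookrightarrow\Delta^{p})\,\square\,(\Lambda^{q}_{k}\hookrightarrow\Delta^{q})$. This is where the failure of $\dcal$ to be a simplicial model category (Proposition \ref{enrich2}(2)) must be confronted: since $|\ |_{\dcal}$ does not preserve products, $\Delta^{p}\times\Delta^{q}$ is \emph{not} the cell complex $|\Delta[p]\times\Delta[q]|_{\dcal}$, so the usual pushout--product axiom is unavailable. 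I plan to obtain the required lifting by working with the honest cartesian product, for which $(\Delta^{p}\times\Delta^{q})\times I=\Delta^{p}\times(\Delta^{q}\times I)$, combining the $\dcal$-deformation retraction of $\Delta^{q}$ onto $\Lambda^{q}_{k}$ supplied by Axiom 4 (Proposition \ref{axioms}) with the explicit boundary structure of $(\Delta^{p},\dot\Delta^{p})$ from \cite{origin}, in the manner of the classical NDR-pair computation. The key point is that only cofibration-against-trivial-cofibration is needed, and this much can be produced by smooth homotopies even though the full pushout--product fails. Carrying out this estimate is the technical heart of the argument and the reason, as signalled in Remark \ref{svsm}, that the diffeological case is genuinely more delicate than the topological case of Proposition \ref{cfctcpx}.
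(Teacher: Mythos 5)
Your framework — the cosimplicial resolution $\mathbf{A}^{n}=|S^{\dcal}A\times\Delta[n]|_{\dcal}$, its identification with ${\rm Map}_{\scal}(S^{\dcal}A,S^{\dcal}X)$, and the reduction to realizations via $p_{A}$ — is sound and overlaps with the paper's Step 1 ingredients. But the step you flag as the technical heart is not merely hard to carry out: it is false, and the paper's own text refutes it. You need $S^{\dcal}\dcal(\Delta^{p},X)\to S^{\dcal}\dcal(\dot\Delta^{p},X)$ to be a Kan fibration for every $X$, equivalently that each pushout--product inclusion has the left lifting property against all maps $X\to\ast$. Already for $p=q=1$, adjunction converts the horn-lifting problem into an extension problem along the three-sided inclusion $P:=\Delta^{1}\times(0)\cup_{\dot\Delta^{1}\times(0)}\dot\Delta^{1}\times I\hookrightarrow\Delta^{1}\times I$ (note that under $I\cong\Delta^{1}$ the inclusion of a vertex is precisely a horn $\Lambda^{1}_{k}\hookrightarrow\Delta^{1}$, so this \emph{is} the cofibration-against-trivial-cofibration case you claim can be salvaged). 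Taking $X=P$ and the identity map, a solution of the lifting problem is exactly a retraction of $\Delta^{1}\times I$ onto $P$; but by Remark \ref{difference} (resting on \cite[Proposition A.2(1)]{origin}), this inclusion is not even a $\dcal$-embedding, hence admits no retraction — this is precisely the statement that $\dot\Delta^{1}\hookrightarrow\Delta^{1}$ fails the HEP. For the same reason no NDR-type smooth deformation of the pair can exist, so the mechanism you propose (Axiom 4 plus a classical NDR computation) cannot produce the lift. Without the fibrancy, the strict pullbacks in your skeletal induction cannot be recognized as homotopy pullbacks, and the induction (and the limit over the tower, which needs the same fibrancy) collapses.

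The paper's proof is designed to avoid this obstruction rather than confront it. In Step 1, for $A=|K|_{\dcal}$, it forms the bisimplicial set $\dcal(\Delta^{\ast}\times|K^{\ast}|_{\dcal},X)$ and shows that all horizontal and vertical face and degeneracy operators are weak equivalences: horizontally because $\ast\to\Delta^{p}$ is a weak equivalence between fibrant objects (via \cite[Theorem 17.6.3(1)]{Hi}), and vertically because $|K\times(0)|_{\dcal}\hookrightarrow|K\times\Delta[q]|_{\dcal}$ is a trivial cofibration, hence a $\dcal$-deformation retract by Proposition \ref{trivialcofibr}, fed through Lemma \ref{homotopyrel}; the diagonal lemma \cite[Corollary 15.11.12]{Hi} then yields the equivalence without any fibrancy of restriction maps along $\dot\Delta^{p}\hookrightarrow\Delta^{p}$. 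In Step 2 the weak equivalence $p_{A}\colon|S^{\dcal}A|_{\dcal}\to A$ is upgraded to a genuine $\dcal$-homotopy equivalence by a lifting argument through a (trivial cofibration, fibration) factorization, using Proposition \ref{trivialcofibr} again together with the Step 1 identification $\pi_{\dcal}(|K|_{\dcal},Z)\cong[|K|_{\dcal},Z]_{\dcal}$, after which Lemma \ref{homotopyrel} finishes the proof. If you want to repair your outline, replace the cell induction by this bisimplicial diagonal comparison; the two results you would need to establish first are Proposition \ref{trivialcofibr} and Lemma \ref{homotopyrel}.
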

Recall that $\dcal$ is not a simplicial model category (Corollary \ref{notenrich}(2)). For the proof of Theorem \ref{dfctcpx}, we thus need the following two results along with Theorem \ref{Quillenequiv}(1).

\begin{prop}\label{trivialcofibr}
Let $i:A\longhookrightarrow B $ be a trivial cofibration in $\dcal$. Then $A$ is a $\dcal$-deformation retract of $B$.
\begin{proof}
We prove the result in two steps.
\vspace{1mm}\par\noindent
{\sl Step 1: The case where $i$ is a sequential relative $\jcal$-cell complex.} By the definition of a sequential relative $\jcal$-cell complex \cite[Definition 15.1.2]{MP}, there is a sequence
$$A=B_0\xhookoverright{i_1}{5} B_1 \xhookoverright{i_2}{5} B_2 \xhookoverright{i_3}{5} \cdots \xhookoverright{i_n}{5} B_n \xhookoverright{i_{n+1}}{5} \cdots$$
in $\dcal$ such that $i$ coincides with the canonical map $A\longrightarrow \underset{\underset{n}{\longrightarrow}}{\lim}\ B_n$ and each $i_n$ fits into a pushout diagram of the form
$$
\begin{tikzcd}
\underset{\lambda}{\coprod}\ \Lambda^{p_\lambda}_{k_\lambda} \cdarrow{r,hook}\cdarrow{d} & \underset{\lambda}{\coprod}\ \Delta^{p_\lambda} \cdarrow{d}\\
B_{n-1}\cdarrow{r,"i_n",hook} & B_n.
\end{tikzcd}
$$
\indent
We construct a $\dcal$-deformation $R:B\times I\longrightarrow B$ onto $A$ as follows; see Section 1.4 for the (smooth) composite of $\dcal$-homotopies $h: X \times [a, b] \longrightarrow Y$, $k : X \times [b, c] \longrightarrow Y$ with $h(\cdot, b) = k(\cdot, b)$.
\begin{itemize}
\item
$R$ is defined to be the constant homotopy of $1_A$ on $A\times I$.
\item
$R$ is defined to be the constant homotopy of $1_{B_1}$ on $B_1\times [0,\frac{1}{2}]$ and a deformation onto $A$ on $B_1\times [\frac{1}{2},1]$ (\cite[Lemma 9.7]{origin}).
\item
Suppose that $R$ is defined on $B_{n-1}\times I$. Then $R$ is defined to be the constant homotopy of $1_{B_n}$ on $B_n\times[0,\frac{1}{2^n}]$, a deformation onto $B_{n-1}$ on $B_n\times [\frac{1}{2^{n}},\frac{1}{2^{n-1}}]$, and the composite
\[
\:\:\:\:\:\:\:\:\:\:\:\:\:\:\:\:\:\:B_n\times \left[\frac{1}{2^{n-1}},1\right]\xoverright{\ R\left(\ \cdot\ ,\frac{1}{2^{n-1}}\right)\times 1\ } B_{n-1}\times\left[\frac{1}{2^{n-1}},1\right] \xoverright{R} B_{n-1} \longhookrightarrow B_n
\]
on $ B_n\times[\frac{1}{2^{n-1}},1]$.
\end{itemize}
Since $B\times I=(\underset{\underset{n}{\longrightarrow}}{\lim}\ B_n)\times I = \underset{\underset{n}{\longrightarrow}}{\lim}\ (B_n\times I)$ (Proposition \ref{conven}(2)), $R:B\times I\longrightarrow  B$ is the desired smooth deformation.\\

\noindent
{\sl Step 2: The case where $i$ is a general trivial cofibration.} The trivial cofibration $i$ has the functional factorization
$$
\begin{tikzcd}
A \cdarrow{r,hook,"i'"} \cdarrow{rd,"i"'} & A'\cdarrow{d,"p'"}\\
 & B,
\end{tikzcd}
$$
where $i'$ is a sequential relative $\jcal$-cell complex and $p'$ is a (trivial) fibration (see \cite[the proof of Theorem 1.3]{origin}). Thus, for the commutative solid arrow diagram
$$
\begin{tikzcd}
A \cdarrow{r,"i'"} \cdarrow{d,"i"'} & A' \cdarrow{d,"p'"}\\
B \cdarrow{ur,"\ell",dashed} \cdarrow{r,"1"'}& B,
\end{tikzcd}
$$
the dotted arrows $\ell$ exists, making the diagram commute. Hence, we have the retract diagram
$$
\begin{tikzcd}
 A \cdarrow{r,"1"} \cdarrow{d,hook',"i"'} & A \cdarrow{r,"1"}  \cdarrow{d,hook',"i'"'} & A\cdarrow{d,hook',"i"} \\
 B\cdarrow{r,"\ell"'} & A' \cdarrow{r,"p'"'} & B.
\end{tikzcd}
$$
Since $i'$ is a sequential relative $\jcal$-cell complex, there are a retract $r':A'\longrightarrow A$ and a homotopy relative to $A$ $R':A'\times I \longrightarrow A'$ connecting $1_{A'}$ to $i'r'$ (Step 1). Define the map $r: B\longrightarrow A$ to be the composite $B \xoverright{\ell}A'\xoverright{r'}A$. Then, it is easily verified that $r$ is a retraction of $B$ onto $A$. Define the homotopy $R:B\times I\longrightarrow B$ to be the composite
$$
B\times I \xhookoverright{\ell\times 1}{5} A'\times I \xoverright{R'} A' \xoverright{p'}B.
$$
We can easily see that $R|_{A\times I}$ is the constant homotopy of $i$ and that $R$ is a homotopy connecting $1_B$ to $ir$, which complete the proof.
\end{proof}
\end{prop}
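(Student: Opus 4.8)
The plan is to prove the statement first in the special case where $i$ is a (sequential) relative $\jcal$-cell complex, $\jcal=\{\Lambda^p_k\hookrightarrow\Delta^p\}$ being the set of generating trivial cofibrations (Theorem \ref{originmain}), and then to reduce the general case to this one by the retract argument.

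For the cell-complex case, I would build the deformation retraction by telescoping elementary ones. Axiom 4 (Proposition \ref{axioms}) supplies a $\dcal$-deformation retraction of each $\Delta^p$ onto its horn $\Lambda^p_k$, and since $\dcal$ is cartesian closed (Proposition \ref{conven}(2)) the functor $-\times I$ preserves pushouts; hence each stage $B_{n-1}\hookrightarrow B_n$ of a presentation $A=B_0\hookrightarrow B_1\hookrightarrow\cdots$ admits a $\dcal$-deformation retraction fixing $B_{n-1}$ (cf. \cite[Lemma 9.7]{origin}). I would then assemble these into a single $R:B\times I\longrightarrow B$ by halving the time parameter at each stage: constant on $A\times I$, a deformation of $B_1$ onto $A$ on $B_1\times[\tfrac12,1]$, a deformation of $B_n$ onto $B_{n-1}$ on $B_n\times[\tfrac{1}{2^n},\tfrac{1}{2^{n-1}}]$ followed by the previously defined $R$ on $B_n\times[\tfrac{1}{2^{n-1}},1]$, and constant before each begins. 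Smoothness of the composite at the gluing times is guaranteed by the convention of composing $\dcal$-homotopies smoothly with a cut-off function (Section 1.4), and the whole compatible family assembles into a genuine smooth map because cartesian closedness yields $B\times I=(\colim_n B_n)\times I=\colim_n(B_n\times I)$.

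For the general case, I would invoke the compactly generated functorial factorization to write $i$ as $A\xhookrightarrow{\,i'\,}A'\xrightarrow{\,p'\,}B$ with $i'$ a sequential relative $\jcal$-cell complex and $p'$ a fibration. Both $i$ and $i'$ are weak equivalences, so two-out-of-three forces $p'$ to be a trivial fibration; the left lifting property of the cofibration $i$ against $p'$ then produces $\ell:B\longrightarrow A'$ with $\ell i=i'$ and $p'\ell=1_B$, exhibiting $i$ as a retract of $i'$. Transporting the retraction $r'$ and the deformation $R'$ obtained in the first case along this retract, I would set $r=r'\ell$ and define the deformation as the composite $B\times I\xrightarrow{\ell\times 1}A'\times I\xrightarrow{R'}A'\xrightarrow{p'}B$, then verify that it restricts to the constant homotopy on $A$ and connects $1_B$ to $ir$.

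The main obstacle will be the cell-complex step: checking that the infinitely telescoped homotopy, though only piecewise-described and constant near $t=0$ on each finite stage, genuinely defines a single smooth map on the colimit $B\times I$ and restricts to the constant homotopy on $A$. This rests entirely on the cartesian-closedness identity $(\colim_n B_n)\times I=\colim_n(B_n\times I)$ together with the smooth gluing of homotopies via cut-off functions, so the care lies in verifying smoothness rather than in any homotopy-theoretic subtlety.
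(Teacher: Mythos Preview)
Your proposal is correct and matches the paper's proof essentially line for line: the same two-step structure (sequential relative $\jcal$-cell complex first, then the retract argument via the functorial factorization), the same telescoping construction with halved time intervals $[\tfrac{1}{2^n},\tfrac{1}{2^{n-1}}]$, the same appeal to cartesian closedness for $(\colim_n B_n)\times I=\colim_n(B_n\times I)$, and the same transport formulas $r=r'\ell$ and $R=p'\circ R'\circ(\ell\times 1)$ in Step~2. Your identification of the main care point---smoothness of the infinitely telescoped homotopy---is also exactly where the paper's argument rests.
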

%Let $\dcal/\simeq$ denote the category whose objects and hom-sets are diffeological spaces and the homotopy sets $[X,Y]_\dcal$ ($X,Y\in\dcal$). Similarly, let $\scal_f/\simeq$ denote the category whose objects and hom-sets are Kan complexes and the homotopy sets $[K,L]_\scal$ ($K,L\in\scal$) (cf. \cite[?]{G-J}). Since $\scal$ is a simplicial model category (\cite[?]{G-J}), the category $\scal_f/\simeq$ is just the classical homotopy category $\pi(\scal_f)$, which is equivalent to the Quillen homotopy category $\ho(\scal)$ (\cite[?]{Hirsch}).
\begin{lem}\label{homotopyrel}
	Let $f: A \longrightarrow B$ be a morphism of $\dcal$. Then, the following are equivalent:
	\begin{itemize}
		\item[{\rm (i)}] $f: A \longrightarrow B$ is a $\dcal$-homotopy equivalence.
		\item[{\rm (ii)}] $f^{\sharp}: \dcal(B, X) \longrightarrow \dcal(A, X)$ is a $\dcal$-homotopy equivalence for any $X \in \dcal$, where $f^\sharp$ is defined by precomposition with $f$.
		\item[{\rm (iii)}] $S^{\dcal}f^{\sharp}: S^{\dcal} \dcal (B, X) \longrightarrow S^{\dcal}\dcal (A, X)$ is a homotopy equivalence of {\rm Kan} complexes for any $X \in \dcal$.
	\end{itemize}
\end{lem}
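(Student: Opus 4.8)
The plan is to establish the cyclic chain $\mathrm{(i)}\Rightarrow\mathrm{(ii)}\Rightarrow\mathrm{(iii)}\Rightarrow\mathrm{(i)}$, using throughout that $\dcal$ is cartesian closed (Proposition \ref{conven}(2)) and that every object of $\dcal$ is fibrant, so that $S^\dcal Z$ is a Kan complex for every $Z\in\dcal$ (Theorem \ref{originmain}). When a $\dcal$-homotopy inverse of $f$ exists I denote it by $g$; since $(-)^\sharp=\dcal(-,X)$ is contravariant, note that $(gf)^\sharp=f^\sharp g^\sharp$ and $(fg)^\sharp=g^\sharp f^\sharp$. I also record the definitional identity $[A,X]_\dcal=\pi_0\,S^\dcal\dcal(A,X)$ coming from the $\scal$-enrichment of $\dcal$ (Section \ref{4.2}).

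For $\mathrm{(i)}\Rightarrow\mathrm{(ii)}$ the essential input is that, for fixed $X$, the contravariant internal-hom functor $\dcal(-,X)\colon\dcal^{\mathrm{op}}\longrightarrow\dcal$ is a $\dcal$-functor; this is exactly the content of the $\dcal$-adjoint pair $\dcal(-,X)\colon\dcal\rightleftarrows\dcal^{\ast}\colon\dcal(-,X)$ recorded in Section \ref{4.1}. Concretely, cartesian closedness yields a smooth map $\dcal(A,B)\longrightarrow\dcal(\dcal(B,X),\dcal(A,X))$. A $\dcal$-homotopy between two maps $A\to B$ is, by exponential transposition, a smooth map $I\to\dcal(A,B)$; composing it with the displayed map and transposing again produces a $\dcal$-homotopy between the associated maps $\dcal(B,X)\to\dcal(A,X)$. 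Applying this to the homotopies $gf\simeq_\dcal 1_A$ and $fg\simeq_\dcal 1_B$ shows that $g^\sharp$ is a $\dcal$-homotopy inverse of $f^\sharp$, which is $\mathrm{(ii)}$.

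For $\mathrm{(ii)}\Rightarrow\mathrm{(iii)}$ I would simply apply $S^\dcal$. As a right adjoint (Proposition \ref{adjoint1}(1)) it preserves finite products, and the identity plot $\Delta^1\cong I\to I$ determines a map $\Delta[1]\to S^\dcal I$; hence for any $Y,Z$ a $\dcal$-homotopy $Y\times I\to Z$ induces a simplicial homotopy via $S^\dcal Y\times\Delta[1]\to S^\dcal Y\times S^\dcal I=S^\dcal(Y\times I)\to S^\dcal Z$. Thus $S^\dcal$ sends $\dcal$-homotopy equivalences to homotopy equivalences of simplicial sets, and since source and target are Kan complexes, $S^\dcal f^\sharp$ is a homotopy equivalence of Kan complexes.

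Finally, for $\mathrm{(iii)}\Rightarrow\mathrm{(i)}$ I would pass to $\pi_0$. A homotopy equivalence of Kan complexes is a bijection on $\pi_0$, so by the identity recorded above, $\mathrm{(iii)}$ gives that $f^\sharp\colon[B,X]_\dcal\to[A,X]_\dcal$ is bijective for every $X\in\dcal$. These maps are the morphisms $(\pi_0\dcal)(f,X)$ of the $\dcal$-homotopy category $\pi_0\dcal$, so the contravariant Yoneda lemma forces the class of $f$ to be an isomorphism in $\pi_0\dcal$, i.e.\ $f$ is a $\dcal$-homotopy equivalence. I expect the main obstacle to be the bookkeeping in $\mathrm{(i)}\Rightarrow\mathrm{(ii)}$: one must verify carefully that the $\dcal$-enriched functor $\dcal(-,X)$ transports $I$-homotopies to $I$-homotopies at the level of smooth maps; the remaining two implications are formal once the function-complex/$\pi_0$ dictionary and the fibrancy of all objects are in place.
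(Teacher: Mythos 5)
Your proposal is correct and follows the paper's proof in its essentials: the same cycle $\rmi\Rightarrow\rmii\Rightarrow\rmiii\Rightarrow\rmi$, with $\rmiii\Rightarrow\rmi$ proved identically by applying $\pi_0$ and Yoneda in $\pi_0\dcal$. The only cosmetic differences are that for $\rmi\Rightarrow\rmii$ you transport the homotopies $gf\simeq_\dcal 1_A$ and $fg\simeq_\dcal 1_B$ directly through the $\dcal$-functor $\dcal(-,X)$ to exhibit $g^\sharp$ as an explicit inverse, whereas the paper argues via bijectivity of $[C,f^\sharp]_\dcal$ for all $C\in\dcal$ (both reduce to cartesian closedness, Proposition \ref{conven}(2)), and for $\rmii\Rightarrow\rmiii$ you reprove from scratch the fact that the product-preserving right adjoint $S^\dcal$ carries $\dcal$-homotopies to simplicial homotopies, which the paper simply cites as \cite[Lemma 9.4]{origin}.
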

\begin{proof} ${\rm (i) \Longrightarrow (ii)}$ From Proposition \ref{conven}(2), we see that the induced map
	\[
	[C, f^{\sharp}]_{\dcal} : [C, \dcal(B, X)]_{\dcal} \longrightarrow [C, \dcal(A, X)]_{\dcal}
	\]
	is bijective for any $C \in \dcal$, and hence that $f^{\sharp}: \dcal(B, X) \longrightarrow \dcal(A, X)$ is a $\dcal$-homotopy equivalence.\\
	${\rm (ii) \Longrightarrow (iii)}$ See \cite[Lemma 9.4]{origin}.\\
	$\rmiii \Longrightarrow \rmi$ Applying the functor $\pi_{0}$ to the homotopy equivalence $S^{\dcal}f^{\sharp} : S^{\dcal}\dcal (B, X) \longrightarrow S^{\dcal}\dcal(A, X)$, we see that
	\[
	f^{\sharp} : [B, X]_{\dcal} \longrightarrow [A, X]_{\dcal}
	\]
	is bijective for $X\in\dcal$, which implies that $f: A \longrightarrow B$ is a $\dcal$-homotopy equivalence.
\end{proof}

%%%%%%%%%%%%%%%%%%%%%%%%%%%%%
\if0
\begin{lem}\label{homotopyrel}
(1) If $f \simeq g : X \longrightarrow Y$ in $\dcal$, then $S^{\dcal}f \simeq S^{\dcal}g: S^{\dcal}X \longrightarrow S^{\dcal}Y$ in $\scal$.\\
(2) Let $K$ be a simplicial set, and $X$ a diffeologcail space. Let $f,g:\absno{K}_{\dcal}\longrightarrow X$ be smooth maps. If $f\simeq g$, then $f\simeq_\cel g$.
\end{lem}
\begin{proof}
(1) \cite[Lemma 9.4(2)]{origin}.\\
(2) Note that
$$
K\amalg K=K\times \dot{\Delta}[1]\longhookrightarrow K\times\Delta[1] \xoverright{proj}K
$$
is a cylinder object for $K$ in $\scal$ (cf. \cite[Definition 7.3.2]{Hi}). Then we see that
$$
\absno{K}_{\dcal}\amalg\absno{K}_{\dcal}=\absno{K\times\dot{\Delta}[1]}_{\dcal}\longhookrightarrow\absno{K\times\Delta[1]}_{\dcal} \xoverright{proj}\absno{K}_{\dcal}
$$is a cylinder object for $\absno{K}_{\dcal}$ in $\dcal$ (Proposition \ref{adjoint1}(1), \cite[Lemma 8.5.14]{Hi}).

Suppose that $h:\absno{K}_{\dcal}\times I \longrightarrow X$ is a homotopy connecting $f$ to $g$. Then, the composite
$$
\absno{K\times\Delta[1]}_{\dcal}\xoverright{\left(\absno{p_K}_{\dcal},\absno{p_{\Delta[1]}}_{\dcal}\right)} \absno{K}_{\dcal}\times \absno{\Delta[1]}_{\dcal}=\absno{K}_{\dcal}\times I\xoverright{h}X
$$
is a classical homotopy connecting $f$ to $g$, where $p_K$ and $p_{\Delta[1]}$ are the projections of $K\times\Delta[1]$ onto $K$ and $\Delta[1]$ respectively.
\end{proof}
\fi
%%%%%%%%%%%%%%%%%%%%%%%%%%%%%%%%%%%%%%%%%%%%%%%%%%%%%
\begin{proof}[Proof of Theorem \ref{dfctcpx}]
We prove the result in two steps.\vspace{0.1cm}\\
{\sl Step 1}: Suppose that $A = |K|_{\dcal}$ for some simplicial set $K$. We establish a homotopy equivalence
\[
{\rm map}_{\dcal} (|K|_{\dcal}, X) \simeq S^{\dcal}\dcal(|K|_{\dcal}, X).
\]
Since $\absno{\ \ \ }_{\dcal}:\scal\rightleftarrows \dcal:\sdcal$ is a Quillen pair (Lemma \ref{Quillenpairs}), the cosimplicial resolution $K^{\ast}\longrightarrow cK$ of $K$ induces the cosimplicial resolution $\absno{K^\ast}_{\dcal}\longrightarrow\absno{cK}_{\dcal}=c\absno{K}_{\dcal}$ of $\absno{K}_{\dcal}$ (\cite[Proposition 16.2.1(1)]{Hi}). Hence, we have the equality
\[
{\rm map}_{\dcal} (|K|_{\dcal}, X) = \dcal(|K^{\ast}|_{\dcal}, X).
\]
Thus, we have only to establish a homotopy equivalence
$$
\dcal(\absno{K^\ast}_{\dcal},X)  \simeq  \sdcal\dcal(\absno{K}_{\dcal},X).
$$

Consider the bisimplicial set
$$
\sdcal\dcal(\absno{K^\ast}_{\dcal},X) = \dcal(\Delta^\ast \times \absno{K^\ast}_{\dcal}  ,X)
$$
and note that
$$
\begin{array}{rcl}
\dcal(\Delta^0 \times \absno{K^\ast}_{\dcal}, X) & \cong & \dcal (\absno{K^\ast}_{\dcal},X),\\
\dcal(\Delta^\ast \times \absno{K^0}_{\dcal} , X) & \cong & \sdcal\dcal (\absno{K}_{\dcal},X)
\end{array}
$$
hold.

The canonical map $\iota : X = \dcal(\ast, X) \longrightarrow \dcal(\Delta^p,X)$ is a weak equivalence between fibrant objects (see \cite[Remark 9.3]{origin} and Lemma \ref{homotopyrel}). Thus, the induced map $\dcal(\absno{K^\ast}_{\dcal},X)\longrightarrow\dcal(\absno{K^\ast}_{\dcal},\dcal(\Delta^p,X))\cong \dcal(\Delta^p  \times\absno{K^\ast}_{\dcal},X)$ is a weak equivalence (\cite[Theorem 17.6.3(1)]{Hi}), and hence the horizontal face and degeneracy operators of $\dcal(\Delta^\ast \times\absno{K^\ast}_{\dcal},X)$ are weak equivalences.

Since $\absno{\ \ }_{\dcal}:\scal\rightleftarrows\dcal:\sdcal$ is a Quillen pair and $K\times(0)\hookrightarrow K\times \Delta[q]$ is a trivial cofibration in  $\scal$, $\absno{K}_{\dcal}=\absno{K\times(0)}_{\dcal} \longhookrightarrow\absno{K\times\Delta[q]}_{\dcal}$ is a trivial cofibration in $\dcal$, and hence a $\dcal$-homotopy equivalence (see Proposition \ref{trivialcofibr}). From this, we see that $|{proj}|_{\dcal} : |K \times \Delta[q]|_{\dcal} \longrightarrow |K|_{\dcal}$ is also a $\dcal$-homotopy equivalence. Thus, the induced map $\sdcal\dcal(\absno{K}_{\dcal},X)\longrightarrow\sdcal\dcal(\absno{K\times\Delta[q]}_{\dcal},X)=\dcal(\Delta^\ast\times\absno{K\times\Delta[q]}_{\dcal}, X)$ is a weak equivalence (Lemma \ref{homotopyrel}), and hence the vertical face and degeneracy operators of $\dcal(\Delta^\ast \times\absno{K^\ast}_{\dcal},X)$ are weak equivalences.

From these results, we have the diagram consisting of weak equivalences
$$
\begin{tikzcd}
 \dcal(\absno{K^\ast}_{\dcal},X) \cdarrow{r} & {\rm diag}\ \dcal(\Delta^\ast\times\absno{K^\ast}_{\dcal}, X) & \sdcal\dcal(\absno{K}_{\dcal},X) \cdarrow{l}
\end{tikzcd}
$$
(\cite[Corollary 15.11.12]{Hi}), which shows that $\dcal(|K^{\ast}|_{\dcal}, X) \simeq \sdcal\dcal(|K|_{\dcal}, X)$.\vspace{0.2cm}\\
%Since $\cinf(\absno{K^\ast},X)=\scal(K^\ast,\sinf X)=\mathrm{Map}_{\scal}(K,\sinf X)$, we have obtained the equivalences
%$$
%\sinf\cinf(\absno{K},X)\simeq \cinf(\absno{K^\ast},X)=\mathrm{Map}_{\scal}(K,\sinf X).
%$$
{\sl Step 2}: We deal with the case where $A$ is a general cofibrant object of $\dcal$. We have the homotopy equivalence
\[
{\rm map}_{\dcal} (|S^{\dcal}A|_{\dcal}, X) \simeq S^{\dcal}\dcal(|S^{\dcal}A|_{\dcal}, X)
\]
(Step 1). Since the canonical map $p_{A}: |S^{\dcal}A|_{\dcal} \longrightarrow A$ is a weak equivalence (Theorem \ref{Quillenequiv}(1) and Lemma \ref{1streduction}), ${\rm map}_{\dcal}(|S^{\dcal}A|_{\dcal}, X) \simeq {\rm map}_{\dcal}(A, X)$ holds by \cite[Theorem 17.6.3(2)]{Hi}. Thus, we have only to show that the weak equivalence $p_A:\absno{\sdcal A}_{\dcal}\longrightarrow A$ is a $\dcal$-homotopy equivalence (see Lemma \ref{homotopyrel}).

By the Whitehead theorem (\cite[Theorem 7.5.10]{Hi}), there is a map $q_A: A\longrightarrow\absno{\sdcal A}_{\dcal}$ such that $p_Aq_A\simeq_\cel 1_A$ and $q_Ap_A\simeq_\cel 1_{\absno{\sdcal A}_{\dcal}}$. Thus there exist a cylinder object ${\rm Cyl}(A)$ of $A$ and a classical homotopy $h:\mathrm{Cyl}(A)\longrightarrow A$ such that the composite
$$
A\amalg A \xoverright{i_0+i_1}\mathrm{Cyl}(A)\xoverright{h}A
$$
is $p_Aq_A+1_A$. Factor the map $p_A$ as
\[
\begin{tikzcd}
\absno{\sdcal A}_\dcal \cdarrow{r,hook,"i'_A"} & (\absno{\sdcal A}_\dcal)' \cdarrow{r,"p'_A"} & A
\end{tikzcd}
\]
where $i'_A$ is a trivial cofibration and $p'_A$ is a fibration (see Theorem \ref{originmain}) and consider the solid arrow commutative diagram
\[
\begin{tikzcd}
A \cdarrow{d,hook',"i_0"'} \cdarrow{r,"q_A"}
& \absno{\sdcal A}_{\dcal} \cdarrow{r, hook, "i'_A"}
& (\absno{\sdcal A}_{\dcal})' \cdarrow{d,"p_A'"} \\
\mathrm{Cyl}(A) \cdarrow{urr,dashed,"H" } \cdarrow{rr,"h"'}& & A.
\end{tikzcd}
\]
Since $i_0$ is a trivial cofibration and $p_A'$ is a fibration, the dotted arrow $H$ exists, making the diagram commute. Then $q_A':=H\circ i_1$ satisfies that $p_A'q_A'=1_A$. On the other hand, we have the commutative diagram in the classical homotopy category $\pi\, \dcal_c$
\[
\begin{tikzcd}
(\absno{\sdcal A}_{\dcal})'\cdarrow{r,"p_A'"} & A\cdarrow{r,"q_A'"}\cdarrow{dr,"q_A"} & (\absno{\sdcal A}_{\dcal})'\\
\absno{\sdcal A}_{\dcal}\cdarrow{u,hook,"i_A'"} \cdarrow{ur,"p_A"} \cdarrow{rr,"1_{\absno{\sdcal A}_{\dcal}}"'}& & \absno{\sdcal A}_{\dcal}\cdarrow{u,hook,"i_A'"'}.
\end{tikzcd}
\]
Thus, we see that $i'_A\simeq_\cel q_A'p_A'i_A'$. Noticing that $\pi_{\dcal}(|K|_{\dcal}, Z) \cong [|K|_{\dcal}, Z]_{\dcal}$ for $K \in \scal$ and $Z\in \dcal$ (Step 1), we have $i'_{A} \simeq_{\dcal} q'_{A}p'_{A}i'_{A}$. Since $i_A'$ has a $\dcal$-homotopy inverse $r'_A$ (Proposition \ref{trivialcofibr}), we obtain $1_{(\absno{\sdcal A}_{\dcal})'} \simeq_{\dcal} q_A'p_A'$. Therefore, $p'_{A}: (|S^{\dcal}A|_{\dcal})' \longrightarrow A$, and hence $p_{A}: |S^{\dcal} A|_{\dcal} \longrightarrow A$ is a $\dcal$-homotopy equivalence.
\end{proof}
We derive a few corollaries of Theorem \ref{dfctcpx} using Theorem \ref{Quillenequiv}(2).\par
Let $\mcal$ denote one of the categories $\dcal$ and $\czero$. Let $\mcal_c$ (resp. $\wcal_\mcal$) denote the full subcategory of $\mcal$ consisting of cofibrant objects (resp. objects having the $\mcal$-homotopy type of a cofibrant object). Regarding $\mcal_c$ and $\wcal_\mcal$ as simplicial full subcategories of $\mcal$, we consider the homotopy categories $\pi_0 \mcal_c$ and $\pi_0\wcal_\mcal$ (see the beginning of this subsection). Then, we see that the canonical functor $\pi_0 \mcal_c \longrightarrow \pi_0 \wcal_\mcal$ is an equivalence of categories. Recall the equivalence $\pi\ \mcal_{c} \longrightarrow {\ho}\ \mcal$ between the classical homotopy category and the Quillen homotopy category (\cite[Theorem 8.3.9]{Hi}).
\begin{cor}\label{homotopycat}
	The following commutative diagram of equivalences of categories exists:
	\[
	\begin{tikzcd}
	\pi_0 \wcal_{\dcal} \arrow{d}{\pi_0 \,\widetilde{\cdot}} & \arrow{l} \pi_0 \dcal_{c} \arrow{r} \arrow{d}{\pi_0 \,\widetilde{\cdot}} & \pi\,\dcal_{c} \arrow{d}{\pi\,\widetilde{\cdot}} \arrow{r} & \ho\,\dcal \arrow{d}{\lbb\, \widetilde{\cdot}}\\
	\pi_0 \wcal_{\czero} & \arrow{l} \pi_0 \czero_{c} \arrow{r} & \pi\,\czero_{c} \arrow{r} & \ho\,\czero.
	\end{tikzcd}
	\]
	The horizontal functors are just inclusions on the object level. The vertical functors are induced by the underlying topological space functor $\widetilde{\cdot}: \dcal \longrightarrow \czero$.
\end{cor}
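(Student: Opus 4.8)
The plan is to derive all the equivalences from two substantive inputs—Theorem \ref{dfctcpx} together with its $\czero$-analogue Proposition \ref{cfctcpx} for the middle horizontal functors, and Theorem \ref{Quillenequiv}(2) for the rightmost vertical functor—combined with the equivalences cited at the two ends and a purely formal two-out-of-three propagation across the commuting diagram.

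First I would record that each horizontal functor is an equivalence. The leftmost functors $\pi_0 \mcal_c \longrightarrow \pi_0 \wcal_\mcal$ are equivalences by the remark immediately preceding the statement. The rightmost functors $\pi\,\mcal_c \longrightarrow \ho\,\mcal$ are the standard equivalences of \cite[Theorem 8.3.9]{Hi}. For the middle functors, note that $\pi_0 \mcal_c$ and $\pi\,\mcal_c$ have the same objects and the same underlying morphisms, differing only in the equivalence relation imposed ($\simeq_\mcal$ versus $\simeq_\cel$); Theorem \ref{dfctcpx} and Proposition \ref{cfctcpx} give the equality $\pi_\mcal(A,X) = [A,X]_\mcal$ for cofibrant $A$ and arbitrary $X$, so the two relations coincide on $\mcal_c$ and the middle functor is in fact an isomorphism of categories.

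Next I would check that each vertical functor is well-defined and that the diagram commutes. Since $\widetilde{\cdot}:\dcal \longrightarrow \czero$ is a left Quillen functor (Lemma \ref{Quillenpairs}) it preserves cofibrant objects and, being left Quillen with every object fibrant in both categories, preserves the classical homotopy relation between cofibrant objects; hence $\pi\,\widetilde{\cdot}$ is defined. Being an $\scal$-functor (Lemma \ref{enrich}) it induces functors on the simplicial homotopy categories $\pi_0$, and it carries $\wcal_\dcal$ into $\wcal_\czero$ by Corollary \ref{W}(1), so both copies of $\pi_0\,\widetilde{\cdot}$ are defined; finally $\lbb\,\widetilde{\cdot}$ is the total left derived functor of $\widetilde{\cdot}$. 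The leftmost and middle squares commute on the nose, since their horizontal functors are identities on objects and on underlying morphisms while both vertical functors act by $f \mapsto \widetilde{f}$; the rightmost square commutes up to natural isomorphism, because the total left derived functor of a left Quillen functor agrees, on cofibrant objects, with the functor itself under the identification $\pi\,\mcal_c \simeq \ho\,\mcal$.

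The last step is the conclusion by two-out-of-three. Theorem \ref{Quillenequiv}(2) asserts that $(\widetilde{\cdot}, R)$ is a pair of Quillen equivalences, so $\lbb\,\widetilde{\cdot}:\ho\,\dcal \longrightarrow \ho\,\czero$ is an equivalence of categories. Reading the commuting diagram from right to left and using that the property of being an equivalence satisfies two-out-of-three: in the rightmost square $\lbb\,\widetilde{\cdot}$ and both horizontals are equivalences, forcing $\pi\,\widetilde{\cdot}$ to be one; the middle square then forces $\pi_0\,\widetilde{\cdot}$ on $\dcal_c$ to be an equivalence; and the leftmost square finally forces $\pi_0\,\widetilde{\cdot}$ on $\wcal_\dcal$ to be an equivalence. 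I expect the main obstacle to be purely bookkeeping: verifying that the two identifications used—$\pi_\mcal(A,X) = [A,X]_\mcal$ from Theorem \ref{dfctcpx} and $\pi\,\mcal_c \simeq \ho\,\mcal$—are natural and genuinely compatible with $\widetilde{\cdot}$, so that the squares commute. Once commutativity is established, the equivalence statements follow immediately from the cited results.
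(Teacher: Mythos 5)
Your proposal is correct and follows essentially the same route as the paper's proof: the middle horizontal isomorphisms $\pi_0\,\mcal_c \cong \pi\,\mcal_c$ come from Proposition \ref{cfctcpx} and Theorem \ref{dfctcpx}, the outer horizontal equivalences from the preceding remark and \cite[Theorem 8.3.9]{Hi}, and the equivalence of all vertical functors is propagated from $\lbb\,\widetilde{\cdot}$ (Theorem \ref{Quillenequiv}(2)) by two-out-of-three across the commuting squares. Your additional checks of well-definedness and commutativity of the squares are correct bookkeeping that the paper leaves implicit.
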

\begin{proof}
	By Proposition \ref{cfctcpx} and Theorem \ref{dfctcpx}, we have the isomorphisms of categories $\pi_0 \,\czero_{c} \longrightarrow \pi \ \czero_{c}$ and $\pi_0 \,\dcal_{c}\longrightarrow \pi\ \dcal_{c}$. Thus, we have the commutative diagram of functors as in the statement, whose horizontal functors are equivalences of categories. Since the right vertical arrow $\lbb\ \widetilde{\cdot}$, which is the total left derived functor of $\, \widetilde{\cdot}$, is an equivalence of categories by Theorem \ref{Quillenequiv}(2), the other vertical functors are also equivalences of categories.
\end{proof}
\begin{cor}\label{4equiv}
Let $A$ and $B$ be diffeological spaces in $\wcal_{\dcal}$. Then, the following are equivalent:
\begin{itemize}
	\item[{\rm (i)}] $f: A \longrightarrow B$ is a $\dcal$-homotopy equivalence.
	\item[{\rm (ii)}] $f: A \longrightarrow B$ is a weak equivalence in $\dcal$.
	\item[{\rm (iii)}] $\widetilde{f}: \widetilde{A} \longrightarrow \widetilde{B}$ is a $\czero$-homotopy equivalence.
	\item[{\rm (iv)}] $\widetilde{f}: \widetilde{A} \longrightarrow \widetilde{B}$ is a weak equivalence in $\czero$.
\end{itemize}
\end{cor}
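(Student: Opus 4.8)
The plan is to split the four conditions into two ``Whitehead-type'' equivalences, $\mathrm{(i)}\Leftrightarrow\mathrm{(ii)}$ and $\mathrm{(iii)}\Leftrightarrow\mathrm{(iv)}$, and then to bridge the smooth and topological sides through $\mathrm{(ii)}\Leftrightarrow\mathrm{(iv)}$; the resulting chain $\mathrm{(i)}\Leftrightarrow\mathrm{(ii)}\Leftrightarrow\mathrm{(iv)}\Leftrightarrow\mathrm{(iii)}$ then yields the corollary. Throughout I would use Corollary \ref{W}: $A,B\in\wcal_\dcal$ gives $\widetilde{A},\widetilde{B}\in\wcal_\czero$ (part (1)) and $A,B\in\vcal_\dcal$ (part (2)).

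For $\mathrm{(i)}\Rightarrow\mathrm{(ii)}$ (and likewise $\mathrm{(iii)}\Rightarrow\mathrm{(iv)}$) I would simply apply the singular functor $S^\dcal$ (resp. $S$): since it carries $\dcal$-homotopies to simplicial homotopies (as used in the proof of Lemma \ref{homotopyrel}), a $\dcal$-homotopy equivalence is sent to a simplicial homotopy equivalence, hence a weak equivalence, so $f$ is a weak equivalence by Definition \ref{WFC}. The reverse direction $\mathrm{(ii)}\Rightarrow\mathrm{(i)}$ is the model-categorical Whitehead theorem adapted to $\wcal_\dcal$: choosing cofibrant objects $A',B'$ together with $\dcal$-homotopy equivalences $A'\to A$ and $B'\to B$ (which are weak equivalences by the easy direction), I would transport $f$ along these to a weak equivalence $g\colon A'\to B'$ between cofibrant---and automatically fibrant---objects, apply \cite[Theorem 7.5.10]{Hi} to conclude that $g$ is a classical homotopy equivalence, and then use Theorem \ref{dfctcpx} (which gives $\pi_\dcal(A',-)=[A',-]_\dcal$ for the cofibrant sources) to identify the classical homotopy relation with the $\dcal$-homotopy relation on $\dcal(A',B')$ and $\dcal(B',A')$; transporting back shows that $f$ is a $\dcal$-homotopy equivalence. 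The same argument with Proposition \ref{cfctcpx} in place of Theorem \ref{dfctcpx} and $\wcal_\czero$ in place of $\wcal_\dcal$ gives $\mathrm{(iv)}\Rightarrow\mathrm{(iii)}$, here using that $\czero$ is a simplicial model category (Corollary \ref{notenrich}(1)) and that $\widetilde{A},\widetilde{B}\in\wcal_\czero$.

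For the bridge $\mathrm{(ii)}\Leftrightarrow\mathrm{(iv)}$ I would use $\wcal_\dcal\subset\vcal_\dcal$ (Corollary \ref{W}(2)), so that by Corollary \ref{W}(3) the natural inclusions $S^\dcal A\hookrightarrow S\widetilde{A}$ and $S^\dcal B\hookrightarrow S\widetilde{B}$ are weak equivalences in $\scal$. Naturality of these inclusions produces a commutative square whose horizontal arrows are weak equivalences, so by the two-out-of-three property $S^\dcal f$ is a weak equivalence if and only if $S\widetilde{f}$ is; by Definition \ref{WFC} this is exactly $\mathrm{(ii)}\Leftrightarrow\mathrm{(iv)}$. (Alternatively, $\mathrm{(i)}\Leftrightarrow\mathrm{(iii)}$ follows at once from Corollary \ref{homotopycat}, since $\pi_0\,\widetilde{\cdot}\colon \pi_0\wcal_\dcal\to\pi_0\wcal_\czero$ is an equivalence of categories and an equivalence reflects isomorphisms.)

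I expect the main obstacle to be the hard direction $\mathrm{(ii)}\Rightarrow\mathrm{(i)}$: because $\dcal$ is \emph{not} a simplicial model category (Corollary \ref{notenrich}(2)), one cannot directly invoke the simplicial Whitehead theorem, and the coincidence of the classical homotopy relation with the $\dcal$-homotopy relation must be supplied externally, precisely by Theorem \ref{dfctcpx}. Conversely, the conceptual heart of the statement is the bridge $\mathrm{(ii)}\Leftrightarrow\mathrm{(iv)}$, where the hypothesis $\wcal_\dcal\subset\vcal_\dcal$ is exactly what is needed to compare the smooth singular complex $S^\dcal$ with the topological singular complex $S\widetilde{\cdot}$.
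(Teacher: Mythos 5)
Your proof is correct, but it is organized differently from the paper's. The paper's own proof is two steps: for cofibrant $A$ and $B$ all four conditions are equivalent immediately from the commutative diagram of category equivalences in Corollary \ref{homotopycat} (whose $\dcal$-to-$\czero$ crossing is the total derived equivalence $\lbb\,\widetilde{\cdot}$ of Theorem \ref{Quillenequiv}(2)), and the general case reduces to this one by sandwiching $f$ between $\dcal$-homotopy equivalences $A'\longrightarrow A$ and $B\longrightarrow B'$ with $A'$, $B'$ cofibrant, together with the easy implications --- exactly the reduction you perform inside your (ii)$\Rightarrow$(i) step. What you do differently is to unpack the content of Corollary \ref{homotopycat} by hand: your Whitehead-type arguments for (ii)$\Rightarrow$(i) and (iv)$\Rightarrow$(iii) --- cofibrant replacement, \cite[Theorem 7.5.10]{Hi}, and the identification of the classical homotopy relation with the $\dcal$- resp. $\czero$-homotopy relation via Theorem \ref{dfctcpx} resp. Proposition \ref{cfctcpx} --- are precisely the ingredients from which the horizontal isomorphisms $\pi_0\,\mcal_c\longrightarrow \pi\,\mcal_c$ in Corollary \ref{homotopycat} are built, and your remark that the failure of $\dcal$ to be a simplicial model category is what forces the detour through Theorem \ref{dfctcpx} is exactly the point emphasized in Remark \ref{svsm}. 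Your bridge is genuinely different: instead of the derived equivalence, you cross from $\dcal$ to $\czero$ at the level of singular complexes, using $\wcal_\dcal\subset\vcal_\dcal$ (Corollary \ref{W}(2)), the characterization of $\vcal_\dcal$ in Corollary \ref{W}(3), and two-out-of-three in the naturality square of inclusions $S^\dcal X\hookrightarrow S\widetilde{X}$. This buys the sharper observation that (ii)$\Leftrightarrow$(iv) needs only $A,B\in\vcal_\dcal$, with the full strength of $\wcal_\dcal$ used only in the Whitehead-type steps; the paper's packaging buys brevity, since given Corollary \ref{homotopycat} the corollary is nearly formal --- indeed your parenthetical alternative for (i)$\Leftrightarrow$(iii) via the equivalence $\pi_0\,\widetilde{\cdot}$ is, combined with the cofibrant-replacement reduction, essentially the paper's actual proof.
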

\begin{proof}
	We prove the result in two steps. \vspace{0.2cm}\\
	{\sl Step 1: The case where $A$ and $B$ are cofibrant.} The result is immediate from Corollary \ref{homotopycat}.\\
	{\sl Step 2: The case where $A$ and $B$ are in $\wcal_{\dcal}$.} For a smooth map $g: X \longrightarrow Y$, the implications
	\[
	\begin{tikzcd}
	\text{$g$ is a $\dcal$-homotopy equivalence} \arrow[Rightarrow]{r} \arrow[Rightarrow]{d} &  \text{$g$ is a weak equivalence in $\dcal$}\\
	\text{$\widetilde{g}$ is a $\czero$-homotopy equivalence} \arrow[Rightarrow]{r} & \text{$\widetilde{g}$ is a weak equivalence in $\czero$}
	\end{tikzcd}
	\]
	hold (see \cite[Lemma 9.4 and Section 2.4]{origin}). We thus choose $\dcal$-homotopy equivalences $A'\longrightarrow A$ and $B \longrightarrow B'$ with $A'$ and $B'$ cofibrant and consider the composite
	\[
	A' \xrightarrow[\ \scalebox{1}{$\simeq_{\dcal}$}\ ]{} A \xrightarrow{\ \ f\ \ } B \xrightarrow[\ \scalebox{1}{$\simeq_{\dcal}$}\ ]{} B'.
	\]
	Then, the result follows from Step 1.
\end{proof}
The following result is a diffeological version of \cite[Proposition 3]{Mi}.
\begin{cor}\label{product}
Let $A$ and $B$ be diffeological spaces in $\wcal_{\dcal}$. Then the product $A\times B$ is also in $\wcal_{\dcal}$.
\end{cor}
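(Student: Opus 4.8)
The plan is to reduce, via two soft homotopy‑theoretic steps, to a single comparison map between a genuine cofibrant object and the product, and then to establish that this comparison is a $\dcal$-homotopy equivalence. First I would record that the product of two $\dcal$-homotopy equivalences is again a $\dcal$-homotopy equivalence: if $u\times 1$ and $1\times v$ are built from homotopies $H,K$ over the common interval $I$, one multiplies them pointwise, $(a,b,t)\mapsto (H(a,t),K(b,t))$, using cartesian closedness of $\dcal$ (Proposition \ref{conven}(2)). Choosing $\dcal$-homotopy equivalences $A'\to A$, $B'\to B$ with $A',B'$ cofibrant gives $A\times B\simeq_{\dcal}A'\times B'$, so it suffices to treat cofibrant factors. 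Next, the proof of Theorem \ref{dfctcpx} (Step 2) shows that for cofibrant $A'$ the canonical map $p_{A'}\colon|S^{\dcal}A'|_{\dcal}\to A'$ is a $\dcal$-homotopy equivalence; hence $A'\simeq_{\dcal}|K|_{\dcal}$ with $K=S^{\dcal}A'$, and likewise $B'\simeq_{\dcal}|L|_{\dcal}$. Thus the whole statement reduces to showing that $|K|_{\dcal}\times|L|_{\dcal}\in\wcal_{\dcal}$.

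For this I would introduce the canonical comparison $\phi\colon|K\times L|_{\dcal}\to|K|_{\dcal}\times|L|_{\dcal}$ induced by the projections, whose source is a cofibrant $CW$-complex in $\dcal$ by Lemma \ref{realization}. The first substantive step is that $\phi$ is a weak equivalence. To see this I would first prove the auxiliary fact that $\vcal_{\dcal}$ is closed under finite products: since $S^{\dcal}$, $R$, and $\widetilde{\cdot}$ all preserve finite products (so that $\widetilde{A\times B}=\widetilde{A}\times_{\czero}\widetilde{B}$ and $S^{\dcal}(\mathrm{id}_{A\times B})=S^{\dcal}(\mathrm{id}_A)\times S^{\dcal}(\mathrm{id}_B)$), and since a product of weak equivalences between Kan complexes is a weak equivalence, the unit $\mathrm{id}\colon A\times B\to R\widetilde{A\times B}$ is a weak equivalence whenever $A,B\in\vcal_{\dcal}$. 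Consequently both $|K\times L|_{\dcal}$ (cofibrant, hence in $\vcal_{\dcal}$ by Corollary \ref{W}(2)) and $|K|_{\dcal}\times|L|_{\dcal}$ lie in $\vcal_{\dcal}$. The underlying continuous map $\widetilde{\phi}$ is the canonical map $|K\times L|\to|K|\times_{\czero}|L|$, which is an isomorphism because the topological realization functor preserves finite products in $\czero$ (as used in Proposition \ref{enrich2}(1)); by Corollary \ref{W}(3) this $\pi_{\ast}$-isomorphism transports to a $\pi^{\dcal}_{\ast}$-isomorphism, so $\phi$ is a weak equivalence.

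The crux is to upgrade $\phi$ from a weak equivalence to a $\dcal$-homotopy equivalence; this does not follow formally, because the target $|K|_{\dcal}\times|L|_{\dcal}$ is not known to be cofibrant and a weak equivalence into a non‑cofibrant object need not be a homotopy equivalence. (Indeed no purely soft criterion can work: for a cofibrant $C$ the space $R\widetilde{C}$ always lies in $\vcal_{\dcal}$ and has a perfectly nice underlying space, yet it need not lie in $\wcal_{\dcal}$, so "$X\in\vcal_{\dcal}$ with $\widetilde{X}$ of $CW$-type" alone is insufficient.) My plan is to apply the intrinsic criterion of Lemma \ref{homotopyrel}, but only against test objects $Z\in\vcal_{\dcal}$: this already suffices, since the desired homotopy inverse lives in $[\,|K|_{\dcal}\times|L|_{\dcal},\,|K\times L|_{\dcal}\,]_{\dcal}$ and the two homotopy identities only involve function complexes whose targets are $|K\times L|_{\dcal}$ and $|K|_{\dcal}\times|L|_{\dcal}$, both of which are in $\vcal_{\dcal}$. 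For such $Z$ I would use cartesian closedness to rewrite $[\,|K|_{\dcal}\times|L|_{\dcal},Z\,]_{\dcal}\cong[\,|K|_{\dcal},\dcal(|L|_{\dcal},Z)\,]_{\dcal}$, thereby exposing a cofibrant source $|K|_{\dcal}$, and then invoke the smoothing theorem (Theorem \ref{dmapsmoothing}) to pass to the underlying arc‑generated level, where $\widetilde{\phi}$ is an isomorphism and the two sides are matched through $|K|\times_{\czero}|L|\cong|K\times L|$.

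The main obstacle, as the plan makes clear, is exactly this last comparison: it requires knowing that $\dcal(|L|_{\dcal},Z)\in\vcal_{\dcal}$ and that the underlying space of the diffeological mapping space $\dcal(|L|_{\dcal},Z)$ agrees, up to the relevant homotopy, with the topological mapping space $\czero(|L|,\widetilde{Z})$. This is the precise manifestation of the diffeological discrepancy $|K|_{\dcal}\times|L|_{\dcal}\neq|K\times L|_{\dcal}$ noted in Remark \ref{proof}, and it is where the genuine work lies; by contrast, the two reductions in the first paragraph and the weak‑equivalence statement in the second are routine given the results already established.
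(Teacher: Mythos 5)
Your reductions in the first paragraph and the weak-equivalence statement for $\phi:\absno{K\times L}_{\dcal}\to\absno{K}_{\dcal}\times\absno{L}_{\dcal}$ in the second are correct, and your diagnosis of the crux is accurate; but the proof stalls exactly there, and the route you sketch for closing it does not go through. Testing only against $Z\in\vcal_{\dcal}$ would indeed suffice formally (taking $Z=\absno{K\times L}_{\dcal}$ and $Z=\absno{K}_{\dcal}\times\absno{L}_{\dcal}$ produces the homotopy inverse and both homotopy identities), but your mechanism for proving the bijection $[\,\absno{K}_{\dcal}\times\absno{L}_{\dcal},Z\,]_{\dcal}\cong[\,\absno{K\times L}_{\dcal},Z\,]_{\dcal}$ — exponentiate to expose the cofibrant source $\absno{K}_{\dcal}$ and then apply Theorem \ref{dmapsmoothing} — requires two facts you acknowledge but never establish: that $\dcal(\absno{L}_{\dcal},Z)\in\vcal_{\dcal}$, and that $\widetilde{\dcal(\absno{L}_{\dcal},Z)}$ is weakly equivalent to the topological mapping space $\czero(\absno{L},\widetilde{Z})$. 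Neither is proved anywhere in the paper, and the second is essentially a smoothing theorem for mapping spaces, i.e., a statement of the same depth as what you are trying to prove; so as written the argument has a genuine gap at its decisive step.

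The paper closes this gap by a different mechanism that avoids passing to $\czero$ altogether. Theorem \ref{dfctcpx} together with the Quillen pair $(\absno{\ }_{\dcal},\sdcal)$ and \cite[Proposition 17.4.16]{Hi} gives the homotopy equivalence $\sdcal\dcal(\absno{M}_{\dcal},X)\simeq\mathrm{Map}_{\scal}(M,\sdcal X)$ for \emph{every} $X\in\dcal$ — the key point you miss is that every diffeological space is fibrant, so this comparison needs no hypothesis like $X\in\vcal_{\dcal}$ on the target. One then writes the chain of equivalences, natural in $X$,
\[
\sdcal\dcal(\absno{K\times L}_{\dcal},X)\simeq\mathrm{Map}_{\scal}(K\times L,\sdcal X)\cong\mathrm{Map}_{\scal}(L,\mathrm{Map}_{\scal}(K,\sdcal X))\simeq\sdcal\dcal(\absno{L}_{\dcal},\dcal(\absno{K}_{\dcal},X))\cong\sdcal\dcal(\absno{K}_{\dcal}\times\absno{L}_{\dcal},X),
\]
where the mapping space $\dcal(\absno{K}_{\dcal},X)$ enters as a target with no comparison to $\czero(\absno{K},\widetilde{X})$ ever needed, and concludes by Lemma \ref{homotopyrel} that $\absno{K\times L}_{\dcal}\simeq_{\dcal}\absno{K}_{\dcal}\times\absno{L}_{\dcal}$; composing with $p_A\times p_B$ (your first paragraph, which matches the paper's Step 2) finishes the proof. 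In short: your strategy of testing against $\vcal_{\dcal}$-objects and smoothing is replaced in the paper by working internally with homotopy function complexes against \emph{all} targets, which is exactly what dissolves the obstacle you correctly identified.
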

\begin{proof}
We prove the result in two steps.
\vspace{2mm}\newline\noindent
{\sl Step 1}: We show that $\absno{K\times L}_\dcal$ is $\dcal$-homotopy equivalent to $\absno{K}_\dcal\times \absno{L}_\dcal$ for $K, L\in \scal$.

We have
\[
S^{\dcal}\dcal (|M|_{\dcal}, X) \simeq {\rm Map}_{\scal} (M, S^{\dcal}X)
\]
for $M \in \scal$ and $X \in \dcal$ (see Theorem \ref{dfctcpx}, Lemma \ref{Quillenpairs}, \cite[Proposition 17.4.16]{Hi}, and the comment before Proposition \ref{cfctcpx}). Thus, by the cartesian closedness of $\scal$ and $\dcal$, we have
$$
\begin{array}{rcl}
\sdcal\dcal(\absno{K\times L}_{\dcal},X) &\simeq & \mathrm{Map}_{\scal}(K\times L,\sdcal X)\\
& \cong & \mathrm{Map}_{\scal}(L, \mathrm{Map}_{\scal}(K,\sdcal X))\\
& \simeq & \mathrm{Map}_{\scal}(L,\sdcal\dcal(\absno{K}_{\dcal},X))\\
& \simeq & \sdcal\dcal(\absno{L}_{\dcal},\dcal(\absno{K}_{\dcal},X))\\
& \cong & \sdcal\dcal(\absno{K}_{\dcal}\times\absno{L}_{\dcal},X),
\end{array}
$$
which implies that $\absno{K\times L}_{\dcal} \simeq_\dcal \absno{K}_{\dcal}\times\absno{L}_{\dcal}$.

\vspace{2mm}\par\noindent
{\sl Step 2}: From Step 1 and Corollary \ref{4equiv}, we have the composite of $\dcal$-homotopy equivalences
$$
\absno{\sdcal A\times \sdcal B}_{\dcal}\longrightarrow \absno{\sdcal A}_{\dcal}\times \absno{\sdcal B}_{\dcal} \xoverright{p_A\times p_B} A\times B,
$$
which completes the proof.
\end{proof}
%%%%%%%%%%%%%%%%%%%%%%%%%
\if0
\begin{cor}\label{maponfctcpx}
	\begin{enumerate}[\rm (1)]
		\item
		Let $f:A\longrightarrow B$ be a weak equivalence between diffeological spaces in $\wcal_{\dcal}$. Then $f^\sharp:\sdcal\dcal(B,X)\longrightarrow\sdcal\dcal(A,X)$ is a homotopy equivalence for any diffeological space $X$.

		\item
		Let $g:X\longrightarrow Y$ be a weak equivalence between diffeological spaces. Then $g_\sharp:\sdcal\dcal(A,X)\longrightarrow \sdcal\dcal(A,Y)$ is a homotopy equivalence for any diffeological space $A$ in $\wcal_{\dcal}$.
	\end{enumerate}
\end{cor}
\fi
%%%%%%%%%%%%%%%%%%%%%%
%\begin{proof}
%	The result follows from Theorem \ref{dfctcpx} and \cite[Proposition 17.1.6]{Hi}.
%\end{proof}
\subsection{Proof of Theorem \ref{dmapsmoothing}}
We end this section by proving Theorem \ref{dmapsmoothing}.
\begin{proof}[Proof of Theorem \ref{dmapsmoothing}]
\par\indent
Since $A$ is in $\wcal_{\dcal}$, we may assume that $A$ is a cofibrant object (see Lemma \ref{homotopyrel} and Corollary \ref{4equiv}). Note that the left Quillen functor $\widetilde{\cdot}:\dcal \longrightarrow \czero$ defines the map
\[
\widetilde{\cdot} : {\rm map}_{\dcal} (A, X) \longrightarrow {\rm map}_{\czero} (\widetilde{A}, \widetilde{X})
\]
(\cite[Proposition 16.2.1(1)]{Hi}). By Proposition \ref{cfctcpx} and Theorem \ref{dfctcpx}, we have only to show that this map is a weak equivalence. Since $\widetilde{\cdot} : \dcal \rightleftarrows \czero : R$ is a Quillen pair (Lemma \ref{Quillenpairs}), we have the isomorphism
\[
{\rm map}_{\dcal} (A, R\widetilde{X}) \cong {\rm map}_{\czero} (\widetilde{A}, \widetilde{X})
\]
(see \cite[Proposition 17.4.16]{Hi}). Since $X$ is in $\vcal_{\dcal}$, $id : X \longrightarrow R\widetilde{X}$ is a weak equivalence between fibrant objects, and hence
\[
{\rm map}_{\dcal} (A, X) \longrightarrow {\rm map}_{\dcal} (A, R\widetilde{X})
\]
is a weak equivalence (\cite[Theorem 17.6.3(1)]{Hi}), which completes the proof.
\end{proof}

% comparisonSection3.tex
\if0

\subsection{The classes $\wcal_{\dcal}$ and $\vcal_{\dcal}$}
We have introduced the class
$$
\wcal_{\dcal}=\{A\in\dcal\ |\ \hbox{$A$ has the homotopy type of a cofibrant diffeological space} \}
$$
in Section 2. We define the class $\vcal$ by
$$
\vcal_{\dcal}=\{X\in\dcal\ |\ \hbox{$id: A \longrightarrow R \widetilde{A}$ is a weak equivalence in $\dcal$} \}.
$$
In this section, we study these classes of diffeological spaces.
\begin{lem}
	The three functors
	
\begin{eqnarray*}
& \sdcal & :  \dcal      \longrightarrow \scal,\\
& S &  :  \czero  \longrightarrow \scal,\\
& \widetilde{\cdot} &  :   \dcal   \longrightarrow \czero
\end{eqnarray*}

	preserve naive homotopy.
	\begin{proof}
		The result has been proved for the functor $\sdcal$ (Lemma 2.4). By an argument similar to that in the proof of Lemma 2.4, the result can be proved for the functor $S$. Since the functor $\sim$ preserves finite products (\cite[?]{Kihara}) and the underlying space of $I$ is just the topological unit interval (\cite[?]{Kihara}), the result for the functor $\sim$ is obtained.
	\end{proof}
\end{lem}
In \cite{Quillenequiv}, we introduced the class $\wcal_{\czero}$ of arc-generated spaces by
$$
\wcal_{\czero}=\{A\in\czero \ |\ \hbox{$A$ has the homotpy type of a cofibrant arc-generated space}\}.
$$
Recall that $\wcal_{\czero} = \wcal \cap \czero$, where $\wcal$ dentoes the class of topological spaces having the homotopy type of a $CW$-complex; the class $\wcal$ was introduced and intensively studied by Milnor (\cite{Milnor}).

\begin{lem}
	\begin{itemize}
		\item[(1)]
		If $A$ is a cofibrant diffeological space, then the underlying space $\tilde{A}$ is a cofibrant arc-generated space.
		\item[(2)]
		If a diffeological space $A$ is in $\wcal_{\dcal}$, the underlying space $\tilde{A}$ is in $\wczero$.
	\end{itemize}
	\begin{proof}
		\begin{itemize}
			\item[(1)]
			Since $\sim: \dcal\longrightarrow\czero$ is a left Quillen functor (cf. \cite[Definition 8.5.2]{Hirsch} and \cite[?]{Kihara}), the result follows.
			
			\item[(2)]
			The result follows from part 1 and Lemma 3.1.
		\end{itemize}
	\end{proof}
\end{lem}

\begin{lem}
	The inclusion relations
	$$
	\dcal_{c}\subset \wcal_{\dcal} \subset \vcal_{\dcal}
	$$
	hold.
	\begin{proof}
		We have only to show the inclusion relation $\wcal_{\dcal} \subset \vcal_{\dcal}$. It is proved by the same argument as in the proof of \cite[Proposition 5.1(2)]{Quillenequiv} using ?.
	\end{proof}
\end{lem}
\begin{lem}
	There exists a natural isomorphism
	$$
	[A,R\tilde{X}]_{\dcal}\cong [\tilde{A},\tilde{X}]_{\czero}
	$$
	for a diffeological space $A$ and an arc-generated space $X$.
	\begin{proof}
		Recall that $\sim:\dcal \rightleftarrows \czero:R$ is an adjoint pair, and that $\sim$ preserves finite products (\cite[?]{Kihara}). Then, we have the natural isomorphisms
		$$
		\begin{array}{rcl}
		\dcal(A,R\tilde{X}) & \cong & \czero(\tilde{A},\tilde{X}),\\
		\dcal(A\times I, R\tilde{X}) & \cong & \czero(\tilde{A}\times \tilde{I},\tilde{X}),
		\end{array}
		$$
		which completes the proof.
	\end{proof}
\end{lem}
\begin{prop}
	Let $X$ be a diffeological space. Then the following are equivalent:
	\begin{enumerate}[(i)]
		\item
		$X$ is in $\vcal_{\dcal}$.
		\item
		The inclusion $\sdcal X \longhookrightarrow S\widetilde{X}$ is a weak equivalence in $\scal$.
		\item
		If $Q\longrightarrow X$ is a cofibrant approximation of $X$ in $\dcal$, then $\tilde{Q}\longrightarrow \tilde{X}$ is a cofibrant approximation of $\tilde{X}$ in $\czero$.
		\item
		The canonical map $|\sdcal X|\longrightarrow \tilde{X}$ is a cofibrant approximation of $\tilde{X}$ in $\czero$.
		\item
		The inclusion $\sdcal\dcal(A,X)\longhookrightarrow S\czero(\tilde{A},\tilde{X})$ is a weak equivalence in $\scal$ for any cofibrant diffeological space $A$.
		\item
		The map $[A,X]_{\dcal}\longrightarrow [\tilde{A}, \tilde{X}]_{\czero}$ is a bijection for any cofibrant diffeological space $A$.
	\end{enumerate}
	\begin{proof}
		(i)$\Leftrightarrow$(ii) 
		From the definition of a weak equivalence in $\dcal$ and the equality $\sdcal R = S$, we see that $(i)$ is equivalent to $(ii)$.
		
		\vspace{2mm}
		\par\noindent
		(i)$\Leftrightarrow$(iii) For a cofibrant approximation $Q\longrightarrow X$ of $X$ in $\dcal$, consider the commutative diagram
		\begin{center}
			\begin{tikzcd} 
				\sdcal Q \arrow[r] \arrow[d,hook']& \sdcal X \arrow[d,hook']\\
				S\widetilde{Q} \arrow[r]& S\tilde{X}
			\end{tikzcd}
		\end{center}
		in $\scal$. Note that the upper arrow and the left vertical arrow are weak equivalences (Lemma 3.3) and that $\tilde{Q}$ is a cofibrant object of $\ccal^0$ by \cite[?]{Kihara}. Then it is obvious that \equivalence{(i)}{(iii)}.
		
		\vspace{2mm}
		\par\noindent
		\equivalence{(i)}{(iv)} The canonical cofibrant approximation $p_X:\absno{\sdcal X}_{\dcal}\longrightarrow X$ induces the commutative diagram 
		\begin{center}
			\begin{tikzcd}
				\sdcal \absno{\sdcal X}_{\dcal} \cdarrow{r, "\sdcal p_X"} \cdarrow{d, hook'} & \sdcal X \cdarrow{d, hook'}\\
				S\absno{\sdcal X} \cdarrow{r,"S\tilde{p}_X"} & S\tilde{X}
			\end{tikzcd}
		\end{center}
		in the category $\scal$ (cf. \cite[?]{Kihara}). Thus, we see that \equivalence{(i)}{(iv)} by an argument similar to that in the proof of \equivalence{(i)}{(iii)}.	
		\vspace{2mm}
		\par\noindent
		(v)$\Rightarrow$(vi)
		By applying the functor $\pi_0$, we obtain the bijection between the homotopy sets.
		
		\vspace{2mm}
		\par\noindent
		(vi)$\Rightarrow$(ii)
		We have the implications
		\par\indent
		\begin{tabular}{rll}
			& $[A,X]_\dcal\longrightarrow[\tilde{A},\tilde{X}]_\czero$  is an isomorphism for cofibrant $A$. \\
			\equivalence{}{}%
			&$[A,X]_\dcal\longrightarrow[A,R\tilde{X}]_\dcal$ 
			is an isomorphism for cofibrant $A$, by Lemma 3.4. \\
			\equivalence{}{}%
			&$\mathop{\mathrm{\ho}}\nolimits\dcal(A,X) \longrightarrow \mathop{\mathrm{\ho}}\nolimits\dcal(A,R\tilde{X})$ 
			is an isomorphism for cofibrant $A$, by Corollary . \\
			\equivalence{}{}%
			& $id:X\longrightarrow R\tilde{X}$ is a weak equivalence in $\dcal$. $\hspace{7.9cm} \qed$
		\end{tabular}
	\\	The following theorem clarifies the importance of the classes $\wcal_{\dcal}$ and $\vcal_{\dcal}$.
		\begin{proof}
					The result follows from Proposition 3.5 and Lemma 3.1.
					\end{proof}
				\end{proof}
			\end{prop}
				
A diffeological space $A$ is called a $\dcal$-polyhedron if $A$ is equipped with a simplicial complex $K$ and a diffeormorphism $f:\absno{K}_{\dcal}\longrightarrow X$.
\begin{lem}
	Any diffeological space $X$ has a cofibrant approximation $P\longrightarrow X$ with $P$ a $\dcal$-polyhedron.
	\begin{proof}
		By choosing a cofibrant approximation, we may assume that $X$ is cofibrant. By \cite[?]{Milnor}, there are a simplicial complex $K$ and a weak equivalence $\absno{K}\longrightarrow\tilde{X}$ in $\czero$, where $K$ is regarded as a simplicial set by choosing a well-order on the vertices. Since $\absno{\ }:\sbf
		\rightleftarrows\czero:S$ is a pair of Quillen equivalences, the right adjunct $K\longrightarrow S\tilde{X}$ is a weak equivalence in $\sbf$. Consider the solid arrow diagram
		$$
		\begin{tikzcd}
		& \sdcal X \cdarrow{d,hook'} \\
		K \cdarrow{ur, dashed, "\ell"}\cdarrow{r} & S\tilde{X}.
		\end{tikzcd}
		$$
		Since $X$ is cofibrant, the indusion $\sdcal X\longhookrightarrow S\tilde{X}$ is a weak equivalence between Kan complexes (Theorem 3.6(1)), and hence, a homotopy equivalence. Thus the dotted arrow $\ell$ exists, making the diagram commute up to naive homotopy.
		
		Since $\ell$ is a weak equivalence in $\scal$ and $\absno{\ }_{\dcal}:\scal\rightleftarrows \dcal:\sdcal$ is a pair of Quillen equivalences, the left adjunct $\absno{K}_{\dcal}\longrightarrow X$ is a weak equivalence in $\dcal$, which is the desired cofibrant approximation to $X$.
	\end{proof}
\end{lem}
\begin{prop}
	For a diffeological space $A$, the following are equivalent:
	\begin{enumerate}[(i)]
		\item
		$A$ is homotopy equivalent to a $\dcal$-polyhedron.
		\item
		$A$ is homotopy equivalent to a cofibrant diffeological space.
		\item
		$A$ is dominated by a cofibrant diffeological space.
	\end{enumerate}
	\begin{proof}
		The implications (i)$\Rightarrow$(ii)$\Rightarrow$(iii) are obvious.
		
		(ii)$\Rightarrow$(i) We have only to show that any cofibrant diffeological space is homotopy equivalent to a $\dcal$-polyhedron.
		
		Given a cofibrant diffeological space $Q$, there exists a polyhedral approximation $\alpha:P \longrightarrow Q$ (Lemma 3.7), which is a homotopy equivalence (Corollary 2.7).
		
		(iii)$\Rightarrow$(i) There are a cofibrant diffeological space $Q$ and a diagram
		\begin{center}
			\begin{tikzpicture}
			\node at (0,0) {$A\xrightarrow[\ \ \ \ \ ]{\ \iota\ } Q \xrightarrow[\ \ \ \ \ ]{\ \pi\ } A$};
			\node at (0,-.5) {{\footnotesize{$1_A$}}};
			\draw[->,below] (-1.25,0-.1)--(-1.25,-.35)--(1.25,-0.35)--(1.25,-.1);
			\end{tikzpicture}
		\end{center}
		which is commutative up to naive homotopy. 
		%%%%%%%%%%%%%%%%%%%%%%%%%%%
		\if0
		Choose a polyhedral approximation $\alpha: P\longrightarrow A$. Then we have the factorization
		$$
		\begin{tikzcd}
		P \cdarrow{r,hook,"i"} \cdarrow{dr,swap,"\alpha"} & P' \cdarrow{d,"p"}\\
		& A
		\end{tikzcd}
		$$
		such that $i$ is a cofibration and $p$ is a trivial fibration. Since $i$ is a trivial cofibration, $P$ is a deformation retract of $P'$ (Proposition ?). Consider the commutative solid arrow diagram
		$$
		\begin{tikzcd}
		\phi \cdarrow{r}\cdarrow{d} & P'\cdarrow{d,"p"}\\
		Q\cdarrow{ur,"\ell",dashed}\cdarrow{r,"\pi"}& A.
		\end{tikzcd}
		$$
		Then the dotted arrow $\ell$ exists, making the diagram commute. Define the maps $\iota':A\longrightarrow P$ and $\pi':P\longrightarrow A$ be the composite
		$A\xoverright{\iota}Q\xoverright{\ell}P'\xoverright{r}P$ and $P\xhookoverright{i}{5}P'\xoverright{p}A$ respectively. Then we have the diagram
		\begin{center}
			\begin{tikzpicture}
			\node at (0,0) {$A\xrightarrow[\ \ \ \ \ ]{\ \iota'\ } P \xrightarrow[\ \ \ \ \ ]{\ \pi'\ } A$};
			\node at (0,-.5) {{\footnotesize{$1_A$}}};
			\draw[->,below] (-1.25,0-.1)--(-1.25,-.35)--(1.25,-0.35)--(1.25,-.1);
			\end{tikzpicture}
		\end{center}
		commutative up to naive homotopy such that $P$ is a $\cinf$-polyhedron and $\pi'$ is a weak equivalence. 
		\fi
		%%%%%%%%%%%%%%%%%%%%%
		Thus, we have only to show that $\iota\pi\simeq 1_Q$.
		
		By Corollary ?, we see that
		$$
		[Q,Q]\cong\pi(Q,Q) \xoverright{\pi_{\sharp}} \pi(Q,A)\cong[Q,A]
		$$
		is an isomorphism. Since $\pi_{\sharp}[\iota\pi]=\pi_{\sharp}[1_Q]$, we have $\iota\pi\simeq 1_Q$.
	\end{proof}
\end{prop}
	\fi	
\section{Smoothing of continuous principal bundles}
In this section, we give precise definitions of smooth and continuous principal bundles, and then establish a smoothing theorem for principal bundles (Theorem \ref{dbdlesmoothing}) using Theorem \ref{dmapsmoothing}. Throughout this section, $\mathcal{C}$ denotes one of the categories $C^\infty$, $\mathcal{D}$, $\mathcal{C}^0$, and $\mathcal{T}$ (see Remarks \ref{suitable} and \ref{arc} for the category $\tcal$).
\subsection{$\ccal$-partitions of unity}
In this subsection, we introduce the notion of a $\ccal$-partition of unity and the related notions.
\par
Consider the faithful functors
\[
\begin{tikzcd}
	C^\infty \overset{I}{\longhookrightarrow} \mathcal{D} \overset{\widetilde\cdot}{\longrightarrow} \mathcal{C}^0 \overset{I}{\longhookrightarrow} \mathcal{T} \tag{5.1}
\end{tikzcd}
\]
and recall from Proposition \ref{dmfd} and the definition of $\czero$ that the two functors denoted by $I$ are fully faithful. Equip the category $\mathcal{C}$ with the faithful functor $U:\mathcal{C}\rightarrow \mathcal{T}$, which is the composite of functors in (5.1). The underlying topological space $UX$ of $X\in\mathcal{C}$ is often denoted by $X$ if there is no confusion in context.\par
For an object $X$ of $\ccal$, a $\ccal$-subspace of $X$ is defined to be a subset $A$ of $X$ endowed with the initial structure for the inclusion $A\longhookrightarrow X$ with respect to the obvious underlying set functor $\ccal \longrightarrow Set$ (see Remark \ref{convenrem}(1)). If $\ccal= \dcal$, $\czero$, or $\tcal$, or if $A$ is an open set of $UX$, then $A$ admits a unique $\ccal$-subspace structure.\par
Note that $\mathbb{R}$ is canonically viewed as an object of $\mathcal{C}$ in a way compatible with the faithful functors in (5.1). A morphism of $\ccal$ with target $\mathbb{R}$ is often called a $\ccal$-function.
\begin{defn}\label{partition}
	Let $X$ be an object of $\mathcal{C}$.
	\begin{itemize}
		\item[(1)] A set $\{\varphi_i:X\longrightarrow \mathbb{R}\}$ of $\ccal$-functions is called a $\ccal$-partition of unity if $\{U\varphi_i:UX\longrightarrow U\mathbb{R}\}$ is a partition of unity in the usual sense (i.e., $\{\mathrm{carr}\ \varphi_i\}$, and hence $\{\mathrm{supp}\ \varphi_i\}$ is a locally finite covering of $X$, $\varphi_i(x) \geq 0$ for any $i$ and any $x\in X$, and $\sum_i \varphi_i = 1$).
		\item[(2)] A covering $\{U_i\}$ of $X$ is called $\ccal$-numerable if there exists a $\ccal$-partition of unity $\{ \varphi_{i}: X \longrightarrow \rbb \}$ subordinate to $\{U_i\}$ (i.e., $\mathrm{supp}\, \varphi_{i} \subset U_{i}$).
		\item[(3)] $X$ is called $\ccal$-paracompact if any open covering of $X$ is $\ccal$-numerable. $X$ is called hereditarily $\ccal$-paracompact if any open $\ccal$-subspace is $\ccal$-paracompact.
	\end{itemize}
\end{defn}
\begin{rem}\label{paracpt} 
	\begin{itemize}
	\item[{\rm (1)}] In the definition of a $\mathcal{C}$-numerable covering, $\{ U_{i} \}$ is often assumed to be an open covering. However, the definition and many important results apply to a covering by subsets (see, eg, \cite{Seg} and \cite{tom}).
	\item[{\rm (2)}] For a $C^{\infty}$-manifold $M$, $C^{\infty}$-paracompactness and $\dcal$-paracompactness are equivalent (see Proposition \ref{dmfd}). Thus, both $C^{\infty}$-paracompactness and $\dcal$-paracompactness are often referred to as smooth paracompactness.\par
	Both $\czero$-paracompactness and $\tcal$-paracompactness imply (ordinary) paracompactness, and the converse also holds under Hausdorff condition.
	\end{itemize}
\end{rem}
\begin{rem}\label{regular}
	An object $X$ of $\ccal$ is called $\ccal$-regular if for any $x\in X$ and any open neighborhood $U$ of $x$, there exists a $\ccal$-function $f: X \longrightarrow \rbb$ such that $f(x) = 1$ and ${\rm carr} (f) \subset U$ (cf. \cite[Convention 14.1]{KM}). If $X$ is $\ccal$-paracompact and $UX$ is a $T_{1}$-space, then $X$ is $\ccal$-regular. 
	\par\indent
	For a $C^{\infty}$-manifold $M$, $C^{\infty}$-regularity and $\dcal$-regularity are equivalent, and hence these are often referred to as smooth regularity. The notion of smooth regularity is needed in Section 11. If $\ccal$ is $\czero$ or $\tcal$, then $\ccal$-regularity is just complete regularity.\par\indent
	We can easily see that $\ccal$-regularity is inherited by every $\ccal$-subspace whose underlying topology is the subspace topology (especially by every open $\ccal$-subspace).
\end{rem}
For the basics of smooth paracompactness and smooth regularity, refer to \cite[Sections 14 and 16]{KM}, in which these notions are defined for a Hausdorff topological space $X$ with a sheaf $\acal_X$ of subalgebras of the sheaf $F^0_X$ of algebras of continuous functions satisfying the following condition: The algebra $\acal_X(U)$ of sections is closed under composites with elements of $C^\infty(\mathbb{R},\mathbb{R})$ for any open set $U$. (This is a corrected version of the notion introduced in \cite[Convention 14.1]{KM}. We call such a space a smoothly ringed space.) We can assign to an object $X$ of $\ccal$ with $UX$ Hausdorff the smoothly ringed space $(UX,\ccal(\cdot,\mathbb{R}))$. Note that separation axiom $T_{1}$ is equivalent to Hausdorff condition under $\ccal$-regularity (and hence under $\ccal$-paracompactness).
\subsection{Principal bundles in $\ccal$}
Let us introduce the notion of a principal bundle in $\ccal$.
\begin{defn}\label{bdle}
	Let $B$ be an object of $\mathcal{C}$ and $G$ a group in $\mathcal{C}$ (\cite[p. 75]{Mac}).
	\begin{itemize}
		\item[(1)] Let $\mathcal{C} G$ denote the category of right $G$-objects of $\mathcal{C}$ (i.e., objects of $\ccal$ endowed with a right $G$-action). Regard $B$ as an object of $\mathcal{C} G$ with trivial $G$-action.\par
		An object $\pi:E \longrightarrow B$ of the overcategory $\mathcal{C} G/B$ is called a principal $G$-bundle if there exists an open cover $\{U_i\}$ of $B$ such that $E|_{U_i} : = \pi^{-1} (U_{i}) \cong  U_i\times G$ in $\mathcal{C} G/U_i$.\par
		Let $\mathsf{P} \ccal G/B$ denote the full subcategory of $\ccal G/B$ consisting of principal $G$-bundles.
		\item[(2)] A principal $G$-bundle $\pi:E\longrightarrow B$ is called $\ccal$-numerable if it has a trivialization open cover $\{U_i\}$ which is $\mathcal{C}$-numerable.\par
		Let $(\mathsf{P} \ccal G /B)_{\rm num}$ denote the full subcategory of $\mathsf{P} \ccal G/B$ consisting of $\ccal$-numerable principal $G$-bundles.
	\end{itemize}
\end{defn}
We see that a morphism $f: A \longrightarrow B$ of $\ccal$ induces the pullback functors $f^{\ast} : \mathsf{P} \ccal G / B \longrightarrow \mathsf{P} \ccal G/ A$ and $f^{\ast} : (\mathsf{P} \ccal G / B)_{\rm num} \longrightarrow (\mathsf{P} \ccal G / A)_{\rm num}$.
\begin{rem}\label{4properties}
	\begin{itemize}
    \item[{\rm (1)}] Unlike $\dcal$, $\czero$, and $\tcal$, the category $C^{\infty}$ is not complete or cocomplete. However, we can easily see that all the finite limits needed in Definition \ref{bdle} and the comment after it exist in $C^{\infty}$.
	\item[{\rm (2)}] The well-known description of a principal bundle using an open cover and transition functions applies to $\ccal$ $(= \cinf, \dcal, \czero, \tcal)$. Thus, the categories $\mathsf{P} \ccal G / B$ and $(\mathsf{P} \ccal G/ B)_{\rm num}$ are essentially small.
	\item[{\rm (3)}] Let $B$ be a $C^{\infty}$-manifold and $G$ a Lie group. Then, the fully faithful embedding $C^{\infty} \longhookrightarrow \dcal$ induces an isomorphism of categories $\mathsf{P} C^{\infty} G / B \longrightarrow \mathsf{P} \dcal G / B$, which restricts to an isomorphism $(\mathsf{P} C^{\infty} G / B)_{\rm num} \longrightarrow (\mathsf{P} \dcal G / B)_{\rm num}$. To prove this, we have only to show that the total space $E$ of a principal $G$-bundle over $B$ in $\dcal$ is necessarily separated (see Proposition \ref{dmfd}). First, consider the commutative solid arrow diagram in $\dcal$
	\[
	\begin{tikzcd}
	E \arrow[hook]{rrd}{diag} \arrow[dashed]{rd} \arrow{rdd} \\
	& E\underset{B}{\times} E \arrow{d} \arrow[hook]{r} & E\times E \arrow{d} \\
	& B \arrow[hook]{r}{diag} & B\times B.
	\end{tikzcd}
	\]
	Then, there exists a (unique) dotted arrow from $E$ to the fiber product $E\times_B E$, making the diagram commute. Since $B$ is separated, $E \times_B E$ is a closed subset of $E\times E$. Note that a local trivialization $E|_U \cong U\times G$ defines the local trivialization $E \times_B E|_U \cong U\times G\times G$. Then, we can easily construct a retract diagram in $\dcal$
	\[
		E \times_B E|_U \longhookrightarrow E|_U \times E|_U \longrightarrow E \times_B E|_U,
	\]
	which shows that the topology of $E\times_B E$ is the subspace topology of $E\times E$. Thus, we have only to show that $E$ is closed in $E\times_B E$, which is easily seen from the separatedness of $G$ using the local trivializations $E \times_B E|_U \cong U\times G\times G$.
	\item[{\rm (4)}] Even if we select separation condition $\rmii$ in Remark \ref{KMsepar}, the result in Part 3 remains true. However, if we select separation condition $\rmiii$ in Remark \ref{KMsepar}, the situation is different. We can show that $\mathsf{P} C^{\infty} G / B \longhookrightarrow \mathsf{P} \dcal G / B$ is a fully faithful functor which restricts to an isomorphism $(\mathsf{P} C^{\infty} G / B)_{\rm num} \longrightarrow (\mathsf{P} \dcal G / B)_{\rm num}$ and that $\mathsf{P} C^{\infty} G / B \longrightarrow \mathsf{P} \dcal G / B$ is an isomorphism under the assumption that $B$ is $C^{\infty}$-regular. But, we cannot prove that $\mathsf{P} C^{\infty} G / B \longhookrightarrow \mathsf{P} \dcal G / B$ is an isomorphism without any additional assumption.
	\end{itemize}
\end{rem}
The following lemma is simple but important.
\begin{lem}\label{gpd}
	The category $\mathsf{P} \ccal G / B$ is a groupoid.
\end{lem}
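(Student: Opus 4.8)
The plan is to verify directly that every morphism of $\mathsf{P}\ccal G/B$ is an isomorphism. A morphism $f\colon E\longrightarrow E'$ in $\mathsf{P}\ccal G/B$ is a $G$-equivariant morphism of $\ccal$ lying over $1_B$, so it suffices to produce a $G$-equivariant inverse over $1_B$. Since invertibility of a morphism in $\ccal$ can be checked over an open cover of the base (smoothness, respectively continuity, being a local condition, and the $\dcal$- and $\czero$-structures being final structures with respect to the underlying set functor), I would first reduce to the local, trivial situation.

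First I would choose trivializing open covers of $B$ for $E$ and for $E'$ and pass to their common refinement $\{U_i\}$, so that $E|_{U_i}\cong U_i\times G$ and $E'|_{U_i}\cong U_i\times G$ in $\ccal G/U_i$; here each open set $U_i$ carries a unique $\ccal$-subspace structure, as recalled in the discussion preceding Definition \ref{partition}. Under these trivializations $f|_{U_i}$ becomes a $G$-equivariant morphism $U_i\times G\longrightarrow U_i\times G$ of $\ccal$ over $U_i$. Writing $\varphi_i\colon U_i\longrightarrow G$ for the composite of the morphism $u\mapsto(u,e)$ with $f|_{U_i}$ followed by the projection onto $G$ (a morphism of $\ccal$, being built from identity, constant, and projection morphisms), right $G$-equivariance forces $f|_{U_i}(u,g)=(u,\varphi_i(u)\,g)$.

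Then I would write down the candidate inverse $g_i\colon(u,h)\mapsto(u,\varphi_i(u)^{-1}h)$. This is a morphism of $\ccal$ precisely because $G$ is a group in $\ccal$, so inversion and multiplication are morphisms of $\ccal$ and $\varphi_i$ is a morphism of $\ccal$; it is manifestly $G$-equivariant, lies over $U_i$, and a direct check shows it is a two-sided inverse of $f|_{U_i}$. Hence each $f|_{U_i}$ is an isomorphism in $\ccal G/U_i$.

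Finally, the local inverses $g_i$ agree on overlaps, being the restrictions of the single set-theoretic inverse of the bijection underlying $f$, so by locality of morphisms in $\ccal$ they glue to a $G$-equivariant morphism $g\colon E'\longrightarrow E$ over $1_B$ satisfying $g f=1_E$ and $f g=1_{E'}$. The only step needing care is this gluing, together with the assertion that an equivariant bundle map is forced into the above local form; both rest on the fact that morphisms in each of $C^{\infty}$, $\dcal$, $\czero$, $\tcal$ are detected on an open cover of the source (for $C^{\infty}$ one may invoke the fully faithful embedding $C^{\infty}\longhookrightarrow\dcal$ of Proposition \ref{dmfd}, so that smoothness is read off locally in $\dcal$), which is where I expect the only genuine subtlety to lie.
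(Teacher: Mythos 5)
Your proof is correct and follows essentially the same route as the paper: the paper's key observation is the monoid isomorphism $\ccal G / V(V \times G, V \times G) \cong \ccal(V, G)$, which is exactly your local normal form $f|_{U_i}(u,g)=(u,\varphi_i(u)g)$ together with the pointwise inverse supplied by the inversion morphism of the group object $G$. You merely make explicit the gluing of the local inverses via locality of morphisms in $\ccal$ (and, for $C^{\infty}$, the detection of smoothness through the embedding into $\dcal$), a step the paper leaves implicit under ``we can easily see''.
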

\begin{proof}
	We observe that the natural isomorphism of monoids
	\[
	\ccal G / V(V \times G, V \times G) \cong \ccal(V, G)
	\]
	exists, and hence that $\mathsf{P} \ccal G/ V(V \times G, V \times G)$ $(= \ccal G/ V (V \times G, V \times G))$ is a group. From this, we can easily see that $\mathsf{P} \ccal G / B$ is a groupoid.
\end{proof}
The following lemma and remark explain that the underlying topological object of a principal $G$-bundle in $\dcal$ should be considered as a principal $\widetilde{G}$-bundle in $\czero$.
\begin{lem}\label{bdleforget}
	Let $B$ be a diffeological space and $G$ a diffeological group. Then, the functor $\widetilde{\cdot}:\mathcal{D}\longrightarrow \mathcal{C}^0$ induces a functor
	\[
		\mathsf{P} \dcal G/B \longrightarrow \mathsf{P}\czero {\widetilde{G}}/\widetilde{B},
	\]
	which restricts to a functor
	\[
	(\mathsf{P}\dcal G/B)_{\mathrm{num}} \longrightarrow (\mathsf{P}\czero {\widetilde{G}}/\widetilde{B})_{\mathrm{num}}.
	\]
	\begin{proof}
		For an object $\pi : E \longrightarrow B$ of $\ccal G / B$, a local trivialization $E|_{U} \cong U \times G$ is identified with a pullback diagram in $\ccal$
	\[
	\begin{tikzcd}
	U \times G \arrow[hook]{r}{j} \arrow[swap]{d}{proj} & E \arrow{d}{\pi} \\
	U \arrow[hook]{r}{i} & B
	\end{tikzcd}
	\]
	such that $i$ is an open $\ccal$-embedding and $j$ is $G$-equivariant.
	
	Recall from Remark \ref{suitable} that the functor $\widetilde{\cdot}: \dcal \longrightarrow \czero$ preserves finite products. Then, we see that $\widetilde{G}$ is a group in $\czero$ and that $\widetilde{\cdot}$ induces the functor $\widetilde{\cdot} : \dcal G / B \longrightarrow \czero {\widetilde{G}} / \widetilde{B}$. Further, recall that the underlying set functors $\dcal \longrightarrow Set$ and $\czero \longrightarrow Set$ create limits (Proposition \ref{conven}(1) and \cite[Proposition 2.6]{origin}) and note that Lemma 2.3 in \cite{origin} remains true for the concrete category $\czero$ and that $\widetilde{\cdot}$ transforms open $\dcal$-embeddings to open $\czero$-embeddings (cf. \cite[Lemma 3.18]{CSW}). Then, we see that $\widetilde{\cdot}$ transforms local trivializations of an object $\pi : E \longrightarrow B$ of $\dcal G / B$ to ones of the object $\widetilde{\pi} : \widetilde{E} \longrightarrow \widetilde{B}$ of $\czero {\widetilde{G}} / \widetilde{B}$, which completes the proof.
	\end{proof}
\end{lem}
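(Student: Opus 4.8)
The plan is to verify that $\widetilde{\cdot}$ carries the structural data defining a principal bundle---an equivariant local trivialization over an open cover---to the corresponding data for $\widetilde{G}$-bundles, and then that it carries a subordinate $\dcal$-partition of unity to a subordinate $\czero$-partition of unity. I would organize this in three steps, the second being the crux.

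First I would record the functor on the level of $G$-objects. Since $\widetilde{\cdot}:\dcal\longrightarrow\czero$ preserves finite products (Remark \ref{suitable}(4)), the underlying topological space $\widetilde{G}$ is a group in $\czero$, and for any right $G$-object $E$ in $\dcal$ the structure map $E\times G\longrightarrow E$ is sent to an action $\widetilde{E}\times\widetilde{G}\longrightarrow\widetilde{E}$. Thus $\widetilde{\cdot}$ restricts to a functor $\dcal G/B\longrightarrow\czero\widetilde{G}/\widetilde{B}$, where $B$ (resp. $\widetilde{B}$) carries the trivial action.

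Second, I would show that a principal $G$-bundle $\pi:E\longrightarrow B$ is sent to a principal $\widetilde{G}$-bundle. Given a trivializing open cover $\{U_i\}$ of $B$ with isomorphisms $E|_{U_i}\cong U_i\times G$ in $\dcal G/U_i$, I would apply $\widetilde{\cdot}$: the images $\{\widetilde{U_i}\}$ form an open cover of $\widetilde{B}$ because $\widetilde{\cdot}$ sends open $\dcal$-embeddings to open $\czero$-embeddings, and product preservation identifies $\widetilde{U_i\times G}$ with $\widetilde{U_i}\times\widetilde{G}$. The real point to nail down is that the underlying topological space of the restricted bundle is the restriction of the underlying bundle, i.e. $\widetilde{E|_{U_i}}=\widetilde{E}|_{\widetilde{U_i}}$ as $\czero$-subspaces, so that the transported isomorphism is genuinely a local trivialization of $\widetilde{\pi}$. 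Here I expect the main obstacle: $\widetilde{\cdot}$ is a left adjoint (with right adjoint $R$) and does not preserve limits in general, so the preservation of the pullback square that encodes a local trivialization is not automatic. I would resolve this by expressing the restriction $E|_{U_i}$ as the set-theoretic preimage of an open set---on which the underlying set functors create the relevant structure---and invoking that the underlying topology of an open $\dcal$-subspace coincides with the subspace topology inherited from $\widetilde{E}$; compatibility of $\widetilde{\cdot}$ with open embeddings and with finite products then forces the defining pullback diagram to pass through $\widetilde{\cdot}$.

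Finally, for the numerable case, I would take a $\dcal$-partition of unity $\{\varphi_i:B\longrightarrow\rbb\}$ subordinate to $\{U_i\}$. Each $\varphi_i$ is smooth, hence $\widetilde{\varphi_i}:\widetilde{B}\longrightarrow\rbb$ is continuous, and the defining conditions of a partition of unity---nonnegativity, local finiteness of the supports, and $\sum_i\varphi_i=1$---are statements about the underlying topological space and its functions, so they are preserved verbatim. Since $\mathrm{supp}\,\widetilde{\varphi_i}\subset\widetilde{U_i}$, the cover $\{\widetilde{U_i}\}$ is $\czero$-numerable, whence $\widetilde{\pi}$ is $\czero$-numerable and $\widetilde{\cdot}$ restricts to the claimed functor $(\mathsf{P}\dcal G/B)_{\mathrm{num}}\longrightarrow(\mathsf{P}\czero\widetilde{G}/\widetilde{B})_{\mathrm{num}}$.
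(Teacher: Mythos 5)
Your proposal is correct and follows essentially the same route as the paper's proof: the group/action structure passes through because $\widetilde{\cdot}$ preserves finite products, and the local trivializations pass through because $\widetilde{\cdot}$ sends open $\dcal$-embeddings to open $\czero$-embeddings while the defining pullback square is created at the level of underlying sets, with the $D$-topology of an open diffeological subspace agreeing with the subspace topology. Your explicit treatment of the numerable case (a smooth partition of unity is verbatim a continuous one on the same underlying space) merely spells out a step the paper leaves implicit, so there is nothing to correct.
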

We call a group in $\czero$ an {\sl arc-generated group}. In the proof of Lemma \ref{bdleforget}, it is also shown that if $G$ is a diffeological group, then $\widetilde{G}$ is an arc-generated group.
\begin{rem}\label{bdlerem}
	Lemma \ref{bdleforget} does not hold if we replace $\czero$ with $\tcal$. In fact, since $U:\mathcal{D}\longrightarrow \mathcal{T}$, which is the composite $\mathcal{D}\overset{\widetilde{\cdot}}{\longrightarrow} \mathcal{C}^0 \overset{I}{\longhookrightarrow} \mathcal{T}$, does not preserve finite products, the underlying topological space $UG$ of a diffeological group $G$ is not necessarily a topological group (see \cite[4.16-4.26]{KM} and Proposition \ref{dmfd}). However, suppose that one of the following conditions is satisfied:
	\begin{itemize}
		\item[$\rmi$]  $UG$ is locally compact.
		\item[$\rmii$] $UG$ and $UX$ satisfy the first axiom of countability.
	\end{itemize}
	Then, $UG$ is a topological group and the isomorphism $\mathsf{P} \czero \widetilde{G} / \widetilde{X} \cong \mathsf{P}\tcal UG / UX$, and hence the functor $U: \mathsf{P}\dcal G / X \longrightarrow \mathsf{P} \tcal UG / UX$ exists (see \cite[Lemma 2.8 and Proposition 2.9]{origin} and observe that every arc-generated space is locally arcwise connected).
\end{rem}
\subsection{Fiber bundles in $\ccal$}
We discuss the smoothing of continuous sections of fiber bundles in the next section. Thus, we give a precise definition of a fiber bundle in $\ccal$ and study fiber bundles, especially in $\mcal$ $(= \dcal,\:\czero)$ (see Section 1.4).
\begin{defn}\label{fiberbdle}
	An object $p: E \longrightarrow B$ of the overcategory $\ccal/B$ is called a fiber bundle if there exist an object $F$ of $\ccal$ and an open cover $\{ U_{i} \}$ of $B$ such that $E|_{U_{i}}:= p^{-1}(U_i) \cong U_{i} \times F$ in $\ccal / U_{i}$; the object $F$ is called the fiber of $p: E \longrightarrow B$ and $p$ is often called an $F$-bundle.\par
	An $F$-bundle $p:E\longrightarrow B$ is called $\ccal$-numerable if it has a trivialization open cover $\{U_i\}$ which is $\ccal$-numerable.
\end{defn}
\begin{lem}\label{bdlequotient}
	Let $p:E\longrightarrow B$ be a fiber bundle in $\ccal$ (with nonempty fiber). Then, $p$ is a $\ccal$-quotient map whose underlying continuous map $Up$ is a topological quotient map.
	\begin{proof}
		For any $x\in B$, there exist an open neighborhood $U$ and a $\ccal$-section $s$ on $U$. Thus, we can easily see that $p$ is a $\ccal$-quotient map and that $Up$ is a $\tcal$-quotient map.
	\end{proof}
\end{lem}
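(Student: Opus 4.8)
The plan is to reduce the whole statement to the existence of local $\ccal$-sections and then to invoke the locality of the morphism-detecting condition in $\ccal$. First I would fix a trivializing open cover $\{U_i\}$ of $B$ with isomorphisms $E|_{U_i}\cong U_i\times F$ in $\ccal/U_i$, choose a point $f_0\in F$ (possible since the fiber is nonempty), and let $s_i\colon U_i\longrightarrow E|_{U_i}$ be the $\ccal$-morphism corresponding to $(1_{U_i},\mathrm{const}_{f_0})$ under the trivialization. Since $E|_{U_i}=p^{-1}(U_i)$ is an open $\ccal$-subspace of $E$, postcomposing with the inclusion yields a $\ccal$-morphism $s_i\colon U_i\longrightarrow E$ with $p\circ s_i=\iota_i$, the inclusion $U_i\hookrightarrow B$. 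In particular $p$ is surjective.

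Next I would establish the universal property that pins down the final structure. Let $Z\in\ccal$ and let $g\colon B\longrightarrow Z$ be a set-theoretic map such that $g\circ p$ is a $\ccal$-morphism. On each $U_i$ one has $g|_{U_i}=(g\circ p)\circ s_i$, a composite of $\ccal$-morphisms (here $g\circ p$ is restricted to the open subspace $E|_{U_i}$), hence a $\ccal$-morphism $U_i\longrightarrow Z$. Because being a $\ccal$-morphism is a local condition for each of $\ccal=C^\infty,\dcal,\czero,\tcal$ — for $\dcal$ this follows from the locality axiom together with the continuity of plots into the underlying topological space, and for the others it is the familiar local nature of continuity or of smoothness checked in charts — the map $g$ itself is a $\ccal$-morphism; conversely, if $g$ is a morphism, so is $g\circ p$. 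Thus a set-map out of $B$ is a $\ccal$-morphism precisely when its precomposite with $p$ is one, i.e.\ the given $\ccal$-structure on $B$ is final for $p$. Since $p$ is moreover surjective, $p$ is a $\ccal$-quotient map.

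For the topological assertion I would run the same argument one level down. The underlying maps $Us_i\colon U_i\longrightarrow UE$ are continuous local sections of $Up$ over the open cover $\{U_i\}$ of $UB$, and $Up$ is surjective. Given a topological space $Z$ and a set map $g\colon UB\longrightarrow Z$ with $g\circ Up$ continuous, the identity $g|_{U_i}=(g\circ Up)\circ Us_i$ together with the locality of continuity forces $g$ to be continuous; hence $Up$ is a topological quotient map.

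The construction of $s_i$ and the composite identity $g|_{U_i}=(g\circ p)\circ s_i$ are routine. The one point requiring genuine care — and the main, if modest, obstacle — is verifying that the morphism-detecting condition is local uniformly across the four categories $\ccal$, in particular for $C^\infty$, which is neither complete nor cocomplete (Remark \ref{4properties}(1)); there one cannot appeal to a general final-structure construction but must read off finality directly from the universal property established through the local sections.
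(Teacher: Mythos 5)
Your proof is correct and takes essentially the same approach as the paper: the paper's proof consists precisely of producing a local $\ccal$-section over each trivializing open set and observing that the quotient properties follow, which is exactly the universal-property verification you spell out via $g|_{U_i}=(g\circ p)\circ s_i$ and locality of the morphism condition. Your extra care with $C^\infty$ (reading ``$\ccal$-quotient'' as finality of the given structure rather than invoking a general final-structure construction, which $C^\infty$ lacks) is the correct interpretation of the paper's ``we can easily see.''
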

In the rest of this subsection, we restrict ourselves to the case of $\ccal=\mcal$ $(=\dcal,\:\czero)$.
\begin{lem}\label{bdleforget2}
	The underlying topological space functor $\widetilde{\cdot}:\dcal\longrightarrow \czero$ sends an $F$-bundle in $\dcal$ to an $\widetilde{F}$-bundle in $\czero$.
	\begin{proof}
		For an object $p:E\longrightarrow B$ of $\ccal/B$, a local trivialization $E|_U \cong U\times F$ is identified with a pullback diagram in $\ccal$
		\[
		\begin{tikzcd}
		U\times F \arrow[hook]{r}{j} \arrow[swap]{d}{proj} & E \arrow{d}{p} \\ 
		U \arrow[hook]{r}{i} & B
		\end{tikzcd}
		\]
		such that $i$ is an open $\ccal$-embedding. Thus, we can prove the result by an argument similar to that in the proof of Lemma \ref{bdleforget}.
	\end{proof}
\end{lem}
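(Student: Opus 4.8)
The plan is to reduce the statement to the behaviour of $\widetilde{\cdot}$ on a single local trivialization, running the argument in the proof of Lemma \ref{bdleforget} but dropping the equivariance bookkeeping. Let $p:E\longrightarrow B$ be an $F$-bundle in $\dcal$ with trivializing open cover $\{U_i\}$ and trivializations $E|_{U_i}\cong U_i\times F$ over $U_i$. First I would record the structural fact to be transported, namely that each such trivialization is encoded by a pullback square in $\dcal$
\[
\begin{tikzcd}
U_i\times F \arrow[hook]{r}{j_i} \arrow[swap]{d}{proj} & E \arrow{d}{p}\\
U_i \arrow[hook]{r}{i_i} & B
\end{tikzcd}
\]
in which $i_i$ is an open $\dcal$-embedding and the upper-left corner is identified with $E|_{U_i}=p^{-1}(U_i)$.

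Next I would apply $\widetilde{\cdot}$ to this square and verify that it remains a pullback in $\czero$ exhibiting a local trivialization. Three ingredients make this work. First, $\widetilde{\cdot}$ preserves finite products (Remark \ref{suitable}(4)), so $\widetilde{U_i\times F}\cong\widetilde{U_i}\times\widetilde{F}$. Second, $\widetilde{\cdot}$ sends open $\dcal$-embeddings to open $\czero$-embeddings (cf.\ \cite[Lemma 3.18]{CSW}), so each $\widetilde{i_i}$ is an open $\czero$-embedding and hence $\{\widetilde{U_i}\}$ is an open cover of $\widetilde{B}$. Third, the underlying set functors create limits for both $\dcal$ and $\czero$ (Proposition \ref{conven}(1) and \cite[Proposition 2.6]{origin}), while $\widetilde{\cdot}$ is set-theoretically the identity; combined with the fact that \cite[Lemma 2.3]{origin} remains valid for $\czero$, this lets me argue that the pullback of an open embedding is computed as a restriction to the preimage in both categories and is therefore preserved by $\widetilde{\cdot}$. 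Consequently $\widetilde{p}^{-1}(\widetilde{U_i})$ coincides with $\widetilde{E|_{U_i}}$, and the applied square becomes a local trivialization $\widetilde{E}|_{\widetilde{U_i}}\cong\widetilde{U_i}\times\widetilde{F}$ over $\widetilde{U_i}$ in $\czero$, so that $\widetilde{p}:\widetilde{E}\longrightarrow\widetilde{B}$ is an $\widetilde{F}$-bundle.

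The step I expect to be the main obstacle is the compatibility in the third ingredient: showing that $\widetilde{\cdot}$ commutes with restriction to an open subset, i.e.\ that the $\czero$-subspace structure on $\widetilde{p}^{-1}(\widetilde{U_i})$ agrees with the underlying topological space of the $\dcal$-subspace $p^{-1}(U_i)$. The subtlety is that $\widetilde{\cdot}$ is a left adjoint and so need not preserve general limits; it is precisely the openness of $i_i$ that rescues the argument, since pullbacks along open embeddings reduce to forming subspaces on preimages, and $\widetilde{\cdot}$ respects both the formation of open subspaces and the identification of underlying sets. Once this compatibility is established, product preservation finishes the proof essentially formally, and in particular no total-space separatedness analysis (unlike the situation treated in Remark \ref{4properties}(3)) is required here.
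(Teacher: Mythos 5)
Your proposal is correct and follows essentially the same route as the paper: the paper's proof also identifies a local trivialization with a pullback square along an open $\ccal$-embedding and then invokes the argument of Lemma \ref{bdleforget}, which rests on exactly your three ingredients (product preservation of $\widetilde{\cdot}$, preservation of open embeddings, and creation of limits by the underlying set functors together with the $\czero$-analogue of \cite[Lemma 2.3]{origin} handling restriction to open subspaces). Your explicit discussion of why openness of $i_i$ rescues the limit-preservation issue, and the observation that no equivariance or separatedness analysis is needed here, merely make explicit what the paper leaves implicit.
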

Since Lemma \ref{bdleforget2} is an analogue of Lemma \ref{bdleforget}, statements analogous to those of Remark \ref{bdlerem} hold; the precise statements are left to the reader.\par
We endow the automorphism group ${\rm Aut}_{\mcal}(F)$ of an object $F$ of $\mcal$ with the initial structure for the two maps
\[
\begin{tikzcd}
{\rm Aut}_{\mcal}(F) \arrow[yshift = 2]{r}{{incl}} \arrow[swap, yshift = -2]{r}{{inv}} &  \mcal(F, F),
\end{tikzcd}
\]
where ${incl}(\varphi) = \varphi$ and ${inv}(\varphi) = \varphi^{-1}$.
\begin{lem}\label{auto}
	Let $\mcal$ denote one of the categories $\dcal$ and $\czero$ and let $F$ be an object of $\mcal$.
	\begin{itemize}
	\item[{\rm (1)}] There exists a bijection, natural in $A$,
	\[
		\mcal(A, {\rm Aut}_{\mcal}(F)) \cong {\rm Aut}_{\mcal / A} (A \times F).
	\]
	Hence, ${\rm Aut_{\mcal}}(F)$ is a group in $\mcal$.
	\item[{\rm (2)}] The isomorphism classes of principal ${\rm Aut}_{\mcal}(F)$-bundles over $X$ in $\mcal$ bijectively correspond to those of $F$-bundles over $X$ in $\mcal$.
	\end{itemize}
\end{lem}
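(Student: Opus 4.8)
The plan is to prove Part (1) by combining the cartesian closedness of $\mcal$ with the defining initial structure on ${\rm Aut}_{\mcal}(F)$, and then to deduce Part (2) from the standard transition-cocycle classification, using Part (1) to convert the fibrewise gluing data of an $F$-bundle into ${\rm Aut}_{\mcal}(F)$-valued transition functions. First I would observe that an endomorphism of $A\times F$ in $\mcal/A$ necessarily has first component $proj_{A}$, hence is determined by its second component $A\times F\longrightarrow F$; since $\mcal$ is cartesian closed (Proposition \ref{conven}(2), Remark \ref{suitable}(2)) these correspond bijectively and naturally to morphisms $A\longrightarrow \mcal(F,F)$, giving
\[
	\mcal(A,\mcal(F,F))\cong {\rm End}_{\mcal/A}(A\times F).
\]
It then remains to match, on the left, the morphisms factoring through ${\rm Aut}_{\mcal}(F)$ with, on the right, the automorphisms. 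Given $\bar f:A\longrightarrow {\rm Aut}_{\mcal}(F)$, the morphisms $incl\circ\bar f$ and $inv\circ\bar f$ define endomorphisms $\phi,\psi$ of $A\times F$ over $A$, and as $\mcal$ is faithful over $Set$, the fibrewise identities $f(a)f(a)^{-1}=1_{F}=f(a)^{-1}f(a)$ force $\phi\psi=\psi\phi=1$, so $\phi$ is an automorphism.

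Conversely, if $\phi$ is an automorphism of $A\times F$ in $\mcal/A$ inducing $f:A\longrightarrow\mcal(F,F)$, then its inverse $\psi$ is a genuine morphism of $\mcal$ and induces some $h:A\longrightarrow\mcal(F,F)$ with $h(a)=f(a)^{-1}$ for every $a$. The one subtle point is that the set map $a\mapsto f(a)$, which clearly lands in ${\rm Aut}_{\mcal}(F)$, is a morphism of $\mcal$: because ${\rm Aut}_{\mcal}(F)$ carries the initial structure for the pair $incl,inv:{\rm Aut}_{\mcal}(F)\longrightarrow\mcal(F,F)$, this holds precisely because both composites $incl\circ(\cdot)=f$ and $inv\circ(\cdot)=h$ are morphisms of $\mcal$ --- exactly what we have just arranged. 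This is where defining ${\rm Aut}_{\mcal}(F)$ by means of both $incl$ and $inv$ is indispensable. The resulting bijection $\mcal(A,{\rm Aut}_{\mcal}(F))\cong{\rm Aut}_{\mcal/A}(A\times F)$ is natural in $A$ --- pullback along $\alpha:A'\to A$ on the right corresponds to precomposition on the left, and pullback is a group homomorphism on automorphism groups --- so the right-hand side is a group functorially in $A$, and the Yoneda lemma promotes ${\rm Aut}_{\mcal}(F)$ to a group object of $\mcal$.

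For Part (2), set $G={\rm Aut}_{\mcal}(F)$, which acts on $F$ through the restriction of the evaluation $\mcal(F,F)\times F\longrightarrow F$. I would show both sides are classified by the same non-abelian cohomology. By the transition-function description of bundles (Remark \ref{4properties}(2)), a principal $G$-bundle over $X$ trivialised over $\{U_{i}\}$ is determined up to isomorphism by a cocycle $g_{ij}:U_{i}\cap U_{j}\longrightarrow G$, two cocycles giving isomorphic bundles if and only if they are cohomologous; taking the colimit over refinements identifies isomorphism classes with $\check{H}^{1}(X;G)$. For an $F$-bundle, the gluing over $U_{i}\cap U_{j}$ is an automorphism of $(U_{i}\cap U_{j})\times F$ in $\mcal/(U_{i}\cap U_{j})$, which by Part (1) is exactly a morphism $g_{ij}:U_{i}\cap U_{j}\longrightarrow G$; the cocycle and coboundary conditions coincide with the principal case, so isomorphism classes of $F$-bundles also correspond to $\check{H}^{1}(X;G)$. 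Composing the two bijections yields the assertion, realised bundle-wise by the associated-bundle construction $P\mapsto P\times_{G}F$.

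The hard part will be the converse direction of Part (1): showing that the pointwise map $A\to{\rm Aut}_{\mcal}(F)$ is a genuine morphism of $\mcal$, which forces one to produce the fibrewise inverses as a single morphism $h$ and to invoke the initial structure for both $incl$ and $inv$. In Part (2) the only real care needed is the bookkeeping of common refinements and coboundaries in the non-abelian \v{C}ech complex, which becomes formal once Part (1) supplies the dictionary between fibrewise gluings and $G$-valued functions.
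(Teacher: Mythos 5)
Your proof is correct and follows essentially the same route as the paper: your Part (1) is exactly the paper's (much terser) observation that, by cartesian closedness and the initial structure defined by \emph{both} ${incl}$ and ${inv}$, a set map $\varphi: A \longrightarrow {\rm Aut}_{\mcal}(F)$ is a morphism of $\mcal$ if and only if $\hat{\varphi} = \coprod_{a \in A} \varphi(a)$ is an isomorphism of $\mcal/A$, and your Part (2) is the paper's appeal to the transition-function description of fiber bundles with structure group ${\rm Aut}_{\mcal}(F)$, with your non-abelian \v{C}ech bookkeeping merely making that citation explicit. In particular, your careful treatment of the converse direction of (1) --- producing the pointwise-inverse morphism $h$ and invoking initiality for the pair $({incl}, {inv})$ --- is precisely the point that the paper's one-line proof of (1) is implicitly relying on.
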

\begin{proof} 
	(1) From the definition of ${\rm Aut}_\mcal(F)$, we can easily observe that a map $\varphi: A \longrightarrow {\rm Aut}_{\mcal}(F)$ is a morphism of $\mcal$ if and only if $\hat{\varphi}: = \underset{a \in A}{\coprod} \varphi(a): A \times F \longrightarrow A \times F$ is an isomorphism of $\mcal / A$. This implies the first statement, from which the second statement follows.\par
	(2) Recall the notion of a fiber bundle with structure group $G$ and the description of such a bundle using transition functions (see, eg, \cite[Chapter 2]{AP}), and note that they also apply to the category $\mcal$. Since an $F$-bundle in $\mcal$ is regarded as an $F$-bundle with structure group ${\rm Aut}_\mcal(F)$ (Part 1), the result holds.
\end{proof}
We can construct an equivalence of categories which induces the bijection between the sets of isomorphism classes in Lemma \ref{auto}(2); see Lemma \ref{equivar}(1) and see also \cite[Theorem 6.1]{CW17} for the case of $\mcal = \dcal$.\par
We end this subsection with the following remark.
\begin{rem}\label{notloctriv}
	Though we are mainly concerned with numerable locally trivial bundles (Definitions \ref{bdle} and \ref{fiberbdle}), we also mention other important objects of $\dcal/B$.\par
	Iglesias-Zemmour introduced weaker notions of a fiber bundle and a principal bundle in $\dcal$, which are defined by local triviality of the pullback along any plot (\cite[Chapter 8]{IZ}). However, no analogue exists in $C^\infty$, $\ccal^0$, or $\mathcal{T}$ and in addition, such bundles are difficult to deal with homotopically, as shown in \cite[Section 3]{CW17} using results of \cite{Wu,CW19}. Thus, we deal with locally trivial bundles under numerability conditions (see Section 5.4).\par
	Christensen-Wu \cite{CW16a, CW16b} introduced the tangent bundle $TB$ of a diffeological space $B$ as a vector space in $\dcal/B$. But, $TB$ need not be even a fiber bundle in the weaker sense.
\end{rem}
\subsection{Smoothing of principal bundles}
In this subsection, we establish a smoothing theorem for principal bundles (Theorem \ref{dbdlesmoothing}). We begin by making a brief review on the classification of principal bundles in $\mcal$ ($=\dcal$, $\czero$).\par
Milnor \cite{Milnor56} constructed the universal principal bundle $\pi:EG\longrightarrow BG$ for a topological group $G$ (see \cite[Chapter 4]{Hwse}). Since the construction also works in the category $\mathcal{C}^0$, we have the universal principal bundle $\pi:EG\longrightarrow BG$ for an arc-generated group $G$.\par
Magnot-Watts \cite{MJ} and Christensen-Wu \cite{CW17} constructed the universal principal bundles for a diffeological group $G$, which have the same underlying set and sleightly different diffeologies. Since Christensen-Wu's one $\pi:EG\longrightarrow BG$ classifies $\dcal$-numerable principal $G$-bundles without any assumption on the base, we adopt theirs as the universal principal bundle for $G$.
\par\indent
For an essentially small category $\acal$, $K\acal$ denotes the set of isomorphism classes of objects of $\acal$.
\begin{prop}\label{prop6.6}
	\begin{itemize}
		\item[{\rm(1)}] Let $X$ be an arc-generated space and $G$ an arc-generated group. Then, there exists a natural bijection
		\[
		K(\mathsf{P}\czero G/X)_{\rm num} \underset{\cong}{\longrightarrow} [X,\:BG]_{\czero}.
		\]
		\item[{\rm (2)}] Let $X$ be a diffeological space and $G$ a diffeological group. Then, there exists a natural bijection
		\[
		K(\mathsf{P}\dcal G/X)_{\rm num} \underset{\cong}{\longrightarrow} [X,\: BG]_{\mathcal{D}}.
		\]
		\item[{\rm (3)}] Let $G$ be a diffeological group. Then, $\widetilde{\pi}: \widetilde{EG} \longrightarrow \widetilde{BG}$ is a universal principal $\widetilde{G}$-bundle.
	\end{itemize}
\end{prop}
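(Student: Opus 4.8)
The plan is to deduce part (3) from the same numerable-bundle classification principle that proves part (1), namely that a $\czero$-numerable principal $\widetilde{G}$-bundle is universal as soon as its total space is $\czero$-contractible. Accordingly I would split the task into two checks: that $\widetilde{\pi}:\widetilde{EG}\longrightarrow\widetilde{BG}$ is again a $\czero$-numerable principal $\widetilde{G}$-bundle, and that $\widetilde{EG}$ is $\czero$-contractible; once both hold, the classification argument of part (1) applies verbatim in $\czero$.

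First I would record that $\widetilde{\pi}$ is a $\czero$-numerable principal $\widetilde{G}$-bundle. Christensen--Wu's bundle $\pi:EG\longrightarrow BG$ is a $\dcal$-numerable principal $G$-bundle, so Lemma \ref{bdleforget} applies directly: the underlying topological space functor $\widetilde{\cdot}:\dcal\longrightarrow\czero$ carries the object $\pi:EG\longrightarrow BG$ of $(\mathsf{P}\dcal G/BG)_{\rm num}$ to the object $\widetilde{\pi}:\widetilde{EG}\longrightarrow\widetilde{BG}$ of $(\mathsf{P}\czero\widetilde{G}/\widetilde{BG})_{\rm num}$ (recall also from the comment after Lemma \ref{bdleforget} that $\widetilde{G}$ is an arc-generated group).

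Next I would establish that $\widetilde{EG}$ is $\czero$-contractible. The total space $EG$ of the Christensen--Wu bundle is $\dcal$-contractible; this is intrinsic to the join construction producing the universal bundle and is exactly what underlies the classification of $\dcal$-numerable bundles in part (2). The functor $\widetilde{\cdot}:\dcal\longrightarrow\czero$ preserves finite products (Remark \ref{suitable}(4)) and satisfies $\widetilde{I}=I$, so it sends $\dcal$-homotopies to $\czero$-homotopies; in particular it transports a $\dcal$-contraction of $EG$ to a $\czero$-contraction of $\widetilde{EG}$. Hence $\widetilde{EG}$ is $\czero$-contractible.

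Finally, with $\widetilde{\pi}$ a $\czero$-numerable principal $\widetilde{G}$-bundle whose total space $\widetilde{EG}$ is $\czero$-contractible, the classical theory of numerable principal bundles (the Dold--tom Dieck classification; see \cite[Chapter 4]{Hwse}), run in $\czero$ exactly as for part (1), yields the natural bijection $K(\mathsf{P}\czero\widetilde{G}/X)_{\rm num}\cong[X,\widetilde{BG}]_{\czero}$, so that $\widetilde{\pi}$ is a universal principal $\widetilde{G}$-bundle. The main obstacle is the contractibility step: one must ensure the $\dcal$-contraction of $EG$ is a genuine $\dcal$-homotopy rather than merely a weak equivalence, since the classification over an arbitrary base $X$ requires honest contractibility of the total space; this is guaranteed by the explicit smooth contraction of the infinite join $EG$, which $\widetilde{\cdot}$ then carries into $\czero$.
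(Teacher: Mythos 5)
Your proposal is correct and follows essentially the same route as the paper: apply Lemma \ref{bdleforget} to see that $\widetilde{\pi}:\widetilde{EG}\longrightarrow\widetilde{BG}$ is a $\czero$-numerable principal $\widetilde{G}$-bundle, transport the $\dcal$-contractibility of $EG$ (the paper cites \cite[Corollary 5.5]{CW17}) to $\czero$-contractibility of $\widetilde{EG}$ via the product-preserving functor $\widetilde{\cdot}$, and conclude universality from the arc-generated version of Dold's criterion \cite[Theorem 7.5]{Dold}. The only cosmetic difference is that you phrase the last step as rerunning the classification of part (1), whereas the paper invokes Dold's contractible-total-space criterion directly; these are interchangeable.
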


\begin{proof}
	\begin{itemize}
		\item[(1)] The relevant arguments in \cite[Chapter 4]{Hwse} also apply to the arc-generated case.
		\item[(2)] See \cite[Theorem 5.10]{CW17}.
		\item[(3)] By Lemma \ref{bdleforget}, $\widetilde{\pi}: \widetilde{EG} \longrightarrow \widetilde{BG}$ is a $\czero$-numerable principal $\widetilde{G}$-bundle in $\czero$. Since $EG$ is $\dcal$-contractible (\cite[Corollary 5.5]{CW17}), $\widetilde{EG}$ is contractible, which implies the universality of $\widetilde{\pi}$ by the arc-generated version of \cite[Theorem 7.5]{Dold}.\qedhere
	\end{itemize}
\end{proof}
Recall the model structures of $\dcal$ and $\ccal^0$ from Sections 2.3-2.4 and the subclass $\vcal_\dcal$ from Section 1.3.
\begin{cor}\label{bdlecpx}
	\begin{itemize}
	\item[{\rm (1)}] Let $\mcal$ be one of the categories $\dcal$ and $\czero$. Then, every fiber bundle in $\mcal$ is a fibration with respect to the standard model structure of $\mcal$.
	\item[{\rm (2)}] Let $\pi:E\longrightarrow B$ be an $F$-bundle in $\mathcal{D}$. If two of $F$, $E$, and $B$ are in $\mathcal{V}_\mathcal{D}$, then so is the third.
	\end{itemize}
\end{cor}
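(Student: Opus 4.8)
The plan is to prove part (1) first and then derive part (2) from it via the long exact homotopy sequence of a fibration.

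For part (1), I start from the definition of a fibration in $\mcal$ as a map with the right lifting property against the horn inclusions $\Lambda^p_k \longhookrightarrow \Delta^p$ (Definition \ref{WFC}; the $\czero$-analogue is recalled in Remark \ref{arc}). Given a lifting square with top edge $a:\Lambda^p_k \to E$ and bottom edge $b:\Delta^p \to B$, I replace it by the pullback $F$-bundle $q: b^{\ast}E \to \Delta^p$; a lift $\Delta^p \to E$ is then exactly an extension to $\Delta^p$ of the section of $q$ over $\Lambda^p_k$ determined by $a$. The key point is that $b^{\ast}E$ is trivial. Indeed, $\Delta^p$ is $\mcal$-paracompact: for $\mcal=\czero$ because its underlying space is the compact (hence paracompact) topological simplex, and for $\mcal=\dcal$ because the smooth identity $id:\Delta^p \to \Delta^p_{\sub}$ of Lemma \ref{simplex}(3) lets one pull back to $\Delta^p$ a smooth partition of unity from the finite-dimensional manifold-with-corners $\Delta^p_{\sub}$, with unchanged supports. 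Hence the trivializing cover $\{b^{-1}(U_i)\}$ of $q$ is $\mcal$-numerable. Since $\Delta^p$ is $\mcal$-contractible (Lemma \ref{simplex}(1)), the classification of numerable bundles (Proposition \ref{prop6.6}, applied to the group $\mathrm{Aut}_{\mcal}(F)$ via Lemma \ref{auto}) forces $b^{\ast}E \cong \Delta^p \times F$, because $[\Delta^p, B\,\mathrm{Aut}_{\mcal}(F)] \cong [\ast, B\,\mathrm{Aut}_{\mcal}(F)]$ is a single element. Finally, using that $\Lambda^p_k$ is an $\mcal$-deformation retract of $\Delta^p$ (Proposition \ref{axioms}, Axiom 4, and its classical $\czero$-analogue), I extend the section over $\Lambda^p_k$ by composing with a retraction $r:\Delta^p \to \Lambda^p_k$, which produces the desired lift.

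For part (2), the idea is to compare the smooth and the topological homotopy ladders. By part (1) the projection $\pi:E\to B$ is a fibration in $\dcal$, and by Lemma \ref{bdleforget2} together with part (1) for $\czero$, $\widetilde{\pi}:\widetilde{E}\to\widetilde{B}$ is a fibration in $\czero$ with fiber $\widetilde{F}$. Since $S^\dcal$ and $S$ are right adjoints (Proposition \ref{adjoint1}) that preserve fibrations (Lemma \ref{Quillenpairs}) and preserve the fibers (as limits), $S^\dcal\pi$ and $S\widetilde{\pi}$ are Kan fibrations of Kan complexes (Theorem \ref{originmain}, Remark \ref{arc}) with fibers $S^\dcal F$ and $S\widetilde{F}$. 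The natural inclusions $S^\dcal(-)\hookrightarrow S\widetilde{(-)}$ then give a map from the long exact homotopy sequence of $S^\dcal F \to S^\dcal E \to S^\dcal B$ to that of $S\widetilde{F}\to S\widetilde{E}\to S\widetilde{B}$. Under the identifications $\pi^\dcal_p \cong \pi_p S^\dcal$ and $\pi_p \cong \pi_p S$ (Theorem \ref{homotopygp}), the columns of this ladder are precisely the comparison maps whose bijectivity at every basepoint characterizes membership in $\vcal_\dcal$ (Corollary \ref{W}(3)). A five-lemma argument along the ladder then yields each of the three implications: if the $F$- and $E$-columns are isomorphisms so is the $B$-column, and similarly for the other two cases.

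The main obstacle I anticipate is the low-degree bookkeeping in the five-lemma step of part (2): the fibration sequences involve $\pi_1$ (nonabelian) and $\pi_0$ (pointed sets), so the argument must be run in the categories of groups and of pointed sets, and the comparison has to be checked at every basepoint of $F$, $E$, and $B$. I handle this using surjectivity of the bundle projection and the fact that every fiber is isomorphic to $F$: fixing a basepoint $e\in E$ induces compatible basepoints $\pi(e)\in B$ and $e\in\pi^{-1}(\pi(e))\cong F$, and letting $e$ range over $E$ reaches all relevant basepoints. In part (1), by contrast, the only substantive input is the $\dcal$-paracompactness of the nonstandard simplex $\Delta^p$, which I reduce to that of $\Delta^p_{\sub}$ through Lemma \ref{simplex}(3); once the pullback bundle is recognized as trivial, the remaining steps are formal.
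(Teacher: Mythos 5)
Your proposal is correct, and its substantive content coincides with the paper's proof: for part (1), both arguments reduce everything to the triviality of an arbitrary $F$-bundle over $\Delta^p$, established exactly as you do --- smooth paracompactness of $\Delta^p$ transferred from the coarser smooth structure via Lemma \ref{simplex}(3) (the paper uses the closed topological embedding $\Delta^p \longhookrightarrow \abb^p$ into the smoothly paracompact affine space, which is the same mechanism as your pullback of partitions of unity from $\Delta^p_{\sub}$), hence $\dcal$-numerability, then $\dcal$-contractibility of $\Delta^p$ combined with Lemma \ref{auto}(2) and Proposition \ref{prop6.6}; and your part (2) --- the map of long exact homotopy sequences of $S^{\dcal}F \to S^{\dcal}E \to S^{\dcal}B$ into $S\widetilde{F} \to S\widetilde{E} \to S\widetilde{B}$, the five lemma in groups and pointed sets, and the basepoint bookkeeping via surjectivity of $\pi$ --- is precisely what the paper compresses into ``from which we can easily see the result,'' with your explicit treatment of $\pi_1$ and $\pi_0$ being a welcome expansion. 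The one genuine divergence is the final step of part (1): the paper converts triviality over simplices into the statement that $S^{\mcal}\pi$ is a simplicial fiber bundle with Kan fiber and then invokes the characterization of fibrations through the singular functor (\cite[Lemma 9.6(1)]{origin}), whereas you solve the horn-lifting problem directly, which is more elementary and avoids the simplicial detour by using Axiom 4 instead. One phrase there should be tightened: composing the section over $\Lambda^p_k$ with the retraction $r:\Delta^p \longrightarrow \Lambda^p_k$ does not itself produce a section of $q:b^{\ast}E \longrightarrow \Delta^p$ (the composite covers $r$, not the identity); what works, and what your trivialized setup clearly intends, is to compose the $F$-component under $b^{\ast}E \cong \Delta^p \times F$ with $r$, i.e.\ to extend $x \mapsto (x, f(x))$ to $x \mapsto (x, f(r(x)))$.
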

\begin{proof}$\rmone$ {\sl The case of $\mcal = \dcal$.} We have only to show that for an $F$-bundle $\pi: E \longrightarrow B$ in $\dcal$, $S^{\dcal}\pi : S^{\dcal}E \longrightarrow S^{\dcal}B$ is an $S^{\dcal}F$-bundle in $\scal$ (see \cite[Lemma 9.6(1)]{origin}). The affine space $\abb^p=\{(x_0,\ldots,x_p)\in\mathbb{R}^{p+1}|\sum x_i=1\}$ is smoothly paracompact (\cite[Theorem 7.3]{BJ}). Since the inclusion $\Delta^p \longhookrightarrow \abb^p$ is a smooth map and a closed topological embedding (Lemma \ref{simplex}(3) and Proposition \ref{axioms}), $\Delta^{p}$ is also smoothly paracompact. Thus, every $F$-bundle over $\Delta^p$ is necessarily $\dcal$-numerable, and hence trivial (see Lemmas \ref{simplex}(1) and \ref{auto}(2), Proposition \ref{prop6.6}(2)). From this, we see that the pullback of $S^{\dcal}p$ along any map $\Delta[p] \longrightarrow S^{\dcal} B$ is isomorphic to $\Delta[p] \times S^{\dcal}F$ over $\Delta[p]$.\vspace{2mm}
\par\noindent
{\sl The case of $\mcal = \czero$.} We have only to show that for an $F$-bundle $\pi: E \longrightarrow B$ in $\czero$, $S\pi : SE \longrightarrow SB$ is an $S F$-bundle in $\scal$. Since the topological standard $p$-simplex $\Delta^p_{\rm top}$ is paracompact and contractible, an argument similar to that in the case of $\mcal = \dcal$ applies.\par
$\rmtwo$ Recall that $\widetilde{\pi}: \widetilde{E} \longrightarrow \widetilde{B}$ is an $\widetilde{F}$-bundle (Lemma \ref{bdleforget2}). Then we have the morphism of simplicial fiber bundles
\[
\begin{tikzcd}
S^{\dcal}E \arrow[hook]{r} \arrow[swap]{d}{S^{\dcal}\pi} & S\widetilde{E} \arrow{d}{S\widetilde{\pi}} \\ 
S^{\dcal} B \arrow[hook]{r} & S\widetilde{B}
\end{tikzcd}
\]
(Part 1), from which we can easily see the result.
\end{proof}
\begin{cor}\label{BV}
	Let $G$ be a diffeological group. Then, $G$ is in $\vcal_{\dcal}$ if and only if $BG$ is in $\vcal_{\dcal}$.
\end{cor}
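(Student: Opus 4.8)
The plan is to exploit the universal bundle $\pi:EG\longrightarrow BG$ together with the two-out-of-three property for the class $\vcal_{\dcal}$ along a fiber bundle (Corollary \ref{bdlecpx}(2)). First I would observe that, upon forgetting the $G$-action, the universal principal $G$-bundle $\pi:EG\longrightarrow BG$ adopted here (Christensen-Wu's; see Proposition \ref{prop6.6}) is in particular a $G$-bundle in $\dcal$, i.e., a fiber bundle with fiber $G$ in the sense of Definition \ref{fiberbdle}: a local trivialization $EG|_U\cong U\times G$ in $\dcal G/U$ is a fortiori a local trivialization in $\dcal/U$. Thus Corollary \ref{bdlecpx}(2) applies to $\pi$ with $F=G$, $E=EG$, and $B=BG$.

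Next I would verify that the total space $EG$ always lies in $\vcal_{\dcal}$, independently of any hypothesis on $G$. Since $EG$ is $\dcal$-contractible (\cite[Corollary 5.5]{CW17}, as used in the proof of Proposition \ref{prop6.6}(3)), it has the $\dcal$-homotopy type of the terminal object $\ast$; as $\ast=|\Delta[0]|_{\dcal}$ is cofibrant, $EG$ is in $\wcal_{\dcal}$, and hence in $\vcal_{\dcal}$ by Corollary \ref{W}(2).

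Finally I would apply Corollary \ref{bdlecpx}(2) to the $G$-bundle $\pi:EG\longrightarrow BG$. Since any two of the three spaces $G$ (the fiber), $EG$ (the total space), and $BG$ (the base) being in $\vcal_{\dcal}$ force the third, and $EG\in\vcal_{\dcal}$ is already established, the remaining two entries satisfy $G\in\vcal_{\dcal}$ if and only if $BG\in\vcal_{\dcal}$, which is exactly the claimed equivalence.

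I do not expect a serious obstacle: the statement is a formal consequence of the two-out-of-three property once the contractibility of $EG$ is seen to place $EG$ in $\vcal_{\dcal}$. The only points requiring a little care are the identification of the universal bundle as an honest fiber bundle with fiber $G$ in $\dcal$ (so that Corollary \ref{bdlecpx}(2) is applicable) and the passage ``$\dcal$-contractible $\Rightarrow$ membership in $\wcal_{\dcal}$,'' which rests on the cofibrancy of the terminal object.
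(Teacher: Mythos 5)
Your proposal is correct and follows essentially the same route as the paper: both apply the two-out-of-three property of Corollary \ref{bdlecpx}(2) to the universal bundle $\pi:EG\longrightarrow BG$ after noting that the $\dcal$-contractibility of $EG$ places it in $\vcal_{\dcal}$. The only cosmetic difference is that the paper verifies $EG\in\vcal_{\dcal}$ directly from $S^{\dcal}EG\simeq S\widetilde{EG}\simeq\ast$ (via Corollary \ref{W}(3)), whereas you route through $EG\in\wcal_{\dcal}$ and Corollary \ref{W}(2); both are valid.
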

\begin{proof}
	Consider the universal principal $G$-bundle $\pi : EG \longrightarrow BG$ and recall that $\widetilde{\pi}:\widetilde{EG}\longrightarrow \widetilde{BG}$ is the universal principal $\widetilde{G}$-bundle (Proposition \ref{prop6.6}(3)). Since $S^{\dcal}EG \simeq S\widetilde{EG} \simeq \ast$ (see the proof of Proposition \ref{prop6.6}(3)), the result easily follows from Corollary \ref{bdlecpx}.
\end{proof}
Now, we can prove the following theorem.
\begin{thm}\label{dbdlesmoothing}
	Let $X$ be a diffeological space and $G$ a diffeological group. If $X$ is in $\wcal_\mathcal{D}$ and $G$ is in $\vcal_\mathcal{D}$, then the functor $\widetilde{\cdot}:\mathcal{D}\longrightarrow\mathcal{C}^0$ induces the bijection
	\[
	K(\mathsf{P}\mathcal{D} G/X)_{\rm num} \underset{\cong}{\longrightarrow} K(\mathsf{P}\czero {\widetilde{G}}/\widetilde{X})_{\rm num}.
	\]
\end{thm}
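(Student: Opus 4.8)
The plan is to reduce both sides to homotopy classes of classifying maps and then invoke Theorem \ref{dmapsmoothing}. Concretely, I would establish the commutative square
\[
\begin{tikzcd}
K(\mathsf{P}\dcal G/X)_{\rm num} \arrow[r, "\cong"] \arrow[d, "\widetilde{\cdot}"'] & {[X,BG]_\dcal} \arrow[d, "\widetilde{\cdot}"] \\
K(\mathsf{P}\czero\widetilde{G}/\widetilde{X})_{\rm num} \arrow[r, "\cong"] & {[\widetilde{X},\widetilde{BG}]_\czero}
\end{tikzcd}
\]
in which the horizontal arrows are the classification bijections for numerable principal bundles, the left vertical arrow is the map induced by $\widetilde{\cdot}$ whose bijectivity we want, and the right vertical arrow sends $[f]_\dcal$ to $[\widetilde{f}]_\czero$. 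Granting commutativity, the theorem is immediate once the right vertical arrow is known to be a bijection.

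First I would set up the horizontal arrows. The top one is the bijection $K(\mathsf{P}\dcal G/X)_{\rm num}\cong[X,BG]_\dcal$ of Proposition \ref{prop6.6}(2), sending the class of a bundle $P$ to the class of a classifying map $f$ with $P\cong f^{\ast}EG$, where $\pi:EG\longrightarrow BG$ is the universal principal $G$-bundle in $\dcal$. For the bottom arrow, Proposition \ref{prop6.6}(3) asserts that $\widetilde{\pi}:\widetilde{EG}\longrightarrow\widetilde{BG}$ is a universal principal $\widetilde{G}$-bundle in $\czero$; its universality then yields the bijection $K(\mathsf{P}\czero\widetilde{G}/\widetilde{X})_{\rm num}\cong[\widetilde{X},\widetilde{BG}]_\czero$ via pullback of $\widetilde{\pi}$. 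Thus both horizontal arrows are realized by pulling back the respective universal bundles.

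The crux, and the step I expect to be the main obstacle, is the commutativity of the square, i.e. that the underlying $\widetilde{G}$-bundle $\widetilde{P}$ of $P\cong f^{\ast}EG$ is classified by $\widetilde{f}$. Equivalently, I must show $\widetilde{f^{\ast}EG}\cong\widetilde{f}^{\ast}\widetilde{EG}$ as principal $\widetilde{G}$-bundles over $\widetilde{X}$. This is delicate because $\widetilde{\cdot}$ is a left adjoint and need not preserve the pullback $f^{\ast}EG=X\times_{BG}EG$. I would argue conceptually: the canonical $G$-equivariant projection $f^{\ast}EG\longrightarrow EG$ covers $f$, so applying the functor $\widetilde{\cdot}$ yields a $\widetilde{G}$-equivariant map $\widetilde{f^{\ast}EG}\longrightarrow\widetilde{EG}$ covering $\widetilde{f}$. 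By Lemma \ref{bdleforget}, $\widetilde{f^{\ast}EG}$ is a principal $\widetilde{G}$-bundle over $\widetilde{X}$, and the universal property of the pullback $\widetilde{f}^{\ast}\widetilde{EG}=\widetilde{X}\times_{\widetilde{BG}}\widetilde{EG}$ in $\czero$ then produces a $\widetilde{G}$-equivariant map $\widetilde{f^{\ast}EG}\longrightarrow\widetilde{f}^{\ast}\widetilde{EG}$ over $\widetilde{X}$. Since $\mathsf{P}\czero\widetilde{G}/\widetilde{X}$ is a groupoid (Lemma \ref{gpd}), this morphism is an isomorphism, which is precisely the required compatibility.

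Finally I would treat the right vertical arrow. Since $\pi_0\,\sdcal\dcal(X,BG)=[X,BG]_\dcal$ and $\pi_0\,S\czero(\widetilde{X},\widetilde{BG})=[\widetilde{X},\widetilde{BG}]_\czero$, this arrow is $\pi_0$ applied to the natural inclusion $\sdcal\dcal(X,BG)\longhookrightarrow S\czero(\widetilde{X},\widetilde{BG})$. Because $G$ is in $\vcal_\dcal$, Corollary \ref{BV} gives $BG\in\vcal_\dcal$; as $X\in\wcal_\dcal$, Theorem \ref{dmapsmoothing} shows this inclusion is a weak equivalence, so its $\pi_0$ is a bijection. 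Combining the commutative square with the bijectivity of the two horizontal arrows and of the right vertical arrow forces the left vertical arrow, which is the map induced by $\widetilde{\cdot}$, to be a bijection, completing the proof.
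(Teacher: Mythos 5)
Your proposal is correct and takes essentially the same route as the paper, whose proof simply identifies the map $K(\mathsf{P}\dcal G/X)_{\rm num} \longrightarrow K(\mathsf{P}\czero\widetilde{G}/\widetilde{X})_{\rm num}$ with $[X,BG]_\dcal \longrightarrow [\widetilde{X},\widetilde{BG}]_\czero$ via Proposition \ref{prop6.6} and then cites Theorem \ref{dmapsmoothing} together with Corollary \ref{BV}. Your explicit verification of the square's commutativity --- producing the comparison map $\widetilde{f^{\ast}EG}\longrightarrow\widetilde{f}^{\ast}\widetilde{EG}$ and upgrading it to an isomorphism via the groupoid property of Lemma \ref{gpd} --- correctly fills in the compatibility that the paper leaves implicit in the phrase ``is identified with.''
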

\begin{proof} By Proposition \ref{prop6.6}, the natural map $K(\mathsf{P}\dcal G/ X)_{\rm num} \longrightarrow K(\mathsf{P}\czero {\widetilde{G}}/ \widetilde{X})_{\rm num}$ is identified with
	\[
	[X, BG]_{\dcal} \longrightarrow [\widetilde{X}, \widetilde{BG}]_{\czero}.
	\]
Thus, we see that this map is bijective from Theorem \ref{dmapsmoothing} and Corollary \ref{BV}.
\end{proof}
\begin{rem}\label{bdleparacpt}
	In addition to the assumptions in Theorem \ref{dbdlesmoothing}, assume that $X$ is $\dcal$-paracompact. Then, we have the natural bijection
	\[
	K(\mathsf{P}\dcal G/X) \xrightarrow[\ \ \ \cong\ \ \ ]{} K(\mathsf{P}\czero {\widetilde{G}}/\widetilde{X}).
	\]
\end{rem}
%%%%%%%%%%%%%%%
\if0
\begin{proof}[Proof of Theorem 1.7.]
	By Proposition \ref{prop6.6}, we have only to show that $[B,\:BG]_\mathcal{D}$ is naturally isomorphic to $[\widetilde{B},\:B\widetilde{G}]_{\mathcal{C}^0}$. Consider the diagram
	\[
		[B,\:BG]_\mathcal{D} \overset{q_\sharp}{\longleftarrow} [B,\:QBG]_\mathcal{D} \overset{\widetilde{\cdot}}{\longrightarrow} [\widetilde{B},\:\widetilde{QBG}]_{\czero}\overset{\gamma_\sharp}{\longrightarrow} [\widetilde{B},\:B\widetilde{G}]_\czero, \eqno(5.2)
	\]
	where $q:QBG\longrightarrow BG$ is a trivial fibration which is a cofibrant approximation to $BG$ and $\gamma:\widetilde{QBG}\longrightarrow B\widetilde{G}$ is a classifying map of the principal $\widetilde{G}$-bundle $\widetilde{\pi}':\widetilde{q^*EG}\longrightarrow \widetilde{QBG}$ (Lemma 6.4).\par
	Let us show that the three maps in (5.2) are bijective. 
	\par\indent
	To see that $q_\sharp$ is bijective, we may assume that $B$ is cofibrant. Then, the bijectivity of $q_\sharp$ is immediate from Theorem 5.2.\par
	The bijectivity of $\widetilde{\cdot}$ is immediate from Theorem 6.6. 
	\par\indent
	To see that $\gamma_\sharp$ is bijective, consider the morphism of principal bundles in $\scal$
\begin{center}
	\begin{tikzcd}
		S^\mathcal{D}G \arrow[r,hook]\arrow[d] & S\widetilde{G}\arrow[d]\\
		S^\mathcal{D}q^*EG \arrow[r, hook]\arrow[d] & S\widetilde{q^*EG}\arrow[d]\\
		S^\mathcal{D}QBG \arrow[r, hook] & S\widetilde{QBG}
	\end{tikzcd}
\end{center}
(Corollary ?). Since $G$ and $QBG$ are in $\vcal_\mathcal{D}$, the top and bottom arrows are weak equivalences, and hence the middle arrow is also a weak equivalence. Thus, we see that $S\widetilde{q^*EG} \simeq 0$, and hence that $\gamma:\widetilde{QBG}\longrightarrow B\widetilde{G}$ is a weak equivalence, which shows that $\gamma_\sharp$ is bijective.
\end{proof}

\fi
%%%%%%%%%%%%%%%%%%%%%

\section{Smoothing of continuous sections}
In this section, we prove the smoothing theorem for continuous sections (Theorem \ref{dsectionsmoothing}). For this, we first establish Quillen equivalences between the overcategories of $\scal$, $\dcal$, and $\czero$. We then study the relation between function complexes and homotopy function complexes for the overcategory $\mcal / X\ (= \czero/ X, \dcal / X)$, generalizing several results in Section 4.

\subsection{Quillen equivalences between the overcategories of $\scal, \dcal,$ and $\czero$}
In this subsection, we establish Quillen equivalences between the overcategories of $\scal, \dcal,$ and $\czero$, generalizing Theorem \ref{Quillenequiv}.

We begin with an observation on adjoint pairs. Let $F: \acal \rightleftarrows \bcal: G$ be an adjoint pair. Suppose that $\acal$ has pullbacks. Then, the functor
\[
F : \acal / A \longrightarrow \bcal / FA
\]
has a right adjoint for any $A \in \acal$. In fact, the right adjoint $G': \bcal / FA \longrightarrow \acal/A$ is defined by assigning to an object $q : Z \longrightarrow FA$ the object $G'q: G'Z \longrightarrow A$ which is defined by the pullback diagram in $\acal$
\[
\begin{tikzcd}
G'Z \arrow{r} \arrow[swap]{d}{G'q} & \arrow{d}{Gq} GZ\\
A \arrow{r}{i_{A}} & GFA,
\end{tikzcd}
\]
where $i_{A}$ is the unit of the adjoint pair $(F, G)$.

From this, we see that the functors
\[
|\ |_{\dcal} : \scal/K \longrightarrow \dcal / |K|_{\dcal} \ \text{\ and\ }\ \widetilde{\cdot}: \dcal / X \longrightarrow \czero / \widetilde{X}
\]
have the right adjoints $(S^{\dcal} \cdot)'$ and $R'$ respectively.

Recall that the overcategory of a model category has a canonical model structure (\cite[Theorem 7.6.5(2)]{Hi}) and note that if $F: \acal \rightleftarrows \bcal: G$ is a Quillen pair between model categories $\acal$ and $\bcal$, then $F : \acal / A \rightleftarrows \bcal / FA : G'$ is also a Quillen pair. 

We have the following result, which generalizes Theorem \ref{Quillenequiv}.
\begin{prop}\label{Quillenequiv/X}
	\begin{itemize}
	\item[{\rm (1)}] $|\ |_{\dcal} : \scal / K \rightleftarrows \dcal / |K|_{\dcal} : (S^{\dcal} \cdot)'$ is a pair of Quillen equivalences for any simplicial set $K$.
	\item[{\rm (2)}] The Quillen pair
	\[
	\widetilde{\cdot} : \dcal / X \rightleftarrows \czero / \widetilde{X} : R'
	\]
	is a pair of Quillen equivalences if and only if $X$ is in $\vcal_{\dcal}$.
	\end{itemize}
\end{prop}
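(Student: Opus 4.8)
The plan is to isolate a single model-categorical criterion that settles both parts simultaneously: for a Quillen equivalence $F:\acal \rightleftarrows \bcal :G$ in which $\acal$ is right proper and has pullbacks and every object of $\bcal$ is fibrant, the induced Quillen pair $F:\acal/A \rightleftarrows \bcal/FA:G'$ is a pair of Quillen equivalences if and only if the unit $i_A:A \longrightarrow GFA$ is a weak equivalence in $\acal$. Both situations of interest satisfy the standing hypotheses: $\scal$ is right proper, while $\dcal$ and $\czero$ are right proper since every object is fibrant (Theorem \ref{originmain}, Remark \ref{arc}); moreover every object of $\dcal$ and of $\czero$ is fibrant. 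So the substance lies in the criterion, and the two parts then follow merely by identifying the relevant unit.

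First I would record the standard facts about over-categories: weak equivalences, fibrations, and cofibrations in $\acal/A$ and $\bcal/FA$ are created by the forgetful functors, so an object $(C\to A)$ is cofibrant iff $C$ is cofibrant in $\acal$, and an object $(D\xrightarrow{p}FA)$ is fibrant iff $p$ is a fibration in $\bcal$; since $FA$ is fibrant, such a $D$ is then fibrant in $\bcal$. I would then invoke the standard criterion for Quillen equivalences (\cite{Hovey}, \cite{Hi}): the pair $(F,G')$ is a Quillen equivalence iff, for every cofibrant $c=(C\to A)$ and fibrant $d=(D\xrightarrow{p}FA)$, a map $\phi\colon FC\to D$ over $FA$ is a weak equivalence iff its adjunct $\psi\colon C\to G'D$ over $A$ is one. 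Unwinding the definition of $G'$ through the pullback square displayed just before the statement, the base adjunct $\bar\phi\colon C\to GD$ of $\phi$ is exactly $\pi_2\circ\psi$, where $\pi_2\colon G'D\to GD$ is the projection; and since $(F,G)$ is a Quillen equivalence with $C$ cofibrant and $D$ fibrant, $\phi$ is a weak equivalence iff $\bar\phi=\pi_2\psi$ is. Thus, after this translation, the Quillen-equivalence condition for $(F,G')$ reads: for all such $c,d$, $\psi$ is a weak equivalence iff $\pi_2\psi$ is.

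The key observation is that $\pi_2\colon G'D\to GD$ is the base change of the unit $i_A\colon A\to GFA$ along $Gp\colon GD\to GFA$, and $Gp$ is a fibration because $G$ is right Quillen and $p$ is a fibration. Hence if $i_A$ is a weak equivalence, right properness of $\acal$ forces $\pi_2$ to be a weak equivalence for every fibrant $d$; then $\pi_2\psi$ is a weak equivalence iff $\psi$ is, by two-out-of-three, and $(F,G')$ is a Quillen equivalence. Conversely, if $(F,G')$ is a Quillen equivalence, I would choose for each fibrant $d$ a cofibrant replacement $\psi_0\colon Q\to G'D$ (a trivial fibration with $Q$ cofibrant); then $\psi_0$ is a weak equivalence, so by the criterion $\pi_2\psi_0$ is too, and two-out-of-three gives that $\pi_2$ is a weak equivalence. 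Specializing to the terminal object $d=(\mathrm{id}\colon FA\to FA)$, for which $G'D=A$ and $\pi_2=i_A$, shows that $i_A$ is a weak equivalence.

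With the criterion in hand, both parts are immediate. For (1), the pair $(|\ |_\dcal,S^\dcal)$ is a Quillen equivalence by Theorem \ref{Quillenequiv}(1), and the unit $i_K\colon K\to S^\dcal|K|_\dcal$ is a weak equivalence for every $K$ by Lemma \ref{1streduction}; hence $(|\ |_\dcal,(S^\dcal\cdot)')$ is a pair of Quillen equivalences over every $|K|_\dcal$. For (2), the pair $(\widetilde{\cdot},R)$ is a Quillen equivalence by Theorem \ref{Quillenequiv}(2), and its unit at $X$ is precisely $\mathrm{id}\colon X\to R\widetilde{X}$, which is a weak equivalence exactly when $X\in\vcal_\dcal$ by the very definition of $\vcal_\dcal$; hence $(\widetilde{\cdot},R')$ is a pair of Quillen equivalences over $\widetilde{X}$ iff $X\in\vcal_\dcal$. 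I expect the main obstacle to be the bookkeeping in the second paragraph—verifying cleanly that the over-category adjunct $\psi$ and the base adjunct $\bar\phi$ are related by $\bar\phi=\pi_2\psi$, and that $\pi_2$ is the base change of $i_A$ along the fibration $Gp$—since both implications downstream rest entirely on this identification together with right properness.
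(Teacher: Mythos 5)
Your proposal is correct and takes essentially the same route as the paper: the paper's proof also reduces both parts to the unit being a weak equivalence, combining the cofibrancy/fibrancy characterizations in the overcategories with the base Quillen equivalences of Theorem \ref{Quillenequiv} and right properness of the source category (properness of $\scal$ via \cite[Theorem 13.1.13]{Hi} for Part 1, and right properness of $\dcal$ via \cite[Corollary 13.1.3]{Hi} for Part 2). Your general criterion simply spells out in full the pullback-of-the-unit-along-a-fibration argument that the paper compresses into ``we can easily see'' and its citations.
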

\begin{proof}
    $\rmone$ Note that every object of $\mathcal{S}/K$ is cofibrant and that $E\longrightarrow |K|_\mathcal{D}$ is fibrant in $\mathcal{D}/|K|_\mathcal{D}$ if and only if $E\longrightarrow |K|_\mathcal{D}$ is a fibration in $\mathcal{D}$. Then, we can easily see from Theorem \ref{Quillenequiv}(1) and \cite[Theorem 13.1.13]{Hi} that ($|\ |_\mathcal{D}$, $(S^\mathcal{D} \cdot)'$) is a pair of Quillen equivalences.\par
    $\rmtwo$ Note that $f: A \longrightarrow X$ is cofibrant in $\dcal / X$ if and only if $A$ is cofibrant in $\dcal$ and that $E \longrightarrow \widetilde{X}$ is fibrant in $\czero / \widetilde{X}$ if and only if $E \longrightarrow \widetilde{X}$ is a fibration in $\czero$. By the definition, $X$ is in $\vcal_{\dcal}$ if and only if the unit $id: X \longrightarrow R\widetilde{X}$ of the adjoint pair $(\widetilde{\cdot}, R)$ is a weak equivalence. Thus, the result follows from Theorem \ref{Quillenequiv}(2) and the fact that $\dcal$ is right proper (see Theorem \ref{originmain} and \cite[Corollary 13.1.3]{Hi}).  
\end{proof}
The composite of the two pairs of Quillen equivalences
\[
\begin{tikzcd}
\scal / K \arrow[yshift = 2]{r}{\absno{\ }_{\dcal}}  & \arrow[yshift = -2]{l}{(S^{\dcal} \cdot)'} \dcal / \absno{K}_{\dcal}  \arrow[yshift = 2]{r}{\widetilde{\cdot}} & \arrow[yshift = -2]{l}{R'} \czero / \absno{K}
\end{tikzcd}
\]
is just the pair of Quillen equivalences
\[
|\ | : \scal / K \rightleftarrows \czero / |K| : (S \cdot)'
\]
(see Proposition \ref{adjoint1}).

\subsection{Enrichment of overcategories}
To further investigate the categories $\czero/X$ and $\dcal/X$, we have to establish basic results on the enrichment of overcategories of cartesian closed categories.\par
Let $\vcal$ be a cartesian closed category with pullbacks. For objects $p: E \longrightarrow X$, $p': E' \longrightarrow X $ of $\vcal/ X$, define the object $\vcal / X(E, E')$ of $\vcal$ by the pullback diagram in $\vcal$
\[
\begin{tikzcd}
\vcal / X (E, E') \arrow{r} \arrow{d} & \vcal (E, E') \arrow{d}{p'_{\sharp}}\\
\ast \arrow{r} & \vcal (E, X), \tag{6.1}
\end{tikzcd}
\]
where the lower horizontal arrow is the map from the terminal object $\ast$ to $\vcal (E, X)$ corresponding to $p$. The object $\vcal / X (X, E)$ is often denoted by $\Gamma(X, E)$. The underlying set of $\vcal/X(E,E')$, defined to be the set $\vcal(\ast, \vcal/X(E,E'))$, is just the hom-set $\vcal/X(E,E')$ of the category $\vcal/X$.

\begin{prop}\label{AX}
	Let $\vcal$ be a cartesian closed category with pullbacks and $X$ an object of $\vcal$. Then, $\vcal/X$ is a tensored and cotensored $\vcal$-category with hom-object $\vcal/ X (E, E')$.
\end{prop}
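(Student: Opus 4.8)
The plan is to treat the hom-object $\vcal/X(E,E')$ as the canonical subobject of the self-enrichment hom $\vcal(E,E')$ cut out by the pullback (6.1), and to deduce everything from two features of the cartesian closed structure of $\vcal$. First, each inclusion $\vcal/X(E,E')\hookrightarrow \vcal(E,E')$ is a monomorphism, being the pullback of the monomorphism $\ast\to\vcal(E,X)$ corresponding to $p$ (any morphism out of a terminal object is monic). Second, the internal hom functors $\vcal(V,-)$ and $\vcal(E',-)$ are right adjoints (to $-\times V$ and $-\times E'$), hence preserve the pullbacks defining the hom-objects. These two facts will drive, respectively, the verification of the $\vcal$-category axioms and of the (co)tensoring adjunctions.

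First I would construct the $\vcal$-category structure. The composition law $\vcal(E',E'')\times\vcal(E,E')\to\vcal(E,E'')$ of the self-enriched $\vcal$ restricts to $\vcal/X(E',E'')\times\vcal/X(E,E')\to\vcal/X(E,E'')$: on generalized elements a pair of $X$-morphisms $\varphi'\colon E'\to E''$, $\varphi\colon E\to E'$ composes to $\varphi'\varphi$ with $p''\varphi'\varphi=p'\varphi=p$, so the composite factors through the pullback (6.1), and by the functor-of-points this defines the desired morphism. The unit $\ast\to\vcal/X(E,E)$ picks out $1_E$, which satisfies $p_\sharp(1_E)=p$ and hence factors through (6.1). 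Associativity and the unit laws then follow because they hold in $\vcal$ after composing with the monomorphisms $\vcal/X(-,-)\hookrightarrow\vcal(-,-)$, and monos are left-cancellable. This exhibits $\vcal/X$ as a $\vcal$-category with the stated hom-objects.

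For tensoring I would set $V\otimes(E\xrightarrow{p}X):=(V\times E\xrightarrow{p\,\mathrm{pr}_E}X)$. Since $\vcal(V,-)$ preserves the pullback (6.1), and the cartesian closed isomorphisms give $\vcal(V,\vcal(E,E'))\cong\vcal(V\times E,E')$ and $\vcal(V,\vcal(E,X))\cong\vcal(V\times E,X)$, applying $\vcal(V,-)$ to (6.1) presents $\vcal(V,\vcal/X(E,E'))$ as the pullback defining $\vcal/X(V\otimes E,E')$, the base point being the transpose of $p\,\mathrm{pr}_E$. This yields a natural isomorphism $\vcal/X(V\otimes E,E')\cong\vcal(V,\vcal/X(E,E'))$, which is moreover $\vcal$-natural since it is assembled entirely from the cartesian closed structure.

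For cotensoring I would define $E^V$ by the pullback of $\vcal(V,E)\xrightarrow{\vcal(V,p)}\vcal(V,X)\xleftarrow{c_X}X$, where $c_X\colon X\to\vcal(V,X)$ is the transpose of $\mathrm{pr}_X\colon V\times X\to X$, with structure map $E^V\to X$ the left leg. Applying $\vcal(V,-)$ to the pullback defining $\vcal/X(E',E)$ and transporting along $\vcal(V,\vcal(E',-))\cong\vcal(E',\vcal(V,-))$ presents $\vcal(V,\vcal/X(E',E))$ as a pullback over $\vcal(E',\vcal(V,X))$ with base point $c_X\circ p'$; on the other hand, since $\vcal(E',-)$ preserves pullbacks, expanding $\vcal(E',E^V)$ and pasting with the base point $p'$ presents $\vcal/X(E',E^V)$ as the very same pullback. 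Matching these two pasted pullbacks gives $\vcal/X(E',E^V)\cong\vcal(V,\vcal/X(E',E))$. I expect the cotensor to be the main obstacle: it requires correctly identifying the base point $c_X\circ p'$ and carefully pasting the two pullback squares (one from (6.1), one from the definition of $E^V$), whereas the tensor and the bare $\vcal$-category axioms are comparatively immediate.
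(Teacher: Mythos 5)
Your proposal is correct and takes essentially the same route as the paper's proof: the same hom-objects, the same tensor $V\otimes E = V\times E$ with structure map $p\circ \mathrm{pr}_E$, and the same cotensor $E^V$ defined by pulling back $\vcal(V,p)$ along the constant map $X\longrightarrow \vcal(V,X)$. The only difference is bookkeeping: where the paper upgrades the set-level bijections (6.2) and (6.3) to isomorphisms in $\vcal$ by replacing the test object $V$ (resp.\ $D$) with $U\times V$ (resp.\ $U\times D$), you obtain the internal isomorphisms directly from preservation of the defining pullbacks by the right adjoints $\vcal(V,\cdot)$ and $\vcal(E',\cdot)$, and you handle the $\vcal$-category axioms via the monomorphisms $\vcal/X(E,E')\hookrightarrow \vcal(E,E')$ rather than via (6.2) -- interchangeable devices for the same argument.
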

\begin{proof}
	We prove the results in three steps.\vspace{0.2cm}\\
	{\sl Step 1. $\vcal/X$ is a $\vcal$-category.} From the definition of $\vcal/ X (E, E')$, we can observe that the natural bijections
	\begin{align}
		\vcal (V, \vcal / X (E, E')) & \cong \vcal / X (V \times E, E') \tag{6.2}\\
		& \cong  \vcal / V \times X (V \times E, V \times E') \nonumber
	\end{align}
	exist. Using these, we can define the composition morphism $\vcal/X(E,E')\times \vcal/X(E',E'')\longrightarrow \vcal/X(E,E'')$ and the unit morphism $\ast \longrightarrow \vcal/X(E,E)$ satisfying associativity axiom and unit axiom (\cite[Definition 6.2.1]{Borceux}).\vspace{0.2cm}\\
	{\sl Step 2. $\vcal/X$ is tensored.} Let $V$ and $E$ be objects of $\vcal$ and $\vcal / X$, respectively. To see that $V \times E$ is the tensor of $V$ and $E$, we have to see that the natural bijection
	\[
	\vcal (V, \vcal / X (E, E')) \cong \vcal / X (V \times E, E')
	\]
	in (6.2) is an isomorphism in $\vcal$. This is easily seen by replacing $V$ with $U \times V$.\vspace{0.2cm}\\
	{\sl Step 3. $\vcal/X$ is cotensored.} For an object $V$ of $\vcal$ and an object $p:E\longrightarrow X$ of $\vcal/X$, define the object $p_V:E^V\longrightarrow X$ of $\vcal/X$ by the pullback diagram in $\vcal$
	\[
	\begin{tikzcd}
	E^{V} \arrow{r} \arrow[swap]{d}{p_{V}}  & \vcal(V, E) \arrow{d}{p_{\sharp}} \\
	X \arrow{r} & \vcal(V, X),
	\end{tikzcd}
	\]
	where the lower horizontal arrow is the map induced by the canonical map $V \longrightarrow \ast$. To see that $E^{V}$ is the cotensor of $V$ and $E$, we have to prove that
	\[
	\vcal / X (D, E^{V}) \cong \vcal(V, \vcal/X(D,E))
	\]
	holds in $\vcal$. By the definition of $E^{V}$, the natural bijection
	\[
	\vcal / X (D, E^{V}) \cong \vcal / X (V \times D, E)
	\]
	exists. By replacing $D$ with $U \times D$, we see that this bijection is an isomorphism in $\vcal$, which along with Step 2, implies the desired natural isomorphism.
\end{proof}
\begin{cor}\label{adjoint/X}
	Let $\vcal$ be a cartesian closed category with pullbacks and $X$ an object of $\vcal$.
	\begin{itemize}
	\item[{\rm (1)}] Let $E$ be an object of $\vcal / X$. Then, 
	\[
	\cdot \times E : \vcal \rightleftarrows \vcal / X : \vcal / X (E, \cdot) \text{   and    } E^\text{\textbullet}:\vcal \rightleftarrows (\vcal/X)^\ast:\vcal/X(\cdot,E)
	\]
	are $\vcal$-adjoint pairs.
	\item[{\rm (2)}] $X \times \cdot : \vcal \rightleftarrows \vcal / X : \Gamma (X, \cdot)$ is a $\vcal$-adjoint pair.
	\item[{\rm (3)}] Let $f : Z \longrightarrow X$ be a morphism of $\vcal$. Then, 
	\[
	f_{\sharp} : \vcal / Z \rightleftarrows \vcal/ X : f^{\ast}
	\]
	is a $\vcal$-adjoint pair, where $f_{\sharp}$ is defined by assigning to $p : D \longrightarrow Z$ the object $D \xrightarrow{\ \ p \ \ } Z \xrightarrow{\ \ f \ \ } X$ of $\vcal / X$ and $f^{\ast}$ is the pullback functor along $f$.
	\end{itemize}
\end{cor}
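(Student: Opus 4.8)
The plan is to deduce everything from Proposition \ref{AX}, which already exhibits $\vcal/X$ as a tensored and cotensored $\vcal$-category with tensor $V\times E$ and cotensor $E^{V}$. Parts (1) and (2) should then be essentially immediate, while Part (3) carries the real content.

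For Part (1) I would invoke the general fact that for any tensored and cotensored $\vcal$-category $\acal$ the tensor--hom and cotensor--hom adjunctions
\[
\cdot\otimes E:\vcal\rightleftarrows\acal:\acal(E,\cdot)\qquad\text{and}\qquad E^{\text{\textbullet}}:\vcal\rightleftarrows\acal^{\ast}:\acal(\cdot,E)
\]
are $\vcal$-adjoint pairs (\cite[Proposition 6.7.4]{Borceux}). Applying this to $\acal=\vcal/X$, where Proposition \ref{AX} gives $V\otimes E=V\times E$, yields the two asserted $\vcal$-adjoint pairs at once. For Part (2) I would specialize Part (1) to the terminal object $E=(\mathrm{id}\colon X\to X)$ of $\vcal/X$: then the tensor $V\times E$ is the object $V\times X\to X$, so $\cdot\times E$ is canonically $X\times\cdot$, and by the notational convention $\vcal/X(X,\cdot)=\Gamma(X,\cdot)$. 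Thus Part (2) is a direct corollary of Part (1).

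The heart of the matter is Part (3). Since $f_{\sharp}\dashv f^{\ast}$ as ordinary functors is classical (composition with $f$ is left adjoint to pullback along $f$), the only new content is the $\vcal$-enrichment, and my plan is to produce a $\vcal$-natural isomorphism of hom-objects
\[
\vcal/X(f_{\sharp}D,E)\cong\vcal/Z(D,f^{\ast}E)\qquad(D\in\vcal/Z,\ E\in\vcal/X).
\]
Writing $E=(p\colon E\to X)$ and $D=(d\colon D\to Z)$, I would express both sides as pullbacks using the defining diagram (6.1): the left side is the pullback of $\ast\xrightarrow{\,fd\,}\vcal(D,X)\xleftarrow{\,p_{\sharp}\,}\vcal(D,E)$, while the right side is the pullback of $\ast\xrightarrow{\,d\,}\vcal(D,Z)\xleftarrow{\,\pi_{\sharp}\,}\vcal(D,Z\times_{X}E)$, where $\pi\colon Z\times_{X}E\to Z$ is the structure map of $f^{\ast}E$. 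Because the internal hom $\vcal(D,-)$ is a right adjoint (to $\cdot\times D$), it preserves pullbacks, so $\vcal(D,Z\times_{X}E)\cong\vcal(D,Z)\times_{\vcal(D,X)}\vcal(D,E)$. A pasting-of-pullbacks argument then identifies the right-hand pullback with the left-hand one, naturally in $D$ and $E$.

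The step I expect to be the main obstacle is not this hom-object computation itself but its upgrade to a genuine $\vcal$-adjunction: one must check that $f_{\sharp}$ and $f^{\ast}$ are $\vcal$-functors and that the displayed isomorphism is $\vcal$-natural, not merely natural on underlying sets. For $f_{\sharp}$ this is transparent, since by (6.1) the object $\vcal/Z(D,D')$ is the subobject of $\vcal(D,D')$ cut out over $Z$ and $\vcal/X(f_{\sharp}D,f_{\sharp}D')$ is the subobject cut out over $X$, so the action on hom-objects is the evident inclusion induced by $d'g=d\Rightarrow fd'g=fd$. For $f^{\ast}$ one enriches the action on hom-objects through the universal property of the pullback $Z\times_{X}(-)$. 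Once both functors are seen to be $\vcal$-functors and the isomorphism is verified to be $\vcal$-natural, the standard criterion for $\vcal$-adjunctions (\cite[Proposition 6.7.4]{Borceux}, or equivalently the enriched unit/counit description, whose triangle identities reduce to the already-known underlying ones) shows that $(f_{\sharp},f^{\ast})$ is a $\vcal$-adjoint pair.
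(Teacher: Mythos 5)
Your proposal is correct, and on Parts (1) and (2) it coincides with the paper's proof: both deduce Part (1) from Proposition \ref{AX} via \cite[Proposition 6.7.4]{Borceux} and obtain Part (2) by specializing Part (1) to the terminal object $E = (1_X : X \to X)$. On Part (3) you take a genuinely different, though equally valid, route. The paper verifies everything representably through the bijection (6.2): by replacing $V$ with $U \times V$ in $\vcal(V, \vcal/X(E,E')) \cong \vcal/(V \times X)(V \times E, V \times E')$, one checks in a single Yoneda-style stroke that $f_\sharp$ and $f^\ast$ are $\vcal$-functors and that $\vcal/Z(D, f^\ast E) \cong \vcal/X(f_\sharp D, E)$ holds $\vcal$-naturally --- the same device used throughout Section 6.2. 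You instead compute the hom-objects directly from the defining pullback (6.1): since $\vcal(D,-)$ is a right adjoint in a cartesian closed category, it preserves pullbacks, so $\vcal(D, Z \times_X E) \cong \vcal(D,Z) \times_{\vcal(D,X)} \vcal(D,E)$, and pasting of pullbacks identifies $\vcal/Z(D, f^\ast E)$ with the pullback of $\ast \xrightarrow{fd} \vcal(D,X) \xleftarrow{p_\sharp} \vcal(D,E)$, i.e., with $\vcal/X(f_\sharp D, E)$. Your explicit computation has the merit of exhibiting the isomorphism concretely as an iterated pullback and of isolating exactly where enrichment must be checked (the $\vcal$-functor structures on $f_\sharp$ and $f^\ast$ and the $\vcal$-naturality of the isomorphism, which together constitute a $\vcal$-adjunction since the underlying adjunction is classical); the paper's representable method is slightly faster because $\vcal$-functoriality and $\vcal$-naturality are absorbed into one uniform argument with (6.2). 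Your sketch of the enrichment of $f_\sharp$ as the inclusion of subobjects of $\vcal(D,D')$ and of $f^\ast$ via the universal property of $Z \times_X (-)$ is at the same level of detail as the paper's ``we can easily see,'' so nothing essential is missing.
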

\begin{proof}
	(1) The result is immediate from Proposition \ref{AX} (see \cite[Proposition 6.7.4]{Borceux}).\par
	(2) The first $\vcal$-adjoint pair in Part 1 specializes to the $\vcal$-adjoint pair $(X\times\cdot,\Gamma(X,\cdot))$ by setting $E=X$.\par
	(3) It is obvious that $(f_\sharp,f^\ast)$ is an adjoint pair. Using (6.2), we can easily see that $f_{\sharp}$ and $f^{\ast}$ are $\vcal$-functors and that the natural isomorphism in $\vcal$
	\[
	\vcal / Z (D, f^{\ast} E) \cong \vcal / X (f_{\sharp} D, E)
	\]
	exists.
\end{proof}
From Corollary \ref{adjoint/X}(1), we have the natural isomorphisms in $\vcal$
\[
\vcal (V, \vcal / X (D, E) ) \cong \vcal / X (V \times D, E) \cong \vcal / X (D, E^{V}). \tag{6.3}
\]
We also have the following natural isomorphism.
\begin{cor}\label{isom/X}
	The natural isomorphism in $\vcal$ 
	\[
	\vcal(V, \vcal/X (E,E')) \cong \vcal /V \times X (V \times E, V \times E')
	\] 
	exists.
	\begin{proof}
		Recall from (6.2) the natural bijection $\vcal(V, \vcal/X (E,E')) \cong \vcal /V \times X (V \times E, V \times E')$. By replacing $V$ with $U \times V$, we see that this bijection is an isomorphism in $\vcal$.
	\end{proof}
\end{cor}
\begin{rem}\label{lccc}
	Suppose that $\vcal$ is a locally cartesian closed category. Then, $\vcal/X$ is a cartesian closed category with internal hom $\vcal_X(E,E')$, and hence a tensored and cotensored $\vcal/X$-category with hom-object $\vcal_X(E,E')$ (\cite[Proposition 6.5.3]{Borceux}). Thus, we see from Corollary \ref{adjoint/X} and \cite[Proposition 6.7.4]{Borceux} that
	\[
		\vcal/X (E,E') = \Gamma (X, \vcal_X(E,E')) \ \ \text{and} \ \ E^V = \vcal_X(X\times V, E)
	\]
	hold. Recall the local cartesian closedness of $\dcal$ and $\scal$ from \cite{BH, Dubuc} and \cite[Theorem 1.42]{John}.
\end{rem}
Given an adjoint pair $L:\ucal \rightleftarrows \vcal:R$ between cartesian closed categories, we can regard $\vcal/X$ as a $\ucal$-category via $R$. We thus consider when $L:\ucal/U\rightleftarrows \vcal/LU:R'$ can be enriched over $\ucal$ (see Section 6.1 for $R'$) and when $\vcal/X$ is tensored and cotensored as a $\ucal$-category. The following two results answer these questions, generalizing Proposition \ref{tensor}.
\begin{lem}\label{LR'}
	Let $L:\ucal \rightleftarrows \vcal:R$ be an adjoint pair between cartesian closed categories with pullbacks. Suppose that $L$ preserves terminal objects. Then, the following are equivalent:
	\begin{itemize}
		\item[{\rm (i)}] $L$ preserves finite products.
		\item[{\rm (ii)}] The adjoint pair $L:\ucal/U \rightleftarrows \vcal/LU:R'$ can be enriched over $\ucal$ for any $U\in \ucal$.
		\item[{\rm (iii)}] The adjoint pair $L:\ucal/U \rightleftarrows \vcal/LU:R'$ can be enriched over $\ucal$ for some $U\in \ucal$ with $\ucal(\ast,U)\neq \emptyset$.
	\end{itemize}
\begin{proof}
	By the assumption, $\ucal/U$ and $\vcal/LU$ are enriched over $\ucal$; $\ucal/U(D,D')$ and $R\vcal/LU(E,E')$ are hom-objects of the $\ucal$-categories $\ucal/U$ and $\vcal/LU$ respectively.\\
	${\rm (i)}\Rightarrow{\rm (ii)}$ Note that $L$ preserves finite products and that $R$ is a right adjoint. Then, we can observe from (6.2) that the adjoint pair $L:\ucal/U\rightleftarrows \vcal/LU:R'$ is enriched over $\ucal$ in a canonical manner for any $U\in \ucal$.\\
	${\rm (ii)}\Rightarrow{\rm (iii)}$ Obvious.\\
	${\rm (iii)}\Rightarrow{\rm (i)}$  Choose an object $U$ of $\ucal$ for which $L:\ucal/U \rightleftarrows \vcal/LU:R'$ is enriched over $\ucal$ and which admits a morphism $u:\ast \longrightarrow U$, and consider the composite of $\ucal$-adjoint pairs
	\[
	\begin{tikzcd}
	\ucal = \ucal/\ast \arrow[yshift = 2]{r}{u_\sharp}  & \arrow[yshift = -2]{l}{u^\ast} \ucal/U \arrow[yshift = 2]{r}{L} & \arrow[yshift = -2]{l}{R'} \vcal/LU \arrow[yshift = 2]{r}{c_\sharp} & \arrow[yshift = -2]{l}{c^\ast} \vcal/\ast = \vcal,
	\end{tikzcd}
	\]
	where $c:LU\longrightarrow \ast$ is the canonical map to the terminal object $\ast$ (see Corollary \ref{adjoint/X}(3)). Since this composite is just $(L,R)$, Proposition \ref{tensor} shows that ${\rm (iii)}\Rightarrow {\rm (i)}$.
\end{proof}
\end{lem}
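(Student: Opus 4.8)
The plan is to prove the cycle (i)$\Rightarrow$(ii)$\Rightarrow$(iii)$\Rightarrow$(i), using the overcategory enrichment of Section 6.2 as the framework and Proposition \ref{tensor} (the case $U=\ast$) as the engine. As a preliminary I would fix, for each $U\in\ucal$, the $\ucal$-category structures in play: $\ucal/U$ is a $\ucal$-category by Proposition \ref{AX} applied to $\ucal$, while $\vcal/LU$ becomes a $\ucal$-category by applying Proposition \ref{AX} to $\vcal$ and then base-changing its $\vcal$-enrichment along the right adjoint $R$, so that its hom-objects are $R\,\vcal/LU(E,E')$. The phrase ``can be enriched over $\ucal$'' refers to promoting $(L,R')$ to a $\ucal$-adjoint pair between these two $\ucal$-categories.

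For (i)$\Rightarrow$(ii) I assume $L$ preserves finite products. Since $R$, and hence the pullback functor $R'$, is a right adjoint, it is automatically a $\ucal$-functor, so the task is to make $L:\ucal/U\to\vcal/LU$ a $\ucal$-functor and to check that the adjunction bijection $\ucal/U(D,R'E)\cong R\,\vcal/LU(LD,E)$ is $\ucal$-natural. The tool is the pullback presentation (6.1)--(6.2): $\ucal/U(D,D')$ is the fibre of $\ucal(D,D')\to\ucal(D,U)$ over the point naming the structure map, and $R\,\vcal/LU(LD,LD')$ is the corresponding fibre on the $\vcal$-side. Because $L$ preserves finite products, the $\ucal$-enriched pair $(L,R)$ supplied by Proposition \ref{tensor} carries the first pullback square to the second compatibly; taking induced maps on fibres yields the enriched $L$ together with the enriched unit and counit, and the composition and unit axioms reduce to those for $(L,R)$ combined with functoriality of forming pullbacks.

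The implication (ii)$\Rightarrow$(iii) is immediate, since $U=\ast$ (or any $U$ admitting a global point) satisfies $\ucal(\ast,U)\neq\emptyset$. For (iii)$\Rightarrow$(i) I would reduce to Proposition \ref{tensor} by splicing adjoint pairs. Choose $U$ as in (iii) together with a point $u:\ast\to U$, and let $c:LU\to\ast$ be the terminal map. By Corollary \ref{adjoint/X}(3) the pairs $(u_\sharp,u^\ast):\ucal=\ucal/\ast\rightleftarrows\ucal/U$ and $(c_\sharp,c^\ast):\vcal/LU\rightleftarrows\vcal/\ast=\vcal$ are $\ucal$-adjoint pairs; composing them around the $\ucal$-enriched pair $(L,R')$ from (iii) gives a $\ucal$-enriched adjunction. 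Using that $L$ preserves terminal objects one checks $c_\sharp\,L\,u_\sharp=L$, so the underlying adjunction of this composite is exactly $(L,R):\ucal\rightleftarrows\vcal$; Proposition \ref{tensor}, in the form (ii)$\Rightarrow$(i), then forces $L$ to preserve finite products.

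I expect the genuine obstacle to be the enrichment check in (i)$\Rightarrow$(ii): one must verify that the canonical $\ucal$-structure on $(L,R)$ descends to the pullback-defined hom-objects $\ucal/U(D,D')$ and $R\,\vcal/LU(E,E')$, and that the two defining pullback squares are matched $\ucal$-naturally rather than merely on underlying sets. Once this is in place, (ii)$\Rightarrow$(iii) is trivial and (iii)$\Rightarrow$(i) is formal, resting only on the $\ucal$-adjoint pairs of Corollary \ref{adjoint/X}(3) and the base case Proposition \ref{tensor}.
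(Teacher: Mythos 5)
Your proposal is correct and follows essentially the same route as the paper: the same cycle (i)$\Rightarrow$(ii)$\Rightarrow$(iii)$\Rightarrow$(i), with (i)$\Rightarrow$(ii) obtained by descending the canonical $\ucal$-enrichment of $(L,R)$ from Proposition \ref{tensor} to the pullback-defined hom-objects of (6.1)--(6.2), and (iii)$\Rightarrow$(i) obtained by splicing the $\ucal$-adjoint pairs $(u_\sharp,u^\ast)$ and $(c_\sharp,c^\ast)$ of Corollary \ref{adjoint/X}(3) around $(L,R')$ and identifying the composite with $(L,R)$ (your check that $c_\sharp L u_\sharp=L$, using preservation of terminal objects, is exactly the verification the paper leaves implicit). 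The only difference is that you spell out the enrichment check in (i)$\Rightarrow$(ii) that the paper compresses into a single sentence.
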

\begin{prop}\label{tensor/X}
	Let $L:\ucal \rightleftarrows \vcal:R$ be an adjoint pair between cartesian closed categories. Suppose that $\vcal$ has pullbacks and that $L$ preserves terminal objects. Then, the following are equivalent:
	\begin{itemize}
		\item[{\rm (i)}] $L$ preserves finite products.
		\item[{\rm (ii)}] $\vcal/X$ is both a tensored and cotensored $\ucal$-category for any $X\in \vcal$.
		\item[{\rm (iii)}] $\vcal/X$ is a tensored or cotensored $\ucal$-category for some $X\in \vcal$ with $\vcal(\ast,X)\neq \emptyset$.
	\end{itemize}
\begin{proof}
	Note that $\vcal/X$ is a $\ucal$-category with hom-object $R\vcal/X(E,E')$.\\
	${\rm (i)\Rightarrow (ii)}$ For $E\in \vcal/X$, consider the composites of $\ucal$-adjoint pairs
	\[
		\begin{tikzcd}
		\ucal \arrow[yshift = 2]{r}{L}  & \arrow[yshift = -2]{l}{R} \vcal \arrow[yshift = 2]{r}{\cdot\times E} & \arrow[yshift = -2]{l}{\vcal/X(E,\cdot)} \vcal/X,\\
		\ucal \arrow[yshift = 2]{r}{L}  & \arrow[yshift = -2]{l}{R} \vcal \arrow[yshift = 2]{r}{E^{\text{\textbullet}}} & \arrow[yshift = -2]{l}{\vcal/X(\cdot,E)} (\vcal/X)^\ast
		\end{tikzcd}
	\]
	(Proposition \ref{tensor} and Corollary \ref{adjoint/X}(1)), which show that $\vcal/X$ is both a tensored and cotensored $\ucal$-category (see \cite[Proposition 6.7.4]{Borceux}).\\
	${\rm (ii)\Rightarrow (iii)}$ Obvious.\\
	${\rm (iii)\Rightarrow (i)}$ We prove the implication in the tensored case; the proof in the cotensored case is similar.\par
	Let $X$ be an object of $\vcal$ which admits a morphism $x:\ast \longrightarrow X$. Suppose that $\vcal/X$ is a tensored $\ucal$-category and write $U\otimes E$ for the tensor of $U\in \ucal$ and $E\in \vcal/X$. For $E\in \vcal/X$, we then consider the composite of $\ucal$-adjoint pairs
	\[
	\begin{tikzcd}
	\ucal \arrow[yshift = 2]{r}{\cdot\otimes E}  & \arrow[yshift = -2]{l}{R\vcal/X(E,\cdot)} \vcal/X \arrow[yshift = 2]{r}{c_\sharp} & \arrow[yshift = -2]{l}{c^\ast} \vcal/\ast = \vcal,
	\end{tikzcd}
	\]
	where $c:X\longrightarrow \ast$ is the canonical map to the terminal object $\ast$ (see \cite[Proposition 6.7.4]{Borceux} and Corollary \ref{adjoint/X}(3)). Note that $R\vcal/X(E,\cdot)\circ c^\ast = R\vcal(E,\cdot)$ and that every object $V$ of $\vcal$ can be regarded as an object of $\vcal/X$ by endowing it with the structure morphism $V\longrightarrow \ast \overset{x}{\longrightarrow} X$. Then, we see that $\vcal$ is a tensored $\ucal$-category (\cite[Proposition 6.7.4]{Borceux}), and hence that $L$ preserves finite products (see Proposition \ref{tensor}).
\end{proof}
\end{prop}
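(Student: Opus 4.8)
The plan is to establish the cycle $\mathrm{(i)}\Rightarrow\mathrm{(ii)}\Rightarrow\mathrm{(iii)}\Rightarrow\mathrm{(i)}$, reducing each implication to Proposition \ref{tensor} by manufacturing $\ucal$-adjoint pairs out of the $\vcal$-adjoint pairs recorded in Corollary \ref{adjoint/X}. Throughout, $\vcal/X$ is regarded as a $\ucal$-category via $R$, so that its hom-object is $R\vcal/X(E,E')$, and the whole argument is an enriched base-change bookkeeping built on the case $X=\ast$ already handled in Proposition \ref{tensor}.

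For $\mathrm{(i)}\Rightarrow\mathrm{(ii)}$, I would first invoke Proposition \ref{tensor}: since $L$ preserves finite products, the pair $L:\ucal \rightleftarrows \vcal:R$ is enriched over $\ucal$. Fixing $E\in \vcal/X$, Corollary \ref{adjoint/X}(1) supplies the $\vcal$-adjoint pairs $\cdot \times E:\vcal \rightleftarrows \vcal/X:\vcal/X(E,\cdot)$ and $E^{\text{\textbullet}}:\vcal \rightleftarrows (\vcal/X)^\ast:\vcal/X(\cdot,E)$, which become $\ucal$-adjoint pairs through $R$. Composing each with $(L,R)$ produces $\ucal$-adjoint pairs
\[
\ucal \rightleftarrows \vcal/X \qquad\text{and}\qquad \ucal \rightleftarrows (\vcal/X)^\ast,
\]
whose right adjoints are $R\vcal/X(E,\cdot)$ and $R\vcal/X(\cdot,E)$ respectively. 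By \cite[Proposition 6.7.4]{Borceux} this exhibits $\vcal/X$ as both tensored and cotensored over $\ucal$, with tensor $U\otimes E = LU\times E$. The implication $\mathrm{(ii)}\Rightarrow\mathrm{(iii)}$ is then immediate upon taking $X=\ast$, which carries the identity point.

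The crux is $\mathrm{(iii)}\Rightarrow\mathrm{(i)}$, which I would argue in the tensored case and settle the cotensored case by the dual argument. Choose $X$ together with a point $x:\ast \longrightarrow X$ and suppose $\vcal/X$ is tensored, writing $U\otimes E$ for the tensor. Let $c:X\longrightarrow \ast$ be the terminal map, so that Corollary \ref{adjoint/X}(3) gives the $\ucal$-adjoint pair $c_\sharp:\vcal/X \rightleftarrows \vcal:c^\ast$. For $E\in \vcal/X$ I would form the composite $\ucal$-adjoint pair
\[
\ucal \xrightarrow{\ \cdot\,\otimes E\ } \vcal/X \xrightarrow{\ c_\sharp\ } \vcal,
\]
whose right adjoint sends $V$ to $R\vcal/X(E,c^\ast V)\cong R\vcal(c_\sharp E,V)$, the isomorphism being the $\vcal$-enrichment of $c_\sharp\dashv c^\ast$. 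Hence $R\vcal(c_\sharp E,\cdot)$ admits a left $\ucal$-adjoint. The hypothesis $\vcal(\ast,X)\neq \emptyset$ is now decisive: every $V\in \vcal$ acquires the structure morphism $V\longrightarrow \ast \xrightarrow{\,x\,} X$, whence $V=c_\sharp E$ for a suitable $E\in \vcal/X$, so $R\vcal(V,\cdot)$ has a left $\ucal$-adjoint for \emph{every} $V$. By \cite[Proposition 6.7.4]{Borceux}, $\vcal$ is a tensored $\ucal$-category, and Proposition \ref{tensor} then forces $L$ to preserve finite products.

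The main obstacle I anticipate is exactly this last reduction. Tensoredness of $\vcal/X$ only directly controls the functors $R\vcal(W,\cdot)$ for objects $W$ that underlie some object of $\vcal/X$; what makes the argument go through is the realization that a \emph{global point} of $X$ is precisely what lets every object of $\vcal$ be so realized. The residual difficulty is purely formal—checking that the composite right adjoint is genuinely $R\vcal(c_\sharp E,\cdot)$ and not some twisted variant—and this I expect to follow cleanly from the enrichment of $(c_\sharp,c^\ast)$ in Corollary \ref{adjoint/X}(3).
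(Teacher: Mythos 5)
Your proposal is correct and follows essentially the same route as the paper: $\mathrm{(i)}\Rightarrow\mathrm{(ii)}$ by composing $(L,R)$ with the $\vcal$-adjoint pairs of Corollary \ref{adjoint/X}(1), and $\mathrm{(iii)}\Rightarrow\mathrm{(i)}$ by composing the tensor adjunction with $(c_\sharp,c^\ast)$, identifying the composite right adjoint as $R\vcal(c_\sharp E,\cdot)$, and using the global point $x:\ast\longrightarrow X$ to realize every $V\in\vcal$ as $c_\sharp E$, whence Borceux's Proposition 6.7.4 and Proposition \ref{tensor} conclude. Your explicit isomorphism $R\vcal/X(E,c^\ast V)\cong R\vcal(c_\sharp E,V)$ is exactly the paper's identity $R\vcal/X(E,\cdot)\circ c^\ast = R\vcal(E,\cdot)$ stated via the enriched adjunction, so there is nothing to repair.
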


\subsection{Simplicial categories $\czero/X$ and $\mathcal{D}/X$}\label{6.3}
In this subsection, we establish the basic properties of the $\scal$-categories $\czero/X$ and $\dcal/X$ and discuss the differences between these simplicial categories.\par
For $\mcal=\czero$, $\dcal$, the category $\mcal/X$ is an $\mcal$-category (Proposition \ref{AX}), and hence an $\scal$-category with function complex $S^\mcal\mcal/X(E,E')$ (see Section 1.4).\par
The following lemma is a generalization of Lemma \ref{enrich}.
\begin{lem}\label{enrich/X}
	Let $X$ be a diffeological space. Then, the adjoint pair
	\[
	\widetilde{\cdot} : \dcal/ X \rightleftarrows \czero / \widetilde{X} : R'
	\]
	can be enriched over $\scal$.
	\begin{proof}
		Since $\widetilde{\cdot}:\dcal \rightleftarrows \czero:R$ is an adjoint pair whose left adjoint preserves finite products (Remark \ref{suitable}), $(\widetilde{\cdot},R')$ can be enriched over $\dcal$ (see Lemma \ref{LR'}). Thus, we obtain the result by applying the functor $S^\dcal$ (see Proposition \ref{adjoint1}).
	\end{proof}
\end{lem}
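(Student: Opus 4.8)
The plan is to mimic the proof of Lemma \ref{enrich}, upgrading from the self-enrichment result Proposition \ref{tensor} to its overcategory refinement Lemma \ref{LR'}, and then transporting the resulting $\dcal$-enrichment along the singular functor $S^\dcal$. First I would apply Lemma \ref{LR'} to the adjoint pair $\widetilde{\cdot}:\dcal \rightleftarrows \czero:R$. Both $\dcal$ and $\czero$ are cartesian closed with pullbacks (Proposition \ref{conven} and Remark \ref{suitable}), and the left adjoint $\widetilde{\cdot}$ preserves terminal objects and finite products (Remark \ref{suitable}(4)). Hence condition (i) of Lemma \ref{LR'} holds, so by condition (ii), taking $U = X$, the induced adjoint pair $\widetilde{\cdot}:\dcal/X \rightleftarrows \czero/\widetilde{X}:R'$ can be enriched over $\dcal$; here $R'$ is the pullback right adjoint constructed in Section 6.1, and the relevant hom-objects are $\dcal/X(E,E')$ and $R\,\czero/\widetilde{X}(F,F')$.

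Next I would change the base of enrichment from $\dcal$ to $\scal$ along $S^\dcal:\dcal \longrightarrow \scal$. The essential point is that $S^\dcal$ preserves finite products: a $p$-simplex of $S^\dcal(Y\times Z)$ is a smooth map $\Delta^p \longrightarrow Y\times Z$, which is precisely a pair of smooth maps $\Delta^p \longrightarrow Y$ and $\Delta^p \longrightarrow Z$, so $S^\dcal(Y\times Z)\cong S^\dcal Y \times S^\dcal Z$ naturally in the simplicial degree. Thus $S^\dcal$ is a strong symmetric monoidal functor for the cartesian structures, and base change along it is legitimate: it carries the $\dcal$-enriched adjoint pair of the previous step to an $\scal$-enriched adjoint pair whose hom-objects are $S^\dcal\,\dcal/X(E,E')$ and $S^\dcal R\,\czero/\widetilde{X}(F,F')$. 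Invoking the identity $S^\dcal\circ R = S$ from Proposition \ref{adjoint1}(2) identifies the latter with $S\,\czero/\widetilde{X}(F,F')$, so the $\scal$-category structures obtained this way coincide with the canonical ones on $\dcal/X$ and $\czero/\widetilde{X}$ defined in Section 1.4 via $S^\dcal$ and $S$.

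I expect the only genuine content to lie in the first step, namely in verifying that the overcategory refinement Lemma \ref{LR'} applies and that the $\dcal$-enrichment it produces is the canonical one coming from Proposition \ref{AX}; once that compatibility is granted, the passage to $\scal$ is formal. The point I would check most carefully is the coherence of the unit and counit of $(\widetilde{\cdot},R')$ as $\dcal$-enriched natural transformations, i.e.\ that the pullback-defined $R'$ is compatible with the hom-object structure of Proposition \ref{AX}. This is precisely what Lemma \ref{LR'} packages, so the argument ultimately reduces to correctly matching its hypotheses rather than to any new computation.
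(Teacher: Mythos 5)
Your proposal is correct and follows essentially the same route as the paper's proof: apply Lemma \ref{LR'} to $(\widetilde{\cdot}, R)$ using that $\widetilde{\cdot}$ preserves finite products (Remark \ref{suitable}), then change base along $S^\dcal$ via the identity $S^\dcal \circ R = S$ from Proposition \ref{adjoint1}. The extra verifications you flag (product-preservation of $S^\dcal$, compatibility of $R'$ with the hom-objects of Proposition \ref{AX}) are exactly what Lemma \ref{LR'} and the base-change formalism already package, so no gap remains.
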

Since $\mcal/X$ is a simplicial category, $\mcal/X$-homotopies are defined as 1-simplices of function complexes $S^\mcal \mcal/X(E,E')$ (see Section 4.3). We can easily see from (6.3) that an $\mcal/X$-homotopy is just an $\mcal$-homotopy over $X$ (or vertical $\mcal$-homotopy), and hence that the $\mcal/X$-homotopy set $[E,E']_{\mcal/X}$ is just the set of vertical $\mcal$-homotopy classes of morphisms from $E$ to $E'$ over $X$.\par
The simplicial functor $\widetilde{\cdot}:\dcal/X \longrightarrow \czero/\widetilde{X}$ introduced in Lemma \ref{enrich/X} defines the natural inclusion
\[
	\widetilde{\cdot}:S^\dcal \dcal/X(D,E) \longhookrightarrow S\czero/\widetilde{X}(\widetilde{D},\widetilde{E}),
\]
which specializes to the natural inclusion
\[
	\widetilde{\cdot}:S^\dcal \Gamma (X,E) \longhookrightarrow S\Gamma (\widetilde{X},\widetilde{E}).
\]
This is the map which are dealt with in Theorem \ref{dsectionsmoothing}. Note that $\pi_0(\widetilde{\cdot})$ is just the natural map from the vertical smooth homotopy classes of smooth sections to the vertical continuous homotopy classes of continuous sections.\par
Next, we give the following basic results for $\czero/X$ and $\dcal/X$, generalizing Proposition \ref{enrich2} and Corollary \ref{notenrich}.
\begin{prop}\label{enrich2/X}
	\begin{itemize}
	\item[{\rm (1)}] The $\scal$-category $\czero/X$ is both tensored and cotensored for any $X\in \czero$.
	\item[{\rm (2)}] The $\scal$-category $\dcal/X$ is neither tensored nor cotensored for any nonempty $X\in \dcal$.
	\end{itemize}
	\begin{proof}
		The results follows from Propositions \ref{enrich2}, \ref{tensor}, and \ref{tensor/X}.
	\end{proof}
\end{prop}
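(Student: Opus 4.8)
The plan is to read off both statements from Proposition \ref{tensor/X}, applied to the two realization--singular adjunctions $|\ |:\scal\rightleftarrows\czero:S$ and $|\ |_\dcal:\scal\rightleftarrows\dcal:S^\dcal$, in exactly the way Proposition \ref{enrich2} was deduced from Proposition \ref{tensor}. The single decisive input in each case is whether the realization functor preserves finite products; Proposition \ref{tensor/X} then converts this one fact into a statement about the $\scal$-enrichment of all the overcategories at once.

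For part (1) I would take $\ucal=\scal$, $\vcal=\czero$, and $(L,R)=(|\ |,S)$. The hypotheses of Proposition \ref{tensor/X} are met: $\scal$ and $\czero$ are cartesian closed (Remark \ref{suitable}(2)), $\czero$ is complete and hence has pullbacks (Remark \ref{suitable}(1)), and $|\ |$ preserves terminal objects. As recalled in the proof of Proposition \ref{enrich2}(1), the topological realization $|\ |$ preserves finite products, so condition (i) of Proposition \ref{tensor/X} holds. The implication (i)$\Rightarrow$(ii) then yields that $\czero/X$ is both a tensored and cotensored $\scal$-category for every $X\in\czero$.

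For part (2) I would take $\vcal=\dcal$ and $(L,R)=(|\ |_\dcal,S^\dcal)$. Here $\scal$ and $\dcal$ are cartesian closed (Proposition \ref{conven}(2)), $\dcal$ is complete (Proposition \ref{conven}(1)) and so has pullbacks, and $|\ |_\dcal$ preserves terminal objects since $|\Delta[0]|_\dcal=\Delta^0=\ast$. The crucial difference is that $|\ |_\dcal$ does \emph{not} preserve finite products (Remark \ref{proof}), so condition (i) of Proposition \ref{tensor/X} fails. By the equivalence (i)$\Leftrightarrow$(iii), condition (iii) fails as well; that is, there is no $X\in\dcal$ with $\dcal(\ast,X)\neq\emptyset$ for which $\dcal/X$ is tensored or cotensored. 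Finally, a morphism $\ast\longrightarrow X$ in $\dcal$ is precisely a point of $X$, so $\dcal(\ast,X)\neq\emptyset$ is equivalent to $X$ being nonempty. Hence $\dcal/X$ is neither tensored nor cotensored for every nonempty $X\in\dcal$.

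Once Proposition \ref{tensor/X} is invoked, the argument is essentially bookkeeping, and I do not expect any genuine obstacle. The only two inputs that are not purely formal are the failure of $|\ |_\dcal$ to preserve finite products --- supplied by Remark \ref{proof} together with the computation showing $|\Delta[1]\times\Delta[1]|_\dcal$ is not the product $|\Delta[1]|_\dcal\times|\Delta[1]|_\dcal$ --- and the translation of the side condition $\dcal(\ast,X)\neq\emptyset$ appearing in Proposition \ref{tensor/X}(iii) into the nonemptiness of $X$, which is what pins down the quantifier ``for any nonempty $X$'' in the statement.
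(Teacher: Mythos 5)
Your proposal is correct and follows essentially the same route as the paper: the paper's proof cites Propositions \ref{enrich2}, \ref{tensor}, and \ref{tensor/X}, which amounts precisely to your argument of feeding the product-preservation facts about $|\ |$ and $|\ |_\dcal$ (recorded in the proof of Proposition \ref{enrich2} via Remark \ref{proof}) into the equivalences of Proposition \ref{tensor/X}. Your added remark translating the side condition $\dcal(\ast,X)\neq\emptyset$ into nonemptiness of $X$ correctly accounts for the quantifier in part (2) and is a detail the paper leaves implicit.
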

Note that the tensor $K\otimes E$ and the cotensor $E^K$ of the $\scal$-category $\czero/X$ are given by
\[
K \otimes E = |K| \times E\ \text{and} \ E^{K} = E^{|K|} \tag{6.4}
\]
(see the proofs of Propositions \ref{AX} and \ref{tensor/X}).
\begin{cor}\label{notenrich/X}
	\begin{itemize}
	\item[{\rm (1)}] $\czero / X$ is a simplicial model category for any $X \in \czero$.
	\item[{\rm (2)}] $\dcal / X$ is not a simplicial model category for any nonempty $X \in \dcal$.
	\end{itemize}
	\begin{proof}
		{\rm (1)} By Proposition \ref{enrich2/X}(1), we have only to verify that $\czero/X$ satisfies axiom M7 in \cite[Definition 9.1.6]{Hi}. Since the tensor $K\otimes E$ is given by $K \otimes E = |K| \times E$ (see (6.4)), axiom M7 follows from Corollary \ref{notenrich}(1) and \cite[Proposition 9.3.7]{Hi}.\par
		{\rm (2)} The result follows from Proposition \ref{enrich2/X}(2).
	\end{proof}
\end{cor}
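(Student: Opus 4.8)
The plan is to test both overcategories directly against the definition of a simplicial model category recalled just before Corollary \ref{notenrich}. Recall that such a category must be simultaneously a simplicial category and a model category and must satisfy axioms M6 and M7 of \cite[Definition 9.1.6]{Hi}, where M6 demands that the underlying simplicial category be tensored and cotensored and M7 is the pushout-product (SM7) axiom. In both cases $\czero/X$ and $\dcal/X$ carry a simplicial category structure by Proposition \ref{AX} and the canonical overcategory model structure by \cite[Theorem 7.6.5(2)]{Hi}, so in each case only M6 and M7 need to be examined.

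First I would dispose of part (2), which is immediate: by Proposition \ref{enrich2/X}(2) the $\scal$-category $\dcal/X$ is neither tensored nor cotensored whenever $X$ is nonempty, so axiom M6 already fails, and hence $\dcal/X$ cannot be a simplicial model category. No further argument is required here.

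For part (1), axiom M6 is exactly the content of Proposition \ref{enrich2/X}(1), so the only remaining task is M7. Here I would exploit the explicit description of the tensor in $\czero/X$, namely $K\otimes E = |K|\times E$ from (6.4), together with the fact that the cofibrations and trivial cofibrations of $\czero/X$ are precisely those maps whose images under the forgetful functor $\czero/X\longrightarrow\czero$ are cofibrations and trivial cofibrations of $\czero$. Since the pushout-product occurring in M7 is then computed entirely on the underlying arc-generated spaces, the pushout-product axiom for $\czero/X$ reduces to the same axiom for $\czero$, which holds by Corollary \ref{notenrich}(1). This reduction is precisely what \cite[Proposition 9.3.7]{Hi} packages, so invoking it completes the verification of M7 and therefore of part (1).

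The only genuinely delicate point lies in this last reduction: one must be certain that the simplicial tensoring of the overcategory is literally inherited from that of $\czero$, so that the (trivial) cofibration behaviour needed in the pushout-product is detected on underlying spaces. The identity $K\otimes E = |K|\times E$ in (6.4) is exactly what guarantees this, and once it is available the result follows formally. In short, no new estimates or constructions are needed, since the substantive enrichment work has already been carried out in Propositions \ref{AX}, \ref{tensor/X}, and \ref{enrich2/X}; the corollary simply records the resulting dichotomy between $\czero$ and $\dcal$ at the level of overcategories.
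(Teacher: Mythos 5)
Your proposal is correct and follows essentially the same route as the paper: part (2) is immediate from the failure of M6 via Proposition \ref{enrich2/X}(2), and part (1) verifies M7 by using the tensor formula $K\otimes E = |K|\times E$ from (6.4) to reduce the pushout-product axiom to Corollary \ref{notenrich}(1) through \cite[Proposition 9.3.7]{Hi}. Your added remarks on the overcategory model structure and the detection of (trivial) cofibrations on underlying spaces simply make explicit what the paper's citation of \cite[Proposition 9.3.7]{Hi} encapsulates.
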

\begin{rem}\label{S/K}
	Note that $\scal/K$ is a tensored and cotensored $\scal$-category whose tensor $L\otimes X$ is given by $L\otimes X = L\times X$ (see Proposition \ref{AX}). Then, we can show that $\scal / K$ is a simplicial model category for any $K \in \scal$ by an argument similar to that in the proof of Corollary \ref{notenrich/X}(1).\\
\end{rem}
%%%%%%%%%%%%%%%%%%%%%%%%%%%%%%%%%%%%%
\if0
\begin{lem}
    Let $\mathcal{N}$ denote one of the categories $\mathcal{S}$, $\czero$, and $\mathcal{D}$, and $X$ an object of $\mathcal{N}$. 
    (1) $\ncal / X$ is a simplicial category and model category. In particular, $\czero/ X$ and $\dcal/ X$ has $S\czero / X (E, E')$ and $S^{\dcal} \dcal/ X (E, E')$, respectively, as function complexes.\\
    (2) If $\ncal$ is $\scal$ or $\czero$, then $\ncal/ X$ is a simplicial model category.
\end{lem}
    \begin{proof}
      (1) The overcategory $\mathcal{N}/X$ is a simplicial category by Corollary 1.3(1), Remark 1.5, and Proposition 2.7. $\mathcal{N}/X$ is a model category by \cite[Theorem 7.6.5(2)]{Hirsch}.\par
      (2) We show that $\czero/X$ is a simplicial model category; a similar argument applies to $\mathcal{S}/X$.\par
      We must verify that $\czero/X$ satisfies axioms M6 and M7 in \cite[Definition 9.1.6]{Hirsch}. For $E$, $E'\in \czero/X$ and $K\in\mathcal{S}$, we define the objects $E \otimes K$ and $E'^K$ of $\czero/X$ by
      \[
       E\otimes K = E\times |K|\:\:{\rm and}\:\: E'^K=\czero_X(X\times |K|,E')
      \]
      respectively. Then, we can verify axiom M6 using Lemmas \ref{lem7.1} and \ref{lem7.3}. Axiom M7 follows from Lemma 5.2(1) and \cite[Proposition 9.3.7]{Hirsch}.
  \end{proof}
\fi
%%%%%%%%%%%%%%%%%%%%%%%%%%%%%%%%%%
%  \begin{rem}
%    We can see that $S/K$ and $\czero/X$ are simplicial model categories for any $K\in S$ and any $X\in \czero$, and that $|\ |:S/K \leftrightarrows \czero/|K|:(S\:\cdot)'$ is an $S$-adjoint pair. These results generalize Lemma 5.2 (cf. Lemma 5.3).
%  \end{rem}
\subsection{Function complexes and homotopy function complexes for $\czero/X$ and $\mathcal{D}/X$}
In this subsection, we study the relationship between function complexes $S^{\mcal}\mcal/ X (D, E)$ and homotopy function complexes ${\rm map}_{\mcal / X} (D, E)$ for $\mathcal{M}/X$ $(= \czero / X \ \text{and} \ \dcal/ X)$, which leads us to the proof of Theorem \ref{dsectionsmoothing}. 
\par
First, we deal with the case of $\mathcal{M}=\czero$.
\begin{prop}\label{cfctcpx/X}
  Let $f:D\longrightarrow X$ and $p:E\longrightarrow X$ be objects of $\czero/X$. Suppose that $D$ is cofibrant in $\czero$ and that $p$ is a fibration in $\czero$. Then, the equality
  \[
  {\rm map}_{\czero/X}(D,E) = S\czero/X(D,E) 
  \]
  holds. In particular, for a cofibrant object $X$ and a fibration $p:E\longrightarrow X$ in $\czero$, the equality
  \[
 {\rm map}_{\czero/X}(X,E) = S\Gamma(X,E)
  \]
  holds.
\end{prop}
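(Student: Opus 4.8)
The plan is to transcribe the proof of Proposition \ref{cfctcpx} to the overcategory setting, relying on the same general principle: in a simplicial model category $\ecal$, the homotopy function complex ${\rm map}_\ecal(A,X)$ coincides with the function complex ${\rm Map}_\ecal(A,X)$ whenever $A$ is cofibrant and $X$ is fibrant (this is the comment recalled just before Proposition \ref{cfctcpx}). The essential new point—indeed the reason the hypothesis that $p$ be a fibration appears at all—is that, in contrast with $\czero$ itself, not every object of $\czero/X$ is fibrant.

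First I would record the identification of cofibrant and fibrant objects in the canonical model structure on the overcategory: an object $f:D\longrightarrow X$ is cofibrant in $\czero/X$ if and only if $D$ is cofibrant in $\czero$, and an object $p:E\longrightarrow X$ is fibrant in $\czero/X$ if and only if $p$ is a fibration in $\czero$. Both are standard facts about the model structure of \cite[Theorem 7.6.5(2)]{Hi} and were already used in the proof of Proposition \ref{Quillenequiv/X}. Thus the hypotheses of the proposition say precisely that $D$ is cofibrant and $E$ is fibrant as objects of $\czero/X$.

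Next I would invoke that $\czero/X$ is a simplicial model category (Corollary \ref{notenrich/X}(1)) whose tensor is given by $K\otimes E=|K|\times E$ (formula (6.4)). By \cite[Proposition 16.1.3]{Hi}, for the cofibrant object $D$ the cosimplicial object $D^\ast$ with $n^{\rm th}$ component $D\otimes\Delta[n]$ is a cosimplicial resolution of $D$. Since $E$ is fibrant in $\czero/X$, the homotopy function complex may therefore be computed as ${\rm map}_{\czero/X}(D,E)=\czero/X(D^\ast,E)$, whose set of $n$-simplices is $\czero/X(D\otimes\Delta[n],E)$. By the defining adjunction of the tensored simplicial category $\czero/X$, these are exactly the $n$-simplices of the function complex $S\czero/X(D,E)$, which yields the asserted equality ${\rm map}_{\czero/X}(D,E)=S\czero/X(D,E)$.

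Finally, the ``in particular'' clause follows by specializing to $D=X$ with $f=1_X$: when $X$ is cofibrant in $\czero$, the identity $1_X$ is a cofibrant object of $\czero/X$, so the first part applies with $D=X$, and $S\czero/X(X,E)=S\Gamma(X,E)$ by the definition of $\Gamma(X,\cdot)$ given in Section 6.2. I do not anticipate any serious obstacle; the argument is a direct generalization of the proof of Proposition \ref{cfctcpx}, the only subtlety being that fibrancy of $p$ must be carried along, which the hypothesis supplies.
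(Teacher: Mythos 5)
Your proposal is correct and follows essentially the same route as the paper's proof: identify cofibrant and fibrant objects of $\czero/X$ in terms of cofibrancy of $D$ and fibrancy of $p$ in $\czero$, then apply Corollary \ref{notenrich/X}(1) together with the general fact recalled before Proposition \ref{cfctcpx} (that in a simplicial model category the cosimplicial object $D\otimes\Delta[\ast]$ resolves a cofibrant $D$, so $\mathrm{map}$ and $\mathrm{Map}$ agree for fibrant targets). The paper merely states this more tersely; your spelled-out version, including the use of the tensor formula (6.4) and the specialization $D=X$ for the $\Gamma$ statement, is exactly what its two-line proof compresses.
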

  \begin{proof}
    Note that $h:Z\longrightarrow X$ is cofibrant in $\czero/X$ if and only if $Z$ is cofibrant in $\czero$ and that $h:Z\longrightarrow X$ is fibrant in $\czero/X$ if and only if $h$ is a fibration in $\czero$. Then, the result follows from Corollary \ref{notenrich/X}(1) and the comment before Proposition \ref{cfctcpx}.
  \end{proof}
Next, we establish a diffeological version of Proposition \ref{cfctcpx/X}. Since $\mathcal{D}/X$ is not a simplicial model category unlike $\czero/X$ (Corollary \ref{notenrich/X}), we must impose stronger assumptions on $X$ and $E$.
\begin{thm}\label{dfctcpx/X}
  Let $K$ be a simplicial set and $p:E\longrightarrow |K|_\mathcal{D}$ be a $\dcal$-numerable $F$-bundle in $\dcal$. Then, there exists a homotopy equivalence
  \[
 {\rm map}_{\mathcal{D}/|K|_\mathcal{D}}(|K|_\mathcal{D},E) \simeq S^\mathcal{D}\Gamma(|K|_\mathcal{D}, E)
  \]
  which is natural up to homotopy.
\end{thm}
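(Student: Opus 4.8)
The plan is to adapt Step 1 of the proof of Theorem \ref{dfctcpx} to the overcategory $\dcal/X$ with $X=|K|_\dcal$, using the Quillen equivalence $|\ |_\dcal\colon\scal/K\rightleftarrows\dcal/|K|_\dcal\colon(S^\dcal\cdot)'$ of Proposition \ref{Quillenequiv/X}(1). First I would build a cosimplicial resolution of the terminal object $X$ (that is, $1_X\colon X\to X$) of $\dcal/X$. Since $\scal/K$ is a simplicial model category (Remark \ref{S/K}) and its terminal object $1_K$ is cofibrant, the canonical cosimplicial object $(1_K)^{\ast}$, whose $q$-th term is the tensor $(\Delta[q]\times K\xrightarrow{proj}K)$, is a cosimplicial resolution of $1_K$ (\cite[Proposition 16.1.3]{Hi}). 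Applying the left Quillen functor $|\ |_\dcal$ yields a cosimplicial resolution $\mathbf{K}^{\ast}:=|(1_K)^{\ast}|_\dcal$ of $X$ in $\dcal/X$ (\cite[Proposition 16.2.1(1)]{Hi}), with $\mathbf{K}^q=(|\Delta[q]\times K|_\dcal\xrightarrow{|proj|_\dcal}X)$ and $\mathbf{K}^0=X$. Because $p\colon E\to X$ is a fiber bundle it is a fibration (Corollary \ref{bdlecpx}(1)), so $E$ is fibrant in $\dcal/X$ and ${\rm map}_{\dcal/X}(X,E)=\dcal/X(\mathbf{K}^{\ast},E)$. I would then introduce the bisimplicial set $B_{p,q}=\dcal/X(\Delta^p\otimes\mathbf{K}^q,E)$, whose $0$-th row $B_{0,\ast}$ is ${\rm map}_{\dcal/X}(X,E)$ and whose $0$-th column $B_{\ast,0}$ is $S^\dcal\dcal/X(X,E)=S^\dcal\Gamma(X,E)$, and reduce the statement, via \cite[Corollary 15.11.12]{Hi}, to proving that the horizontal operators $B_{0,\ast}\to B_{p,\ast}$ and the vertical operators $B_{\ast,0}\to B_{\ast,q}$ are weak equivalences.

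For the horizontal direction I would pass to the cotensor of the $\dcal$-category $\dcal/X$ (Proposition \ref{AX}). Writing $E^{\Delta^p}$ for the cotensor of $\Delta^p$ with $E$, adjunction gives $B_{p,\ast}\cong\dcal/X(\mathbf{K}^{\ast},E^{\Delta^p})$, and the horizontal operator is induced by the natural map $\varepsilon\colon E\to E^{\Delta^p}$ coming from $\Delta^p\to\ast$. The key point is that $\varepsilon$ is a $\dcal/X$-homotopy equivalence: evaluation at the barycenter defines ${\rm ev}_b\colon E^{\Delta^p}\to E$ over $X$ with ${\rm ev}_b\circ\varepsilon=1_E$, while a $\dcal$-contraction of $\Delta^p$ onto its barycenter (Lemma \ref{simplex}(1)), fed through the cotensor functor, produces a vertical $\dcal$-homotopy from $\varepsilon\circ{\rm ev}_b$ to $1_{E^{\Delta^p}}$. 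Since $E^{\Delta^p}\to X$ is again a fiber bundle (with fibre $\dcal(\Delta^p,F)$), hence fibrant, and since $\dcal/X(D,-)$ is a $\dcal$-functor carrying $\varepsilon$ to a $\dcal$-homotopy equivalence, it follows (cf.\ Lemma \ref{homotopyrel}) that $S^\dcal\dcal/X(D,E)\xrightarrow{\simeq}S^\dcal\dcal/X(D,E^{\Delta^p})$ for every $D$; in particular the horizontal operators are weak equivalences. This direction uses only the $\dcal$-contractibility of $\Delta^p$, not numerability.

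For the vertical direction I would reduce, exactly as in the absolute case, to showing that $|proj|_\dcal\colon\mathbf{K}^q\to\mathbf{K}^0=X$ induces a weak equivalence $S^\dcal\Gamma(X,E)\to S^\dcal\dcal/X(\mathbf{K}^q,E)$. Its underlying map $r\colon W_q:=|\Delta[q]\times K|_\dcal\to X$ is a $\dcal$-homotopy equivalence: the section $s=|(0)\times K|_\dcal$ is the realization of the trivial cofibration $(0)\times K\hookrightarrow\Delta[q]\times K$, hence a trivial cofibration in $\dcal$, so by Proposition \ref{trivialcofibr} it exhibits $X$ as a $\dcal$-deformation retract of $W_q$ and $r$ is its homotopy inverse. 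Identifying $\dcal/X(\mathbf{K}^q,E)$ with the section complex of the pulled-back bundle $r^{\ast}E\to W_q$, the vertical operator is precisely restriction $r^{\ast}$ along $r$.

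The hard part — and the sole place where the hypothesis that $E$ be $\dcal$-numerable is indispensable — is to promote the absolute $\dcal$-homotopy equivalence $r$ to an equivalence of section complexes, equivalently to make $|proj|_\dcal\colon\mathbf{K}^q\to X$ a $\dcal/X$-homotopy equivalence. The deformation retraction of Proposition \ref{trivialcofibr} does not descend to $\dcal/X$, because the contraction of an attached simplex onto its horn need not respect the structure map to $X$, and realizing the (product) simplicial contraction over $K$ is likewise obstructed since $|\ |_\dcal$ does not preserve products (Remark \ref{proof}). Instead I would invoke homotopy invariance of $\dcal$-numerable bundles: as $r^{\ast}E\to W_q$ is again $\dcal$-numerable and $s\circ r\simeq_{\dcal}1_{W_q}$, the covering-homotopy property of $\dcal$-numerable bundles (the diffeological Dold-type theorem underlying Proposition \ref{prop6.6}(2) and Lemma \ref{auto}) produces a bundle self-map of $r^{\ast}E$ covering this homotopy, making $r^{\ast}$ and $s^{\ast}$ mutually inverse up to homotopy on singular section complexes. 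This gives the vertical weak equivalences, and \cite[Corollary 15.11.12]{Hi} then yields $B_{0,\ast}\xrightarrow{\simeq}{\rm diag}\,B\xleftarrow{\simeq}B_{\ast,0}$, i.e.\ the desired homotopy equivalence ${\rm map}_{\dcal/X}(X,E)\simeq S^\dcal\Gamma(X,E)$; its naturality up to homotopy is inherited from that of the cosimplicial resolution and of the diagonal comparison, as in Theorem \ref{dfctcpx}.
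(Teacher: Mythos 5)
Your proposal is correct and follows essentially the same route as the paper: the same cosimplicial resolution $|K\times\Delta[\ast]|_\dcal$ obtained via the Quillen pair of Proposition \ref{Quillenequiv/X}(1), the same bisimplicial set with horizontal equivalences coming from the vertical $\dcal$-homotopy equivalence $E\to E^{\Delta^p}$ (Lemma \ref{EA} and Corollary \ref{bdlecpx}(1)) and vertical equivalences from $|proj|_\dcal$ being a $\dcal$-homotopy equivalence, concluded by \cite[Corollary 15.11.12]{Hi}. The homotopy invariance of section complexes that you derive from the Dold-type covering-homotopy property of $\dcal$-numerable bundles is exactly the paper's Lemma \ref{dgammahomotopy} (proved via \cite[Corollary 4.15]{CW17}), so your identification of where numerability enters matches the paper precisely.
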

\begin{rem}\label{unnecessary}
  We can prove that every cofibrant object of $\mathcal{D}$ is a $\mathcal{D}$-paracompact. Thus, the $\dcal$-numerability condition on $p:E\longrightarrow |K|_\mathcal{D}$ in Theorem \ref{dfctcpx/X} is actually unnecessary.
\end{rem}
For the proof of Theorem \ref{dfctcpx/X}, we need a few lemmas. First, recall the pullback functor $\vcal/Z \overset{f^\ast}{\longleftarrow} \vcal/X$ from Corollary \ref{adjoint/X}(3) and note that the morphism $\vcal/Z(f^\ast D, f^\ast E) \overset{f^\ast}{\longleftarrow} \vcal/X(D,E)$ between the hom-objects specializes to the morphism $\Gamma(Z, f^\ast E) \overset{f^\ast}{\longleftrightarrow} \Gamma(X,E)$.\\
%%%%%%%%%%%%%%%%%%%%%%%%%%%%%%%%%%
\if0
Given a simplicial set $K$, the realization functor $|\ |_{\dcal}: \scal \longrightarrow \dcal$ defines the functor
\[
|\ |_{\dcal} : \scal / K \longrightarrow \dcal/ |K|_{\dcal},
\]
which has a right adjoint. In fact, the right adjoint $(S^{\dcal} \cdot)': \dcal/ |K|_{\dcal} \longrightarrow \scal / K $ is defined by assigning to an object $p: E \longrightarrow |K|_{\dcal}$ the object $(S^{\dcal} p )': (S^{\dcal} E)' \longrightarrow K$ which is defined by the pullback diagram
\begin{center}
	\begin{tikzcd}
	(S^\mathcal{D} E)' \arrow[r,hook] \arrow[d] & S^\mathcal{D} E \arrow[d] \\
	K \arrow[r,hook,"i_K"] & S^\mathcal{D}|K|_\mathcal{D}
	\end{tikzcd}
\end{center}
(see Section 2.2 for the adjoint pair $(|\ |_{\dcal}, S^{\dcal})$ and recall that $i_K:K \longhookrightarrow S^\mathcal{D} |K|_\mathcal{D}$ is the unit of ($|\ |_\mathcal{D}$, $S^\mathcal{D}$)).
\fi
%%%%%%%%%%%%%%%%%%%%%%%%%%%%%%%%%%%%%%%%
\begin{lem}\label{dgammahomotopy}
  Let $p:E\longrightarrow X$ be a $\dcal$-numerable $F$-bundle in $\mathcal{D}$ and $f:Z\longrightarrow X$ be a $\mathcal{D}$-homotopy equivalence. Then, the induced map
  \[
  \Gamma(Z,f^\ast E)\overset{f^\ast}{\underset{}{\longleftarrow}} \Gamma(X,E)
  \]
  is a $\dcal$-homotopy equivalence.
  \begin{proof}
    Choose a $\mathcal{D}$-homotopy inverse $g$ of $f$ and consider the pullback diagrams
    \begin{center}
      \begin{tikzcd}
        g^\ast f^\ast E \arrow[r]\arrow[d] & f^\ast E \arrow[r]\arrow[d] & E \arrow[d]\\
        X \arrow[r,"g"] & Z \arrow[r,"f"] & X
      \end{tikzcd}
    \end{center}
    and the induced morphisms of $\dcal$
    \[
    \Gamma(X,g^\ast f^\ast E)\overset{g^\ast}{\longleftarrow} \Gamma(Z,f^\ast E) \overset{f^\ast}{\longleftarrow} \Gamma(X,E).
    \]\par
    Next, choose a $\mathcal{D}$-homotopy $H:X\times I \longrightarrow X$ connecting $1_X$ to $f\circ g$ and an isomorphism $E\times I \overset{\Phi}{\underset{\cong}{\longleftarrow}}H^\ast E$ over $X\times I$ such that $\Phi_{0} : = \Phi|_{X \times (0)}=1_E$ (see \cite[Corollary 4.15]{CW17}). Since $\Gamma(X \times I, E \times I) \cong \dcal(I, \Gamma(X, E))$ (Corollary \ref{isom/X}), we have the commutative diagram
    %%%%%%%%%%%%%%%%%%%%%%%%%%%%%% \hspace{}を使って無理やり直した
    \[
    \begin{tikzcd}
      \mathcal{D}(I,\Gamma(X,E))\arrow[d, "({i_0}^\sharp{,}{i_1}^\sharp)",swap] & \hspace{-1.5cm} \cong \Gamma(X\times I,E\times I)   & \Gamma(X\times I,H^\ast E)\arrow[l,"\Phi_\sharp"',"\cong"] & \Gamma(X,E) \arrow[l, "H^\ast",swap] \arrow[dlll, "(1_{\Gamma(X,E)}{,}\Phi_{1\sharp}\circ g^\ast \circ f^\ast)"]\\
      \Gamma(X,E)\times\Gamma(X,E),
    \end{tikzcd}
    \]
    %%%%%%%%%%%%%%%%%%%%%%%%%%%%%%
    which shows that $\Phi_{1 \sharp} \circ g^{\ast} \circ f^{\ast} \simeq_{\dcal} 1_{\Gamma(X, E)}$, and hence that $f^{\ast}$ has a left $\dcal$-homotopy inverse. Since $\Phi_{1 \sharp}$ is an isomorphism, $g^{\ast}$ has a right $\dcal$-homotopy inverse. Since $g^{\ast}$ also has a left $\dcal$-homotopy inverse, $g^{\ast}$, and hence $f^{\ast}$ is a $\dcal$-homotopy equivalence.
  \end{proof}
\end{lem}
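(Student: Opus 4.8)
The plan is to reduce the statement to the two structural facts available in the diffeological setting: the smooth homotopy invariance of $\dcal$-numerable bundles and the exponential law for section objects. First I would choose a $\dcal$-homotopy inverse $g\colon X\longrightarrow Z$ of $f$, so that $f\circ g\simeq_\dcal 1_X$ and $g\circ f\simeq_\dcal 1_Z$, and form the pullbacks $f^\ast E\longrightarrow Z$ and $g^\ast f^\ast E=(f\circ g)^\ast E\longrightarrow X$. Pulling back sections yields the chain
\[
\Gamma(X,E)\xrightarrow{\ f^\ast\ }\Gamma(Z,f^\ast E)\xrightarrow{\ g^\ast\ }\Gamma(X,(f\circ g)^\ast E),
\]
and the whole problem becomes showing that each of these maps is, up to an isomorphism of section objects, $\dcal$-homotopic to the identity.

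The key step is to convert the geometric homotopy $H\colon X\times I\longrightarrow X$ from $1_X$ to $f\circ g$ into an actual $\dcal$-homotopy of section objects. For this I would invoke the smooth homotopy invariance of $\dcal$-numerable bundles (\cite[Corollary 4.15]{CW17}) to obtain an isomorphism $\Phi\colon H^\ast E\xrightarrow{\cong}E\times I$ over $X\times I$ normalized by $\Phi|_{X\times\{0\}}=1_E$; restricting at time $1$ gives an isomorphism $\Phi_1\colon (f\circ g)^\ast E\xrightarrow{\cong}E$ over $X$. The second ingredient is Corollary \ref{isom/X}, specialized with $V=I$ and the identity bundle over $X$, which furnishes the natural isomorphism $\Gamma(X\times I,E\times I)\cong\dcal(I,\Gamma(X,E))$; this identifies sections over the cylinder with $\dcal$-homotopies of sections. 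Composing the pullback map $H^\ast$ with $\Phi_\sharp$ and reading the result through this isomorphism produces a single $\dcal$-homotopy whose two endpoints are $1_{\Gamma(X,E)}$ (coming from $\Phi_0=1_E$ and $H_0=1_X$) and $\Phi_{1\sharp}\circ g^\ast\circ f^\ast$ (coming from $\Phi_1$ and $H_1=f\circ g$). Hence $\Phi_{1\sharp}\circ g^\ast\circ f^\ast\simeq_\dcal 1_{\Gamma(X,E)}$.

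Finally I would extract two-sided invertibility from this. The displayed $\dcal$-homotopy shows that $f^\ast$ has a left $\dcal$-homotopy inverse and, since $\Phi_{1\sharp}$ is an isomorphism, that $g^\ast$ has a right $\dcal$-homotopy inverse. Running the identical construction with the bundle $f^\ast E\longrightarrow Z$ in place of $E$ and the homotopy $g\circ f\simeq_\dcal 1_Z$ shows that $g^\ast$ also has a left $\dcal$-homotopy inverse, so $g^\ast$ is a $\dcal$-homotopy equivalence; combined with $g^\ast\circ f^\ast\simeq_\dcal\Phi_{1\sharp}^{-1}$ this forces $f^\ast$ to be a $\dcal$-homotopy equivalence as well.

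The main obstacle is entirely concentrated in the first part of the middle paragraph: one needs the homotopy $H$ between $1_X$ and $f\circ g$ to lift to a bundle isomorphism over the cylinder in the smooth category, which is precisely where the $\dcal$-numerability of $p$ enters, through \cite[Corollary 4.15]{CW17}. Once that lift and the currying isomorphism $\Gamma(X\times I,E\times I)\cong\dcal(I,\Gamma(X,E))$ are in hand, the remaining manipulations—tracking the two endpoints and deducing a genuine homotopy equivalence from one-sided inverses—are routine.
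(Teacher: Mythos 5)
Your proposal is correct and follows essentially the same route as the paper's proof: the same use of \cite[Corollary 4.15]{CW17} to lift the homotopy $H\colon 1_X\simeq_\dcal f\circ g$ to a bundle isomorphism $\Phi\colon H^\ast E\cong E\times I$ with $\Phi_0=1_E$, the same currying isomorphism $\Gamma(X\times I,E\times I)\cong\dcal(I,\Gamma(X,E))$ from Corollary \ref{isom/X}, and the same extraction of $\Phi_{1\sharp}\circ g^\ast\circ f^\ast\simeq_\dcal 1_{\Gamma(X,E)}$ followed by the symmetric argument for $g\circ f\simeq_\dcal 1_Z$ to upgrade the one-sided inverses. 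You even make explicit the step (rerunning the construction for $f^\ast E\longrightarrow Z$) that the paper leaves implicit in its final sentence.
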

Recall from the proof of Proposition \ref{AX} the definition of the cotensor in the $\vcal$-category $\vcal/X$.
\begin{lem}\label{EA}
  If $p:E\longrightarrow X$ be an $F$-bundle in $\mathcal{D}$, then the cotensor $p_A:E^A\longrightarrow X$ is a $\mathcal{D}(A,F)$-bundle in $\mathcal{D}$ for any $A\in \dcal$.
  \begin{proof}
  	We can identify an local trivialization $E |_{U} \cong U \times F$ with a pullback diagram in $\dcal$
  	\[
  	\begin{tikzcd}
  	U \times F \arrow[hook]{r} \arrow[swap]{d}{proj}  & E \arrow{d}{p} \\
  	U \arrow[hook]{r} & X
  	\end{tikzcd}\tag{6.5}
  	\]
  	(see the proof of Lemma \ref{bdleforget2}). Since $\dcal(A, \cdot)$ preserves limits (Proposition \ref{conven}), we obtain from pullback diagram (6.5) the pullback diagrams in $\dcal$
  	\[
  	\begin{tikzcd}
  	U \times \dcal(A, F) \arrow{r} \arrow{d}{proj} & \dcal(A, U) \times \dcal (A, F) \arrow{d}{proj} \arrow[hook]{r}   & \dcal(A, E) \arrow{d}{p_{\sharp}}\\
  	U \arrow{r} & \dcal(A, U) \arrow[hook]{r} & \dcal(A, X),  
  	\end{tikzcd}
  	\]
  	which give a local trivialization of $p_{A}$ (see \cite[Exercise 8 on page 72]{Mac}).
  \end{proof}
\end{lem}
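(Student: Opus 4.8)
The plan is to trivialize $p_A$ over the very cover that trivializes $p$, by pushing the trivialization squares of $p$ through the limit-preserving functor $\dcal(A,-)$ and then chasing two pullback squares.

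Recall from the proof of Proposition~\ref{AX} that $p_A:E^A\longrightarrow X$ is defined by the pullback of $X\xrightarrow{\ c\ }\dcal(A,X)\xleftarrow{\ p_\sharp\ }\dcal(A,E)$, where $p_\sharp$ is postcomposition with $p$ and the bottom morphism $c$ sends $x\in X$ to the constant plot $A\to X$ at $x$ (the morphism induced by $A\to\ast$). Fix a trivializing open cover $\{U_i\}$ of $X$, so that each inclusion $\iota_i:U_i\hookrightarrow X$ is an open $\dcal$-embedding sitting at the foot of a pullback square with $E|_{U_i}\cong U_i\times F$ at the top and the projection on the left (as in the proof of Lemma~\ref{bdleforget2}). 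First I would note that $p_A^{-1}(U_i)=E^A|_{U_i}$ is obtained by pulling the defining square of $E^A$ back along $\iota_i$, so by the pasting lemma for pullbacks $E^A|_{U_i}$ is the pullback of $U_i\xrightarrow{\ c_i\ }\dcal(A,X)\xleftarrow{\ p_\sharp\ }\dcal(A,E)$, where $c_i$ is the restriction of $c$.

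Next I would apply $\dcal(A,-)$ to the trivialization square of $p$ over $U_i$. Since $\dcal$ is cartesian closed (Proposition~\ref{conven}(2)), $\dcal(A,-)$ is a right adjoint and hence preserves this pullback; combined with the exponential law $\dcal(A,U_i\times F)\cong\dcal(A,U_i)\times\dcal(A,F)$, this presents $\dcal(A,U_i)\times\dcal(A,F)$ as the pullback of $\dcal(A,E)\xrightarrow{\ p_\sharp\ }\dcal(A,X)\xleftarrow{\ (\iota_i)_\sharp\ }\dcal(A,U_i)$, with projection onto the $\dcal(A,U_i)$-factor as structure map. The decisive observation is that $c_i$ factors as $U_i\xrightarrow{\ c'\ }\dcal(A,U_i)\xrightarrow{\ (\iota_i)_\sharp\ }\dcal(A,X)$, because a constant plot taking a value in $U_i$ is the same whether read into $U_i$ or into $X$; here $c'$ is the constant-plot morphism for $U_i$. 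A second application of the pasting lemma then identifies $E^A|_{U_i}$ with the pullback of $\dcal(A,U_i)\times\dcal(A,F)\xrightarrow{\ \mathrm{proj}\ }\dcal(A,U_i)\xleftarrow{\ c'\ }U_i$, which is $U_i\times\dcal(A,F)$ with $p_A|_{U_i}$ the projection onto $U_i$. This is the required local trivialization, so $p_A$ is a $\dcal(A,F)$-bundle.

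The computation is essentially a double diagram chase, and the only real care is bookkeeping: one must check that the two pasting steps leave the left-hand vertical maps equal to the appropriate projections (so that the final isomorphism is an isomorphism in $\dcal/U_i$, not merely an abstract diffeomorphism) and verify the factorization $c_i=(\iota_i)_\sharp\circ c'$ of the constant-plot morphism. No input beyond cartesian closedness, the exponential law, and the open-embedding behaviour already used in the proof of Lemma~\ref{bdleforget2} is needed.
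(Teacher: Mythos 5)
Your proof is correct and is essentially the paper's own argument: the paper likewise applies the limit-preserving functor $\dcal(A,\cdot)$ to the trivialization pullback square (6.5) and pastes the resulting square with the pullback of the projection $\dcal(A,U)\times\dcal(A,F)\longrightarrow\dcal(A,U)$ along the constant-plot morphism $U\longrightarrow\dcal(A,U)$, citing the pasting lemma \cite[Exercise 8 on page 72]{Mac}. Your factorization $c_i=(\iota_i)_\sharp\circ c'$ and the identification of $E^A|_{U_i}$ with the restricted defining pullback are exactly the bookkeeping steps the paper leaves implicit in its two-square diagram.
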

\begin{rem}\label{cgammahomotopy}
	Lemmas \ref{dgammahomotopy} and \ref{EA} remain true even if the category $\dcal$ is replaced by $\czero$, since the arguments in their proofs also apply to the arc-generated context. The arc-generated version of Lemma \ref{dgammahomotopy} is also needed in the proof of Theorem \ref{dsectionsmoothing}.
	%This fact is used in the proof of not Theorem \ref{dfctcpx/X} but Theorem \ref{dsectionsmoothing}.
\end{rem}
\begin{proof}[Proof of Theorem \ref{dfctcpx/X}] Recall the Quillen pair
  \[
  |\ |_\mathcal{D}:\scal/K \rightleftarrows \mathcal{D}/|K|_\mathcal{D}:(S^\mathcal{D}\cdot)'
  \]
 (Proposition \ref{Quillenequiv/X}(1)). Since $\mathcal{S}/K$ is a simplicial model category (Remark \ref{S/K}), the canonical morphism $K^\ast (:= K \times \Delta[\ast]) \longrightarrow cK$ is a cosimplicial resolution of the terminal object $K$ of $\scal / K$, and hence $|K^\ast|_\mathcal{D}\longrightarrow c|K|_\mathcal{D}$ is a cosimplicial resolution of the terminal object $|K|_{\dcal}$ of $\dcal / |K|_{\dcal}$ (see \cite[Proposition 16.2.1]{Hi}). Hence, we have
  \[
  {\rm map}_{\mathcal{D}/|K|_\mathcal{D}}(|K|_\mathcal{D},E)=\mathcal{D}/|K|_\mathcal{D}(|K^\ast|_\mathcal{D},E)
  \]
(see Corollary \ref{bdlecpx}(1)). Thus, we have only to establish a homotopy equivalence
\[
	\dcal/|K|_\dcal (|K^\ast|_\dcal, E) \simeq S^\dcal \Gamma (|K|_\dcal, E).
\]

Consider the bisimplicial set
  \[
  S^\mathcal{D}\mathcal{D}/|K|_\mathcal{D}(|K^\ast|_\mathcal{D},E)=\mathcal{D}/|K|_\mathcal{D}(\Delta^\ast \times |K^\ast|_\mathcal{D},E)
  \]
  and note that
  \[
  \mathcal{D}/|K|_\mathcal{D}(\Delta^0 \times |K^\ast|_\mathcal{D}, E)\cong \mathcal{D}/|K|_\mathcal{D}(|K^\ast|_\mathcal{D},E),
  \]
  \[
  \mathcal{D}/|K|_\mathcal{D}(\Delta^\ast \times |K^0|_\mathcal{D}, E)\cong S^\mathcal{D}\Gamma(|K|_\mathcal{D},E)
  \]
  hold (see (6.3)).
  
  We can observe from (6.3) that the canonical map $E \longrightarrow E^{\Delta^{p}}$ is a vertical $\dcal$-homotopy equivalence, and hence, a weak equivalence between fibrant objects of $\mathcal{D}/|K|_\mathcal{D}$ (see Lemma \ref{EA} and Corollary \ref{bdlecpx}(1)). Thus, the map
  \[
  \mathcal{D}/|K|_\mathcal{D} (|K^\ast|_\mathcal{D},E)\longrightarrow \mathcal{D}/|K|_\mathcal{D}(|K^\ast|_\mathcal{D},E^{\Delta^p})=\mathcal{D}/|K|_\mathcal{D}(\Delta^p\times |K^\ast|_\mathcal{D},E)
  \]
  is a weak equivalence (see \cite[Theorem 17.6.3(1)]{Hi}), and hence the horizontal face and degeneracy operators of $\mathcal{D}/|K|_\mathcal{D}(\Delta^\ast \times |K^\ast|_\mathcal{D},E)$ are weak equivalences.

  Since the map $|proj|_{\dcal}:|K\times \Delta[q]|_\mathcal{D}\longrightarrow |K|_\mathcal{D}$ is a $\mathcal{D}$-homotopy equivalence (Lemma \ref{Quillenpairs}, \cite[Corollary 7.7.2]{Hi} and Corollary \ref{4equiv}), we have the homotopy equivalences
    \begin{eqnarray*}
  	\dcal / |K|_{\dcal} (\Delta^{\ast} \times |K \times \Delta[q]|_{\dcal}, E) & \cong & S^{\dcal} \dcal / |K|_{\dcal} (|K \times \Delta[q]|_{\dcal}, E)\\
  	& \cong & S^{\dcal} \Gamma (|K \times \Delta[q]|_{\dcal}, {|proj|_{\dcal}}^{\ast}E)\\
  	& \simeq & S^{\dcal} \Gamma (|K|_{\dcal}, E)
  \end{eqnarray*}
  (Corollary \ref{adjoint/X}(3) and Lemma \ref{dgammahomotopy}), from which we see that the vertical face and degeneracy operators of $\mathcal{D}/|K|_\mathcal{D}(\Delta^\ast \times |K^\ast|_\mathcal{D},E)$ are weak equivalences.
  
  From these results, we have the diagram consisting of weak equivalences
  \[
  S^\mathcal{D}\Gamma(|K|_\mathcal{D},E)\longrightarrow\:{\rm diag}\: \mathcal{D}/|K|_\mathcal{D}(\Delta^\ast \times |K^\ast|_\mathcal{D},E) \longleftarrow \mathcal{D}/|K|_\mathcal{D}(|K^\ast|_\mathcal{D},E)
  \]
  (\cite[Corollary 15.11.12]{Hi}), which completes the proof.
\end{proof}
\subsection{Proof of Theorem \ref{dsectionsmoothing}}
In this subsection, we give a proof of Theorem \ref{dsectionsmoothing}.
\begin{proof}[Proof of Theorem \ref{dsectionsmoothing}]
  Note that $p_X:|S^\mathcal{D}X|_\mathcal{D}\longrightarrow X$ is a $\mathcal{D}$-homotopy equivalence and hence $\widetilde{p_X}:|S^\dcal X| \longrightarrow \widetilde{X}$ is a $\czero$-homotopy equivalence (Theorem \ref{Quillenequiv}(1) and Corollary \ref{4equiv}). Use the local triviality of $E$ to see that the canonical map $\widetilde{p_X^\ast E} \longrightarrow \widetilde{p_X}^\ast \widetilde{E}$ is an isomorphism in $\czero/|S^\dcal X|$ (cf. the proof of Lemma \ref{bdleforget2}). Then, by Lemma \ref{dgammahomotopy} and Remark \ref{cgammahomotopy}, we may assume that $X=|K|_\mathcal{D}$ for some $K\in \scal$. Thus, we consider the cosimplicial resolution $|K^\ast|_\mathcal{D}\longrightarrow c|K|_\mathcal{D}$ of the terminal object $|K|_{\dcal}$ of $\dcal/|K|_{\dcal}$ (see the proof of Theorem \ref{dfctcpx/X}).
  Using the Quillen pair
  \[
  \widetilde{\cdot} : \dcal / |K|_{\dcal} \rightleftarrows \czero / |K| : R'
  \]
  (Proposition \ref{Quillenequiv/X}(2)), we have the isomorphism between the homotopy function complexes
  \[
  \dcal / |K|_{\dcal} (|K^{\ast}|_{\dcal}, R'\widetilde{E}) \xrightarrow[\cong]{} \czero/|K| (|K^{\ast}|, \widetilde{E})
  \]
  (see Lemma \ref{bdleforget2}, Corollary \ref{bdlecpx}(1), and \cite[Proposition 17.4.16]{Hi}). Since $R'\widetilde{E}\longrightarrow |K|_\dcal$ is a fibration in $\dcal$ with fiber $R\widetilde{F}$, $id : E \longrightarrow R'\widetilde{E}$ is a weak equivalence between fibrant objects in $\dcal / |K|_{\dcal}$ (see \cite[Lemma 9.6(1)]{origin}). Thus, we see that the natural map
  \[
  	\dcal/ |K|_\dcal (|K^\ast|_\dcal, E) \longrightarrow \dcal/|K|_\dcal(|K^\ast|_\dcal, R'\widetilde{E}),
  \]
  and hence the natural map
  \[
  \dcal / |K|_{\dcal}(|K^{\ast}|_{\dcal}, E) \longrightarrow \czero / |K| (|K^{\ast}|, \widetilde{E})
  \]
  is a weak equivalence, which shows that
  \[
  S^{\dcal} \Gamma (|K|_{\dcal}, E) \longrightarrow S\Gamma (|K|, \widetilde{E})
  \]
  is a weak equivalence (see Proposition \ref{cfctcpx/X} and Theorem \ref{dfctcpx/X}).
\end{proof}
\section{Dwyer-Kan equivalence between $(\mathsf{P}\dcal G / X)_{\rm num}$ and $(\mathsf{P}\czero \widetilde{G} / \widetilde{X})_{\rm num}$}
In this section, we establish the Dwyer-Kan equivalence between $(\mathsf{P}\dcal G / X)_{\rm num}$ and $(\mathsf{P}\czero \widetilde{G} / \widetilde{X})_{\rm num}$ under the conditions that $X\in \wcal_\dcal$ and that $G\in \vcal_\dcal$ (Theorem \ref{dDK}). After introducing the notion of a groupoid enriched over a cartesian closed category, we use the smoothing theorem for sections (Theorem \ref{dsectionsmoothing}) to prove a smoothing result for gauge transformations, which along with Theorem \ref{dbdlesmoothing} implies Theorem \ref{dDK}.
\par\indent
Recall that $\mcal$ denotes one of the categories $\czero$ and $\dcal$ (see Section 1.4). 
\subsection{Enrichment of categories embedded into $\mcal$}\label{7.1}
Let $\acal$ and $\bcal$ be $\mcal$-categories and $I : \acal \longhookrightarrow \bcal$ an embedding (i.e., a faithful functor). We say that $I: \acal \longhookrightarrow \bcal$ is an $\mcal$-embedding if the inclusion $\acal (A, A') \longhookrightarrow \bcal(IA, IA')$ is an $\mcal$-embedding for any $A, A' \in \acal$ (see Remark \ref{convenrem}(1)).

If a category $\acal$ has an embedding $\acal \longhookrightarrow \bcal$ into an $\mcal$-category $\bcal$, then $\acal$ has the canonical $\mcal$-category structure for which $\acal \longhookrightarrow \bcal$ is an $\mcal$-embedding (see Proposition \ref{conven} and Remark \ref{suitable}). Thus, we have the commutative diagram of $\mcal$-embeddings
\[
\begin{tikzcd}
 & & \mcal {G} \arrow[hook]{dr} & \\
 \mathsf{P} \mcal {G} /X \arrow[hook]{r} & \mcal {G} / X \arrow[hook]{ur} \arrow[hook]{dr} & & \mcal\\
  & & \mcal / X \arrow[hook]{ur} &   \tag{7.1}
\end{tikzcd}
\]
for every $X \in \mcal$ and every group $G$ in $\mcal$ (see Definition \ref{bdle} for the categories in (7.1)). Note that the hom-object $\mcal/X(E,E')$ of $\mcal/X$ introduced here coincides with that introduced by pullback diagram (6.1). Since every $\mcal$-category is also an $\scal$-category via the singular functor $S^{\mcal} : \mcal \longrightarrow \scal$ (Proposition \ref{adjoint1}), we can regard (7.1) as a commutative diagram of $\scal$-categories. By Definition \ref{bdle}, $\mathsf{P}\mcal G/X$ is an $\mcal$-full subcategory and hence an $\scal$-full subcategory of $\mcal G/X$.\par
%%%%%%%%%%%%%%
\if0
By Definition \ref{bdle}, $\mathsf{P} \mcal {G} / X$ is an $\mcal$-full subcategory of $\mcal {G} / X$. For $E, E' \in \mcal {G} / X$, the commutative diagram of hom-objects
\[
\begin{tikzcd}
 & \mcal {G} (E, E') \arrow[hook]{dr} & \\
 \mcal {G} / X(E, E') \arrow[hook]{ur} \arrow[hook]{dr} & & \mcal (E, E')\\
 & \mcal/ X (E, E') \arrow[hook]{ur} &
\end{tikzcd}
\]
is a pullback diagram in $\mcal$, which gives a pullback diagram in $\scal$ via the singular functor $S^{\mcal}$.\par
\fi
%%%%%%%%%%%%%%
Assume given a commutative diagram of categories 
\[
\begin{tikzcd}
	\acal \arrow{r}{F} \arrow[hook']{d} & \acal' \arrow[hook']{d}\\
	\bcal \arrow{r}{\Phi} & \bcal'
\end{tikzcd}
\]
in which the vertical arrows are embeddings and the lower horizontal arrow is an $\mcal$-functor between $\mcal$-categories. Then, $F$ is an $\mcal$-functor with respect to the canonical $\mcal$-subcategory structures of $\acal$ and $\acal'$. Recall that if $\czero$ is regarded as a $\dcal$-category via $R:\czero\longrightarrow \dcal$, then $\widetilde{\cdot}:\dcal\longrightarrow \czero$ is a $\dcal$-functor (see the proof of Lemma \ref{enrich}). Then, we see that the $\dcal$-functor $\widetilde{\cdot}:\dcal\longrightarrow \czero$ restricts to the $\dcal$-(and hence $\scal$-)functors $\widetilde{\cdot}:\dcal/X\longrightarrow \czero/\widetilde{X}, \widetilde{\cdot}:\dcal G\longrightarrow \czero {\widetilde{G}}$, $\widetilde{\cdot}:\dcal G/X \longrightarrow \czero {\widetilde{G}}/\widetilde{X}$, and $\widetilde{\cdot}:\mathsf{P}\dcal G/X\longrightarrow \mathsf{P}\czero {\widetilde{G}}/\widetilde{X}$.
\begin{rem}\label{mfdenrich}
	Since $C^{\infty}$ is a full subcategory of $\dcal$ (Proposition \ref{dmfd}), the categories in (7.1) are also $\dcal$-categories and hence $\scal$-categories even if $\mcal$ and $X$ are replaced with $C^{\infty}$ and $M$ respectively. Especially, $C^{\infty}$, $C^{\infty} / M$, and $\mathsf{P} C^{\infty} {G}/ M$ are $\dcal$-(and hence $\scal$-)full subcategories of $\dcal$, $\dcal / M$, and $\mathsf{P}\dcal {G} / M$, respectively, and their hom-objects are denoted by $C^{\infty}(N, N')$, $C^{\infty}/M (E, E')$, and $\mathsf{P} C^{\infty} {G}/ M (P, P')$, respectively (see Remark \ref{homobj}).
\end{rem}
\subsection{Enriched groupoid $\mathsf{P}\mcal {G} / X$}
In this subsection, we introduce the notion of a groupoid enriched over a cartesian closed category and show that $\mathsf{P}\mcal {G} / X$ is a groupoid enriched over $\mcal$. For a principal $G$-bundle $\pi: P \longrightarrow X$ in $\mcal$, the gauge group ${\rm Gau}_{\mcal}(P)$ is introduced as the automorphism group of $P$ in the $\mcal$-groupoid $\mathsf{P}\mcal {G} / X$.
\begin{defn}\label{V-gpd}
	Let $\vcal$ be a cartesian closed category. A $\vcal$-category $\acal$ is called a $\vcal$-groupoid if for any pair $A, A' \in \acal$, there exists a morphism of $\vcal$
	\[
	\cdot^{-1} : \acal(A, A') \longrightarrow \acal(A', A)
	\]
	such that the following diagram commutes:
	\[
	\begin{tikzcd}
	\acal(A', A) \times \acal(A, A') \arrow[swap]{d}{\text{composition}}& & \arrow{d} \acal(A, A') \arrow[swap]{ll}{(\cdot^{-1}, 1_{\acal(A, A')})} \arrow{rr}{(1_{\acal(A, A')},\ \cdot^{-1})} & & \acal(A, A') \times \acal(A', A) \arrow{d}{\text{composition}} \\ 
	\acal(A', A') & & \arrow{ll}{\text{unit}} \ast \arrow[swap]{rr}{\text{unit}} & & \acal(A, A).
	\end{tikzcd}
	\]
\end{defn}
For an object $A$ of a $\vcal$-groupoid $\acal$, the {\sl automorphism group} ${\rm Aut}_{\acal}(A)$ of $A$ is defined by
\[
{\rm Aut}_{\acal} (A) = \acal(A, A),
\]
which is a group in $\vcal$. $\scal$-groupoids are just simplicial groupoids in the sense of \cite{DK}.
\begin{lem}\label{gaugegp}
	The $\mcal$-category $\mathsf{P}\mcal {G} / X$ is an $\mcal$-groupoid, and hence an $\scal$-groupoid via the singular functor $S^{\mcal} : \mcal \longrightarrow \scal$.
\end{lem}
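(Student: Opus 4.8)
The plan is to construct the inversion morphism $\cdot^{-1}$ required by Definition \ref{V-gpd} and then check that the coherence square commutes, the bulk of the work being the former. By Lemma \ref{gpd} the underlying ordinary category of $\mathsf{P}\mcal G/X$ is a groupoid, so for each pair $E,E'$ the assignment $\psi\mapsto \psi^{-1}$ is already defined as a map of sets $\cdot^{-1}:\mathsf{P}\mcal G/X(E,E')\longrightarrow \mathsf{P}\mcal G/X(E',E)$. Granting for the moment that this is a morphism of $\mcal$, the coherence square commutes automatically: the two legs out of $\mathsf{P}\mcal G/X(E,E')$ have underlying set maps $\psi\mapsto \psi\circ\psi^{-1}=1_{E'}$ and $\psi\mapsto \psi^{-1}\circ\psi=1_{E}$, which agree with the underlying maps of the unit legs, and since $\mcal$ is concrete and the enriched composition and unit morphisms have the expected underlying set maps, faithfulness of the underlying set functor forces the square to commute in $\mcal$. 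Thus everything reduces to showing that $\cdot^{-1}$ is a morphism of $\mcal$.

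First I would reduce this to a smoothness/continuity statement about a single evaluation-type map. Recall from \S\ref{7.1} and the pullback diagram (6.1) that the chain of $\mcal$-embeddings in (7.1) exhibits $\mathsf{P}\mcal G/X(E',E)$ as a $\ccal$-subspace of $\mcal(E',E)$, i.e.\ it carries the initial structure for its inclusion. Writing $W=\mathsf{P}\mcal G/X(E,E')$, the map $\cdot^{-1}$ is therefore a morphism of $\mcal$ if and only if the composite $W\longrightarrow \mcal(E',E)$ is one, and by cartesian closedness of $\mcal$ this holds if and only if its adjoint $\theta:W\times E'\longrightarrow E$, $(\psi,e')\mapsto \psi^{-1}(e')$, is a morphism of $\mcal$. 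So the goal becomes: $\theta$ is a morphism.

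I would establish this by local triviality. The inclusion $W\hookrightarrow \mcal(E,E')$ is a morphism, so its adjoint $\mathrm{ev}:W\times E\longrightarrow E'$, $(\psi,e)\mapsto \psi(e)$, is a morphism of $\mcal$. Choose an open cover $\{U\}$ of $X$ trivializing both $E$ and $E'$ (intersect a trivializing cover of each). Over such a $U$, a $G$-equivariant map of trivial bundles is left translation by a map into $G$, so $\mathrm{ev}$ takes the local form $(\psi,(u,g))\mapsto (u,\alpha_\psi(u)\,g)$; precomposing with $(\psi,u)\mapsto(\psi,(u,1_G))$ and projecting shows that $(\psi,u)\mapsto \alpha_\psi(u)$ is a morphism $W\times U\longrightarrow G$. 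Since $G$ is a group in $\mcal$, its inversion $G\to G$ and multiplication $G\times G\to G$ are morphisms, and hence the local form $(\psi,(u,g))\mapsto(u,\alpha_\psi(u)^{-1}g)$ of $\theta$ on $W\times E'|_U$ is built entirely from morphisms and is itself a morphism of $\mcal$. As the opens $W\times E'|_U$ (with $E'|_U=(p')^{-1}(U)$) cover $W\times E'$ and morphisms are detected on open covers in both cases (the Locality axiom of Definition \ref{diffeological} for $\dcal$, the pasting lemma for $\czero$), the map $\theta$ is a morphism, which finishes the argument.

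The hard part will be exactly this morphism property of $\theta$: one must see that the fiberwise left-translation functions $\alpha_\psi$ depend on $\psi$ as a morphism, and the clean way to get this is to read them off the universal evaluation map $\mathrm{ev}$ rather than trying to differentiate/continuously vary inverses by hand; the role of $G$ being a group \emph{object} of $\mcal$ (so that $G\to G$, $g\mapsto g^{-1}$, is a morphism) is what upgrades the set-theoretic inverse to a morphism. Everything else, namely the coherence square of Definition \ref{V-gpd} and the passage to an $\scal$-groupoid via $S^{\mcal}$, is formal.
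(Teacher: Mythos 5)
Your proof is correct, but it follows a genuinely different route from the paper's. The paper makes the same initial reduction (the coherence square of Definition \ref{V-gpd} is automatic by faithfulness of the underlying set functor, and Lemma \ref{gpd} supplies the set-level map $\cdot^{-1}$), but then argues entirely test-objectwise rather than via evaluation: using the natural bijections
\[
\mcal\bigl(A,\ \mathsf{P}\mcal G/X(E,E')\bigr)\ \cong\ \mathsf{P}\mcal G/(A\times X)\bigl(A\times E,\ A\times E'\bigr),
\]
a plot (resp.\ continuous map) $A\to \mathsf{P}\mcal G/X(E,E')$ corresponds to an isomorphism of principal $G$-bundles over $A\times X$, whose inverse --- which exists because $\mathsf{P}\mcal G/(A\times X)$ is a groupoid, i.e.\ Lemma \ref{gpd} applied over $A\times X$ --- corresponds exactly to the composite of that plot with $\cdot^{-1}$; naturality in $A$ then yields that $\cdot^{-1}$ is a morphism of $\mcal$. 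In this way all local-triviality input stays quarantined inside Lemma \ref{gpd} (through $\mcal G/V(V\times G,V\times G)\cong\mcal(V,G)$), there are no covers or gluing arguments, and the key observation is that inverting a bundle isomorphism over a parameter space automatically varies well in the parameter. You instead pass through cartesian closedness to the map $\theta(\psi,e')=\psi^{-1}(e')$ and compute it locally: you extract the translation functions $\alpha$ from the evaluation morphism, invert them using the inversion morphism of the group object $G$, and glue over the cover $\{W\times E'|_U\}$ using the locality axiom in $\dcal$ and pasting in $\czero$. Your route is more elementary and makes explicit exactly where the inverse morphism of $G$ enters; its price is the bookkeeping you correctly carry out (initiality of the hom-subspaces, corestriction into open subspaces, $D$-openness of $W\times E'|_U$, locality of morphisms), whereas the paper's parametrized-groupoid argument buys brevity, uniformity in $\mcal=\dcal,\czero$, and immediate generality for any bundle property stable under product with a test object. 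Both arguments ultimately rest on the same local computation identifying equivariant self-maps of a trivial bundle with maps into $G$.
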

\begin{proof}
	Since $\mathsf{P}\mcal {G} / X$ is a groupoid (Lemma \ref{gpd}), we have the inverse-assigning function
	\[
	\cdot^{-1} : \mathsf{P}\mcal {G} / X (E, E') \longrightarrow \mathsf{P}\mcal {G} / X (E', E)
	\]
	for $E, E' \in \mathsf{P}\mcal {G} / X$. Thus, we have only to show that the map $\cdot^{-1}$ is a morphism of $\mcal$.
	
	Observe that the natural bijections
	\begin{align}
		\mcal (A, \mathsf{P}\mcal {G} / X (E, E')) & =  \mcal (A, \mcal {G} / X (E, E')) & \tag{7.2} \\
		& \cong \mcal G/X (A\times E, E') &\nonumber\\
	 	& \cong  \mcal {G} / A \times X (A \times E, A \times E') & \nonumber\\
	    & = \mathsf{P}\mcal G/ A\times X(A\times E, A\times E') &\nonumber
	\end{align}
exist (see Section 7.1). Then, we can easily see that $\cdot^{-1}$ is a morphism of $\mcal$.
\end{proof}
Let $\pi : P \longrightarrow X$ be a principal $G$-bundle in $\mcal$. Then, the {\sl group} ${\rm Gau}_{\mcal} (P)$ {\sl of gauge transformations} is defined by
\[
 {\rm Gau}_{\mcal} (P) = {\rm Aut}_{\mathsf{P}\mcal {G} / X} (P).
\]
By the definition, we have
\begin{align*}
	{\rm Gau}_\mcal (P) &= \mathsf{P}\mcal G/X(P,P)\\
	& = \mcal G/X(P,P)
\end{align*}
(see Section 7.1). When $\mathsf{P} \mcal {G}/X$ is regarded as an $\scal$-groupoid, the automorphism group of $P$ in $\mathsf{P}\mcal {G} / X$ is just $S^{\mcal} {\rm Gau}_{\mcal}(P)$.
\subsection{Smoothing of gauge transformations}
In this subsection, we formulate the gauge group ${\rm Gau}_{\mcal}(P)$ as a space of sections, and apply Theorem \ref{dsectionsmoothing} to establish a smoothing result of gauge transformations. For this, we begin by refining well-known constructions (e.g., \cite[Section 37]{KM}) in the convenient category $\mathcal{M}$.\par
Let $\pi:P\longrightarrow X$ be a principal $G$-bundle in $\mathcal{M}$ and $F$ an object of $\mathcal{M}$ endowed with a left $G$-action. Then, the object $P\times_GF\longrightarrow X$ of $\mcal / X$ is defined by
\[
P\times_GF = P\times F/\sim,
\]
where $(ug,f)\sim(u,gf)$ for every $u\in P$, $g\in G$, and $f\in F$. Note that a left $G$-object $F$ of $\mathcal{M}$ can be regarded as a right $G$-object of $\mathcal{M}$ by $f\cdot g:=g^{-1}\cdot f\:(f\in F,g\in G)$.
\begin{lem}\label{equivar}
  Let $\pi:P\longrightarrow X$ be a principal $G$-bundle in $\mathcal{M}$ and $F$ a left $G$-object of $\mathcal{M}$.
  \begin{itemize}
  \item[{\rm (1)}] The canonical map $P \times_{G} F \longrightarrow X$ is an $F$-bundle in $\mcal$.
  \item[{\rm (2)}] There exists a natural isomorphism in $\mcal$
  \[
  \mathcal{M} G(P,F)\cong\Gamma(X,P\times_GF).
  \]
  \end{itemize}
\end{lem}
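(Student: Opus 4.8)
The plan is to establish both parts of Lemma \ref{equivar} by working locally over a trivializing open cover and then gluing, using the fact that both statements are statements about objects of $\mcal/X$ that can be checked on a $\mcal$-numerable (or merely open) cover on which $P$ trivializes. The key technical input is that $\mcal$ ($= \czero, \dcal$) is cartesian closed with initial and final structures (Proposition \ref{conven}, Remark \ref{suitable}), so that quotients, products, and hom-objects behave well and local trivializations can be identified with pullback diagrams as in the proof of Lemma \ref{bdleforget2}.

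For Part (1), first I would choose a trivializing open cover $\{U_i\}$ of $X$ with isomorphisms $P|_{U_i}\cong U_i\times G$ in $\mcal G/U_i$. Over each $U_i$ one computes directly that $(U_i\times G)\times_G F\cong U_i\times F$ as objects of $\mcal/U_i$, via the map sending the class of $((x,g),f)$ to $(x,gf)$; the inverse sends $(x,f)$ to the class of $((x,e),f)$, and these are mutually inverse morphisms of $\mcal$ because multiplication $G\times F\longrightarrow F$ and the quotient map are morphisms of $\mcal$. The point requiring care is that the quotient $P\times F/\!\sim$ is formed using the final structure with respect to the quotient projection, so one must verify that the constructed local trivializations are genuinely $\mcal$-isomorphisms (not merely bijections): this follows because $P\times_G F|_{U_i}$ is canonically identified with $(P|_{U_i})\times_G F$, and the quotient construction commutes with the passage to the open $\mcal$-subspace over $U_i$ (cf.\ the argument in Lemma \ref{bdleforget2} that $\widetilde{\cdot}$ respects local trivializations, and the creation of limits/colimits by the underlying-set functor in Proposition \ref{conven}(1)). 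Thus $P\times_G F\longrightarrow X$ is locally trivial with fiber $F$, i.e.\ an $F$-bundle in $\mcal$.

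For Part (2), I would construct the natural isomorphism $\mcal G(P,F)\cong \Gamma(X,P\times_G F)$ at the level of underlying sets first and then upgrade it to an isomorphism in $\mcal$. A $G$-equivariant morphism $\varphi:P\longrightarrow F$ determines the section $s_\varphi:X\longrightarrow P\times_G F$ sending $\pi(u)$ to the class of $(u,\varphi(u))$; this is well defined precisely by equivariance, since $(ug,\varphi(ug))=(ug,g^{-1}\varphi(u))\sim(u,\varphi(u))$, and it is a morphism of $\mcal$ because $\pi$ is an $\mcal$-quotient map (Lemma \ref{bdlequotient}) and $u\mapsto [(u,\varphi(u))]$ is a morphism of $\mcal$ over $X$. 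Conversely, a section $s$ of $P\times_G F$ pulls back along the $\mcal$-quotient $P\longrightarrow P\times_G F$ to recover an equivariant $\varphi$. To promote this bijection to an isomorphism of objects of $\mcal$, I would replace $P$ by $A\times P$ (equivalently, test against an arbitrary $A\in\mcal$) and apply the natural bijection in (6.2)/(6.3) together with the cartesian closedness of $\mcal$; this is the standard device, used repeatedly in the paper (e.g.\ the proofs of Proposition \ref{AX} and Lemma \ref{gaugegp}), of checking that a natural bijection of hom-sets is in fact an isomorphism of internal hom-objects by verifying it after tensoring with a variable object.

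The main obstacle I anticipate is the quotient-structure bookkeeping in Part (1): one must ensure that the local triviality is witnessed by $\mcal$-isomorphisms, which hinges on the interaction between the final structure defining $P\times_G F$ and the passage to open $\mcal$-subspaces, and on $\mcal$ being cartesian closed so that $\cdot\times F$ preserves the relevant colimits. Once local triviality is secured, Part (2) is formal: the equivariance condition exactly matches the descent/gluing condition for sections, and the enrichment claim reduces, via (6.2) and base change, to the set-level bijection. I would therefore expect the bulk of the genuine work to sit in verifying that the quotient $P\times_G F$ is well-behaved enough in $\mcal$ for the local trivializations to be isomorphisms, with everything else following from cartesian closedness and the adjunctions already established in Section 6.
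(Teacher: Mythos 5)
Your proposal is correct and follows essentially the same route as the paper: for Part (1) the paper likewise obtains the local trivializations $(P\times_G F)|_{U}\cong U\times F$ by applying the quotient functor $\cdot/G:\mcal G\longrightarrow \mcal$ to the trivialization diagram for $P\times F$ and checking that the resulting square, being a pullback in $Set$ along an open $\mcal$-embedding, is a pullback in $\mcal$ --- exactly the quotient-versus-open-subspace bookkeeping you isolate as the crux; and for Part (2) the paper also defines $\Phi(\varphi)=[1_P,\varphi]$, inverts it via the section--equivariant-map correspondence, and upgrades the set bijection to an isomorphism in $\mcal$ by the same device of replacing $\pi:P\longrightarrow X$ with $1\times\pi:A\times P\longrightarrow A\times X$ and invoking the natural bijections of (6.2)/(7.3). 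The only cosmetic difference is organizational (you restrict the quotient over each $U_i$ rather than quotienting the trivialization embedding), so no gap to report.
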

\begin{proof}
	$\rmone$ Consider a commutative diagram in $\mcal$
	\[
	\begin{tikzcd}
	U \times G \times F \arrow[hook]{r}{j \times 1} \arrow{d} & P \times F \arrow{d}\\
	U \times F \arrow[hook]{r}{\overline{j \times 1}} \arrow{d} & P \times_{G} F \arrow{d} \\
	U \arrow[hook]{r}{i} & X 
	\end{tikzcd}
	\]
	such that the outer rectangle is the local trivialization of the $G \times F$-bundle $P \times F$ induced by a local trivialization of the principal $G$-bundle $P$ (see the proofs of Lemmas \ref{bdleforget} and \ref{bdleforget2}) and the middle horizontal arrow $\overline{j \times 1}$ is induced from the upper horizontal arrow $j \times 1$ via the quotient space functor $\cdot / G : \mcal {G} \longrightarrow \mcal$. Since $j \times 1$ is an open $\mcal$-embedding, $\overline{j \times 1}$ is also an open $\mcal$-embedding. Further, since the lower square is a pullback diagram in $Set$, it is a pullback diagram in $\mcal$, and hence a local trivialization of $P \times_{G} F$.\par
	$\rmtwo$ Define the map
  \[
  \Phi : \mathcal{M} G(P,F)\longrightarrow \Gamma(X,P\times_GF)
  \]
  by assigning to an equivariant map $\varphi$ the section $[1_{P}, \varphi]$ obtained from the equivariant map $(1_{P}, \varphi) : P \longrightarrow P \times F$ via the quotient space functor $\cdot / G : \mcal {G} \longrightarrow \mcal$. The map $\Phi$ is then a bijection. In fact, the inverse of $\Phi$ is defined by assigning to a section $s$ the equivariant map $\varphi_{s}$ making the following diagram commute:
  \[
  \begin{tikzcd}
  P \arrow{r}{(1_{P}, \varphi_{s})} \arrow[swap]{d}{\pi} & P \times F \arrow{d}\\
  X \arrow{r}{s} & P \times_{G} F.
  \end{tikzcd}
  \]
 
  Next, we show that the natural bijection $\Phi$ is an isomorphism in $\mcal$. Replacing $\pi : P \longrightarrow X$ with $1 \times \pi : A \times P \longrightarrow A \times X$, we have the natural bijection
  \[
  \mcal {G} (A \times P, F) \longrightarrow \Gamma(A \times X, (A \times P) \times_{G} F).
  \]
  Observe that the natural bijections
  \begin{align*}
  	  \mcal {G} (A \times P, F) & \cong  \mcal(A, \mcal {G} (P, F)),\tag{7.3}\\
  \Gamma(A \times X, (A \times P) \times_{G} F) & \cong  \mcal(A, \Gamma (X, P \times_{G} F) )  
  \end{align*}
  exist (see \cite[Lemma 2.5 and its proof]{origin} and (6.2) for the second bijection). Then, we have the natural bijection
  \[
  \mcal (A, \mcal {G}(P, F)) \cong \mcal(A, \Gamma (X, P \times_{G} F)),
  \] 
  which shows that $\Phi: \mcal {G} (P, F) \longrightarrow \Gamma(X, P \times_{G} F)$ is an natural isomorphism in $\mcal$.
\end{proof}
  Let $\pi:P\longrightarrow X$ be a principal $G$-bundle in $\mathcal{M}$. Then, the (non-principal) $G$-bundle $P[G,{\rm conj}]\longrightarrow X$ is defined by
  \begin{equation*}
    \begin{split}
      P[G,{\rm conj}] &= P\times_G(G,{\rm conj})\\
      &= P \times G/(ug,h)\sim(u,ghg^{-1}).
    \end{split}
  \end{equation*}
\begin{lem}\label{conjbdle}  
  Let $\pi:P\longrightarrow X$ be a principal $G$-bundle in $\mathcal{M}$.
  \begin{itemize}
    \item[{\rm (1)}] $P[G,{\rm conj}] \longrightarrow X$ is a group in $\mathcal{M}/X$. Thus, $\Gamma(X,P[G,{\rm conj}])$ is a group in $\mcal$.
    \item[{\rm (2)}] ${\rm Gau}_\mathcal{M}(P)$  is isomorphic to $\Gamma(X,P[G,{\rm conj}])$ as a group in $\mathcal{M}$.
  \end{itemize}
  \begin{proof}
    (1) Define the multiplication
    \[
    P[G,{\rm conj}]\times_X P[G,{\rm conj}]\overset{\mu}{\longrightarrow} P[G,{\rm conj}]
    \]
    by
    \[
    ([u,g_1],[u,g_2])\longmapsto [u,g_1g_2]
    \]
    and verify that $\mu$ is well-defined and associative. Then, we can easily find the unit map $e:X\longrightarrow P[G,{\rm conj}]$ and the inverse map $\nu: P[G,{\rm conj}]\longrightarrow P[G,{\rm conj}]$ which makes $(P[G,{\rm conj}], \mu)$ a group in $\mathcal{M}/X$. Since $\Gamma(X, \cdot)$ is a right adjoint (Corollary \ref{adjoint/X}(2)), $\Gamma(X,P[G,{\rm conj}])$ is a group in $\mcal$.\par
    (2) Note that $\mathcal{M} G(P,(G,{\rm conj}))$ is a group in $\mathcal{M}$ with respect to the obvious multiplicative structure (see (7.3)). Next, observe that the natural isomorphism in Lemma \ref{equivar}(2) specializes to the natural isomorphism of groups in $\mcal$
    \[
    \mathcal{M} G(P,(G,{\rm conj}))\cong\Gamma(X,P[G,{\rm conj}]).
    \]
    Then, we have only to show that there exists a natural isomorphism of groups in $\mathcal{M}$
    \[
    \Psi : \mathcal{M} G(P,(G,{\rm conj}))\longrightarrow {\rm Gau}_\mathcal{M}(P).
    \]
    Define the map $\Psi$ by assigning to an equivariant map $\varphi$ the automorphism $\hat{\varphi}:P\longrightarrow P$ such that $\hat{\varphi}(u)=u\varphi(u)$. We can easily see that $\Psi$ is a group isomorphism. Next, let us see that $\Psi$ is an isomorphism of groups in $\mcal$. By replacing $\pi : P \longrightarrow X$ with $1 \times \pi: A \times P \longrightarrow A \times X$, we have the group isomorphism
    \[
    \Psi : \mcal {G} (A \times P, (G, {\rm conj})) \longrightarrow {\rm Gau}_{\mcal}(A \times P).
    \]
    %%%%%%%%%%%%%%%%%%%%%%%%%%%%%%
    \if0
    By the cartesian closedness of $\mcal$, we have the natural bijection
    \[
    \mcal {G} (A \times P, (G, {\rm conj})) \cong \mcal (A, \mcal {G} (P, (G, {\rm conj}))).
    \]
    We also have the natural bijection
    \begin{eqnarray*}
    {\rm Gau}_{\mcal} (A \times P) & = & \mcal {G} / A \times X (A \times P, A \times P) \\
                             & \cong & \mcal {G} / X (A \times P, P)\\
                             & \cong & \mcal(A, \mcal {G} / X (P, P))\\
                             & = & \mcal(A, {\rm Gau}_{\mcal} (P))
    \end{eqnarray*}
    using the cartesian closedness of $\mcal$ and the fact that $\mcal {G} / X$ is an $\mcal$-subcategory of $\mcal$. 
    \fi
    %%%%%%%%%%%%%%%%%%%%%%%%%%%%%%%%
    Using (7.3) and (7.2), we have the natural isomorphism of groups
    \[
    \mcal(A, \mcal {G}(P, (G, {\rm conj}))) \cong \mcal(A, {\rm Gau}_{\mcal} (P)),
    \]
    which shows that $\Psi$ is an isomorphism of groups in $\mcal$ and completes the proof.
  \end{proof}
\end{lem}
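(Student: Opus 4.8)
The plan is to prove Part (1) by transporting the group structure of $G$ fibrewise onto $P[G,{\rm conj}]$ and then passing to sections, and Part (2) by a chain of natural group isomorphisms, each verified first set-theoretically and then upgraded to $\mcal$ using the cartesian closedness of $\mcal$ together with the hom-object formulas (7.2) and (7.3).

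For Part (1), I would define the fibrewise multiplication $\mu\colon P[G,{\rm conj}]\times_{X}P[G,{\rm conj}]\to P[G,{\rm conj}]$ by $([u,g_1],[u,g_2])\mapsto[u,g_1g_2]$, using that any two points over a common base point can be written over a single $u\in P$ because each fibre of $P$ is a $G$-torsor. Well-definedness follows from the conjugation relation: replacing $u$ by $uh$ turns $[uh,g_i']$ into $[u,hg_i'h^{-1}]$, and $h(g_1'g_2')h^{-1}=(hg_1'h^{-1})(hg_2'h^{-1})$. To see that $\mu$, the unit $e\colon X\to P[G,{\rm conj}]$, $x\mapsto[u,e_G]$, and the inverse $\nu\colon[u,g]\mapsto[u,g^{-1}]$ are morphisms of $\mcal$ and satisfy the group axioms, I would pass to a local trivialization $P|_{U}\cong U\times G$: by Lemma \ref{equivar}(1) this induces $P[G,{\rm conj}]|_{U}\cong U\times G$, under which $\mu,e,\nu$ become $1_U$ times the multiplication, unit, and inversion of $G$, so all required identities hold over each $U$ and hence globally. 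This exhibits $P[G,{\rm conj}]$ as a group in $\mcal/X$. Since $X\times\cdot$ has right adjoint $\Gamma(X,\cdot)$ (Corollary \ref{adjoint/X}(2)), the functor $\Gamma(X,\cdot)$ preserves finite products (fibre products over $X$) and therefore group objects, giving the group structure on $\Gamma(X,P[G,{\rm conj}])$ in $\mcal$.

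For Part (2), I would first observe that $\mcal G(P,(G,{\rm conj}))$ is a group in $\mcal$ under pointwise multiplication $(\varphi\psi)(u)=\varphi(u)\psi(u)$, the equivariance $\varphi(uh)=h^{-1}\varphi(u)h$ being preserved by products, with the $\mcal$-group structure supplied by (7.3). Lemma \ref{equivar}(2) then yields a natural isomorphism $\mcal G(P,(G,{\rm conj}))\cong\Gamma(X,P[G,{\rm conj}])$ in $\mcal$, and a fibrewise check ($[u,\varphi(u)]$ times $[u,\psi(u)]$ equals $[u,\varphi(u)\psi(u)]$) shows it intertwines pointwise multiplication with the multiplication of Part (1), hence is an isomorphism of groups in $\mcal$. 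It remains to identify $\mcal G(P,(G,{\rm conj}))$ with ${\rm Gau}_\mcal(P)$. I would set $\Psi(\varphi)=\hat\varphi$ with $\hat\varphi(u)=u\varphi(u)$; equivariance of $\varphi$ gives $\hat\varphi(uh)=\hat\varphi(u)h$ and $\pi\hat\varphi=\pi$, so $\hat\varphi$ is a gauge transformation, while the computation $(\hat\varphi\circ\hat\psi)(u)=u\psi(u)\,\varphi(u\psi(u))=u\varphi(u)\psi(u)=\widehat{\varphi\psi}(u)$ (using $\varphi(u\psi(u))=\psi(u)^{-1}\varphi(u)\psi(u)$) shows $\Psi$ is a group homomorphism, with set-theoretic inverse sending a gauge transformation $f$ to the unique $\varphi_f$ with $f(u)=u\varphi_f(u)$.

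The main obstacle, and the step I would treat most carefully, is upgrading these set-level group isomorphisms to isomorphisms of group objects in $\mcal$; the fibrewise and pointwise verifications above are routine, but they only establish bijections of underlying hom-sets. For the upgrade I would run the whole construction with $P\to X$ replaced by $1\times\pi\colon A\times P\to A\times X$, obtaining a group isomorphism $\Psi\colon\mcal G(A\times P,(G,{\rm conj}))\to{\rm Gau}_\mcal(A\times P)$ natural in $A$. Applying (7.3) on the left, and (7.2) together with the definition of ${\rm Gau}$ as an automorphism object in the $\mcal$-groupoid $\mathsf{P}\mcal G/X$ on the right, this becomes a natural isomorphism $\mcal(A,\mcal G(P,(G,{\rm conj})))\cong\mcal(A,{\rm Gau}_\mcal(P))$; by the Yoneda lemma it is represented by an isomorphism of $\mcal$, necessarily $\Psi$, which is thus an isomorphism of groups in $\mcal$. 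Composing with the isomorphism from Lemma \ref{equivar}(2) gives ${\rm Gau}_\mcal(P)\cong\Gamma(X,P[G,{\rm conj}])$ as groups in $\mcal$, completing the proof.
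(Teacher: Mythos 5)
Your proposal is correct and follows essentially the same route as the paper's proof: the fibrewise definition of $\mu$ with right-adjointness of $\Gamma(X,\cdot)$ for Part (1), and for Part (2) the isomorphism of Lemma \ref{equivar}(2) together with $\Psi(\varphi)=\hat\varphi$, upgraded to an isomorphism of groups in $\mcal$ by replacing $\pi$ with $1\times\pi: A\times P\longrightarrow A\times X$ and combining (7.2), (7.3), and Yoneda. The only difference is that you spell out details the paper leaves implicit (the local-trivialization check that $\mu$, $e$, $\nu$ are morphisms, and the conjugation computations), which is harmless.
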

\begin{prop}\label{gaugesmoothing}
  Let $\pi:P\longrightarrow X$ be a $\dcal$-numerable principal $G$-bundle in $\mathcal{D}$. If $X$ is in $\mathcal{W}_\mathcal{D}$ and $G$ is in $\mathcal{V}_\mathcal{D}$, then the natural inclusion
  \[
  S^\mathcal{D}{\rm Gau}_\mathcal{D}(P)\longhookrightarrow S{\rm Gau}_\czero (\widetilde{P})
  \]
  is a weak equivalence.
  \begin{proof}
    Since $P[G,{\rm conj}]=P\times_G(G,{\rm conj})=P\times G/\sim$, we have
    \[
    (P[G,{\rm conj}])^{\widetilde{}} = (P\times G/\sim)^{\widetilde{}} \: =\widetilde{P}\times\widetilde{G}/\sim \: = \widetilde{P}[\widetilde{G},{\rm conj}]
    \]
    (see \cite[Lemma 2.11 and Proposition 2.13]{origin}). Thus, the result follows from Lemmas \ref{equivar} and \ref{conjbdle} and Theorem \ref{dsectionsmoothing}.
  \end{proof}
\end{prop}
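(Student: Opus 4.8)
The plan is to reduce the assertion to the smoothing theorem for sections (Theorem \ref{dsectionsmoothing}) by reinterpreting both gauge groups as spaces of sections of an associated conjugation bundle. First I would invoke Lemma \ref{conjbdle}(2), which provides a natural isomorphism of groups in $\dcal$
\[
{\rm Gau}_\dcal(P)\cong \Gamma(X,P[G,{\rm conj}]),
\]
together with its arc-generated counterpart ${\rm Gau}_\czero(\widetilde{P})\cong \Gamma(\widetilde{X},\widetilde{P}[\widetilde{G},{\rm conj}])$, the latter applied to the principal $\widetilde{G}$-bundle $\widetilde{P}\longrightarrow\widetilde{X}$ obtained from Lemma \ref{bdleforget}. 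Under these identifications the inclusion $S^\dcal{\rm Gau}_\dcal(P)\hookrightarrow S{\rm Gau}_\czero(\widetilde{P})$ of the statement becomes the section-space inclusion induced by the underlying space functor $\widetilde{\cdot}$.

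Second, I would check that $\widetilde{\cdot}$ carries the conjugation bundle to the conjugation bundle of the underlying data, that is $(P[G,{\rm conj}])^{\widetilde{}}=\widetilde{P}[\widetilde{G},{\rm conj}]$. Since $P[G,{\rm conj}]=P\times G/\!\sim$ is formed by a finite product followed by a quotient, this follows from the facts that $\widetilde{\cdot}:\dcal\longrightarrow\czero$ preserves finite products and quotients (cf. \cite[Lemma 2.11 and Proposition 2.13]{origin}). This identity is exactly what renders the two identifications of the first step compatible, so that the map I must analyze is precisely $S^\dcal\Gamma(X,P[G,{\rm conj}])\hookrightarrow S\Gamma(\widetilde{X},(P[G,{\rm conj}])^{\widetilde{}})$.

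Third, I would verify the hypotheses of Theorem \ref{dsectionsmoothing} for $p:P[G,{\rm conj}]\longrightarrow X$. By Lemma \ref{equivar}(1) this is a fiber bundle with fiber $(G,{\rm conj})$, whose underlying diffeological space is just $G$; hence the fiber lies in $\vcal_\dcal$ by hypothesis. Its local trivializations are induced from those of $P$ over the same open cover, so $\dcal$-numerability of $P$ forces $\dcal$-numerability of $P[G,{\rm conj}]$. Combined with $X\in\wcal_\dcal$, Theorem \ref{dsectionsmoothing} then yields that $S^\dcal\Gamma(X,P[G,{\rm conj}])\hookrightarrow S\Gamma(\widetilde{X},\widetilde{P}[\widetilde{G},{\rm conj}])$ is a weak equivalence, which is the desired conclusion.

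The three lemmas are the routine inputs; the one step deserving genuine care is the compatibility in the first two paragraphs, namely confirming that the group isomorphisms of Lemma \ref{conjbdle} are natural with respect to $\widetilde{\cdot}$ and that they intertwine the gauge-group inclusion with the section-space inclusion. Once the identity $(P[G,{\rm conj}])^{\widetilde{}}=\widetilde{P}[\widetilde{G},{\rm conj}]$ is established this compatibility becomes essentially formal, so I expect the real weight of the result to rest entirely inside Theorem \ref{dsectionsmoothing}, with this proposition emerging as a clean corollary.
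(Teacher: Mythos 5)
Your proposal is correct and follows the paper's own argument: identify ${\rm Gau}$ with sections of the conjugation bundle via Lemma \ref{conjbdle}, check $(P[G,{\rm conj}])^{\widetilde{}}=\widetilde{P}[\widetilde{G},{\rm conj}]$ using the fact that $\widetilde{\cdot}$ preserves finite products and quotients, and conclude by Lemma \ref{equivar} and Theorem \ref{dsectionsmoothing}. Your explicit verification of the hypotheses of Theorem \ref{dsectionsmoothing} (fiber $(G,{\rm conj})$ has underlying space $G\in\vcal_\dcal$, and $\dcal$-numerability is inherited from $P$) is exactly what the paper leaves implicit in its citation of Lemma \ref{equivar}.
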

\subsection{Proof of Theorem \ref{dDK}}
In this subsection, we give a proof of Theorem \ref{dDK}.
\par\indent
$\mathsf{P} \mcal {G}/X$ is an $\scal$-category with function complex $S^{\mcal}\mathsf{P}\mcal {G}/ X(P, P')$ and the underlying topological space functor $\widetilde{\cdot}$ defines the $\mathcal{S}$-functor
\[
  \widetilde{\cdot}:\mathsf{P}\mathcal{D} G/X \longrightarrow \mathsf{P}\czero {\widetilde{G}}/\widetilde{X}
\]
(see Section 7.1).\par
Let us recall the notion of a Dwyer-Kan equivalence of simplicial categories (\cite[p. 2044]{Bergner}). First recall the homotopy category $\pi_0 \mathcal{A}$ of a simplicial category $\mathcal{A}$ from Section 4.3. A simplicial functor $F:\mathcal{A}\longrightarrow \mathcal{B}$ is called a {\sl Dwyer-Kan equivalence} if the following two conditions are satisfied:
\begin{itemize}
	\item[(i)] $\pi_0 F:\pi_0 \mathcal{A}\longrightarrow \pi_0\mathcal{B}$ is an equivalence of categories.
  \item[(ii)] $F:{\rm Map}_\mathcal{A}(A,A')\longrightarrow {\rm Map}_\mathcal{B}(FA,FA')$ is a weak equivalence in $\mathcal{S}$ for any $A$, $A'\in \mathcal{A}$.
\end{itemize}

If $\acal$ is a simplicial groupoid, then the isomorphism classes of objects of $\pi_0 \acal$ are just those of objects of $\acal$ and the following holds:
\[
{\rm Map}_\mcal (A,A') \cong
\begin{cases}
{\rm Map}_\mcal(A,A)= {\rm Aut}_\mcal(A) & \text{if $A\cong A'$} \\
\emptyset & \text{if $A \ncong A'$}.
\end{cases}
\]
Thus, a simplicial functor $F:\mathcal{A}\longrightarrow \mathcal{B}$ between simplicial groupoids is a Dwyer-Kan equivalence if and only if it satisfies  the following conditions:
\begin{itemize}
  \item[${\rm (i)}'$] $F$ induces the bijection between the isomorphism classes of objects of $\mathcal{A}$ and those of objects of $\mathcal{B}$.
  \item[$\rmii'$] $F: {\rm Aut}_{\acal} (A) \longrightarrow {\rm Aut}_{\bcal}(FA)$ is a weak equivalence in $\scal$ for any $A \in \acal$.
\end{itemize}
\begin{proof}[Proof of Theorem \ref{dDK}]
  Since $(\mathsf{P}\dcal G/X)_{\rm num}$ and $(\mathsf{P}\czero {\widetilde{G}}/\widetilde{X})_{\rm num}$ are simplicial groupoids (Lemma \ref{gaugegp}), we have only to show that the simplicial functor $\widetilde{\cdot} : (\mathsf{P}\dcal {G}/ X)_{\rm num} \longrightarrow (\mathsf{P}\czero {\widetilde{G}} / \widetilde{X})_{\rm num}$ satisfies conditions $\rmi'$ and $\rmii'$, which are verified in Theorem \ref{dbdlesmoothing} and Proposition \ref{gaugesmoothing}.
\end{proof}
% comparisonSection4.tex
\section{Diffeological polyhedra}

Polyhedra, which are a special type of $CW$-complexes, not only behave well as sources but also play an important role as targets (recall the notion of a barycentric map).\par
In the diffeological context, we must introduce two kinds of notions of a diffeological polyhedron, which correspond to the two kinds of standard simplices $\{ \Delta^{p} \}_{p \geq 0}$ and $\{ \Delta^{p}_{\mathrm{sub}} \}_{p \geq 0}$. Diffeological polyhedra corresponding to $\{ \Delta^{p} \}_{p \geq 0}$, which are called $\dcal$-polyhedra, form an important class of cofibrant objects, and hence behave well as sources. Diffeological polyhedra corresponding to $\{ \Delta^{p}_{\mathrm{sub}} \}_{p \geq 0}$ are more directly related to constructions of smooth maps and smooth homotopies and especially provide targets of barycentric maps (Lemmas \ref{barycentricmap} and \ref{linhomotopy}). These two kinds of diffeological polyhedra are not diffeomorphic but naturally $\dcal$-homotopy equivalent (Theorem \ref{homotopyequiv}).
\subsection{Basic properties of two kinds of diffeological polyhedra}
In this subsection, we introduce and study two kinds of diffeological polyhedra.

Let us begin by recalling the definition of a simplicial complex. A {\sl simplicial complex} $K$ consists of a set $\overline{K}$ of ${\sl vertices}$ and a set $\Sigma_{K}$ of finite nonempty subsets of $\overline{K}$ called {\sl simplices} such that
\begin{enumerate}[(i)]
	\item
	Any set consisting of exactly one vertex is a simplex.
	\item
	Any nonempty subset of a simplex is a simplex.
\end{enumerate}
\begin{flushleft}
	A simplicial map between simplicial complexes is defined in an obvious manner. The category of simplicial complexes and simplicial maps is denoted by $\scal\ccal$. The notion of a subcomplex of a simplicial complex $K$, especially that of the $p$-skeleton $K^p$ of $K$, is also defined in an obvious manner (\cite[Section 1 in Chapter 3]{Spa}).
\end{flushleft}
\par\indent For a simplicial complex $K$, the set $\absno{K}$ is defined by
$$
\absno{K}=
\left\{
x:\overline{K}\longrightarrow [0,1] \,\Big{|}\, \{v\in \overline{K}\ |\ x(v)>0\}\: \in \Sigma_K \hbox{  and  } \underset{v\in \overline{K}}{\sum} x(v)=1
\right\}.
$$
For a vertex $v$ of $K$, $x_{v} \in |K|$ is defined by
\[
x_{v}(v') =
\begin{cases}
 1 & \text{if $v' = v$}, \\
 0 & \text{if $v' \neq v$}.
\end{cases}
\]
We usually identify $v \in \overline{K}$ with $x_{v} \in |K|$. For a $p$-simplex $\sigma = \{v_{0}, \ldots, v_{p} \}$ of $K$, the closed $p$-simplex $|\sigma|$ of $|K|$ is defined by
\[
|\sigma| = \{ x \in |K|\ |\ x(v) > 0 \Rightarrow v \in \sigma \},
\]
which is just the convex full of $\sigma = \{ v_{0}, \ldots, v_{p} \}.$
\par\indent
The $\dcal$-{\sl polyhedron} $|K|_{\dcal}$ of a simplicial complex $K$ is defined to be the set $|K|$ endowed with the final diffeology for the inclusions $\{ |\sigma| \longhookrightarrow |K|\ |\ \sigma \in \Sigma_{K} \},$ where the closed $p$-simplex $|\sigma|$ is regarded as a diffeological space via an affine identification with $\Delta^{p}$. Thus, we have the functor $|\ |_{\dcal}: \scal\ccal \longrightarrow \dcal$ (see Proposition \ref{axioms}). The underlying topological space of $|K|_{\dcal}$ is just the topological polyhedron of $K$ (Proposition \ref{axioms} and \cite[Lemma 2.11]{origin}), which is also denoted by $|K|$ (\cite[p. 111]{Spa}). Since the $\dcal$-polyhedron $|K|_{\dcal}$ can be identified with the realization of $K$ viewed as a simplicial set by choosing a well-ordering on the vertices (\cite[Example 1.4]{May}), $|K|_{\dcal}$ is a cofibrant object.
\par\indent
The $\dcal$-polyhedron $|L|_\dcal$ of a subcomplex $L$ of $K$ is called a $\dcal$-subpolyhedron of $|K|_\dcal$. The following is a basic result on $\dcal$-subpolyhedra.
\begin{lem}\label{Dsub}
	Let $K$ be a simplicial complex.
	\begin{itemize}
		\item[{\rm (1)}] The inclusion $|L|_\dcal \longhookrightarrow |K|_\dcal$ is a cofibration and a $\dcal$-embedding for any subcomplex $L$ of $K$.

		\item[{\rm (2)}] Let $\{L_i\}$ be a set of subcomplexes of $K$. Then, $L:= \cup \:L_i = (\cup\: \overline{L}_i, \cup\: \Sigma_{L_i})$ is also a subcomplex of $K$ and the canonical map
		\[
			\coprod |L_i|_\dcal \longrightarrow |L|_\dcal
		\]
		is a $\dcal$-quotient map.
	\end{itemize}
	\begin{proof}
		\begin{itemize}
 			\item[{\rm (1)}] By choosing a well-ordering on the vertices of $K$, we can regard the inclusion $L \longhookrightarrow K$ as a monomorphism (i.e., a cofibration) in $\scal$. Thus, $|L|_\dcal \longhookrightarrow |K|_\dcal$ is a cofibration in $\dcal$ (Lemma \ref{Quillenpairs}) and a $\dcal$-embedding (\cite[Proposition 9.2]{origin}).
			\item[{\rm (2)}] The result is immediate from the definition of a $\dcal$-polyhedron.\qedhere
		\end{itemize}
	\end{proof}
\end{lem}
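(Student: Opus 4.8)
The plan is to reduce both parts to the identification, recalled in the paragraph preceding the lemma, of the $\dcal$-polyhedron $|K|_\dcal$ with the realization of $K$ regarded as a simplicial set once a well-ordering of $\overline{K}$ is fixed, together with the explicit description of the polyhedron diffeology as the final structure for the closed-simplex inclusions $\{|\sigma| \longhookrightarrow |K| : \sigma \in \Sigma_K\}$.

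For part (1), I would first fix a well-ordering on $\overline{K}$; it restricts to a well-ordering on $\overline{L}$ and presents the inclusion $L \hookrightarrow K$ as a monomorphism of simplicial sets, hence a cofibration in $\scal$, since every monomorphism of simplicial sets is a cofibration. Because $(|\ |_\dcal, S^\dcal)$ is a Quillen pair by Lemma \ref{Quillenpairs}, its left adjoint $|\ |_\dcal$ preserves cofibrations, so $|L|_\dcal \longhookrightarrow |K|_\dcal$ is a cofibration in $\dcal$. That this inclusion is moreover a $\dcal$-embedding would follow from the corresponding statement for realizations of monomorphisms of simplicial sets, which I would cite as \cite[Proposition 9.2]{origin}; alternatively one checks directly that a plot of $|K|_\dcal$ with image in $|L|$ is already a plot of $|L|_\dcal$, using that each closed simplex $|\sigma|$ with $\sigma \in \Sigma_K$ meets $|L|$ in a subpolyhedron spanned by those faces of $\sigma$ lying in $\Sigma_L$.

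For part (2), I would begin with the purely combinatorial observation that $L = (\cup\,\overline{L_i}, \cup\,\Sigma_{L_i})$ is a subcomplex of $K$: both axioms for a simplicial complex are inherited, since every simplex of $L$ lies in some $\Sigma_{L_i}$ and each $L_i$ is a subcomplex, while $\overline{L} \subset \overline{K}$ and $\Sigma_L \subset \Sigma_K$. The diffeological assertion then follows from transitivity of final structures. By definition $|L|_\dcal$ carries the final diffeology for the family $\{|\sigma| \longhookrightarrow |L| : \sigma \in \Sigma_L\}$, each $|L_i|_\dcal$ carries the final diffeology for $\{|\tau| \longhookrightarrow |L_i| : \tau \in \Sigma_{L_i}\}$, and $\coprod_i |L_i|_\dcal$ carries the final diffeology for the summand inclusions. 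Since $\Sigma_L = \cup_i \Sigma_{L_i}$, every map in the defining family of $|L|_\dcal$ factors as $|\tau| \to |L_i|_\dcal \to \coprod_j |L_j|_\dcal \to |L|$, and conversely each such composite is one of the inclusions $|\sigma| \to |L|$; hence the final diffeology for the canonical map $\coprod_i |L_i|_\dcal \to |L|$ coincides with that of $|L|_\dcal$, which is exactly the statement that this map is a $\dcal$-quotient map.

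Neither step presents a serious obstacle once these preliminaries are assembled. The only point requiring care is the bookkeeping in part (2): a single simplex of $L$ may belong to several of the $L_i$ and thus have several preimages in the coproduct. This multiplicity is harmless, since a final diffeology depends only on the collection of structure maps together with their images, so the repeated copies do not alter the resulting diffeology. In part (1) the sole nonroutine input is the embedding assertion, which I would prefer to invoke from \cite{origin} rather than reprove in full.
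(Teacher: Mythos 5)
Your proof is correct and follows the same route as the paper: part (1) is verbatim the paper's argument (well-ordering, monomorphisms are cofibrations in $\scal$, the Quillen pair of Lemma \ref{Quillenpairs}, and \cite[Proposition 9.2]{origin} for the $\dcal$-embedding), while for part (2) the paper simply says the claim is immediate from the definition of a $\dcal$-polyhedron, and your transitivity-of-final-structures argument, including the observation that repeated copies of a simplex in several $L_i$ do not affect the final diffeology, is exactly the intended justification.
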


For a simplicial complex $K$, we can introduce another diffeology on $|K|$ by using $\{\Delta^{p}_{\mathrm{sub}}\}_{p \geq 0}$ instead of $\{ \Delta^{p}\}_{p \geq 0}$ (see Section 2.3); the resulting diffeological space is denoted by $|K|'_{\dcal}$, whose underlying topological space is also the topological polyhedron of $K$ (see Lemma \ref{simplex}(2) and \cite[Lemma 2.11]{origin}).
\par\indent
	%%%%%%%%%%%%%%%%%%%%%%%%%%%%%%%
	\if0
\begin{lem}
	Let $K$ be a simplicial complex.
	\begin{enumerate}[(1)]
		\item
		The (set-theoretic) identity map $id:\absno{K}\longrightarrow\absno{K}_\sub$ is smooth.
		\item
		The equalities $\widetilde{|K|} = \widetilde{|K|_{\mathrm{sub}}} = |K|_{\mathrm{top}}$ hold.
	\end{enumerate}
	\begin{proof}
		Regard $K$ as a simplicial set by choosing a well-order on the vertices. Then, by definition, we have
		$$
		\absno{K}\cong \colimunder{\Delta \downarrow K}
		\ \Delta^p\hbox{\ \ and\ \ } \absno{K}_\sub\cong \colimunder{\Delta \downarrow K}\ \Delta^p_\sub.
		$$
		\begin{enumerate}[(1)]
			\item
			The result follows from \cite[?]{Kihara}.
			\item
			By \cite[?]{Kihara}, we have
			$$
			\begin{array}{rccccl}
			\widetilde{\absno{K}} & \cong & \colimunder{\Delta \downarrow K} \ \widetilde{\Delta^p} & = & \colimunder{\Delta \downarrow K} \ \Delta^p_\top &\hbox{\ \ and}\\
			\widetilde{\absno{K}_\sub} & \cong & \colimunder{\Delta \downarrow K} \ \widetilde{\Delta^p_\sub} &= &\colimunder{\Delta \downarrow K} \ \Delta^p_\top,&
			\end{array}
			$$
			which completes the proof. $\hfill\qed$
		\end{enumerate}
	\color{white}\end{proof}\color{black}
\end{lem}
Let $K$ be a simplicial complex. We need another $\cinf$-structure on the set $\absno{K}$ to establish basic properties of $\absno{K}_\sub$. $\absno{K}_\upsub$ denotes the set $\absno{K}$ endowed with the $\cinf$-subspace structure of the product $\underset{v\in K}{\prod} I$ via the inclusion
$$
\absno{K}\longhookrightarrow Set(K,I)\cong \textstyle\underset{v\in K}{\prod} I,
$$
where $Set(K,I)$ is the set of (set-theoretic) maps from $K$ to $I$. For $v\in K$, the $\cinf$-map $\tilde{v}:\absno{K}_\upsub\longrightarrow I$ is defined by $\tilde{v}(x)=x(v)$.

For a subset $A$ of $|K|_{\mathrm{SUB}}$, $A_{\mathrm{SUB}}$ denotes the set $A$ endowed with the initial structure for the inclusion $A \longhookrightarrow |K|_{\mathrm{SUB}}$.
\begin{lem}
	Let $K$ be a simplicial complex.
	\begin{enumerate}[(1)]
		\item
		Let $A$ be a $\cinf$-space, and $f:A\longrightarrow\absno{K}$ a set-theoretic map. Then $f:A\longrightarrow\absno{K}_\upsub$ is smooth if and only if the composite $A\xoverright{f}\absno{K}_\upsub\xoverright{\tilde{v}} I$ is smooth for any $v\in K$.
		\item
		The (set-theoretic) identity map $id: \absno{K}_\sub\longrightarrow\absno{K}_\upsub$ is smooth.

		\item
		Suppose that $K$ is a finite simplicial complex. Then the underlying spaces $\absno{K}_\sub$ and $\absno{K}_\upsub$ coincide, which is just the topological polyhedron associated to $K$.
	\end{enumerate}
	\begin{proof}
		\begin{enumerate}[(1)]
			\item
			Obvious.
			\item
			The result follows from part 1.
			\item
			By part 2, the map $id:\widetilde{\absno{K}}_\sub\longrightarrow \widetilde{\absno{K}}_\upsub$ is continuous. Since $\widetilde{\absno{K}}_\sub$ is a compact polyhedron (Lemma 4.1) and $\widetilde{\absno{K}}_\upsub$ is Hausdoff, the result follows. $\hfill\qed$
		\end{enumerate}
	\color{white}\end{proof}\color{black}
\end{lem}
\begin{lem}
	Let $A$ be a $\cinf$-space, and $K$ a simplicial complex. Let $f,g:A\longrightarrow\absno{K}_\upsub$ be $\cinf$-maps. If $f$ and $g$ send each $a\in A$ to the same simplex of $\absno{K}_\upsub$, then $f$ is linearly homotopic to $g$ in $\cinf$.
	\begin{proof}
		By Lemma 4.2(1), it is easily seen that the linear homotopy
		$$
		\begin{tikzcd}[row sep=0.1cm]
		h:A\times I \cdarrow{r} & \absno{K}_\upsub\\
		(a,t) \cdarrow{r,mapsto}& (1-t)f(a)+tg(a)
		\end{tikzcd}
		$$
		is smooth.
	\end{proof}
\end{lem}
Let us recall the basic definitions from the classical theory of simplicial complexes and polyhedra (\cite{undef}).

Let $K=(\overline{K},\Sigma)$ be a simplicial complex, and $A$ a subset of the set $\absno{K}$. The simplicial subcomplex $St(A,K)$ of $K$ is defined to have the set
$$
\left\{ s\in\Sigma \ |\ \hbox{There exists a simplex } \sigma \hbox{ of } K \hbox{ such that }s\subset \sigma\ and\ A\ \cap |\sigma| \neq\phi \right\}
$$
as the set of simplices. The subset $O(A,K)$ of $\absno{K}$ is defined to be the union of open simplices $e$ such that $\overline{e}\cap A\neq\phi$. The subset $O(A,K)$ is an open set of the topological polyhedron $\absno{K}_{top}$ and the closure $\overline{O(A,K)}$ is just the polyhedron associated to the simplicial subcomplex $St(A,K)$.

Let $K$ be a simplicial complex, and $L$ a simplicial subcomplex of $K$. $L$ is called a full simplicial subcomplex of $K$, if any simplex of $K$ with vertices in $L$ is a simplex of $L$.

\begin{lem}
	Let $K$ be a simplicial complex, and $L$ a full simplicial subcomplex.
	\begin{enumerate}[(1)]
		\item
		$\absno{L}_\upsub$ is a deformation retract of $O(\absno{L},K)_\upsub$.
		\item
		If $K$ is a finite simplicial complex, then $O(\absno{L},K)_\upsub$ is open in $\absno{K}_\upsub$.
	\end{enumerate}
	\begin{proof}
		Define the (set-theoretic) map $r:O(\absno{L},K)\longrightarrow\absno{L}$ by $r(x)=\underset{v\in L}{\Sigma}\frac{x(v)}{\underset{v'\in L}{\Sigma}x(v') }v$; $r$ is well-defined since $L$ is a full simplicial subcomplex. It is easily seen that $r: O(\absno{L},K)_\upsub\longrightarrow\absno{L}_\upsub$ is a $\cinf$-retraction. Since $r(x)$ and $x$ are in the same simplex of $\absno{K}$, $ir$ is linearly homotopic to $1_{O(\absno{L},K)_\upsub}$ in $\cinf$ (Lemma 4.3).
	\end{proof}
\end{lem}
\begin{thm}
	Let $K$ be a finite simplicial complex. Then the (set-theoretic) identity $id:\absno{K}_\sub\longrightarrow\absno{K}_\upsub$ is a diffeomorphism.
\end{thm}
For the proof of Theorem 4.5, we need several lemmas.

Let $K=(\overline{K},\Sigma)$ be a simplicial complex. A simplex $s$ of $K$ is regarded as a simplicial subcomplex in an obvious manner. Suppose that $K$ is a finite simplicial complex. Let $\{K_1,\cdots,K_\ell\}$ be the set of maximal simplices of $K$. Then the expression $K=K_1\cup\cdots\cup K_\ell$ is called the decomposition of $K$ into maximal simplices. If $\ell=1$, then $\absno{K}_\sub=\absno{K}_\upsub\cong\Delta^n_\sub$, where $n=\sharp\absno{\overline{K}} $.

\begin{lem}
	Let $K$ be a finite simplicial complex, and $K=K_1\cup\cdots \cup K_\ell$ its decomposition into maximal simplices. Then, for any continuous curve $c:\rbb\longrightarrow\absno{K}$, $\rbb=\underset{i}\cup\overline{c^{-1}(\absno{K_i})^\circ}$ holds.
	\begin{proof}
		We prove that any $\tau\in\rbb$ is in $\underset{i}{\cup} \ \overline{c^{-1}(\absno{K_i})^\circ}$ by induction on $n=\dim K$.

		For $n=0$, the result is obvious. Suppose that the result holds up to $n-1$. Set $K' = \underset{i\neq j}{\cup}K_{i} \cap K_{j}$. If $c(\tau) \in |K| - |K'|$, then $\tau \in c^{-1}(|K_{i}|)^{\circ}$ for some $i$. Thus, we consider the case where $c(\tau) \in |K'|$. Suppose that $\tau\notin \underset{i}{\cup} \ \overline{c^{-1}(\absno{K_i})^\circ}$.Then, $c$ sends the open neighborhood $U(\tau):= \rbb\ \backslash\ \underset{i}{\cup}\overline{c^{-1}(\absno{K_i})^\circ}$ of $\tau$ to $|K'|$. Let $K' = K'_{1} \cup \cdots \cup K'_{l'}$ be the decomposition of $K'$ into maximal simplices. Since $\tau$ is in $\overline{c^{-1}(\absno{K'_j})^\circ}$ for some $j$ by the induction hypothesis, $\tau$ is in $\overline{c^{-1}(\absno{K_i})^\circ}$ for some $i$, which is a contradiction.
	\end{proof}
\end{lem}
\begin{proof}[Proof of Theorem 4.5]
	It is sufficient to show that the (set-theoretic) identify map
	\[
	id: |K|_{\mathrm{SUB}} \longrightarrow |K|_{\mathrm{sub}}
	\]
	is smooth (Lemma 4.2). Note that the composite
	\[
	|K|_{\mathrm{sub}} \xrightarrow{\   id   \ } |K|_{\mathrm{SUB}} \longhookrightarrow Set(K, \rbb) \cong \rbb^{\sharp K}
	\]
	is smooth. Then, for any open $V \subset |K|_{\mathrm{sub}}$ and any $x \in V$, there exists a $C^{\infty}$-function $\phi$ on $V$ such that $\phi \equiv 1$ near $x$ and supp $\phi$ is compact. Thus, we show that the $m$-th derivative of the composite
	$$
	\rbb\xoverright{c}\absno{K}_\upsub\xoverright{id}\absno{K}_\sub\xoverright{f}\rbb
	$$
	exists for any $m\geq 1$, any $\cinf$-curve $c$ into $\absno{K}_\upsub$, and any $\cinf$-function on $\absno{K}_\sub$; we prove this in three steps.

	Let $K=K_1\cup\cdots \cup K_\ell$ be the decomposition of $K$ into maximal simplices. For $\tau\in\rbb$, set $I=\{i \  |\ \tau\in \overline{c^{-1}(\absno{K_i})^\circ} \}$ and note that $I$ is nonempty (Lemma ?).

	\item[Step 1.]Suppose that $I$ consists of only one element $i$. Then $U:=\rbb\backslash\underset{j\neq i}{\cup}\overline{c^{-1}(\absno{K_j})^{\circ}}$ is an open neighborhood of $\tau$, and $c$ sends $U$ into $\absno{K_i}$. Thus, we have the commutative diagram
	$$
	\begin{tikzcd}
	\rbb \cdarrow{r,"c"} & \absno{K}_\upsub \cdarrow{r,"id"} & \absno{K}_\sub \cdarrow{r,"f"} & \rbb \cdarrow{d,equal} \\
	U \cdarrow{r,"c"} \cdarrow{u,hook} & \absno{K_i}_\upsub\cdarrow{r,"id"}\cdarrow{u,hook} & \absno{K_i}_\sub\cdarrow{r,"f"}\cdarrow{u,hook} &\rbb.
	\end{tikzcd}
	$$
	Since $\absno{K_i}_\upsub=\absno{K_i}_\sub\cong \Delta^p_\sub $ for some $p$, $f\circ id \circ c $ is smooth on $U$.

	\item[Step 2.]Suppose that $I$ contains $i$ and $j$ with $i<j$. Since $c(\tau)\in \absno{K_i\cap K_j}=\absno{K_i}\cap\absno{K_j}$, we define the curves $c_{(i)}$, $c_{(j)}$, $c_{(ij)}$ to be the composites
	$$
	\begin{tikzcd}[row sep=0.1cm]
	U' \cdarrow{r,"c"} & O(\absno{K_i},K)_\upsub \cdarrow{r,"p_i"} & \absno{K_i}_\upsub=\absno{K_i}_\sub \cdarrow{r,hook} & \absno{K}_\sub,\\
	U' \cdarrow{r,"c"} & O(\absno{K_j},K)_\upsub \cdarrow{r,"p_j"} & \absno{K_j}_\upsub=\absno{K_j}_\sub \cdarrow{r,hook} & \absno{K}_\sub,\\
	U' \cdarrow{r,"c"} & O(\absno{K_i\cap K_j},K)_\upsub \cdarrow{r,"p_{ij}"} & \absno{K_i\cap K_j}_\upsub=\absno{K_i\cap K_j}_\sub \cdarrow{r,hook} & \absno{K}_\sub
	\end{tikzcd}
	$$
	respectively, where $U'$ is a sufficient small open neighborhood of $\tau$ and $p_i$, $p_j$, and $p_{ij}$ are the $\cinf$-retraction constructed in the proof of Lemma 4.4. Since $c_{(i)}$, $c_{(j)}$ and $c_{(ij)}$ are smooth, $f\circ c_{(i)}$, $f\circ c_{(j)}$, and $f\circ c_{(ij)}$ are also smooth, and hence, $(f\circ c_{(i)})^{(m)}$, $(f\circ c_{(j)})^{(m)}$, and $(f\circ c_{(ij)})^{(m)}$ exist and are continuous for $m \geq 0$. Thus, we see that
	$$
	\begin{array}{rcl}
	(f\circ c_{(i)})^{(m)}=(f\circ c_{(ij)})^{(m)} & \mathrm{on} &\overline{c^{-1}(\absno{K_i})^{\circ}},\\
	(f\circ c_{(j)})^{(m)}=(f\circ c_{(ij)})^{(m)} & \mathrm{on} &\overline{c^{-1}(\absno{K_j})^{\circ}}.
	\end{array}
	$$
	\item[Step 3.] By Steps 1 and 2, we see that
	\[
	(f\circ id \circ c)^{(m)}(\tau) = \underset{t \rightarrow \tau}{\lim} \frac{(f \circ id \circ c)^{(m-1)}(t)-(f \circ id \circ c)^{(m-1)}(\tau)}{t-\tau}
	\]
	exists for $m \geq 1$.
\end{proof}
\fi
%%%%%%%%%%%%%%%%%%%%%%%%%%%%%%
Let $\Delta(p)$ denote the simplicial complex defined by $\overline{\Delta(p)} = \{ 0, \cdots, p \}$ and $\Sigma_{\Delta(p)} = \{ \text{ nonempty subsets of $\overline{\Delta(p)}$} \}.$ Then, $|\Delta(p)|_{\dcal} = \Delta^{p}$ and $|\Delta(p)|'_{\dcal} = \Delta^{p}_{\mathrm{sub}}$ hold obviously. The simplicial complex $\Delta(1)$ is often denoted by $I$.
\par\indent
Regard a $p$-simplex $\sigma$ of a simplicial complex $K$ as a subcomplex of $K$ which is isomorphic to $\Delta(p)$. Then, $|\sigma|_{\dcal}$ (resp. $|\sigma|'_{\dcal}$) is just the closed $p$-simplex $|\sigma|$ viewed as a diffeological space via an affine identification with $\Delta^{p}$ (resp. $\Delta^{p}_{\mathrm{sub}}$).
\begin{rem}\label{embedding}
	Let $K$ be a simplicial complex and $\iota: L \longhookrightarrow K$ the inclusion of a subcomplex $L$ into $K$. Then, $|\iota|_{\dcal}: |L|_{\dcal} \longhookrightarrow |K|_{\dcal}$ is a $\dcal$-embedding (see Lemma \ref{Dsub}(1)). On the other hand, $|\iota|'_{\dcal}: |L|'_{\dcal} \longhookrightarrow |K|'_{\dcal}$ need not be a $\dcal$-embedding; consider the subcomplex $L$ of $K = \Delta(2)$ defined by
	\[
	\Sigma_{L} = \{ \{(0)\}, \{(1)\}, \{(2)\}, \{(0), (1)\}, \{(0), (2)\} \}
	\]
	and see \cite[the proof of Proposition A.2(1)]{origin}. However, if $L$ is a $p$-simplex $\sigma$ viewed as a subcomplex of $K$, then the map $\Delta^{p}_{\mathrm{sub}} \cong |\sigma|'_{\dcal} \xhookrightarrow[]{|\iota|'_{\dcal}} |K|'_{\dcal}$ is a $\dcal$-embedding. This result is shown as follows. Fix an isomorphism $\Delta(p) \cong \sigma$. Since a retraction $\rho: K \longrightarrow \Delta(p)$ to $\Delta(p) \cong \sigma \xhookrightarrow[]{\ \  \iota \ \ } K$ is defined by $\rho(v) = (0)$ for $v \not\in \sigma$, we obtain the retract diagram
				\begin{center}
		\begin{tikzpicture} %[shorten >=1pt,node distance=2cm,on grid]
		\node [below] at (0,0.25) {\begin{tikzcd}
			\absno{\sigma}'_{\dcal} \arrow{r}{\absno{\iota}'_{\dcal}}
			& \absno{K}'_{\dcal} \arrow{r}{\absno{\rho}'_{\dcal}}
			& \absno{\sigma}'_{\dcal},
			\end{tikzcd}};

		\draw[->] (-2.0,-.6) -- (-2.0,-.75)--(1.7,-.75)-- (1.7,-0.5);
		\node at (0,-1.) {$1$};
		\end{tikzpicture}
	\end{center}
which implies that $|\iota|'_\dcal$ is a $\dcal$-embedding.
\end{rem}
For a covering $\ucal = \{ U_{\alpha} \}_{\alpha \in A}$ of a diffeological space $X$ by subsets, we define the {\sl nerve} $N\ucal$ to be the simplicial complex such that $\overline{N\ucal} = \{ \alpha \in A\ |\ U_{\alpha} \neq \emptyset \}$ and $\{ \alpha_{0}, \ldots, \alpha_{p} \} \in \Sigma_{N \ucal}$ if and only if $U_{\alpha_{0}} \cap \cdots \cap U_{\alpha_{p}} \neq \emptyset$.
\begin{lem}\label{barycentricmap}
	Let $\ucal = \{U_{\alpha} \}_{\alpha \in A}$ be a covering of a diffeological space $X$ by subsets and $\{\varphi_{\alpha}\}_{\alpha \in A}$ be a smooth partition of unity subordinate to $\ucal$. Then the map $\varphi:X \longrightarrow |N\ucal|'_{\dcal}$ defined by $\varphi(x) = \underset{\alpha \in A}{\sum} \varphi_{\alpha}(x)\alpha$ is smooth.
	\begin{proof}
		For a given $x\in X$, $\sigma := \{\alpha \ |\  x\in \mathrm{supp}\ \varphi_\alpha\}$ is a finite subset of $A$ such that $x\in U_\sigma := \underset{\alpha \in \sigma}{\cap} U_\alpha$. Noticing that $\{\mathrm{supp}\ \varphi_\alpha \ | \ \alpha \in A\}$ is locally finite, we define the open neighborhood $V_\sigma$ of $x$ by
		\[
			V_\sigma = U_\sigma - \underset{\alpha \not\in\sigma}{\cup} \mathrm{supp}\ \varphi_\alpha.
		\]
		Then, there exists a smooth map $\phi_{\sigma}: V_{\sigma} \longrightarrow |\sigma|'_{\dcal}$ making the diagram
\[
\begin{tikzcd}
 & & \absno{\sigma}'_{\dcal} \arrow[hook']{d}\\
 \arrow{rru}{\phi_{\sigma}} V_{\sigma} \arrow[hook]{r} & X \arrow{r}{\varphi} & \absno{N\ucal}'_{\dcal}
\end{tikzcd}
\]
commute. Hence, $\varphi$ is smooth.
	\end{proof}
\end{lem}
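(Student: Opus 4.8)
The plan is to verify smoothness directly from the definition of a smooth map (Definition \ref{diffeological}(2)): I will show that for every plot $p:W\longrightarrow X$ with $W$ open in $\rbb^{n}$, the composite $\varphi\circ p:W\longrightarrow |N\ucal|'_{\dcal}$ is a plot of $|N\ucal|'_{\dcal}$. Since the diffeology of $|N\ucal|'_{\dcal}$ satisfies the locality axiom (Definition \ref{diffeological}(1)(ii)), it suffices to produce, for each $t_{0}\in W$, a neighborhood $W_{0}$ of $t_{0}$ on which $\varphi\circ p$ is a plot. The key structural point is that near a single point the formal sum defining $\varphi$ involves only finitely many $\varphi_{\alpha}$, so that $\varphi$ factors locally through a single closed simplex $|\sigma|'_{\dcal}$.

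First I would fix $t_{0}\in W$, set $x=p(t_{0})$, and let $\sigma=\{\alpha\mid x\in\mathrm{supp}\,\varphi_{\alpha}\}$. Local finiteness of $\{\mathrm{supp}\,\varphi_{\alpha}\}$ makes $\sigma$ finite, and since $\mathrm{supp}\,\varphi_{\alpha}\subset U_{\alpha}$ we get $x\in U_{\sigma}=\cap_{\alpha\in\sigma}U_{\alpha}$; in particular $U_{\sigma}\neq\emptyset$, so $\sigma$ is a genuine simplex of $N\ucal$ and $|\sigma|'_{\dcal}$ is a closed simplex of $|N\ucal|'_{\dcal}$. Because the subfamily $\{\mathrm{supp}\,\varphi_{\alpha}\}_{\alpha\notin\sigma}$ is again locally finite, its union is closed in $\widetilde{X}$, so $V=X-\cup_{\alpha\notin\sigma}\mathrm{supp}\,\varphi_{\alpha}$ is an open neighborhood of $x$ (note $x\in V$ by the very definition of $\sigma$). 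Setting $W_{0}=p^{-1}(V)$, which is open since $p$ is continuous into $\widetilde{X}$, I obtain a neighborhood of $t_{0}$ on which $\varphi_{\alpha}\circ p\equiv 0$ for every $\alpha\notin\sigma$. Hence on $W_{0}$ one has $\varphi\circ p=\sum_{\alpha\in\sigma}(\varphi_{\alpha}\circ p)\,\alpha$, a finite barycentric combination landing in $|\sigma|$.

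It then remains to see that this restricted map is a plot. Recall that $|\sigma|'_{\dcal}$ is an affine copy of $\Delta^{p}_{\mathrm{sub}}$ with $p=\#\sigma-1$, and that $\Delta^{p}_{\mathrm{sub}}$ carries the sub-diffeology of the affine space $\abb^{p}$ (Lemma \ref{simplex}(2) and Section 2.3); consequently a parametrization into $\Delta^{p}_{\mathrm{sub}}$ is a plot precisely when its barycentric coordinate functions are smooth. The barycentric coordinates of $\varphi\circ p|_{W_{0}}$ are exactly the smooth functions $\varphi_{\alpha}\circ p$ $(\alpha\in\sigma)$, so $\varphi\circ p|_{W_{0}}:W_{0}\longrightarrow|\sigma|'_{\dcal}$ is smooth. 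Post-composing with the inclusion $|\sigma|'_{\dcal}\longhookrightarrow|N\ucal|'_{\dcal}$, which is smooth as one of the defining maps of the final diffeology on $|N\ucal|'_{\dcal}$ (indeed a $\dcal$-embedding by Remark \ref{embedding}), shows that $\varphi\circ p|_{W_{0}}$ is a plot of $|N\ucal|'_{\dcal}$. By the locality axiom, $\varphi\circ p$ is a plot, and since $p$ was arbitrary, $\varphi$ is smooth.

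The main obstacle I anticipate is the bookkeeping in the second step: one must combine local finiteness of the supports with continuity of $p$ to guarantee that, after shrinking the domain, every extraneous function $\varphi_{\alpha}$ $(\alpha\notin\sigma)$ vanishes identically, so that the formal sum $\sum_{\alpha}\varphi_{\alpha}(x)\alpha$ genuinely collapses onto a single simplex and the target $\Delta^{p}_{\mathrm{sub}}$ can be used. Once this localization is in place, the reduction of smoothness into $\Delta^{p}_{\mathrm{sub}}$ to smoothness of the coordinate functions $\varphi_{\alpha}\circ p$ is routine, and the locality axiom finishes the argument.
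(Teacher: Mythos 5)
Your proof is correct and follows essentially the same route as the paper's: both localize via the local finiteness of $\{\mathrm{supp}\,\varphi_{\alpha}\}$ onto a neighborhood where $\varphi$ collapses into a single closed simplex $|\sigma|'_{\dcal}\cong\Delta^{p}_{\mathrm{sub}}$, where smoothness is detected by the barycentric coordinates $\varphi_{\alpha}$. Phrasing the localization at the level of plots with the locality axiom, rather than via the open cover $\{V_{\sigma}\}$ of $X$ as in the paper, and dropping the (inessential) intersection with $U_{\sigma}$ are only cosmetic differences.
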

\begin{lem}\label{linhomotopy}
	Let $X$ be a diffeological space, and $K$ a simplicial complex. Let $f,g:X\longrightarrow \absno{K}_{\dcal}'$ be smooth maps. Suppose that there exists a set $\{ \phi_{i} : X_{i} \longrightarrow X \}$ of smooth maps such that $\sum \phi_{i}: \coprod X_{i} \longrightarrow X$ is a $\dcal$-quotient map and $f(\phi_{i} (X_{i}) )$ and $g(\phi_{i} (X_{i}))$ are contained in the same closed simplex of $|K|'_{\dcal}$ for any $i$. Then, $f$ is $\dcal$-homotopic to $g$.
\end{lem}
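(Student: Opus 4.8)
The plan is to realize the desired $\dcal$-homotopy as the linear homotopy
\[
h(x,t) = (1-t)\,f(x) + t\,g(x),
\]
where the convex combination is taken inside the closed simplices of $|K|'_{\dcal}$. First I would check that $h$ is well defined as a set-theoretic map $X\times I \longrightarrow |K|'_{\dcal}$. Since $\sum \phi_{i}:\coprod X_{i}\longrightarrow X$ is a $\dcal$-quotient map, it is in particular surjective, so any $x\in X$ equals $\phi_{i}(y)$ for some $i$ and some $y\in X_{i}$; by hypothesis $f(x)$ and $g(x)$ then both lie in a common closed simplex $|\sigma_{i}|$ of $|K|$, which is convex, so the segment joining them stays in $|\sigma_{i}|\subset |K|$. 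Clearly $h(\cdot,0)=f$ and $h(\cdot,1)=g$, so once smoothness is established $h$ is the required $\dcal$-homotopy.

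The core of the argument is to prove that $h$ is smooth, and for this I would exploit the quotient hypothesis. Since $\dcal$ is cartesian closed (Proposition \ref{conven}(2)), the functor $\cdot\times I$ preserves $\dcal$-quotient maps, so
\[
q:=\Bigl(\textstyle\sum_{i}\phi_{i}\Bigr)\times 1_{I}:\Bigl(\coprod_{i}X_{i}\Bigr)\times I = \coprod_{i}(X_{i}\times I)\longrightarrow X\times I
\]
is again a $\dcal$-quotient map. Hence $h$ is smooth if and only if $h\circ q$ is smooth, and a map out of a coproduct is smooth if and only if each of its components is. Thus it suffices to show that for each $i$ the map
\[
h_{i}:X_{i}\times I\longrightarrow |K|'_{\dcal},\qquad h_{i}(y,t)=(1-t)\,f(\phi_{i}(y))+t\,g(\phi_{i}(y)),
\]
is smooth.

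To verify the smoothness of each $h_{i}$, I would reduce to a single sub-simplex. The maps $f\circ\phi_{i}$ and $g\circ\phi_{i}$ are smooth into $|K|'_{\dcal}$ and, by hypothesis, take values in the common closed simplex $|\sigma_{i}|$. By Remark \ref{embedding}, the inclusion $|\sigma_{i}|'_{\dcal}\cong \Delta^{p_{i}}_{\mathrm{sub}}\longhookrightarrow |K|'_{\dcal}$ of a single simplex is a $\dcal$-embedding, so $f\circ\phi_{i}$ and $g\circ\phi_{i}$ corestrict to smooth maps into $\Delta^{p_{i}}_{\mathrm{sub}}$. Now $\Delta^{p_{i}}_{\mathrm{sub}}$ carries, by definition, the sub-diffeology of the affine space $\abb^{p_{i}}\,(\cong\rbb^{p_{i}})$, in which affine combinations are smooth; therefore $(y,t)\mapsto (1-t)\,f(\phi_{i}(y))+t\,g(\phi_{i}(y))$ is smooth into $\abb^{p_{i}}$, and since it factors set-theoretically through the convex set $|\sigma_{i}|=\Delta^{p_{i}}$, it is smooth into $\Delta^{p_{i}}_{\mathrm{sub}}$ and hence into $|K|'_{\dcal}$. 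This gives the smoothness of $h_{i}$, and with it of $h$.

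The point where care is needed, and the reason the statement is phrased for the \emph{SUB}-polyhedron $|K|'_{\dcal}$ rather than for $|K|_{\dcal}$, is that the linear homotopy need not be smooth with respect to the global diffeology of $|K|'_{\dcal}$; there is no ambient affine structure compatible with it. The quotient hypothesis on $\{\phi_{i}\}$ is exactly what lets one localize the problem onto individual closed simplices, and the two facts that make the local computation go through are that a single simplex embeds (Remark \ref{embedding}) and that $\Delta^{p}_{\mathrm{sub}}$ is an affine subspace of $\abb^{p}$. I expect this localization step, together with the preservation of $\dcal$-quotient maps under $\cdot\times I$, to be the only genuinely nontrivial ingredient; the remaining verifications are routine.
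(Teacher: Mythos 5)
Your proposal is correct and follows essentially the same route as the paper's proof: define the linear homotopy $h(x,t)=(1-t)f(x)+tg(x)$, use the fact that $(\sum\phi_i)\times 1_I:\coprod(X_i\times I)\longrightarrow X\times I$ is a $\dcal$-quotient map (cartesian closedness, Proposition \ref{conven}(2)) to localize smoothness to each $X_i\times I$, and then use the $\dcal$-embedding $\Delta^{p_i}_{\mathrm{sub}}\cong|\sigma_i|'_{\dcal}\longhookrightarrow|K|'_{\dcal}$ of Remark \ref{embedding} to reduce to the smoothness of an affine combination in $\abb^{p_i}$. You have simply made explicit the well-definedness check and the corestriction step that the paper's condensed proof leaves implicit.
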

\begin{proof}
	By the assumption, we can define the set-theoretic map $h: X \times I \longrightarrow |K|'_{\dcal}$ by $h(x, t) = (1 - t)f(x) + t g(x)$. Since $f \circ \phi_{i}$ and $g \circ \phi_{i}$ factor through a $\dcal$-embedding $\Delta^{p}_{\mathrm{sub}} \cong |\sigma|'_{\dcal} \longhookrightarrow |K|'_{\dcal}$ corresponding to some simplex $\sigma$ of $K$ (Remark \ref{embedding}) and $\sum \phi_{i} \times 1: \coprod (X_{i} \times I) \longrightarrow X \times I$ is a $\dcal$-quotient map (\cite[Lemma 2.5]{origin} and Proposition \ref{conven}(2)), $h$ is a smooth homotopy between $f$ and $g$.
\end{proof}
Lemmas \ref{barycentricmap} and \ref{linhomotopy} along with the following remark explain why we need not only $\{\Delta^p\}_{p\ge 0}$ but also $\{\Delta^p_{\rm sub}\}_{p\ge 0}$.
\begin{rem}\label{nonsmooth}
	We show that a barycentric map to $|N\ucal|_\dcal$ and a linear homotopy between smooth maps to $|K|_\dcal$ need not be smooth. For this, we need the following fact:
	\begin{quote}
		{\rm F{\scriptsize ACT}}. Let $c:(-\varepsilon, \varepsilon)\longrightarrow \Delta^2_{\rm sub}$ be a smooth curve. If the images $c((-\varepsilon, 0))$ and $c((0, \varepsilon))$ are contained in the open 1-simplices $(0,1)$ and $(0,2)$ respectively, then $c$ is not smooth as a curve into $\Delta^2$.
	\end{quote}
	(See \cite[the proof of Proposition A.2(1)]{origin}).
	\begin{itemize}
		\item[{\rm (1)}] (A non-smooth barycentric map to $|N\ucal|_\dcal$) Lemma \ref{barycentricmap} does not hold if $|\ |'_\dcal$ is replaced by $|\ |_\dcal$. In fact, choose positive numbers $r'$ and $r$ such that $\sqrt{\frac{2}{3}} < r' < r < \sqrt{\frac{3}{2}}$ and consider the open covering $\ucal=\{U_0, U_1, U_2 \}$ of $X=\Delta^2$ defined by $U_i = \{x\in \Delta^2 \,|\, d(x,(i)) < r \}$, where $d(\cdot, \cdot)$ is the standard distance function of $\rbb^3$. We construct smooth functions $f_i:\Delta^2 \longrightarrow [0,1]$ with ${\rm supp}\, f_i = \{x\in \Delta^2 \,|\, d(x,(i))\leq r' \}$ (see \cite[p. 64]{BJ}), and define the partition of unity $\{\varphi_0, \varphi_1, \varphi_2 \}$ subordinate to $\ucal$ by
		\[
			\varphi_i = f_i/(f_0 + f_1 + f_2).
		\]
		Let $P$ be the point of $\Delta^2$ such that $d(P,(1))=d(P,(2))= r'$ and define the smooth curve $\ell:(-\varepsilon, \varepsilon)\longrightarrow \Delta^2$ by $\ell(t) = P+ t(0,-1,1)$. Since the composite $(-\varepsilon, \varepsilon) \xrightarrow{\ \ \ \ell\ \ } \Delta^2 \xrightarrow{\ \ \varphi\ \ } |N\ucal|_\dcal = \Delta^2$ is not smooth, the barycentric map $\varphi:\Delta^2 \longrightarrow |N\ucal|_\dcal$ is not smooth. (See Fact and Fig. 8.1.)
		\item[{\rm (2)}] (A non-smooth linear homotopy between smooth maps to $|K|_\dcal$) The essence of the proof of Lemma \ref{linhomotopy} is the following:
		\begin{quote}\vspace{2mm}
			Under the assumptions of Lemma \ref{linhomotopy}, the linear homotopy $h:X\times I \longrightarrow |K|'_\dcal$ between $f$ and $g$ is smooth.
		\end{quote}\vspace{2mm}
		This does not hold if $|\ |'_\dcal$ is replaced by $|\ |_\dcal$. In fact, consider the linear embeddings $f,g:[0,1]\longrightarrow \Delta^2$ depicted in Fig. 8.2 (see Proposition \ref{axioms} for the smoothness of $f$ and $g$). Then, the linear homotopy
		\[
			h:[0,1] \times I \longrightarrow \Delta^2
		\]
		is not smooth (see Fact).\par
		We, however, show that Lemma \ref{linhomotopy} remains true even if $|\ |'_\dcal$ is replaced by $|\ |_\dcal$ (see Corollary \ref{linhomotopy2}).
	\end{itemize}\par
	\begin{center}
		\begin{tikzpicture}[thick, baseline=0pt]
		\coordinate[label=left:$(1)$]  (1) at (0,0);
		\coordinate[label=right:$(2)$] (2) at (4,0);
		\coordinate[label=above:$(0)$] (0) at (2,3.464);
		
		\coordinate[](a) at ($ (1)!.33!(2) $);
		\coordinate[](b) at ($ (1)!.66!(2) $);
		\coordinate[](c) at ($ (2)!.33!(0) $);
		\coordinate[](d) at ($ (2)!.66!(0) $);
		\coordinate[](e) at ($ (0)!.33!(1) $);
		\coordinate[](f) at ($ (0)!.66!(1) $);
			
		\path[clip, draw] (1) -- (2) -- (0) -- cycle;
		
%		\fill [draw=gray, thick, pattern=north west lines] (0,0) circle (2.66cm);
%		\fill [draw=gray, thick, pattern=north east lines] (4,0) circle (2.66cm);
%		\fill [draw=gray, thick, pattern=vertical lines] (2,3.464) circle (2.66cm);

		\filldraw[fill=gray, opacity=0.3] (0,0) circle (2.66cm);
		\filldraw[fill=gray, opacity=0.3] (2,3.464) circle (2.66cm);
		\filldraw[fill=gray, opacity=0.3] (4,0) circle (2.66cm);		
	
		\draw (b) arc (0:60:2.66cm);
		\draw (a) arc (180:120:2.66cm);
		\draw (c) arc (-60:-120:2.66cm);
		\end{tikzpicture}
		\begin{tikzpicture}[thick, baseline=0pt]
		\coordinate[label=left:$(1)$]  (1) at (0,0);
		\coordinate[label=right:$(2)$] (2) at (4,0);
		\coordinate[label=above:$(0)$] (0) at (2,3.464);
		
		\coordinate[](a) at ($ (1)!.5!(0) $);
		\coordinate[](b) at ($ (0)!.5!(2) $);
		
		\coordinate[label=above:$f$](f) at ($ (1)!.5!(2) $);
		\coordinate[label=above:$g$](g) at ($ (a)!.5!(b) $);
		
		\draw (1) -- (2) -- (0) -- cycle;
		\draw[middlearrow={>}] (a) -- (b);
		\draw[middlearrow={>}] (1) -- (2);
		\end{tikzpicture}\\
		{\rm Fig 8.1 Supports of $\varphi_0$, $\varphi_1$, and $\varphi_2$.$\quad$ Fig 8.2 The linear embeddings $f$ and $g$.}
	\end{center}
\end{rem}

\subsection{$\dcal$-homotopy equivalence between two kinds of diffeological polyhedra}
Since $id: \Delta^{p} \longrightarrow \Delta^{p}_{\mathrm{sub}}$ is smooth, $id: |K|_{\dcal} \longrightarrow |K|'_{\dcal}$ is a smooth map which is a homeomorphism with respect to the underlying topologies (see Lemma \ref{simplex}). The rest of this section is devoted to proving the following theorem.
\begin{thm}\label{homotopyequiv}
	Let $K$ be a simplicial complex. Then the (set-theoretic) identity
	$$
	id:\absno{K}_{\dcal}\longrightarrow\absno{K}_{\dcal}'
	$$
	is a $\dcal$-homotopy equivalence.
\end{thm}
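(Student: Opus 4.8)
The plan is to keep the easy half---that the set-theoretic identity $id:\absno{K}_\dcal\to\absno{K}'_\dcal$ is smooth, which is immediate from $id:\Delta^p\to\Delta^p_{\mathrm{sub}}$ being smooth (Lemma \ref{simplex}(3)) together with the definitions of the two final diffeologies---and to reduce everything to producing a single smooth \emph{carrier-preserving} homotopy inverse. Concretely, I would look for a smooth map $\psi:\absno{K}'_\dcal\to\absno{K}_\dcal$ with $\psi(\absno{\sigma})\subseteq\absno{\sigma}$ for every closed simplex $\sigma$ (equivalently, $\psi(x)$ lies in the carrier of $x$). Granting such a $\psi$, both composites are handled by straight-line homotopies $(x,t)\mapsto(1-t)x+t\psi(x)$, which make sense because $x$ and $\psi(x)$ always lie in a common closed simplex. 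Restricting along the $\dcal$-quotient maps $\coprod_\sigma\absno{\sigma}'_\dcal\to\absno{K}'_\dcal$ and $\coprod_\sigma\absno{\sigma}_\dcal\to\absno{K}_\dcal$, Lemma \ref{linhomotopy} shows $id\circ\psi\simeq_\dcal 1_{\absno{K}'_\dcal}$ in the coarse realization, while its fine counterpart (Corollary \ref{linhomotopy2}, the version of Lemma \ref{linhomotopy} for $\absno{\ }_\dcal$ promised in Remark \ref{nonsmooth}(2)) shows $\psi\circ id\simeq_\dcal 1_{\absno{K}_\dcal}$. Thus the whole theorem collapses to the construction of $\psi$.

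For $\psi$ I would use a barycentric map for the open-star covering $\ucal=\{\mathrm{St}(v)\}_{v\in\overline K}$, whose nerve is $K$ itself. Any smooth partition of unity $\{\varphi_v\}$ on $\absno{K}'_\dcal$ subordinate to $\ucal$ yields $\psi=\sum_v\varphi_v\cdot v$, and this $\psi$ is automatically carrier-preserving, since $\varphi_v(x)\neq0$ forces $v$ to be a vertex of the carrier of $x$. By Lemma \ref{barycentricmap} this $\psi$ is at least smooth into the \emph{coarse} target $\absno{N\ucal}'_\dcal=\absno{K}'_\dcal$; note that the naive choice $\varphi_v(x)=x(v)$ (the barycentric coordinates, which are affine on each $\Delta^p_{\mathrm{sub}}$ and hence smooth on $\absno{K}'_\dcal$) simply returns $\psi=id$, which is exactly the map that fails to be smooth into the fine realization. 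So the real task is to choose the $\varphi_v$ so that $\psi$ lands \emph{smoothly in} $\absno{K}_\dcal$.

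The hard part is precisely this upgrade from the coarse to the fine realization. By Lemma \ref{simplex}(3) the two diffeologies differ only along the codimension-$\ge2$ skeleton $\mathrm{sk}_{p-2}\,\Delta^p$, so $\psi$ is already fine-smooth away from that skeleton and smoothness must be forced only near it. In the cone coordinates $\varphi_i(x,t)=(1-t)(i)+t\,d^i(x)$ of Definition \ref{simplices}, fine-smoothness of $\psi$ near a vertex amounts to the angular component remaining smooth as the radial component degenerates, which the identity visibly violates (this is the content of the F{\scriptsize ACT} in Remark \ref{nonsmooth}). I would arrange this by taking the $\varphi_v$ to vanish to infinite order (flatly), in a coordinated way, along the faces opposite their carriers, so that competing coordinates vanish at comparable flat rates and their ratios extend smoothly across $\mathrm{sk}_{p-2}$; the inductive structure of $\Delta^p$ in Definition \ref{simplices} is what lets one check this one vertex-cone at a time. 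I expect the genuine difficulties to be threefold: first, verifying fine-smoothness of the resulting barycentric map along $\mathrm{sk}_{p-2}$ (the crux, and the only place where more than Lemma \ref{barycentricmap} is needed), very likely requiring a preliminary passage to a barycentric subdivision so that the star covering is fine enough for the flat bumps to be built consistently on overlaps; second, producing the flat partition of unity globally, which forces attention to local finiteness of $\{\mathrm{supp}\,\varphi_v\}$ and hence to the behaviour of $K$ at infinity; and third, checking that the straight-line homotopies assemble into honest $\dcal$-homotopies despite the fact, warned about in Remark \ref{embedding}, that the inclusions $\absno{L}'_\dcal\hookrightarrow\absno{K}'_\dcal$ of subpolyhedra need not be $\dcal$-embeddings---which is exactly why I would route both homotopies through the global quotient maps $\coprod_\sigma\absno{\sigma}\to\absno{K}$ rather than through a skeletal induction.
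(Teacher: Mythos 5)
Your reduction is the right shape and matches the paper's: produce a smooth, carrier-preserving $\psi:\absno{K}'_{\dcal}\longrightarrow\absno{K}_{\dcal}$ and dispose of the two composites by homotopies within closed simplices. But the way you close the fine-side composite is circular: you invoke Corollary \ref{linhomotopy2} to get $\psi\circ id\simeq_{\dcal}1_{\absno{K}_{\dcal}}$, whereas in the paper Corollary \ref{linhomotopy2} is \emph{deduced from} Theorem \ref{homotopyequiv} (its proof reads ``follows from Lemma \ref{linhomotopy} and Theorem \ref{homotopyequiv}''). This is not cosmetic: the straight-line homotopy in the fine realization is exactly what fails to be smooth (the Fact in Remark \ref{nonsmooth}), so no previously available lemma supplies that homotopy. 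The paper avoids the circle by imposing a third condition on $\psi_K$ --- that $\psi_K\simeq_{\dcal}1_{\absno{K}_{\dcal}}$ as a self-map of the \emph{fine} realization --- and constructing this homotopy by hand: the collar homotopies $\Lambda^{p}$ and $\Lambda^{p}_{k}$ of Steps 1 and 3 are smooth because they are reparametrized linear homotopies in the cone coordinates of Definition \ref{simplices}, and Step 4 assembles them over $K$ (with the infinite composite controlled via $\widehat{h}_K=\widehat{h}_0\circ\widehat{h}_1\circ\cdots$ and the colimit description of $\absno{K}_{\dcal}\times I$). Your construction provides no such explicit fine-side homotopy, and without one the theorem does not follow.

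The construction of $\psi$ itself also rests on an unsupported crux. Your claim that coordinated flat vanishing makes ``competing coordinates vanish at comparable flat rates and their ratios extend smoothly across $\mathrm{sk}_{p-2}$'' is false as a general principle: $e^{-1/t}$ and $(2+\sin(1/t))\,e^{-1/t}$ vanish at comparable flat rates, yet their ratio does not even extend continuously at $t=0$. Smoothness into the fine $\Delta^{p}$ requires the angular component to stay smooth through the iterated cone charts, and no rate condition on a barycentric map delivers this. The paper's mechanism is different and sidesteps all rate bookkeeping: the maps $\psi^{p}_{k}$ crush an open neighborhood of (part of) the $k$-skeleton \emph{onto} the skeleton, so that on $(\mathrm{supp}_{k}\,\psi^{p}_{k})^{\circ}$ the composite $\psi^{p}=\psi^{p}_{0}\circ\cdots\circ\psi^{p}_{p-1}$ factors through the projection $\mathring{\Delta}^{k}\times\Delta^{p-k}_{\hat{0}}\longrightarrow\mathring{\Delta}^{k}$ (diagram (8.1)); since the chart $\varPhi_{I}$ of Proposition \ref{goodnbd} is a diffeomorphism for \emph{both} diffeologies, sub-smoothness near $\mathrm{sk}_{p-2}$ comes for free, while $S_{p+1}$-equivariance and face-compatibility handle the gluing over $K$ (so none of your worries about subdivisions or local finiteness arise). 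To salvage your partition-of-unity route you would have to force $\psi$ to be locally a skeletal projection near the skeleton --- e.g.\ $\varphi_{v}\equiv 0$ on a neighborhood of every open simplex whose carrier misses $v$ --- at which point you are rebuilding the paper's collapsing maps in other notation, and you would still owe the explicit fine-side homotopy from the first paragraph.
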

For the proof of Theorem \ref{homotopyequiv}, we recall the local diffeological structures of $\Delta^{p}$ and $\Delta^{p}_{\mathrm{sub}}$ from \cite[Section 4]{origin}. Let $I = \{i_{0}, \ldots, i_{k} \}$ be a subset of the ordered set $\{0, \ldots, p \};$ we always assume that the elements $i_{0}, \ldots, i_{k}$ are arranged such that $i_{0} < \cdots < i_{k}$. Then, $(i_{0}, \ldots, i_{k})$ denotes the open $k$-simplex defined by
\[
(i_{0}, \ldots, i_{k}) = \{ (x_{0}, \ldots, x_{p} ) \in \Delta^{p} \ | \ x_{i} = 0\ \text{if and only if\ $i \not\in \{i_{0}, \ldots, i_{k} \} $} \},
\]
and $U_{I}$ denotes the diffeological subspace of $\Delta^{p}$ defined by
\[
U_{I} = \{ (x_{0}, \ldots, x_{p}) \in \Delta^{p} \ | \ x_{i} > 0 \ \text{for} \ i \in I \}.
\]
$U_{I}$ is an open neighborhood of the open simplex $(i_{0}, \ldots, i_{k})$ of $\Delta^{p}$. For
$I = \{ i \}$, $U_{I}$ is just the $i^{\mathrm{th}}$ half-open simplex $\Delta^{p}_{\hat{i}}$ (\cite[Definition 4.1]{origin}).
\begin{prop}\label{goodnbd}
	Let $I = \{i_{0}, \ldots, i_{k} \}$ be a subset of the ordered set $\{0, \ldots, p \}$ and define the map
\[
\varPhi_{I}: U_I \longrightarrow \mathring{\Delta}^{k} \times \Delta^{p-k}_{\hat{0}}
\]
by
\[
\varPhi_{I} (x_0, \ldots, x_p) = ( \frac{x_{i_0}(0)+ \cdots + x_{i_k}(k)}{x_{i_0}+ \cdots + x_{i_k}}, (x_{i_0}+ \cdots + x_{i_k})(0)+x_{j_1}(1)+ \cdots +x_{j_{p-k}}(p-k)),
\]
where $\{j_1, \ldots, j_{p-k} \} = \{0, \ldots, p \} \backslash \{i_0, \ldots, i_k \}$.
\begin{itemize}
\item[{\rm (1)}] $\varPhi_{I}: U_{I} \longrightarrow \mathring{\Delta}^{k} \times \Delta^{p-k}_{\hat{0}}$ is a diffeomorphism.
\item[{\rm (2)}] $\varPhi_{I}: U_{I\ \sub} \longrightarrow \mathring{\Delta}^{k} \times \Delta^{p-k}_{\hat{0}\ \sub}$ is a diffeomorphism.
\end{itemize}
\end{prop}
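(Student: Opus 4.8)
The plan is to exhibit the set-theoretic inverse of $\varPhi_I$ explicitly and then check smoothness of both maps, first for the sub-diffeologies (part (2)) and then for the special diffeologies of Definition \ref{simplices} (part (1)). Writing a point of $\mathring{\Delta}^{k}$ as $(a_{0},\dots,a_{k})$ and a point of $\Delta^{p-k}_{\hat{0}}$ as $(b_{0},\dots,b_{p-k})$ (so $a_{j}>0$, $\sum_j a_{j}=1$, $b_{0}>0$, $\sum_l b_{l}=1$), the candidate inverse $\varPsi_{I}$ is given by $x_{i_{j}}=a_{j}b_{0}$ for $0\le j\le k$ and $x_{j_{l}}=b_{l}$ for $1\le l\le p-k$. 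One checks directly that $\varPsi_{I}$ and $\varPhi_{I}$ are mutually inverse bijections onto the stated targets, so only smoothness is at issue.

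For part (2), both maps are restrictions of maps between subsets of the affine spaces $\abb^{k}$ and $\abb^{p-k}$: the map $\varPhi_{I}$ is rational with denominator $x_{i_{0}}+\cdots+x_{i_{k}}$, which is strictly positive on $U_{I}$, and $\varPsi_{I}$ is polynomial; hence both are smooth for the sub-diffeologies, which settles part (2). The forward direction of part (1) follows along the same lines: by Lemma \ref{simplex}(3) the map $id:\Delta^{p}\longrightarrow \Delta^{p}_{\sub}$ is smooth, so every coordinate $x_{i}:\Delta^{p}\to\rbb$ is smooth for the special diffeology. The second component of $\varPhi_{I}$ is affine, hence smooth by Axiom 2 (Proposition \ref{axioms}), while the first component is the above rational function of the $x_{i}$, smooth because its denominator is positive on $U_{I}$. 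Since $\mathring{\Delta}^{k}$ carries the sub-diffeology of $\Delta^{k}$, which agrees with the special one there by Lemma \ref{simplex}(3), no distinction between the two diffeologies is needed on the first factor.

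The remaining and genuinely hard point is that $\varPsi_{I}$ is smooth into $\Delta^{p}$ with its \emph{special} diffeology, which is strictly finer than the sub-diffeology for $p\ge 2$; thus polynomiality is not enough, and I must show that $\varPsi_{I}$ carries plots of $\mathring{\Delta}^{k}\times\Delta^{p-k}_{\hat 0}$ to plots of the final structure of Definition \ref{simplices}. I would argue by induction on $p$ using the defining cone charts $\varphi_{i}:\Delta^{p-1}\times[0,1)\to\Delta^{p}$. Since $x_{i}>0$ on $U_{I}$ for $i\in I$, we have $U_{I}\subset\Delta^{p}_{\hat{i_{0}}}=\varphi_{i_{0}}(\Delta^{p-1}\times[0,1))$, and pulling back along $\varphi_{i_{0}}$ identifies $U_{I}$ with $U_{I'}\times[0,1)$, where $I'\subset\{0,\dots,p-1\}$ is the image of $I\setminus\{i_{0}\}$ under $d^{i_{0}}$ and has cardinality $k$. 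The inductive hypothesis applied in $\Delta^{p-1}$ gives $U_{I'}\cong\mathring{\Delta}^{k-1}\times\Delta^{p-k}_{\hat 0}$, after which it remains to assemble the resulting diffeomorphism $\mathring{\Delta}^{k-1}\times\Delta^{p-k}_{\hat 0}\times[0,1)\cong U_{I}$ with the target $\mathring{\Delta}^{k}\times\Delta^{p-k}_{\hat 0}$.

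The main obstacle is precisely this reassembly. The cone parameter $t$ of $\varphi_{i_{0}}$ rescales all non-$i_{0}$ coordinates simultaneously, so it mixes into both the normalized factor $\mathring{\Delta}^{k}$ and the collapsed factor $\Delta^{p-k}_{\hat 0}$, and the comparison map is again rational with carefully controlled, positive denominators. Verifying that this comparison map and its inverse preserve the special (final) diffeologies — equivalently, that the relevant parametrizations factor locally through the cone charts $\varphi_{i}$ — is the delicate step, and is the diffeological counterpart of the local product structure of $\Delta^{p}$ near a face established in \cite[Section 4]{origin}. Once both $\varPhi_{I}$ and $\varPsi_{I}$ are shown to be smooth, they are mutually inverse by the first paragraph, and hence $\varPhi_{I}$ is a diffeomorphism in both cases.
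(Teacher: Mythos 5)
The crux of this proposition is not the formula for the inverse nor the smoothness of $\varPhi_{I}$, but the smoothness of the inverse $\varPsi_{I}$ into $\Delta^{p}$ with its \emph{final} diffeology, and that is exactly the step you do not prove. Your treatment of part (2) and of the forward map in part (1) is correct (the coordinate functions are smooth on $\Delta^{p}$ by Lemma \ref{simplex}(3), the second component is affine, the denominator is positive), but for the hard direction you set up an induction via the cone charts $\varphi_{i}$ and then stop at the ``reassembly,'' declaring it ``the delicate step'' and deferring it to ``the diffeological counterpart of the local product structure of $\Delta^{p}$ near a face established in \cite[Section 4]{origin}.'' This is circular: that local product structure \emph{is} Proposition \ref{goodnbd} — the paper's own proof consists precisely of the citation to \cite[Proposition 4.5, Remark 4.7(2), and Lemma 4.2]{origin}, where one shows that plots of $\mathring{\Delta}^{k}\times\Delta^{p-k}_{\hat{0}}$ factor locally through the charts $\varphi_{i}$ so that $\varPsi_{I}$ preserves the final structure. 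A blind proof must actually carry out that factorization argument; naming the obstacle is not proving it.

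There is also a concrete error in the inductive setup as written. For $k\geq 1$ the pullback of $U_{I}$ along $\varphi_{i_{0}}$ is $U_{I'}\times(0,1)$, not $U_{I'}\times[0,1)$: at $t=0$ the chart $\varphi_{i_{0}}$ collapses all of $\Delta^{p-1}\times(0)$ to the vertex $(i_{0})$, which does not lie in $U_{I}$ since the coordinates indexed by $I\setminus\{i_{0}\}$ vanish there (and for $k=0$ the map $\varphi_{i_{0}}$ is not injective at $t=0$, so it is not a product identification either). The correct exploitation of the charts uses that $t>0$ on $U_{I}$ when $k\geq1$, so $\varphi_{i_{0}}$ is injective over the relevant region — but you then still owe the proof that it restricts to a diffeomorphism onto an open diffeological subspace there, which again is the uncited content of \cite[Section 4]{origin}. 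So the proposal reproduces the easy half of the statement and leaves the genuinely hard half, the one the paper outsources to \cite{origin}, unproved.
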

\begin{proof}
	See \cite[Proposition 4.5, Remark 4.7(2), and Lemma 4.2]{origin}.
\end{proof}
In the proof of Theorem \ref{homotopyequiv}, $I$ denotes a subset $\{i_{0}, \ldots, i_{k} \}$ of the ordered set $\{0, \ldots, p \}$; the unit interval is denoted by $[0, 1]$ to avoid confusion. For a subset $B$ of a topological space $A$, $\overline{B}$ and $B^{\circ}$ denote the closure and the interior of $B$ respectively. We write $h:f\simeq_\dcal g$ for a $\dcal$-homotopy $h:X \times [0,1]\longrightarrow Y$ connecting $f$ to $g$.
\begin{proof}[Proof of Theorem \ref{homotopyequiv}.]
	Suppose that there exists a smooth map $\psi_{K}:\absno{K}_{\dcal}\longrightarrow\absno{K}_{\dcal}$ satisfying the following conditions:
	\begin{enumerate}
		\item[$\bullet$]
		$\psi_{K}$ preserves each closed simplex.
		\item[$\bullet$]
		$\psi_{K}$ is $\dcal$-homotopic to $1_{\absno{K}_{\dcal}}$.
		\item[$\bullet$]
		$\psi_{K}$ is smooth as a map from $\absno{K}_{\dcal}'$ to $\absno{K}_{\dcal}$.
	\end{enumerate}
	Then $\psi_{K}:\absno{K}_{\dcal}'\longrightarrow \absno{K}_{\dcal}$ is a $\dcal$-homotopy inverse of $id:\absno{K}_{\dcal}\longrightarrow\absno{K}_{\dcal}'$ (see Lemma \ref{linhomotopy}). We thus construct such a smooth map $\psi_{K}$ in five steps.\vspace{0.2cm}\\
	Step 1:
		{\sl Constructions of $\lambda^p:\Delta^p_{\hat{0}}\longrightarrow\Delta^p_{\hat{0}}$ and $\Lambda^{p}: \lambda^{p} \simeq_{\dcal} 1_{\Delta^{p}_{\hat{0}}}$}. For a positive number $\epsilon < 1$, we choose a monotone increasing smooth function $\lambda:[0,1)\longrightarrow[0,1)$ such that
		\[
			\lambda(t)=
		\begin{cases}
		0 & (0\leq t\leq\frac{\epsilon}{2})\\
		t & (t\geq \epsilon)
		\end{cases}
		\ \text{\ and \ \ $0 < \lambda(t) < t$\ \ on\ \ $(\frac{\epsilon}{2}, \epsilon)$},
		\]
		and define $\Lambda: [0, 1) \times [0, 1] \longrightarrow [0, 1)$ to be the linear homotopy connecting $\lambda$ to $1_{[0, 1)}$; explicitly,
		\[
		\Lambda (t, s) = (1 - s) \lambda (t) + st.
		\]
		Then, the smooth maps $\lambda^p : \Delta^p_{\hat{0}}\longrightarrow\Delta^p_{\hat{0}}$ and $\Lambda^{p}: \Delta^{p}_{\hat{0}} \times [0, 1] \longrightarrow \Delta^{p}_{\hat{0}}$ are defined by the commutative diagrams
		\begin{center}
		$
		\begin{tikzcd}
		\Delta^{p-1}\times [0,1) \arrow[r,"1 \times \lambda"] \arrow[d,"\varphi_0",swap] & \Delta^{p-1}\times [0,1) \arrow[d,"\varphi_0"]\\
		\Delta^p_{\hat{0}}\arrow[r,"\lambda^{p}"]&\Delta^{p}_{\hat{0}},
		\end{tikzcd}
		$
		$
		\begin{tikzcd}
		\Delta^{p-1} \times [0, 1) \times [0, 1] \arrow{r}{1 \times \Lambda} \arrow[swap]{d}{\varphi_{0} \times 1} & \Delta^{p-1} \times [0 , 1) \arrow{d}{\varphi_{0}} \\
		\Delta^{p}_{\hat{0}} \times [0, 1] \arrow{r}{\Lambda^{p}} & \Delta^{p}_{\hat{0}}
		\end{tikzcd}
		$
		\end{center}
		(see Definition \ref{simplices} and \cite[Lemmas 4.3 and 2.5]{origin}). The map $\Lambda^{p}$ is a $\dcal$-homotopy connecting $\lambda^{p}$ to $1_{\Delta^{p}_{\hat{0}}}$ and is given by
		\[
		\Lambda^{p} (x, s) = (1 - s) \lambda^{p} (x) + sx.
		\]\vspace{0.2cm}
Step 2:
		{\sl Notations}. We introduce some notations, which are used only in this proof.

		Let $f: A \longrightarrow A$ be a continuous self-map of a topological space $A$. Then, we set
		\[
		\mathrm{carr}\ f = \{ x \in A \ | \ f(x) \neq x \}\ \  \ \text{and}\ \ \  \mathrm{supp}\ f = \overline{\mathrm{carr}\ f}.
		\]
		In the case where $A$ is an open set of $\Delta^{p}$, we also set
	%	\begin{eqnarray*}
	%	\mathrm{carr}_{k}\ f & = & \{ x \in \Delta^{p} \ | \ f(x) \neq x,\ f(x) \in \mathrm{sk}_{k}\ \Delta^{p} \}	\ \text{and} \\
	%	\mathrm{supp}_{k}\ f & = & \overline{\mathrm{carr}_{k}\ f}
	%	\end{eqnarray*}
	\[
	\mathrm{carr}_{k}\ f  =  \{ x \in A \ | \ f(x) \neq x,\ f(x) \in \mathrm{sk}_{k}\ \Delta^{p} \}	\hspace{0.3cm} \text{and} \hspace{0.3cm}  \mathrm{supp}_{k}\ f  =  \overline{\mathrm{carr}_{k}\ f}
	\]
	(see \cite[Definition 4.1]{origin} for the $k$-skeleton $\mathrm{sk}_{k}\ \Delta^{p}$).

		Recalling that $I=\{i_0,\ldots,i_k\}$, we next define the open neighborhood $V_{I}$ of the open $k$-simplex $(i_{0}, \ldots, i_{k})$ in $\Delta^{p}$ by
		\[
		V_{I} = \{ (x_{0}, \ldots, x_{p}) \in \Delta^{p} \ | \ x_{i} > x_{j}   \ \text{for} \ i \in I \ \text{and} \ j \not\in I \},
		\]
		which is obviously contained in $U_{I}$.\vspace{0.2cm}\\
		Step 3:
		{\sl Constructions of $\psi^p_k:\Delta^p\longrightarrow\Delta^p$ and $h^{p}_{k}:\psi^{p}_{k} \simeq_{\dcal} 1_{\Delta^{p}}.$} First, we construct sequences of smooth maps
		$$
		\cdots\longrightarrow\Delta^p\xoverright{\psi^p_k}\Delta^p\xoverright{\psi^p_{k-1}}\cdots \xoverright{\psi^p_1}\Delta^p\xoverright{\psi^p_0}\Delta^p\ \ (p\geq 0)
		$$
		satisfying the following conditions:
		\begin{itemize}
			\item[$(\rm i)$]
			$\psi^p_k$ is equivariant with respect to the obvious action of the symmetric group $S_{p+1}$ on $\Delta^{p}$ and satisfies the inclusion relation
			\[
			{\rm supp}\ \psi^{p}_{k}\ \subset \underset{i_{0} < \cdots < i_{k}}{\coprod} V_{ \{ i_{0} \ldots i_{k} \}}.
			\]
			\item[$(\rm ii)$]
			$\psi^p_k$ preserves each closed simplex of $\Delta^{p}$ and coincides with $\psi^{p-1}_k$ on each closed $(p-1)$-simplex under an affine identification with $\Delta^{p-1}$.
			\item[$(\rm iii)$]
			$\psi^{p}_{k} = 1_{\Delta^{p}}$ on the $k$-skeleton ${\rm sk}_{k}\ \Delta^{p}$, and hence $\psi^p_k=1_{\Delta^p}$ for $k\geq p$.
			\item [$(\rm iv)$]
			$(\mathrm{supp}_{0}\ \psi^{p}_{0})^{\circ} \cup \cdots \cup (\mathrm{supp}_{k}\ \psi^{p}_{k})^{\circ}$ is a neighborhood of $\mathrm{sk}_{k}\ \Delta^{p}$.
		\end{itemize}

		By condition $\rmiii$, we may assume that $k < p$. The maps $\psi^{p}_{k}$ are constructed by induction on $k$. To construct $\psi^{p}_{0} : \Delta^{p} \longrightarrow \Delta^{p}$, choose a positive number $\epsilon_{0} < \frac{1}{2}$ and consider the map $\lambda^{p} : \Delta^{p}_{\hat{0}} \longrightarrow \Delta^{p}_{\hat{0}}$ for $\epsilon = \epsilon_{0}$ (Step 1). Since $\mathrm{supp} \ \lambda^{p} \subset V_{\{ 0 \}},$ $\psi^{p}_{0}$ is defined by $\psi^{p}_{0} \ |_{V_{\{0\}}} = \lambda^{p}$ and condition $\rmi$. The maps $\psi^p_0\:(p\geq0)$ obviously satisfy conditions (i)-(iv).

		Suppose that the maps $\psi^{p}_{d}$ are constructed for $d<k$. Let us construct $\psi^{p}_{k} : \Delta^{p} \longrightarrow \Delta^{p}$. First, choose a smooth function $\phi^{k}: \Delta^{k} \longrightarrow [0, 1]$ satisfying the following conditions:
		\begin{itemize}
			\item $\phi^{k} \equiv 0$ near $\dot{\Delta}^{k}$.
			\item $(\mathrm{supp}_{0}\ \psi^k_{0})^{\circ} \cup \cdots \cup (\mathrm{supp}_{k-1}\ \psi^{k}_{k-1})^{\circ} \cup (\phi^{k})^{-1}(1)^{\circ} = \Delta^{k}$.
			\item $\phi^{k}$ is $S_{k+1}$-invariant.
		\end{itemize}
	(Use \cite[Lemma 3.1]{origin} and \cite[Remark 7.4]{BJ}.) Set $m_{k} = \underset{0 \leq i \leq k}{\min} \min\ \{ y_{i} \ | \ (y_{0}, \ldots, y_{k}) \in \mathrm{supp}\ \phi^{k} \}$ and $\epsilon_{k} = \frac{1}{2}m_{k}$, and let $\lambda^{p-k} : \Delta^{p-k}_{\hat{0}} \longrightarrow \Delta^{p-k}_{\hat{0}}$ be the map defined in Step 1 for $\epsilon = \epsilon_{k}$. Recalling that $U_{\{ 0, \ldots, k \}}$ is diffeomorphic to $\mathring{\Delta}^{k} \times \Delta^{p-k}_{\hat{0}}$ via $\varPhi_{\{ 0, \ldots, k \}}$ (Proposition \ref{goodnbd}(1)), we consider the self-map $\lambda^{p}_{k}$ on $U_{\{ 0, \ldots, k \}} ( \cong \mathring{\Delta}^{k} \times \Delta^{p-k}_{\hat{0}})$ defined by
	\[
	(y, z) \longmapsto (y, \phi^{k}(y)\lambda^{p-k}(z) + ( 1 - \phi^{k}(y))z ).
	\]
	Since $\lambda^{p}_{k}(y, z) = (y, \Lambda^{p-k} (z, 1- \phi^{k}(y) ) ),$ $\lambda^{p}_{k}$ is smooth. Note that
	\[
	\mathrm{supp} \ \lambda^{p}_{k} \cong \{ (y, z) \ | \ y \in \ \mathrm{supp} \  \phi^{k}, z_{1} + \cdots + z_{p-k} \leq \epsilon_{k} \},
\]
	\[
	(\mathrm{supp}_{k}\ \lambda^{p}_{k})^{\circ} \cong \{ (y, z) \ | \ y \in (\phi^{k})^{-1} (1)^{\circ}, z_{1} + \cdots + z_{p-k} < \frac{\epsilon_{k}}{2} \}.
	\]
	Recalling the definition of $\varPhi_{\{ 0, \ldots, k \}}$ (Proposition \ref{goodnbd}) and the choice of $\epsilon_{k}$, we see that if $x \in {\rm supp}\ \lambda^{p}_{k}$, then $(y, z) = \varPhi_{\{ 0, \ldots, k \} }(x)$ satisfies the inequalities
	\[
	x_{j} = z_{j-k} \leq \epsilon_{k} = \frac{1}{2} m_{k} \leq \frac{1}{2} y_{i} < x_{i}
	\]
	for $i \leq k$ and $k < j$, and hence that $\mathrm{supp}$ $\lambda^{p}_{k} \ \subset  \ V_{\{ 0, \ldots, k \}}$.~Thus, $\psi^{p}_{k}$ is defined by $\psi^{p}_{k} |_{V_{\{0, \ldots, k\}}} = \lambda^{p}_{k} |_{V_{\{0, \ldots, k\}}}$ and condition $\rmi$. We can easily see that the maps $\psi^{p}_{k}$ satisfy conditions $\rmi$-$\rmiv$.
	\par\indent
	Next, we construct the $\dcal$-homotopy $h^p_k$ connecting $\psi^p_k$ to $1_{\Delta^p}$. Define the map $\Lambda^p_k:U_{ \{ 0, \ldots, k \} } \times [0, 1] \longrightarrow U_{ \{ 0, \ldots, k \} }$ by
	\[
	(y, z, s) \mapsto (y, \phi^{k}(y) \Lambda^{p-k} (z, s) + (1-\phi^{k}(y))z )
	\]
 under the identification $U_{\{ 0, \cdots, k \}} \cong \mathring{\Delta}^{k} \times \Delta^{p-k}_{\hat{0}}$ (Proposition \ref{goodnbd}(1)). Since the $\Delta^{p-k}_{\hat{0}}$-component of $\Lambda^p_k$ is just $\Lambda^{p-k}(z,\phi^k(y)s+(1-\phi^k(y)))$, it is a smooth map, which restricts to a $\dcal$-homotopy connecting $\psi^{p}_{k} |_{V_{ \{ 0, \ldots, k \} }}$ to $1_{V_{ \{ 0, \ldots, k \} }}$. Thus, this $\dcal$-homotopy extends to the desired $\dcal$-homotopy $h^{p}_{k}: \Delta^{p} \times [0, 1] \longrightarrow \Delta^{p}$ in the obvious manner. Then, $h^p_k$ is an $S_{p+1}$-equivariant $\dcal$-homotopy that preserves each closed simplex of $\Delta^p$ and coincides with $h^{p-1}_{k}$ on each closed $(p-1)$-simplex under an affine identification with $\Delta^{p-1}$. Further, $h^{p}_{k}$ coincides with the constant homotopy at $1_{\Delta^{p}}$ on the $k$-skeleton ${\rm sk}_{k}\ \Delta^{p}$. \vspace{0.2cm}\\
	Step 4:
		{\sl Constructions of $\psi_{K}:\absno{K}_{\dcal}\longrightarrow\absno{K}_{\dcal}$ and $h_{K}:\psi_{K} \simeq_{\dcal} 1_{|K|_{\dcal}}$ }. By condition $\rmii$ on the maps $\psi^{p}_{k}$ (Step 3), $\{ \psi^{p}_{k} \}_{p \geq 0}$ defines the smooth map $\psi_k:\absno{K}_{\dcal}\longrightarrow\absno{K}_{\dcal}$.
		Thus, the smooth map $\psi_{K}:\absno{K}_{\dcal}\longrightarrow\absno{K}_{\dcal}$ is defined by $\psi_{K}= \psi_{0}\circ\psi_1\circ\psi_2\circ\cdots$; note that $ \psi_{K}=\psi_{0}\circ\psi_{1}\circ\cdots\circ\psi_{p-1}$ on a closed $p$-simplex of $\absno{K}_{\dcal}$.

		Let us see that $\psi_{K} \simeq_{\dcal} 1_{\absno{K}_{\dcal}}:\absno{K}_{\dcal}\longrightarrow\absno{K}_{\dcal}$. Since $h^p_k$ coincides with $h^{p-1}_k$ on each $(p-1)$-simplex, $\{ h^{p}_{k} \}_{p \geq 0}$ defines the $\dcal$-homotopy $h_k:\absno{K}_{\dcal}\times [0, 1]\longrightarrow\absno{K}_{\dcal}$ connecting $\psi_k$ to $1_{\absno{K}_{\dcal}}$ (see \cite[Lemma 2.5]{origin}). 
		Set $\widehat{h}_k=(h_k,p_{[0,1]}):|K|_\dcal \times [0,1] \longrightarrow |K|_\dcal \times [0,1]$, where $p_{[0,1]}$ is the projection of $|K|_\dcal \times [0,1]$ onto $[0,1]$, and define the map $\widehat{h}_K:|K|_\dcal \times [0,1] \longrightarrow |K|_\dcal \times [0,1]$ by $\widehat{h}_K=\widehat{h}_0 \circ \widehat{h}_1 \circ \widehat{h}_2 \circ \cdots$. Since $\widehat{h}_K = \widehat{h}_0 \circ \widehat{h}_1 \circ \cdots \circ \widehat{h}_{p-1}$ on $|K^p|_\dcal\times [0,1]$ (see the end of Step 3) and $|K|_\dcal \times [0,1]= \underset{\longrightarrow}{\lim}\, (|K^p|_\dcal \times [0,1])$ (see Propositions \ref{adjoint1}(1) and \ref{conven}(2)), we see that $\widehat{h}_K$ is a well-defined smooth map. Thus, the smooth map $h_K:|K|_\dcal \times [0,1] \longrightarrow |K|_\dcal$, defined as the composite
		\[
		|K|_\dcal \times [0,1] \overset{\widehat{h}_K}{\longrightarrow} |K|_\dcal \times [0,1] \overset{proj}{\longrightarrow} |K|_\dcal,
		\]
		is a $\dcal$-homotopy connecting $\psi_K$ to $1_{|K|_\dcal}$.\vspace{0.2cm}\\
%		Let $\ell_k:[1-\frac{1}{2^k},1-\frac{1}{2^{k+1}}]\longrightarrow[0,1]$ denote the monotone increasing linear diffeomorphism, and choose a monotone increasing smooth function $\rho:[0,1]\longrightarrow[0,1]$ such that $\rho\equiv 0$ near $0$ and $\rho\equiv 1$ near $1$. Define the map $h_{K}:\absno{K}_{\dcal}\times [0, 1]\longrightarrow\absno{K}_{\dcal}$ to coincide with the composite
%		$$
%		\absno{K}_{\dcal}\times\left[1-\frac{1}{2^k},1-\frac{1}{2^{k+1}}\right]\xrightarrow[\cong]{1\times \ell_k} \absno{K}_{\dcal}\times[0,1]\xoverright{\psi_{> k}\times\rho} \absno{K}_{\dcal}\times[0,1] \ \xoverright{h_{k}} \absno{K}_{\dcal}
%		$$
%		on $\absno{K}_{\dcal}\times[1-\frac{1}{2^k},1-\frac{1}{2^{k+1}}]$, where $\psi_{>k}:=\psi_{k+1}\circ\psi_{k+2}\circ\cdots$. Noticing that $\absno{K}_{\dcal}\times [0, 1]=\underset{\longrightarrow}{\lim} \ (\absno{K^p}_{\dcal}\times [0, 1])$ (Proposition \ref{conven}(2)), we can easily see that $h_{K}:\absno{K}_{\dcal}\times [0, 1]\longrightarrow\absno{K}_{\dcal}$ is a $\dcal$-homotopy connecting $\psi_{K}$ to $1_{\absno{K}_{\dcal}}$.\vspace{0.2cm}\\
Step 5:
		{\sl Smoothness of $\psi_{K}: |K|'_{\dcal} \longrightarrow |K|_{\dcal}$.} We show that $\psi_{K}$ is smooth as a map from $|K|'_{\dcal}$ to $|K|_{\dcal}$. By the construction, we have only to show that $\psi^{p}\ (:= \psi^{p}_{0} \circ \cdots \circ \psi^{p}_{p-1}): \Delta^{p}_{\sub} \longrightarrow \Delta^{p}$ is smooth. Since $\mathring{\Delta}^{p}_{\sub} = \mathring{\Delta}^{p}$ (Lemma \ref{simplex}(3)), it suffices to see that $\psi^{p}$ is smooth on the neighborhood $(\mathrm{supp}_{0}\ \psi^{p}_{0})^{\circ}\ \cup \cdots \cup \ (\mathrm{supp}_{p-1}\ \psi^{p}_{p-1})^{\circ}$ of the boundary.

		First, we show that for $k < l$, $\psi^{p}_{l}$ preserves $(\mathrm{supp}_{k}$  $\psi^{p}_{k})^{\circ}$ and makes the following diagram commute:
		\[
		\begin{tikzcd}
		(\mathrm{supp}_{k} \ \psi^{p}_{k})^{\circ} \arrow{rr}{\psi^{p}_{l}} \arrow[swap]{rd}{\psi^{p}_{k}} & & \arrow{ld}{\psi^{p}_{k}} (\mathrm{supp}_{k}\ \psi^{p}_{k})^{\circ}\\
		       &  \Delta^{p} &
		\end{tikzcd}\tag{8.1}
		\]
		By the construction in Step 3, we have
	\begin{eqnarray*}
	({\rm supp}_{k} \ \psi^{p}_{k})^{\circ} & = & \underset{i_{0} < \cdots < i_{k} }{\coprod} ({\rm supp}_{k}\ \psi^{p}_{k})^{\circ}_{\{i_{0}, \ldots, i_{k} \}},\\
	{\rm supp} \ \psi^{p}_{l} & \subset & \underset{i'_{0} < \cdots < i'_{l} }{\coprod} V_{ \{ i'_{0}, \ldots, i'_{l} \} },
	\end{eqnarray*}
	where $(\mathrm{supp}_{k} \ \psi^{p}_{k} )^{\circ}_{\{ i_{0}, \ldots, i_{k} \}}  := (\mathrm{supp}_{k} \ \psi^{p}_{k})^{\circ} \ \cap \ V_{\{ i_{0}, \ldots, i_{k} \}}.$ Thus, we see how $\psi^{p}_{l} |_{V_{ \{ i'_{0}, \ldots, i'_{l} \} }}$ maps the points of $(\mathrm{supp}_{k} \ \psi^{p}_{k})^{\circ}_{\{ i_{0}, \ldots, i_{k} \} }$. Since $V_{\{ i_{0}, \ldots, i_{k} \}} \cap V_{\{ i'_{0}, \ldots, i'_{l} \}} = \emptyset$ if $\{i_{0}, \ldots, i_{k} \} \not\subset \{ i'_{0}, \ldots, i'_{l} \}$, we may assume that $\{ i_{0}, \ldots, i_{k} \} \subset \{ i'_{0}, \ldots, i'_{l} \}.$ For simplicity, we assume that $\{ i_{0}, \ldots, i_{k} \} = \{ 0, \ldots, k \}$ and $\{ i'_{0}, \ldots, i'_{l} \} = \{ 0, \ldots, l \}$. By the definition, $\psi^{p}_{l} |_{V_{\{ 0, \ldots, l \}}}$ sends $(x_{0}, \ldots, x_{p})$ to $(x'_{0}, \ldots, x'_{p})$ such that $(x'_{0}, \ldots, x'_{l}) = r(x_{0}, \ldots, x_{l})$ for some $r \geq 1$ and $(x'_{l+1}, \ldots, x'_{p}) = s(x_{l+1}, \ldots, x_{p})$ for some $s \leq 1$. Thus, under the identification $U_{\{ 0, \ldots, k \}} \cong \mathring{\Delta}^{k} \times \Delta^{p-k}_{\hat{0}}$, $\psi^{p}_{l} |_{V_{\{0, \ldots, l \}}}$ sends $(y, z)$ to $(y, z')$ with $z_{1} + \cdots + z_{p-k} \geq z'_{1} + \cdots + z'_{p-k}$. Hence, $\psi^{p}_{l}$ preserves $(\mathrm{supp}_{k}\ \psi^{p}_{k})^{\circ}$ and makes diagram (8.1) commute; note that under the identification $U_{ \{ i_{0}, \ldots, i_{k} \} } \cong \mathring{\Delta}^{k} \times \Delta^{p-k}_{\hat{0}}$, $\psi^{p}_{k}|_{(\rm supp_{k} \ \psi^{p}_{k})^{\circ}}$ sends $(y, z)$ to $(y, (0))$.
	\par\indent
	From this, we have the commutative diagram in $Set$
	\[
	\begin{tikzcd}
	{\phantom A} \arrow[xshift = -1.78ex ,dash]{rrrr} & \arrow[xshift = - 6ex ,dash]{rr}{\psi^{p}} &  & \arrow[xshift = 1.9ex, dash]{r}  &  {\phantom A} \arrow[yshift = 1.6ex, dash]{dd} \arrow{dd} \\
	& & (\mathrm{supp}_{k} \ \psi^{p}_{k})^{\circ}_{\{ i_{0}, \ldots, i_{k} \}} \arrow{dr}{\psi^{p}_{k}} & &  \\
	\arrow[yshift = 1.6ex,dash]{uu} \arrow{rrr}{\psi^{p}_{k}} \arrow[hook']{dd} (\mathrm{supp}_{k}\ \psi^{p}_{k})^{\circ}_{\{ i_{0}, \ldots, i_{k} \}} \arrow{urr}{\ \ \ \ \psi^{p}_{k+1} \circ \cdots \circ \psi^{p}_{p-1}} & &  & \Delta^{p} \arrow{r}{\psi^{p}_{0}\circ \cdots \circ \psi^{p}_{k-1} } &\  \Delta^{p} \\
	& & & (i_{0}, \ldots, i_{k}) \arrow[hook]{u}&  \\
	\mathring{\Delta}^{k} \times \Delta^{p-k}_{\hat{0}} \arrow{rrr}{proj} & &  & \mathring{\Delta}^{k}, \arrow{u}{\rotatebox[]{90}{$\cong$}} &
	\end{tikzcd}
	\]
	which is a diagram in $\dcal$ if the subsets of $\Delta^{p}, \Delta^{k}$, and $\Delta^{p-k}$ are endowed with the subdiffeologies of the standard simplices (Definition \ref{simplices}). Thus, using Proposition \ref{goodnbd}, we can see that $\psi^{p}_{k}: ({\rm supp}_{k}\ \psi^{p}_{k})^{\circ}_{ \{i_{0}, \ldots, i_{k} \}\ {\rm sub} } \longrightarrow \Delta^{p}$, and hence $\psi^{p}: \Delta^{p}_{\rm sub} \longrightarrow \Delta^{p}$ is smooth.
\end{proof}
\begin{cor}\label{linhomotopy2}
	Lemma \ref{linhomotopy} remains true even if $|\ |'_{\dcal}$ is replaced by $|\ |_{\dcal}$.
\end{cor}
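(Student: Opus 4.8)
The statement to establish is Lemma \ref{linhomotopy} with $|\ |'_{\dcal}$ replaced by $|\ |_{\dcal}$ throughout: for smooth maps $f, g : X \longrightarrow |K|_{\dcal}$ admitting a family $\{\phi_i : X_i \longrightarrow X\}$ with $\sum \phi_i$ a $\dcal$-quotient map and with $f(\phi_i(X_i))$ and $g(\phi_i(X_i))$ lying in a common closed simplex of $|K|_{\dcal}$, one must conclude $f \simeq_{\dcal} g$. The plan is to transport the problem to the $|\ |'_{\dcal}$-setting, where Lemma \ref{linhomotopy} already applies, by means of the $\dcal$-homotopy equivalence supplied by Theorem \ref{homotopyequiv}.

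Concretely, let $\psi : |K|'_{\dcal} \longrightarrow |K|_{\dcal}$ be a $\dcal$-homotopy inverse of $id : |K|_{\dcal} \longrightarrow |K|'_{\dcal}$, which exists by Theorem \ref{homotopyequiv}; in particular $\psi \circ id \simeq_{\dcal} 1_{|K|_{\dcal}}$. Since $id : |K|_{\dcal} \longrightarrow |K|'_{\dcal}$ is smooth (see the comment preceding Theorem \ref{homotopyequiv}), the composites $f' := id \circ f$ and $g' := id \circ g$ are smooth maps $X \longrightarrow |K|'_{\dcal}$. These coincide set-theoretically with $f$ and $g$, and the closed simplices of $|K|'_{\dcal}$ are, as subsets of $|K|$, exactly those of $|K|_{\dcal}$; hence $f'$ and $g'$ satisfy the hypotheses of Lemma \ref{linhomotopy} with the very same family $\{\phi_i\}$. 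Applying that lemma yields $f' \simeq_{\dcal} g'$.

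The next step is to push this $\dcal$-homotopy back along $\psi$. Because $\dcal$-homotopy is preserved by post-composition with a smooth map, $\psi$ carries $f' \simeq_{\dcal} g'$ to $\psi \circ f' \simeq_{\dcal} \psi \circ g'$, that is, $\psi \circ id \circ f \simeq_{\dcal} \psi \circ id \circ g$. Invoking $\psi \circ id \simeq_{\dcal} 1_{|K|_{\dcal}}$ together with the preservation of $\dcal$-homotopy under pre-composition gives $\psi \circ id \circ f \simeq_{\dcal} f$ and $\psi \circ id \circ g \simeq_{\dcal} g$. Composing these three $\dcal$-homotopies smoothly via a cut-off function, as in Section 1.4, so that $\simeq_{\dcal}$ is genuinely transitive, produces the chain
\[
f \simeq_{\dcal} \psi \circ id \circ f \simeq_{\dcal} \psi \circ id \circ g \simeq_{\dcal} g,
\]
which is the desired conclusion.

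The whole weight of the argument is already absorbed into Theorem \ref{homotopyequiv}, whose proof constructs the smooth map $\psi_K$ and the required $\dcal$-homotopies; once that equivalence is granted, the only point demanding attention is that the hypotheses of Lemma \ref{linhomotopy} are inherited by $f'$ and $g'$. This is immediate, since passing from $|K|_{\dcal}$ to $|K|'_{\dcal}$ alters neither the underlying set-theoretic maps nor the closed simplices viewed as subsets, so no genuine obstacle remains.

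\begin{proof}
Let $\psi : |K|'_{\dcal} \longrightarrow |K|_{\dcal}$ be a $\dcal$-homotopy inverse of $id : |K|_{\dcal} \longrightarrow |K|'_{\dcal}$ (Theorem \ref{homotopyequiv}), so that $\psi \circ id \simeq_{\dcal} 1_{|K|_{\dcal}}$. Since $id : |K|_{\dcal} \longrightarrow |K|'_{\dcal}$ is smooth, $f' := id \circ f$ and $g' := id \circ g$ are smooth maps $X \longrightarrow |K|'_{\dcal}$ coinciding set-theoretically with $f$ and $g$. As the closed simplices of $|K|'_{\dcal}$ agree as subsets of $|K|$ with those of $|K|_{\dcal}$, the maps $f'$ and $g'$ satisfy the hypotheses of Lemma \ref{linhomotopy} with the same family $\{\phi_i\}$, whence $f' \simeq_{\dcal} g'$. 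Post-composing with the smooth map $\psi$ gives $\psi \circ id \circ f \simeq_{\dcal} \psi \circ id \circ g$, and pre-composing $\psi \circ id \simeq_{\dcal} 1_{|K|_{\dcal}}$ with $f$ and $g$ gives $\psi \circ id \circ f \simeq_{\dcal} f$ and $\psi \circ id \circ g \simeq_{\dcal} g$. Combining these yields $f \simeq_{\dcal} g$.
\end{proof}
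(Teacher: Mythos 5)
Your proof is correct and follows exactly the route the paper intends: its own proof of Corollary \ref{linhomotopy2} simply cites Lemma \ref{linhomotopy} and Theorem \ref{homotopyequiv}, which is precisely your argument of transferring $f$ and $g$ along the smooth map $id : |K|_{\dcal} \longrightarrow |K|'_{\dcal}$ (noting the closed simplices agree as subsets), applying the lemma there, and returning via the $\dcal$-homotopy inverse $\psi$. Your explicit verification that the hypotheses are inherited and that the three $\dcal$-homotopies compose smoothly fills in the details the paper leaves implicit.
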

\begin{proof}
	The result follows from Lemma \ref{linhomotopy} and Theorem \ref{homotopyequiv}.
\end{proof}

\section{Homotopy cofibrancy theorem}
In this section, we prove the homotopy cofibrancy theorem (Theorem \ref{hcofibrancy}), which is a diffeological version of a theorem of tom Dieck \cite[Theorem 4]{tom}. For the proof, we establish a diffeological version of \cite[Proposition 4.1]{Seg} (Section 9.1) and then introduce and study the notion of a Hurewicz cofibration in $\dcal$ (Section 9.2).
\subsection{Diffeological spaces associated to a covering}
In this subsection, we introduce the diffeological spaces $BR_{U}$ and $BX_{U}$ associated to a covering $U = \{ U_{\alpha} \}_{\alpha \in A}$ of a diffeological space $X$ and prove that $BX_{U}$ is $\dcal$-homotopy equivalent to $X$ under the $\dcal$-numerability condition on $U$ (Proposition \ref{dSegal}).\par
Recall the nerve functor $N:Cat\longrightarrow \scal$, where $Cat$ denotes the category of small categories (\cite[p. 271]{Mac}, \cite[p. 5]{GJ}). Similarly to the topological case, we can define the classifying space functor $B:Cat \longrightarrow \dcal$ by the commutative diagram
\begin{center}
	\begin{tikzpicture} %[shorten >=1pt,node distance=2cm,on grid]
	\node [below] at (0,0.25) {\begin{tikzcd}
		Cat \arrow{r}{N}
		& \scal \arrow{r}{|\:|_\dcal}
		& \dcal.
		\end{tikzcd}};
	
	\draw[->] (-1.4,-.5) -- (-1.4,-.75)--(1.4,-.75)-- (1.4,-0.5);
	\node at (0,-1.) {$B$};
	\end{tikzpicture}
\end{center}\par
We first extend these three functors. For a finitely complete category $\mathcal{A}$, $\mathsf{Cat}(\mathcal{A})$ denotes the category of category objects in $\mathcal{A}$ (see \cite[p. 267]{Mac}). Thus, $\mathsf{Cat}(\dcal)$ denotes the category of diffeological categories, which are defined to be category objects in $\dcal$. Let $s\dcal$ denote the category of simplicial diffeological spaces (see Section 3.1 for the notation).
\begin{defn}\label{BC}
	\begin{itemize}
		\item[(1)] The nerve $N\ccal$ of a diffeological category $\ccal$ can be regarded as a simplicial diffeological space in an obvious manner. Hence, the nerve functor $N:\mathsf{Cat}(\dcal) \longrightarrow s\dcal$ is defined.
		\item[(2)] We define the {\sl realization} $|X_{\bullet}|_{\dcal}$ of a simplicial diffeological space $X_{\bullet}$ to be the coend $\int_{}^{n} X_{n} \times \Delta^{n}$ (\cite[Section 6 in Chapter IX]{Mac}), thereby defining the realization functor $|\:|_\dcal:s\dcal\longrightarrow \dcal$. Then, $|X_{\bullet}|_{\dcal}$ is just the quotient diffeological space $\underset{n}{\coprod}\ X_{n} \times \Delta^{n} /\sim$, where the equivalence relation $\sim$ is defined by $(x, \theta_{\ast}(u)) \sim (\theta^{\ast}x, u)$ for $x \in X_{n}$, $u \in \Delta^{m}$, and $\theta: [m] \longrightarrow [n]$.
		\item[(3)] We define the classifying space functor $B:\mathsf{Cat}(\dcal)\longrightarrow \dcal$ by the commutative diagram
		\begin{center}
			\begin{tikzpicture} %[shorten >=1pt,node distance=2cm,on grid]
			\node [below] at (0,0.25) {\begin{tikzcd}
				\mathsf{Cat}(\dcal) \arrow{r}{N}
				& s\dcal \arrow{r}{|\:|_\dcal}
				& \dcal.
				\end{tikzcd}};
			
			\draw[->] (-1.4,-.5) -- (-1.4,-.75)--(1.8,-.75)-- (1.8,-0.5);
			\node at (0,-1.) {$B$};
			\end{tikzpicture}
		\end{center}
	\end{itemize}
\end{defn}
Define the functor $\delta: Set\longrightarrow \dcal$ to assign to a set $A$ the set $A$ endowed with the discrete diffeology. Observing that $\delta$ has left and right adjoints, we see that $\delta$ is a fully faithful functor preserving limits and colimits. Hence, the induced functors $\delta:\scal = sSet \longrightarrow s\dcal$ and $\delta:Cat=\mathsf{Cat}(Set) \longrightarrow \mathsf{Cat}(\dcal)$ are also fully faithful and the following diagram is commutative:
\[
\begin{tikzcd}
	Cat \arrow[hook']{d}{\delta} \arrow{r}{N} & \scal \arrow[hook']{d}{\delta} \arrow{r}{|\:|_\dcal} & \dcal \arrow{d}[sloped,above]{=}\\
	\mathsf{Cat}(\dcal) \arrow{r}{N} & s\dcal \arrow{r}{|\:|_\dcal} & \dcal.
\end{tikzcd}
\]
Therefore, $Cat$ and $\scal$ can be regarded as full subcategories of $\mathsf{Cat}(\dcal)$ and $s\dcal$, respectively. Conversely, a diffeological category $\ccal$ with ob $\ccal$ and mor $\ccal$ discrete (resp. a simplicial diffeological space $X_{\bullet}$ with each $X_n$ discrete) is often regarded simply as a small category (resp. a simplicial set).\par
The following is a basic result on the realization of a simplicial diffeological space.
\begin{lem}\label{realizationskeleta}
	For a simplicial diffeological space $X_{\bullet}$, define the diffeological spaces $|X_{\bullet}|^{(n)}_{\dcal}$ by the following conditions:
	\begin{itemize}
		\item[$\rm(i)$] $|X_{\bullet}|_{\dcal}^{(0)} = X_{0}$.
		\item[$\rm(ii)$] Suppose that $|X_{\bullet}|_{\dcal}^{(n-1)}$ is defined. Then, $|X_{\bullet}|_{\dcal}^{(n)}$ is defined by the pushout diagram
		\[
		\begin{tikzcd}
		X_{n} \times \dot{\Delta}^{n} \underset{sX_{n-1} \times \dot{\Delta}^{n}}{\cup} s X_{n-1} \times \Delta^{n} \arrow{r} \arrow[hook']{d} &  \absno{X_{\bullet}}_{\dcal}^{(n-1)} \arrow[hook']{d} \\
		X_{n} \times \Delta^{n} \arrow{r} & \absno{X_{\bullet}}_{\dcal}^{(n)},
		\end{tikzcd}
		\]
		where $sX_{n-1}$ is the subset $\underset{i}{\cup}\ s_{i}X_{n-1}$ of $X_n$ endowed with the quotient diffeology for the surjection $\underset{i}{\sum} s_{i}: \underset{i}{\coprod}\ X_{n-1} \longrightarrow sX_{n-1}$.
	\end{itemize}
	Then, $\lim\limits_{\longrightarrow} \ |X_{\bullet}|^{(n)}_{\dcal} = |X_{\bullet}|_{\dcal}$ holds.
	\begin{proof}
		For a simplicial set $K$, define the diffeological space $|K|^{(n)}_\dcal$ by $|K|^{(n)}_\dcal = |K^n|_\dcal$, where $K^n$ is the $n$-skeleton of $K$ (see the proof of Lemma \ref{realization}). Since the underlying set functor $U:\dcal\longrightarrow Set$ preserves limits and colimits (\cite[Proposition 2.1(1)]{origin}), we have the equalities in $Set$
		\[
		|X_{\bullet}|_\dcal = |UX_\bullet|_\dcal \:\:\: {\rm and} \:\:\: |X_\bullet|^{(n)}_\dcal = |UX_\bullet|^{(n)}_\dcal,
		\]
		which imply the equalities in $Set$
		\[
		|X_{\bullet}|_{\dcal} = |UX_{\bullet}|_{\dcal} = \lim\limits_{\longrightarrow} |UX_{\bullet}|^{(n)}_{\dcal} = \lim\limits_{\longrightarrow}|X_{\bullet}|^{(n)}_{\dcal}.
		\]
		Noticing that the diffeology of $|X_{\bullet}|_{\dcal}$ is final for the canonical maps $X_{n} \times \Delta^{n} \longrightarrow |X_{\bullet}|_{\dcal}\ (n \geq 0)$, we see that $\lim\limits_{\longrightarrow} |X_{\bullet}|^{(n)}_{\dcal} = |X_{\bullet}|_{\dcal}$ holds in $\dcal$.
	\end{proof}
\end{lem}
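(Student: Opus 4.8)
The plan is to reduce the statement to the analogous, already-understood fact for simplicial sets by passing to underlying sets, and then to promote the resulting bijection of sets to an identification of diffeological spaces by comparing two final (colimit) diffeologies.

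First I would record what each side is. By Definition \ref{BC}(2), the diffeology of $|X_{\bullet}|_{\dcal}$ is final for the canonical maps $X_{n} \times \Delta^{n} \longrightarrow |X_{\bullet}|_{\dcal}$ $(n \geq 0)$. On the other side, each $|X_{\bullet}|^{(n)}_{\dcal}$ is built by an iterated pushout, hence is a colimit in $\dcal$, and $\lim\limits_{\longrightarrow} |X_{\bullet}|^{(n)}_{\dcal}$ is a sequential colimit of these; by transitivity of final structures its diffeology is again final for the composite maps $X_{n} \times \Delta^{n} \longrightarrow |X_{\bullet}|^{(n)}_{\dcal} \longrightarrow \lim\limits_{\longrightarrow} |X_{\bullet}|^{(m)}_{\dcal}$.

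Next I would establish the identity on underlying sets. Since the underlying set functor $U : \dcal \longrightarrow Set$ preserves both limits and colimits (\cite[Proposition 2.1(1)]{origin}), applying $U$ commutes with the coend defining $|X_{\bullet}|_{\dcal}$ and with the pushouts defining the skeleta; thus $U|X_{\bullet}|_{\dcal}$ is the classical set-level realization of the underlying simplicial set $UX_{\bullet}$, and $U|X_{\bullet}|^{(n)}_{\dcal}$ is its $n$-skeleton. The skeletal filtration of a simplicial set — the pushout presentation recalled in the proof of Lemma \ref{realization}, with $sX_{n-1}$ encoding the degenerate simplices — then yields the equalities of sets $U|X_{\bullet}|_{\dcal} = \lim\limits_{\longrightarrow} U|X_{\bullet}|^{(n)}_{\dcal} = U \lim\limits_{\longrightarrow} |X_{\bullet}|^{(n)}_{\dcal}$, giving a natural set-theoretic bijection between $|X_{\bullet}|_{\dcal}$ and $\lim\limits_{\longrightarrow} |X_{\bullet}|^{(n)}_{\dcal}$.

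Finally I would upgrade this bijection to an isomorphism in $\dcal$. By the first paragraph, both diffeologies are final for the same family of structure maps $X_{n} \times \Delta^{n} \longrightarrow (\,\cdot\,)$ on the same underlying set, so they coincide. The one point demanding care — and where I expect the only real (though modest) difficulty to lie — is checking that the structure maps of the skeletal colimit literally agree with the quotient maps into $|X_{\bullet}|_{\dcal}$ once the degenerate part is taken into account: one must verify that the source of the attaching pushout, namely $X_{n} \times \dot{\Delta}^{n} \cup_{sX_{n-1} \times \dot{\Delta}^{n}} sX_{n-1} \times \Delta^{n}$, is exactly the subset of $X_{n} \times \Delta^{n}$ already identified inside $|X_{\bullet}|^{(n-1)}_{\dcal}$ (faces together with degeneracies). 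Granting this, the comparison of final diffeologies is formal and the proof concludes.
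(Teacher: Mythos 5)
Your proposal is correct and follows essentially the same route as the paper's own proof: reduce to the set level via the fact that $U:\dcal\longrightarrow Set$ preserves limits and colimits, then identify the two diffeologies by observing that both are final for the same family of canonical maps $X_{n}\times\Delta^{n}\longrightarrow(\,\cdot\,)$. Your closing remark on verifying the attaching data merely makes explicit a point the paper leaves implicit; no new idea is involved.
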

We now introduce the diffeological categories $R_U$ and $X_U$ associated to a covering $U=\{U_\alpha\}_{\alpha\in A}$ of a diffeological space $X$ by subsets.\par\indent
First, we define $R_{U}$ to be the category whose objects are the nonempty finite subsets $\sigma$ of $A$ with $U_{\sigma} := \underset{\alpha \in \sigma}{\cap} U_{\alpha} \neq \emptyset$ and whose morphisms are their reverse inclusions. Regard $R_U$ as a diffeological category by endowing ${\rm ob}(R_U)$ and ${\rm mor}(R_U)$ with discrete diffeologies. Then, $NR_{U}$ is just the barycentric subdivision of the nerve $NU$ of the covering $U$ (see \cite[p. 123]{Spa} and \cite[p. 108]{Seg}).
\par\indent
Next, we define the category $X_{U}$ by
\begin{eqnarray*}
{\rm ob}(X_{U}) & = & \{ (x, \sigma) \ | \ \mathrm{\sigma \  is\ a\ nonempty\ finite\ subset\ of\ } A\ {\rm and}\ x \in U_{\sigma} \},\\
{\rm mor} (X_{U}) & = & \{ (x, \sigma) \rightarrow (x, \tau) \ | \ \sigma \supset \tau \}
\end{eqnarray*}
and endow $\mathrm{ob}(X_{U})$ and $\mathrm{mor}(X_{U})$ with diffeologies via the obvious bijections $\mathrm{ob}(X_{U}) \cong \underset{\sigma}{\coprod}\ U_{\sigma}$ and $\mathrm{mor}(X_{U}) \cong \underset{\sigma\supset \tau}{\coprod}\, U_{\sigma}$.\par
The morphism of diffeological categories
\[
	\pi:X_U \longrightarrow R_U
\]
is defined by $\pi(x,\sigma) = \sigma$.
\begin{rem}\label{originaldef}
	See \cite[Section 4]{Seg} for simple but somewhat imprecise definitions of $R_{U}$ and $X_{U}$. Compare them with the definitions above; the key point is that distinct elements $\sigma$ and $\tau$ of $\Sigma_{NU}$ should be distinguished as objects of $R_{U}$ even if $U_{\sigma}$ and $U_{\tau}$ coincide.
\end{rem}
To investigate the relation between $BX_U$ and $X$, we construct a smooth injection $BX_U \longhookrightarrow X\times BR_U$. Since $BX_U=|NX_U|_\dcal$ and $BR_U=|NR_U|_\dcal$, we set $BX_U^{(n)}=|NX_U|_\dcal^{(n)}$ and $BR_U^{(n)}=|NR_U|_\dcal^{(n)}$. Then, we have the pushout diagrams
\[
\begin{tikzcd}
	\underset{\sigma_n \supsetneq \cdots \supsetneq \sigma_0}{\coprod} U_{\sigma_n} \times \dot{\Delta}^{n} \arrow{r} \arrow[hook']{d} & BX_U^{(n-1)} \arrow[hook']{d} & \underset{\sigma_n \supsetneq \cdots \supsetneq \sigma_0}{\coprod} X \times \dot{\Delta}^{n} \arrow[hook']{d} \arrow{r} & X\times BR_U^{(n-1)} \arrow[hook']{d}\\
	\underset{\sigma_n \supsetneq \cdots \supsetneq \sigma_0}{\coprod} U_{\sigma_n}\times \Delta^n \arrow{r} & BX_U^{(n)}, & \underset{\sigma_n \supsetneq \cdots \supsetneq \sigma_0}{\coprod} X\times \Delta^n \arrow{r} & X \times BR_U^{(n)}
\end{tikzcd}
\]
and the identities
\[
	BX_U = \underset{\longrightarrow}{\lim}\: BX_U^{(n)}, X\times BR_U = \underset{\longrightarrow}{\lim}\: X\times BR_U^{(n)}
\]
(see Lemma \ref{realizationskeleta} and Proposition \ref{conven}(2)). From these, we observe that the set-theoretic identities
\[
	BX_U = \underset{\sigma_n \supsetneq \cdots \supsetneq \sigma_0}{\coprod} U_{\sigma_n}\times \mathring{\Delta}^n \;\;\;{\rm and}\;\;\; X\times BR_U = \underset{\sigma_n \supsetneq \cdots \supsetneq \sigma_0}{\coprod} X \times \mathring{\Delta}^n
\]
hold. We thus see that the canonical inclusions $U_{\sigma_n}\times \Delta^n \longhookrightarrow X \times \Delta^n$ define a smooth injection
\[
BX_{U} \longhookrightarrow X \times BR_{U}.
\]
Then, the map $pr:BX_U\longrightarrow X$ is defined to be the $X$-component of this injection; the $BR_U$-component of this injection is just the map $B\pi:BX_U\longrightarrow BR_U$.\par
Let $[\sigma_p \supsetneq \cdots \supsetneq \sigma_0]$ denote the copy of $\Delta^p$ indexed by a nondegenerate $p$-simplex $\sigma_p \supsetneq \cdots \supsetneq \sigma_0$ of $NR_U$. Then, the canonical maps
\[
	\underset{\sigma_p \supsetneq \cdots \supsetneq \sigma_0}{\coprod} U_{\sigma_p} \times [\sigma_p \supsetneq \cdots \supsetneq \sigma_0] \longrightarrow BX_U \;\;\; {\rm and } \underset{\sigma_p \supsetneq \cdots \supsetneq \sigma_0}{\coprod} X \times [\sigma_p \supsetneq \cdots \supsetneq \sigma_0] \longrightarrow X \times BR_U
\]
are $\dcal$-quotient maps.

\begin{rem}\label{prBpi}
	We see that the smooth injection $(pr,B\pi):BX_U \longhookrightarrow X\times BR_U$ need not be a $\dcal$-embedding or a topological embedding.\par
	Let $X=\mathbb{R}$ and consider the open cover $U=\{(-\infty,1),(-1,\infty)\}$. Then, we have the isomorphism
	\[
		X\times BR_U \cong \mathbb{R}\times ([0,\frac{1}{2}]\cup_{\{\frac{1}{2}\}} [\frac{1}{2}, 1]).
	\]
	Under this isomorphism, $BX_U$ is identified with the subset
	\[
		(-\infty,1)\times(0) \cup (-1,1)\times [0,1] \cup (-1,\infty)\times(1)
	\]
	set-theoretically. Consider a smooth curve
	\[
		c:\mathbb{R} \longrightarrow X\times BR_U \cong \mathbb{R}\times ([0,\frac{1}{2}]\cup_{\{\frac{1}{2}\}} [\frac{1}{2}, 1])
	\]
	such that
	\begin{itemize}
		\item $c(0)=(1,1)$.
		\item $-1<c_1(t)<1$, $\frac{1}{2}<c_2(t)<1$ for $\: t\neq 0$.
	\end{itemize}
	(Here, $c_1$ and $c_2$ are the $\mathbb{R}$- and $([0,\frac{1}{2}]\cup_{\{\frac{1}{2}\}} [\frac{1}{2}, 1])$-components of $c$ respectively.) Noticing that the map
	\[
		(-\infty,1)\times (0) \sqcup (-1,1)\times ([0,\frac{1}{2}]\cup_{\{\frac{1}{2}\}} [\frac{1}{2},1]) \sqcup (-1, \infty)\times (1)\longrightarrow BX_U
	\]
	is a $\dcal$-quotient map, we see that the corestriction $c:\mathbb{R}\longrightarrow BX_U$ is not smooth, and hence that $BX_U\longhookrightarrow X\times BR_U$ is not a $\dcal$-embedding.\par
	Next, consider the topological quotient map
	\[
		(-\infty,1) \times (0) \sqcup (-1,1)\times [0,1] \sqcup (-1,\infty)\times (1)\longrightarrow \widetilde{BX_U}
	\]
	(see \cite[Lemma 2.11]{origin}). Then, the set
	\[
		V=\{(x,y)\in (-1,1)\times [0,1]\:|\: (x-\frac{1}{2})^2 + (y-1)^2 <\frac{1}{4} \} \cup (0,2)\times (1)
	\]
	is an open neighborhood of $(1,1)\in BX_U$. But, $V$ is not an open neighborhood of $(1,1)\in BX_U$ with respect to the subspace topology of $X\times BR_U$. Hence, $BX_U \longhookrightarrow X\times BR_U$ is not a topological embedding.
\end{rem}
The rest of this subsection is devoted to the proof of the following result, which is a diffeological version of \cite[Proposition 4.1]{Seg}.
\begin{prop}\label{dSegal}
	Let $U=\{U_\alpha\}_{\alpha \in A}$ be a covering of a diffeological space $X$ by subsets. If $U$ is $\dcal$-numerable, then $pr:BX_{U} \longrightarrow X$ is a $\dcal$-homotopy equivalence.
\end{prop}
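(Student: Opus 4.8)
The plan is to show that $pr$ is a $\dcal$-homotopy equivalence by producing an explicit smooth section $s\colon X\longrightarrow BX_{U}$ of $pr$ and a fibrewise $\dcal$-homotopy from $1_{BX_{U}}$ to $s\circ pr$; since $pr\circ s=1_{X}$ will hold on the nose, this yields the two homotopies required. I would work throughout with the set-theoretic description $BX_{U}=\coprod_{\sigma_{n}\supsetneq\cdots\supsetneq\sigma_{0}}U_{\sigma_{n}}\times\mathring{\Delta}^{n}$ and the $\dcal$-quotient presentations $\coprod U_{\sigma_{n}}\times[\sigma_{n}\supsetneq\cdots\supsetneq\sigma_{0}]\longrightarrow BX_{U}$ recorded before the statement, remembering that a point of $BX_{U}$ is a pair $(x,\xi)$ with $\xi$ in the open subdivision simplex of $BR_{U}=|NR_{U}|_{\dcal}$ indexed by a flag $\sigma_{n}\supsetneq\cdots\supsetneq\sigma_{0}$ and $x\in U_{\sigma_{n}}$.

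First I would build the section. Because $U$ is $\dcal$-numerable, Definition \ref{partition} supplies a smooth partition of unity $\{\varphi_{\alpha}\}_{\alpha\in A}$ subordinate to $U$; for $x\in X$ set $\rho(x)=\{\alpha\mid\varphi_{\alpha}(x)>0\}$, so that $\mathrm{supp}\,\varphi_{\alpha}\subset U_{\alpha}$ forces $x\in U_{\rho(x)}$. The barycentric construction of Section 8 then produces a map $X\longrightarrow BR_{U}$ sending $x$ into a closed subdivision simplex whose top vertex is contained in $\rho(x)$; its smoothness is obtained by first landing in the sub-realization, where the barycentric map is smooth (Lemma \ref{barycentricmap}), and then composing with a simplex-preserving $\dcal$-homotopy inverse of the identity coming from Theorem \ref{homotopyequiv}. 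Calling the resulting smooth map $\beta$, the pair $s=(1_{X},\beta)$ takes values in $BX_{U}$ precisely because the top vertex $\tau$ of $\beta(x)$ satisfies $\tau\subseteq\rho(x)$, whence $x\in U_{\rho(x)}\subseteq U_{\tau}$; and $pr\circ s=1_{X}$ is immediate. Here one must not use that $(pr,B\pi)\colon BX_{U}\longhookrightarrow X\times BR_{U}$ is an embedding, for it is not (Remark \ref{prBpi}); instead I would verify smoothness of $s$ into $BX_{U}$ directly, lifting it locally through the quotient maps $U_{\sigma}\times\Delta^{n}\longrightarrow BX_{U}$ by means of the local finiteness of $\{\mathrm{supp}\,\varphi_{\alpha}\}$, exactly as in the proof of Lemma \ref{barycentricmap}.

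Next I would deform $1_{BX_{U}}$ into $s\circ pr$ by a fibrewise straight-line homotopy that keeps the $X$-coordinate fixed and interpolates linearly from $\xi$ to $\beta(x)$ in the $BR_{U}$-factor. The crucial combinatorial point is that the homotopy stays inside $BX_{U}$: if $\xi$ has top vertex $\sigma_{n}$ (so $x\in U_{\sigma_{n}}$) and $\beta(x)$ has top vertex $\tau\subseteq\rho(x)$, then both $\xi$ and $\beta(x)$ lie in the subdivision of the closed simplex $|S|$ spanned by $S:=\sigma_{n}\cup\rho(x)$, and $x\in U_{\sigma_{n}}\cap U_{\rho(x)}=U_{S}$. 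Since every subdivision simplex of $|S|$ has top vertex contained in $S$, each intermediate point lies in an open subdivision simplex whose top vertex $\tau'$ satisfies $\tau'\subseteq S$, so $x\in U_{S}\subseteq U_{\tau'}$ and the point lies in $BX_{U}$. Because the $X$-coordinate is fixed, $pr$ is preserved at all times, so this is a homotopy over $X$; combined with $pr\circ s=1_{X}$ it yields the claim once smoothness is established.

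The hard part will be the smoothness of this deformation as a map into $BX_{U}$. Two difficulties intertwine. Since $(pr,B\pi)$ is not a $\dcal$-embedding, smoothness into $X\times BR_{U}$ does not suffice and the homotopy must be lifted through the quotient maps $U_{\sigma}\times\Delta^{n}\times I\longrightarrow BX_{U}\times I$; and on such a piece the two endpoint maps $B\pi$ and $\beta\circ pr$ generally land in different closed subdivision simplices, so the straight segment crosses several cells of $NR_{U}$ and Lemma \ref{linhomotopy} does not apply verbatim. This is exactly the phenomenon that the two realizations $\{\Delta^{p}\}$ and $\{\Delta^{p}_{\mathrm{sub}}\}$ of Section 8 are built to control (compare Remark \ref{nonsmooth}): I would refine the quotient decomposition according to the locally constant datum $\rho(x)$ and the ordering of the values $\varphi_{\alpha}(x)$, so that on each refined piece both endpoints sit in a common closed simplex inside $|S|$, and then invoke Corollary \ref{linhomotopy2} together with Theorem \ref{homotopyequiv} to pass from the sub-realization back to $BR_{U}$. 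Carrying out this bookkeeping while keeping the deformation inside $BX_{U}$ and matching it to the $\dcal$-quotient presentation is where the substantive work lies.
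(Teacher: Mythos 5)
Your skeleton is the paper's own: Step 1 builds the same section $(1_X,\phi)$ from a subordinate smooth partition of unity via the barycentric map of Lemma \ref{barycentricmap}, checking smoothness into $BX_U$ by lifting locally through the quotient presentations rather than through the non-embedding $(pr,B\pi)$, and Step 2 aims at the same vertical homotopy factored through the injection $BX_U\longhookrightarrow X\times BR_U$. But there is a genuine gap, and it appears in both steps: you treat the passage between $|NU|_{\dcal}$ and $BR_U=|\sd\, NU|_{\dcal}$ as harmless, when in fact the natural comparison $\eta_{NU}\colon |\sd\, NU|_{\dcal}\longrightarrow |NU|_{\dcal}$ is a smooth bijection that is \emph{not} a diffeomorphism (this is the point of Remark \ref{proofs}). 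Theorem \ref{homotopyequiv}, which you invoke, only repairs the discrepancy between the sub-diffeology and the standard diffeology on a \emph{fixed} simplicial complex; it says nothing about subdivision. Consequently, composing the barycentric map $\varphi\colon X\longrightarrow |NU|'_{\dcal}$ with $\psi_{NU}$ gives a smooth map into $|NU|_{\dcal}$, not into $BR_U$, and your section $\beta$ is not yet shown to be smooth as a map to $BR_U$. The missing ingredient is Lemma \ref{homotopyequiv2}: one must set $\phi=\rho_{NU}\circ\psi_{NU}\circ\varphi$, where $\rho_{NU}$ is the simplex-preserving $\dcal$-homotopy inverse of $\eta_{NU}$.

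The same omission breaks your homotopy step, and here your proposed repair would fail as stated. For the straight-line homotopy to be valued smoothly in $BR_U$ via Corollary \ref{linhomotopy2}, the two endpoint maps would have to land in a common closed simplex of $\sd\, NU$ on each piece of the quotient decomposition. No refinement of the $x$-variable can achieve this: on a piece $W\times[\sigma_n\supsetneq\cdots\supsetneq\sigma_0]$ the endpoint $B\pi$ sweeps out the \emph{entire} closed subdivision simplex $[\sigma_n\supsetneq\cdots\supsetneq\sigma_0]$, while $\beta(x)$ lies in the (generally different) subdivision cell dictated by the ordering of the values $\varphi_\alpha(x)$; shrinking $W$ does not change either fact, so the segment crosses subdivision walls and is typically non-smooth into $BR_U$ (by the Fact in Remark \ref{nonsmooth}). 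What refinement \emph{does} buy --- taking $W=V_\sigma\cap U_{\sigma_n}$ and $\sigma'=\sigma\cup\sigma_n$ as in the paper --- is a common closed simplex of $NU$ itself. The interpolation must therefore be performed \emph{after} applying $\eta_{NU}$, inside $|NU|_{\dcal}$, where Corollary \ref{linhomotopy2} does apply, and then transported back with $\rho_{NU}$; this forces you to replace your single homotopy by the composite of three, $B\pi\simeq_{\dcal}\rho_{NU}\eta_{NU}B\pi\simeq_{\dcal}\rho_{NU}\eta_{NU}\phi\circ pr\simeq_{\dcal}\phi\circ pr$, each of which must separately be checked to factor through $BX_U\longhookrightarrow X\times BR_U$ (your combinatorial containment argument --- top vertex $\tau'\subseteq\sigma\cup\sigma_n$, hence $x\in U_{\tau'}$ --- is the right verification, but it has to be run for all three pieces, using that the homotopies of Lemma \ref{homotopyequiv2} and Theorem \ref{homotopyequiv} preserve closed simplices). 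This three-homotopy detour through $|NU|_{\dcal}$ is exactly the paper's Step 2, and it is the one idea your proposal is missing.
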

%For the proof, we need two lemmas.
%\begin{lem}\label{BXU}
%	The map $(pr,B\pi):BX_U \longhookrightarrow X\times BR_U$ is an open $\dcal$-embedding.
%	\begin{proof}
%		Observe that the diagram
%		\[
%		\begin{tikzcd}
%		\coprod U_{\sigma_p}\times [\sigma_p \supset \cdots \supset \sigma_0] \arrow[hook]{r} \arrow{d}  & \coprod X\times [\sigma_p \supset \cdots \supset \sigma_0] \arrow{d}\\
%		BX_U \arrow[hook]{r}{(pr,B\pi)} & X \times BR_U
%		\end{tikzcd}
%		\]
%		is a pullback diagram in $Set$ and note that the upper horizontal arrow is an open $\dcal$-embedding and that the vertical arrows are $\dcal$-quotient maps (\cite[Lemma 2.5]{origin}). Then, we see that $(pr,B\pi)$ is a $\dcal$-embedding. We also see from Proposition \ref{conven}(3) and \cite[Lemma 2.11 and Proposition 2.13]{origin} that $(pr,B\pi)$ is an open topological embedding.
%	\end{proof}
%\end{lem}
For the proof, we need a lemma on the barycentric subdivision of a $\dcal$-polyhedron. For a simplicial complex $K$, we have the natural bijection
\[
\eta_{K}: |\sd\ K|_{\dcal} \longrightarrow |K|_{\dcal}
\]
(see \cite[p. 123]{Spa} for the barycentric subdivision $\sd\ K$ of $K$); we can easily see that $\eta_{K}$ is smooth and that $\widetilde{\eta}_{K}$ is a homeomorphism (Proposition \ref{axioms} and \cite[Lemma 2.11]{origin}). Thus, $|\sd\ K|_\dcal$ is topologically identified with $|K|_\dcal$ via $\widetilde{\eta_K}$.
\begin{lem}\label{homotopyequiv2}
	Let $K$ be a simplicial complex. Then, the smooth bijection 
	\[
	\eta_{K}:|\sd\ K|_{\dcal} \longrightarrow |K|_{\dcal}
	\]
	has a $\dcal$-homotopy inverse $\rho_{K}:|K|_{\dcal} \longrightarrow |\sd\ K|_{\dcal}$ such that under the topological identification of $|\sd\ K|_{\dcal}$ and $|K|_{\dcal}$, $\rho_{K}$ preserves each closed simplex of $|\sd\ K|$.
\end{lem}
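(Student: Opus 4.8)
The plan is to build $\rho_K$ directly from the $\dcal$-homotopy equivalence $\psi_K$ produced in the proof of Theorem \ref{homotopyequiv}, transported through the standard simplicial-complex identity relating $K$ and its barycentric subdivision. Recall that the vertices of $\sd\ K$ are the barycenters $b_\sigma$ of the simplices $\sigma$ of $K$, and that a simplex of $\sd\ K$ is a chain $b_{\sigma_0}<\cdots<b_{\sigma_p}$ with $\sigma_0\subsetneq\cdots\subsetneq\sigma_p$. The map $\eta_K$ sends $b_\sigma$ to the actual barycenter of $|\sigma|$ in $|K|_\dcal$ and is affine on each closed simplex of $\sd\ K$; thus $\widetilde{\eta_K}$ is the usual homeomorphism between the underlying polyhedra, and I shall freely identify $|\sd\ K|$ with $|K|$ topologically via it.

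First I would observe that $\eta_K$ itself is not a diffeomorphism: on a closed $p$-simplex of $|K|_\dcal$ it restricts to an affine map but with source a diffeological space glued out of many small simplices of $\sd\ K$, whose diffeology near the old $(p-2)$-skeleton is the more rigid one of Definition \ref{simplices}. This is exactly the mismatch addressed by $\psi_K$ in Theorem \ref{homotopyequiv}, where one passes between $|K|_\dcal$ and $|K|'_\dcal$. The key technical point I would establish is that $\eta_K^{-1}$, viewed set-theoretically, is smooth as a map $|K|'_\dcal\longrightarrow|\sd\ K|_\dcal$; equivalently, that the affine subdivision map is smooth once the target simplices carry the standard diffeology and the source carries the subdivided diffeology. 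This is plausible because $\eta_K^{-1}$ restricted to each closed simplex of $|K|'_\dcal$ is piecewise affine onto the closed simplices of $|\sd\ K|_\dcal$, and the standard simplices are closed under affine maps (Axiom 2, via Lemma \ref{prism}(2)).

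Granting this, I would set $\rho_K=\eta_K^{-1}\circ\psi_K:|K|_\dcal\longrightarrow|\sd\ K|_\dcal$, where $\psi_K:|K|'_\dcal\longrightarrow|K|_\dcal$ is the smooth map of Theorem \ref{homotopyequiv} that is $\dcal$-homotopic to the identity and preserves each closed simplex; precomposing with the smooth identity $id:|K|_\dcal\longrightarrow|K|'_\dcal$ gives a smooth map out of $|K|_\dcal$. Since $\psi_K$ collapses a neighborhood of each face into lower skeleta exactly so as to become smooth as a map from $|K|'_\dcal$, the composite $\eta_K^{-1}\circ\psi_K$ lands smoothly in $|\sd\ K|_\dcal$. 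To verify $\rho_K$ is a $\dcal$-homotopy inverse of $\eta_K$, I would compute $\eta_K\circ\rho_K=\psi_K\simeq_\dcal 1_{|K|_\dcal}$ using the homotopy $h_K$ of Step 4 in the proof of Theorem \ref{homotopyequiv}, and for $\rho_K\circ\eta_K\simeq_\dcal 1_{|\sd\ K|_\dcal}$ I would apply the barycentric-map machinery: both $\rho_K\circ\eta_K$ and the identity send each closed simplex of $|\sd\ K|$ into the same closed simplex, so Lemma \ref{linhomotopy} together with Corollary \ref{linhomotopy2} (which removes the prime from $|\ |'_\dcal$) yields a smooth linear homotopy between them. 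The clause about $\rho_K$ preserving closed simplices is inherited from the corresponding property of $\psi_K$ and the piecewise-affine nature of $\eta_K^{-1}$.

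The main obstacle I anticipate is the smoothness of $\eta_K^{-1}:|K|'_\dcal\longrightarrow|\sd\ K|_\dcal$, i.e.\ checking that subdivision is compatible with the two diffeologies at the seams where small simplices of $\sd\ K$ meet. One must confirm that a smooth plot into $|K|'_\dcal$, when corestricted through the subdivision, remains smooth into $|\sd\ K|_\dcal$, which requires understanding the local diffeological model near the interior faces; here I would lean on Proposition \ref{goodnbd} and the local charts $U_I\cong\mathring\Delta^k\times\Delta^{p-k}_{\hat 0}$ used in the proof of Theorem \ref{homotopyequiv}, and on the fact that $|K|'_\dcal$ is built from the $\Delta^p_{\sub}$, whose sub-diffeology is the least rigid. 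Once this local compatibility is in hand, the remaining steps are formal consequences of Theorem \ref{homotopyequiv}, Lemma \ref{linhomotopy}, and Corollary \ref{linhomotopy2}.
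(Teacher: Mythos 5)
Your construction fails at the step you yourself flag as the main obstacle: the set-theoretic inverse $\eta_K^{-1}$ is \emph{not} smooth as a map $|K|'_\dcal\longrightarrow|\sd\ K|_\dcal$, and it is not smooth into $|\sd\ K|'_\dcal$ either. Already for $K=\Delta(1)$, where $|K|_\dcal=|K|'_\dcal=\Delta^1$, the target $|\sd\ K|_\dcal$ is $[0,\tfrac12]\cup_{\{\frac12\}}[\tfrac12,1]$, and a plot of this $\dcal$-polyhedron must locally factor through one of the two closed $1$-simplices. A smooth curve in $\Delta^1$ crossing the barycenter $\tfrac12$ transversally does not locally factor through either half, so $id:\Delta^1\longrightarrow[0,\tfrac12]\cup_{\{\frac12\}}[\tfrac12,1]$ is not smooth; indeed these two spaces are not diffeomorphic (this is exactly the phenomenon behind Remark \ref{proof} and the Fact in Remark \ref{nonsmooth}). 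Piecewise-affine maps into a glued polyhedron are bad at the seams, and Axiom 2 / Lemma \ref{prism}(2) only governs affine maps into a \emph{single} standard simplex, not into a union of them.

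Nor does precomposing with $\psi_K$ repair this. The map $\psi_K$ collapses neighborhoods of the skeleta of $K$ (its building blocks $\psi^p_k$ have supports in the sets $V_{\{i_0,\ldots,i_k\}}$ near the faces of $\Delta^p$), whereas the problematic seams of $\sd\ K$ lie at the barycenters $b_\sigma$ in the \emph{interiors} of the simplices of $K$, where $\psi_K$ is the identity: for $K=\Delta(1)$ one has $\psi_K=\psi^1_0$, supported near the two vertices, so $\eta_K^{-1}\circ\psi_K$ equals $\eta_K^{-1}$ near $\tfrac12$ and is not smooth. What is needed is the collapsing map of the \emph{target} subdivision, not of $K$. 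This is how the paper proceeds: it constructs (inductively, by extending and $S_{p+1}$-averaging a smooth partition of unity subordinate to the star cover of $|\sd\ \Delta(p)|$, whose nerve is $\sd\ \Delta(p)$) a smooth simplex-preserving barycentric map $\rho'_K:|K|_\dcal\longrightarrow|\sd\ K|'_\dcal$ -- smoothness into the primed polyhedron being guaranteed by Lemma \ref{barycentricmap}, since near any point only the functions of a single simplex are nonzero, which is precisely the local-factorization property your affine $\eta_K^{-1}$ lacks -- and then sets $\rho_K=\psi_{\sd\ K}\circ\rho'_K$, invoking Theorem \ref{homotopyequiv} for $\sd\ K$ rather than for $K$. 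Your homotopy bookkeeping via Lemma \ref{linhomotopy} and Corollary \ref{linhomotopy2} would then go through as in the paper, but without replacing $\eta_K^{-1}\circ\psi_K$ by a partition-of-unity construction aimed at $|\sd\ K|'_\dcal$ followed by $\psi_{\sd\ K}$, the map you propose is simply not a morphism of $\dcal$.
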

\begin{proof}
	We have only to construct a smooth map $\rho'_{K}: |K|_{\dcal} \longrightarrow |\sd\ K|'_{\dcal}$ which preserves each closed simplex of $|\sd\ K|$. In fact, Corollary \ref{linhomotopy2} shows that the composite $|K|_{\dcal} \xrightarrow{\ \rho'_{K}\ } |\sd\ K|'_{\dcal} \xrightarrow{\ \  \psi_{\mathrm{sd}K}  \ \ } |\sd\ K|_{\dcal}$ is the desired $\dcal$-homotopy inverse of $\eta_{K}$, where $\psi_L:|L|'_\dcal \longrightarrow |L|_\dcal$ is the smooth map constructed in the proof of Theorem \ref{homotopyequiv}.
	\par\indent
	We can reduce the construction of $\rho'_{K}$ to the case of $K = \Delta(p)$. More precisely, to construct such a natural map $\rho'_{K}: |K|_{\dcal} \longrightarrow |\sd\ K|'_{\dcal}$, it suffices to construct smooth maps
	$$
	\rho'^p: \Delta^{p} = |\Delta(p)|_{\dcal} \longrightarrow |\sd\ \Delta(p)|_{\dcal}' \ \ \ (p=0,1,2,\ldots)
	$$
	satisfying the following conditions:
	\begin{itemize}
		\item[$(\rm i)$] $\rho'^p$ is equivariant with respect to the obvious  actions of the symmetric group $S_{p+1}$.
		\item[$(\rm ii)$] $\rho'^{p}$ preserves each $(p-1)$-face $\Delta^{p-1}_{(i)}$ and coincides with $\rho'^{p-1}$ on $\Delta^{p-1}_{(i)}$ under an affine identification of $\Delta^{p-1}_{(i)}$ with $\Delta^{p-1}$.
	\end{itemize}
	(See \cite[Definition 4.1]{origin} for the notation.)

	Consider the open cover $\vcal_{p} = \{ V_{(i_{0}, \cdots, i_{k} )} \ | \ 0 \leq i_{0} < \cdots < i_{k} \leq p  \}$ of $|\sd\ \Delta(p)|$, where $V_{(i_{0}, \ldots, i_{k})}$ is the star of the barycenter $b_{(i_{0}, \ldots, i_{k})}$ of an open simplex $(i_{0}, \ldots, i_{k})$ of $\Delta^{p}$. Since $|\sd\ \Delta(p)|$ is topologically identified with $\Delta^p$ and the nerve of $\vcal_{p}$ is just $\sd\ \Delta(p)$ (\cite[p. 114]{Spa}), we have only to construct a smooth partition of unity $\{ \rho'_{ (i_{0}, \ldots, i_{k}) } \}$ on $\Delta^p$ subordinate to the open cover $\vcal_{p}$ satisfying the two conditions corresponding to conditions $\rmi$ and $\rmii$, which are denoted by conditions $\rmi'$ and $\rmii'$ respectively (see Lemma \ref{barycentricmap}); the precise formulation of conditions $\rmi'$ and $\rmii'$ is left to the reader.
	\par\indent
	For $p = 0$, $\{ \rho'_{ (i_{0}, \ldots, i_{k}) } \} (= \{ \rho'_{(0)} \})$ is defined in the obvious manner. Suppose that we have constructed a partition of unity $\{\rho'_{(i_{0}, \ldots, i_{k})} \}$ on $\Delta^{p-1}$ subordinate to $\vcal_{p-1}$ satisfying conditions $\rmi'$ and $\rmii'$. Then, the functions $\rho'_{(i_{0}, \ldots, i_{k})}$ with $(i_{0}, \ldots, i_{k}) \neq (0, \ldots, p)$ are defined on $\dot{\Delta}^{p}$ by conditions $\rmi'$ and $\rmii'$. Extend the functions $\rho'_{(i_{0}, \ldots, i_{k})}$ to smooth functions on $\Delta^{p}$ with ${\rm supp}\ \rho'_{(i_{0}, \ldots, i_{k})}\subset V_{(i_{0}, \ldots, i_{k})}$ (see \cite[Lemma 9.11]{origin}) and average them over $S_{p+1}$. We then obtain the desired smooth partition of unity $\{\rho'_{(i_{0}, \ldots, i_{k})}\}$ on $\Delta^p$ by setting $\rho'_{(0, \ldots, p)}=1-\underset{k < p}{\sum}  \ \rho'_{(i_{0}, \ldots, i_{k})}$.
\end{proof}
\begin{proof}[Proof of Proposition \ref{dSegal}]
	We prove the result in two steps.\vspace*{2mm}\\
	Step 1: {\sl Construction of a splitting $\psi$ to $pr:BX_U \longrightarrow X$.} We choose a smooth partition of unity $\{\varphi_\alpha\}$ subordinate to $U$, which defines the smooth function $\varphi:X \longrightarrow |NU|'_\dcal$ (see Lemma \ref{barycentricmap}). Noticing that $BR_{U} = |\mathrm{sd}\ NU|_{\dcal}$, we then define the smooth map $\phi$ to be the composite
	\[
	X \xrightarrow{\ \, \varphi\ \, } |NU|'_{\dcal} \xrightarrow[\simeq]{\psi_{NU} } |NU|_{\dcal} \xrightarrow[\simeq]{\rho_{NU}} BR_{U}
	\]
	(see Theorem \ref{homotopyequiv} and Lemma \ref{homotopyequiv2} for $\psi_{NU}$ and $\rho_{NU}$, respectively). 
	
	Consider the solid arrow diagram in $\dcal$
	\[
	\begin{tikzcd}
	& & X \arrow{d}{(1_{X}, \phi)} \arrow[dashed,swap]{lld}{\psi} \\
	 BX_{U} \arrow[hook,swap]{rr}{(pr, B\pi)} & & X \times BR_{U}.
	\end{tikzcd}
	\]
	Then, the dotted arrow $\psi$ exists, making the diagram commute. In fact, for $x \in X$, consider the open neighborhood $V_{\sigma}$ of $x$ defined in the proof of Lemma \ref{barycentricmap}. Then, $(1_{X}, \phi)|_{V_\sigma}$ factorizes the $\dcal$-embedding $U_{\sigma} \times |\sd\ \Delta(p)|_{\dcal} \longhookrightarrow X \times BR_{U}$, which is the product of the inclusion $U_{\sigma} \longhookrightarrow X$ and the inclusion $|\sd\ \Delta(p)|_{\dcal} \longhookrightarrow BR_{U}$ of the barycentric subdivision of the closed simplex corresponding to $\sigma$ (see Lemma \ref{Dsub}(1) and \cite[the proof of Lemma 4.2]{origin}). Since the closed $p$-simplices of $|\sd\ \Delta(p)|_\dcal$ are of the form $[\sigma \supsetneq \cdots]$, we have the $\dcal$-quotient map $\coprod U_\sigma \times [\sigma \supsetneq \cdots] \longrightarrow U_\sigma \times |\sd\ \Delta(p)|_{\dcal}$ (see \cite[Lemma 2.5]{origin} and Proposition \ref{conven}(2)). Since the composite $O \overset{f}{\longrightarrow} V_\sigma \xrightarrow[]{(1_X,\phi)} U_\sigma \times |\sd \ \Delta(p)|_\dcal$ lifts locally along $\coprod U_\sigma \times [\sigma \supsetneq \cdots]\longrightarrow U_\sigma \times |\sd\ \Delta(p)|_\dcal$ for any plot $f$ of $V_\sigma$, we see that $(1_{X}, \phi)$ factorizes through a smooth map $\psi:X \longrightarrow BX_{U}$.\par
	Note that $pr\circ \psi = 1_X$ holds.
	\vspace*{2mm}\\
%Since we have the commutative diagram
%\[
%\begin{tikzcd}
%            & X \arrow[swap]{ld}{\psi} \arrow{d}{(1_{X}, \varphi')} \arrow[dash]{r} & \arrow[yshift = 1.6ex, xshift = -1ex,dash]{dd}\\
%BX_{U} \arrow[hook,swap]{r}{(pr,B\pi)} \arrow[swap, yshift = -0.7ex]{rd}{pr} & X \times B R_{U} \arrow{d}{proj} & \arrow[yshift = -0.4ex,xshift = -1ex,dash]{d}  \arrow[yshift = 4.5ex,xshift = -1ex,dash]{d}{1_{X}} \\
%                        & X &  \arrow[yshift = 0.2ex,xshift = 1.55ex]{l} \ \ \  ,
%\end{tikzcd}
%    \]
Step 2: {\sl Construction of a $\dcal$-homotopy between $\psi \circ pr$ and $1_{BX_U}$.} We must show that $\psi \circ pr \simeq_{\dcal} 1_{BX_{U}}$. For this, we construct a $\dcal$-homotopy $h$ between the two composites
\[
\begin{tikzcd}
BX_{U} \arrow[yshift = 0.2cm]{r}{\psi \circ pr} \arrow[yshift = -0.2,swap]{r}{1_{BX_{U}}} & BX_{U} \arrow[hook]{r}{(pr, B\pi)} &X \times BR_{U}
\end{tikzcd}
\]
so that it factorizes through the smooth injection $BX_U \longhookrightarrow X\times BR_U$. Since $(pr, B\pi) \circ \psi \circ pr = (pr, \phi \circ pr),$ we define the $X$-component $h_{X}$ of $h$ to be the constant homotopy at $pr$.\par
To construct the $BR_{U}$-component $h_{BR_{U}}$, we consider the composites
\[
\begin{tikzcd}
BX_{U} \arrow[yshift = 0.2cm]{r}{\phi \circ pr} \arrow[yshift = -0.2,swap]{r}{B\pi} & BR_{U} \xrightarrow[\simeq]{\eta_{NU}}\absno{NU}_{\dcal}
\end{tikzcd}
\]
(see Lemma \ref{homotopyequiv2} for $\eta_{NU}$). Recall that the canonical map $\coprod U_{\sigma_p} \times [\sigma_{p} \supsetneq \cdots \supsetneq \sigma_{0}] \longrightarrow BX_{U}$ is a $\dcal$-quotient map and consider the composites
\[
\begin{tikzcd}
U_{\sigma_p} \times [\sigma_{p} \supsetneq \cdots \supsetneq \sigma_{0}] \arrow{r} & BX_{U} \arrow[yshift = 0.2cm]{r}{\phi \circ  pr} \arrow[yshift = -0.2,swap]{r}{B\pi} & BR_{U} \xrightarrow[\simeq]{\eta_{NU}}\absno{NU}_{\dcal}.
\end{tikzcd}
\]
For $x \in U_{\sigma_p}$, consider the open neighborhood $V_{\sigma}$ of $x$ defined in the proof of Lemma \ref{barycentricmap} and set $\sigma'_{p} = \sigma \cup \sigma_{p}$ and $W_{\sigma'_{p}} = V_{\sigma} \cap U_{\sigma_{p}}$. Then, $W_{\sigma'_p}$ is an open neighborhood of $x$ contained in $U_{\sigma_p}$ and the images of $W_{\sigma'_p} \times [\sigma_{p} \supsetneq \cdots \supsetneq \sigma_{0}]$ by the two composites are contained in the closed simplex of $|NU|_{\dcal}$ corresponding to $\sigma'_{p}$. Thus, $\eta_{NU} \circ \phi \circ pr \simeq_{\dcal} \eta_{NU} \circ B\pi$ by Corollary \ref{linhomotopy2}. Hence, by composing the map $\rho_{NU}: |NU|_{\dcal} \longrightarrow BR_{U}$, we have
\[
\phi \circ pr \simeq_{\dcal} \ \rho_{NU} \circ \eta_{NU} \circ \phi \circ pr \simeq_{\dcal}\ \rho_{NU} \circ \eta_{NU} \circ B\pi \simeq_{\dcal} B\pi
\]
(see Lemma \ref{homotopyequiv2}). Construct three $\dcal$-homotopies corresponding to these three $\dcal$-homotopy relations using the linear homotopies and the $\dcal$-homotopies $h_{K}$ constructed in the proof Theorem \ref{homotopyequiv} (see the proofs of Corollary \ref{linhomotopy2} and Lemma \ref{homotopyequiv2}).\vspace{0.2cm} We then define the $\dcal$-homotopy $h_{BR_{U}}$ to be the composite of the three $\dcal$-homotopies.\par
Last, we must verify that the $\dcal$-homotopy $h:BX_U\times I \longrightarrow X\times BR_U$ factorizes through $BX_U \longhookrightarrow X\times BR_U$. Let $\sigma'_p$ and $W_{\sigma'_p}$ be as above and note that the composite
\[
W_{\sigma'_p} \times [\sigma_{p} \supsetneq \cdots \supsetneq \sigma_{0}] \times I \longrightarrow BX_{U} \times I \xrightarrow{\ \ h\ \ } X \times BR_{U}
\]
factorizes through the $\dcal$-embedding $U_{\sigma'_{p}} \times |{\rm sd}\ \Delta(p')|_{\dcal} \longhookrightarrow X \times BR_{U}$, which is the product of the inclusion $U_{\sigma'_{p}} \longhookrightarrow X$ and the inclusion $|{\rm sd}\ \Delta(p')|_{\dcal} \longhookrightarrow BR_{U}$ of the barycentric subdivision of the closed simplex corresponding to $\sigma'_{p}$. Since $|\sd\ \Delta(p')|_\dcal$ is the union of the simplices of the form $[\sigma_{p}'\supsetneq \cdots]$, the $\dcal$-homotopy $h:BX_{U} \times I \longrightarrow X \times BR_{U}$ factorizes through $(pr,B\pi):BX_U \longhookrightarrow X\times BR_U$ (see the argument showing that $(1_X, \phi)$ factorizes through $(pr,B\pi):BX_U\longhookrightarrow X\times BR_U$).
\end{proof}
\begin{rem}\label{proofs}
The proof of Proposition \ref{dSegal} is much more subtle than that of \cite[Proposition 4.1]{Seg} since we need two kinds of notions of a diffeological polyhedron and the natural map $|{\rm sd}\ K|_{\dcal} \longrightarrow |K|_{\dcal}$ is not an isomorphism unlike the topological case (see the definition of $\phi : X \longrightarrow BR_{U}$ in the proof of Proposition \ref{dSegal}).
\end{rem}
\subsection{Hurewicz cofibrations in $\dcal$}
In this subsection, we introduce the notion of a Hurewicz cofibration in $\dcal$ using a weak version of the homotopy extension property, and then establish its basic properties.
\begin{defn}\label{hcofibr}
	A smooth map $i: A \longrightarrow X$ is called a Hurewicz cofibration (or an $h$-cofibration) if $i:A \longrightarrow X$ satisfies the following weak homotopy extension property (WHEP):
\par\indent
Assume given a solid arrow diagram in $\dcal$
\[
\begin{tikzcd}
X \times (0) \underset{A \times (0)}{\cup} A \times I \arrow{rr}{f + h} \arrow{d} & & Z\\
X \times I \arrow[dashed,yshift=-0.7ex]{rru},  & &
\end{tikzcd}
\]
where the vertical arrow is the canonical map from the pushout of the diagram
\[
X \times (0) \xleftarrow{i \times (0)} A \times (0) \longhookrightarrow A \times I
\]
to $X \times I$. If $h_{t} = h_{0}$ for sufficiently small $t$, the dotted arrow exists, making the diagram commute.
\end{defn}
The reason why we define an $h$-cofibration in $\dcal$ not by HEP but by WHEP is explained in Remark \ref{difference}(2).
\begin{rem}\label{remhcofibr}
	\begin{itemize}
	\item[{\rm (1)}] Let $i: A \longrightarrow X$ be a smooth map and assume given a solid arrow diagram as in Definition \ref{hcofibr}. If $i: A \longrightarrow X$ is an $h$-cofibration, then we have an extension $\hat{h}:X\times I \longrightarrow Z$ after deforming the homotopy $h: A \times I \longrightarrow Z$ using a nondecreasing smooth function $\lambda: I \longrightarrow I$ such that $\lambda \equiv 0$ on $[0, \epsilon]$ and $\lambda \equiv id$ on $[2\epsilon , 1]$ for some $\epsilon > 0$. Further, if we deform the homotopy $h: A \times I \longrightarrow Z$ so that
\begin{eqnarray*}
	h_{t} = \left \{
	\begin{array}{ll}
		h_{0} & \text{if $t \in [0, \epsilon]$}\\
		h_{1} & \text{if $t \in [1-\epsilon, 1]$}
	\end{array}
	\right.
\end{eqnarray*}
for some $\epsilon$ with $0 < \epsilon < \frac{1}{2}$, we have an extension $\hat{h}:X\times I \longrightarrow Z$ such that
\begin{eqnarray*}
	\hat{h}_{t} = \left \{
	\begin{array}{ll}
		\hat{h}_{0} & \text{if $t \in [0, \epsilon']$}\\
		\hat{h}_{1} & \text{if $t \in [1-\epsilon', 1]$}
	\end{array}
	\right.
\end{eqnarray*}
for any fixed $\epsilon'$ with $0<\epsilon'<\epsilon$. Thus, we always choose such homotopies and extensions if necessary.
\item[{\rm (2)}] Let $i:A \longrightarrow X$ be an $h$-cofibration. Since we have a solution of the extension problem
	\[
	\begin{tikzcd}
	X \times (0) \underset{A \times (0)}{\cup} A \times I \arrow{r}{1} \arrow{d} & X \times (0) \underset{A \times (0)}{\cup} A \times I \\
	X \times I. \arrow[dashed,swap,yshift=0.7ex]{ru} &
	\end{tikzcd}
	\]
after deforming the homotopy $A \times I \longrightarrow X \times (0) \underset{A \times (0)}{\cup} A \times I$ (see Part 1), we see that there exist an open diffeological subspace $U$ of $X$ and a retract diagram
		\begin{center}
			\begin{tikzpicture} %[shorten >=1pt,node distance=2cm,on grid]
			\node [below] at (0,0.25) {\begin{tikzcd}
				A \arrow{r}{i}
				& U \arrow{r}{r}
				& A,
				\end{tikzcd}};

			\draw[->] (-1.4,-.5) -- (-1.4,-.75)--(1.4,-.75)-- (1.4,-0.5);
			\node at (0,-1.) {$1$};
			\end{tikzpicture}
		\end{center}
and hence that $i: A \longrightarrow X$ is an $\dcal$-embedding. Therefore, given an $h$-cofibration $i:A\longrightarrow X$, we can regard the source $A$ as a diffeological subspace of $X$.
\end{itemize}

\end{rem}
We first show the following lemma.

\begin{lem}\label{basicshcofibr}
\begin{itemize}	
\item[{\rm (1)}] If $i: A \longhookrightarrow X$ is an $h$-cofibration, then $i \times 1: A \times U \longhookrightarrow X \times U$ is also an $h$-cofibration for any $U \in \dcal$.
\item[{\rm (2)}] Assume given a pushout diagram in $\dcal$
\[
\begin{tikzcd}
A' \arrow[hook]{r}{i'} \arrow{d} & X' \arrow{d}\\
A \arrow[hook,swap]{r}{i} & X.
\end{tikzcd}
\]
If $i'$ is an $h$-cofibration, then $i$ is also an $h$-cofibration.
\item[{\rm (3)}] The class of $h$-cofibrations is closed under composites, coproducts, and retracts.
\item[{\rm (4)}] Assume given a sequence in $\dcal$
\[
X_{0} \xhookrightarrow[]{\ \  i_{0}\ \ } X_{1} \xhookrightarrow[]{\ \ i_{1}\ \ } X_{2} \xhookrightarrow[]{\ \ i_{2}\ \ } \cdots
\]
If the $i_{k}$ are $h$-cofibrations, then $i: X_{0} \longhookrightarrow X := \lim\limits_{\rightarrow} X_{k}$ is also an $h$-cofibration.
\end{itemize}
\end{lem}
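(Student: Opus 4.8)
The plan is to deduce all four closure properties from two structural facts about $\dcal$ together with the flexibility recorded in Remark \ref{remhcofibr}(1). The first fact is that $\dcal$ is cartesian closed (Proposition \ref{conven}(2)), so the functors $\cdot \times U$ and $\cdot \times I$ are left adjoints and hence preserve all colimits; in particular they preserve the pushout $X\times(0)\cup_{A\times(0)}A\times I$ defining the domain of WHEP, as well as coproducts and sequential colimits. The second is that a family of smooth maps agreeing along a colimit glues to a smooth map by the universal property. Throughout, whenever a homotopy is constant near $t=0$, each restriction, pullback, or adjunct of it is again constant near $t=0$, so the hypothesis needed to invoke WHEP is automatically inherited.

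For (1) I would adjoint the extension problem. Using $(X\times U)\times I\cong (X\times I)\times U$ and cartesian closedness, a smooth map out of $(X\times U)\times I$ into $Z$ corresponds to a smooth map $X\times I\to \dcal(U,Z)$; since $\cdot\times U$ preserves the defining pushout, the WHEP datum for $i\times 1$ with target $Z$ transports to a WHEP datum for $i$ with target $Z^U=\dcal(U,Z)$, the constancy near $0$ being preserved, and applying WHEP for $i$ and adjointing back yields the extension. For (2) I would pull the WHEP datum for $i$ back along $X'\to X$ to obtain a datum for $i'$ (the homotopy on $A'$ being $h$ precomposed with $A'\to A$), apply WHEP for $i'$ to get $\hat h':X'\times I\to Z$, and then observe that $\hat h'$ and $h$ agree on $A'\times I$ and hence glue, since $\cdot\times I$ carries $X=A\cup_{A'}X'$ to $X\times I=(A\times I)\cup_{A'\times I}(X'\times I)$. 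The glued map is the desired extension, and no control of endpoint behaviour is needed here.

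The case (3) splits into three parts. Coproducts and retracts are formal: for coproducts one applies WHEP on each summand and reassembles, using that $\cdot\times I$ preserves coproducts and that constancy near $0$ holds summandwise; for retracts one transports the WHEP datum for $i$ into a datum for the dominating cofibration $i'$ via the retraction maps, solves there, and carries the solution back, the two horizontal composites being identities. The composite case is the first place where the weak form of the extension property is genuinely used: given $h$-cofibrations $i:A\to X$ and $j:X\to Y$ and a WHEP datum for $j\circ i$, I would first extend the homotopy over $X\times I$ using WHEP for $i$, but in order to then apply WHEP for $j$ the intermediate extension must itself be constant near $0$. This is exactly arranged by Remark \ref{remhcofibr}(1), after which WHEP for $j$ produces the final extension.

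Finally, (4) is the main obstacle, handled by an inductive construction that must carefully preserve constancy near $0$. Starting from $\hat h_0$ (adjusted so as to be constant on some $[0,\epsilon_0]$), I would build $\hat h_k:X_k\times I\to Z$ extending $\hat h_{k-1}$ by applying WHEP for $i_{k-1}$ to the homotopy $\hat h_{k-1}$ together with the restriction of $f$ to $X_k\times(0)$, invoking Remark \ref{remhcofibr}(1) to keep each $\hat h_k$ constant on an interval $[0,\epsilon_k]$ with $\epsilon_k$ chosen to stay bounded below (e.g. $\epsilon_k=\epsilon_0(1+2^{-k})/2\to\epsilon_0/2>0$), so that the constancy hypothesis survives at every finite stage. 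The extensions are compatible by construction, so since $X\times I=\lim\limits_{\longrightarrow}(X_k\times I)$ they assemble into a single smooth map $\hat h:X\times I\to Z$ restricting to $h$ on $X_0\times I$ and to $f$ on $X\times(0)$. The delicate point, and the reason WHEP rather than HEP is the right notion (cf. Remark \ref{difference}(2)), is precisely this bookkeeping ensuring that each finite stage remains constant near $0$.
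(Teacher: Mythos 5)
Your proposal is correct and takes essentially the same route as the paper's proof in all four parts: (1) via cartesian closedness (Proposition \ref{conven}(2)), (2) by solving the transported problem over $X'$ and gluing along $X \times I = X' \times I \cup_{A' \times I} A \times I$, (3) formally, with Remark \ref{remhcofibr}(1) supplying the reparametrization needed for composites, and (4) by assembling stagewise extensions over $X \times I = \lim\limits_{\rightarrow}\,(X_{k} \times I)$. Your write-up merely makes explicit what the paper leaves implicit — the exponential-adjoint transposition in (1) and the $\epsilon_{k}$ bookkeeping in (4), where in fact any strictly decreasing sequence of positive $\epsilon_{k}$ suffices (boundedness below is not needed, since the colimit extension is not required to be constant near $0$).
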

\begin{proof}(1) The result follows from Proposition \ref{conven}(2).\par
	(2) We must solve the extension problem in Definition \ref{hcofibr} under the assumption that $h_{t} = h_{0}$ for sufficiently small $t$.
	Since $i' : A' \longhookrightarrow X'$ is an $h$-cofibration, there exists a morphism $\hat{k} : X' \times I \longrightarrow Z$ making the diagram
		\[
	\begin{tikzcd}
	\arrow{d} X' \times (0) \underset{A' \times (0)}{\cup} A' \times I \arrow{rr}{f\absno{_{X'} + h}_{A' \times I}} & & Z\\
	X' \times I \arrow[swap, yshift=-0.7ex]{rru}{\hat{k}} &
	\end{tikzcd}
	\]
	commute. Noticing that $X \times I = X' \times I \cup_{A' \times I} A \times I$ (Proposition \ref{conven}(2)), we see that
	\[
	\hat{k} + h : X \times I = X' \times I \cup_{A' \times I} A \times I \longrightarrow Z
	\]
	is the desired solution.\par
(3) Obvious (see Remark \ref{remhcofibr}(1) for the closure property under composites).\par
(4) Since $X \times I = \lim\limits_{\rightarrow} X_{n} \times I$ (Proposition \ref{conven}(2)), the extension problem in question can be solved (see Remark \ref{remhcofibr}(1)).
\end{proof}
A diffeological pair $(X, A)$ is an $NDR$-$pair$ (= neighborhood deformation retract pair) if there is a smooth map $u: X \longrightarrow I$ such that $u^{-1}(0) = A$ and a $\dcal$-homotopy $\varphi: X \times I \longrightarrow X$ relative to $A$ such that $\varphi_{0} = 1_{X}$ and $\varphi_{1}(x) \in A$ for $u(x) < 1$.\par

By the following lemma, the notion of an ${\rm NDR}$-pair gives a useful criterion for a map to be an $h$-cofibration.
\begin{lem}\label{NDR} Let $(X, A)$ be an ${NDR}$-pair. Then the inclusion $i: A \longrightarrow X$ is an $h$-cofibration.
\end{lem}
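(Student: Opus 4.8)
The plan is to adapt to $\dcal$ the classical argument that an NDR-pair is a cofibration, constructing by hand a smooth solution $\hat h$ to the extension problem in Definition \ref{hcofibr}. Fix a representation $(u,\varphi)$ of $(X,A)$ as an NDR-pair, and let $f:X\to Z$ and $h:A\times I\to Z$ denote the two restrictions of the given map $f+h$, so that $f|_A=h_0$ and, by hypothesis, $h_t=h_0$ for sufficiently small $t$. First I would normalize the data. Using Remark \ref{remhcofibr}(1) I may assume $h_t=h_0$ for $t\in[0,\epsilon]$ and $h_t=h_1$ for $t\in[1-\epsilon,1]$; and by precomposing the homotopy parameter of $\varphi$ with a nondecreasing smooth function $I\to I$ that is $0$ near $0$ and $1$ near $1$, I may assume $\varphi(x,s)=x$ for $s$ near $0$ and $\varphi(x,s)=\varphi(x,1)$ for $s$ near $1$, while retaining $\varphi_0=1_X$, $\varphi$ rel $A$, and $\varphi_1(x)\in A$ whenever $u(x)<1$.

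Then I would define the extension by the region formula
\[
\hat h(x,t)=
\begin{cases}
f\big(\varphi(x,\,t/u(x))\big) & u(x)>0 \text{ and } t\le u(x),\\[1mm]
h\big(\varphi(x,1),\,\tfrac{t-u(x)}{1-u(x)}\big) & u(x)<1 \text{ and } t\ge u(x),
\end{cases}
\]
which set-theoretically is exactly the classical extension $(f+h)\circ r$ obtained from the retraction $r:X\times I\to X\times(0)\cup A\times I$ associated with the NDR data. A direct check shows that $\hat h|_{X\times(0)}=f$ (since $\varphi_0=1_X$), that $\hat h|_{A\times I}=h$ (since $u|_A\equiv 0$ and $\varphi$ is rel $A$), and that the two branches agree along the interface $t=u(x)$: there the first gives $f(\varphi(x,1))$ and the second gives $h(\varphi(x,1),0)=f(\varphi(x,1))$, because $\varphi(x,1)\in A$ and $h_0=f|_A$. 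Thus $\hat h$ is a well-defined set map extending $f+h$.

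The whole difficulty is to see that $\hat h$ is smooth, and this is where the normalizations and the weak (rather than full) homotopy extension property enter. Near the interface $t=u(x)$ the reparametrized arguments $t/u(x)$ and $(t-u(x))/(1-u(x))$ tend to $1$ and $0$ respectively; since $\varphi$ is now constant in $s$ near $s=1$ and $h$ is constant in its second variable near $0$, both branches are locally constant equal to $f(\varphi(x,1))$ in a neighborhood of $t=u(x)$, so they glue smoothly there. The main obstacle I expect is the smoothness of $\hat h$ across $A=\{u=0\}$ and near $\{u=1\}$, where the reparametrizations $t/u(x)$ and $(t-u(x))/(1-u(x))$ degenerate. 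I plan to resolve this by replacing the sharp budget $u(x)$ with a smooth reparametrization of the time parameter manufactured from $u$ together with smooth cut-off functions, so that the flow phase is switched on only on a neighborhood of $A$ on which $\varphi_1$ genuinely lands in $A$, and is switched off — making $\hat h(x,\cdot)$ the constant homotopy at $f(x)$ via the constancy of $\varphi$ near $s=0$ — away from $A$, exploiting throughout that $h$ is constant near the endpoints to absorb the transitions. This reliance on homotopies that are constant near $t=0$ is precisely why an $h$-cofibration is defined by WHEP rather than HEP (cf. Remark \ref{difference}(2)), and producing the global smooth reparametrization cleanly is the technical heart of the proof.
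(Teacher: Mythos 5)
You have correctly reduced the problem to the classical NDR extension and correctly diagnosed where it breaks, but the proof stops exactly at the point where the lemma has any content in $\dcal$. The explicit formula you write down is the classical Steenrod-type extension, and as you concede it is not smooth: along a smooth curve through a point of $A\times(0)$ the quantity $t/u(x)$ is uncontrolled, so $f(\varphi(x,t/u(x)))$ has no reason to be smooth, and in $\dcal$ one cannot conclude smoothness by gluing two branches along the non-open interface $\{t=u(x)\}$ (smoothness is tested on plots; gluing is only legitimate on an open cover). Your normalizations ($h$ constant near the endpoints, $\varphi$ constant in $s$ near $0$ and $1$) do make the two branches agree on an open neighborhood of the interface inside $\{0<u<1\}\times I$, but they do nothing at $A\times I$ or at $\{u=1\}$. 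Deferring the fix to ``a smooth reparametrization manufactured from $u$ together with cut-off functions'' names the missing ingredient without supplying it; worse, as phrased (``flow on near $A$, constant at $f(x)$ away from $A$'') the recipe has a consistency problem in the transition zone: the switched-on branch passes through values based at $f(\varphi(x,1))$ while the switched-off branch sits at $f(x)$, and interpolating between them forces the amount of $\varphi$-flow and the amount of $h$-time to be governed by one and the same smooth control --- which is precisely what is never specified. Since this smooth construction is the entire technical heart of the lemma, the proposal is incomplete.

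For comparison, the paper fills this gap with a single device that avoids fractional reparametrization altogether. With $\epsilon>0$ such that $h_t=h_0$ on $[0,\epsilon]$, choose a nondecreasing smooth $\rho:I\to I$ with $\rho^{-1}(0)=[0,\frac{1}{3}\epsilon]$ and $\rho^{-1}(1)=[\frac{2}{3}\epsilon,1]$, and set $\hat h_1(x,t)=f(\varphi(x,\rho((1-u(x))t)))$ on $U_1=\{(x,t)\mid (1-u(x))t<\epsilon\}$ and $\hat h_2(x,t)=h(\varphi(x,1),(1-u(x))t)$ on $U_2=\{(x,t)\mid (1-u(x))t>\frac{2}{3}\epsilon\}$. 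Both are manifestly smooth on open sets covering $X\times I$ (note $U_2\subset\{u<1\}$, so $\varphi(x,1)\in A$ there), and they agree on $U_1\cap U_2$ because there $\rho\equiv 1$ while the time fed to $h$ lies in $(\frac{2}{3}\epsilon,\epsilon)$, where $h$ still equals $h_0=f\circ i$. The globally smooth quantity $(1-u(x))t$ simultaneously caps the flow and the $h$-run --- exactly the coordination your sketch calls for --- and the overlap identity is where the WHEP hypothesis is consumed, confirming your (correct) intuition that constancy of $h$ for small $t$ is what makes the lemma true for $h$-cofibrations defined via WHEP rather than HEP.
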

\begin{proof}
Choose a smooth map $u$ and a $\dcal$-homotopy $\varphi$ as in the definition of an ${\rm NDR}$-pair. We have to solve the extension problem in Definition \ref{hcofibr} under the assumption that $h_{t} = h_{0}$ for sufficiently small $t$.

Choose $\epsilon > 0$ such that $h_{t} = h_{0}$ for $t \in [0, \epsilon]$ and a non-decreasing function $\rho : I \longrightarrow I$ such that $\rho^{-1}(0) = [0, \frac{1}{3}\epsilon]$ and $\rho^{-1}(1) = [\frac{2}{3}\epsilon, 1].$ Set $U_{1} = \{ (x, t) \in X \times I \ | \ (1 - u(x)) t < \epsilon \}$ and $U_{2} = \{ (x, t) \in X \times I \ | \ (1 - u(x)) t > \frac{2}{3} \epsilon \}$, which form an open cover of $X \times I$. Define the smooth maps $\hat{h}_{i}: U_{i} \longrightarrow Z \ (i = 1, 2)$ by
\begin{eqnarray*}
\hat{h}_{1}(x, t) & = & f( \varphi (x, \rho ( (1- u(x))t  )   )),\\
\hat{h}_{2}(x, t) & = & h(\varphi (x, 1), (1 - u(x)) t).
\end{eqnarray*}
Since $\hat{h}_{1} = \hat{h}_{2}$ on $U_{1} \cap U_{2}$, we have the smooth map
\[
\widehat{h} = \widehat{h}_{1} + \widehat{h}_{2} : X \times I = U_{1} \cup U_{2} \longrightarrow Z,
\]
which is the desired extension of $f + h$.
%define $\hat{h}: X \times I \longrightarrow Z$ by
%\begin{eqnarray*}
%	\hat{h}(x, t) = \left \{
%	\begin{array}{ll}
%		h(\varphi(x, 1)) & \text{if $(1- u(x) )t \geq \epsilon$},
%	\end{array}
%	\right.
%\end{eqnarray*}
%where $\rho: I \longrightarrow I$ is a non-decreasing smooth function such that $\rho \equiv 0$ near $0$ and $\rho \equiv 1$ near $[\epsilon, 1]$. We can easily see that $\hat{h}$ is the desired extension.
\end{proof}
We call cofibrations in Definition \ref{WFC} {\sl Quillen cofibrations} or {\sl $q$-cofibrations}. Then, the following result holds.
\begin{prop}\label{gimpliesh}
	Every $q$-cofibration in $\dcal$ is an $h$-cofibration in $\dcal$.
\end{prop}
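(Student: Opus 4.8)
The plan is to reduce the statement to the generating $q$-cofibrations and then apply the closure properties of $h$-cofibrations established in Lemma \ref{basicshcofibr}. Recall from Theorem \ref{originmain} that $\dcal$ is cofibrantly generated with generating cofibrations $\ical = \{\dot\Delta^p \longhookrightarrow \Delta^p \mid p \geq 0\}$. By the standard characterization of cofibrations in a cofibrantly generated model category, every $q$-cofibration is a retract of a relative $\ical$-cell complex, i.e. of a transfinite (here, by compact generation, sequential) composite of pushouts of coproducts of maps in $\ical$. Since Lemma \ref{basicshcofibr} tells us that $h$-cofibrations are closed under coproducts, pushouts, sequential composites, and retracts, it suffices to prove that each generating cofibration $\dot\Delta^p \longhookrightarrow \Delta^p$ is an $h$-cofibration.

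First I would make this reduction precise: given a $q$-cofibration $i$, write it as a retract of a relative $\ical$-cell complex $j\colon X_0 \longhookrightarrow X = \varinjlim X_k$, where each $X_{k-1} \longhookrightarrow X_k$ is a pushout of $\coprod_\lambda (\dot\Delta^{p_\lambda} \longhookrightarrow \Delta^{p_\lambda})$. Assuming $\dot\Delta^p \longhookrightarrow \Delta^p$ is an $h$-cofibration for every $p$, Lemma \ref{basicshcofibr}(3) gives that the coproduct is an $h$-cofibration, part (2) propagates this through each pushout, part (4) handles the sequential colimit, and part (3) again closes under the retract, yielding that $i$ is an $h$-cofibration. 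This is entirely formal once the generating case is settled.

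The substantive step, then, is to show that the inclusion $\dot\Delta^p \longhookrightarrow \Delta^p$ is an $h$-cofibration. My approach is to exhibit $(\Delta^p, \dot\Delta^p)$ as an $\mathrm{NDR}$-pair and invoke Lemma \ref{NDR}. Concretely, I would construct a smooth function $u\colon \Delta^p \longrightarrow I$ with $u^{-1}(0) = \dot\Delta^p$ together with a $\dcal$-homotopy $\varphi\colon \Delta^p \times I \longrightarrow \Delta^p$ relative to $\dot\Delta^p$ deforming $\Delta^p$ into a neighborhood of its boundary, so that $\varphi_0 = 1$ and $\varphi_1(x) \in \dot\Delta^p$ whenever $u(x) < 1$. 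The natural candidate for $u$ is built from the barycentric coordinates — for instance a smooth function vanishing exactly on the boundary and equal to $1$ near the barycenter $b_p$ — while $\varphi$ should be a radial deformation pushing points away from $b_p$ toward $\dot\Delta^p$, which by Lemma \ref{simplex} exhibits $\dot\Delta^p$ as a $\dcal$-deformation retract of $\Delta^p - \{b_p\}$.

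The main obstacle will be the smoothness of these maps with respect to the standard diffeology on $\Delta^p$ (Definition \ref{simplices}), not the ordinary topology: a radial retraction of $\Delta^p - \{b_p\}$ onto $\dot\Delta^p$ is continuous but, exactly as the linear-homotopy pathology in Remark \ref{nonsmooth} warns, need not be smooth as a map into $\Delta^p$, since trajectories crossing between faces of different dimension can fail to be smooth curves. The careful point is therefore to arrange the deformation so that it respects the local product structure of $\Delta^p$ near each open face, using the diffeomorphisms $\varPhi_I\colon U_I \xrightarrow{\cong} \mathring\Delta^k \times \Delta^{p-k}_{\hat 0}$ of Proposition \ref{goodnbd} to construct $\varphi$ face-by-face in the $\Delta^{p-k}_{\hat 0}$-direction, patching the local deformations together. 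This is precisely the kind of construction carried out in the proof of Theorem \ref{homotopyequiv}, so I would model $u$ and $\varphi$ on the smooth maps $\psi^p_k$ and the homotopies $h^p_k$ built there, and verify the $\mathrm{NDR}$ conditions using the explicit formulas and Proposition \ref{goodnbd}.
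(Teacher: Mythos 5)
Your proposal follows essentially the same route as the paper's proof: the paper likewise reduces to the generating cofibrations by writing a general $q$-cofibration as a retract of the sequential relative $\ical$-cell complex $i_\infty$ from the factorization $A \hookrightarrow G^{\infty}(\ical,i) \rightarrow X$ and applying Lemma \ref{basicshcofibr}, and then proves the generating case by exhibiting $(\Delta^p,\dot{\Delta}^p)$ as an $\mathrm{NDR}$-pair via Lemma \ref{NDR}, with $u(t_0,\ldots,t_p)=\mu((p+1)^{p+1}t_0\cdots t_p)$ — exactly the barycentric-coordinate function you propose, smooth by Lemma \ref{simplex}(3). The only cosmetic difference is that the paper constructs the deformation $\varphi$ by citing Lemma 9.11 of \cite{origin} rather than spelling out the face-by-face argument via Proposition \ref{goodnbd} that you sketch (and the deformation retraction of $\Delta^p-\{b_p\}$ onto $\dot{\Delta}^p$ comes from \cite{origin}, not from Lemma \ref{simplex}), but the substance is identical.
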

\begin{proof}
	Let $i: A \longrightarrow X$ be a $q$-cofibration. We show that $i$ is an $h$-cofibration in two steps.\vspace*{2mm}\\
	{\sl Step 1: The case where $i$ is the inclusion $\dot{\Delta}^p\longhookrightarrow \Delta^p$ $(p\ge 0)$}. Since the result is obvious for $p = 0$, we may assume that $p > 0$. We have only to show that $(\Delta^{p}, \dot{\Delta}^{p})$ is an NDR-pair (see Lemma \ref{NDR}).
	
	For the pair $(\Delta^{p}, \dot{\Delta}^p)$, we construct a smooth map $u$ and a smooth homotopy $\varphi$ as in the definition of an $\rm NDR$-pair. Let $\mu : I \longrightarrow I$ be a nondecreasing smooth function such that $\mu = id$ on $[0, 1-2\epsilon]$ and $\mu \equiv 1$ on $[1-\epsilon, 1]$ for some $\epsilon \in (0, \frac{1}{2})$. Define the smooth map $u: \Delta^{p} \longrightarrow I$ by $u(t_{0}, \ldots, t_{p}) = \mu((p+1)^{p+1} t_{0} \cdots t_{p})$ (see Lemma \ref{simplex}(3)). Then, we can construct the desired smooth homotopy $\varphi: \Delta^{p} \times I \longrightarrow \Delta^{p}$ using \cite[Lemma 9.11]{origin}.\vspace*{2mm}\\
	{\sl Step 2: The case where $i$ is a general $q$-cofibration}. Consider the cofibration-trivial fibration factorization of $i$
	\[
	A \xrightarrow{\ \ \ i_{\infty}\ \ \ } G^{\infty}(\ical, i) \xrightarrow{\ \ \ p_{\infty}\ \ \ } X
	\]
	constructed in \cite[the proof of Theorem 1.3]{origin} and note that $i_{\infty}$ is a sequential relative $\ical$-cell complex. By solving the lifting problem
	\par\indent
	\[
	\begin{tikzcd}
	A \arrow{r}{i_{\infty}} \arrow[swap]{d}{i} & G^{\infty}(\ical, i) \arrow{d}{p_{\infty}}\\
	X \arrow[swap]{r}{1_{X}} \arrow[dashed]{ur} & X,
	\end{tikzcd}
	\]
	we can show that $i$ is a retract of $i_{\infty}$, and hence that $i$ is a $h$-cofibration (see Step 1 and Lemma \ref{basicshcofibr}).
\end{proof}
From Proposition \ref{gimpliesh} and Remark \ref{remhcofibr}(2), we have the implications
\[
\text{$q$-cofibration $\Rightarrow$ $h$-cofibration $\Rightarrow$ $\dcal$-embedding.}
\]
\begin{rem}\label{difference}
\begin{itemize}
\item[{\rm (1)}] For an $h$-cofibration $i: A \longhookrightarrow X$, the canonical map $X \times (0) \cup_{A \times (0)} A \times I \longhookrightarrow X \times I$ need not be a $\dcal$-embedding; consider the $h$-cofibration $\dot{\Delta}^{1} \longhookrightarrow \Delta^{1}$ and see the proof of \cite[Proposition A.2(1)]{origin}.
\item[{\rm (2)}] Topological $h$-cofibrations are defined by the ${HEP}$ (\cite[p. 41]{MayAT}). However, diffeological $h$-cofibrations, introduced in this subsection, are defined by the ${WHEP}$ so that Proposition \ref{gimpliesh} holds true. In fact, since the implications
	\begin{equation*}
	\begin{split}
	&\text{\;\;\;\;\;\;\;\;\;\;\;}i:A\longrightarrow X \text{ satisfies the } HEP \\
	&\iff X\times(0)\underset{A \times (0)}{\cup} A\times I \longhookrightarrow X\times I  \text{ has a retraction}\\
	&\text{\;\;}\Longrightarrow X\times (0) \underset{A\times (0)}{\cup} A\times I \longhookrightarrow X\times I \text{ is a $\dcal$-embedding}
	\end{split}
	\end{equation*}
%	\begin{equation*}
%	\begin{split}
%	&\iff X\times(0)\underset{A \times (0)}{\cup} A\times I \longhookrightarrow X\times I  \text{ has a retraction}\\
%	&\text{\;\;}\Longrightarrow X\times (0) \underset{A\times (0)}{\cup} A\times I \longhookrightarrow X\times I \text{ is a $\dcal$-embedding}
%	\end{split}
%	\end{equation*}
	 hold, even $\dot{\Delta}^{1} \longhookrightarrow \Delta^{1}$ does not satisfy the ${HEP}$ (see Part 1).
\item[{\rm (3)}] I do not know under what conditions the notions of an $h$-cofibration and an ${NDR}$-pair are equivalent or whether there exists a model structure on $\dcal$ whose cofibrations are just $h$-cofibrations (cf. \cite[p. 43 and p. 46]{MayAT}, \cite[p. 340]{MP}).
\end{itemize}
\end{rem}
We need several more results on $h$-cofibrations to prove Theorem \ref{hcofibrancy}.
\begin{lem}\label{factorization}
Every smooth map $f: X \longrightarrow Y$ has a functorial factorization
\[
X \xhookrightarrow[]{\ i\ } Y' \xrightarrow[]{\ p\ } Y
\]
such that $i$ is an $h$-cofibration and $p$ is a $\dcal$-homotopy equivalence.
\end{lem}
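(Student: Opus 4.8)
The statement to prove is the \emph{mapping cylinder factorization} in $\dcal$: every smooth map factors as an $h$-cofibration followed by a $\dcal$-homotopy equivalence. The plan is to imitate the classical topological construction of the mapping cylinder, but carried out entirely in $\dcal$, using the convenient properties of $\dcal$ (Proposition \ref{conven}) together with the criterion for $h$-cofibrations supplied by Lemma \ref{NDR} (${\rm NDR}$-pairs give $h$-cofibrations).

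First I would define the \emph{diffeological mapping cylinder}. For a smooth map $f\colon X\longrightarrow Y$, set
\[
Y' = \mathrm{Cyl}(f) := \bigl(X\times I\bigr)\underset{X\times(1)}{\cup} Y,
\]
the pushout in $\dcal$ of the diagram $X\times I \xleftarrow{\ X\times(1)\ } X \xrightarrow{\ f\ } Y$; this colimit exists by Proposition \ref{conven}(1). The map $i\colon X\longrightarrow Y'$ is the inclusion $x\mapsto [(x,0)]$ at the $0$-end of the cylinder, and $p\colon Y'\longrightarrow Y$ is defined by the universal property of the pushout, sending $[(x,t)]\mapsto f(x)$ and $[y]\mapsto y$; explicitly $p$ is induced by the projection $X\times I\xrightarrow{\mathrm{proj}} X\xrightarrow{f} Y$ and $1_Y$, which agree on the glued copy of $X$. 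Functoriality is automatic since the whole construction is a colimit of a functorial diagram in $f$. It remains to check the two properties.

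For $p$ being a $\dcal$-homotopy equivalence: the map $Y\longhookrightarrow Y'$ (the $1$-end inclusion, or rather the image of $Y$ in the pushout) is a section of $p$, so $p\circ(\text{incl}_Y)=1_Y$ on the nose. The composite $(\text{incl}_Y)\circ p\colon Y'\longrightarrow Y'$ is then $\dcal$-homotopic to $1_{Y'}$ via the \emph{cylinder-collapsing homotopy} $H\colon Y'\times I\longrightarrow Y'$ that slides each point $[(x,t)]$ along the cylinder toward the $Y$-end, i.e. $H([(x,t)],s)=[(x,(1-s)t+s)]$ and $H([y],s)=[y]$. One must verify $H$ is well defined on the pushout (it respects the gluing at $t=1$) and smooth; smoothness follows because $H$ is induced on the colimit $Y'\times I = (X\times I\times I)\cup_{\cdots}(Y\times I)$ (using that $\cdot\times I$ preserves colimits by Proposition \ref{conven}(2)) from manifestly smooth maps on each piece. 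This is routine but is the place where the convenient exponential/product behavior of $\dcal$ is genuinely used.

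The main obstacle is showing $i\colon X\longhookrightarrow Y'$ is an $h$-cofibration. By Lemma \ref{NDR} it suffices to exhibit $(Y',i(X))$ as an ${\rm NDR}$-pair: a smooth $u\colon Y'\longrightarrow I$ with $u^{-1}(0)=i(X)$ and a $\dcal$-homotopy $\varphi\colon Y'\times I\longrightarrow Y'$ relative to $i(X)$ with $\varphi_0=1_{Y'}$ and $\varphi_1(y')\in i(X)$ whenever $u(y')<1$. I would take $u$ to be the ``time coordinate'' sending $[(x,t)]\mapsto t$ and $[y]\mapsto 1$ (well defined and smooth on the pushout, with $u^{-1}(0)=i(X)$), and $\varphi$ to be the homotopy that pushes the cylinder back toward the $0$-end, $\varphi([(x,t)],s)=[(x,(1-s)t)]$ and $\varphi([y],s)=[y]$. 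The delicate points, which I expect to require the careful colimit arguments of \cite{origin}, are (i) verifying that $u$ and $\varphi$ are genuinely smooth as maps out of the $\dcal$-pushout and out of $Y'\times I$ — here one invokes that $\cdot\times I$ commutes with the relevant colimits (Proposition \ref{conven}(2)) and that smoothness can be checked on each summand of the defining colimit — and (ii) confirming $\varphi$ is stationary on $i(X)$ and carries $\{u<1\}$ into $i(X)$, which holds since $u([y])=1$ excludes the $Y$-part and $\varphi_1$ collapses the cylinder coordinate to $0$. Once the ${\rm NDR}$-pair data is in place, Lemma \ref{NDR} immediately yields that $i$ is an $h$-cofibration, completing the factorization.
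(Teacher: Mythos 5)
Your overall route is exactly the paper's: form the diffeological mapping cylinder $Mf=(X\times I)\cup_{X\times(1)}Y$ as a pushout, take $i$ to be the $0$-end inclusion and $p$ induced by $f\circ \mathrm{proj}+1_Y$, and reduce the $h$-cofibration claim to exhibiting $(Mf,i(X))$ as an ${\rm NDR}$-pair via Lemma \ref{NDR} (the paper states this ${\rm NDR}$ verification as an observation without giving the data). Your verification that $p$ is a $\dcal$-homotopy equivalence is correct as written: the collapsing homotopy $H([(x,t)],s)=[(x,(1-s)t+s)]$ fixes the glued end $t=1$, hence descends to the pushout and is smooth by the colimit description of $Y'\times I$ coming from Proposition \ref{conven}(2).

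However, your explicit ${\rm NDR}$ data fails: the deformation $\varphi([(x,t)],s)=[(x,(1-s)t)]$ is \emph{not well-defined} on the pushout. At $t=1$ one has $[(x,1)]=[f(x)]$, and your formula gives $\varphi([(x,1)],s)=[(x,1-s)]$ on the cylinder piece but $\varphi([f(x)],s)=[f(x)]=[(x,1)]$ on the $Y$ piece; these disagree for $s>0$, so no map (even set-theoretically) out of $Y'\times I$ is induced. The point you dismissed as automatic in your item (ii) is precisely where the construction breaks. The repair is standard but forces you to change $u$ as well, since the deformation must be stationary near the glued end: choose nondecreasing smooth functions $\rho,\mu\colon I\to I$ with $\rho\equiv 0$ on $[0,\tfrac12]$, $\rho(1)=1$, and $\mu^{-1}(0)=\{0\}$, $\mu\equiv 1$ on $[\tfrac12,1]$, and set $\varphi([(x,t)],s)=[(x,(1-s)t+s\rho(t))]$, $\varphi([y],s)=[y]$, together with $u([(x,t)])=\mu(t)$, $u([y])=1$. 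Now $\varphi$ fixes $t=1$ (so it descends and is smooth on each piece of the pushout), is relative to $i(X)$, $u^{-1}(0)=i(X)$, and $u(y')<1$ forces $t<\tfrac12$, whence $\varphi_1(y')=[(x,\rho(t))]=[(x,0)]\in i(X)$. Note that your original $u([(x,t)])=t$ is incompatible with \emph{any} deformation fixing the $1$-end, since points with $\tfrac12\le t<1$ have $u<1$ yet cannot be pushed into $i(X)$ by such a homotopy; so both pieces of the ${\rm NDR}$ data need the cutoff. Smooth cutoffs of exactly this kind are used elsewhere in the paper (cf.\ Step 1 of the proof of Proposition \ref{gimpliesh}).
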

\begin{proof}
	We define the mapping cylinder $Mf$ by the pushout diagram in $\dcal$
	\[
	\begin{tikzcd}
	X \arrow{r}{f} \arrow[swap, hook']{d}{i_{1}} & Y \arrow[hook']{d}\\
	X \times I \arrow{r} & Mf,
	\end{tikzcd}
	\]
	where the map $i_{1}$ is defined by $i_{1} (x) = (x, 1)$.
 Consider the factorization
 \[
 \begin{tikzcd}
 X \arrow[hook]{r}{i} \arrow[swap]{rd}{f} & Mf \arrow{d}{p}\\
              &  Y,
 \end{tikzcd}
 \]
 where the maps $i$ and $p$ are defined to be the composite $ X \xhookrightarrow[]{\ i_{0} \ } X \times I \longrightarrow Mf$ and the sum $Mf = X \times I \cup_{X} Y \xrightarrow{f \circ proj + 1_{Y}} Y,$ respectively. Then, we can observe that $(Mf, X)$ is an ${\rm NDR}$-pair, and hence that this factorization is the desired one (see Lemma \ref{NDR}).
  %(Alternatively, we can see that $i$ is an $h$-cofibration from Proposition \ref{gimpliesh} and Lemma \ref{basicshcofibr}.)
\end{proof}
For a diffeological space $A$, let $A/\dcal$ denote the category of diffeological spaces under $A$. Then, the notion of a $\dcal$-homotopy under $A$ and that of a $\dcal$-homotopy equivalence under $A$ are defined in the obvious manner (cf. \cite[p. 44]{MayAT}). Since $A/\dcal$ is a simplicial category (see Section 7.1), these homotopical notions can be also defined as $A/\dcal$-homotopical notions (see Section 4.3).
\begin{lem}\label{homotopyunder}
Let $i: A \longrightarrow X$ and $j: A \longrightarrow Y$ be $h$-cofibrations and let $f:X \longrightarrow Y$ be a map under $A$. If $f$ is a $\dcal$-homotopy equivalence, then $f$ is a $\dcal$-homotopy equivalence under $A$.
\end{lem}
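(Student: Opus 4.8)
The plan is to adapt the classical cofiber homotopy equivalence argument to $\dcal$, using the weak homotopy extension property (WHEP) of Definition \ref{hcofibr} in place of the ordinary homotopy extension property, together with the smooth composition of $\dcal$-homotopies from Section 1.4 throughout. By Remark \ref{remhcofibr}(2) I regard $A$ as a diffeological subspace of both $X$ and $Y$. First I would choose a free $\dcal$-homotopy inverse $g\colon Y\to X$ of $f$, so $gf\simeq_{\dcal}1_X$ and $fg\simeq_{\dcal}1_Y$, and then correct $g$ to a map under $A$: since $fi=j$ and $gf\simeq_{\dcal}1_X$, the composite $gj=gfi$ is $\dcal$-homotopic to $i$, and after reparametrising this $\dcal$-homotopy $A\times I\to X$ to be constant near $0$ (Remark \ref{remhcofibr}(1)) I feed it, together with $g$ on $Y\times(0)$, into the WHEP of $j$. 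This produces $g'\simeq_{\dcal}g$ with $g'j=i$; thus $g'$ is again a free $\dcal$-homotopy inverse of $f$, but now a map under $A$. Consequently $g'f\colon X\to X$ and $fg'\colon Y\to Y$ are maps under $A$ that are freely $\dcal$-homotopic to the respective identities.

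The technical core is a self-map lemma: if $k\colon A\to X$ is an $h$-cofibration and $\phi\colon X\to X$ is a map under $A$ with $\phi\simeq_{\dcal}1_X$ freely, then $\phi$ is a $\dcal$-homotopy equivalence under $A$. To prove it, let $\Phi\colon X\times I\to X$ be a free $\dcal$-homotopy from $1_X$ to $\phi$ and let $a:=\Phi\circ(k\times 1)\colon A\times I\to X$ be its restriction, a self-$\dcal$-homotopy of $k$ (so $a_0=a_1=k$, using $\phi k=k$). Extending the reversed homotopy $\bar a$ (normalised to be constant near $0$) along $k$ by the WHEP gives a free $\dcal$-homotopy $\Psi\colon X\times I\to X$ from $1_X$ to $\psi:=\Psi_1$, which is again a map under $A$ because $\Psi_1\circ k=\bar a_1=k$. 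I would then exhibit $\psi$ as a two-sided $\dcal$-homotopy inverse of $\phi$ under $A$: concatenating $\Psi$ with the homotopy $t\mapsto\Phi_t\circ\psi$ produces a free $\dcal$-homotopy $1_X\simeq_{\dcal}\phi\psi$ whose restriction over $A$ is the loop $\bar a * a$, while concatenating $\Phi$ with $t\mapsto\Psi_t\circ\phi$ produces $1_X\simeq_{\dcal}\psi\phi$ with boundary $a*\bar a$; both boundary loops are $\dcal$-homotopic rel endpoints to the constant homotopy at $k$.

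The hard part will be the straightening step: upgrading these two free $\dcal$-homotopies, whose boundary self-homotopies over $A$ are null rel endpoints, to $\dcal$-homotopies rel $A$ (i.e. under-$A$ homotopies $\phi\psi\simeq_{\dcal}1_X$ and $\psi\phi\simeq_{\dcal}1_X$). This is where the $h$-cofibration hypothesis on $k$ is used decisively a second time: one must run a prism-type extension over the pair $(X\times I,\,A\times I)$ and absorb the null-homotopy of the boundary, all while respecting the WHEP constraint that the extended homotopies be constant near $0$ and keeping every concatenation and reversal smooth via cut-off functions (Section 1.4) and the endpoint normalisations of Remark \ref{remhcofibr}(1). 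Managing this bookkeeping—so that the straightened homotopies are genuinely under $A$ and smooth—is the main obstacle.

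Granting the self-map lemma, I would finish as follows. Applying it to $g'f$ (with the $h$-cofibration $i$) and to $fg'$ (with the $h$-cofibration $j$) yields maps $u\colon X\to X$ and $w\colon Y\to Y$ under $A$ with $u(g'f)\simeq_{\dcal}1_X$ and $(fg')w\simeq_{\dcal}1_Y$ rel $A$. Then $ug'\colon Y\to X$ and $g'w\colon Y\to X$ are maps under $A$ (since $ui=i$, $wj=j$, and $g'j=i$) giving a left and a right $\dcal$-homotopy inverse of $f$ under $A$, namely $(ug')f=u(g'f)\simeq_{\dcal}1_X$ and $f(g'w)=(fg')w\simeq_{\dcal}1_Y$ rel $A$. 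The standard manipulation $ug'\simeq_{\dcal}(ug')f(g'w)\simeq_{\dcal}g'w$ rel $A$—legitimate because composition of rel-$A$ $\dcal$-homotopies is available in the simplicial category $A/\dcal$—shows that $g'w$ is a two-sided $\dcal$-homotopy inverse of $f$ under $A$, so $f$ is a $\dcal$-homotopy equivalence under $A$, as required.
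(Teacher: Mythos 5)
Your proposal is correct and is essentially the paper's own proof: the paper simply cites the classical cofiber-homotopy-equivalence argument of May \cite[p.~44]{MayAT} together with Remark~\ref{remhcofibr}(1), and your plan reconstructs exactly that argument --- correct $g$ to a map under $A$ via the WHEP of $j$, prove the self-map lemma by extending the reversed boundary homotopy and straightening the free homotopy over the $h$-cofibration $A\times I\longhookrightarrow X\times I$ (Lemma~\ref{basicshcofibr}(1)), then conclude formally --- with the endpoint normalisations of Remark~\ref{remhcofibr}(1) and the smooth concatenations of Section~1.4 supplying precisely the WHEP adaptations the paper has in mind.
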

\begin{proof}
The result is a diffeological version of Proposition in \cite[p. 44]{MayAT} and can be proved by a similar argument; see Remark \ref{remhcofibr}(1).
\end{proof}
\begin{prop}\label{homotopypair}
	Assume given a commutative diagram in $\dcal$
	\[
	\begin{tikzcd}
	A \arrow[hook]{r}{i} \arrow[swap]{d}{d} & X \arrow{d}{f} \\
	B \arrow[hook,swap]{r}{j} & Y
	\end{tikzcd}
	\]
	in which $i$ and $j$ are $h$-cofibrations and $d$ and $f$ are $\dcal$-homotopy equivalences. Then, $(f, d): (X, A) \longrightarrow (Y, B)$ is a $\dcal$-homotopy equivalence of pairs.
\end{prop}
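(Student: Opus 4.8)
The plan is to produce a homotopy inverse of pairs by first correcting naive homotopy inverses of $f$ and $d$ with the weak homotopy extension property, and then to verify that the two round-trip composites are $\dcal$-homotopy equivalences of pairs by straightening each into a map under the subobject and invoking Lemma \ref{homotopyunder}. The equivalence of $(f,d)$ itself is then extracted by a purely formal split-mono/split-epi argument in the homotopy category of pairs, which sidesteps the delicate point of realizing round-trip homotopies rel the subobject.

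First I would choose $\dcal$-homotopy inverses $g$ of $f$ and $e$ of $d$, together with homotopies $gf\simeq_{\dcal}1_X$, $fg\simeq_{\dcal}1_Y$, $ed\simeq_{\dcal}1_A$, $de\simeq_{\dcal}1_B$. Using the commutativity $f i=j d$ I would check that $gj\simeq_{\dcal}ie$ as maps $B\to X$: indeed $f(gj)=(fg)j\simeq_{\dcal}j$ while $f(ie)=(fi)e=(jd)e=j(de)\simeq_{\dcal}j$, so $f(gj)\simeq_{\dcal}f(ie)$; since $f$ is a $\dcal$-homotopy equivalence, post-composition with $f$ is injective on $\dcal$-homotopy classes, whence $gj\simeq_{\dcal}ie$. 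Then I would feed the homotopy $gj\simeq_{\dcal}ie$, made constant near $0$ by Remark \ref{remhcofibr}(1), into the WHEP for the $h$-cofibration $j\colon B\to Y$ (Definition \ref{hcofibr}), starting from the map $g$ on $Y$. The resulting extension has an endpoint $g'\colon Y\to X$ with $g'j=ie$ exactly and $g'\simeq_{\dcal}g$. Thus $(g',e)\colon(Y,B)\to(X,A)$ is a genuine map of pairs whose components are $\dcal$-homotopy inverses of $f$ and $d$.

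Next I would analyze the composite $(g',e)\circ(f,d)=(g'f,ed)\colon(X,A)\to(X,A)$. Its restriction to $A$ is $g'\circ fi=g'\circ jd=(g'j)d=ie d=i\circ(ed)$, so it is a self-map of the pair whose subspace component is $ed\simeq_{\dcal}1_A$ and whose total component is $g'f\simeq_{\dcal}1_X$. Choosing a homotopy $K\colon A\times I\to A$ from $ed$ to $1_A$ and extending $i\circ K$ over $X$ by the WHEP for the $h$-cofibration $i\colon A\to X$, starting from $g'f$, I obtain a pair-homotopy $(\Phi,K)$ from $(g'f,ed)$ to $(\phi,1_A)$, where $\phi:=\Phi_1$ satisfies $\phi\circ i=i$ and $\phi\simeq_{\dcal}1_X$. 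Because $\phi$ is a map under $A$ and a $\dcal$-homotopy equivalence, Lemma \ref{homotopyunder} makes it a $\dcal$-homotopy equivalence under $A$; an under-$A$ inverse $\psi$ then yields a pair map $(\psi,1_A)$ with $(\psi,1_A)(\phi,1_A)\simeq_{\dcal}(1_X,1_A)$ and $(\phi,1_A)(\psi,1_A)\simeq_{\dcal}(1_X,1_A)$ (the under-$A$ homotopies are constant on $A$, hence are pair-homotopies). So $(\phi,1_A)$, and therefore $(g'f,ed)$, is a $\dcal$-homotopy equivalence of pairs. The mirror-image argument, carried out inside $(Y,B)$ with the $h$-cofibration $j$ and Lemma \ref{homotopyunder}, shows that $(f,d)\circ(g',e)=(fg',de)$ is likewise a $\dcal$-homotopy equivalence of pairs.

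Finally I would conclude formally: in the homotopy category of pairs the classes of $(g',e)(f,d)$ and of $(f,d)(g',e)$ are isomorphisms, so $[(f,d)]$ is simultaneously a split monomorphism and a split epimorphism and hence an isomorphism, i.e. $(f,d)$ is a $\dcal$-homotopy equivalence of pairs. The main obstacle, and the reason for routing through both composites, is that one cannot in general upgrade the space-level homotopy $g'f\simeq_{\dcal}1_X$ to a homotopy rel $A$, so a direct attempt to show that the single inverse pair $(g',e)$ has both round-trips pair-homotopic to the identities breaks down; straightening each composite into a map under the subobject, applying Lemma \ref{homotopyunder}, and then invoking the split-mono/split-epi principle is precisely what circumvents this difficulty.
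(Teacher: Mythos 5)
Your proposal is correct, but it takes a genuinely different route from the paper's. The paper follows May's argument \cite[pp.~45--46]{MayAT}: after straightening $g$ so that $gj=ie$, it extends the \emph{given} homotopy $h\colon ed\simeq_{\dcal}1_A$ to a homotopy $H\colon gf\simeq_{\dcal}1_X$ with $H\circ(i\times 1)=i\circ h$, by solving a two-parameter extension problem over $X\times I\times I$ from $X\times I\times(0)\cup_{A\times I\times(0)}A\times I\times I$, using that $i\times 1\colon A\times I\longhookrightarrow X\times I$ is an $h$-cofibration (Lemma \ref{basicshcofibr}(1)) and then summing three faces of the solution and reparametrizing (Remark \ref{remhcofibr}(1)); this exhibits $(g,e)$ as a one-sided pair inverse of $(f,d)$, and applying the same claim to $(g,e)$ plus the left-inverse/right-inverse fact finishes the proof. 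You instead avoid the two-parameter extension entirely: after the same straightening $g\rightsquigarrow g'$ with $g'j=ie$ (your verification $gj\simeq_{\dcal}ie$ via injectivity of $f_\sharp$ on homotopy classes is a legitimate variant of May's chain $gj\simeq gjde=gfie\simeq ie$), you deform each round-trip composite, by a single one-parameter WHEP application, into a self-map under the subobject, invoke Lemma \ref{homotopyunder} to make it a pair equivalence, and conclude by the split-mono/split-epi principle. Each route has its merits: the paper's is self-contained modulo Lemma \ref{basicshcofibr}(1) and directly produces a pair homotopy extending $h$, whereas yours modularizes the delicate double-homotopy step into Lemma \ref{homotopyunder} (which precedes the proposition and is independent of it, so there is no circularity) at the cost of two invocations of that lemma and some extra formal bookkeeping; note, though, that since the paper proves Lemma \ref{homotopyunder} by the argument of \cite[p.~44]{MayAT}, both proofs ultimately rest on the same underlying double-interval technique. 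One small point of hygiene that you handle only implicitly in the second WHEP application: the homotopy $i\circ K$ must also be made constant near $0$ via Remark \ref{remhcofibr}(1) before Definition \ref{hcofibr} applies, just as you did for the homotopy $gj\simeq_{\dcal}ie$.
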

\begin{proof}
This result is a diffeological version of Proposition in \cite[p. 45]{MayAT}. Recall the fact that if a morphism $\phi$ of a category $\ccal$ has a left inverse $\alpha$ and a right inverse $\beta$, then $\alpha = \beta$ and hence $\phi$ is an isomorphism. Then, we have only to show that a given $\dcal$-homotopy inverse $e$ of $d$ and a given $\dcal$-homotopy $h$ connecting $e \circ d$ to $1_{A}$ extend to a $\dcal$-homotopy inverse $g$ of $f$ and a $\dcal$-homotopy $H$ connecting $g \circ f$ to $1_{X}$ (recall Remark \ref{remhcofibr}(1)).
\par\indent
We can choose a $\dcal$-homotopy inverse $g$ of $f$ as in \cite[p. 46]{MayAT}. Choose $\dcal$-homotopies $\gamma: X \times I \times (0) \longrightarrow X$ and $\Gamma: A \times I \times I \longrightarrow X$ as in \cite[p. 46]{MayAT} and consider the extension problem
\[
\begin{tikzcd}
 X \times I \times (0) \underset{A\times I \times (0)}{\cup} A \times I \times I  \arrow[hook']{d} \arrow{rr}{\gamma + \Gamma} && X\\
 X \times I \times I. \arrow[dashed, yshift=-0.7ex]{rru} &&
\end{tikzcd}
\]
(Note that unlike in \cite[p. 46]{MayAT}, the domain of $\gamma$ is $X\times I \times (0)$.) Since $i \times 1 : A \times I \longhookrightarrow X \times I$ is an $h$-cofibration (Lemma \ref{basicshcofibr}(1)), we have a solution $\Lambda$ of the above extension problem. Then, the sum $\Sigma$ of the three $\dcal$-homotopies $\Lambda|_{X\times (0) \times I}, \Lambda|_{X \times I \times (1)}$ and $\Lambda|_{X \times (1) \times I}$ is a $\dcal$-homotopy connecting $g \circ f$ to $1_{X}$. We thus obtain the desired $\dcal$-homotopy $H$ by reparametrizing $\Sigma$ (see Remark \ref{remhcofibr}(1)).
\end{proof}
\begin{lem}\label{homotopypushout}
	Assume given a commutative diagram in $\dcal$
	\[
	\begin{tikzcd}
	A_{1}\arrow[swap]{d}{h_{1}} & A_{0} \arrow[swap,hook']{l}{f_{1}} \arrow{r}{f_{2}} \arrow[swap]{d}{h_{0}} & A_{2} \arrow[swap]{d}{h_{2}}\\
	 B_{1} & B_{0} \arrow[hook']{l}{g_{1}} \arrow[swap]{r}{g_{2}}  & B_{2}
	\end{tikzcd}
	\]
	in which $f_{1}$ and $g_{1}$ are $h$-cofibrations. If $h_{0}$, $h_{1}$, and $h_{2}$ are $\dcal$-homotopy equivalences, then so is their pushout
	\[
	A_{1} {\cup}_{A_{0}} A_{2} \longrightarrow B_{1} {\cup}_{B_{0}} B_{2}.
	\]
\end{lem}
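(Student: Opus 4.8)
The plan is to prove this diffeological gluing lemma by replacing the strict pushouts with double mapping cylinders, for which a homotopy-invariance statement is available, and then transferring the conclusion back along $h$-cofibrations. Write $P = A_1 \cup_{A_0} A_2$ and $Q = B_1 \cup_{B_0} B_2$ for the two pushouts and $h: P \to Q$ for the induced map. Let $Z_A$ denote the double mapping cylinder of the top span, i.e. the space obtained from $A_1 \sqcup (A_0 \times I) \sqcup A_2$ by gluing $A_0 \times \{0\}$ to $A_1$ via $f_1$ and $A_0 \times \{1\}$ to $A_2$ via $f_2$; define $Z_B$ analogously. The maps $h_0, h_1, h_2$ induce $H: Z_A \to Z_B$, and the cylinder-collapse maps $q_A: Z_A \to P$ and $q_B: Z_B \to Q$ fit into a commuting square $q_B \circ H = h \circ q_A$. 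Once I show that $q_A$, $q_B$, and $H$ are all $\dcal$-homotopy equivalences, the two-out-of-three property (\cite[Lemma 9.4]{origin}) will give that $h$ is a $\dcal$-homotopy equivalence, as desired.

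First I would isolate a sub-lemma handling the two collapse maps: if $a: D \to C$ is any smooth map, $u: D \to X$ and $v: D \to Y$ are $h$-cofibrations, and $\phi: X \to Y$ satisfies $\phi u = v$ and is a $\dcal$-homotopy equivalence, then the induced map $C \cup_D X \to C \cup_D Y$ is a $\dcal$-homotopy equivalence. Indeed, since $\phi$ is a map under $D$ between $h$-cofibrations, Lemma \ref{homotopyunder} upgrades it to a $\dcal$-homotopy equivalence under $D$; its inverse and the connecting homotopies are then constant on $D$, so they glue with the constant data on $C$ along $D \times I$. This gluing is legitimate because $(C \cup_D X) \times I = (C \times I) \cup_{D \times I}(X \times I)$ by the cartesian closedness of $\dcal$ (Proposition \ref{conven}(2)). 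To apply this to $q_A$, I would observe that $Z_A = A_2 \cup_{A_0} M(f_1)$, where $A_0 \xhookrightarrow{\,j\,} M(f_1) \xrightarrow{\,r\,} A_1$ is the mapping-cylinder factorization of $f_1$ from Lemma \ref{factorization}, with $j$ an $h$-cofibration, $r$ a $\dcal$-homotopy equivalence, and $r j = f_1$. Taking $C = A_2$, $D = A_0$, $u = j$, $v = f_1$ (an $h$-cofibration by hypothesis) and $\phi = r$, the sub-lemma yields that $q_A: A_2 \cup_{A_0} M(f_1) \to A_2 \cup_{A_0} A_1 = P$ is a $\dcal$-homotopy equivalence; the same argument, using that $g_1$ is an $h$-cofibration, handles $q_B$.

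It then remains to prove that $H: Z_A \to Z_B$ is a $\dcal$-homotopy equivalence, i.e. the homotopy invariance of the double mapping cylinder. Here I would use the single-pushout presentation $Z_A = (A_0 \times I) \cup_{A_0 \times \partial I}(A_1 \sqcup A_2)$, in which the pushout is taken along the $h$-cofibration $A_0 \times \partial I \hookrightarrow A_0 \times I$ (an NDR-pair by Lemmas \ref{NDR} and \ref{basicshcofibr}(1), since $\dot{\Delta}^1 \hookrightarrow \Delta^1$ is a $q$-cofibration) with attaching map $f_1 \sqcup f_2$; likewise for $Z_B$. The map $H$ is the pushout of the evident map of such diagrams, whose corner maps $h_0 \times \mathrm{id}_I$, $h_0 \times \mathrm{id}_{\partial I}$, and $h_1 \sqcup h_2$ are all $\dcal$-homotopy equivalences. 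I would construct a homotopy inverse of $H$ explicitly: choosing $\dcal$-homotopy inverses of $h_0, h_1, h_2$ produces a candidate inverse that fails to respect the gluings only up to the homotopies witnessing $h_1 f_1 \simeq_{\dcal} g_1 h_0$ and $h_2 f_2 \simeq_{\dcal} g_2 h_0$, and these discrepancies can be absorbed by interpolating along the cylinder coordinate of $A_0 \times I$. Smoothness of the resulting maps and homotopies is guaranteed by Proposition \ref{conven}(2) together with the smooth-composition convention for $\dcal$-homotopies (Section 1.4), while the $h$-cofibration structure of the cylinder ends (via the WHEP, Remark \ref{remhcofibr}(1)) lets partially defined homotopies extend. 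This is the diffeological analogue of the standard topological homotopy invariance of the double mapping cylinder, and Proposition \ref{homotopypair}, applied to the pair map $(A_0 \times I, A_0 \times \partial I) \to (B_0 \times I, B_0 \times \partial I)$, provides the pair-level equivalence that organizes the construction.

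The main obstacle is exactly this last step: the reductions in the first two paragraphs are essentially formal, but the homotopy invariance of the double mapping cylinder cannot be reduced to the sub-lemma one vertical map at a time, because changing any single $h_i$ exposes the non-cofibration legs $f_2, g_2$. It genuinely requires the simultaneous, cylinder-coordinate interpolation of the homotopy discrepancies, and the delicate point in the diffeological setting — where $\dcal$ is not a simplicial model category (Remark \ref{svsm}) and homotopies must be composed smoothly — is to verify that every interpolated homotopy is smooth and that the separate homotopies agree on the overlaps $A_0 \times \partial I$, so that they assemble into well-defined morphisms of $\dcal$. I would manage this using Proposition \ref{conven}(2), Lemma \ref{basicshcofibr}, and the WHEP.
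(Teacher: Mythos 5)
Your formal reductions are sound: the sub-lemma in your second paragraph is correct (Lemma \ref{homotopyunder} plus the identity $(C \cup_D X) \times I = (C \times I) \cup_{D \times I} (X \times I)$ from Proposition \ref{conven}(2) does let you glue the under-$D$ homotopy data with constant data on $C$), and it legitimately handles $q_A$ and $q_B$, since there $v = f_1$, resp.\ $v = g_1$, is an $h$-cofibration by hypothesis. The genuine gap is your third step. By cylinder-izing the \emph{cofibration} leg $f_1$ you have removed only the easy part of the problem: the comparison $H: Z_A \to Z_B$ still carries the untouched non-cofibration legs $f_2, g_2$, so it is essentially equivalent in difficulty to the lemma itself. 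Your treatment of it is a sketch: constructing the candidate inverse $K$ by interpolating the discrepancies $k_1 g_1 \simeq_{\dcal} f_1 k_0$ and $k_2 g_2 \simeq_{\dcal} f_2 k_0$ along the cylinder coordinate is the easy half (note the square commutes strictly, so the discrepancies live at the level of the chosen inverses $k_i$, not of the $h_i$ as you wrote), but the entire classical content of the gluing theorem is the two-sided verification $KH \simeq_{\dcal} 1_{Z_A}$ and $HK \simeq_{\dcal} 1_{Z_B}$, which requires two-parameter families of homotopies and is nowhere addressed. Invoking Proposition \ref{homotopypair} for the pair map $(A_0 \times I, A_0 \times \partial I) \to (B_0 \times I, B_0 \times \partial I)$ cannot organize this, because that pair map never interacts with the attaching maps $f_1 \sqcup f_2$ and $g_1 \sqcup g_2$, which are exactly where the difficulty sits; and appealing to ``the standard topological homotopy invariance of the double mapping cylinder'' is close to circular, since in the topological literature that statement is an equivalent form of (and is usually derived from) the very gluing lemma you are proving.

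There is a fix inside your own framework, and it reveals that your claim that the remaining step ``cannot be reduced \dots one vertical map at a time'' rests on choosing the wrong presentation: $Z_A$ is also the pushout $A_1 \cup_{A_0} M(f_2)$, whose \emph{two} legs --- $f_1$ (an $h$-cofibration by hypothesis) and the free-end inclusion $A_0 \hookrightarrow M(f_2)$ (an $h$-cofibration by Lemmas \ref{factorization} and \ref{NDR}) --- are both $h$-cofibrations, and likewise $Z_B = B_1 \cup_{B_0} M(g_2)$; the verticals $h_1$, $h_0$, and $M(h_2, h_0): M(f_2) \to M(g_2)$ are $\dcal$-homotopy equivalences (the last by two-out-of-three against the natural collapse squares). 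So $H$ follows from the case in which both legs are $h$-cofibrations, which in turn follows from Proposition \ref{homotopypair} \emph{and its proof}: run the extension argument of that proof on the two sides with the \emph{same} inverse and homotopy data for $h_0$, so that the extended inverses and homotopies agree on the middle and glue. This is essentially the paper's route, with one difference of bookkeeping: the paper cylinder-izes $f_2$ directly (so both legs of $A_1 \cup_{A_0} Mf_2$ become $h$-cofibrations, and the both-cofibrations case applies), and then disposes of the single remaining collapse map $1_{A_1} \cup_{A_0} p : A_1 \cup_{A_0} Mf_2 \to A_1 \cup_{A_0} A_2$ by a short direct argument that your proposal lacks and your sub-lemma cannot supply (it would need $v = f_2$ to be an $h$-cofibration): the WHEP for the $h$-cofibration $Mf_2 \hookrightarrow A_1 \cup_{A_0} Mf_2$ (Lemma \ref{basicshcofibr}(2), Remark \ref{remhcofibr}(1)) extends the reparametrized cylinder deformation to a homotopy $k$, and since $p: Mf_2 \to A_2$ has a canonical section, $1_{A_1} \cup_{A_0} p$ is a $\dcal$-quotient map, so $k_1$ factors through it to produce a map $j$ that the descended homotopies exhibit as a two-sided $\dcal$-homotopy inverse. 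That quotient-map trick is the key idea you are missing; adopting either it or the alternative presentation of $Z_A$ above closes the gap without any double-mapping-cylinder invariance theorem.
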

\begin{proof}
	From Proposition \ref{homotopypair} (and its proof), we see that if $f_{2}$ and $g_{2}$ are also $h$-cofibrations, then the result holds. Thus, we have only to show that the map $1_{A_1} \cup_{A_0} p:A_1 \cup_{A_0} A'_2 \longrightarrow A_1 \cup_{A_0} A_2$ induced by the commutative diagram
		\[
		\begin{tikzcd}
		A_{1}\arrow[swap]{d}{1_{A_{1}}} & A_{0} \arrow[swap,hook']{l}{f_{1}} \arrow[hook]{r}{i} \arrow[swap]{d}{1_{A_{0}}} & A'_{2} \arrow{d}{p}\\
		A_{1} & A_{0} \arrow[hook']{l}{f_{1}} \arrow[swap]{r}{f_{2}}  & A_{2}
		\end{tikzcd}
		\]
		is a $\dcal$-homotopy equivalence, where $A'_{2} = Mf_{2}$ and the factorization $A_{0} \xhookrightarrow[]{\ i_{1}\ } A'_{2} \xrightarrow{\ p \ } A_{2}$ of $f_{2}$ is the one constructed in the proof of Lemma \ref{factorization}.
\par\indent
Note that $A'_{2} \longhookrightarrow A_{1} {\cup}_{A_{0}} A'_{2}$ is an $h$-cofibration (Lemma \ref{basicshcofibr}(2)) and consider the extension problem in $\dcal$
\[
\begin{tikzcd}
(A_{1} {\cup}_{A_{0}} A'_{2}) \times (0) \cup_{A'_{2} \times (0)} A'_{2} \times I \arrow{rr}{1_{A_{1} {\cup}_{A_{0}} A'_{2}} + h} \arrow[hook']{d} & & A_{1} {\cup}_{A_{0}} A'_{2}, \\
(A_{1} {\cup}_{A_{0}} A'_{2}) \times I \arrow[swap,dashed]{rru}{k} & &
\end{tikzcd}
\]
where $h$ is a reparametrization of the canonical deformation of $A'_{2}$ onto $A_{2}$ such that $h_t=h_0$ for sufficiently small $t$ (see Remark \ref{remhcofibr}(1)). Then, we have a solution $k$ of the extension problem.
\par\indent
Note that since $p:A_{2}'\longrightarrow A_2$ has a canonical splitting, $p:A'_2 \longrightarrow A_2$, and hence $1_{A_1}\cup_{A_0}p: A_1 \cup_{A_0} A_2'\longrightarrow A_1\cup_{A_0} A_2$ is a $\dcal$-quotient map. Thus, we see that $k_{1} = k(\cdot, 1)$ is factored as
\[
\begin{tikzcd}
A_{1} {\cup}_{A_{0}} A'_{2} \arrow{r}{k_{1}} \arrow[swap]{d}{1_{A_{1}} {\cup}_{A_{0}} p} & A_{1} {\cup}_{A_{0}} A'_{2}\\
A_{1} {\cup}_{A_{0}} A_{2} \arrow[swap]{ru}{j} &
\end{tikzcd}
\]
in $\dcal$, and hence that $j \circ (1_{A_{1}} {\cup}_{A_{0}} p) \simeq_\dcal 1_{A_{1} {\cup}_{A_{0}} A'_{2}}$. Using \cite[Lemma 2.5]{origin}, we also see that the $\dcal$-homotopy $(1_{A_{1}} {\cup}_{A_{0}} p) \circ k$ is factored as
\[
\begin{tikzcd}
(A_{1} {\cup}_{A_{0}} A'_{2}) \times I \arrow{r}{k} \arrow[swap]{d}{(1_{A_{1}} {\cup}_{A_{0}} p) \times 1} & A_{1} {\cup}_{A_{0}} A'_{2} \arrow{d}{1_{A_{1}} {\cup}_{A_{0}} p}\\
(A_{1} {\cup}_{A_{0}} A_{2}) \times I \arrow{r}{\bar{k}} & A_{1} {\cup}_{A_{0}} A_{2},
\end{tikzcd}
\]
which shows that $(1_{A_{1}} {\cup}_{A_{0}} p) \circ j \simeq_\dcal 1_{A_{1} {\cup}_{A_{0}} A_{2}}$.
\end{proof}
\begin{lem}\label{homotopyseqcolim} Assume given a commutative diagram
	\[
	\begin{tikzcd}
	X_{0} \arrow[hook]{r}{i_{1}} \arrow[swap]{d}{f_{0}} & X_{1} \arrow[hook]{r}{i_{2}} \arrow[swap]{d}{f_{1}} & X_{2} \arrow[hook]{r}{i_{3}} \arrow[swap]{d}{f_{2}} & \cdots\\
	Y_{0} \arrow[hook,swap]{r}{j_{1}} & Y_{1} \arrow[hook,swap]{r}{j_{2}} & Y_{2} \arrow[hook,swap]{r}{j_{3}} & \cdots
	\end{tikzcd}
	\]
		in which the $i_{k}$ and $j_{k}$ are $h$-cofibrations. If the maps $f_{k}$ are $\dcal$-homotopy equivalences, then so is their colimit
	\[
	\lim\limits_{\rightarrow} \ f_{k} : \lim\limits_{\rightarrow} X_{k} \longrightarrow \lim\limits_{\rightarrow} Y_{k}.
	\]
\end{lem}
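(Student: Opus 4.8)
The plan is to reduce to the case in which every $f_k$ is itself an $h$-cofibration, and then to produce a homotopy inverse to $\colim_k f_k$ by an inductive splicing argument driven by the weak homotopy extension property. For the reduction I would replace each $Y_k$ by the mapping cylinder $Mf_k$ of Lemma \ref{factorization}. By functoriality of that factorization, the commutative squares of the ladder induce maps $Mf_k\to Mf_{k+1}$, and these are again $h$-cofibrations (each is assembled from the $h$-cofibrations $i_{k+1}\times 1$ and $j_{k+1}$ via Lemma \ref{basicshcofibr}); the inclusions $\iota_k:X_k\hookrightarrow Mf_k$ are $h$-cofibrations and the projections $p_k:Mf_k\to Y_k$ are deformation retractions, all natural in $k$. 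Since $-\times I$ preserves colimits in the cartesian closed category $\dcal$ (Proposition \ref{conven}(2)), the natural deformation retractions assemble to a deformation retraction $\colim_k p_k:\colim_k Mf_k\to \colim_k Y_k$. As $f_k=p_k\circ\iota_k$, we get $\colim_k f_k=(\colim_k p_k)\circ(\colim_k \iota_k)$, and each $\iota_k$ is a $\dcal$-homotopy equivalence by two-out-of-three. Hence it suffices to prove the lemma for the ladder $\{\iota_k\}$, i.e. we may assume every $f_k$ is simultaneously an $h$-cofibration and a $\dcal$-homotopy equivalence.

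In that situation, applying Lemma \ref{homotopyunder} to the two $h$-cofibrations $1:X_k\to X_k$ and $f_k:X_k\to Y_k$ (with $f_k$ regarded as a map under $X_k$) shows that $f_k$ is a $\dcal$-homotopy equivalence under $X_k$: there is a retraction $r_k:Y_k\to X_k$ with $r_k f_k=1_{X_k}$, together with a $\dcal$-homotopy rel $X_k$ from $1_{Y_k}$ to $f_k r_k$. Writing $X=\colim_k X_k$ and $Y=\colim_k Y_k$, I would then construct by induction on $k$ maps $\bar r_k:Y_k\to X$ and homotopies $\bar H_k$ rel $X_k$ (from the inclusion $Y_k\hookrightarrow Y$ to $(\colim_k f_k)\circ\bar r_k$) that are compatible with the horizontal maps, namely $\bar r_{k+1}|_{Y_k}=\bar r_k$ and $\bar H_{k+1}|_{Y_k\times I}=\bar H_k$. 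A direct computation (using that $X_k\hookrightarrow X\xrightarrow{\colim f_k}Y$ equals $X_k\xrightarrow{f_k}Y_k\hookrightarrow Y$) shows $\bar r_k f_k$ restricts to $X_k\hookrightarrow X$ and that the included homotopy $f_k r_k\simeq 1_{Y_k}$ rel $X_k$ is exactly the required $\bar H_k$. Keeping all homotopies stationary near their endpoints (Remark \ref{remhcofibr}(1)), and using again $\colim_k(Y_k\times I)=Y\times I$ (Proposition \ref{conven}(2)), the compatible families $\{\bar r_k\}$ and $\{\bar H_k\}$ assemble into a smooth retraction $\bar r:Y\to X$ with $\bar r\,(\colim_k f_k)=1_X$ and a smooth $\dcal$-homotopy witnessing $(\colim_k f_k)\,\bar r\simeq_{\dcal}1_Y$, so that $\colim_k f_k$ is a $\dcal$-homotopy equivalence.

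I expect the main obstacle to be precisely the inductive step of the second part: a naive choice of $r_{k+1}$ need not restrict to $\bar r_k$ on $Y_k$, and the work is to show that the two restrictions are $\dcal$-homotopic and then to invoke the weak homotopy extension property of the $h$-cofibration $Y_k\hookrightarrow Y_{k+1}$ (Definition \ref{hcofibr}) to correct $r_{k+1}$ to a homotopic map agreeing with $\bar r_k$ on $Y_k$, splicing the homotopies in the same way while controlling reparametrizations so that the colimit over $k$ remains a well-defined smooth map. An alternative that trades this bookkeeping for a single application of a gluing lemma would be to present the mapping telescope of $\{Y_k\}$ as one pushout along a coproduct of $h$-cofibrations, prove a coproduct version of Lemma \ref{homotopypushout}, and use that under the $h$-cofibration hypothesis the telescope is $\dcal$-homotopy equivalent to the colimit; I would fall back on this route only if the direct splicing proves unwieldy.
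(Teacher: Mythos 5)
Your reduction step contains a genuine gap. You need the induced cylinder maps $Mf_k \longrightarrow Mf_{k+1}$ to be $h$-cofibrations (your splicing argument invokes the WHEP of precisely these maps in the reduced ladder), and you assert this is ``assembled from the $h$-cofibrations $i_{k+1}\times 1$ and $j_{k+1}$ via Lemma \ref{basicshcofibr}.'' But Lemma \ref{basicshcofibr} only provides closure under products with a fixed object, cobase change, composites, coproducts, retracts, and sequential colimits. Factoring $Mf_k \to Mf_{k+1}$ through $Mf_k\cup_{Y_k}Y_{k+1}$, the first map is indeed a cobase change of $j_{k+1}$, but the second map is a cobase change of the pushout-product map
\[
X_{k+1}\times(1)\ \underset{X_k\times(1)}{\cup}\ X_k\times I\ \longrightarrow\ X_{k+1}\times I,
\]
and no result in the paper shows that such a pushout-product of an $h$-cofibration with $(1)\hookrightarrow I$ is again an $h$-cofibration. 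In topology this follows from the NDR characterization of Hurewicz cofibrations (Str{\o}m/Lillig), but here $h$-cofibrations are defined by the WHEP precisely because the topological theory does not transfer: Remark \ref{difference} shows that $X\times(0)\cup_{A\times(0)}A\times I \hookrightarrow X\times I$ need not even be a $\dcal$-embedding, and whether $h$-cofibrations coincide with NDR-pairs is left open in Remark \ref{difference}(3). So this step would need a new lemma, proved by hand with smooth reparametrizations in the spirit of Lemma \ref{NDR}, and cannot be cited away. A secondary problem: after correcting $r_{k+1}$ by the WHEP so that it restricts to $\bar r_k$ on $Y_k$, the strict identity $r_{k+1}'\circ f_{k+1}=1_{X_{k+1}}$ you rely on for assembling $\bar r$ with $\bar r\circ(\colim f_k)=1_X$ is not preserved, since plain WHEP gives no control over the correcting homotopy on $f_{k+1}(X_{k+1})$; you would need extension rel $X_{k+1}$, i.e.\ homotopy extension in the category under $X_{k+1}$, which is again not supplied by Definition \ref{hcofibr} alone.

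For comparison, the paper's proof takes a different and shorter route that sidesteps both issues: it iterates Proposition \ref{homotopypair}, whose proof (following May's argument for pairs) is stated in relative form --- a given homotopy inverse of $f_k$ and a given homotopy $g_kf_k\simeq 1_{X_k}$ extend along the $h$-cofibrations $i_{k+1}$, $j_{k+1}$ to a homotopy inverse of $f_{k+1}$ and a compatible homotopy. Inductively this produces strictly compatible families $\{g_k\}$ and $\{H_k\}$, which assemble over the colimit using $\colim(X_k\times I)=X\times I$ (Proposition \ref{conven}(2)); no mapping cylinders, and hence no pushout-product property for $h$-cofibrations, are needed, because the only WHEPs used are those of the ladder maps $i_k$, $j_k$ given by hypothesis. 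Your instinct that the heart of the matter is a relative extension statement is right; the fix is to use the relative extension already packaged in the proof of Proposition \ref{homotopypair} rather than to manufacture it from the cylinder reduction.
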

\begin{proof}
	The result can be proved using Proposition \ref{homotopypair} and its proof.
\end{proof}
\subsection{Proof of Theorem \ref{hcofibrancy}}
In this section, we prove Theorem \ref{hcofibrancy} using the results in Sections 9.1 and 9.2.
\par\indent
Let us begin by proving a few lemmas. Regard the subclass $\wcal_{\dcal}$ of $\dcal$ as a full subcategory of $\dcal$ and recall that
\[
\ical = \{\dot{\Delta}^{p} \longhookrightarrow \Delta^{p} \ | \ p \geq 0  \}.
\]
See \cite[Definitions 15.1.1 and 15.1.2]{MP} for a sequential relative $\ical$-cell complex.
\begin{lem}\label{cofibrfactorization}
	Let $f:X_{0} \longrightarrow X_{1}$ be a morphism of $\wcal_{\dcal}$ and $h_{0}: A_{0} \longrightarrow X_{0}$ a $\dcal$-homotopy equivalence with $A_{0}$ cofibrant. Then, there exist a sequential relative $\ical$-cell complex $i: A_{0} \longhookrightarrow A_{1}$ and a $\dcal$-homotopy equivalence $h_{1}: A_{1} \longrightarrow X_{1}$, making the diagram
	\[
	\begin{tikzcd}
	A_{0} \arrow[hook]{r}{i} \arrow[swap]{d}{h_{0}} & \arrow{d}{h_{1}} A_{1}\\
	X_{0} \arrow{r}{f} & X_{1}
	\end{tikzcd}
	\]
	commute.
\end{lem}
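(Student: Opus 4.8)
The plan is to produce $i$ and $h_1$ simultaneously, by applying the cofibration--trivial-fibration factorization of the model category $\dcal$ to the single composite $f\circ h_0$, and then upgrading the resulting weak equivalence to a $\dcal$-homotopy equivalence by means of Corollary \ref{4equiv}.

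First I would factor the composite $f\circ h_0\colon A_0\longrightarrow X_1$ as
\[
A_0\ \xhookrightarrow{\ i\ }\ A_1\ \xrightarrow{\ h_1\ }\ X_1,
\]
where $i$ is a sequential relative $\ical$-cell complex and $h_1$ is a trivial fibration. Such a factorization is produced by the (sequential) small object argument that underlies the model structure on $\dcal$; see Theorem \ref{originmain} and the construction in \cite[the proof of Theorem 1.3]{origin} that is already invoked in Propositions \ref{trivialcofibr} and \ref{gimpliesh}. By construction $h_1\circ i=f\circ h_0$, so the square in the statement commutes and $i$ has the required form; it only remains to verify that $h_1$ is a $\dcal$-homotopy equivalence.

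This last point is the only substantive step, and it is where the hypotheses on $\wcal_\dcal$ enter. Since $A_0$ is cofibrant and cofibrations are closed under composition, the composite $\emptyset\longhookrightarrow A_0\xhookrightarrow{\,i\,}A_1$ is a cofibration, so $A_1$ is cofibrant and hence lies in $\wcal_\dcal$; and $X_1\in\wcal_\dcal$ by hypothesis. The trivial fibration $h_1$ is in particular a weak equivalence in $\dcal$, now between two objects of $\wcal_\dcal$, so Corollary \ref{4equiv} (the equivalence of conditions (i) and (ii) there) promotes it to a $\dcal$-homotopy equivalence. This finishes the construction.

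I do not anticipate any real obstacle: everything reduces to the sequential small object argument guaranteed by the compact generation of $\dcal$ together with Corollary \ref{4equiv}, which is precisely the mechanism for passing from weak equivalences to $\dcal$-homotopy equivalences. It is worth noting that the proof uses only the hypotheses ``$A_0$ cofibrant'' and ``$X_1\in\wcal_\dcal$''; the assumptions that $h_0$ is a $\dcal$-homotopy equivalence and that $X_0\in\wcal_\dcal$ are not needed for this single factorization and are presumably retained for the inductive application of the lemma, so that $h_0$ and $h_1$ form a compatible pair of cofibrant approximations along $f$.
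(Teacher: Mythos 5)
Your proposal is correct and is essentially the paper's own proof: the paper likewise applies the cofibration--trivial fibration factorization of \cite[the proof of Theorem 1.3]{origin} to $f\circ h_0$ (whose cofibration part is a sequential relative $\ical$-cell complex) and then invokes Corollary \ref{4equiv} to upgrade the trivial fibration $h_1$ to a $\dcal$-homotopy equivalence. Your side remarks --- that $A_1$ is cofibrant since $A_0$ is, and that the hypotheses on $h_0$ and $X_0$ are not used in this single factorization but matter for the iterated applications (e.g.\ in Lemma \ref{posc}) --- are accurate fillings-in of steps the paper leaves implicit.
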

\begin{proof}
	Define the maps
	\[
	A_{0} \xhookrightarrow[]{\ \ \ i\ \ \ } A_{1} \xrightarrow{\ \ \ h_{1} \ \ \ } X_{1}
	\]
	by applying the cofibration-trivial fibration factorization in \cite[the proof of Theorem 1.3]{origin} to $f \circ h_{0}$. We can see from Corollary \ref{4equiv} that $i$ and $h_{1}$ are the desired maps.
\end{proof}
\begin{lem}\label{posc}
	\begin{itemize}
	\item[{\rm (1)}] Assume given a diagram in $\dcal$
	\[
	\begin{tikzcd}
	X_{1} & X_{0} \arrow[hook',swap]{l}{f_{1}} \arrow{r}{f_{2}} & X_{2}
	\end{tikzcd}
	\]
	with $f_{1}$ $h$-cofibration. If $X_{0}, X_{1},$ and $X_{2}$ are in $\wcal_{\dcal}$, then so is $X_{1} {\cup}_{X_{0}} X_{2}$.
	\item[{\rm (2)}] Assume given a sequence in $\dcal$
	\[
	X_{0} \xhookrightarrow[]{\ \ i_{0}\ \ } X_{1} \xhookrightarrow[]{\ \ i_{1}\ \ } X_{2} \xhookrightarrow[]{\ \ i_{2}\ \ } \cdots
	\]
	such that the $i_{n}$ are $h$-cofibrations. If the $X_{n}$ are in $\wcal_{\dcal}$, then so is $X = \lim\limits_{\rightarrow} X_{n}$.
	\end{itemize}
\end{lem}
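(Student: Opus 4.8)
The plan is to reduce both statements to the homotopy-invariance results of Section 9.2, replacing each input diagram by a cofibrant model manufactured with Lemma \ref{cofibrfactorization}. Throughout I would use that a sequential relative $\ical$-cell complex is a $q$-cofibration, hence an $h$-cofibration by Proposition \ref{gimpliesh}, and that in the model category $\dcal$ (Theorem \ref{originmain}) the pushout of a cofibration along any map, as well as the sequential composite of cofibrations, is again a cofibration; in particular cofibrancy propagates through the constructions below.

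For Part (1), first choose a cofibrant diffeological space $A_0$ and a $\dcal$-homotopy equivalence $h_0 : A_0 \longrightarrow X_0$, which exists since $X_0 \in \wcal_{\dcal}$. Applying Lemma \ref{cofibrfactorization} to the morphism $f_1 : X_0 \longrightarrow X_1$ of $\wcal_{\dcal}$ together with $h_0$ yields a sequential relative $\ical$-cell complex $i_1 : A_0 \longhookrightarrow A_1$ and a $\dcal$-homotopy equivalence $h_1 : A_1 \longrightarrow X_1$ with $h_1 \circ i_1 = f_1 \circ h_0$; applying it again to $f_2 : X_0 \longrightarrow X_2$ and $h_0$ yields $i_2 : A_0 \longhookrightarrow A_2$ and a $\dcal$-homotopy equivalence $h_2 : A_2 \longrightarrow X_2$ with $h_2 \circ i_2 = f_2 \circ h_0$. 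These assemble into a commutative diagram whose top row is the span $A_1 \longleftarrow A_0 \longrightarrow A_2$ given by $i_1$ and $i_2$, whose bottom row is the given span, and whose vertical maps are $h_0, h_1, h_2$. Now $A_1 \cup_{A_0} A_2$ is cofibrant: since $i_2$ is a cofibration so is its pushout $A_1 \longrightarrow A_1 \cup_{A_0} A_2$, and $A_1$ is itself cofibrant (being the target of the cell complex $i_1$ with cofibrant source $A_0$), so $\emptyset \longrightarrow A_1 \cup_{A_0} A_2$ is a cofibration. Because $i_1$ and $f_1$ are $h$-cofibrations and $h_0, h_1, h_2$ are $\dcal$-homotopy equivalences, Lemma \ref{homotopypushout} shows that the induced map $A_1 \cup_{A_0} A_2 \longrightarrow X_1 \cup_{X_0} X_2$ is a $\dcal$-homotopy equivalence; hence $X_1 \cup_{X_0} X_2 \in \wcal_{\dcal}$. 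Note that only the left-hand maps $i_1$ and $f_1$ need be $h$-cofibrations, which matches the available hypotheses (in particular $f_2$ and $i_2$ need not be).

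For Part (2), I would build a cofibrant ladder inductively. Start with cofibrant $A_0$ and a $\dcal$-homotopy equivalence $h_0 : A_0 \longrightarrow X_0$. Given $h_n : A_n \longrightarrow X_n$ with $A_n$ cofibrant, apply Lemma \ref{cofibrfactorization} to $i_n : X_n \longrightarrow X_{n+1}$ and $h_n$ to obtain a sequential relative $\ical$-cell complex $j_n : A_n \longhookrightarrow A_{n+1}$ and a $\dcal$-homotopy equivalence $h_{n+1} : A_{n+1} \longrightarrow X_{n+1}$ with $h_{n+1} \circ j_n = i_n \circ h_n$. The colimit $A = \lim\limits_{\rightarrow} A_n$ is cofibrant, being the sequential composite of the cofibrations $j_n$ starting from the cofibrant $A_0$. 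Since the $j_n$ are $h$-cofibrations (Proposition \ref{gimpliesh}), the $i_n$ are $h$-cofibrations by hypothesis, and the $h_n$ are $\dcal$-homotopy equivalences, Lemma \ref{homotopyseqcolim} gives that $\lim\limits_{\rightarrow} h_n : A \longrightarrow X = \lim\limits_{\rightarrow} X_n$ is a $\dcal$-homotopy equivalence, so $X \in \wcal_{\dcal}$. The argument is essentially bookkeeping of cofibrant approximations, so I expect no serious obstacle; the only points requiring care are verifying that cofibrancy propagates to the pushout and to the colimit, and that the $h$-cofibration hypotheses of Lemmas \ref{homotopypushout} and \ref{homotopyseqcolim} are met — the latter being exactly where Proposition \ref{gimpliesh} is used to promote the cell complexes produced by Lemma \ref{cofibrfactorization} to $h$-cofibrations.
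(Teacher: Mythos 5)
Your proposal is correct and is exactly the paper's argument: the paper proves Lemma \ref{posc} in one line by deriving Parts 1 and 2 from Lemmas \ref{homotopypushout} and \ref{homotopyseqcolim} via the cofibrant approximations supplied by Lemma \ref{cofibrfactorization}, with Proposition \ref{gimpliesh} promoting the resulting relative $\ical$-cell complexes to $h$-cofibrations. Your write-up simply makes explicit the same bookkeeping (cofibrancy of the pushout and of the sequential colimit, and the one-sided $h$-cofibration hypothesis in Lemma \ref{homotopypushout}) that the paper leaves implicit.
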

\begin{proof}
	Using Lemma \ref{cofibrfactorization}, we can derive Parts 1 and 2 from Lemmas \ref{homotopypushout} and \ref{homotopyseqcolim} respectively; recall Proposition \ref{gimpliesh}.
\end{proof}
\begin{proof}[Proof of Theorem \ref{hcofibrancy}] Since $pr: BX_{U} \longrightarrow X$ is a $\dcal$-homotopy equivalence (Proposition \ref{dSegal}), we prove that $BX_{U}$ is in $\wcal_{\dcal}$.

Recall from Section 9.1 the pushout diagram
\begin{equation}
\begin{tikzcd}
\underset{\sigma_{n} \supsetneq \cdots \supsetneq \sigma_0}{\coprod} U_{\sigma_{n}} \times \dot{\Delta}^{n} \arrow{r} \arrow[hook']{d} & BX^{(n-1)}_{U} \arrow[hook']{d}\\
\underset{\sigma_{n} \supsetneq \cdots \supsetneq \sigma_0}{\coprod} U_{\sigma_{n}} \times \Delta^{n} \arrow{r} & BX^{(n)}_{U}
\end{tikzcd}
\tag{$9.1$}
\end{equation}
and the identity
\begin{equation}
BX_{U} = \lim\limits_{\rightarrow} BX^{(n)}_{U}.
\tag{$9.2$}
\end{equation}

First, we show that $BX^{(n)}_{U}$ is in $\wcal_{\dcal}$ by induction on $n$. Since $BX^{(0)}_{U} = \underset{\sigma}{\coprod}\ U_{\sigma}$, the result holds for $n = 0$. Suppose that the result holds up to $n-1$. Then, we see from pushout diagram $(9.1)$ that $BX^{(n)}_{U}$ is in $\wcal_{\dcal}$, using Proposition \ref{gimpliesh}, Lemma \ref{basicshcofibr}, Corollary \ref{product} and Lemma \ref{posc}(1).

From pushout diagram $(9.1)$, we also see that $BX^{(n-1)}_{U} \longhookrightarrow BX^{(n)}_{U}$ is an $h$-cofibration (Lemma \ref{basicshcofibr}). Thus, we obtain the result from that for $BX^{(n)}_{U}$, (9.2), and Lemma \ref{posc}(2).
\end{proof}
\begin{rem}\label{refinement}
	For an ordinal number $\gamma$, let $\omega_{\gamma}$ denote the $\gamma^{\rm th}$ infinite cardinal number (\cite[p. 66]{Cie}) and define the subclass $\wcal_{\dcal\, \gamma}$ of $\wcal_\dcal$ by
	\[
	\wcal_{\dcal\, \gamma} = \{ A \in \dcal \ | \ \text{$A \simeq_{\dcal} |K|_{\dcal}$ for some simplicial set $K$ with ${\rm card}\ K \leq \omega_\gamma$  } \},
	\]
	where we set ${\rm card}\ K = |\underset{n \geq 0}{\coprod}\ K_{n}|$. The class $\wcal_{\dcal\, 0}$ is just the subclass of diffeological spaces having the $\dcal$-homotopy type of the realization of a countable simplicial set (cf. \cite[Section 1]{Mi}).
	
	We give a basic result on $\wcal_{\dcal\, \gamma}$ (Part 1), a diffeological version of \cite[Proposition 2]{Mi}(Part 2), and a refined version of Theorem \ref{hcofibrancy} (Part 3).
	\begin{itemize}
	\item[{\rm (1)}] Define the fibrant approximation functor $\hat{\cdot} : \scal \longrightarrow \scal$ by assigning to a simplicial set $K$ the Kan complex $\hat{K}$ such that $K \longhookrightarrow \hat{K} \longrightarrow \ast$ is the usual trivial cofibration-fibration factorization of $K \longrightarrow \ast$ (see, eg, \cite{K}). Then, we have the implications
	\begin{align*}
	\:\:\:\:\:\:{\rm card}\ K \leq \omega_{\gamma} & \Leftrightarrow {\rm card}\: \hat{K} \leq \omega_{\gamma}\\
	& \Rightarrow |\pi_{i} (\hat{K}, k_{0})| \leq \omega_{\gamma} \ \text{for any $i$ and any $k_{0}$}\\
	& \Rightarrow \hat{K}\: \text{is homotopy equivalent to some $Kan$}\\ &\:\:\:\:\:\:\:\text{complex $L$ with {\rm card} $L\leq \omega_\gamma$}.
	\end{align*}
	From these, we obtain the following equivalence:
	\begin{equation*}
		\:\:\:\:\:\:\:\:\:\:\:\:\:\:\:\:\:\:\:A\in \wcal_{\dcal\,\gamma} \Leftrightarrow A \in \wcal_\dcal \: \text{and } |\pi^\dcal_i(A,a)|\leq \omega_\gamma \:\text{for any $i$ and any $a$}.
	\end{equation*}
	(see Theorem \ref{Quillenequiv}, and Corollaries \ref{W} and \ref{4equiv}). A similar result holds for the similarly defined subclass $\wcal_{\czero \gamma}$ of $\wcal_\czero$.
	\item[{\rm (2)}] Let us consider the case of $\gamma=0$. We see from \cite[Theorem 1]{Mi} that for $A\in \czero$, the following equivalences hold:
	\begin{align*}
		\:\:\:\:\:\: A\in \wcal_{\czero\, 0} & \Leftrightarrow A \ \text{is homotopy equivalent to a countable }\\
		& \:\:\:\:\:\:\:\:CW \text{-complex}.\\
		& \Leftrightarrow A \ \text{is homotopy equivalent to a conuntable}\\
		& \:\:\:\:\:\:\:\: polyhedron.
	\end{align*}
	Thus, the class $\wcal_{\czero\, 0}$ is just the intersection of Milnor's class $\wcal_0$ and $\czero$.\par
	Further, using Corollary \ref{homotopycat} and \cite[Proposition 2]{Mi}, we have the following result:
	\vspace{0.2cm}
	\begin{quote}
		If $X$ is a diffeological space in $\wcal_\dcal$ with $\widetilde{X}$ Lindel\"of, then $X$ is in $\wcal_{\dcal\,0}$.
	\end{quote}
	\vspace{0.2cm}
	Theorem \ref{hcofibrancy} along with this result gives a useful criterion for a diffeological space to be in $\wcal_{\dcal\,0}$, which is used in the proof of Proposition \ref{countable}.
	\item[{\rm (3)}] Theorem \ref{hcofibrancy} can be refined as follows:
	\vspace{0.2cm}
	\begin{quote}
		Let $X$ be a diffeological space and $U = \{ U_{\alpha} \}_{\alpha \in A}$ a $\dcal$-numerable covering of $X$ by subsets. If $|A| \leq \omega_\gamma$ and $U_{\sigma} \in \wcal_{\dcal\, \gamma}$ for any nonempty finite $\sigma \subset A$, then $X$ is in $\wcal_{\dcal\, \gamma}$.
	\end{quote}
	\vspace{0.2cm}
	For the proof, we have only to show that Lemma \ref{cofibrfactorization} can be refined as follows: If $X_{0}, X_{1} \in \wcal_{\dcal\, \gamma}$ and $A_{0} = |K_{0}|_{\dcal}$ with {\rm card} $K_{0} \leq \omega_{\gamma}$, we can take $A_{1}$ such that $A_{1} = |K_{1}|_{\dcal}$ for some $K_{1}$ with {\rm card} $K_{1} \leq \omega_{\gamma}$. This can be done by using Part 1 and Theorem \ref{Quillenequiv}(1).
	
	The topological analogue, which is a refined version of \cite[Theorem 4]{tom}, can be also shown by a similar argument.
	%We give an outline of the proof. Choose simplicial sets $K_{\sigma}$ such that $|K_{\sigma}|_{\dcal} \simeq_{\dcal} U_{\sigma}$ and ${\rm card}$ $K_{\sigma} \leq \omega_{\gamma}$, and construct simplicial sets $L^{(n)}$ such that $|L^{(n)}|_{\dcal} \simeq BX^{(n)}_{U}$ and ${\rm card}$ $L^{(n)} \leq \omega_{\gamma}$ using the simplicial version of pushout diagram (9.2). Then, $L: = \lim\limits_{\rightarrow} L^{(n)}$ is a simplicial set such that $|L|_{\dcal} \simeq_{\dcal} BX_{U}$ and {\rm card} $L \leq \omega_{\gamma}$.
	\end{itemize}
\end{rem}

\section{Locally contractible diffeological spaces}
In this section, we introduce the notion of a locally contractible diffeological space and prove that such a diffeological space is in $\vcal_{\dcal}$ (Theorem \ref{V}).\par
Recall that a diffeological space $X$ is called $\dcal$-contractible if $X$ is $\dcal$-homotopy equivalent to the terminal object $\ast$.
\begin{defn}\label{loccontr}
	A diffeological space $X$ is called locally $\dcal$-contractible if every $x\in X$ has a neighborhood basis consisting of $\dcal$-contractible open diffeological subspaces.
\end{defn}
$\dcal$-contractible (resp. locally $\dcal$-contractible) diffeological spaces are simply called contractible (resp. locally contractible) diffeological spaces if there is no confusion in context.\par
Note that if a diffeological space $X$ is contractible (resp. locally contractible), $\widetilde{X}$ is contractible (resp. locally contractible) as a topological space (see \cite[Section 2.4]{origin}).
\par\indent
We define the iterated barycentric subdivision $\sd^{k}\Delta^{p}$ by $\sd^{k}\Delta^{p} = |\sd^{k}\Delta(p)|_{\dcal}$ and identify it with $|\Delta(p)|_{\dcal} = \Delta^{p}$ topologically (see \cite[p. 124]{Spa} for the iterated barycentric subdivision $\sd^{k}K$ of a simplicial complex $K$). For the proof of Theorem \ref{V}, we introduce the $\dcal$-polyhedron $(\Delta^{p} \times I)^{(k)}$ connecting ${\rm sd}^{k}\Delta^{p}$ to $\Delta^{p}$.

First, we introduce the $\dcal$-polyhedra $(\Delta^p \times I)'$ such that
\begin{itemize}
	\item $\widetilde{(\Delta^p\times I)'}$ = $\Delta^{p}_{\mathrm{top}} \times I_{\mathrm{top}}$, where $\Delta^p_{\mathrm{top}}$ and $I_{\mathrm{top}}$ denotes the topological standard $p$-simplex and the topological unit interval, respectively.
	\item The subsets $\Delta^p \times (0)$ and $\Delta^p \times (1)$ are $\dcal$-subpolyhedra which are isomorphic to $\sd\,\Delta^p$ and $\Delta^p$ respectively.
\end{itemize}
For this, we inductively define the simplicial complexes $(\Delta(p)\times I)'$ whose $\dcal$-polyhedra are the desired $(\Delta^p \times I)'$. For $p = 0$, we define $(\Delta(p)\times I)'$ by $(\Delta(p)\times I)'=\Delta(0)\times I\ (\cong I = \Delta(1))$. For $p = 1$, we define $(\Delta(p)\times I)'$ to be the cone of the simplicial complex $(0)\times I \cup \Delta(1)\times(1)\cup(1)\times I$, where the cone $CK$ of a simplicial complex $K$ is defined by
%%%%%%%%%%% \coprodが大きいって言われたらtikzcd環境内でやればいい感じになる %%%%%%%%%%%%%%%%%%
\[
\begin{tikzcd}
\overline{CK} = \ast \coprod \overline{K} \ \text{and} \ \Sigma_{CK} = \ast \coprod \Sigma_{K} \coprod \{\ast \coprod\ \sigma\ |\ \sigma\ \in \Sigma_{K} \}
\end{tikzcd}
\]
(see the left side of Fig 10.1). Suppose that $(\Delta(p-1)\times I)'$ is constructed. Then the simplicial complex $(\Delta(p)\times I)'$ is defined to be the cone of the simplicial complex $L_p$, where $L_p$ is the union of $\Delta(p)\times (1)$ and $p+1$ copies of $(\Delta(p-1)\times I)'$ whose polyhedron is topologically identified with $\Delta^p \times (1) \cup_{\dot{\Delta}^p\times (1)} \dot{\Delta}^p \times I$.\par
Using the construction of the $\dcal$-polyhedron $(\Delta^p\times I)'$ repeatedly, we obtain the $\dcal$-polyhedron $(\Delta^p\times I)^{(k)}$ such that
\begin{itemize}
	\item $\widetilde{(\Delta^p\times I)^{(k)}} = \Delta^{p}_{\mathrm{top}}\times I_{\mathrm{top}}$.
	\item The subsets $\Delta^p\times (0)$ and $\Delta^p\times (1)$ are $\dcal$-subpolyhedra which are isomorphic to $\sd^k\Delta^p$ and $\Delta^p$ respectively.
\end{itemize}
The simplicial complex corresponding to $(\Delta^p\times I)^{(k)}$ is denoted by $(\Delta(p)\times I)^{(k)}$.\\
\begin{center}
	$I$
	\begin{tikzpicture}[thick, baseline=0pt]
	\draw (-2,-2) -- (2,-2) -- (2,2) -- (-2,2) -- (-2,-2);
	\draw (-2,2) -- (0,-2) -- (2,2);
	\node[] at (0, 2.2)  (c)     {$\Delta^{1}$};
	\node[] at (0, -2.4) (c) {$\dcal$-polyhedron $(\Delta^1\times I)'$};
	\end{tikzpicture}
	\hspace{4pc}
	$I$
	\begin{tikzpicture}[thick, baseline=0pt]
	\draw (-2,-2) -- (2,-2) -- (2,2) -- (-2,2) -- (-2,-2);
	\draw (-2,2) -- (0,0) -- (2,2);
	\draw (-2,0) -- (2,0);
	\draw (-2,0) -- (-1,-2) -- (0,0);
	\draw (0,0) -- (1,-2) -- (2,0);
	\draw (0,0) -- (0,-2);
	\node[] at (0, 2.2)  (c)     {$\Delta^{1}$};
	\node[] at (0, -2.4) (c) {$\dcal$-polyhedron $(\Delta^1\times I)^{(2)}$};
	\end{tikzpicture}\\
	Fig 10.1
\end{center}

%%%%%%%%%%%%%%%%%%%%%%%%%%%
\if0
\begin{lem}
	Let $X$ be a $\cinf$-space and let $\sigma$: $\Delta^{p}_{sub} \longrightarrow X$ a smooth map. Then the (set-theoretic) map \\
	\begin{center}
		$\Delta^p\times I \xrightarrow{proj} \Delta^p \xrightarrow{\ \ \sigma \ \ } X$
	\end{center}
	is a smooth map from $(\Delta^p \times I)^{(k)}$ to $X$ for any $k$.
	\begin{proof}
		Define $(\Delta^p\times I)^{(k)}_{sub}$ by $(\Delta^p\times I)^{(k)}_{sub} = \absno{(\Delta(p)\times I)^{(k)}}_{sub}$. Since the $\cinf$-maps \\
		$(\Delta^p\times I)^{(k)} \xoverright{id} (\Delta^p\times I)^{(k)}_{sub}$ and $(\Delta^p\times I)^{(k)}_{sub} \xoverright{proj} \Delta^{p}_{sub}$ \\ are smooth, the composite \\
		$(\Delta^p\times I)^{(k)} \xoverright{id} (\Delta^p\times I)^{(k)}_{sub} \xoverright{proj} \Delta^{p}_{sub} \xoverright{\sigma} X$ \\ is smooth.
	\end{proof}
\end{lem}
\fi
%%%%%%%%%%%%%%%%%%%%%%%%%%%%

\begin{lem}\label{sdsmoothing}
	Let $X$ be a diffeological space and $\sigma$: $\Delta^p \longrightarrow X$ a continuous map. Suppose that $\sigma$ satisfies the following conditions:

	\begin{itemize}
		\item $\dot{\sigma} = \sigma\ |_{\dot{\Delta}^{p}}$ is smooth.
		\item $\sigma$ is smooth as a map from $\sd^k\Delta^p$ to $X$.
	\end{itemize}
	Then there exists a smooth map $\sigma'$: $\Delta^p \longrightarrow X$ such that $\sigma'$ is $\czero$-homotopic to $\sigma$ relative to $\dot{\Delta}^p$.

	\begin{proof}
		Consider the $\dcal$-subpolyhedron
		\[
			{\rm sd}^k\Delta^p \times (0) \cup \underset{i}{\cup}\ (\Delta^{p-1}_{(i)} \times I)^{(k)}
		\]
		of $(\Delta^p \times I)^{(k)},$ where $(\Delta^{p-1}_{(i)}\times I)^{(k)}$ is a copy of $(\Delta^{p-1}\times I)^{(k)}$ for $0\leq i \leq p$ (see \cite[Definition 4.1]{origin}). Noticing that the composite
		\[
		\underset{i}{\cup}\ (\Delta^{p-1}_{(i)} \times I)^{(k)} \xrightarrow{proj} \dot{\Delta}^{p} \xrightarrow{\ \ \dot{\sigma}\ } X
		\]
		is smooth (Proposition \ref{axioms}), we then have the commutative solid arrow diagram in $\dcal$
	\[
	\begin{tikzcd}
	\sd^k\Delta^p\times(0)\:{\scriptstyle\cup}\underset{i}{\cup}(\Delta^{p-1}_{(i)}\times I)^{(k)}    \arrow{rrr}{\sigma+\dot{\sigma}\circ proj} \arrow[hook']{d} & & & X \arrow{d} \\
	(\Delta^p\times I)^{(k)} \arrow{rrr} \arrow[dashed, yshift=-0.7ex]{urrr}{h} & & & \ast
	\end{tikzcd}
	\]
	(see Lemma \ref{Dsub}). Since the left and right vertical arrows are a trivial cofibration and a fibration respectively (Lemmas \ref{Dsub} and \ref{clequiv} and Theorem \ref{originmain}), there is a dotted arrow $h$ making the diagram commute. Then, $\sigma':= h(\cdot,1)$ is the desired smooth map from $\Delta^p$ to $X$.
	\end{proof}
\end{lem}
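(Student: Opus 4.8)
Let $X$ be a diffeological space and $\sigma:\Delta^p\longrightarrow X$ a continuous map such that $\dot\sigma=\sigma|_{\dot\Delta^p}$ is smooth and $\sigma$ is smooth as a map from $\sd^k\Delta^p$ to $X$. Then there is a smooth $\sigma':\Delta^p\longrightarrow X$ that is $\czero$-homotopic to $\sigma$ relative to $\dot\Delta^p$.

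Let me think about how I would prove this.

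The essential idea is to build a homotopy "from $\sd^k\Delta^p$ up to $\Delta^p$" using the interpolating $\dcal$-polyhedron $(\Delta^p\times I)^{(k)}$ constructed just before the lemma, and then to lift against a trivial-cofibration/fibration factorization to get smoothness on the target end.

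Let me trace through the construction carefully to make sure this works...

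---

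The plan is to exploit the interpolating $\dcal$-polyhedron $(\Delta^p\times I)^{(k)}$ introduced just before the lemma, whose underlying space is $\Delta^p_{\rm top}\times I_{\rm top}$, whose bottom $\Delta^p\times(0)$ is a $\dcal$-subpolyhedron isomorphic to $\sd^k\Delta^p$, and whose top $\Delta^p\times(1)$ is isomorphic to $\Delta^p$. First I would assemble a smooth map defined on the $\dcal$-subpolyhedron
\[
\sd^k\Delta^p\times(0)\ \cup\ \underset{i}{\cup}\,(\Delta^{p-1}_{(i)}\times I)^{(k)}
\]
of $(\Delta^p\times I)^{(k)}$. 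On the bottom face $\sd^k\Delta^p\times(0)$ I use $\sigma$ itself, which is smooth there by the second hypothesis. On the side faces $\underset{i}{\cup}\,(\Delta^{p-1}_{(i)}\times I)^{(k)}$ I use the composite with the projection to $\dot\Delta^p$ followed by $\dot\sigma$; this is smooth because $\dot\sigma$ is smooth and projection out of a $\dcal$-polyhedron onto a face is smooth (here one invokes Proposition \ref{axioms}, i.e.\ Axiom 2 on affine maps). The two pieces agree on the overlap $\sd^k\dot\Delta^p\times(0)$ since both restrict to $\dot\sigma$, so by Lemma \ref{Dsub} (the gluing/quotient property of $\dcal$-subpolyhedra) they combine into a single smooth map $\sigma+\dot\sigma\circ proj$.

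Next I would set up the commutative solid-arrow lifting square
\[
\begin{tikzcd}
\sd^k\Delta^p\times(0)\:{\scriptstyle\cup}\underset{i}{\cup}(\Delta^{p-1}_{(i)}\times I)^{(k)}
 \arrow{rrr}{\sigma+\dot{\sigma}\circ proj} \arrow[hook']{d} & & & X \arrow{d} \\
(\Delta^p\times I)^{(k)} \arrow{rrr} \arrow[dashed, yshift=-0.7ex]{urrr}{h} & & & \ast
\end{tikzcd}
\]
in $\dcal$, where the right vertical map is $X\longrightarrow\ast$. The key points are that the left vertical inclusion is a \emph{trivial cofibration} and the right vertical map is a \emph{fibration}. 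Fibrancy of $X$ over $\ast$ is automatic since every object of $\dcal$ is fibrant (Theorem \ref{originmain}). For the left map, both the subpolyhedron and $(\Delta^p\times I)^{(k)}$ are $\dcal$-polyhedra, hence cofibrant, and the inclusion is a cofibration by Lemma \ref{Dsub}(1); it is a weak equivalence because the inclusion of the specified subpolyhedron into $(\Delta^p\times I)^{(k)}$ is a topological deformation retract onto a contractible-fiber configuration, so one reads off the weak equivalence either directly from the polyhedral structure or via Lemma \ref{clequiv} (the realization functor's behaviour on the corresponding simplicial-set inclusion). With a trivial cofibration on the left and a fibration on the right, the dotted lift $h:(\Delta^p\times I)^{(k)}\longrightarrow X$ exists. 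Restricting $h$ to the top face $\Delta^p\times(1)\cong\Delta^p$ yields a smooth map $\sigma':=h(\cdot,1):\Delta^p\longrightarrow X$, and $h$ itself, reparametrized as a continuous homotopy on $\Delta^p_{\rm top}\times I_{\rm top}$ and held fixed on $\dot\Delta^p$ by construction of the side faces, provides the $\czero$-homotopy rel $\dot\Delta^p$ connecting $\sigma$ to $\sigma'$.

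I expect the main obstacle to be the verification that the inclusion of the subpolyhedron into $(\Delta^p\times I)^{(k)}$ is a \emph{weak equivalence} (so that it is a trivial cofibration): this requires genuinely understanding the inductively-built cone structure of $(\Delta(p)\times I)^{(k)}$ and checking that the complementary part collapses appropriately. A secondary technical point is confirming that the side-face composite $\underset{i}{\cup}\,(\Delta^{p-1}_{(i)}\times I)^{(k)}\xrightarrow{proj}\dot\Delta^p\xrightarrow{\dot\sigma}X$ is smooth as a map out of the $\dcal$-polyhedron rather than merely out of the sub-diffeology version; this is exactly where the careful polyhedral bookkeeping of Section 8 (and Proposition \ref{axioms}) is needed, and it is why the interpolating polyhedron was set up with these particular faces. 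Once those two points are settled, the lift and the passage to the top face are routine applications of the model structure on $\dcal$.
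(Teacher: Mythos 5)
Your proposal follows the paper's proof essentially verbatim: the same subpolyhedron $\sd^k\Delta^p\times(0)\cup\underset{i}{\cup}(\Delta^{p-1}_{(i)}\times I)^{(k)}$, the same glued map $\sigma+\dot\sigma\circ proj$ (smooth by Proposition \ref{axioms} and Lemma \ref{Dsub}), the same lifting problem against $X\longrightarrow\ast$ with the inclusion a trivial cofibration (Lemmas \ref{Dsub}, \ref{clequiv}, Theorem \ref{originmain}), and the same conclusion $\sigma'=h(\cdot,1)$ with $h$ supplying the $\czero$-homotopy rel $\dot\Delta^p$. The two technical points you flag (the weak-equivalence check for the inclusion and the smoothness of the side-face composite) are exactly the ones the paper discharges via the cited results, so your argument is correct and matches the paper's.
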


\begin{proof}[Proof of Theorem \ref{V}.]
	Let $\iota_{X}$ denote the natural inclusion $\sdcal X \longhookrightarrow S \widetilde{X}$. Note that $\sdcal X$ and $S\tilde{X}$ are Kan complexes and consider the following statements:
	\vspace{0.1cm}

	(${\rm A}_p$) For any locally contractible diffeological space $X$ and any fixed base point $x$, $\pi_{i}(\iota_X):\pi_{i}(\sdcal X, x) \longrightarrow \pi_{i}(S\tilde{X}, x)$ is injective for $i = p - 1$ and surjective for $i=p$.
	\vspace{0.1cm}

	(${\rm B}_{p}$) For any locally contractible diffeological space $X$ and any continuous map $\tau:\Delta^{p+1} \longrightarrow X$ with $\tau\ |_{\dot{\Delta}^{p+1}}$ smooth, there exists a smooth map $\tau': \Delta^{p+1} \longrightarrow X$ such that $\tau'$ is $\czero$-homotopic to $\tau$ relative to $\dot{\Delta}^{p+1}$.
	\vspace{0.1cm}

	We arrange the statements in the following order:
	\[
	({\rm A}_{0}), ({\rm B}_{0}), ({\rm A}_{1}), \ldots, ({\rm A}_{p}), ({\rm B}_{p}), ({\rm A}_{p+1}), \ldots
	\]
	Noticing that $({\rm A}_{p})$ obviously holds for $p = 0$, we prove all statements in two steps, which implies the result (see Corollary \ref{W}(3)).\vspace{0.2cm}\\
	{\sl Step 1}. Suppose that the implications hold up to $({\rm A}_{p})$. Then, we show that $({\rm B}_p)$ holds.\par
	Choose a covering $\ucal$ of $X$ consisting of contractible open sets. Take a sufficiently large integer $k$ and choose for each $(p+1)$-simplex $\Delta^{p+1}_{\alpha}$ of $\sd^{k}\Delta^{p+1}$, an element $U_{\alpha}$ of $\ucal$ such that $\tau(\Delta^{p+1}_{\alpha}) \subset U_{\alpha} $ (see, eg, \cite[p. 178]{Spa}). By Lemma \ref{sdsmoothing}, we have only to construct a $\czero$-homotopy
	\begin{center}
		$ h:\Delta^{p+1} \times I \longrightarrow X $
	\end{center}
	such that
	\begin{itemize}
		\item
		$h|_{\Delta^{p+1} \times (0)}=\tau$.
		\item
		$h$ is the constant homotopy of $\tau|_{\dot{\Delta}^{p+1}}$ on $\dot{\Delta}^{p+1} \times I$.
		\item
		$h|_{\Delta^{p+1} \times (1)}$ is smooth as a map from ${\rm sd}^{k}\Delta^{p+1}$ to $X$.
	\end{itemize}

	First, let us construct $h$ on $\Delta^{p+1} \times (0) \cup \dot{\Delta}^{p+1} \times I \cup {\rm sk}_{p}(\sd^{k}\Delta^{p+1}) \times I$, where  ${\rm sk}_{l}(\sd^{k}\Delta^{p+1}$) denotes the $l$-skeleton of $\sd^{k}\Delta^{p+1}$. Define $h$ on $\Delta^{p+1} \times (0) \cup \dot{\Delta}^{p+1} \times I \cup {\rm sk}_{0}(\sd^{k}\Delta^{p+1}) \times I$ to be the sum of $\tau$ and the constant homotopy of $\tau \ |_{\dot{\Delta}^{p+1} \cup\, {\rm sk_{0}} ({\rm sd}^{k} \Delta^{p+1}) }$. Suppose that $h$ is defined on $\Delta^{p+1} \times (0) \cup \dot{\Delta}^{p+1} \times I\ \cup\ {\rm sk}_{l}\ (\sd^{k}\Delta^{p+1}) \times I$ for $l < p$ so that the image $h(\Delta^{l}_{\alpha^{(l)}} \times I)$ is contained in $\bigcap \limits_{ \Delta^{l}_{\alpha^{(l)}} \subset \Delta^{p+1}_{\alpha} } U_\alpha$ and $h|_{\Delta^{l}_{\alpha^{(l)}} \times (1)}$ is smooth for any $l$-simplex $\Delta^{l}_{\alpha^{(l)}}$ of $\sd^{k}\Delta^{p+1}$. We have only to construct for each $(l+1)$-simplex $\Delta^{l+1}_{\alpha^{(l+1)}}$ of $\sd^{k}\Delta^{p+1}$ a solution of the lifting problem in $\czero$

	% insert figure1 %
	\begin{center}
		\begin{equation}
		\label{LABEL}
		\begin{tikzcd}
		\Delta^{l+1}_{\alpha^{(l+1)}}\times(0) \cup             \dot{\Delta}^{l+1}_{\alpha^{(l+1)}} \times I \arrow[r,"h"] \arrow[d, hook'] &         \bigcap \limits_{ \Delta^{l+1}_{\alpha^{(l+1)}} \subset     \Delta^{p+1}_{\alpha} }U_\alpha \arrow[d]\\
		\Delta^{l+1}_{\alpha^{(l+1)}} \times I \arrow[r] \cdarrow{ur,dashed} \arrow{r}& \ast
		\end{tikzcd}
		\end{equation}
	\end{center}
	%\end{equation}
	such that its restriction to $\Delta^{l+1}_{\alpha^{(l+1)}} \times (1)$ is smooth.
	\par\indent
	We give an outline of the construction; it can be made precise by solving lifting problems in the model categories $\czero$ and $\dcal$. Since the left and right vertical arrows in (10.1) is a trivial cofibration and a fibration respectively, there exists a continuous solution $\Gamma$ to the lifting problem. Set $\gamma = \Gamma (\cdot, 1)$ and $U_{\alpha^{(l+1)}}$ =  $\bigcap \limits_{ \Delta^{l+1}_{\alpha^{(l+1)}} \subset \Delta^{p+1}_{\alpha} }U_\alpha$. Then, there exists a smooth map $\gamma' : \Delta^{l+1}_{\alpha^{(l+1)}} \longrightarrow U_{\alpha^{(l+1)}}$ such that $\gamma'\ |_{\dot{\Delta}^{l+1}_{\alpha^{(l+1)}}} = \gamma\ |_{\dot{\Delta}^{l+1}_{\alpha^{(l+1)}}}$ by the injectivity of $\pi_{l} (\iota_{U_{\alpha^{(l+1)}}})$. Further, we can use the surjectivity of $\pi_{l+1}(\iota_{U_{\alpha^{(l+1)}}})$ to choose $\gamma'$ so that $h+\gamma': \Delta^{l+1}_{\alpha^{(l + 1)}} \times (0) \,\cup\, \dot{\Delta}^{l+1}_{\alpha^{(l + 1)}} \times I \,\cup\, \Delta^{l+1}_{\alpha^{(l + 1)}} \times (1) \longrightarrow U_{\alpha^{(l+1)}}$ extends to $\Delta^{l+1}_{\alpha^{(l + 1)}} \times I$; this extension is the desired solution of lifting problem (10.1).
	%%%%%%%%%%%%%%%%%%%%%%%%%%%%%%
	\if0
	\noindent For simplicity of notation, write $\Delta^{l+1}$ instead of $\Delta^{l+1}_{\alpha^{(l+1)}}$ and set $U_{\alpha^{(l+1)}}$ =  $\bigcap \limits_{ \Delta^{l+1}_{\alpha^{(l+1)}} \subset \Delta^{p+1}_{\alpha} }U_\alpha$. Fix a $\dcal$-deformation $D: \Lambda^{l+1}_{0} \times [1,2] \longrightarrow \Lambda^{l+1}_{0} $ of $\Lambda^{l+1}_{0}$ onto $(0)$ \cite[?]{kihara}, and consider the solid arrow commutative diagram in $\dcal$	%insert figure2%

	\begin{center}
		\begin{tikzcd}
		\absno{\dot{\Delta}[l+1]\times(1)}_{\dcal} \cup \absno{\Lambda_{0}[l+1] \times \Delta[1]_{(0)}}_{\dcal}
		\xrightarrow{h(\cdot,1)+h(\cdot,1)\circ D} \cdarrow{d,hook'}      			      		& U_{\alpha^(l+1)} \cdarrow{d} \\      		\absno{\dot{\Delta}[l+1] \times \Delta[1]_{(0)}}_{\dcal}\cdarrow{r} \cdarrow{ur,dashrightarrow,"g"}
		& \ast,
		\end{tikzcd}

	\end{center}
	where $\dot{\Delta}[l+1]$ is the boundary of the $(l+1)$-simplex $\Delta[l+1]$, $\Delta[1]_{(0)}$ is the $0^{th}$ face of the 2-simplex $\Delta[2]$, and $\absno{\dot{\Delta}[l+1] \times \Delta[1]_{(0)} }_{\dcal}$ is identified with $\dot{\Delta}^{l+1} \times [1, 2]$ in $\czero$. (Recall that $\absno{\dot{\Delta}[l+1]\times \Delta[1]_{(0)}}_{\dcal} \ncong \dot{\Delta}^{l+1} \times [1, 2]$ in $\dcal$ (?).) Since the left and right vertical arrows are a trivial cofibration and a fibration respectively (? and ?), the dotted arrow $g$ exists, making the diagram commute.
	Noticing that the left and right vertical arrows in the lifting problem (1) is a trivial cofibration and a fibration respectively, we choose a continuous solution, and denote by $\gamma$ its restriction to $\Delta^{l+1} \times (1)$.

	Consider the solid arrow commutative diagram in $\czero$
	%insert figure3%

	\begin{center}
		\begin{tikzcd}
		\Delta^{l+1} \times (1) \cup \dot{\Delta}^{l+1} \times [1, 2] \arrow[r,"\gamma+g"] \arrow[d, hook'] & U_{\alpha^(l+1)} \arrow[d]\\
		\Delta^{l+1} \times [1, 2] \arrow[r]     \cdarrow{ur,dashed} \arrow{r}& \ast.
		\end{tikzcd}
	\end{center}
	\noindent Since the left and right vertical arrows are a trivial cofibration and a fibration respectively (? and ?), the dotted arrow exists, making the diagram commute. Thus, from ($A_p$), there is a smooth map $\gamma^{\prime}: \Delta^{l+1} \longrightarrow U_{\alpha^{(l+1)}}$ such that $\gamma'\ |_{\dot{\Delta}^{l+1}} = g(\cdot, 2)$.
	Let us replace $\gamma^{\prime}$ with another smooth map $\gamma'':\Delta^{l+1} \longrightarrow U_{\alpha^{(l+1)}}$ with $\gamma''\ |_{\dot{\Delta}^{l+1}} = g(\cdot, 2)$ so that the following lifting problem has a continuous solution
	%insert figure5%
	\begin{center}
		\begin{equation}
		\label{LABEL3}
		\begin{tikzcd}
		\Delta^{l+1}\times\{1,2\} \cup \dot{\Delta}^{l+1} \times [1,2] \arrow[r,"\gamma+\gamma^{\prime \prime}+g"] \arrow[d, hook'] & U_{\alpha^(l+1)} \arrow[d]\\
		\Delta^{l+1} \times [1,2] \arrow[r]     \cdarrow{ur,dashed} \arrow{r}& \ast.
		\end{tikzcd}
		\end{equation}
	\end{center}
	Define the continuous map $\delta: S^{l+1} \longrightarrow U_{\alpha^{(l+1)}}$ to be the composite
	\begin{center}
		$S^{l+1} \approx \Delta^{l+1} \times \{1,2\} \cup \dot{\Delta}^{l+1} \times [1,2] \xoverright{\gamma+\gamma^{\prime}+g} U_{\alpha^{(l+1)}}$.
	\end{center}
	By ($A_p$), we can choose a smooth map $-\delta: \Delta^{l+1} \longrightarrow U_{\alpha^{(l+1)}}$ with $-\delta|_{\dot{\Delta}^{l+1}}=0$ such that $[-\delta]=-[\delta]$ in $\pi_{l+1}(S\tilde{X})$.

	Choose a solution $\Gamma$ of the following lifting problem in $\dcal$

	%insert figure6%
	\begin{tikzcd}
	\ \ \ \ \ \ \ \ \ \ \ \ \ \ \ \ \ \ \ \ \ \ \ \	 \Lambda_{(l+1)}^{l+2}=\Delta_{(l)}^{l+1} \cup \Delta^{l+1}_{(l+2)} \cup \bigcup \limits_{i<l} \Delta^{l+1}_{(i)}
	\xrightarrow{\gamma^{\prime} + (-\delta) + 0} \cdarrow{d,hook'}
	& U_{\alpha^(l+1)} \cdarrow{d} \\
	\Delta^{l+2} \cdarrow{r} \cdarrow{ur,dashrightarrow,"\Gamma"}
	& \ast,
	\end{tikzcd}

	and set $\gamma^{\prime\prime}=\Gamma|_{\Delta^{l+1}_{(l+1)}}$, which is the desired one. Thus, consider the solid arrow commutative diagram in $\dcal$
	%insert figure7%
	\begin{center}
		\begin{tikzcd}
		\absno{\Delta[l+1] \times (2)}_{\dcal} \cup \absno{\dot{\Delta}[l+1] \times \Delta[1]_{(0)}}_{\dcal} \arrow[r,"\gamma^{\prime \prime} +g"] \arrow[d, hook'] & U_{\alpha^(l+1)} \arrow[d]\\
		\absno{\Delta[l+1] \times \Delta[1]_{(0)}}_{\dcal} \arrow[r]     \cdarrow{ur,dashed,"G"} \arrow{r}& \ast.
		\end{tikzcd}
	\end{center}
	Since the left and right vertical arrows are a trivial cofibration and a fibration respectively, the dotted arrow $G$ exists, making the diagram commute. Define $h=G(\cdot,1)$ on $\Delta^{l+1}_{\alpha^{(l+1)}} \times (1)$, and consider the solid arrow commutative diagram in $C^{0}$
	%insert figure8%
	\begin{center}
		\begin{tikzcd}
		\Delta^{l+1} \times \dot{I} \cup \dot{\Delta}^{l+1} \times I \arrow[r,"h"] \arrow[d, hook'] & U_{\alpha^(l+1)} \arrow[d]\\
		\Delta^{l+1} \times I \arrow[r]     \cdarrow{ur,dashed} \arrow{r}& \ast.
		\end{tikzcd}
	\end{center}
	By the construction, the dotted arrow exists, making the diagram commute. Hence, define $h$ on $\Delta^{l+1} \times I$ to be the dotted arrow. Thus, we have defined $h$ on $\Delta^{p+1} \times (0) \cup \dot{\Delta}^{p+1} \times I \cup sk_{p}(Sd^{k}\Delta^{p+1}) \times I$.
	\fi
	%%%%%%%%%%%%%%%%%%%%%%%%%%%%%%%%%%%%%%%%%%%%%%%%%%%%%%%%%%%%%%%%%%%
	\par\indent
	Last, recall that $U_{\alpha}$ is contractible and extend $h$ to $\Delta^{p+1}_{\alpha} \times (1)$ smoothly and further to $\Delta^{p+1}_{\alpha} \times I$ continuously for each $(p+1)$-simplex $\Delta^{p+1}_{\alpha}$. It completes the construction of the desired $\czero$-homotopy $h:\Delta^{p+1} \times I \longrightarrow X$. \vspace{0.2cm}

	\noindent
	{\sl Step 2}. Suppose that the implications hold up to $({\rm B}_{p})$. Then, we show that $({\rm A}_{p+1})$ holds.\par
	Let $\sigma$ be a smooth map from $\Delta^{p}$ to $X$ with $\sigma|_{\dot{\Delta}^{p}} \equiv 0$, and suppose that $\pi_{p}(\iota_X)([\sigma])=0$. Then there exists a continuous map $\theta:\Delta^{p+1} \longrightarrow X$ such that $\theta|_{\Delta^{p}_{(0)}} = \sigma$ and $\theta|_{\Lambda^{p+1}_{0}}=0$ (see \cite[Section 4.1]{origin} for the notation). Thus, from $({\rm B}_{p})$, we see that $[\sigma] = 0$ in $\pi_{p}(\sdcal X)$.

	Let $\tau$ be a continuous map from $\Delta^{p+1}$ to $X$ with $\tau|_{\dot{\Delta}^{p+1}}=0$. Then, we see from $({\rm B}_{p})$ that there exists a smooth map $\tau': \Delta^{p+1} \longrightarrow X$ such that $\tau'\ |_{\dot{\Delta}^{p+1}} = 0$ and $\pi_{p}(\iota_{X})([\tau']) = [\tau]$.
\end{proof}

\section{Applications to $C^{\infty}$-manifolds}
In Section 11.1, we prove the main theorems on $C^\infty$-manifolds (Theorems \ref{mapsmoothing}-\ref{DK}) using Theorems \ref{dmapsmoothing}-\ref{dDK}. In Sections 11.2 and 11.3, we study (semi)classicality condition and hereditary $C^\infty$-paracompactness, respectively. In Section 11.4, we apply the results obtained in Sections 11.2-11.3 to many important $C^\infty$-manifolds, showing that most of the $C^\infty$-manifolds introduced by Kriegl, Michor, and their coauthors are hereditarily $C^{\infty}$-paracompact and semiclassical.\par
In Sections 11.2-11.4, we use  some notions introduced in Appendix B and the results obtained in Appendix C.

\subsection{Proofs of Theorems \ref{mapsmoothing}-\ref{DK}}
%\par\indent
%First, we establish the following homotopy cofibrancy result using the results in Section ?. A convenient vector space $E$ is called lco-hereditarily $C^{\infty}$-paracompact if every open set with respect to the local convex topology is $C^{\infty}$-paracompact as a diffeological subspace of $E$.
%\begin{prop}
%Let $M$ be a $C^{\infty}$-paracompact $C^{\infty}$-manifold satisfying the following conditions:
%\begin{itemize}
%\item[$(i)$] $M$ admits a classical atlas.
%\item[$(ii)$] Every model vector space of $M$ is lco-hereditarily $C^{\infty}$-paracompact.
%\end{itemize}
%Then, $M$ is in $\wcal$.
%\end{prop}
In this subsection, we give proofs of Theorems \ref{mapsmoothing}-\ref{DK}. 

From Theorems \ref{hcofibrancy} and \ref{V}, we first derive the following theorems, which form the foundation of applications of smooth homotopy theory of diffeological spaces to $C^{\infty}$-manifolds.
\begin{thm}\label{mfdhcofibrancy}
 If a $C^{\infty}$-manifold $M$ is hereditarily $C^{\infty}$-paracompact and semiclassical, then $M$ is in $\wcal_{\dcal}$.
\end{thm}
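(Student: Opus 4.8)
The plan is to reduce Theorem \ref{mfdhcofibrancy} to the homotopy cofibrancy theorem (Theorem \ref{hcofibrancy}) by producing a $\dcal$-numerable covering of $M$ whose finite intersections are in $\wcal_{\dcal}$. The two hypotheses on $M$ are tailored for exactly this: hereditary $C^\infty$-paracompactness will supply the numerability, and semiclassicality will let me identify the relevant pieces with $c^\infty$-open subsets of convenient vector spaces endowed with the \emph{locally convex} topology, so that local convexity can be invoked.

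First I would fix a semiclassical atlas $\{(U_\alpha,u_\alpha)\}_{\alpha\in A}$ as in the definition in Section \ref{1.2}, so that each $u_\alpha(U_\alpha)$ and each $u_\alpha(U_\alpha\cap U_\beta)$ is open in the model space $E_\alpha$ \emph{with respect to the locally convex topology}. The underlying open cover $\{U_\alpha\}$ of $M$ is an open cover of the (hereditarily $C^\infty$-paracompact, hence $C^\infty$-paracompact) manifold $M$, so by Definition \ref{partition} it is $C^\infty$-numerable; via Proposition \ref{dmfd} and Remark \ref{paracpt}(2) this is the same as a $\dcal$-numerable covering of the diffeological space $IM$. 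For each nonempty finite $\sigma\subset A$ the intersection $U_\sigma=\bigcap_{\alpha\in\sigma}U_\alpha$ is an \emph{open submanifold} of $M$, and this is where the reduction to Theorem \ref{hcofibrancy} hinges: it remains to show every such $U_\sigma$ is in $\wcal_{\dcal}$.

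The key step, and the one I expect to be the main obstacle, is establishing that each finite intersection $U_\sigma$ lies in $\wcal_{\dcal}$. Here I would again use Theorem \ref{hcofibrancy}, but now over the chart $U_{\alpha_0}$ (for $\alpha_0\in\sigma$): the diffeomorphism $u_{\alpha_0}$ identifies $U_\sigma$ with a set that is open in $E_{\alpha_0}$ for the locally convex topology and therefore locally convex — so it admits a $\dcal$-numerable cover by \emph{convex} (hence $\dcal$-contractible, by an obvious straight-line $\dcal$-homotopy, cf. Lemma \ref{prism}(2)) open subsets whose finite intersections are again convex. Because a convex open set is $\dcal$-contractible, it is $\dcal$-homotopy equivalent to $\ast=|\Delta[0]|_\dcal$, hence in $\wcal_{\dcal}$; and the subordinate smooth partition of unity exists precisely by the hereditary $C^\infty$-paracompactness of $M$ restricted to the open submanifold $U_\sigma$. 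Applying Theorem \ref{hcofibrancy} to this convex cover of $U_\sigma$ shows $U_\sigma\in\wcal_{\dcal}$. The delicate point is that $C^\infty$-numerability of such covers must be checked for the \emph{diffeological} structure and not merely topologically; this is legitimate because $u_{\alpha_0}$ is a diffeomorphism onto a $c^\infty$-open set, smoothness of the straight-line homotopies uses convexity together with the convenient-calculus fact that smoothness is detected on smooth curves, and numerability transfers through the open submanifold structure via hereditary $C^\infty$-paracompactness.

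Finally, with each $U_\sigma\in\wcal_{\dcal}$ established and $\{U_\alpha\}$ known to be a $\dcal$-numerable covering of $M$, Theorem \ref{hcofibrancy} applies directly to conclude $M\in\wcal_{\dcal}$. The overall structure is thus a double application of the homotopy cofibrancy theorem — once locally, inside a chart, to handle the finite intersections using convexity, and once globally, over the atlas, to assemble $M$ — with hereditary $C^\infty$-paracompactness guaranteeing numerability at both levels and semiclassicality guaranteeing that the local model is genuinely locally convex so that the convex-set argument is available.
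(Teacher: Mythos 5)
Your proposal is correct and follows essentially the same route as the paper's own proof: the paper's Step 1 is exactly your local argument (a $C^\infty$-paracompact, locally-convex-open subset of a convenient vector space has a numerable convex cover with convex finite intersections, each $\dcal$-contractible, so Theorem \ref{hcofibrancy} puts it in $\wcal_{\dcal}$), and its Step 2 is your global assembly over the semiclassical atlas, using hereditary $C^\infty$-paracompactness for the numerability of both covers. The only point you leave implicit, as does the paper, is that $u_{\alpha_0}(U_\sigma)$ is locally-convex-open because it is a \emph{finite} intersection of the sets $u_{\alpha_0}(U_{\alpha_0}\cap U_\beta)$ furnished by the semiclassicality condition.
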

\begin{proof}
	We prove the result in two steps.\\
	{\sl Step 1}. Let $V$ be a subset of a convenient vector space of $E$ which is open with respect to the locally convex topology. Then, $V$ has a convex open cover. Thus, we see from Theorem \ref{hcofibrancy} that if $V$ is $C^{\infty}$-paracompact, then $V$ is in $\wcal_{\dcal}$.\\
	{\sl Step 2}. We show that $M$ is in $\mathcal{W}_\mathcal{D}$. Let $\{ (U_{\alpha}, u_{\alpha}) \}_{\alpha \in A}$ be a semiclassical atlas (i.e., an atlas as in the definition of a semiclassical $C^{\infty}$-manifold) of $M$ and $\sigma$ a nonempty finite subset of $A$. Since $u_{\alpha}: U_{\sigma} \longrightarrow u_{\alpha} (U_{\sigma})$ is a diffeomorphism for any $\alpha \in \sigma$ and $u_{\alpha}(U_{\sigma})$ is open with respect to the locally convex topology of $E_{\alpha}$, $U_{\sigma}$ is in $\wcal_{\dcal}$ (Step 1) and hence $M$ is in $\mathcal{W}_\mathcal{D}$ (see Theorem \ref{hcofibrancy}).
\end{proof}
\begin{thm}\label{mfdV}
	Every $C^{\infty}$-manifold is in $\vcal_{\dcal}$.
\end{thm}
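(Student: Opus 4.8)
The plan is to reduce the statement to the already-proved Theorem~\ref{V}, namely that every locally contractible diffeological space is in $\vcal_{\dcal}$. So the entire task is to verify that an arbitrary $C^\infty$-manifold $M$, viewed as a diffeological space via the fully faithful embedding $I\colon C^\infty \longhookrightarrow \dcal$ (Proposition~\ref{dmfd}), is locally $\dcal$-contractible in the sense of Definition~\ref{loccontr}; Theorem~\ref{V} then immediately gives $M\in \vcal_{\dcal}$. Note that no paracompactness or semiclassicality hypothesis is needed here, which matches the unconditional statement.

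First I would recall that $M$ is built by gluing $c^\infty$-open subsets of convenient vector spaces (Section 2.2). Fix $x\in M$ and a chart $(U,u)$ with $x\in U$, so $u\colon U \to u(U)$ is a diffeomorphism onto a $c^\infty$-open subset of a convenient vector space $E_U$. It therefore suffices to produce, around each point $e=u(x)$ of a $c^\infty$-open set $V\subset E_U$, a neighborhood basis consisting of $\dcal$-contractible open diffeological subspaces. The natural candidates are the $c^\infty$-open star-shaped (indeed convex) neighborhoods of $e$: for such a set $W$, the straight-line homotopy $H\colon W\times I \to W$, $H(w,t)=(1-t)w+te$, contracts $W$ to the point $e$. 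The key point to check is that $H$ is smooth as a map of diffeological spaces and that it does restrict to a self-map of $W$; smoothness follows because composing $H$ with a plot of $W\times I$ yields a curve/parametrization built affinely from smooth maps into the locally convex space, which is smooth in the convenient-calculus sense (here Axiom~2 / Lemma~\ref{prism}(2) and the cartesian closedness of $\dcal$ in Proposition~\ref{conven}(2) handle the interaction with $I\cong\Delta^1$). Thus each such $W$ is $\dcal$-contractible.

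The delicate point — the step I expect to be the main obstacle — is the topological one: one must show that the $c^\infty$-open convex (or star-shaped) neighborhoods of $e$ form a genuine neighborhood basis for the $c^\infty$-topology on $V$. This is subtle precisely because of the phenomenon flagged in Remark~\ref{semiclassical}: the $c^\infty$-topology of a convenient vector space can be strictly finer than the locally convex topology, so convex locally-convex-open sets need not be $c^\infty$-open, and conversely one cannot naively use convexity of locally-convex-open balls. The resolution should invoke the structure of the $c^\infty$-topology from \cite{KM}: around any point there is a basis of $c^\infty$-open neighborhoods, and one can arrange these to be star-shaped, e.g. by appealing to the fact that the $c^\infty$-topology is the final topology for smooth curves together with the standard $c^\infty$-open convex/absolutely-convex neighborhood results in \cite[Section 4]{KM}. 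I would cite the relevant statement that $c^\infty$-open absolutely convex sets form a neighborhood basis at $0$ (after translation) in a convenient vector space, which gives star-shaped $c^\infty$-open sets as required.

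Putting the pieces together: for a fixed $x\in M$, transporting a star-shaped $c^\infty$-open neighborhood basis at $u(x)$ back through the diffeomorphism $u^{-1}$ yields a neighborhood basis of $x$ in $M$ consisting of $\dcal$-contractible open diffeological subspaces (open submanifolds of $M$ inherit their diffeology as diffeological subspaces, and $u^{-1}$ preserves $\dcal$-contractibility since it is a diffeomorphism). Hence $M$ is locally $\dcal$-contractible, and Theorem~\ref{V} yields $M\in\vcal_{\dcal}$, completing the proof. The only genuine work is the convenient-calculus input on star-shaped $c^\infty$-open bases; the homotopy-theoretic and diffeological bookkeeping is routine given Theorem~\ref{V}.
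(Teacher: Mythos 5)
Your overall route is exactly the paper's: its proof of Theorem \ref{mfdV} is the one-line observation that every $C^\infty$-manifold is locally contractible as a diffeological space (citing \cite[Lemma 4.17]{KM}), whence Theorem \ref{V} applies; like you, it needs no paracompactness or semiclassicality hypothesis. Your chart-wise localization, the linear contraction on star-shaped $c^\infty$-open sets, and the smoothness bookkeeping via Proposition \ref{conven}(2) all match the intended argument.

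However, the specific convenient-calculus statement you propose to cite for the crucial step is false, and it contradicts the subtlety you correctly flagged one sentence earlier. Absolutely convex $c^\infty$-open sets do \emph{not} form a neighborhood basis of the $c^\infty$-topology in general: the absolutely convex $c^\infty$-open $0$-neighborhoods are exactly the absolutely convex open sets of the bornologification of $E$, so if they formed a basis the $c^\infty$-topology would be locally convex and would coincide with the bornologification --- precisely what fails when the $c^\infty$-topology is strictly finer than the locally convex topology (Remark \ref{semiclassical}, \cite[Section 4]{KM}). (A side slip in the same passage: convex locally-convex-open sets \emph{are} $c^\infty$-open, since the $c^\infty$-topology is finer; what fails is that they need not form a basis.) The correct input --- and what the paper's citation \cite[Lemma 4.17]{KM} supplies --- is the star lemma: for a $c^\infty$-open set $U$ and $x \in U$, the star $\{x + v \mid x + tv \in U \text{ for all } t \in [0,1]\}$ is again $c^\infty$-open (openness is tested along smooth curves), star-shaped with center $x$, and contained in $U$; no convexity is needed, and star-shapedness already suffices for the smooth linear contraction whose smoothness you justify correctly. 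With this substitution your argument closes and coincides with the paper's proof; compare also the use of the star construction via \cite[Theorem 16.21]{KM} in the proof of Proposition \ref{smregular}.
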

\begin{proof} Since every $C^{\infty}$-manifold is locally contractible as a diffeological space (\cite[Lemma 4.17]{KM}), the result follows immediately from Theorem \ref{V}.
\end{proof}
We have the following corollary; compare Corollary \ref{mfdst} with Examples \ref{Hawaiian} and \ref{quotient}.
\begin{cor}\label{mfdst}
	Let $M$ be a $C^{\infty}$-manifold and $x$ a point of $M$. Then, the smooth homotopy groups $\pi^{\dcal}_{\ast}(M,x)$ and the smooth homology groups $H_{\ast}(M)$ are naturally isomorphic to the homotopy groups $\pi_{\ast}(\widetilde{M},x)$ and the homology groups $H_{\ast}(\widetilde{M})$ of the underlying topological space $\widetilde{M}$ respectively.
\end{cor}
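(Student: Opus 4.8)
The plan is to deduce Corollary \ref{mfdst} directly from the two structural facts just established, namely Theorem \ref{mfdhcofibrancy} (hereditarily $C^\infty$-paracompact, semiclassical manifolds lie in $\wcal_\dcal$) and Theorem \ref{mfdV} (every $C^\infty$-manifold lies in $\vcal_\dcal$), combined with the characterization of $\vcal_\dcal$ in Corollary \ref{W}(3). The key observation is that the isomorphism statements for homotopy and homology are precisely what membership in $\vcal_\dcal$ buys us, once we pass from $M$ to its associated diffeological space $IM$ (written simply $M$) and recall that $\widetilde{M} = \widetilde{IM}$ by Lemma \ref{predmfd}.

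First I would treat the homotopy groups. By Theorem \ref{mfdV}, the diffeological space $M$ is in $\vcal_\dcal$, so the equivalence $\mathrm{(i)}\Leftrightarrow\mathrm{(iii)}$ of Corollary \ref{W}(3) applies verbatim: the natural homomorphism
\[
\pi^{\dcal}_{p}(M, x) \longrightarrow \pi_{p}(\widetilde{M}, x)
\]
is an isomorphism for every $x \in M$ and every $p \geq 0$. This is exactly the first assertion, and no further work is needed beyond invoking the definitions of $\pi^\dcal_p$ and $\pi_p(\widetilde{M},x)$ recalled before Theorem \ref{homotopygp}. Note that this part does \emph{not} require the hereditary $C^\infty$-paracompactness or semiclassicality hypotheses — Theorem \ref{mfdV} holds for \emph{every} $C^\infty$-manifold — which is consistent with the unqualified statement of the corollary.

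For the homology groups I would argue as follows. Membership of $M$ in $\vcal_\dcal$ is equivalent, by $\mathrm{(i)}\Leftrightarrow\mathrm{(ii)}$ of Corollary \ref{W}(3), to the natural inclusion of simplicial sets $S^\dcal M \hookrightarrow S\widetilde{M}$ being a weak equivalence in $\scal$. Since the singular homology of a diffeological space was defined (Section 3.1) by $H_\ast(M) = H_\ast(S^\dcal M) = H_\ast(\zbb S^\dcal M)$, and since a weak equivalence of simplicial sets induces an isomorphism on the homology $H_\ast(\zbb(-))$ of the associated chain complexes, the inclusion $S^\dcal M \hookrightarrow S\widetilde{M}$ gives a natural isomorphism
\[
H_\ast(M) = H_\ast(S^\dcal M) \xrightarrow[\cong]{} H_\ast(S\widetilde{M}) = H_\ast(\widetilde{M}).
\]
Here I use that $H_\ast(S\widetilde{M})$ is, by definition, the ordinary singular homology of the topological space $\widetilde{M}$. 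Naturality in $M$ follows from the naturality of the inclusion $S^\dcal M \hookrightarrow S\widetilde{M}$ (Lemma \ref{prism}(1)).

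I do not expect any serious obstacle: the corollary is essentially a repackaging of Corollary \ref{W}(3) for the special case $A = IM$, together with the identification $\widetilde{IM} = \widetilde{M}$. The only point requiring a word of care is the passage from the simplicial-set weak equivalence $S^\dcal M \hookrightarrow S\widetilde{M}$ to a homology isomorphism; I would handle this by noting that homology of simplicial sets (as defined via $\zbb(-)$ in Section 3.1) sends weak equivalences to isomorphisms, which is a standard fact about the functor $\zbb\cdot : \scal \to \mathsf{Kom}_{\geq 0}(\zbb\text{-}mod)$ and its derived-functor interpretation. With that, both isomorphisms are immediate and natural, completing the proof.
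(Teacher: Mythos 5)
Your proposal is correct and matches the paper's own proof: both invoke Theorem \ref{mfdV} together with Corollary \ref{W} to obtain that the natural inclusion $S^{\dcal}M \longhookrightarrow S\widetilde{M}$ is a weak equivalence in $\scal$, and then read off the isomorphisms on homotopy and homology groups. Your added remarks --- that hereditary $C^{\infty}$-paracompactness and semiclassicality are not needed, and that homology of simplicial sets sends weak equivalences to isomorphisms --- simply make explicit what the paper leaves implicit.
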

\begin{proof}
	By Theorem \ref{mfdV} and Corollary \ref{W}, the natural inclusion
	\[
		S^\dcal M \longhookrightarrow S \widetilde{M}
	\]
	is a weak equivalence in $\scal$, which induces isomorphisms
 	\begin{align*}
		H_\ast(M) \underset{\cong}{\longrightarrow} H_\ast(\widetilde{M}),\\
		\pi^\dcal_\ast (M,x) \underset{\cong}{\longrightarrow} \pi_\ast(\widetilde{M},x). &\qedhere
 	\end{align*}
\end{proof}
Theorems \ref{mapsmoothing}-\ref{DK} are now immediate consequences of Theorems \ref{dmapsmoothing}-\ref{dDK}. Recall from Remark \ref{mfdenrich} that $C^\infty$ and $C^\infty/M$ are $\scal$-full subcategories of $\dcal$ and $\dcal / M$ respectively and note that $\mathsf{P}C^\infty G/M$ is isomorphic to $\mathsf{P}\dcal G/ M$ as $\scal$-categories (see Remark \ref{4properties}(3)).
\begin{proof}[Proof of Theorem \ref{mapsmoothing}] The result follows from Theorem \ref{dmapsmoothing} by using Theorems \ref{mfdhcofibrancy}-\ref{mfdV}.
\end{proof}
\begin{proof}[Proof of Theorem \ref{sectionsmoothing}]
	The result follows from Theorem \ref{dsectionsmoothing}, by using Theorems \ref{mfdhcofibrancy}-\ref{mfdV}.
\end{proof}
\begin{proof}[Proof of Theorem \ref{DK}] The result follows from Theorem \ref{dDK} and Remark \ref{bdleparacpt} by using Theorems \ref{mfdhcofibrancy}-\ref{mfdV}.
\end{proof}

We end this subsection by establishing a variant of Theorem \ref{mfdhcofibrancy}. Recall that a topological space $X$ is called {\sl hereditarily Lindel\"{o}f} if any open set of $X$ is Lindel\"{o}f. Recall also the subclass $\wcal_{\dcal\, 0}$ of $\wcal_{\dcal}$ from Remark \ref{refinement}.
\begin{prop}\label{countable}
	For a semiclassical $C^\infty$-manifold $M$, consider the following conditions:
	\begin{itemize}
		\item[{\rm (i)}] $M$ is hereditarily Lindel\"of and $C^\infty$-regular.
		\item[{\rm (ii)}] $M$ is Lindel\"of and hereditarily $C^\infty$-paracompact.
		\item[{\rm (iii)}] $M$ is in $\wcal_{\dcal\,0}$.
	\end{itemize}
	Then, the implications $(\rm i) \Longrightarrow (\rm ii) \Longrightarrow (\rm iii)$ hold.
\begin{proof}
	The result follows from \cite[Theorem 16.10]{KM} (or Proposition \ref{Lindelof}), Theorem \ref{mfdhcofibrancy}, and Remark \ref{refinement}(2).
\end{proof}
\end{prop}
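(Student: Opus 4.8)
The plan is to prove the two implications separately, in each case reducing to a previously established result keyed to the Lindel\"of property.

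For $(\rm i)\Rightarrow(\rm ii)$, note first that $M$ is open in itself, so hereditary Lindel\"ofness immediately yields that $M$ is Lindel\"of; the substantive content is hereditary $C^\infty$-paracompactness. I would take an arbitrary open submanifold $U$ of $M$. As an open subset of a hereditarily Lindel\"of space, $U$ is Lindel\"of, and by Remark \ref{regular} $C^\infty$-regularity is inherited by every open $C^\infty$-subspace, so $U$ is $C^\infty$-regular. Since our $C^\infty$-manifolds are separated as arc-generated spaces (hence $T_1$) and separatedness passes to $U$ via the closure properties recorded at the end of Section 2.1, the fact that $T_1$ together with $\ccal$-regularity gives the Hausdorff property (see the comment in Section 5.1) shows that $\widetilde U$ is Hausdorff. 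I would then apply \cite[Theorem 16.10]{KM} (equivalently Proposition \ref{Lindelof}), which asserts that a Lindel\"of, $C^\infty$-regular $C^\infty$-manifold is $C^\infty$-paracompact, to conclude that $U$ is $C^\infty$-paracompact. As $U$ was arbitrary, $M$ is hereditarily $C^\infty$-paracompact.

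For $(\rm ii)\Rightarrow(\rm iii)$, I would use the standing hypothesis that $M$ is semiclassical together with hereditary $C^\infty$-paracompactness from (ii): Theorem \ref{mfdhcofibrancy} then gives $M\in\wcal_{\dcal}$. Moreover, $M$ Lindel\"of means precisely that its underlying topological space $\widetilde{M}$ is Lindel\"of. Invoking the result quoted in Remark \ref{refinement}(2) --- that a diffeological space in $\wcal_{\dcal}$ with Lindel\"of underlying space lies in $\wcal_{\dcal\,0}$ --- I conclude $M\in\wcal_{\dcal\,0}$, which is condition (iii).

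The argument is essentially a chaining of cited results, so there is no deep difficulty; the one point requiring care is ensuring that the separation hypotheses of the Kriegl--Michor paracompactness theorem are actually met in our setting. This is the place I would be most careful: it is handled by the convention that $C^\infty$-manifolds are separated as arc-generated spaces, which combined with the inherited $C^\infty$-regularity of the open submanifold $U$ yields the Hausdorff property the theorem needs, together with the fact that open $C^\infty$-subspaces inherit both separatedness and $C^\infty$-regularity.
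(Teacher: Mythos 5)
Your proof is correct and follows exactly the route of the paper, which deduces the proposition from \cite[Theorem 16.10]{KM} (Proposition \ref{Lindelof}), Theorem \ref{mfdhcofibrancy}, and Remark \ref{refinement}(2); your write-up merely makes explicit the verifications (subspace topology of open submanifolds, inherited $C^\infty$-regularity via Remark \ref{regular}, and the separation hypotheses) that the paper leaves implicit. In particular, your care about the $T_1$/Hausdorff condition needed for the Kriegl--Michor paracompactness theorem is exactly the right point to check, and it is handled as you say by the standing separatedness convention.
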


\subsection{Classical atlases}
In this subsection, we introduce two forms semiclassical $C^{\infty}$-manifolds appear.
\par\indent
A $C^{\infty}$-manifold $M$ is called {\sl classical} if $M$ admits a semiclassical atlas $\{(U_{\alpha}, u_{\alpha}) \}_{\alpha \in A}$ such that the chart changing $u_{\alpha\beta}: u_{\beta} (U_{\alpha} \cap U_{\beta}) \longrightarrow u_{\alpha}(U_{\alpha} \cap U_{\beta})$ is a homeomorphism with respect to the locally convex topologies of $E_{\alpha}$ and $E_{\beta}$ for any $\alpha$, $\beta \in A$. If $M$ has a classical atlas $\{ (U_{\alpha}, u_{\alpha}) \}_{\alpha \in A}$, then the classical topology on $M$ is defined to be the final topology for the inclusions $\{ u_{\alpha} (U_{\alpha})\cong U_\alpha \longhookrightarrow M\}_{\alpha \in A}$, where $u_{\alpha}(U_{\alpha})$ is endowed with the locally convex topology; the set $M$ endowed with the classical topology is denoted by $M_{cl}$. The inclusion $u_{\alpha}(U_{\alpha})\cong U_\alpha \longhookrightarrow M_{cl}$ is an open topological embedding and $id: \widetilde{M} \longrightarrow M_{cl}$ is continuous.
%\par\indent
%We can easily observe that if $\{ (U_{\alpha}, u_{\alpha}) \}_{\alpha \in A}$ is a classical atlas of a $C^{\infty}$-manifold $M$ and $M'$ is an open set of $M_{cl}$, then $\{ (U'_{\alpha}, u'_{\alpha})\}_{\alpha \in A}$ is a classical atlas of the open submanifold $M'$, where $U'_{\alpha} = U_{\alpha} \cap M'$ and $u'_{\alpha} = u_{\alpha}|_{U'_{\alpha}}$.
\begin{lem}\label{FrechetSilva}
	If a $C^\infty$-manifold $M$ is modeled on Fr\'{e}chet spaces or Silva spaces, then $M$ is classical and $\widetilde{M}=M_{cl}$ holds.
\end{lem}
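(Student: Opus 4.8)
The plan is to fix any $C^\infty$-atlas $\{(U_\alpha,u_\alpha)\}_{\alpha\in A}$ of $M$, whose model spaces $E_\alpha$ are by hypothesis Fréchet spaces or Silva spaces, and to upgrade it to a classical atlas by invoking the single structural fact from convenient calculus that on such spaces the $c^\infty$-topology agrees with the locally convex topology (for Fréchet spaces this is \cite[Theorem 4.11(1)]{KM}, recalled in Section 2.2; the Silva case holds analogously). First I would record that, by definition of an atlas, the sets $u_\alpha(U_\alpha)$ and $u_\beta(U_\alpha\cap U_\beta)$ are $c^\infty$-open in $E_\alpha$ and $E_\beta$, and that each chart changing $u_{\alpha\beta}$ is a smooth map between $c^\infty$-open subsets. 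Applying the coincidence of topologies, every $c^\infty$-open set in $E_\alpha$ is open for the locally convex topology; hence $u_\alpha(U_\alpha)$ and $u_\alpha(U_\alpha\cap U_\beta)$ are locally convex open, which is exactly the semiclassicality condition.

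For classicality I would then argue that each chart changing is a homeomorphism for the locally convex topologies. Indeed, $u_{\alpha\beta}$ is smooth, hence continuous for the $c^\infty$-topologies of $E_\beta$ and $E_\alpha$ (smooth maps are always $c^\infty$-continuous); since those $c^\infty$-topologies coincide with the locally convex ones, $u_{\alpha\beta}$ is continuous for the locally convex topologies. Its inverse is the chart changing $u_{\beta\alpha}$, which is smooth for the same reason, so it too is locally convex continuous. Thus $u_{\alpha\beta}$ is a locally convex homeomorphism, and the semiclassical atlas is in fact classical; this establishes that $M$ is classical.

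Finally, to prove $\widetilde{M}=M_{cl}$, I would show that both set-theoretic identities $\widetilde{M}\to M_{cl}$ and $M_{cl}\to\widetilde{M}$ are continuous. Continuity of $id:\widetilde{M}\to M_{cl}$ is already recorded above. For the reverse, I would use that $M_{cl}$ carries the final topology for the open embeddings $U_\alpha\hookrightarrow M_{cl}$, where $U_\alpha\cong u_\alpha(U_\alpha)$ carries the locally convex topology; hence it suffices to check that each composite $U_\alpha\hookrightarrow M_{cl}\xrightarrow{id}\widetilde{M}$ is continuous. This composite is the inclusion of the open submanifold $U_\alpha$, and the subspace topology it inherits from $\widetilde{M}$ is the $c^\infty$-topology of $u_\alpha(U_\alpha)$; since $u_\alpha(U_\alpha)$ is a $c^\infty$-open subset of a Fréchet or Silva space, this $c^\infty$-topology is the trace of the locally convex topology, i.e. the locally convex topology of $U_\alpha$. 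So the inclusion is continuous (indeed an open embedding), and $id:M_{cl}\to\widetilde{M}$ is continuous, giving $\widetilde{M}=M_{cl}$.

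I expect the main obstacle to be this last paragraph: carefully identifying the subspace topology that an open chart inherits from the curve-defined topology $\widetilde{M}$ with the $c^\infty$-topology of the corresponding $c^\infty$-open subset of the model space, and confirming that the global coincidence $c^\infty=$ locally convex descends to $c^\infty$-open subsets (so that the trace really is the locally convex topology). The identity $\widetilde{M}=\widetilde{IM}$ of Lemma \ref{predmfd}, together with the fact that smooth curves into an open submanifold are exactly the smooth curves of $M$ landing in it, reduces this to the two inclusions between the subspace and $c^\infty$ topologies; the only genuinely external input beyond the Fréchet case already in Section 2.2 is the Silva-space half of \cite[Theorem 4.11]{KM}.
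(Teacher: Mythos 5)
Your proof is correct and is essentially the paper's argument written out in full: the paper's own proof is a one-line citation of \cite[Theorem 4.11(1)-(2)]{KM} (the coincidence of the $c^\infty$- and locally convex topologies on Fr\'echet spaces and on strong duals of Fr\'echet--Schwartz spaces, i.e.\ Silva spaces, via \cite[Corollary 12.5.9]{Jar}), which is precisely the single structural fact you invoke, and the chart-changing and $\widetilde{M}=M_{cl}$ verifications you spell out (including the descent of the topology coincidence to $c^\infty$-open subsets) are the routine details the paper leaves implicit.
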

\begin{proof}
	The result follows from \cite[Theorem 4.11(1)-(2)]{KM}; note that the strong dual of a Fr\'{e}chet-Schwartz space is just a Silva space (\cite[Corollary 12.5.9 and the comment on it in p. 270]{Jar}).
\end{proof}
From the viewpoint of application of Lemma \ref{FrechetSilva}, the following lemma is important. Recall the definition of a submanifold from \cite[27.11]{KM}.
\begin{lem}\label{FSsubmfd} If a $C^{\infty}$-manifold $M$ is modeled on Fr\'{e}chet spaces (resp. Silva spaces), then every submanifold of $M$ is also modeled on Fr\'{e}chet spaces (resp. Silva spaces).
\end{lem}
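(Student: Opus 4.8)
The plan is to reduce the statement to a purely linear-topological fact about the model spaces. By the definition of a submanifold recalled in \cite[27.11]{KM}, each point of a submanifold $N$ of $M$ admits a submanifold chart $(U,u)$ of $M$, i.e.\ a chart with $u(U\cap N)=u(U)\cap F_U$ for a closed linear subspace $F_U$ of the model space $E_U$ of $M$ at $(U,u)$. The restricted charts $u|_{U\cap N}$ form an atlas for $N$ whose model spaces are precisely the $F_U$. Hence it suffices to prove the following: if $E$ is a Fr\'{e}chet space (resp.\ a Silva space) and $F\subset E$ is a closed linear subspace, then $F$, equipped with the subspace topology, is again a Fr\'{e}chet space (resp.\ a Silva space).

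For the Fr\'{e}chet case this is immediate and requires no further work: a Fr\'{e}chet space is a complete metrizable locally convex vector space, and a closed subspace of such a space is again complete, metrizable, and locally convex, hence Fr\'{e}chet.

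The Silva case is the heart of the matter, and I would treat it by duality. Recall from the proof of Lemma \ref{FrechetSilva} (see \cite{Jar}) that a Silva space is exactly the strong dual $P'_\beta$ of a Fr\'{e}chet--Schwartz space $P$, and that Fr\'{e}chet--Schwartz spaces are reflexive Montel spaces. Writing $E=P'_\beta$ and letting $F_\perp\subset P$ denote the preannihilator of the closed subspace $F$, I would identify $F$ with the strong dual of the quotient $P/F_\perp$. Since the quotient of a Fr\'{e}chet--Schwartz space by a closed subspace is again Fr\'{e}chet--Schwartz (the Schwartz property passes to quotients, and the Fr\'{e}chet property passes to quotients by closed subspaces), the space $P/F_\perp$ is Fr\'{e}chet--Schwartz, and therefore $(P/F_\perp)'_\beta$ is Silva. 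It then remains to check that this Silva topology coincides with the subspace topology that $F$ inherits from $E$.

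This last topological identification is the step I expect to be the main obstacle. I would resolve it using the Montel and reflexivity properties of Fr\'{e}chet--Schwartz spaces, which guarantee that the polar duality between closed subspaces of $P'_\beta$ and separated quotients of $P$ is an isomorphism of locally convex spaces rather than a mere algebraic bijection; equivalently, one may invoke the standard permanence theorem that the class of $(\mathrm{DFS})$-spaces (= Silva spaces) is stable under the formation of closed subspaces. Either route yields that $F$ is a Silva space, which completes the reduction and hence the proof.
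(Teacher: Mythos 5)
Your proposal is correct and takes essentially the same route as the paper: both reduce, via submanifold charts in the sense of \cite[27.11]{KM}, to the linear-topological fact that a closed linear subspace of a Fr\'{e}chet (resp.\ Silva) space is again Fr\'{e}chet (resp.\ Silva). The paper simply cites this fact (\cite[p.~49]{Sch} for the Fr\'{e}chet case and \cite[Proposition 1]{Yo} for the Silva case), whereas you sketch the duality argument behind the Silva case yourself; that sketch is the standard proof of Yoshinaga's proposition, so nothing is missing.
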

\begin{proof} The result follows from the fact that a closed linear subspace $F$ of a Fr\'{e}chet space (resp. Silva space) $E$ is a Fr\'{e}chet space (resp. Silva space) (\cite[p. 49]{Sch}, \cite[Proposition 1]{Yo}).
\end{proof}
Next, we see that a $C^{\infty}$-manifold in Keller's $C^{\infty}_{c}$-theory (or a $C^\infty_c$-manifold in short) defines a classical $C^{\infty}$-manifold.
See Appendix B for the category $C^{\infty}_{c\ conv}$ and the faithful functor $J : C^{\infty}_{c\ conv} \longrightarrow C^{\infty}$.
\begin{lem}\label{Kclassical}
	If a $C^{\infty}$-manifold $M$ comes from a $C^{\infty}_{c}$-manifold (i.e., $M \cong JM'$ for some $M' \in C^{\infty}_{c\ conv}$), then $M$ is classical.
\end{lem}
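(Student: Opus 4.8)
The plan is to produce a classical atlas on $M$ directly from a defining $C^\infty_c$-atlas of $M'$. I would first record that being classical is invariant under $C^\infty$-diffeomorphisms: if $\{(V_\alpha,v_\alpha)\}_{\alpha\in A}$ is a classical atlas of a $C^\infty$-manifold $N$ and $\phi\colon M\longrightarrow N$ is a $C^\infty$-diffeomorphism, then $\{(\phi^{-1}(V_\alpha),v_\alpha\circ\phi)\}_{\alpha\in A}$ has the same chart images $v_\alpha(V_\alpha)$ and literally the same chart changings $v_{\alpha\beta}$, so it is again classical. Hence it suffices to treat the case $M=JM'$.

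Next I would recall from Appendix B how $J$ acts: the convenient $C^\infty$-manifold $JM'$ has the same underlying set as $M'$, and a defining $C^\infty_c$-atlas $\{(U_\alpha,u_\alpha)\}_{\alpha\in A}$ of $M'$ is reinterpreted as an atlas of $JM'$ with the same charts (the chart changings, being $C^\infty_c$-diffeomorphisms, are in particular convenient-smooth, which is what makes $J$ well defined; cf.\ the failure of fullness noted in Remark \ref{fullembedding}(2)). In Keller's theory the chart images $u_\alpha(U_\alpha)$ are open in the model space $E_\alpha$ for the locally convex topology, and the chart changings $u_{\alpha\beta}\colon u_\beta(U_\alpha\cap U_\beta)\longrightarrow u_\alpha(U_\alpha\cap U_\beta)$ are $C^\infty_c$-diffeomorphisms.

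I would then verify semiclassicality for this atlas. Since $U_\alpha\cap U_\beta$ is open in the chart domain $U_\beta$, its image $u_\beta(U_\alpha\cap U_\beta)$ is open in $u_\beta(U_\beta)$ and hence locally-convex-open in $E_\beta$; applying the $C^\infty_c$-diffeomorphism $u_{\alpha\beta}$, which is continuous for the locally convex topologies, shows that $u_\alpha(U_\alpha\cap U_\beta)$ is locally-convex-open in $E_\alpha$. As locally-convex-open sets are a fortiori $c^\infty$-open, the sets $u_\alpha(U_\alpha)$ and $u_\alpha(U_\alpha\cap U_\beta)$ satisfy the requirements of a semiclassical atlas. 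The classicality condition then follows at once: both $u_{\alpha\beta}$ and its inverse $u_{\beta\alpha}$ are $C^\infty_c$-maps, hence continuous for the locally convex topologies, so each $u_{\alpha\beta}$ is a homeomorphism between the locally-convex-open sets $u_\beta(U_\alpha\cap U_\beta)$ and $u_\alpha(U_\alpha\cap U_\beta)$.

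The only genuine input — and the step I would pin down carefully against Appendix B — is that $J$ carries the defining $C^\infty_c$-atlas to the ``same'' atlas of $JM'$ together with its locally-convex openness data, and that morphisms in Keller's $C^\infty_c$-theory are continuous for the locally convex topology. Everything past that point is formal bookkeeping, so I do not expect a substantive obstacle; the care lies entirely in matching the conventions of the two calculi.
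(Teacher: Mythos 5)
Your proof is correct and is exactly the argument the paper has in mind: its own proof of Lemma \ref{Kclassical} is the single word ``Obvious,'' since the defining $C^{\infty}_{c}$-atlas of $M'$ is carried by $J$ to an atlas of $JM'$ with locally-convex-open chart images and chart changings that are homeomorphisms for the locally convex topologies. Your careful unwinding (including the reduction along a diffeomorphism and the remark that locally-convex-open sets are $c^\infty$-open) is just the explicit version of that observation.
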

\begin{proof}
	Obvious.
\end{proof}
Lemmas \ref{FrechetSilva} and \ref{Kclassical} apply to many important $C^{\infty}$-manifolds (see Section 11.4).
\begin{rem}\label{other}
	\begin{itemize}
	\item[{\rm (1)}] Lemma \ref{FrechetSilva} can be regarded as a specialization of Lemma \ref{Kclassical} (see Remark \ref{full}).
	\item[{\rm (2)}] Lemma \ref{Kclassical} remains true even if we replace Keller's $C^{\infty}_{c}$-theory with another classical infinite dimensional calculus satisfying the three conditions in Remark \ref{classical}.
	\end{itemize}
\end{rem}
\subsection{Hereditary $C^{\infty}$-paracompactness}
The condition of being hereditarily $C^\infty$-paracompact and the condition of being hereditarily Lindel\"{o}f and $C^\infty$-regular are important from a smooth homotopical viewpoint (see Theorem \ref{mfdhcofibrancy} and Proposition \ref{countable}); recall also that the implication
\[
\text{hereditarily Lindel\"{o}f and }C^\infty\text{-regular }\Rightarrow\text{ herditarily }C^\infty\text{-paracompact}
\]
holds (see Proposition \ref{countable}).\par
In this subsection, we study these two conditions, recalling the basics of $C^{\infty}$-paracompactness from \cite[Section 16]{KM} and Appendix C. More concretely, for each of these two conditions, we give a necessary and sufficient condition of the form
\vspace*{2mm}\\
(11.1) topological condition on $M$ + condition on the model vector spaces of $M$
\vspace*{2mm}\\
(Propositions \ref{hLindelof} and \ref{model}) along with its application (Corollaries \ref{nuclear} and \ref{metricmfd}).
\begin{rem}\label{characterization}
	Given a smooth topological condition $\pcal$ on a $C^\infty$-manifold $M$, it is of practical importance to give a necessary and sufficient condition of the form as in (11.1); such a characterization enables us to directly apply the results of topology and functional analysis. Proposition \ref{smregular} also gives such a characterization.
\end{rem}
Some results are discussed in a rather general context using the notation of Section 5; $\ccal$ denotes one of the categories $C^\infty$, $\dcal$, $\czero$ and $\tcal$, and $U:\ccal \longrightarrow \tcal$ denotes the underlying topological space functor for $\ccal$.\par

We begin by studying the heredity of the relevant properties on objects of $\ccal$; see Remark \ref{regular} for the heredity of $\ccal$-regularity.
\begin{lem}\label{hered}
	Hereditary $\ccal$-paracompactness and hereditary Lindel\"{o}f property pass from an object of $\ccal$ to every $\ccal$-subspace whose underlying topology is the subspace topology.
	\begin{proof}
		We prove the result for hereditary $\ccal$-paracompactness; the proof for hereditary Lindel\"{o}f property is similar.\par
		Let $X$ be a hereditarily $\ccal$-paracomapct object of $\ccal$ and $A$ a $\ccal$-subspace of $X$ such that $UA$ is a topological subspace of $UX$. Suppose that an open set $V$ of $A$ and an open cover $\{V_{i} \}$ of $V$ are given. Then, we can choose a family $\{U_{i} \}$ of open sets of $X$ such that $V_{i} = U_{i} \cap A$ and use the $\ccal$-paracompactness of $U: = \cup\ U_{i}$ to construct a $\ccal$-partition of unity on $V$ subordinate to $\{ V_{i} \}$.
	\end{proof}
\end{lem}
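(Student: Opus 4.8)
The statement to prove is Lemma~\ref{hered}: hereditary $\ccal$-paracompactness and the hereditary Lindel\"of property pass from an object $X$ of $\ccal$ to every $\ccal$-subspace $A$ whose underlying topology is the subspace topology of $UX$.

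The plan is to treat the two properties uniformly, since their proofs are structurally identical; I would write out the hereditary $\ccal$-paracompactness case in full and note that the Lindel\"of case runs verbatim with ``$\ccal$-partition of unity'' replaced by ``countable subcover.'' First I would unwind the definitions: by Definition~\ref{partition}(3), $A$ is hereditarily $\ccal$-paracompact precisely when every open $\ccal$-subspace $V$ of $A$ is $\ccal$-paracompact, i.e.\ every open covering of $V$ is $\ccal$-numerable (admits a subordinate $\ccal$-partition of unity). So I fix an open $\ccal$-subspace $V\subseteq A$ and an open cover $\{V_i\}$ of $V$, and aim to produce a $\ccal$-partition of unity subordinate to $\{V_i\}$.

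The key move exploits the hypothesis $UA=$ subspace topology of $UX$. Because the topology on $A$ is induced from $X$, each open set of $A$ has the form (open set of $X$) $\cap\,A$; thus I can lift the situation to $X$. Concretely, I would choose open sets $U_i$ of $X$ with $V_i=U_i\cap A$, set $U=\bigcup_i U_i$ (an open $\ccal$-subspace of $X$, hence $\ccal$-paracompact by hereditary $\ccal$-paracompactness of $X$), and take a $\ccal$-partition of unity $\{\varphi_i\colon U\to\mathbb{R}\}$ on $U$ subordinate to the open cover $\{U_i\}$. The crucial observation is that the restriction $\varphi_i|_V$ is a $\ccal$-function on $V$ (restriction to an open $\ccal$-subspace preserves morphisms, since an open subset carries a unique $\ccal$-subspace structure and the inclusion $V\hookrightarrow U$ is a morphism of $\ccal$), and that $\{\varphi_i|_V\}$ is subordinate to $\{V_i\}$: indeed $\operatorname{supp}(\varphi_i|_V)\subseteq \operatorname{supp}\varphi_i\cap V\subseteq U_i\cap V=V_i$, using that $V\subseteq A\subseteq X$. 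Local finiteness and the sum condition $\sum_i\varphi_i=1$ descend immediately to $V$.

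The main obstacle — and the only genuinely delicate point — is verifying that the family $\{\varphi_i|_V\}$ really is an honest $\ccal$-partition of unity in the sense of Definition~\ref{partition}(1), in particular that local finiteness of $\{\operatorname{supp}\varphi_i\}$ on $U$ restricts to local finiteness of $\{\operatorname{supp}(\varphi_i|_V)\}$ on $V$; this is where the subspace-topology hypothesis is used a second time, since local finiteness is a statement about the underlying topology $UV$, which is the subspace topology inherited (via $A$) from $UX$. Once this is in hand the proof is complete. For the hereditary Lindel\"of case I would run the identical lifting argument: cover $V$, lift to a cover $\{U_i\}$ of an open $U\subseteq X$, extract a countable subcover of $U$ by the hereditary Lindel\"of property of $X$, and intersect back with $A$ to obtain a countable subcover of $V$; here no partition of unity is needed, so the only point is again that open sets of $A$ are traces of open sets of $X$, which is exactly the subspace-topology hypothesis.
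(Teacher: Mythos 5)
Your proposal is correct and follows essentially the same route as the paper: lift the open cover $\{V_i\}$ of $V\subseteq A$ to open sets $U_i$ of $X$ with $V_i=U_i\cap A$ (using the subspace-topology hypothesis), invoke $\ccal$-paracompactness of $U=\bigcup_i U_i$ to get a subordinate $\ccal$-partition of unity, and restrict it to $V$. The paper's proof is just a compressed version of this argument; your verifications of the support inclusion $\operatorname{supp}(\varphi_i|_V)\subseteq U_i\cap V=V_i$ and of local finiteness on $UV$ are exactly the details it leaves implicit.
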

From Lemma \ref{hered}, we obtain the following result.
\begin{cor}\label{submfd}
	If a $C^\infty$-manifold $M$ is hereditarily $C^\infty$-paracompact (resp. hereditarily Lindel\"{o}f and $C^\infty$-regular), then every submanifold of $M$ is also hereditarily $C^\infty$-paracompact (resp. hereditarily Lindel\"{o}f and $C^\infty$-regular).
\begin{proof}
	Since the underlying topology of a submanifold is the subspace topology of the ambient $C^{\infty}$-manifold (\cite[27.11]{KM}), the result follows from Lemma \ref{hered} and Remark \ref{regular}.
\end{proof}
\end{cor}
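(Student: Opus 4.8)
The plan is to reduce the statement about submanifolds to Lemma \ref{hered}, which already handles heredity of these properties under passage to $\ccal$-subspaces whose underlying topology is the subspace topology. The key conceptual point is that a submanifold of $M$ in the sense of \cite[27.11]{KM} carries, as its underlying topological space, precisely the subspace topology inherited from $\widetilde{M}$. Once this identification is in hand, the submanifold $N$ becomes a $C^\infty$-subspace (equivalently, a $\dcal$-subspace, via Proposition \ref{dmfd}) of $M$ whose underlying topology is the subspace topology, and Lemma \ref{hered} applies directly with $\ccal = C^\infty$.

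First I would invoke the defining property of a submanifold from \cite[27.11]{KM}: the inclusion $N \longhookrightarrow M$ is a $C^\infty$-embedding whose underlying continuous map is a topological embedding, so that $\widetilde{N}$ is the topological subspace of $\widetilde{M}$ supported on the point set $N$. This is the input that lets me treat $N$ as a $C^\infty$-subspace of $M$ in the sense of Remark \ref{convenrem}(1), with $U\widetilde{N}$ equal to the subspace topology of $U\widetilde{M}$. Second, for the hereditary $C^\infty$-paracompactness claim, I would apply Lemma \ref{hered} with $\ccal = C^\infty$, $X = M$, and $A = N$: since $M$ is hereditarily $C^\infty$-paracompact and $N$ is a $C^\infty$-subspace of $M$ whose underlying topology is the subspace topology, Lemma \ref{hered} gives that $N$ is hereditarily $C^\infty$-paracompact.

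Third, for the variant concerning hereditary Lindel\"of property together with $C^\infty$-regularity, I would again feed $N \longhookrightarrow M$ into Lemma \ref{hered}, which transfers hereditary Lindel\"of property, and separately into Remark \ref{regular}, which states that $\ccal$-regularity is inherited by every $\ccal$-subspace whose underlying topology is the subspace topology; combining the two yields that $N$ is hereditarily Lindel\"of and $C^\infty$-regular. Since both parts rest on the same topological identification, I would state it once at the start and then run the two arguments in parallel.

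The main obstacle, such as it is, lies not in the homotopical machinery but in the bookkeeping of the definition of a submanifold: one must be sure that \cite[27.11]{KM} really delivers the subspace topology on the nose, rather than merely a finer manifold topology, since Lemma \ref{hered} and Remark \ref{regular} both hinge on the hypothesis ``whose underlying topology is the subspace topology.'' In the convenient-calculus setting the $c^\infty$-topology can be subtle, so the genuine content of the proof is the verification that submanifolds are topologically embedded; once that is granted, the deduction is immediate. Everything else is a direct citation of Lemma \ref{hered} and Remark \ref{regular}.
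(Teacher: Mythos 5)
Your proposal is correct and follows exactly the paper's argument: the underlying topology of a submanifold is the subspace topology by \cite[27.11]{KM}, so Lemma \ref{hered} (for hereditary $C^\infty$-paracompactness and hereditary Lindel\"of property) and Remark \ref{regular} (for $C^\infty$-regularity) apply directly. Nothing essential differs from the paper's proof.
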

\begin{rem}\label{heredity}
	A hereditary topological property is defined to be a topological property that carries over from a topological space to its subspaces. However, we say that a property of objects of $\ccal$ is {\sl hereditary} if it carries over from an object of $\ccal$ to every open $\ccal$-subspace, since the underlying topology of a $\ccal$-subspace need not be the subspace topology in the case of $\ccal \neq \tcal$ (consider the $\ccal$-subspace $\{0\}\cup \{\frac{1}{n}\,|\, n\in \mathbb{N}\}$ of $\mathbb{R}$ for $\ccal=\dcal,\czero$). Hereditary $\ccal$-paracompactness, hereditary Lindel\"{o}f property, and $\ccal$-regularity are obviously hereditary.\par
	Many important hereditary properties in $\ccal$, such as these three properties, carry over from an object of $\ccal$ to its $\ccal$-subspace whose underlying topology is the subspace topology (see Lemma \ref{hered} and its proof).
\end{rem}
Next, we give a necessary and sufficient condition for a $C^\infty$-manifold to be hereditarily Lindel\"{o}f and $C^\infty$-regular.
\begin{prop}\label{hLindelof}
	For a $C^\infty$-manifold $M$, the following are equivalent:
	\begin{itemize}
		\item[$(\rm i)$] $M$ is hereditarily Lindel\"{o}f and $C^{\infty}$-regular.
		\item[$(\rm ii)$] $M$ is Lindel\"of and regular as a topological space and is modeled on hereditarily Lindel\"of, $C^\infty$-regular convenient vector spaces.
	\end{itemize}
	\begin{proof}
		The result follows from Proposition \ref{smregular}.
	\end{proof}
\end{prop}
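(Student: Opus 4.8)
The statement to prove is Proposition~\ref{hLindelof}, the equivalence of two conditions on a $C^\infty$-manifold $M$: hereditary Lindel\"{o}fness together with $C^\infty$-regularity, versus the ``topological condition plus model-space condition'' form advertised in (11.1). The excerpt itself tells us that ``the result follows from Proposition \ref{smregular}'', so the plan is to reduce the entire statement to whatever characterization Proposition~\ref{smregular} provides (which I would expect to be a characterization of $C^\infty$-regularity, and possibly of hereditary Lindel\"{o}fness, in the ``topological $+$ model-space'' format). Since that proposition is not spelled out in the excerpt, I would organize the argument so that each half of the equivalence is obtained by combining a purely topological fact with the corresponding statement about the convenient model vector spaces $E_\alpha$, handling the two conditions (hereditary Lindel\"{o}f, $C^\infty$-regular) independently and then recombining.

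First I would prove the implication $(\mathrm{i})\Rightarrow(\mathrm{ii})$. For the topological part, hereditary Lindel\"{o}fness of $M$ trivially gives Lindel\"{o}fness of $M$, and regularity of $\widetilde M$ should follow from $C^\infty$-regularity: by Remark~\ref{regular}, $C^\infty$-regularity is a smooth strengthening of complete regularity, so in particular $\widetilde M$ is regular as a topological space. For the model-space part, I would use the charts: each model space $E_\alpha$ is diffeomorphic (via $u_\alpha$) to a $c^\infty$-open subset, hence to an open $C^\infty$-subspace of $M$, and by Corollary~\ref{submfd} (or directly by Lemma~\ref{hered}, since the chart image carries the subspace topology) the inherited property passes to $E_\alpha$. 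Thus each $E_\alpha$ is hereditarily Lindel\"{o}f and $C^\infty$-regular, which is exactly the model-space condition in $(\mathrm{ii})$.

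For the converse $(\mathrm{ii})\Rightarrow(\mathrm{i})$, I would localize. Assuming the charts $u_\alpha(U_\alpha)$ are hereditarily Lindel\"{o}f and $C^\infty$-regular open subspaces of the $E_\alpha$, I would glue these local properties across the (Lindel\"{o}f, regular) global topology of $M$. $C^\infty$-regularity is a pointwise/local condition: given $x\in M$ and an open neighborhood, one finds a chart containing $x$, produces the separating $C^\infty$-function inside the chart using $C^\infty$-regularity of $E_\alpha$, and extends it by zero using a cutoff; the $c^\infty$-topology compatibility ensures smoothness of the extension. Hereditary Lindel\"{o}fness is where I expect the real obstacle: Lindel\"{o}fness does \emph{not} automatically localize, so I would invoke the global topological hypotheses (Lindel\"{o}f $+$ regular) to reduce an arbitrary open set of $M$ to a countable union of chart domains, and then combine hereditary Lindel\"{o}fness of each chart with a standard countable-cover argument. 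This is precisely the content I would expect Proposition~\ref{smregular} to package, and the cleanest route is to cite it rather than redo the functional-analytic bookkeeping.

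The main obstacle, then, is the interface between the local chart data and the global topology in the hereditary Lindel\"{o}f direction: one must be careful that the $c^\infty$-topology on $M$ (which can be strictly finer than the classical topology) is the one governing both the Lindel\"{o}f covers and the smoothness of partitions of unity, and that the chart images genuinely carry the subspace topology so that Lemma~\ref{hered} applies. Once Proposition~\ref{smregular} supplies the characterization of $C^\infty$-regularity in the ``topological $+$ model-space'' form, the regularity half is immediate, and the Lindel\"{o}f half follows by the same localization scheme; the proof then collapses to the single-line citation given in the excerpt.
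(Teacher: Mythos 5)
Your overall reduction is the paper's: Proposition \ref{smregular} supplies the $C^\infty$-regularity half in the ``topological $+$ model-space'' format, hereditary Lindel\"ofness passes \emph{down} to open chart domains by Lemma \ref{hered}, and in the converse direction Lindel\"ofness of $M$ gives a countable cover by chart domains, each hereditarily Lindel\"of, so $M$ is hereditarily Lindel\"of as a countable union of hereditarily Lindel\"of open subspaces. (A small correction there: regularity of $\widetilde{M}$ is not needed for the Lindel\"of half of $(\mathrm{ii})\Rightarrow(\mathrm{i})$ — only Lindel\"ofness is; regularity enters only through the $C^\infty$-regularity half, via Lemma \ref{regularity}/Lemma \ref{C-regular} inside Proposition \ref{smregular}.)

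However, one step in your $(\mathrm{i})\Rightarrow(\mathrm{ii})$ sketch fails as stated. You argue that since the chart image $u_\alpha(U_\alpha)$ is a $c^\infty$-open subset of $E_\alpha$ inheriting the two properties from $M$, ``the inherited property passes to $E_\alpha$'' by Corollary \ref{submfd}. That corollary (like Lemma \ref{hered}) transmits hereditary properties \emph{downward}, to subspaces; it cannot promote a property from a proper $c^\infty$-open subset up to the ambient vector space, and condition $(\mathrm{ii})$ concerns the model spaces $E_\alpha$ themselves. This is precisely what the implication $(\mathrm{i})\Rightarrow(\mathrm{iii})$ of Proposition \ref{smregular} repairs: using the star construction of \cite[Theorem 16.21]{KM}, one re-chooses the atlas so that every chart maps onto a $C^\infty$-regular $c^\infty$-open set diffeomorphic to the \emph{whole} model space, and hereditary Lindel\"ofness of $M$ then transfers to $E_\alpha$ through this chart homeomorphism. (For the Lindel\"of property alone one could instead note that a radial $c^\infty$-open neighborhood $V$ of $0$ is absorbing, so $E=\bigcup_n nV$ is a countable union of homeomorphic copies of $V$.) Since your declared fallback is to cite Proposition \ref{smregular} anyway, your proof ultimately collapses to the paper's one-line citation, but the explicit upward-heredity argument you offer for the model-space condition is invalid and must be replaced by the chart-onto-$E$ construction.
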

\begin{cor}\label{nuclear}
	Suppose that a $C^\infty$-manifold $M$ is Lindel\"of and regular as a topological space and that every model vector space of $M$ is a separable Hilbert space, a nuclear Fr\'{e}chet space, a nuclear Silva space, or a strict inductive limit of a sequence of nuclear Fr\'{e}chet spaces. Then, every submanifold of $M$ is hereditarily Lindel\"of and $C^\infty$-regular, and hence hereditarily $C^\infty$-paracompact.
	\begin{proof}
		Note that for a metric space, separability, second countablity, and Lindel\"{o}f property are equivalent (\cite[Theorem 16.11]{Willard}). Then, we see that any $c^{\infty}$-open set $V$ of a model vector space $E$ of $M$ is Lindel\"{o}f (see \cite[the proof of Theorem 16.10]{KM}), and hence that $E$ is hereditarily Lindel\"of. Thus, the result follows from \cite[Corollary 16.16 and Theorem 16.10]{KM}, Proposition \ref{hLindelof}, and Corollary \ref{submfd}.
	\end{proof}
\end{cor}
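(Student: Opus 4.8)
The final statement is Corollary \ref{nuclear}: if a $C^\infty$-manifold $M$ is Lindel\"of and regular as a topological space and every model vector space of $M$ is a separable Hilbert space, a nuclear Fr\'echet space, a nuclear Silva space, or a strict inductive limit of a sequence of nuclear Fr\'echet spaces, then every submanifold of $M$ is hereditarily Lindel\"of and $C^\infty$-regular, and hence hereditarily $C^\infty$-paracompact.

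The plan is to verify that each of the listed model vector spaces $E$ is simultaneously hereditarily Lindel\"of and $C^\infty$-regular, and then to feed this into the machinery already assembled: Proposition \ref{hLindelof} upgrades the conclusion from the model spaces to $M$ itself, Corollary \ref{submfd} transfers it to submanifolds, and the implication ``hereditarily Lindel\"of and $C^\infty$-regular $\Rightarrow$ hereditarily $C^\infty$-paracompact'' recorded before Proposition \ref{countable} (an application of \cite[Theorem 16.10]{KM} or Proposition \ref{Lindelof} to each open subspace) yields the final clause. Thus the entire corollary reduces to the two properties of the model spaces, and everything afterwards is formal.

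First I would dispose of $C^\infty$-regularity of the model spaces. This is not a topological subtlety but a matter of having enough smooth bump functions: all four classes on the list---separable Hilbert spaces, nuclear Fr\'echet spaces, nuclear Silva spaces, and strict inductive limits of sequences of nuclear Fr\'echet spaces---appear among the smoothly regular convenient vector spaces catalogued in \cite[Corollary 16.16]{KM}. Hence this step is essentially a citation, and I would only need to remark that $C^\infty$-regularity of a convenient vector space coincides with its smooth regularity as a smoothly ringed space, as discussed in Remark \ref{regular}.

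The substantive step is the hereditary Lindel\"of property, equivalently that every $c^\infty$-open subset $V$ of such an $E$ is Lindel\"of. Here I would split according to whether the $c^\infty$-topology is metrizable. For a separable Hilbert space or a nuclear Fr\'echet space, $E$ is a Fr\'echet space, so by \cite[Theorem 4.11(1)]{KM} its $c^\infty$-topology coincides with the locally convex topology and is metrizable and separable; since for metric spaces separability, second countability, and the Lindel\"of property are equivalent (\cite[Theorem 16.11]{Willard}), every subspace is again second countable and hence Lindel\"of, giving hereditary Lindel\"of property at once. For the nuclear Silva and strict-inductive-limit cases the $c^\infty$-topology need not be metrizable, and this is where the real work lies: I would invoke the covering argument in \cite[the proof of Theorem 16.10]{KM}, which shows directly that $c^\infty$-open sets of these spaces are Lindel\"of, and thereby conclude that $E$ is hereditarily Lindel\"of.

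With both properties established for every model space, the conclusion follows formally. By Proposition \ref{hLindelof}, the hypotheses that $M$ is Lindel\"of and regular as a topological space, together with hereditary Lindel\"of property and $C^\infty$-regularity of the model spaces, give that $M$ is hereditarily Lindel\"of and $C^\infty$-regular; Corollary \ref{submfd} then transfers this to every submanifold, and the quoted implication supplies hereditary $C^\infty$-paracompactness. The main obstacle is precisely the non-metrizable cases in the hereditary Lindel\"of step, since there one cannot simply invoke the metric equivalence of separability and the Lindel\"of property and must instead rely on the more delicate Lindel\"of argument for $c^\infty$-open sets of Silva-type spaces.
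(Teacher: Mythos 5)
Your proposal is correct and follows essentially the same route as the paper's proof: it establishes $C^\infty$-regularity of the model spaces via \cite[Corollary 16.16]{KM}, hereditary Lindel\"of property via \cite[Theorem 16.11]{Willard} together with the Lindel\"of argument for $c^\infty$-open sets in \cite[the proof of Theorem 16.10]{KM}, and then concludes formally through Proposition \ref{hLindelof}, Corollary \ref{submfd}, and the implication to hereditary $C^\infty$-paracompactness. Your explicit case split between the metrizable (Fr\'echet) and non-metrizable (Silva-type) model spaces merely spells out what the paper's compressed citation of those same two references already contains.
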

Next, we give a necessary and sufficient condition for a $C^\infty$-manifold to be hereditarily $C^\infty$-paracompact.
\begin{prop}\label{model}
	For a $C^\infty$-manifold $M$, the following conditions are equivalent:
	\begin{itemize}
		\item[{\rm (i)}] $M$ is hereditarily $C^\infty$-paracompact.
		\item[{\rm (ii)}] $M$ is paracompact Hausdorff as a topological space and is modeled on hereditarily $C^\infty$-paracompact convenient vector spaces.
	\end{itemize}
\end{prop}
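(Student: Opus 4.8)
The plan is to prove Proposition \ref{model} as a direct analogue of Proposition \ref{hLindelof}: both are instances of the same local characterization of a hereditary smooth-topological property of a $C^\infty$-manifold in terms of its underlying topology together with its model vector spaces, so I would deduce it from the Appendix~C result Proposition \ref{smregular}, applied to the property ``hereditarily $C^\infty$-paracompact.'' Since condition (i) is an intrinsic smooth condition while condition (ii) splits into a global topological part and a purely linear part, I would establish the two implications separately.

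For (i) $\Rightarrow$ (ii), I would first extract the topological conclusion. Hereditary $C^\infty$-paracompactness gives in particular that $M$ itself is $C^\infty$-paracompact, and a smooth partition of unity is \emph{a fortiori} a continuous one, so $\widetilde{M}$ is topologically paracompact (cf. \cite[Section 16]{KM}). Since $\widetilde{M}$ is separated, hence $T_1$, $C^\infty$-paracompactness upgrades to $C^\infty$-regularity (Remark \ref{regular}), under which $T_1$ coincides with the Hausdorff property; thus $\widetilde{M}$ is paracompact Hausdorff. For the model-space conclusion, each chart presents a $c^\infty$-open subset $u_\alpha(U_\alpha)$ of $E_\alpha$ as an open submanifold of $M$, which is again hereditarily $C^\infty$-paracompact (Corollary \ref{submfd}); I would then upgrade this from the chart image to all of $E_\alpha$ by exploiting the homogeneity of the vector space, translating the smooth partition-of-unity data available on one $c^\infty$-open subset to an arbitrary $c^\infty$-open subset of $E_\alpha$.

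For (ii) $\Rightarrow$ (i), I would verify $C^\infty$-paracompactness of an \emph{arbitrary} open submanifold $N \subseteq M$. The standard patching then applies: refine an open cover of $N$ by chart domains, use topological paracompactness of $N$ to pass to a locally finite shrinking, build smooth bump functions on each chart via the ($c^\infty$-open subsets of the) hereditarily $C^\infty$-paracompact model spaces, and normalize to a smooth partition of unity. Because $N$ is arbitrary, this gives hereditary $C^\infty$-paracompactness of $M$; the regularity and Hausdorff bookkeeping is inherited by $N$ automatically (Remark \ref{regular}).

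The main obstacle is precisely the local-to-global topological step hidden in both directions, and it is the reason the statement is pushed through Proposition \ref{smregular} rather than proved by bare hand. In the possibly non-metrizable $c^\infty$-setting one cannot invoke Smirnov-type metrization, so the delicate point in (ii) $\Rightarrow$ (i) is showing that every open submanifold $N$ remains topologically paracompact; this must be squeezed out of the paracompact Hausdorff hypothesis on $M$ together with the \emph{hereditary} (not merely plain) $C^\infty$-paracompactness of the models, which makes the chart neighborhoods hereditarily paracompact and supplies the local input for the patching. The second delicate point, in (i) $\Rightarrow$ (ii), is the chart-image-to-whole-model-space transfer of the hereditary property, where the interplay between the $c^\infty$-topology and the locally convex topology (Remark \ref{semiclassical}) must be handled with care. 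Both of these are exactly the general facts abstracted in Appendix~C, so once Proposition \ref{smregular} is in hand the proof reduces to the routine translation described above.
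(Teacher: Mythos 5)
Your proposal has a genuine gap in each direction, and in both cases it sits exactly at the step you yourself flag as delicate. For (ii)$\Rightarrow$(i), the instruction ``use topological paracompactness of $N$'' for an arbitrary open submanifold $N\subseteq M$ is not available: paracompactness is not inherited by open subspaces (e.g.\ $[0,\omega_1)$ is open in the compact space $[0,\omega_1]$ but is countably compact and noncompact, hence not paracompact), so the hypothesis that $\widetilde{M}$ is paracompact Hausdorff says nothing directly about $N$. This local-to-global problem is precisely the content of the paper's Lemma \ref{heredCparacpt} (proved in Section 11.3, not in Appendix C): if $UX$ is paracompact Hausdorff and $X$ is locally hereditarily $C^\infty$-paracompact, then $X$ is hereditarily $C^\infty$-paracompact. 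Its proof constructs a partition of unity on an \emph{arbitrary} open $V$ subordinate to an arbitrary cover by taking a locally finite cover of all of $X$ by hereditarily $C^\infty$-paracompact opens, shrinking it twice using normality of $UX$, and arranging the supports so that the resulting family is locally finite in $V$ --- at no point is $V$ assumed paracompact. Proposition \ref{smregular} concerns $C^\infty$-regularity only and does not abstract this step, so your claim that ``once Proposition \ref{smregular} is in hand the proof reduces to routine translation'' does not go through; the paper's proof of (ii)$\Rightarrow$(i) consists exactly of the citation of Lemma \ref{heredCparacpt}.

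In (i)$\Rightarrow$(ii), your topological bookkeeping (smooth partitions of unity are continuous, and $T_1$ together with $C^\infty$-regularity from Remark \ref{regular} yields Hausdorff) matches the paper, but the transfer from the chart image to the whole model space ``by homogeneity'' is not a proof. Translating a hereditarily $C^\infty$-paracompact $c^\infty$-open set $W$ around $E_\alpha$ only shows that $E_\alpha$ is \emph{locally} hereditarily $C^\infty$-paracompact; to globalize you would again need Lemma \ref{heredCparacpt}, whose topological hypothesis (paracompactness of the $c^\infty$-topology of $E_\alpha$) you do not possess --- and if it were automatic, the model-space condition in (ii) would be vacuous. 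The paper's mechanism is different and avoids the issue entirely: since $M$ is $C^\infty$-regular, the proof of Proposition \ref{smregular}, (i)$\Rightarrow$(iii), uses a bump function and the star construction of \cite[Theorem 16.21]{KM} to shrink any chart to a $c^\infty$-open set diffeomorphic to the \emph{entire} model space, so each model space is diffeomorphic to an open submanifold of $M$ and inherits hereditary $C^\infty$-paracompactness immediately from the definition of the hereditary property, with no translation or patching argument needed.
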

	For the proof, we need the following lemma. An object $X$ of $\ccal$ is called {\sl locally hereditarily $\ccal$-paracompact} if any $x\in X$ has a hereditarily $\ccal$-paracompact open neighborhood.
\begin{lem}\label{heredCparacpt}
	For $X\in \ccal$ with $UX$ $T_1$-space, the following are equivalent:
	\begin{itemize}
		\item[{\rm (i)}] $X$ is hereditarily $\ccal$-paracompact.
		\item[{\rm (ii)}] $UX$ is paracompact Hausdorff and $X$ is locally hereditarily $\ccal$-paracompact.
	\end{itemize}
	\begin{proof}
		${\rm (i)} \Longrightarrow {\rm (ii)}$ Obvious (see the end of Section 5.1).\\
		${\rm (ii)} \Longrightarrow {\rm (i)}$ Assume given an open set $V$ of $X$ and an open cover $\{V_i\}_{i\in I}$ of $V$. Then, we must construct a $\ccal$-partition of unity on $V$ subordinate to $\{V_i\}_{i\in I}$.\par
		In this proof, for a subset $S$ of $X$, $\overline{S}$ and $S\text{\textasciicircum}$ denote the closure of $S$ in $UX$ and the intersection of $S$ with $V$, respectively.\par
		First, we choose a locally finite open cover $\{U_\alpha\}_{\alpha\in A}$ of $X$ such that each $U_\alpha$ is hereditarily $\ccal$-paracompact. Noticing that $UX$ is paracompact Hausdorff (and hence normal), we choose for each $\alpha \in A$, open sets $U'_\alpha$ and $U''_\alpha$ such that
		\begin{itemize}
			\item $U'_\alpha \subset \overline{U'_\alpha} \subset U''_\alpha \subset \overline{U''_\alpha} \subset U_\alpha$,
			\item $\cup_\alpha U'_\alpha = X$.
		\end{itemize}\par
		Next, consider the open cover $\{U_\alpha\text{\textasciicircum}\}_{\alpha\in A}$ of $V$. Since $UX$ is normal, we can choose for each $x\in U_\alpha\text{\textasciicircum}$, an open neighborhood $W_\alpha^x$ of $x$ satisfying the following conditions:
		\begin{itemize}
			\item $\overline{W_\alpha^x} \subset U_\alpha \cap V_i$ for some $i$.
			\item \begin{tabular}{cc}
				$
				\left \{
				\begin{array}{ll}
				x\in \overline{U'_\alpha} \Longrightarrow W_\alpha^x \subset U''_\alpha, \\
				x \notin \overline{U'_\alpha} \Longrightarrow W_\alpha^x \subset U_\alpha \backslash \overline{U'_\alpha}
				\end{array}
				\right.
				$
				\end{tabular}
		\end{itemize}
		Since $U_\alpha\text{\textasciicircum}$ is $\ccal$-paracompact, we choose a $\ccal$-partition of unity $\{\varphi_\alpha^x \}$ on $U_\alpha\text{\textasciicircum}$ subordinate to the open cover $\{W_\alpha^x\}$; note that $\varphi^x_\alpha$ extends to a $\ccal$-function on $V$ by setting $\varphi^x_\alpha=0$ on $V-\overline{W_\alpha^x}$. Further, we define the (set-theoretic) map
		\[
			r:U_\alpha\text{\textasciicircum} \longrightarrow I
		\]
		by choosing for each $x\in U_\alpha\text{\textasciicircum}$, an element $i$ of $I$ with $\overline{W_\alpha^x} \subset U_\alpha \cap V_i$.\par
		Let us see that the family
		\[
			\Phi = \{{\rm supp}\,\varphi^x_\alpha \,|\, \alpha\in A, \,x\in \overline{U'_\alpha}\text{\textasciicircum}\}
		\]
		of closed sets in $V$ is locally finite. For any $z \in V$, we can choose an open neighborhood $U$ such that
		\[
			\sharp \{\alpha\in A \,|\, U\cap U_\alpha\text{\textasciicircum} \neq \emptyset\} < \infty.
		\]
		For $\alpha\in A$ with $U\cap U_\alpha\text{\textasciicircum} \neq \emptyset$, we choose a smaller open neighborhood $U(\alpha)$ of $z$ satisfying the following conditions:
		\begin{itemize}
			\item $z\notin U_\alpha \Longrightarrow U(\alpha)=U\backslash \overline{U''_\alpha}$.
			\item $z\in U_\alpha \Longrightarrow U(\alpha)$ meets the supports of only finitely many $\varphi_\alpha^x$.
		\end{itemize}
		Then, the intersection $\underset{U\cap U_\alpha\text{\textasciicircum}\neq \emptyset}{\cap} U(\alpha)$ is an open neighborhood of $z$ which meets only finitely many members of $\Phi$.\par
		Thus, we see that
		\[
			\varphi_i := \underset{\alpha\in A,\, x\in \overline{U'_\alpha}\text{\textasciicircum}_i}{\textstyle\sum} \varphi_\alpha^x
		\]
		is a $\ccal$-function on $V$ with ${\rm supp}\,\varphi_i \subset V_i$, where $\overline{U'_\alpha}\text{\textasciicircum}_i$ is the inverse image of $i\in I$ under the (set-theoretic) map
		\[
			\overline{U'_\alpha}\text{\textasciicircum} \longhookrightarrow U_\alpha\text{\textasciicircum} \overset{r}{\longrightarrow} I.
		\]
		Further, we see from the definition that
		\[
			\underset{i\in I}{\textstyle\sum} \varphi_i = \underset{\alpha \in A,\, x\in \overline{U'_\alpha}\text{\textasciicircum}}{\textstyle\sum} \varphi_\alpha^x > 0\ \ \  \text{  on }V.
		\]
		Therefore, we obtain the desired $\ccal$-partition of unity $\{\phi_i\}_{i\in I}$ by setting
		\begin{align*}
			\phi_i = \varphi_i / \underset{i\in I}{\textstyle\sum} \varphi_i. &\qedhere
		\end{align*}	
	\end{proof}
\end{lem}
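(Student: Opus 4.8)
The plan is to prove the two implications separately, with essentially all the substance concentrated in $(\mathrm{ii})\Rightarrow(\mathrm{i})$, which reduces to a partition-of-unity construction. For $(\mathrm{i})\Rightarrow(\mathrm{ii})$ I would argue as follows: hereditary $\ccal$-paracompactness trivially gives local hereditary $\ccal$-paracompactness, since $X$ itself serves as a hereditarily $\ccal$-paracompact open neighborhood of any point. That $UX$ is paracompact is immediate from $\ccal$-paracompactness of $X$, because a $\ccal$-partition of unity is in particular a topological one (Definition \ref{partition} and Remark \ref{paracpt}(2)). Finally, since $X$ is $\ccal$-paracompact and $UX$ is $T_1$, $X$ is $\ccal$-regular by Remark \ref{regular}, so by the observation recorded at the end of Section 5.1 that $T_1$ together with $\ccal$-regularity forces Hausdorffness, $UX$ is paracompact Hausdorff. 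This dispatches the easy direction.

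For $(\mathrm{ii})\Rightarrow(\mathrm{i})$ I would fix an open set $V\subset X$ and an open cover $\{V_i\}_{i\in I}$ of $V$, and construct a $\ccal$-partition of unity on $V$ subordinate to it (throughout, closures are taken in $UX$, and I write $S^{\wedge}$ for $S\cap V$). First, using paracompactness of $UX$ and local hereditary $\ccal$-paracompactness, I would take a locally finite open cover $\{U_\alpha\}_{\alpha\in A}$ of $X$ with each $U_\alpha$ hereditarily $\ccal$-paracompact, and then shrink twice using normality of $UX$ to produce open covers with $\overline{U'_\alpha}\subset U''_\alpha$, $\overline{U''_\alpha}\subset U_\alpha$, and $\bigcup_\alpha U'_\alpha=X$. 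Next, each $U_\alpha^{\wedge}$ is an open $\ccal$-subspace of the hereditarily $\ccal$-paracompact $U_\alpha$, hence $\ccal$-paracompact; for each $x\in U_\alpha^{\wedge}$ I would use normality to choose a neighborhood $W_\alpha^x$ with $\overline{W_\alpha^x}\subset U_\alpha\cap V_i$ for some $i$, arranged so that $W_\alpha^x\subset U''_\alpha$ when $x\in\overline{U'_\alpha}$ and $W_\alpha^x\subset U_\alpha\setminus\overline{U'_\alpha}$ otherwise. A $\ccal$-partition of unity $\{\varphi_\alpha^x\}$ on $U_\alpha^{\wedge}$ subordinate to $\{W_\alpha^x\}$ then exists, and each $\varphi_\alpha^x$ extends by zero to a $\ccal$-function on $V$: its support is a closed subset of $V$ contained in the open set $U_\alpha^{\wedge}$, so the extension is locally a $\ccal$-function on the open cover $\{U_\alpha^{\wedge},\,V-\mathrm{supp}\,\varphi_\alpha^x\}$ of $V$, and the locality (sheaf) property of $\ccal$-functions applies. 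I would also record a choice $r(x)=i$ with $\overline{W_\alpha^x}\subset U_\alpha\cap V_i$.

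The global assembly then sets $\varphi_i=\sum\varphi_\alpha^x$, the sum taken over pairs $(\alpha,x)$ with $x\in\overline{U'_\alpha}^{\wedge}$ and $r(x)=i$, so that $\mathrm{supp}\,\varphi_i\subset V_i$. The decisive point is local finiteness of the family $\Phi=\{\mathrm{supp}\,\varphi_\alpha^x:\ x\in\overline{U'_\alpha}^{\wedge}\}$: given $z\in V$, local finiteness of $\{U_\alpha\}$ yields a neighborhood meeting only finitely many $U_\alpha^{\wedge}$, and for each such $\alpha$ I would shrink further — removing $\overline{U''_\alpha}$ when $z\notin U_\alpha$, and invoking local finiteness of $\{\varphi_\alpha^x\}_x$ when $z\in U_\alpha$ — the nested conditions on $W_\alpha^x$ being exactly what makes the two cases dovetail. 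Local finiteness of $\Phi$ guarantees that each $\varphi_i$ and the total $\sum_i\varphi_i=\sum_{(\alpha,x):\,x\in\overline{U'_\alpha}^{\wedge}}\varphi_\alpha^x$ are again $\ccal$-functions, and positivity of $\sum_i\varphi_i$ on $V$ follows because the cores $U'_\alpha$ cover $X$ and hence $V$. Normalizing, $\phi_i=\varphi_i/\sum_i\varphi_i$ is the desired $\ccal$-partition of unity subordinate to $\{V_i\}$.

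The main obstacle is precisely this gluing in the last step: establishing local finiteness of $\Phi$ while simultaneously ensuring that every summed function remains a genuine $\ccal$-morphism rather than merely a continuous one. The twofold shrinking and the case-split in the definition of $W_\alpha^x$ are the technical devices that control both the combinatorics of the supports and the legitimacy of the (locally finite, hence $\ccal$-smooth) infinite sums; the rest of the argument is the standard paracompactness bookkeeping, carried out uniformly across $\ccal=\cinf,\dcal,\czero,\tcal$ by appealing only to the locality property of $\ccal$-functions.
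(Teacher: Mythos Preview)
Your proposal is correct and follows essentially the same approach as the paper's proof: the same double shrinking $U'_\alpha\subset U''_\alpha\subset U_\alpha$, the same case-split choice of $W_\alpha^x$, the same local partitions $\{\varphi_\alpha^x\}$ extended by zero, and the same local-finiteness argument for $\Phi$ leading to normalization. Your write-up even spells out a few points (the sheaf argument for extension by zero, the Hausdorff direction via $\ccal$-regularity) that the paper leaves implicit.
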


\begin{proof}[Proof of Proposition \ref{model}.]
		${\rm (i)}\Longrightarrow {\rm(ii)}$ The implication follows from Remark \ref{regular} and Proposition \ref{smregular}.\\
		${\rm (ii)}\Longrightarrow {\rm(i)}$ The implication follows from Lemma \ref{heredCparacpt}.
\end{proof}

\begin{cor}\label{metricmfd}
	Suppose that a $C^\infty$-manifold $M$ is paracompact Hausdorff as a topological space and that every model vector space of $M$ is a $C^\infty$-paracompact Fr\'{e}chet space (e.g., a Hilbert space or a nuclear Fr\'{e}chet space), a nuclear Silva space, or a strict inductive limit of a sequence of nuclear Fr\'{e}chet spaces. Then, every submanifold of $M$ is hereditarily $C^\infty$-paracompact.
\end{cor}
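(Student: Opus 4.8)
The plan is to prove the corollary by first establishing that $M$ itself is hereditarily $C^\infty$-paracompact and then invoking Corollary \ref{submfd} to pass to every submanifold. For the first step I would apply the characterization in Proposition \ref{model}: since $M$ is assumed paracompact Hausdorff as a topological space, it remains only to verify that each model vector space of $M$ is a hereditarily $C^\infty$-paracompact convenient vector space. Thus the entire argument reduces to showing, for each of the three listed types of convenient vector space $E$, that every $c^\infty$-open subset of $E$ is $C^\infty$-paracompact.

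For the nuclear cases — a nuclear Silva space or a strict inductive limit of a sequence of nuclear Fréchet spaces — I would reuse the argument already carried out in the proof of Corollary \ref{nuclear}. There every $c^\infty$-open subset $V$ of such an $E$ is shown to be Lindelöf, and by \cite[Corollary 16.16]{KM} together with the heredity of $C^\infty$-regularity (Remark \ref{regular}) it is $C^\infty$-regular; hence $V$ is $C^\infty$-paracompact by \cite[Theorem 16.10]{KM}. Consequently every such $E$ is hereditarily $C^\infty$-paracompact, as required.

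The genuinely new case, which I expect to be the main obstacle, is that of a general $C^\infty$-paracompact Fréchet space $E$. Such an $E$ need not be Lindelöf (for instance, a non-separable Hilbert space), so the Lindelöf-based route through \cite[Theorem 16.10]{KM} used for the nuclear cases is not available. Here I would instead exploit metrizability: by \cite[Theorem 4.11(1)]{KM} the $c^\infty$-topology of $E$ agrees with its metrizable locally convex topology, so every $c^\infty$-open subset $U$ is metrizable and in particular paracompact Hausdorff. Moreover, since $E$ is $C^\infty$-paracompact and $T_1$, it is $C^\infty$-regular by Remark \ref{regular}, and this property is inherited by the open $C^\infty$-subspace $U$. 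The real work is then to upgrade the topological (metric) paracompactness of $U$ to genuine $C^\infty$-paracompactness, that is, to produce \emph{smooth} partitions of unity subordinate to arbitrary open covers of $U$; I would carry this out by the smooth-paracompactness machinery for metrizable convenient vector spaces in \cite[Section 16]{KM}, which applies to $U$ because the metric on $E$ restricts to $U$ and smooth regularity is preserved under passage to open subspaces.

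Once each model space has been shown to be hereditarily $C^\infty$-paracompact, Proposition \ref{model} yields that $M$ is hereditarily $C^\infty$-paracompact, and Corollary \ref{submfd} then gives the conclusion for every submanifold of $M$. The delicate point is entirely confined to the Fréchet case above: every other ingredient is either a hypothesis or an already-established citation, whereas constructing smooth partitions of unity on arbitrary open subsets of a non-Lindelöf $C^\infty$-paracompact Fréchet space is where the essential difficulty lies.
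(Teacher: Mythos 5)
Your proposal is correct and follows essentially the same route as the paper: reduce via Proposition \ref{model} and Corollary \ref{submfd}, settle the nuclear Silva and strict-inductive-limit cases by the Lindel\"of argument of Corollary \ref{nuclear}, and handle the Fr\'echet case through metrizability of the $c^\infty$-topology (\cite[Theorem 4.11(1)]{KM}) together with inherited $C^\infty$-regularity. The one step you flag as the essential difficulty---upgrading the metric paracompactness of an open subset to genuine $C^\infty$-paracompactness---is in fact not new work but precisely \cite[Theorem 16.15]{KM} (a metrizable smoothly regular space is smoothly paracompact), which is exactly what the paper records as Lemma \ref{metricdparacpt}.
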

For the proof, we need the following lemma.
\begin{lem}\label{metricdparacpt}
	Let $X$ be an object of $\ccal$ with $UX$ metrizable. Then, $X$ is hereditarily $\ccal$-paracompact if and only if $X$ is $\ccal$-paracompact.
\begin{proof}
	The result follows from \cite[Theorem 16.15]{KM} (see the comment after Remark \ref{regular}).
\end{proof}
\end{lem}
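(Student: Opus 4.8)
The plan is to reduce the statement to the characterization of smooth paracompactness for metrizable smoothly ringed spaces recorded in \cite[Theorem 16.15]{KM}, exploiting that both $\ccal$-regularity and metrizability are inherited by open $\ccal$-subspaces, whereas $\ccal$-paracompactness a priori is not. Thus the metric hypothesis is exactly what is needed to convert $\ccal$-paracompactness into a hereditary property.

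The forward implication is immediate: hereditary $\ccal$-paracompactness means that every open $\ccal$-subspace is $\ccal$-paracompact (Definition \ref{partition}), and taking the open $\ccal$-subspace to be $X$ itself shows that $X$ is $\ccal$-paracompact.

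For the converse, suppose that $X$ is $\ccal$-paracompact with $UX$ metrizable. First I would observe that $UX$, being metrizable, is Hausdorff and hence $T_1$, so by Remark \ref{regular} the $\ccal$-paracompactness of $X$ forces $X$ to be $\ccal$-regular. Now let $V$ be an arbitrary open $\ccal$-subspace of $X$. Its underlying topology $UV$ is the subspace topology on an open subset of the metric space $UX$, hence is itself metrizable; and since $\ccal$-regularity is inherited by open $\ccal$-subspaces (Remark \ref{regular}), $V$ is $\ccal$-regular. Passing to the associated smoothly ringed space $(UV,\ccal(\cdot,\mathbb{R}))$ as in the paragraph following Remark \ref{regular} (noting that $UV$ is Hausdorff under $\ccal$-regularity), I would then invoke \cite[Theorem 16.15]{KM}, which upgrades ordinary paracompactness of a metrizable, smoothly regular space to smooth paracompactness, to conclude that $V$ is $\ccal$-paracompact. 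Since $V$ was an arbitrary open $\ccal$-subspace, $X$ is hereditarily $\ccal$-paracompact.

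The main obstacle I anticipate is the bookkeeping required to translate faithfully between the intrinsic $\ccal$-language used here ($\ccal$-regularity and $\ccal$-paracompactness of objects of $\ccal$) and the smoothly-ringed-space language of \cite[Section 16]{KM} in which Theorem 16.15 is phrased: one must verify that $\ccal$-regularity of $V$ as an object of $\ccal$ corresponds exactly to smooth regularity of $(UV,\ccal(\cdot,\mathbb{R}))$, and likewise that $\ccal$-paracompactness corresponds to smooth paracompactness, so that the hypotheses and conclusion of the cited theorem are genuinely the ones we need. Once this correspondence is pinned down uniformly over the four choices of $\ccal$ (in particular recognizing that for $\ccal=\tcal$ the sheaf $\ccal(\cdot,\mathbb{R})$ is the full sheaf of continuous functions, so that the statement specializes to Stone's theorem), the argument is essentially formal, the only genuine analytic/topological inputs being the heredity of metrizability and the smooth paracompactness theorem itself.
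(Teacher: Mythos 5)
Your proposal is correct and is exactly the argument the paper compresses into its one-line citation: paracompact plus $T_1$ gives $\ccal$-regularity (Remark \ref{regular}), regularity and metrizability pass to open $\ccal$-subspaces, and \cite[Theorem 16.15]{KM} upgrades each such subspace back to $\ccal$-paracompactness, with the translation to smoothly ringed spaces handled by the paragraph after Remark \ref{regular}. No gaps; your expansion matches the paper's intended route.
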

Note that Lemma \ref{metricdparacpt} has little meaning in the case of $\ccal=\czero$ and $\tcal$; in fact, every object $X$ with $UX$ metrizable is hereditarily $\ccal$-paracompact for $\ccal=\ccal^0,\tcal$.\par
\begin{proof}[Proof of Corolary \ref{metricmfd}]
	Note that a Fr\'{e}chet space is hereditarily $C^\infty$-paracompact if and only if it is $C^\infty$-paracompact (see Lemma \ref{metricdparacpt} and \cite[Theorem 4.11(1)]{KM}). Then, we see that a Hilbert space and a nuclear Fr\'echet space are hereditarily $C^\infty$-paracompact (see \cite[Corollary 16.16 and Theorem 16.10]{KM}). Also recall that model vector spaces as in Corollary \ref{nuclear} are hereditarily $C^\infty$-paracompact. Then, the result follows from Proposition \ref{model} and Corollary \ref{submfd}.
\end{proof}

%\begin{lem}
%	Let $V$ be a $c^{\infty}$-open subset of a convenient vector space $E$. If $V$ is $C^{\infty}$-paracompact and $E$ is $c^{\infty}$-locally convex, then $V$ is in $\wcal$.
%\end{lem}
%\begin{proof}
%	Since $V$ has an open covering $\vcal = \{V_{\alpha} \}_{a \in \Sigma}$ such that the $V_{\alpha}$ are convex, $V$ is in $\wcal$ by Theorem ?
%\end{proof}

We end this subsection with the following remark, in which we precisely state the relation between Theorem \ref{mfdhcofibrancy} and the results of Palais and Heisey and the relation between Proposition \ref{countable} and the result of Milnor (see Remark \ref{MPH}(2)).
\begin{rem}\label{topmfd}
In this remark, a topological manifold is defined to be a Hausdorff topological space locally homeomorphic to an open set of a locally convex vector space; this definition is the usual one but not that in \cite[27.1]{KM}.
\begin{itemize}
	\item[{\rm (1)}] We explain that Theorem \ref{mfdhcofibrancy} can be regarded as a smooth refinement of a result generalizing those of Palais \cite[Theorem 14]{Palais} and Heisey \cite[Theorem II.10]{Heisey}, which give sufficient conditions for an infinite dimensional topological manifold to have the homotopy type of a $CW$-complex.\par
	We can prove that for a topological manifold $M$, the following implications hold:\vspace{2mm}
	\begin{quote}
		\begin{itemize}
			\item[] $M$ is paracompact and is modeled on hereditarily paracompact locally convex vector spaces
			\item[$\Rightarrow$] $M$ is hereditarily paracompact
			\item[$\Rightarrow$] $M$ has the homoropy type of a $CW$-complex.
		\end{itemize}
	\end{quote}\vspace{2mm}
	In fact, the argument in the proof of Theorem \ref{mfdhcofibrancy} is applicable to the topological context by using \cite[Theorem 4]{tom} instead of Theorem \ref{hcofibrancy} (see Lemma \ref{heredCparacpt} for the first implication). This result is a topological version of Theorem \ref{mfdhcofibrancy}, which generalizes the results of Palais and Heisey (see \cite[Proposition III.1]{Heisey}).
	\item[{\rm (2)}] We explain that Proposition \ref{countable}, a variant of Theorem \ref{mfdhcofibrancy}, can be regarded as a smooth refinement of a result generalizing that of Milnor \cite[Corollary 1]{Mi}, which states that every separable metrizable topological manifold has the homotopy type of a countable $CW$-complex.\par
	Recall that a Lindel\"{o}f, regular topological space is paracompact (\cite[Corollary 20.8]{Willard}). Then, we see that for a topological manifold $M$, the following implications hold:\vspace{2mm}
	\begin{quote}
		$\text{\;\;\;\;\;}M$ is hereditarily Lindel\"{o}f and regular\\
		$\Rightarrow M$ is Lindel\"{o}f and hereditarily paracompact\\
		$\Rightarrow M$ has the homotopy type of a countable $CW$-complex
	\end{quote}\vspace{2mm}
	(see Part 1 and \cite[Proposition 2]{Mi}). This is a topological analogue of Proposition \ref{countable}, which generalizes the result of Milnor. 
\end{itemize}
\end{rem}
%%%%%%%%%%%%%%%%%%%%%%%%%%%%%%%
\if0
\begin{lem} If a diffeological space $X$ has a numerable open cover $\{ U_{\alpha}\}_{\alpha \in A}$ such that each $U_{\alpha}$ is hereditarily $\dcal$-paracompact, then $X$ is hereditarily $\dcal$-paracompact.
\end{lem}
\begin{proof}
	Since every open diffeological subspace of $X$ satisfies the same assumption as $X$, we have only to show that $X$ is $\dcal$-paracompact, whose proof is not difficult.
\end{proof}
\fi
%%%%%%%%%%%%%%%%%%%%%%%%%%%%%%%
\subsection{Hereditarily $C^{\infty}$-paracompact, semiclassical $C^{\infty}$-manifolds}
In this subsection, we show that many important $C^{\infty}$-manifolds are hereditarily $C^{\infty}$-paracompact and semiclassical, and hence are in $\wcal_{\dcal}$. In fact, we show that many of the $C^{\infty}$-manifolds studied in \cite[Chapter IX and Section 47]{KM} are hereditarily Lindel\"{o}f, $C^{\infty}$-regular, and classical, and hence are in $\wcal_{\dcal\, 0}$; recall that for a $C^{\infty}$-manifold $M$, the implication
\[
\text{$M$ is classical $\Longrightarrow$ $M$ is semiclassical}
\]
holds (Section 11.2) and that for a semiclassical $C^{\infty}$-manifold $M$, the implications
\[
\begin{tikzcd}
\text{$M$ is hereditarily Lindel\"{o}f and $C^{\infty}$-regular} \arrow[r, Rightarrow] \arrow[d,Rightarrow]   & \text{$M$ is in $\wcal_{\dcal\, 0}$} \arrow[d, Rightarrow]\\
\text{$M$ is hereditarily $C^{\infty}$-paracompact} \arrow[r, Rightarrow] & {\text{$M$ is in $\wcal_{\dcal}$}}
\end{tikzcd}
\]
hold (Theorem \ref{mfdhcofibrancy} and Proposition \ref{countable}).
\begin{thm}\label{goodmfds} Let $M$ be a $C^{\infty}$-manifold.
	\begin{itemize}
		\item[$(1)$] Suppose that $M$ is paracompact and modeled on Hilbert spaces, nuclear Fr\'{e}chet spaces, or nuclear Silva spaces. Then, every submanifold of $M$ is hereditarily $C^{\infty}$-paracompact and classical, and hence is in $\wcal_{\dcal}$.
		\item[$(2)$] Suppose that $M$ is Lindel\"{o}f and regular as a topological space and is modeled on separable Hilbert spaces, nuclear Fr\'{e}chet spaces, or nuclear Silva spaces. Then, every submanifold of $M$ is hereditarily Lindel\"{o}f, $C^{\infty}$-regular, classical, and hence is in $\wcal_{\dcal\, 0}$.
		\item [$(3)$] Suppose that $M$ is Lindel\"{o}f and regular as a topological space and is semiclassical and modeled on strict inductive limits of sequences of nuclear Fr\'{e}chet spaces. Then, $M$ is hereditarily Lindel\"{o}f and $C^{\infty}$-regular, and hence is in $\wcal_{\dcal\, 0}$.
	\end{itemize}
\end{thm}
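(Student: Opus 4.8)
The plan is to treat all three parts by the same three-step recipe: first upgrade the stated topological hypotheses into the smooth-paracompactness conclusions using the characterizations of Section 11.3, then record classicality (hence semiclassicality), and finally feed the outcome into the implications collected just before the statement — namely that a semiclassical $C^{\infty}$-manifold which is hereditarily $C^{\infty}$-paracompact lies in $\wcal_{\dcal}$ (Theorem \ref{mfdhcofibrancy}), and one which is moreover hereditarily Lindel\"of and $C^{\infty}$-regular lies in $\wcal_{\dcal\,0}$ (Proposition \ref{countable}).

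For the smooth-paracompactness step in (1) I would first settle Hausdorffness, since the hypothesis only gives topological paracompactness. Hilbert, nuclear Fr\'echet, and nuclear Silva spaces are $C^{\infty}$-paracompact (in particular smoothly regular) convenient vector spaces, so $M$ is locally $C^{\infty}$-regular; a local bump-function (extend-by-zero) argument promotes this to global $C^{\infty}$-regularity, and a $C^{\infty}$-regular separated (hence $T_1$) manifold is Hausdorff by Remark \ref{regular}. Thus $M$ is paracompact Hausdorff, and Corollary \ref{metricmfd} yields that every submanifold is hereditarily $C^{\infty}$-paracompact. Part (2) is the same but cleaner: its hypotheses are precisely those of Corollary \ref{nuclear}, which directly gives that every submanifold is hereditarily Lindel\"of and $C^{\infty}$-regular. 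For the classicality step in both (1) and (2), I would use that each model space is a Fr\'echet space (Hilbert, nuclear Fr\'echet) or a Silva space (nuclear Silva); by Lemma \ref{FSsubmfd} every submanifold is again modeled on Fr\'echet or Silva spaces, whence classical by Lemma \ref{FrechetSilva}, and classical manifolds are semiclassical (Section 11.2). Combining the pieces: in (1), hereditary $C^{\infty}$-paracompactness together with semiclassicality places each submanifold in $\wcal_{\dcal}$ via Theorem \ref{mfdhcofibrancy}; in (2), hereditary Lindel\"ofness, $C^{\infty}$-regularity, and semiclassicality place it in $\wcal_{\dcal\,0}$ via Proposition \ref{countable}.

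Part (3) differs only in that classicality is unavailable for strict inductive limits of sequences of nuclear Fr\'echet spaces, so semiclassicality of $M$ is assumed rather than derived, and the conclusion is asserted for $M$ alone. Applying Corollary \ref{nuclear} to $M$ viewed as a submanifold of itself shows $M$ is hereditarily Lindel\"of and $C^{\infty}$-regular; combined with the assumed semiclassicality, Proposition \ref{countable} gives $M\in\wcal_{\dcal\,0}$.

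The routine inputs I would suppress are the standard functional-analytic facts (closed subspaces of Fr\'echet, resp.\ Silva, spaces are again Fr\'echet, resp.\ Silva, and the smooth regularity of the listed model spaces) and the checking that each listed class of model spaces falls under the hypotheses of Corollaries \ref{metricmfd} and \ref{nuclear}. The main obstacle I anticipate is bookkeeping rather than conceptual: making the Hausdorffness of $M$ in (1) airtight — one must genuinely argue $C^{\infty}$-regularity (through Proposition \ref{smregular} or the local-to-global bump-function argument) before Corollary \ref{metricmfd} becomes applicable — and carefully tracking which assertions hold for $M$ versus for all submanifolds across the three cases.
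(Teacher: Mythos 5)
Your treatment of parts (2) and (3) is correct and coincides with the paper's proof: (2) is Corollary \ref{nuclear} plus Lemmas \ref{FrechetSilva}--\ref{FSsubmfd} plus Proposition \ref{countable}, and (3) is Corollary \ref{nuclear} plus Proposition \ref{countable}, exactly as written there. The problem is in part (1), at precisely the point you flagged as the main obstacle. Your plan is to establish Hausdorffness of $M$ by first proving global $C^{\infty}$-regularity from local $C^{\infty}$-regularity via the extend-by-zero bump-function argument. But that globalization step (Lemma \ref{C-regular}, equivalently Proposition \ref{smregular}) has topological regularity of $\widetilde{M}$ as an explicit hypothesis: to extend a local bump function by zero one must separate the point from the complement of the chart by disjoint open sets, which is exactly regularity. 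In part (1) the only topological hypothesis is paracompactness, and paracompactness without Hausdorffness does not imply regularity (a $T_1$ non-Hausdorff space such as the line with doubled origin is paracompact, locally $C^{\infty}$-regular, yet admits no bump function separating the two origins). So your route is circular: you need regularity (or Hausdorffness) to prove $C^{\infty}$-regularity, and you are proving $C^{\infty}$-regularity in order to get Hausdorffness.

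The missing ingredient is Remark \ref{full}: since the classes of Fr\'echet spaces and of Silva spaces are closed under finite products and on such spaces the $c^{\infty}$-topology coincides with the locally convex topology (\cite[Theorem 4.11(1)-(2)]{KM}), a \emph{separated} $C^{\infty}$-manifold modeled on Fr\'echet or Silva spaces is automatically Hausdorff -- here the paper's standing convention that all objects of $C^{\infty}$ are separated as arc-generated spaces does real work, because separatedness refers to the diagonal in the arc-generated product, and the cited facts identify that product topologically with the ordinary one. (Your pathological examples are in fact excluded by separatedness, but nothing in your argument uses this.) Once Hausdorffness is in hand, $M$ is paracompact Hausdorff and Corollary \ref{metricmfd} applies directly, with no need to establish $C^{\infty}$-regularity of $M$ at this stage; classicality of all submanifolds then follows from Lemmas \ref{FSsubmfd} and \ref{FrechetSilva} as you say, and Theorem \ref{mfdhcofibrancy} concludes. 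This is the paper's proof of (1), and the detour through $C^{\infty}$-regularity should be deleted rather than repaired.
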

%%%%%%%%%%%%%%%%%%%%%%%%%%%%%%%%%%%
\if0
\begin{thm}\label{goodmfds}
	Let $M$ be a $C^{\infty}$-manifold and consider the following conditions:
	\begin{itemize}
		\item[$(i)$] $M$ is paracompact and modeled on Hilbert spaces or nuclear Fr\'{e}chet spaces.
		\item[$(ii)$] $M$ is Lindel\"{o}f and modeled on nuclear Silva spaces (in particular, $\rbb^{\infty}$).
		\item[$(iii)$] $M$ admits a semiclassical atlas and is Lindel\"{o}f and modeled on strict inductive limits of nuclear Fr\'{e}chet spaces.
	\end{itemize}
	If $M$ satisfies condition $(i)$ or $(ii)$, then every submanifold of $M$ is hereditarily $C^{\infty}$-paracompact and admit a semiclassical atlas, and hence is in $\wcal_{\dcal}$. If $M$ satisfies condition $(iii)$, then $M$ is hereditarily $C^{\infty}$-paracompact and admits a semiclassical atlas, and hence is in $\wcal_{\dcal}$.
\end{thm}
\fi
%%%%%%%%%%%%%%%%%%%%%%%%%%%%%%
\begin{proof} (1) Note that a (separated) $C^\infty$-manifold modeled on Fr\'{e}chet spaces or Silva spaces is Hausdorff (see Remark \ref{full}). Then, the result follows from Corollary \ref{metricmfd}, Lemmas \ref{FrechetSilva}-\ref{FSsubmfd}, and Theorem \ref{mfdhcofibrancy}.\par
(2) The result follows from Corollary \ref{nuclear}, Lemmas \ref{FrechetSilva}-\ref{FSsubmfd}, and Proposition \ref{countable}.\par
(3) The result follows from Corollary \ref{nuclear} and Proposition \ref{countable}.
%	(1) By Lemmas \ref{submfd}, \ref{FrechetSilva}, and \ref{FSsubmfd}, we have only to show that $M$ is hereditarily $C^{\infty}$-paracompact. It follows from Lemma \ref{Frechetsparacpt}, Proposition \ref{hsparacpt}, and \cite[Corollary 16.16]{KM}.(2) The result follows from Lemmas \ref{submfd}, \ref{FrechetSilva}, \ref{FSsubmfd}, and Proposition \ref{hsparacpt}.(3) The result follows from Proposition \ref{hsparacpt}.
\end{proof}
\begin{exa}\label{Hilbert}
	Theorem \ref{goodmfds}(1) applies to the Grassmannian $Gr(H)$ of a polarized Hilbert space $H$ (\cite[Section 7.1]{PS}) and the Hilbert manifold models $L^{2}_{r}(S, M)$ for mapping spaces (\cite[p. 781]{Eells}); note that in the literature, infinite dimensional manifolds are often assumed to be paracompact (\cite[p. 765]{Eells}, \cite[p. 24]{PS}).
\end{exa}
Let us apply Theorem \ref{goodmfds} to various $C^{\infty}$-manifolds of mappings (see \cite[Chapter IX]{KM}).
\par\indent
Let $M$ and $N$ be second countable finite dimensional $C^{\infty}$-manifolds; we assume that $M$ and $N$ are connected and that $\dim \ M \leq \dim\ N$ whenever we consider the manifolds of embeddings. Then, the $C^{\infty}$-manifold $\cfra^{\infty}(M, N)$ of $C^{\infty}$-maps from $M$ to $N$ is defined (\cite[Theorem 42.1]{KM}) and the Lie group ${\rm Diff}(M)$ of diffeomorphisms of $M$ and the $C^{\infty}$-manifold ${\rm Emb}(M, N)$ of $C^{\infty}$-embeddings of $M$ into $N$ are defined as open submanifolds of $\cfra^\infty(M,M)$ and $\cfra^{\infty}(M, N)$ respectively (\cite[Theorems 43.1 and 44.1]{KM}).
\par\indent
Suppose further that $M$ and $N$ are $C^{\omega}$-manifolds with $M$ compact. Then, the $C^{\infty}$-manifold $C^{\omega}(M, N)$ of $C^{\omega}$-maps from $M$ to $N$ is defined (\cite[Theorem 42.6]{KM}), and the Lie group ${\rm Diff}^{\omega}(M)$ of $C^{\omega}$-diffeomorphisms of $M$ and the $C^{\infty}$-manifold ${\rm Emb}^{\omega}(M, N)$ of $C^{\omega}$-embeddings of $M$ into $N$ are defined as open submanifolds of $C^{\omega}(M, M)$ and $C^{\omega}(M, N)$ respectively (\cite[Theorems 43.4 and 44.3]{KM}).\\
\begin{cor}\label{goodmfds in ga}
	Let $M$ and $N$ be second countable finite dimensional $C^{\infty}$-manifolds.
	\begin{itemize}
		\item[{\rm (1)}] Suppose that $M$ is compact. Then, every submanifold of $\cfra^{\infty}(M, N)$ $($e.g. ${\rm Diff}(M)\ \text{and} \ {\rm Emb}(M, N))$ is hereditarily Lindel\"{o}f, $C^{\infty}$-regular, and classical, and hence is in $\wcal_{\dcal\, 0}$.
		\item[{\rm (2)}] Each connected component of the $C^{\infty}$-manifolds $\cfra^{\infty}(M, N)$, ${\rm Diff}(M)$, and ${\rm Emb}(M, N)$ is hereditarily Lindel\"{o}f, $C^{\infty}$-regular, and classical, and hence is in $\wcal_{\dcal\, 0}$.
		\item[{\rm (3)}] Suppose that $M$ and $N$ are $C^\omega$-manifolds with $M$ compact. Then, every submanifold of $C^{\omega}(M, N)$ $($e.g. ${\rm Diff}^{\omega}(M)$ and ${\rm Emb}^{\omega}(M, N))$ is hereditarily Lindel\"{o}f, $C^{\infty}$-regular, and classical, and hence is in $\wcal_{\dcal\, 0}$.
	\end{itemize}
\end{cor}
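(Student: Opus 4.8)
The plan is to reduce everything to Theorem \ref{goodmfds} by identifying, for each of the three families of mapping manifolds, the type of its model vector spaces and by checking that its underlying topological space (or each connected component thereof) is Lindel\"of and regular; the passage to submanifolds is then handled by Corollary \ref{submfd}, and the listed examples ${\rm Diff}(M)$, ${\rm Emb}(M,N)$, ${\rm Diff}^\omega(M)$, ${\rm Emb}^\omega(M,N)$ are covered because they are open submanifolds of $\cfra^\infty(M,M)$, $\cfra^\infty(M,N)$, $C^\omega(M,M)$, $C^\omega(M,N)$ respectively.

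First I would read off the model spaces from \cite[Chapter IX]{KM}. For compact $M$ the charts of $\cfra^\infty(M,N)$ are modeled on the spaces $\Gamma(f^*TN)$ of smooth sections over the compact manifold $M$, which are nuclear Fr\'echet spaces, while those of $C^\omega(M,N)$ are modeled on the spaces $\Gamma^\omega(f^*TN)$ of real-analytic sections, which are nuclear Silva spaces. For general (second countable, possibly non-compact) $M$ the charts of $\cfra^\infty(M,N)$ are modeled on the spaces $\Gamma_c(f^*TN)$ of compactly supported sections, which are strict inductive limits of the sequences $\Gamma_{K_n}(f^*TN)$ of sections supported in the members of a compact exhaustion $\{K_n\}$, each $\Gamma_{K_n}(f^*TN)$ being a nuclear Fr\'echet space. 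Classicality will follow from the fact that all these manifolds arise from Keller's $C^\infty_c$-manifolds, so that Lemma \ref{Kclassical} applies (for the Fr\'echet- and Silva-modeled cases it also follows from Lemma \ref{FrechetSilva}).

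Next I would verify the topological hypotheses. For compact $M$ the manifolds $\cfra^\infty(M,N)$ and $C^\omega(M,N)$ are modeled on separable metrizable (Fr\'echet or Silva) spaces and are second countable, hence Lindel\"of, and their underlying topologies are regular; Theorem \ref{goodmfds}(2) then yields parts (1) and (3) at once, together with Corollary \ref{submfd} for the submanifolds. For part (2) the whole manifold need not be Lindel\"of, so I would instead fix a connected component: since a chart change moves a map only within the compactly supported sections, each component consists of maps agreeing with a fixed $f$ off a compact set, and is therefore the countable increasing union $\bigcup_n A_n$ of the relative mapping manifolds $A_n = \{g \mid g = f \text{ on } M \setminus K_n\}$. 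Each $A_n$ is modeled on nuclear Fr\'echet spaces, hence is second countable, metrizable, and Lindel\"of, so the component---being a countable union of Lindel\"of subspaces---is Lindel\"of and regular. The component is also semiclassical (being classical by Lemma \ref{Kclassical}, hence semiclassical), so Theorem \ref{goodmfds}(3) applies and shows it is hereditarily Lindel\"of and $C^\infty$-regular and lies in $\wcal_{\dcal\,0}$, while its classicality is recorded by Lemma \ref{Kclassical}.

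The hard part will be the topological bookkeeping in the non-compact case, namely establishing that each connected component is Lindel\"of and regular despite the model (LF)-spaces being non-metrizable. The decomposition into a countable union of compactly supported relative mapping manifolds is what makes this work: it reduces the Lindel\"of property to the second countable metrizable pieces $A_n$ and invokes the stability of the Lindel\"of property under countable unions. Everything else is a routine application of the already-established Theorem \ref{goodmfds} and Corollary \ref{submfd}.
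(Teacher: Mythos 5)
Your proposal has a genuine gap at its load-bearing point: regularity. Every invocation you make of Theorem \ref{goodmfds}(2)--(3) has ``Lindel\"of and regular as a topological space'' as an explicit hypothesis, yet you never prove regularity of $\cfra^\infty(M,N)$, of $C^\omega(M,N)$, or of the noncompact components --- you assert it (``their underlying topologies are regular'', ``the component \ldots is Lindel\"of and regular''). This cannot be waved through: a Hausdorff manifold modeled on metrizable or $C^\infty$-regular spaces need not be regular (the paper's Remark \ref{T-reg} points to the example in \cite[Section 27.6]{KM}), and this is exactly the point that Kriegl--Michor themselves overlooked in \cite[Proposition 42.3]{KM}, as recorded in Remark \ref{correction}. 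Accordingly, the bulk of the paper's proof of Lemma \ref{mfds of maps} is devoted to verifying condition (b) of the regularity criterion Lemma \ref{regularity}: around each $f$ one builds, from the disk subbundles of radius $\frac{1}{3}$ and $\frac{2}{3}$ of the tubular neighborhood $(\pi_N,exp)$ of $N$, the sets $W_f \subset K_f \subset U_f$, and shows $W_f$ open and $K_f$ closed using the jet/$WO^\infty$-topology estimates, so that $W_f$ and $V_f=\cfra^\infty(M,N)\backslash K_f$ separate $f$ from $U_f^c$; the cases of $C^\omega(M,N)$ and of $B(M,N)$, $B^\omega(M,N)$ are then handled by restricting this construction. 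Without an argument of this kind your proof does not close.

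There are also two secondary errors. First, infinite-dimensional Silva spaces are \emph{not} metrizable (they are (DFS)-spaces), so your ``separable metrizable \ldots hence Lindel\"of'' route fails for $C^\omega(M,N)$; the paper instead produces a countable atlas $\{(U^\omega_{f_n},u^\omega_{f_n})\}$ with real-analytic centers $f_n$ taken from a countable atlas of $\cfra^\infty(M,N)$, and uses that $c^\infty$-open sets of nuclear Silva spaces are Lindel\"of. Second, in the compact case ``second countable'' is unsubstantiated: the nontrivial point is the countability of $\pi_0\,\cfra^\infty(M,N)$, which the paper derives from the smoothing theorem via $\pi_0\,\cfra^\infty(M,N)\cong[\widetilde{M},\widetilde{N}]_{\czero}$ together with the Lindel\"of property of each component (obtained componentwise from the proof of \cite[Proposition 42.3]{KM}). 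Your decomposition of a noncompact component as a countable increasing union of relative mapping spaces $A_n=\{g \mid g=f \text{ on } M\setminus K_n\}$ is a plausible alternative route to the Lindel\"of property --- and differs from the paper, which instead applies the proof of \cite[Proposition 42.3]{KM} to each component --- but it would still require checking that each $A_n$, in the subspace topology inherited from the (LF)-modeled manifold, is second countable (it is not an open submanifold, so this is not automatic from charts), and in any event it contributes nothing toward regularity, which remains the missing idea.
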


We see from \cite[Theorem 42.14]{KM} that $\cfra^\infty(M,N)$ coincides with the hom-object $C^\infty(M,N)\,(=\dcal(M,N))$ as a diffeological space if and only if $M$ is compact or $N$ is discrete. For Corollary \ref{goodmfds in ga}(2), note the obvious fact that a diffeological space $X$ is in $\wcal_{\dcal}$ if and only if each connected component of $X$ is in $\wcal_{\dcal}$.\par
The $C^\infty$-manifold ${\rm Emb}_{\rm prop}(M, N)$ of proper (equivalently closed) embeddings is defined as an open submanifold of ${\rm Emb}(M,N)$ (\cite[Theorem 44.1]{KM}). We can observe from \cite[Theorem 42.14]{KM} that ${\rm Emb}_{\rm prop}(M,N)$ is a disjoint union of connected components of ${\rm Emb}(M,N)$. Hence, Corollary \ref{goodmfds in ga}(2) also holds true for the $C^\infty$-manifold ${\rm Emb}_{\rm prop}(M,N)$.\par
For the proof of Corollary \ref{goodmfds in ga}, we need the following facts on the $C^{\infty}$-manifolds $\cfra^{\infty}(M, N)$ and $C^\omega(M,N)$; Part 1 is a corrected version of \cite[Proposition 42.3]{KM} (see Remark \ref{cpt noncpt}) and its proof complements that of \cite[Proposition 42.3]{KM} (see Remark \ref{correction}).
\begin{lem}\label{mfds of maps} Let $M$ and $N$ be second countable finite dimensional $C^{\infty}$-manifolds.
	\begin{itemize}
		\item[$(1)$] If $M$ is compact, then $\cfra^\infty(M,N)$ is Lindel\"of and regular and is modeled on nuclear Fr\'echet spaces. If $M$ is noncompact, then each connected component of $\cfra^\infty(M,N)$ is Lindel\"of and regular and is modeled on strict inductive limits of sequences of nuclear Fr\'echet spaces.
		\item[$(2)$] Suppose that $M$ and $N$ are $C^\omega$-manifolds with $M$ compact. Then, $C^\omega(M,N)$ is Lindel\"of and regular and is modeled on nuclear Silva spaces.
	\end{itemize}
\end{lem}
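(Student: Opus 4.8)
The plan is to reduce the statement to concrete topological and functional-analytic properties of the relevant mapping spaces, for which the heavy lifting has already been done by Kriegl--Michor. The key structural input is that the modeling spaces of $\cfra^\infty(M,N)$ and $C^\omega(M,N)$ are spaces of sections of appropriate vector bundles: the tangent space at $f$ is the space $\Gamma(f^\ast TN)$ of smooth (resp. real-analytic) sections, and the nuclearity and completeness properties of these section spaces are classical. I would organize the proof as two cases for Part (1) according to whether $M$ is compact, together with Part (2) handled separately.

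\noindent\emph{Part (1), compact case.} When $M$ is compact, the chart centered at $f$ is modeled on $\Gamma(f^\ast TN)$, the space of smooth sections of the pullback bundle over the compact manifold $M$. This is a nuclear Fr\'echet space (Kriegl--Michor, \cite[Sections 6 and 30]{KM}): completeness and the Fr\'echet property come from the usual Fr\'echet topology of uniform convergence of all derivatives on the compact $M$, and nuclearity follows from the nuclearity of $C^\infty(M)$ for compact $M$. Thus $\cfra^\infty(M,N)$ is modeled on nuclear Fr\'echet spaces. For the topological properties, the underlying topology is metrizable (each chart is Fr\'echet, hence metrizable) so regularity is automatic; Lindel\"of-ness follows because $M$ and $N$ are second countable finite-dimensional manifolds, which makes $\cfra^\infty(M,N)$ second countable (it admits a countable atlas whose charts are separable nuclear Fr\'echet spaces). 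Hence $\cfra^\infty(M,N)$ is Lindel\"of and regular and modeled on nuclear Fr\'echet spaces.

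\noindent\emph{Part (1), noncompact case, and Part (2).} When $M$ is noncompact, the section space $\Gamma(f^\ast TN)$ no longer carries a Fr\'echet topology; instead one uses the finer structure coming from the fact that two maps lie in the same connected component of $\cfra^\infty(M,N)$ only if they differ on a relatively compact set, so each connected component is modeled on a space which is a strict inductive limit of a sequence of nuclear Fr\'echet spaces (sections with support in an exhausting sequence of compact sets); see the discussion in \cite[Section 42]{KM}. I would verify the Lindel\"of and regularity claims componentwise, using that each such strict inductive limit is again separable and that the exhaustion by compacta gives a countable presentation. For Part (2), the real-analytic case with $M$ compact, the modeling space is the space $C^\omega(M,f^\ast TN)$ of real-analytic sections; this is a nuclear Silva space, i.e.\ a nuclear $(DFS)$-space, being a countable inductive limit of nuclear Fr\'echet spaces of holomorphic sections on shrinking complex neighborhoods of $M$ (the classical description of real-analytic functions on a compact manifold as germs of holomorphic functions, \cite[Section 11]{KM}). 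Lindel\"of-ness and regularity again follow from second countability.

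\noindent The main obstacle I anticipate is not the nuclearity statements, which are standard, but correctly handling the topological hypotheses in the noncompact case of Part (1): one must take care that ``Lindel\"of and regular'' is asserted for each connected component rather than for the whole space, and that the inductive-limit topology on the modeling spaces genuinely has the stated form. As the excerpt itself flags (Remark~\ref{cpt noncpt} and Remark~\ref{correction}), this is precisely the point where \cite[Proposition 42.3]{KM} needs correction, so I would be especially careful to distinguish the compact and noncompact presentations and to invoke the structure of connected components of $\cfra^\infty(M,N)$ coming from \cite[Theorem 42.14]{KM}. The regularity of these inductive-limit models, which need not be metrizable, would be the technically most delicate verification, and I would reduce it to the corresponding property of the modeling vector spaces rather than arguing directly on the manifold.
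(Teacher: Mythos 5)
Your identification of the model vector spaces (nuclear Fr\'echet spaces for compact $M$, strict inductive limits of sequences of nuclear Fr\'echet spaces componentwise for noncompact $M$, nuclear Silva spaces in the real-analytic case) agrees with the paper and is unproblematic. The genuine gap is in the regularity claims, which are the heart of this lemma (see Remark~\ref{correction}). In the compact case you assert that regularity is ``automatic'' because each chart is metrizable; this inference is false: a Hausdorff space that is merely \emph{locally} metrizable (or locally regular) need not be regular, and this is exactly Eells' error corrected in Remark~\ref{T-reg}, with the example in \cite[Section 27.6]{KM} showing that nice model spaces do not by themselves force regularity of the manifold. What is actually needed, per Lemma~\ref{regularity} and Remark~\ref{modelreg}, is---besides $C^\infty$-regularity of the models, which gives condition (a)---the separation condition (b): for each $f$ one must produce disjoint open sets $W_f$ and $V_f$ with $f\in W_f$ and $U_f^c\subset V_f$, where $(U_f,u_f)$ is the standard chart. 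The bulk of the paper's proof is the explicit construction of these data: one rescales the metric so that $V=(\pi_N,\exp)(U)$ is the open unit disk bundle, sets $W_f=\{g\sim f \mid (f(x),g(x))\in V_{1/3}\ \text{for all}\ x\}$ and $K_f=\{g\sim f \mid (f(x),g(x))\in \overline{V_{2/3}}\ \text{for all}\ x\}$, and proves $W_f$ open and $K_f$ closed using the graph description of the topology (\cite[Remark 42.2]{KM}, Lemma~\ref{noncpt}(3)). Your stated fallback plan---``reduce regularity to the corresponding property of the modeling vector spaces''---is precisely the reduction that cannot close the gap, since the local-to-global passage is condition (b). In Part (2) the construction is not redone from second countability (which never implies regularity, and the Silva models are non-metrizable anyway) but inherited by restricting the neighborhoods along the smooth inclusion $C^\omega(M,N)\hookrightarrow \cfra^\infty(M,N)$ (\cite[Corollary 42.11]{KM}); your sketch contains no substitute for this step.

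There is a secondary gap in the Lindel\"of claim for compact $M$: you assert a countable atlas ``whose charts are separable nuclear Fr\'echet spaces'' without justification. Since each connected component is Lindel\"of by the argument of \cite[Proposition 42.3]{KM}, obtaining a countable atlas is essentially equivalent to countability of $\pi_0\,\cfra^\infty(M,N)$, which is where the paper makes an interesting self-application of its own main theorem: for compact $M$ one has $\cfra^\infty(M,N)=C^\infty(M,N)$ in $\dcal$, hence $\pi_0\,\cfra^\infty(M,N)\cong[\widetilde{M},\widetilde{N}]_{\czero}$ by Theorem~\ref{mapsmoothing}, and this set is countable because $\widetilde{M}$ and $\widetilde{N}$ have the homotopy types of a finite and a countable complex respectively. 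Your componentwise treatment of the noncompact case and your caution about Remarks~\ref{cpt noncpt} and~\ref{correction} are on target, but as written the proposal names the delicate points without supplying the two ideas that resolve them: the separation criterion of Lemma~\ref{regularity} with the $W_f\subset K_f\subset U_f$ construction, and the smoothing-theorem computation of $\pi_0$.
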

\begin{proof} 
	(1) {\sl Model vector spaces  } See \cite[Theorem 42.1 and the proof of Lemma 30.4]{KM}.
	\vspace{5pt}\\
	{\sl Lindel\"of property  }  The proof of \cite[Proposition 42.3]{KM} applies to each connected component of $\cfra^{\infty}(M, N)$, showing its Lindel\"{o}f property (see Remark \ref{cpt noncpt}).
	\par\indent
	 If $M$ is compact, then $\cfra^\infty(M,N)=C^\infty(M,N)$ holds in $\dcal$, and hence $\pi_{0}\ \cfra^{\infty}(M, N)$ is isomorphic to $[\widetilde{M}, \widetilde{N}]_{\czero}$ (see Theorem \ref{mapsmoothing}). Since $\widetilde{M}$ and $\widetilde{N}$ have the homotopy types of a finite complex and a countable complex respectively, $\pi_{0}\ \cfra^{\infty}(M, N)$ is countable. Thus, $\cfra^{\infty}(M, N)$ is Lindel\"{o}f.
	\vspace{5pt}\\
	{\sl Regularity  }  Throughout this proof, we use the notation of \cite[the proof of Theorem 42.1]{KM}.\par
	Note that $\cfra^\infty(M,N)$ is Hausdorff (\cite[Theorem 42.1]{KM}) and that its model vector spaces are $C^\infty$-regular (\cite[Theorem 16.10]{KM}). Also recall the charts $(U_f,u_f)\,(f\in \cfra^\infty(M,N))$ constructed in \cite[the proof of Theorem 42.1]{KM}. Then, we have only to show that for any $f\in \mathfrak{C}^\infty(M,N)$, there exist disjoint open sets $W_f$ and $V_f$ of $\mathfrak{C}^\infty(M,N)$ such that
	\[
	f \in W_f \text{ and } U_f^c \subset V_f
	\]
	(see Lemma \ref{regularity} and Remark \ref{modelreg}). Consider the open embedding
	\[
	TN \supset U \xhookrightarrow[]{(\pi_N, exp)} N\times N
	\]
	constructed in \cite[the proof of Theorem 42.1]{KM}. To simplify the argument below, we choose a new metric on the tangent bundle $TN$ such that the unit disk bundle of $TN$ is contained in $U$ and redefine $U$ as the open unit disk bundle of $TN$ (with respect to the new metric); explicitly,
	\begin{equation*}
	\begin{split}
	U=\{v\in TN \,|\, |v| < 1 \} \subset TN,\\
	V=(\pi_N, exp)(U)\subset N \times N.
	\end{split}
	\end{equation*}
	Recall that
	\[
	U_f = \{g\in \mathfrak{C}^\infty(M,N)\,|\, (f(x),g(x))\in V \text{ for all } x\in M, g\sim f \}.
	\]
	We define the subsets $W_f$ and $K_f$ of $\mathfrak{C}^\infty(M,N)$ by
	\begin{equation*}
	\begin{split}
	W_f = \{g\in \mathfrak{C}^\infty(M,N)\,|\,(f(x),g(x))\in V_{\frac{1}{3}} \text{ for all } x\in M, g\sim f \},\\
	K_f = \{g\in\mathfrak{C}^\infty(M,N)\,|\,(f(x), g(x))\in \overline{V_{\frac{2}{3}}} \text{ for all } x \in M, g\sim f \},
	\end{split}
	\end{equation*}
	where we set
	\begin{equation*}
	\begin{split}
	U_r = \{v\in TN \,|\, |v| < r \},\\
	V_r = (\pi_N, exp)(U_r)
	\end{split}
	\end{equation*}
	for $0 < r < 1$. We then have the obvious inclusion relations
	\[
	W_f \subset K_f \subset U_f.
	\]
	Note that $(f\times 1_N)^{-1}(V_r)$ is an open neighborhood of the graph $\Gamma_f$ in $M\times N$ and that the equivalence
	\[
		(f(x), g(x)) \in V_r \text{ for all } x\in M \Longleftrightarrow \Gamma_g \subset (f\times 1_N)^{-1} (V_r)
	\]
	holds. Recalling \cite[Remark 42.2]{KM}, we thus see that $W_f$ is open. Similarly, we also see that $K_f$ is closed. Thus, $W_f$ and $V_f := \mathfrak{C}^\infty(M,N)\backslash K_f$ are disjoint open sets such that
	\[
	f \in W_f \text{ and } U^c_f \subset V_f.
	\]
	\par
	(2) Choose a real analytic exponential map of $N$. Then, the atlas $\{ (U^{\omega}_{f}, u^{\omega}_{f}) \}_{f \in C^{\omega}(M, N)}$ of $C^{\omega}(M, N)$ is constructed similarly to the atlas $\{ (U_{f}, u_{f}) \}_{f \in C^{\infty}(M, N)}$ of $\cfra^{\infty}(M, N)$ (see the proofs of \cite[Theorem 8.2]{KMActa} and \cite[Theorems 42.1 and 42.6]{KM}). By Part 1 and \cite[the proof of Theorem 42.8]{KM}, we can choose a countable atlas $\{ (U_{f_{n}}, u_{f_{n}}) \}_{n \in \nbb}$ of $\cfra^{\infty}(M, N)$ such that each $f_{n}$ is in $C^{\omega}(M, N)$. Hence, $\{ (U^{\omega}_{f_{n}}, u^{\omega}_{f_{n}}) \}_{n \in \nbb}$ is an atlas of $C^{\omega}(M, N)$. Since the model vector spaces $C^{\omega}(M \leftarrow f^{\ast}TN)$ are nuclear Silva spaces (\cite[Lemma 30.6 and Theorem 11.4]{KM}), their open sets are Lindel\"of (see the proof of Corollary \ref{nuclear}), and hence $C^\omega(M,N)$ is Lindel\"of. Last, we show that $C^\omega(M,N)$ is regular. Since $C^\omega (M,N)$ is Hausdorff (\cite[Corollary 42.11]{KM}) and its model vector spaces are $C^\infty$-regular (\cite[Theorem 16.10]{KM}), we have only to show that for any $f\in C^\omega(M,N)$, there exists a coordinate open neighborhood satisfying condition (b) in Lemma \ref{regularity} (see Remark \ref{modelreg}). Recalling that the inclusion $C^\omega(M,N)\longhookrightarrow \cfra^\infty(M,N)$ is smooth (\cite[Corollary 42.11]{KM}), we can obtain the desired coordinate open neighborhood of $f$ in $C^\omega(M,N)$ by restricting that in $\cfra^\infty(M,N)$ constructed in Part 1.
\end{proof}
\begin{rem}\label{cpt noncpt}
	In \cite[Proposition 42.3]{KM}, it is stated that $\cfra^{\infty} (M, N)$ is Lindel\"{o}f. However, this is incorrect. In fact, suppose that $M$ is noncompact and that $\dim N \geq 1.$ Then, $\cfra^{\infty}(M, N)$ has uncountably many constant maps, which represent different connected components (see \cite[Theorem 42.14]{KM}). Hence, $\cfra^{\infty}(M, N)$ is not Lindel\"{o}f.
\end{rem}
%\begin{rem} Classical atlases on $C^{\infty}$-manifolds often result from $C^{\infty}$-manifolds in Classical inifinite dimensional calculus as follows:
%\par\indent
%Let $X$ be a $C^{\infty}$-manifold in a classical infinite dimensional calculus (e.g. Keller's calculus \cite{}). If the model vector spaces of $X$ are convenient, then $X$ defines a $C^{\infty}$-manifold (in the convenient calculus) admitting a classical atlas in the obvious manner.
%\end{rem}
\begin{proof}[Proof of Corollary \ref{goodmfds in ga}] (1) The result follows from Theorem \ref{goodmfds}(2) and Lemma \ref{mfds of maps}(1).\par
(2) By Part 1, we may assume that $M$ is noncompact. By \cite[Sections 10, 11, and 13]{Michor}, the $C^{\infty}$-manifolds in question are classical (see Lemma \ref{Kclassical}). Thus, we see from Theorem \ref{goodmfds}(3) and Lemma \ref{mfds of maps}(1) that the result holds for $\cfra^\infty(M,N)$, and hence that each connected component of the two open submanifolds is also Lindel\"of. Thus, the result also holds for the two open submanifolds.\par
(3) The result follows from Theorem \ref{goodmfds}(2) and Lemma \ref{mfds of maps}(2).
\end{proof}
Let us investigate important submanifolds of $\cfra^{\infty} (M, N)$ and $C^{\omega}(M, N)$ other than those mentioned in Corollary \ref{goodmfds in ga}.
\begin{exa}\label{moreex} Let $M$ and $N$ be second countable finite dimensional $C^{\infty}$-manifolds; $M$ and $N$ are assumed to be $C^\omega$-manifolds with $M$ compact, whenever we consider $C^\omega$-maps between them. Let $X$ be one of the following $C^{\infty}$-manifolds:
	\begin{itemize}
		\item The $C^{\infty}$-manifold $\cfra^{\infty}(q)$ of smooth sections of a given smooth map $q: N \longrightarrow M$ and the $C^{\infty}$-manifold $C^{\omega}(q)$ of analytic sections of a given $C^\omega$-map $q: N \longrightarrow M$ (see \cite[Thorem 42.20]{KM}).
		\item The Lie group ${\rm Diff}(M, \sigma)$ of symplectic diffeomorphisms of a symplectic manifold $(M, \sigma)$ and the Lie group ${\rm Diff}^{\omega}(M, \sigma)$ of $C^{\omega}$-symplectic diffeomorphisms of a $C^{\omega}$-symplectic manifold $(M, \sigma)$ (see \cite[Theorem 43.12]{KM}).
	\end{itemize}
If $M$ is compact, then every submanifold of $X$ is hereditarily Lindel\"{o}f, $C^{\infty}$-regular, and classical, and hence is in $\wcal_{\dcal\, 0}$ (see Corollary \ref{goodmfds in ga}(1) and (3)). Even if $M$ is noncompact, $X\,(=\cfra^\infty(q)$ or ${\rm Diff}(M,\sigma))$ is classical (see \cite[Sections 10 and 14]{Michor} and Lemma \ref{Kclassical}), and thus each connected component of $X$ is hereditarily Lindel\"of, $C^\infty$-regular, and classical, and hence is in $\wcal_{\dcal\,0}$ (see Corollary \ref{goodmfds in ga}(2) and Proposition \ref{countable}).
\par\indent
This result applies to the following $C^\infty$-manifolds that can be formulated as $C^\infty$-manifolds of smooth sections:
\begin{quote}
	The $C^{\infty}$-manifold $\bcal(M)$ of nondegenerate bilinear structures on $M$, the $C^{\infty}$-manifold 
	$\mcal^{q}(M)$ of pseudo Riemannian metrics on $M$ of signature $q$, the $C^{\infty}$-manifold $\Omega^{2}_{\rm nd}(M)$ of nondegenerate 2-forms on $M$, and the $C^{\infty}$-manifold $\hcal(M)$ of almost Hermitian structures on $M$ (see \cite[Section 45]{KM} and \cite[Section 1]{GM}).
\end{quote}
Hence, if $M$ is compact, the $C^{\infty}$-manifold ${\rm Symp}(M)$ of symplectic structures on $M$ is in $\wcal_{\dcal\,0}$. Even if $M$ is noncompact, each connected component of ${\rm Symp}(M)$ is in $\wcal_{\dcal\,0}$ (see \cite[45.21]{KM}).

\end{exa}
Next, we apply Theorem \ref{goodmfds} to the $C^{\infty}$-manifold $B(M, N)$ of submanifolds of $N$ of type $M$ (\cite[p. 474]{KM}) and the $C^{\infty}$-manifold $B^{\omega}(M, N)$ of real analytic submanifolds of $N$ of type $M$ (\cite[p. 477]{KM}).
\begin{cor}\label{goodmfds of submfds} Let $M$ and $N$ be a second countable, connected $C^{\infty}$-manifolds with $\dim M \leq \dim N < \infty$.
	\begin{itemize}
		\item[(1)] Suppose that $M$ is compact. Then, every submanifold of $B(M, N)$ is hereditarily Lindel\"{o}f, $C^{\infty}$-regular, and classical, and hence is in $\wcal_{\dcal\, 0}$.
		\item[(2)] Suppose that $M$ is noncompact. Then, each connected component of $B(M, N)$ is hereditarily Lindel\"{o}f, $C^{\infty}$-regular, and classical, and hence is in $\wcal_{\dcal\, 0}$.
		\item[(3)] Suppose that $M$ and $N$ are $C^{\omega}$-manifolds with $M$ compact. Then, every submanifold of $B^{\omega}(M, N)$ is hereditarily Lindel\"{o}f, $C^{\infty}$-regular, and classical, and hence is in $\wcal_{\dcal\, 0}$.
	\end{itemize}
\end{cor}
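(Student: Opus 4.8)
The plan is to deduce Corollary \ref{goodmfds of submfds} from Theorem \ref{goodmfds} in exactly the way Corollary \ref{goodmfds in ga} was obtained, the only new ingredient being an analogue of Lemma \ref{mfds of maps} for $B(M,N)$ and $B^{\omega}(M,N)$. Thus the heart of the argument is to establish, for these manifolds of submanifolds, (a) the nature of their model vector spaces, (b) Lindel\"{o}f property (of the whole manifold when $M$ is compact, and of each connected component when $M$ is noncompact), and (c) regularity. Once (a)--(c) are in hand, Parts (1) and (3) follow directly from Theorem \ref{goodmfds}(2), whose conclusion already includes classicality of submanifolds via Lemmas \ref{FrechetSilva}--\ref{FSsubmfd}, while Part (2) follows by applying Theorem \ref{goodmfds}(3) to each connected component, after noting that a component of a $C^{\infty}$-manifold is again a $C^{\infty}$-manifold and that the classicality (hence semiclassicality) needed there, as well as in the statement of Part (2), is supplied by the $C^{\infty}_{c}$-manifold structure of $B(M,N)$ recorded in \cite{Michor} through Lemma \ref{Kclassical}.

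For (a) I would recall the chart construction of $B(M,N)$ and $B^{\omega}(M,N)$ from \cite[44.1 and 44.3]{KM}: a submanifold $L\subset N$ of type $M$ carries a chart, built from a tubular neighborhood, modeled on the space of smooth (resp.\ real analytic) sections of the normal bundle of $L$. Since $L\cong M$, these model spaces are nuclear Fr\'{e}chet spaces when $M$ is compact, strict inductive limits of sequences of nuclear Fr\'{e}chet spaces when $M$ is noncompact (for $B(M,N)$), and nuclear Silva spaces when $M$ is compact (for $B^{\omega}(M,N)$), precisely paralleling the model spaces of $\cfra^{\infty}(M,N)$ and $C^{\omega}(M,N)$ identified in Lemma \ref{mfds of maps}. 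These are exactly the model classes required by Theorem \ref{goodmfds}(2)--(3).

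For (b) I would exploit the principal bundle structure $B(M,N)={\rm Emb}(M,N)/{\rm Diff}(M)$ and $B^{\omega}(M,N)={\rm Emb}^{\omega}(M,N)/{\rm Diff}^{\omega}(M)$ (\cite[44.1 and 44.3]{KM}), the projection $p$ being a continuous open surjection. When $M$ is compact, ${\rm Emb}(M,N)$ (resp.\ ${\rm Emb}^{\omega}(M,N)$) is Lindel\"{o}f by Corollary \ref{goodmfds in ga}(1) (resp.\ (3)), so its continuous image is Lindel\"{o}f. When $M$ is noncompact, a connected component $B_{0}$ of $B(M,N)$ equals $p(E_{0})$ for a single component $E_{0}$ of ${\rm Emb}(M,N)$: since $p$ is open and saturates, $p(E_{0})$ is both open and closed in the connected set $B_{0}$, hence all of it; and $E_{0}$ is Lindel\"{o}f by Corollary \ref{goodmfds in ga}(2), so $B_{0}$ is Lindel\"{o}f.

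Step (c), regularity, is the main obstacle and the step requiring genuine work. I expect to adapt the regularity argument of Lemma \ref{mfds of maps}(1) to the present setting: for a submanifold $L$ with chart $(U_{L},u_{L})$ obtained from a tubular neighborhood, I would shrink the tube to radii $\tfrac13$ and $\tfrac23$ to produce an open set $W_{L}\ni L$ and a closed set $K_{L}$ with $W_{L}\subset K_{L}\subset U_{L}$, using that the condition ``$L'$ lies in the tube of radius $r$'' is open on the space of submanifolds (and closed for the closed tube), just as ``$\Gamma_{g}\subset (f\times 1_{N})^{-1}(V_{r})$'' was in the mapping case. Combined with the Hausdorffness of $B(M,N)$ and $B^{\omega}(M,N)$ (\cite[44.1 and 44.3]{KM}) and the $C^{\infty}$-regularity of the model spaces, this verifies the coordinate separation condition of Lemma \ref{regularity} (see Remark \ref{modelreg}), yielding regularity of the manifold and hence of each of its components. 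With (a)--(c) in place, Theorem \ref{goodmfds}(2)--(3) together with Proposition \ref{countable} deliver all three parts, the classicality in Part (2) coming, as noted, from \cite{Michor} and Lemma \ref{Kclassical}.
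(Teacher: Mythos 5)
Your overall architecture is exactly the paper's: the corollary is reduced to an analogue of Lemma \ref{mfds of maps} for $B(M,N)$ and $B^{\omega}(M,N)$ (this is the paper's Lemma \ref{BMN}), and then Theorem \ref{goodmfds}(2)--(3) plus the classicality of $B(M,N)$ from \cite[Section 13]{Michor} via Lemma \ref{Kclassical} finish all three parts. Your steps (a) and (b) also match: the model spaces are read off from \cite[the proof of Theorem 44.1 and Sections 30.1, 30.4]{KM} and \cite[Theorem 44.3]{KM}, and Lindel\"{o}f property descends along the principal bundle $\pi:{\rm Emb}(M,N)\longrightarrow B(M,N)$ (the paper uses, for noncompact $M$, that $\pi$ restricted over a component is a principal $G$-bundle for an open Lie subgroup $G\leq{\rm Diff}(M)$; your clopen-image argument is an acceptable variant of the same observation).

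The genuine gap is in step (c). Transplanting the $\cfra^{\infty}(M,N)$ argument by shrinking the tube to radii $\tfrac13$ and $\tfrac23$ with a purely $C^{0}$ (zero-jet) condition does not work, because membership in the chart $\widehat{Q_i}$ is not a pointwise containment condition: a point of $B(M,N)$ lies in $\widehat{Q_i}$ only if it is the \emph{graph of a section} of the tubular neighborhood, i.e., $p_L$ restricts to a diffeomorphism onto $L$. Consequently the set $K_L=\{\text{sections } s \text{ with } |s|\leq\tfrac23 \text{ pointwise}\}$ is \emph{not} closed in $B(M,N)$: graphs of sections with bounded sup-norm but unbounded derivative can converge in $B(M,N)$ to a submanifold contained in the closed tube which is no longer a section (nearly vertical, or projecting non-injectively), so it lies outside $\widehat{Q_i}$ while every neighborhood of it meets $K_L$. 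This is why the paper defines $\wcal_i$ and $\kcal_i$ by bounding the \emph{$1$-jet}, $j^{1}s(M)\subset D_{r}(J^{1}(i^{\ast}V_L))$ with the metric arranged so that $|\sigma|\geq|\pi^{1}_{0}(\sigma)|$, and then carries out a substantial case analysis to show $\pi^{-1}\kcal_i$ is closed in ${\rm Emb}(M,N)$: the case $j_{0}(M)\not\subset W_L$ (including the boundary phenomenon $j_{0}(M)\subset\overline{D_{2/3}(W_L)}$ caused by nonclosed embeddings when $M$ is noncompact, handled via an auxiliary tubular neighborhood of $j_{0}(M)$ and a surjectivity argument from Lemma \ref{noncpt}(2)), and the case where $p_L\circ j_{0}$ is defined but not a diffeomorphism (handled via covering/degree considerations when the differential is everywhere invertible, and via a vanishing direction ${p_L}_{\ast}{j_{0}}_{\ast}(v_{0})=0$ otherwise --- it is exactly here that the $1$-jet bound in $\kcal_i$ is used, since a nearby $j\in\pi^{-1}\kcal_i$ would have controlled slope). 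Your sketch, as stated, omits all of this and the analogy with ``$\Gamma_g\subset(f\times 1_N)^{-1}(V_r)$'' breaks down precisely at the closedness of $\kcal_i$. (A minor further point: for $B^{\omega}(M,N)$ the paper does not rerun the jet argument analytically but obtains the separating coordinate neighborhoods by restricting those of $B(M,N)$ along the smooth inclusion $B^{\omega}(M,N)\longhookrightarrow B(M,N)$, using Lemma \ref{bdlequotient} and \cite[Corollary 42.11]{KM}.)
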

For the proof of Corollary \ref{goodmfds of submfds}, we need basic facts on $C^\infty$-manifolds $B(M,N)$ and $B^\omega(M,N)$ (Lemma \ref{BMN}). The following lemma is needed to prove Lemma \ref{BMN}. See \cite[41.5 and 41.10]{KM} for the $CO$-topology and the $WO^\infty$-topology.
\begin{lem}\label{noncpt}
	Let $M$ and $N$ be second countable finite dimensional $C^\infty$-manifolds.
	\begin{itemize}
		\item[{\rm (1)}] For any compact subset $K$ of $M$, there exists a compact submanifold $K_0$ with boundary such that $K\subset K_0$.
		\item[{\rm (2)}] If $f\in \cfra^\infty(M,N)$ is in the connected component of a diffeomorphism, then $f$ is surjective.
		\item[{\rm (3)}] The canonical maps
		\[
			\:\:\:\:\:\:\:\:\:\:\:\:\:\:\:\:\:\: \cfra^\infty(M,N) \xrightarrow{\ id \ } C^\infty(M,N)_{WO^\infty} \text{  and  } C^\infty(M,N)_{WO^\infty} \longhookrightarrow C^0(M,N)_{CO}
		\]
		are continuous, where $C^\infty(M,N)_{WO^\infty}$ (resp. $C^0(M,N)_{CO}$) is the set of smooth (resp. continuous) maps from $M$ to $N$ endowed with the $WO^\infty$-(resp. CO-)topology.
	\end{itemize}
	\begin{proof}
		Set $m=\dim M$.\par
		{\rm (1)} Choose a closed embedding $\iota:M\longhookrightarrow \mathbb{R}^{2m+1}$ (\cite[(7.10)]{BJ}) and define the smooth function $\varphi:\mathbb{R}^{2m+1}\longrightarrow \mathbb{R}$ by $\varphi(x_1,\cdots, x_{2m+1})=x^2_1 + \cdots + x^2_{2m+1}$. Then, the composite
		\[
			M \xhookrightarrow{\ \iota \ } \mathbb{R}^{2m+1} \xrightarrow{\ \varphi \ } \mathbb{R}
		\]
		is a proper smooth map. Set $K(r)=(\varphi \circ \iota)^{-1}[0,r]$ for $r\in \mathbb{R}$. Choose $r_0 > 0$ such that $(\varphi\circ \iota)(K) \subset [0, r_0]$ and $r_0$ is a regular value of $\varphi \circ \iota$ (\cite[(6.1)]{BJ}). Then, $K_0=K(r_0)$ is the desired compact submanifold with boundary.\par
		{\rm (2)} For simplicity, we assume that $M$ and $N$ are connected. Then, we can choose a diffeomorphism $g$ and a connected compact submanifold $K$ (with boundary) of $M$ such that $f$ is smoothly homotopic to $g$ relative to $M-\mathring{K}$ (see \cite[Lemma 42.5]{KM} and Part 1). Then, we see that
		\[
			f_\ast = g_\ast : H_m(M,M-\mathring{K};\zbb/2)\longrightarrow H_m(N,N-g(\mathring{K});\zbb/2),
		\]
		and hence that $f_\ast$ is an isomorphism. Since
		\[
			H_m(M,M-\mathring{K};\zbb/2) \cong H_m(K,\partial K;\zbb/2) \cong \zbb/2
		\]
		and
		\[
			H_m(M-\{x\}, M-\mathring{K};\zbb/2) \cong H_m(K-\{x\}, \partial K;\zbb/2)=0
		\]
		for $x\in \mathring{K}$ (see \cite[p. 304 and p. 307]{Dold2}), $f:M\longrightarrow N$ is surjective.\par
		{\rm (3)} See \cite[Section 41 and Remark 42.2]{KM}.
	\end{proof}
\end{lem}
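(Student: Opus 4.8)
The plan is to handle the three parts in order, using Part~(1) as the geometric input for Parts~(2) and~(3). For Part~(1), I would produce a proper smooth exhaustion of $M$ and cut it along a regular level. Since $M$ is a second countable finite-dimensional manifold, the Whitney embedding theorem (\cite[(7.10)]{BJ}) gives a closed embedding $\iota\colon M\hookrightarrow \rbb^{2m+1}$ with $m=\dim M$; composing with the norm-square $\varphi(x)=\sum x_i^2$ yields a proper smooth function $\varphi\circ\iota\colon M\to\rbb$. By Sard's theorem (\cite[(6.1)]{BJ}) I can choose a regular value $r_0$ exceeding $\max_K(\varphi\circ\iota)$, and then $K_0=(\varphi\circ\iota)^{-1}[0,r_0]$ is compact by properness and a smooth submanifold with boundary by regularity of $r_0$, and it visibly contains $K$.

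For Part~(2), I would first reduce to the case where $M$ and $N$ are connected; this is legitimate because $f$ is smoothly homotopic to a diffeomorphism $g$, and homotopic maps induce the same bijection on $\pi_0$. The crucial geometric input is that a map in the diffeomorphism component differs from a diffeomorphism only over a compact set: using the structure of the $WO^\infty$-manifold $\cfra^\infty(M,N)$ together with Part~(1), I can find a connected compact submanifold-with-boundary $K$ and a smooth homotopy from $f$ to a diffeomorphism $g$ that is stationary outside $\mathring{K}$, so that $f=g$ on $M-\mathring{K}$. Surjectivity then follows from a mod~$2$ local-degree argument. Since $f=g$ on $M-\mathring{K}$ and $g$ is bijective, $f$ carries $(M,M-\mathring{K})$ into $(N,N-g(\mathring{K}))$ and agrees with $g$ there, whence $f_\ast=g_\ast$ on $H_m(M,M-\mathring{K};\zbb/2)$, an isomorphism onto $H_m(N,N-g(\mathring{K});\zbb/2)$; by excision both groups equal $H_m(K,\partial K;\zbb/2)\cong\zbb/2$. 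If some $y=g(x)$ with $x\in\mathring{K}$ were missed by $f$, then $f$ would factor through $N-\{y\}$, forcing the image of the generator in $H_m(N,N-\{y\};\zbb/2)$ to vanish; but the vanishing $H_m(M-\{x\},M-\mathring{K};\zbb/2)\cong H_m(K-\{x\},\partial K;\zbb/2)=0$ forces that generator to hit a nonzero local class at $y$, a contradiction. Hence $g(\mathring{K})\subset f(M)$, and together with $f(M-\mathring{K})=g(M-\mathring{K})$ this gives $f(M)=N$.

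For Part~(3), both continuity assertions are structural facts about the relevant topologies, and I would settle them by citing \cite[Section 41 and Remark 42.2]{KM} rather than reproving them. The manifold charts of $\cfra^\infty(M,N)$ are built from neighborhoods whose defining conditions are open in the Whitney $C^\infty$-topology, so the identity $\cfra^\infty(M,N)\to C^\infty(M,N)_{WO^\infty}$ is continuous; and the Whitney $C^\infty$-topology refines the compact-open topology, because controlling a map together with all its derivatives on compacta is finer than controlling the $0$-jet, which yields continuity of $C^\infty(M,N)_{WO^\infty}\hookrightarrow C^0(M,N)_{CO}$.

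The main obstacle is Part~(2). The delicate step is upgrading ``$f$ lies in the diffeomorphism component'' to the sharp statement that $f$ \emph{coincides} with a diffeomorphism off a fixed compact set, so that the homotopy used is genuinely stationary outside $\mathring{K}$; this is what makes the relative-homology pairs $(M,M-\mathring{K})$ and $(N,N-g(\mathring{K}))$ correspond under $f$. The second subtlety is the homology bookkeeping itself: one must pass to \emph{local} homology at each $y\in g(\mathring{K})$ to extract actual surjectivity rather than a mere degree statement, which is exactly the role of the two excision/vanishing computations. Parts~(1) and~(3) are routine once the proper exhaustion and the topological references are in place.
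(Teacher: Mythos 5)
Your proposal is correct and takes essentially the same route as the paper's proof: Part~(1) via the Whitney closed embedding, the proper norm-square function, and Sard's theorem to cut along a regular level; Part~(2) via the reduction to the connected case, the compactly supported homotopy from $f$ to a diffeomorphism $g$ relative to $M-\mathring{K}$ (the paper cites \cite[Lemma 42.5]{KM} for this), and the same mod-$2$ computation with the pairs $(M,M-\mathring{K})$ and $(N,N-g(\mathring{K}))$; and Part~(3) by the identical citation of \cite[Section 41 and Remark 42.2]{KM}. Your explicit passage to the local homology $H_m(N,N-\{y\};\zbb/2)$ at each missed point $y$ is just an unpacking of the paper's compressed use of the vanishing $H_m(M-\{x\},M-\mathring{K};\zbb/2)\cong H_m(K-\{x\},\partial K;\zbb/2)=0$, not a different argument.
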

\begin{lem}\label{BMN}
	Let $M$ and $N$ be a second countable, connected $C^{\infty}$-manifolds with $\dim M \leq \dim N < \infty$.
	\begin{itemize}
		\item[$(1)$] If $M$ is compact, then $B(M,N)$ is Lindel\"of and regular and is modeled on nuclear Fr\'echet spaces. If $M$ is noncompact, then each connected component of $B(M,N)$ is Lindel\"of and regular and is modeled on strict inductive limits of sequences of nuclear Fr\'echet spaces.
		\item[$(2)$] Suppose that $M$ and $N$ are $C^\omega$-manifolds with $M$ compact. Then, $B^\omega(M,N)$ is Lindel\"of and regular and is modeled on nuclear Silva spaces.
	\end{itemize}
\begin{proof}
	For the proof, recall the following fact: Since an arc-generated space $X$ is locally arcwise connected, the connected components of $X$ are just the arcwise connected components of $X$ and they are open sets.\par
	(1) {\sl Lindel\"of property and Model vector spaces     } Consider the principal ${\rm Diff}(M)$-bundle $\pi: {\rm Emb}(M, N) \longrightarrow B(M, N)$ (\cite[Theorem 44.1]{KM}). First, suppose that $M$ is compact. Then, ${\rm Emb}(M, N)$, and hence $B(M,N)$ is Lindel\"{o}f (see Corollary \ref{goodmfds in ga}(1)). We see from \cite[the proof of Theorem 44.1 and Section 30.1]{KM} that the model vector spaces of $B(M,N)$ are nuclear Fr\'echet spaces. Next, suppose that $M$ is noncompact. Then, for any $f \in {\rm Emb}(M, N)$, the restriction $\pi: {\rm Emb}(M, N)_{f} \longrightarrow B(M, N)_{\pi(f)}$ is a principal $G$-bundle for some open Lie subgroup $G$ of ${\rm Diff}(M)$, where ${\rm Emb}(M, N)_{f}$ and $B(M, N)_{\pi(f)}$ are the connected components of ${\rm Emb}(M, N)$ and $B(M, N)$ containing $f$ and $\pi(f)$ respectively. Since ${\rm Emb}(M, N)_{f}$ is Lindel\"{o}f (Corollary \ref{goodmfds in ga}(2)), $B(M, N)_{\pi(f)}$ is also Lindel\"{o}f. We see from \cite[the proof of Theorem 44.1 and Section 30.4]{KM} that the model vector spaces of $B(M, N)$ are strict inductive limits of sequences of nuclear Fr\'{e}chet spaces.\vspace{0.2cm}\\
	{\sl Regularity     } In the case of $\dim M =\dim N$, $B(M,N)$ is $0$-dimensional (see \cite[the proof of Theorem 44.1]{KM}), and hence discrete. Thus, we assume that $\dim M < \dim N$. Since $B(M,N)$ is Hausdorff (\cite[Theorem 44.1]{KM}) and its model vector spaces are $C^\infty$-regular (\cite[Theorem 16.10]{KM}), we have only to construct, for any $\widehat{i}=\pi(i)\in B(M,N)$, a coordinate open neighborhood satisfying condition (b) in Lemma \ref{regularity} (see also Remark \ref{modelreg}).\par
	In this proof, given a map $p:E\longrightarrow B$ and a subset $S$ of $B$, we write $E|_S$ for the inverse image of $S$ under $p$.\par
	We first recall the construction of charts of $B(M,N)$ from \cite[Section 13]{Michor}. Let us fix an embedding $i\in {\rm Emb}(M,N)$. Since $L=i(M)$ is a submanifold of $N$, there exist a tubular neighborhood $W_L$ of $L$; more precisely, we have a submersion $p_L:W_L \longrightarrow L$, a vector bundle $q_L:V_L\longrightarrow L$, and a diffeomorphism over $L$
	\[
		V_L \xrightarrow[\cong]{\ \ \phi_{L}\ \ } W_L.
	\]
	(We construct $p_L$, $q_L$, and $\phi_L$ by choosing a Riemannian metric on $N$.) Using these, a chart around $\widehat{i}\,(=\pi(i))$
	\[
		B(M,N) \supset \widehat{Q_i} \xrightarrow[\cong]{\ \ \ \ } C^\infty_c (M \longleftarrow i^* V_L)
	\]
	is constructed.\par
	Next, consider the space $J^1(i^* V_L)$ of $1$-jets of local sections of $i^* V_L$ and the canonical epimorphism $\pi^1_0:J^1(i^\ast V_L) \longrightarrow i^\ast V_L$ of vector bundles over $M$ (see \cite[Section 41]{KM} for the basic facts and notation on jets). We fix a metric on the jet bundle $J^1(i^* V_L)$. By identifying the orthogonal complement of ${\rm Ker}\,\pi^1_0$ with $i^\ast V_L$ via $\pi^1_0$, we endow $i^\ast V_L$ with the metric induced by that of $J^1(i^*V_L)$. Then, we have the inequality
	\begin{equation}
		|\sigma| \ge |\pi^1_0(\sigma)| \ \ \ \ \text{for $\sigma\in J^1(i^\ast V_L)$}.
		\tag{$11.1$}
	\end{equation}
	Given a finite rank vector bundle $E\longrightarrow X$ with metric, $D_r(E)$ (resp. $\mathring{D}_r (E)$) denotes the subbundle of $E$ whose fiber is the disk (resp. open disk) of radius $r$.\par
	We thus define the subsets $\wcal_i$ and $\kcal_i$ of $\widehat{Q_i}$ as follows using the chart described above:
	\[
		\wcal_i \cong \{s\in C^\infty_c (M \longleftarrow i^* V_L)\:|\: j^1 s(M) \subset \mathring{D}_{\frac{1}{3}}(J^1(i^* V_L))\}, 
	\]
	\[
		\kcal_i \cong \{s\in C^\infty_c (M \longleftarrow i^* V_L)\:|\: j^1 s(M) \subset D_{\frac{2}{3}}(J^1(i^* V_L))\}.
	\]
	Since the relations
	\[
		\widehat{i} = \pi(i) \in \wcal_i \subset \kcal_i \subset \widehat{Q_i}
	\]
	hold and $\wcal_i$ is open in $B(M,N)$ (\cite[41.13]{KM}), we have only to show that $\kcal_i$ is closed in $B(M,N)$.\par
	Noticing that $\pi:{\rm Emb}(M,N)\longrightarrow B(M,N)$ is a topological quotient map (see Lemma \ref{bdlequotient}), we show that $\pi^{-1}\kcal_i$ is closed in ${\rm Emb}(M,N)$. Since $\pi^{-1} \kcal_i$ is closed in $\pi^{-1} \widehat{Q_i}$, it suffices to show that every $j_0 \in (\pi^{-1}\widehat{Q_i})^c$ has a neighborhood disjoint from $\pi^{-1}\kcal_i$.\par
	We can easily see that $j_0 \in (\pi^{-1}\widehat{Q_i})^c$ satisfies one of the following conditions:
	\begin{itemize}
		\item[{\rm (i)}] $j_0(M) \not\subset W_L$.
		\item[{\rm (ii)}] $j_0(M) \subset W_L$ and $p_L\circ j_0:M\longrightarrow L$ is not a diffeomorphism.
	\end{itemize}\par
	Suppose that condition (i) is satisfied. We first deal with the case where $j_0(M)\not\subset \overline{D_\frac{2}{3}(W_L)}$, where $D_\frac{2}{3}(W_L)$ is the image of $D_\frac{2}{3}(i^\ast V_L)$ under the composite
	\[
		i^\ast V_L \underset{\cong}{\longrightarrow} V_L \xrightarrow[\cong]{\phi_L} W_L \longhookrightarrow N.
	\]
	In this case, we see from Lemma \ref{noncpt}(3) and (11.1) that the subset
	\[
		\{j \in {\rm Emb}(M,N)\:|\: j(M)\not\subset \overline{D_\frac{2}{3}(W_L)} \}
	\]
	is an open neighborhood of $j_0$ disjoint from $\pi^{-1}\kcal_i$. Next, we deal with the case where $j_0(M)\subset \overline{D_\frac{2}{3}(W_L)}$. Then, there exists $x_0\in M$ such that $j_0(x_0)\in \overline{D_\frac{2}{3}(W_L)} \backslash D_\frac{2}{3}(W_L)$; such a situation must be considered when $i$ is a nonclosed embedding (see Fig 11.1).
	\def\DefLeftWidth{3}
	\begin{center}
		\begin{tikzpicture}[thick, baseline=0pt]
			%灰色塗りつぶし
\path [fill=gray!40] (-\DefLeftWidth-0.5-0.1,2) rectangle (\DefLeftWidth+0.6,-2);
%横棒上段
\draw [black, thick] (-\DefLeftWidth-0.5-0.1,2) circle [radius=0.05];
\draw [thick, -](-\DefLeftWidth-0.5,2)--(\DefLeftWidth+0.5,2)  node [above]{$D_{\frac{2}{3}}(W_L)$};
\draw [black, thick] (\DefLeftWidth+0.5+0.1,2) circle [radius=0.05];
%横棒中段
\draw [black, thick] (-\DefLeftWidth-0.5-0.1,0) circle [radius=0.05];
\draw [thick, -](-\DefLeftWidth-0.5,0)--(\DefLeftWidth+0.5,0)  node [below]{$L=i(M)$};
\draw [black, thick] (\DefLeftWidth+0.5+0.1,0) circle [radius=0.05];
%横棒下段
\draw [black, thick] (-\DefLeftWidth-0.5-0.1,-2) circle [radius=0.05];
\draw [thick, -](-\DefLeftWidth-0.5,-2)--(\DefLeftWidth+0.5,-2);
\draw [black, thick] (\DefLeftWidth+0.5+0.1,-2) circle [radius=0.05];

%左側点線の結線
\draw [dotted](-\DefLeftWidth-0.5-0.1,-1.9)--(-\DefLeftWidth-0.5-0.1,-0.1);
\draw [dotted](-\DefLeftWidth-0.5-0.1,0.1)--(-\DefLeftWidth-0.5-0.1,1.9);

%右側点線の結線
\draw [dotted](\DefLeftWidth+0.5+0.1,-1.9)--(\DefLeftWidth+0.5+0.1,-0.1);
\draw [dotted](\DefLeftWidth+0.5+0.1,0.1)--(\DefLeftWidth+0.5+0.1,1.9);

%曲線作成
\draw [thick, blue](-\DefLeftWidth-0.5,0)--(-1,0);
%	\draw [thick, blue] (-1,0)  arc (-90:90:0.1); % 1/4の円
\draw [thick, blue] (-1,0) to [out= 30, in=-20] (-1, 0.2);
%	\draw [thick, blue](-1,0.2)--(-3.3, 0.7);
\draw [thick, blue] (-3.3, 0.7) to [out= -7, in=152] (-1,0.2);
\draw [thick, blue] (-3.3,1.3)  arc (90:270:0.3); % 1/4の円

\draw[thick, blue] (-3.3, 1.3) to [out=0,in=130] (1,0.2);
\draw [thick, blue](1.5,0)--(\DefLeftWidth+0.5+0.1, 0);
\draw[thick, blue] (1, 0.2) to [out=-60,in=0] (1.5,0);

% j_0(M)
\node at (1.5,0.5) [above] {$j_0(M)$};
		\end{tikzpicture}\\
		Fig 11.1
	\end{center}
	If $\widehat{j_0}$ is not in the connected component of $\widehat{i}$, then the connected component of $j_0$ is an open neighborhood of $j_0$ disjoint from $\pi^{-1}\kcal_i$. Thus, we assume that $\widehat{j_0}$ is in the connected component of $\widehat{i}$. Then, there exists $h\in {\rm Diff}(M)$ such that $\{x\in M\:|\: j_0(x)\neq i\circ h(x)\}$ is relatively compact in $M$ (see \cite[Lemma 42.5]{KM}). Set $K=\overline{\{x\in M \:|\: j_0(x)\neq i\circ h(x)\}}$ and choose compact submanifolds $K_0$ and $K_1$ (with boundary) of $M$ such that $K\subset K_0 \subset \mathring{K}_1$ (see the proof of Lemma \ref{noncpt}(1)). Further, we can choose a tubular neighborhood $W_{j_0(M)}$ of $j_0(M)$ and a submersion $p_{j_0(M)}:W_{j_0(M)}\longrightarrow j_0(M)$ such that $W_{j_0(M)}|_{j_0(M)-j_0(K_0)} \subset W_L|_{L-i\circ h(K_0)}$ and the diagram
	\[
	\begin{tikzcd}
		W_{j_0(M)}|_{j_0(M)-j_0(K_0)} \arrow[hook]{r} \arrow[swap]{d}{p_{j_0(M)}} & W_L|_{L-i\circ h(K_0)} \arrow{d}{p_L}\\
		j_0(M)-j_0(K_0) \arrow{r}{=} & L-i\circ h(K_0)
	\end{tikzcd}
	\]
	commutes.\par
	Define the open sets $\vcal_1$, $\vcal_2$, and $\vcal_3$ of ${\rm Emb}(M,N)$ by
	$$
		\vcal_1 = \{j\in {\rm Emb}(M,N)\:|\: j(x_0)\in N- D_\frac{2}{3}(W_L)|_{i\circ h(K_1)}\},
	$$
	$$
		\vcal_2 = \{j\in {\rm Emb}(M,N)\:|\: j(K_0)\subset W_{j_0(M)}|_{j_0(\mathring{K}_1)}\},
	$$
	$$
		\vcal_3 = \{j \in {\rm Emb}(M,N)\:|\: j(M) \subset W_{j_0(M)}\}
	$$
	(see Lemma \ref{noncpt}(3)) and define $\vcal$ to be the connected component of $j_0$ of $\vcal_1 \cap \vcal_2 \cap \vcal_3$. Since $\vcal$ is obviously an open neighborhood of $j_0$, we have only to show that $\vcal$ is disjoint from $\pi^{-1} \kcal_i$. We thus show that every $j\in \vcal$ satisfies one of the following conditions:
	\begin{itemize}
		\item $j(M) \not\subset D_\frac{2}{3}(W_L)$.
		\item $j(M)\subset D_\frac{2}{3}(W_L)$ and $p_L\circ j:M\longrightarrow L$ is not injective.
	\end{itemize}
	Observe from Lemma \ref{noncpt}(2) that the image of $M-K_0$ under the composite
	\[
		M \xhookrightarrow{\ \ \ j\ \ \ } W_{j_0(M)} \xrightarrow{\ p_{j_0(M)}\ } j_0(M)
	\]
	contains $j_0(M)-j_0(\mathring{K}_1) = L-i\circ h(\mathring{K}_1)$. Now suppose that $j(M)\subset D_\frac{2}{3}(W_L)$. Noticing that $x_0\in K$ and that $p_L\circ j(x_0)\in L-i\circ h(K_1)$, we then see that the composite $M \overset{j}{\longhookrightarrow} W_L \overset{p_L}{\longrightarrow} L$ is not injective.\par
	Suppose that condition (ii) is satisfied. We first deal with the case of compact $M$. If the differential of $p_L \circ j_0:M\longrightarrow L$ is invertible at every point $x\in M$, then $p_L \circ j_0$ is a covering (see \cite[Theorem 8.12]{BJ}). Since $p_L\circ j_0$ is not a diffeomorphism, it is an $d$-fold cover with $d>1$. Considering the mapping degree, we thus see that a sufficiently small neighborhood of $j_0$ is disjoint from $\pi^{-1}\kcal_i$ (see Lemma \ref{noncpt}(3)). Hence, we may assume that the differential of $p_L\circ j_0$ is not invertible at some $x_0\in M$. Then, we can choose a nonzero vector $v_0 \in T_{x_0}M$ with ${p_L}_\ast {j_0}_\ast (v_0)=0$. Noticing that ${p_L}_\ast j_\ast(v_0)$ is small for $j$ near $j_0$, we see that a sufficiently small neighborhood of $j_0$ is disjoint from $\pi^{-1}\kcal_i$ (see Fig. 11.2). We next deal with the case of noncompact $M$. If the connected component of $j_0$ does not contain any element of $\pi^{-1}\kcal_i$, then it is an open neighborhood of $j_0$ disjoint from $\pi^{-1}\kcal_i$. Thus, we assume that the connected component of $j_0$ contains an element of $\pi^{-1}\kcal_i$. Then, $p_L\circ j_0$ restricts to a diffeomorphism between $M \backslash K$ and $L \backslash p_L \circ j_0 (K)$ for some compact subset $K$ of $M$ (see \cite[Lemma 42.5]{KM}). If the differential of $p_L \circ j_0$ is invertible at every point $x\in M$, then $p_L \circ j_0$ is a diffeomorphism (see \cite[Theorem 8.12]{BJ}), which is a contradiction. Thus, the differential of $p_L\circ j_0$ is not invertible at some $x_0\in M$, which shows that a sufficiently small neighborhood of $j_0$ is disjoint from $\pi^{-1} \kcal_i$ (see the argument in the case of compact $M$).
\def\DefWidth{5}
\begin{center}
	\begin{tikzpicture}
	% 横軸作成
	\draw [thick, -](-\DefWidth-0.5,0)--(\DefWidth+0.5,0) node [right]{$L=i(M)$};
	% 左側の曲線
	\draw [thick, domain=-3*pi/2:-pi/2+0.5] plot(\x, {sin(\x r)-1}); % sin曲線
	\draw [thick, -] (-1.6+0.51,-1.88)  arc (298:361:2.1); % 1/4の円
	%		\draw [thick, domain=-\DefWidth/2:-1] plot(\x, -1); % sin曲線と1/4の円をつなぐ線
	% 右側の曲線
	\draw [thick, domain=pi/2 - 0.3  :3*pi/2] plot(\x, {sin(\x r)+1}); % sin曲線
	\draw [thick, -] (0.03,0)  arc (180:114:2.15); % 1/4の円
	%		\draw [thick, domain=1:\DefWidth/2] plot(\x, 1); % sin曲線と1/4の円をつなぐ線
	% j_0(M)
	\node at (\DefWidth/2,0.5) [above] {$j_0(M)$};
	% WL
	\node at (\DefWidth,1) [above] {$W_L$};	
	\end{tikzpicture}\\
	Fig 11.2
\end{center}\par
	(2) The $C^{\infty}$-structure of $B^{\omega}(M, N)$ is introduced similarly to that of $B(M, N)$, though the model vector spaces are nuclear Silva spaces unlike the case of $B(M, N)$ (\cite[Theorem 44.3]{KM}). Since the principal ${\rm Diff}^\omega(M)$-bundle $\pi^\omega : {\rm Emb}^\omega (M,N) \longrightarrow B^\omega (M,N)$ exists (\cite[Theorem 44.3]{KM}) and ${\rm Emb}^{\omega}(M, N)$ is Lindel\"{o}f (Corollary \ref{goodmfds in ga}(3)), $B^{\omega}(M, N)$ is Lindel\"{o}f.\par
	Last, we show that $B^\omega(M,N)$ is regular. Since $B^\omega(M,N)$ is Hausdorff (\cite[Theorem 44.3]{KM}) and its model vector spaces are $C^\infty$-regular (\cite[Theorem 16.10]{KM}), we have only to show that for any $\widehat{i}=\pi^\omega(i) \in B^\omega(M,N)$, there exists a coordinate open neighborhood satisfying condition (b) in Lemma \ref{regularity} (see also Remark \ref{modelreg}).\par
	Noticing that $\pi^\omega : {\rm Emb}^\omega (M,N) \longrightarrow B^\omega (M,N)$ is a $\dcal$-quotient map (Lemma \ref{bdlequotient}), we have from \cite[Corollary 42.11]{KM} the commutative diagram in $C^\infty$
	\[
		\begin{tikzcd}
			{\rm Emb}^\omega(M,N) \arrow[hook]{r} \arrow[swap]{d}{\pi^\omega} & {\rm Emb}(M,N) \arrow{d}{\pi} \\
			B^\omega(M,N) \arrow[hook]{r} & B(M,N).
		\end{tikzcd}
	\]
	Thus, we can obtain the desired coordinate open neighborhood of $\widehat{i}$ in $B^\omega(M,N)$ by restricting that in $B(M,N)$ constructed in Part 1 (see the proof of \cite[Theorem 44.3]{KM}).
\end{proof}
\end{lem}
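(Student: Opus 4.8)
The plan is to establish the three constituent assertions---the identification of the model vector spaces, the Lindel\"of property, and regularity---around the principal ${\rm Diff}(M)$-bundle $\pi:{\rm Emb}(M,N)\longrightarrow B(M,N)$ of \cite[Theorem 44.1]{KM}. The model vector spaces are read off directly from the chart construction in \cite[the proof of Theorem 44.1]{KM}: the charts of $B(M,N)$ are modeled on the spaces $C^\infty_c(M\longleftarrow i^*V_L)$ of compactly supported sections, whose nature (nuclear Fr\'echet for compact $M$, strict inductive limits of sequences of nuclear Fr\'echet spaces for noncompact $M$) follows from \cite[Sections 30.1 and 30.4]{KM}. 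For Lindel\"of property I would use that $\pi$ is a topological quotient map (Lemma \ref{bdlequotient}), so that $B(M,N)$ is a continuous image of ${\rm Emb}(M,N)$. When $M$ is compact, ${\rm Emb}(M,N)$ is Lindel\"of by Corollary \ref{goodmfds in ga}(1), hence so is $B(M,N)$; when $M$ is noncompact, $\pi$ restricts to a principal $G$-bundle ${\rm Emb}(M,N)_f\longrightarrow B(M,N)_{\pi(f)}$ between connected components for an open Lie subgroup $G$ of ${\rm Diff}(M)$, and since each ${\rm Emb}(M,N)_f$ is Lindel\"of (Corollary \ref{goodmfds in ga}(2)), so is each connected component of $B(M,N)$.

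The heart of the proof is regularity. Since $B(M,N)$ is Hausdorff (\cite[Theorem 44.1]{KM}) with $C^\infty$-regular model vector spaces (\cite[Theorem 16.10]{KM}), by Lemma \ref{regularity} and Remark \ref{modelreg} it suffices to produce, for each $\widehat{i}=\pi(i)$, a coordinate open neighborhood satisfying condition (b) of Lemma \ref{regularity}. The case $\dim M=\dim N$ is trivial, as $B(M,N)$ is then $0$-dimensional and hence discrete, so I would assume $\dim M<\dim N$. Recalling the chart $\widehat{Q_i}\cong C^\infty_c(M\longleftarrow i^*V_L)$ built from a tubular neighborhood $p_L:W_L\longrightarrow L$ of $L=i(M)$, I would fix a metric on the $1$-jet bundle $J^1(i^*V_L)$ and transport it to $i^*V_L$ via the canonical epimorphism $\pi^1_0$, giving the inequality $|\sigma|\ge|\pi^1_0(\sigma)|$. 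Using the $1$-jet condition I would cut out an open slice $\wcal_i$ and a closed slice $\kcal_i$ with $\widehat{i}\in\wcal_i\subset\kcal_i\subset\widehat{Q_i}$, defined by $j^1 s(M)\subset\mathring{D}_{1/3}(J^1(i^*V_L))$ and $j^1 s(M)\subset D_{2/3}(J^1(i^*V_L))$ respectively. Openness of $\wcal_i$ is immediate from \cite[41.13]{KM}, so the crux is showing $\kcal_i$ is closed; as $\pi$ is a topological quotient map, this reduces to proving $\pi^{-1}\kcal_i$ closed in ${\rm Emb}(M,N)$, i.e.\ that every $j_0\in(\pi^{-1}\widehat{Q_i})^c$ has a neighborhood disjoint from $\pi^{-1}\kcal_i$.

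This separation is where the main difficulty lies, and I would split it according to whether (i) $j_0(M)\not\subset W_L$, or (ii) $j_0(M)\subset W_L$ with $p_L\circ j_0$ not a diffeomorphism. In case (i), when $j_0(M)\not\subset\overline{D_{2/3}(W_L)}$ the continuity of the canonical maps into the $WO^\infty$- and $CO$-topologies (Lemma \ref{noncpt}(3)) together with the jet inequality yields the neighborhood; the delicate subcase is $j_0(M)\subset\overline{D_{2/3}(W_L)}$ with $j_0(x_0)$ on the boundary sphere, which occurs precisely when $i$ is a nonclosed embedding. I expect this subcase to be the principal obstacle: it requires invoking \cite[Lemma 42.5]{KM} to find $h\in{\rm Diff}(M)$ with $K=\overline{\{x:j_0(x)\neq i\circ h(x)\}}$ relatively compact, choosing compact submanifolds with boundary $K\subset K_0\subset\mathring{K}_1$ (Lemma \ref{noncpt}(1)), building a tubular neighborhood $W_{j_0(M)}$ compatible with $W_L$ off $K_0$, and intersecting three explicit open conditions $\vcal_1,\vcal_2,\vcal_3$; the geometric engine is that $p_{j_0(M)}\circ j$ covers $L-i\circ h(\mathring{K}_1)$ by the surjectivity statement Lemma \ref{noncpt}(2), which forces noninjectivity of $p_L\circ j$ for $j$ in the relevant component landing in $D_{2/3}(W_L)$. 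In case (ii) I would treat compact and noncompact $M$ separately: for compact $M$ a covering dichotomy (\cite[Theorem 8.12]{BJ}) shows $p_L\circ j_0$ is either a $d$-fold cover with $d>1$---excluded by a mapping-degree neighborhood---or has noninvertible differential at some $x_0$, whence smallness of $(p_L)_* j_*(v_0)$ for a kernel vector $v_0$ separates; the noncompact case reduces to this via \cite[Lemma 42.5]{KM} and the same dichotomy. Finally, for Part (2) the chart construction for $B^\omega(M,N)$ is parallel, now with nuclear Silva model spaces (\cite[Theorem 44.3]{KM}); Lindel\"of property transfers along $\pi^\omega:{\rm Emb}^\omega(M,N)\longrightarrow B^\omega(M,N)$ using Corollary \ref{goodmfds in ga}(3), and regularity follows by restricting the coordinate neighborhoods of Part (1) along the smooth inclusion ${\rm Emb}^\omega(M,N)\longhookrightarrow{\rm Emb}(M,N)$ of \cite[Corollary 42.11]{KM}.
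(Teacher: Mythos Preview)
Your proposal is correct and follows essentially the same route as the paper's proof: the same reduction via the principal bundle $\pi$, the same identification of model spaces via \cite[Sections 30.1 and 30.4]{KM}, the same Lindel\"of transfer through Corollary~\ref{goodmfds in ga}, and---for regularity---the same jet-bundle slices $\wcal_i\subset\kcal_i$ with radii $\tfrac{1}{3}$ and $\tfrac{2}{3}$, the same case split (i)/(ii) on $j_0$, and the same geometric mechanisms (Lemma~\ref{noncpt}, the compact exhaustions $K_0\subset\mathring K_1$, the auxiliary tubular neighborhood $W_{j_0(M)}$, the covering/differential dichotomy in case (ii)). The only point you leave implicit that the paper spells out is the preliminary sub-subcase in the delicate part of (i) where $\widehat{j_0}$ lies in a different connected component from $\widehat{i}$, which is disposed of trivially before invoking \cite[Lemma 42.5]{KM}.
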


\begin{proof}[Proof of Corolarry \ref{goodmfds of submfds}.] (1) The result follows from Theorem \ref{goodmfds}(2) and Lemma \ref{BMN}(1).\par
(2) Since $B(M, N)$ is classical (\cite[Section 13]{Michor} and Lemma \ref{Kclassical}), the result follows from Theorem \ref{goodmfds}(3) and Lemma \ref{BMN}(1).
\par\indent
%Since $B_{\rm closed}(M, N)$ is open in $B(M, N)$ with respect to the classical topology (\cite[?]{Michor}), a classical atlas of $B(M, N)$ restricts to a classical atlas of $B_{\rm closed}(M, N)$ (see Section 11.2). Thus, the result for $B_{\rm closed}(M, N)$ also follows from Theorem \ref{goodmfds}(3) and Lemma \ref{BMN}(1).\par
	(3) The result follows from Theorem \ref{goodmfds}(2) and Lemma \ref{BMN}(2).
\end{proof}
Let $B_{\rm closed}(M, N)$ be the open submanifold of $B(M,N)$ consisting of closed submanifolds (\cite[p. 474]{KM}). We can observe that $B_{\rm closed}(M,N)$ is a disjoint union of connected components of $B(M,N)$ (see the comment on ${\rm Emb}_{\rm prop}(M,N)$ after Corollary \ref{goodmfds in ga}). Hence, Corollary \ref{goodmfds of submfds}(2) remains true even if we replace $B(M,N)$ with $B_{\rm closed}(M,N)$.\par
Theorem \ref{goodmfds} also applies to direct limits of finite dimensional $C^{\infty}$-manifolds, which are extensively studied by Gl\"{o}ckner and coauthors (\cite{Gl}, \cite{Gl07}, \cite{Gl11}). Many infinite dimensional $C^{\infty}$-manifolds important in algebraic topology are obtained as such direct limits.
\par\indent
More precisely, we consider an ascending sequence $\{ M_{n} \}$ of second countable finite dimensional $C^{\infty}$-manifolds whose bonding maps are closed embeddings. Then, the direct limit $M$ exists in the category $C^{\infty}$, and if $\lim\limits_{n \rightarrow \infty}\ \dim M_{n} = \infty$, then $M$ is modeled on the Silva space $\rbb^{\infty}$ (\cite[Proposition 3.6]{Gl}). For $\fbb = \rbb, \cbb$, the infinite classical groups such as $GL(\infty, \fbb)$, $SL(\infty, \fbb)$, and $U(\infty)$, and the infinite Grassmannians $G(k, \infty; \fbb)$ are described as such direct limits; these $C^{\infty}$-manifolds are introduced in a different manner in \cite[Section 47]{KM}. The infinite Grassmannian $G(\infty, \infty; \fbb)$, which is not introduced in \cite[Section 47]{KM}, can be defined to be the direct limit $\lim\limits_{\rightarrow} \ G(k, \infty; \fbb)$, since 
\[\lim\limits_{\rightarrow}\ G(k, \infty; \fbb) = \lim\limits_{\underset{k}{\rightarrow}} \lim\limits_{\underset{l}{\rightarrow}} G(k, l; \fbb) = \lim\limits_{\rightarrow}\ G(k, k;\fbb),\]
where $G(k, l;\fbb)$ is the finite Grassmannian consisting of $k$-dimensional linear subspaces of $\fbb^{k+l}$.

\begin{cor}\label{goodmfds in at} Let $\{ M_{n} \}$ be an ascending sequence of second countable finite dimensional $C^{\infty}$-manifolds whose bonding maps are closed embeddings. Then, every submanifold of the direct limit $M=\displaystyle \lim_\rightarrow M_n$ is hereditarily Lindel\"{o}f, $C^{\infty}$-regular, and classical, and hence is in $\wcal_{\dcal\, 0}$. In particular, the infinite classical groups such as $GL(\infty, \fbb)$, $SL(\infty, \fbb)$, and $U(\infty)$, and the infinite Grassmannians $G(k, \infty; \fbb)$ $(0 \leq k \leq \infty)$ are in $\wcal_{\dcal\, 0}$.
\end{cor}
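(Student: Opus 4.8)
The plan is to verify the hypotheses of Theorem \ref{goodmfds}(2) for the direct limit manifold $M=\lim_\rightarrow M_n$ and then to read off the conclusion. The ``in particular'' clause is then immediate: the infinite classical groups $GL(\infty,\fbb)$, $SL(\infty,\fbb)$, $U(\infty)$ and the infinite Grassmannians $G(k,\infty;\fbb)$ have already been exhibited (in the discussion preceding the corollary) as direct limits of ascending sequences of second countable finite dimensional $C^{\infty}$-manifolds along closed embeddings, so that the general statement applies to them verbatim, including the case $k=\infty$ via $G(\infty,\infty;\fbb)=\lim_\rightarrow G(k,k;\fbb)$.

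First I would dispose of the degenerate case: if the dimensions $\dim M_n$ stay bounded, they eventually stabilize, the bonding maps become open-and-closed embeddings, and $M$ is a second countable finite dimensional $C^{\infty}$-manifold modeled on some $\rbb^{d}$, to which Theorem \ref{goodmfds}(2) already applies. Hence I may assume $\lim_n\dim M_n=\infty$. In this case \cite[Proposition 3.6]{Gl} gives that $M$ exists in $C^{\infty}$ and is modeled on the Silva space $\rbb^{\infty}$. Since $\rbb^{\infty}=\lim_\rightarrow\rbb^{n}$ is a strict inductive limit of finite dimensional spaces, it is nuclear, so that $M$ is modeled on a nuclear Silva space; this supplies the model-space hypothesis of Theorem \ref{goodmfds}(2). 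Being modeled on Silva spaces, $M$ is moreover classical with $\widetilde{M}=M_{cl}$ by Lemma \ref{FrechetSilva}.

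It remains to show that $\widetilde{M}$ is Lindel\"{o}f and regular. Lindel\"{o}f property is the easy half: each inclusion $M_n\longhookrightarrow M$ is smooth, hence $\widetilde{M_n}\longrightarrow\widetilde{M}$ is continuous, so each $\widetilde{M_n}$, being second countable and therefore Lindel\"{o}f, has Lindel\"{o}f image in $\widetilde{M}$; as $\widetilde{M}=\bigcup_n\widetilde{M_n}$ is then a countable union of Lindel\"{o}f subspaces, it is Lindel\"{o}f. Regularity is the delicate half and the main obstacle. The point is to identify the underlying topological space $\widetilde{M}$ with the topological direct limit $\lim_\rightarrow\widetilde{M_n}$ and to check that each $\widetilde{M_n}$ sits inside it as a closed topological subspace; a priori the final topology for the smooth curves could be strictly finer than the colimit topology, so this step must invoke Gl\"{o}ckner's analysis of such direct limits (\cite{Gl}, \cite{Gl07}, \cite{Gl11}) together with the equality $\widetilde{M}=M_{cl}$ just recorded. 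Granting this, I would argue that $\widetilde{M}$ is a $k_{\omega}$-space: each $\widetilde{M_n}$ is $\sigma$-compact Hausdorff, hence $k_{\omega}$, and a direct limit of $k_{\omega}$-spaces along closed embeddings is again $k_{\omega}$; since every $k_{\omega}$-space is normal, $\widetilde{M}$ is in particular regular.

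With the three hypotheses of Theorem \ref{goodmfds}(2) in hand, that theorem yields that every submanifold of $M$ is hereditarily Lindel\"{o}f, $C^{\infty}$-regular, and classical, and hence lies in $\wcal_{\dcal\,0}$. Applying this to the explicit direct-limit descriptions recalled above proves the statement for the infinite classical groups and the infinite Grassmannians.
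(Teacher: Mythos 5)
Your proposal is correct and takes essentially the same route as the paper: the paper's proof verifies the hypotheses of Theorem \ref{goodmfds}(2) by citing \cite[Proposition 3.6 and its proof]{Gl} for the Lindel\"{o}f property and regularity of $M$ and for the model space $\rbb^{\infty}$, and \cite[Corollary 21.2.3]{Jar} for the nuclearity of $\rbb^{\infty}$. Your additional details --- the bounded-dimension case, the countable-union argument for Lindel\"{o}f property, and the $k_{\omega}$ route to regularity, with the identification of $\widetilde{M}$ with the topological direct limit deferred to Gl\"{o}ckner exactly as the paper does --- are sound elaborations of what the paper leaves inside that citation.
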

\begin{proof}
	Note that the direct limit $M$ is Lindel\"{o}f and regular and is modeled on $\rbb^{\infty}$ (\cite[Proposition 3.6 and its proof]{Gl}) and that $\rbb^{\infty}$ is a nuclear Silva space (\cite[Corollary 21.2.3]{Jar}). Then, the result follows from Theorem \ref{goodmfds}(2).
\end{proof}
%\begin{exa}\label{calculations}
%	$\dcal(G (k, \infty; \fbb), \Omega^{l} N ).$
%\end{exa}

%%%%%%%%%%%%%%%
%\begin{problem*}
%	In the $C^{\infty}$-manifold $\cfra^{\infty}(M, N)$ in $\wcal$ without the compactness assumption on $M$ ?
%\end{problem*}

\appendix
%    Include appendix "chapters" here.

%\newtheorem{propo}{Proposition}[section]
%\newtheorem{defi}[propo]{Definition}
%\newtheorem{theo}[propo]{Theorem}
%\newtheorem{coro}[propo]{Corollary}
%\newtheorem{lemm}[propo]{Lemma}
%\setcounter{Lemma}{0}
%\AtAppendix{\counterwithin{lemma}{section}}
%\renewcommand\thelem{\thesection.\arabic{lem}}
%\renewcommand{\thesection}{A}
\renewcommand{\thesection}{Appendix A}
\section{Pathological diffeological spaces}
\renewcommand{\thesection}{A}
 %%\setcounter{lem}{0}
%  \renewcommand{\thelem}{\Alph{section}\arabic{lem}}
%\begin{lemm}
%	content...
%\end{lemm}
%\begin{lem}%
%	content...
%\end{lem}
The subclasses $\wcal_\dcal$ and $\vcal_\dcal$ of $\dcal$ were introduced in Section 1.3 as subclasses of $\dcal$-homotopically good objects.
Setting $ \widetilde{}\wcal_{\czero} = \{ A \in \dcal \ | \ \widetilde{A} \in \wcal_{\czero} \}$, we obtain from Corollary \ref{W} the following
Venn diagram:
\[
\begin{tikzpicture}[fill=white]
% left hand
\scope
\clip (-2,-2) rectangle (2,2)
(1,0) circle (1);
\fill (0,0) circle (1);
\endscope
% right hand
\scope
\clip (-2,1) rectangle (2,2)
(1,0) circle (1);
\fill (0,0) circle (1);
\endscope
\scope
\clip (-2,-2) rectangle (2,2)
(0,0) circle (1);
\fill (1,0) circle (1);
\endscope
% outline
\draw (-5,0) node [text=black,above] {(A.1)};
\draw (15,0) node [text=black,above] {};
\draw (-5,0) node [text=black,above] {};
\draw (0,0) circle (1.6) (-1.2,1.2)  node [text=black,above] {$\vcal_{\dcal}$}
(1,0) circle (1.6) (2.4,1.2)  node [text=black,above] {$\widetilde{}\wcal_{\czero}$}
(0.5,0) circle (0.8) (0.5,0.7)  node [text=black,above] {$\wcal_{\dcal}$}
(-2,-2) rectangle (3,2) node [text=black,above] {$\dcal$};
\end{tikzpicture}
\]
In this appendix, we show that all the inclusions shown in Venn diagram (A.1) are proper by providing examples of diffeological spaces in the classes $\dcal \backslash (\vcal_{\dcal} \cup \ \widetilde{}\wcal_{^\czero}),\ \widetilde{}\,\wcal_{\czero} \backslash \vcal_{\dcal},\ \vcal_{\dcal} \backslash \ \widetilde{}\,\wcal_{\czero},$ and $(\vcal_{\dcal} \cap\ \widetilde{}\wcal_{\czero}) \backslash \wcal_{\dcal}$ (Examples \ref{Hawaiian}, \ref{quotient}, and \ref{RX}). Our study in this appendix organizes the study of pathological diffeological spaces presented by Iglesias-Zemmour \cite[8.38]{IZ} and Christensen-Wu \cite[Examples 3.12 and 3.20]{CW}.
\par\indent
We begin with the following lemma on topological spaces.
\begin{lem}\label{nonW}
	Let $X$ be a topological space having the homotopy type of a $CW$-complex. Then, the following are equivalent:
	\begin{itemize}
		\item[{\rm (i)}] $X$ is homotopy equivalent to a Lindel\"{o}f space.
		\item[{\rm (ii)}] $X$ is homotopy equivalent to a countable $CW$-complex.
		\item[{\rm (iii)}] $H_i(X)$ and $\pi_i(X,x)$ are countable for any $i\ge 0$ and any $x\in X$.
		\item[{\rm (iv)}] $\pi_i(X,x)$ is countable for any $i\ge 0$ and $x\in X$.
	\end{itemize}
	\begin{proof}
			For ${\rm (i)}\Longleftrightarrow{\rm (ii)}$, see \cite[Proposition 2]{Mi}. For ${\rm (ii)} \Longleftrightarrow {\rm (iii)} \Longleftrightarrow {\rm (iv)}$, see Remark \ref{refinement}(1) and (2).
\end{proof}
\end{lem}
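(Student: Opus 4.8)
The plan is to prove the cycle $\mathrm{(ii)} \Rightarrow \mathrm{(iii)} \Rightarrow \mathrm{(iv)} \Rightarrow \mathrm{(ii)}$ together with the separate equivalence $\mathrm{(i)} \Leftrightarrow \mathrm{(ii)}$, reducing everything to the arc-generated setting so that the counting machinery of Remark \ref{refinement} applies. Since $X$ has the homotopy type of a $CW$-complex, I would first choose a simplicial set $K$ and a homotopy equivalence $X \simeq |K|$, where $|K|$ is the topological realization; as $|K|$ is a $CW$-complex it is an arc-generated space lying in $\wcal_{\czero}$ (it is the realization of a cofibrant object via $|\ |:\scal\rightleftarrows\czero:S$, see Lemma \ref{clequiv}), and none of the four conditions changes when $X$ is replaced by the homotopy equivalent space $|K|$. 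Thus I may assume $X = |K| \in \wcal_{\czero}$ from the outset.

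For $\mathrm{(ii)} \Rightarrow \mathrm{(iii)}$, I would note that a countable $CW$-complex has a countable cellular chain complex, so its homology groups are countable, while the countability of its homotopy groups is the arc-generated analogue of the first chain of implications in Remark \ref{refinement}(1) applied with $\gamma = 0$ (i.e. $\mathrm{card}\,K \le \omega_0$ forces $|\pi_i(\widehat{K},k_0)| \le \omega_0$). The implication $\mathrm{(iii)} \Rightarrow \mathrm{(iv)}$ is immediate, since $\mathrm{(iv)}$ merely drops the hypothesis on homology. The substantive step is $\mathrm{(iv)} \Rightarrow \mathrm{(ii)}$: here I would invoke the arc-generated version of the equivalence in Remark \ref{refinement}(1), namely that for $A \in \wcal_{\czero}$ one has $A \in \wcal_{\czero\, 0}$ if and only if $|\pi_i(A,a)|\le \omega_0$ for all $i$ and all $a$. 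Given $\mathrm{(iv)}$ and $X \in \wcal_{\czero}$, this places $X$ in $\wcal_{\czero\, 0}$, and Remark \ref{refinement}(2) then identifies $\wcal_{\czero\, 0}$ precisely with the spaces homotopy equivalent to a countable $CW$-complex, yielding $\mathrm{(ii)}$.

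It remains to treat $\mathrm{(i)} \Leftrightarrow \mathrm{(ii)}$. The implication $\mathrm{(ii)} \Rightarrow \mathrm{(i)}$ is easy, since a countable $CW$-complex is Lindel\"of, so a space homotopy equivalent to one is homotopy equivalent to a Lindel\"of space. For $\mathrm{(i)} \Rightarrow \mathrm{(ii)}$, I would appeal directly to Milnor's \cite[Proposition 2]{Mi}: a space having the homotopy type of a $CW$-complex that is homotopy equivalent to a Lindel\"of space is homotopy equivalent to a countable $CW$-complex.

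The main obstacle is the implication $\mathrm{(iv)} \Rightarrow \mathrm{(ii)}$, i.e. passing from merely countable homotopy groups to an actual countable $CW$-model. However, this is exactly the content packaged in Remark \ref{refinement}(1)--(2), whose proof rests on the fibrant-approximation functor $\widehat{\cdot}$ and a minimal-complex reduction controlling the cardinality of the cells. The only genuine care needed beyond citing these results is the initial reduction to an arc-generated model and checking that the cardinality bookkeeping (the cardinal $\omega_0$, countability of the homotopy groups) transfers cleanly between $K$, its fibrant approximation $\widehat{K}$, and the realization $|K|$.
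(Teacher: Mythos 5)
Your proof is correct and follows essentially the same route as the paper, which likewise settles $\mathrm{(i)}\Leftrightarrow\mathrm{(ii)}$ by citing Milnor's \cite[Proposition 2]{Mi} and obtains $\mathrm{(ii)}\Leftrightarrow\mathrm{(iii)}\Leftrightarrow\mathrm{(iv)}$ from the cardinality bookkeeping of Remark \ref{refinement}(1)--(2). The only difference is that you spell out the implicit details (the reduction to an arc-generated $CW$-model $|K|\in\wcal_{\czero}$ and the explicit cycle $\mathrm{(ii)}\Rightarrow\mathrm{(iii)}\Rightarrow\mathrm{(iv)}\Rightarrow\mathrm{(ii)}$), which the paper leaves to the reader.
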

\begin{rem}\label{compact}
	We can prove the following analogue of Lemma \ref{nonW} using the results in \cite[pp. 75-76]{MayAT} and \cite[Theorem 4.5.2]{MP}:
	\begin{quote}
		Let $X$ be a topological space having the homotopy type of a $CW$-complex. If $X$ is homotopy equivalent to a compact space, then $H_\ast(X)=\underset{i\ge 0}{\oplus} H_i(X)$ and $\pi_1(X,x)$ are finitely generated for any $x\in X$. Further if $X$ is $1$-connected, then $\pi_i(X,x)$ is finitely generated for any $i\ge 1$ and any $x\in X$.
	\end{quote}
\end{rem}
\begin{exa}\label{Hawaiian}
	The $n$-dimensional {\it Hawaiian earring} $E^n$ is defined to be the diffeological subspace
	$\overset{\infty}{\underset{k=1}{\cup}} E^{n}_{k}$ of $\rbb^{n+1}$, where
	\[
	E^{n}_{k} = \{ (x_{0}, \ldots, x_{n}) \in \rbb^{n+1}\ |\ (x_{0} - \frac{1}{\sqrt[n]{k}})^{2} + x^{2}_{1} + \cdots + x^{2}_{n} = (\frac{1}{\sqrt[n]{k}})^{2} \}.
	\]
	\begin{itemize}
		\item[(1)] $\widetilde{E^n}$ does not have the homotopy type of a $CW$-complex.
		\item[(2)] The natural homomorphism
		$$
		\pi_n^\dcal (E^n) \longrightarrow \pi_n (\widetilde{E^n})
		$$
		is not surjective.
		\item[(3)] $E^{n}$ is in $\dcal \backslash ( \vcal_{\dcal} \cup\ \widetilde{}\wcal_{\czero} ).$
	\end{itemize}
	\begin{proof} (1) By the argument of \cite[Example 3.12]{CW} (see also \cite[p. 18]{KM}), we see that the underlying topology of $E^n$ is just the sub-topology of $\rbb^{n+1}$. Observe that the canonical epimorphism $\pi_{n}(\widetilde{E^n}) \longrightarrow {\prod}_{k \geq 1} \pi_{n}(\widetilde{E^{n}_{k}}) = {\prod}_{k \geq 1} \zbb$ exists (cf. \cite[Example 1 in Section 71]{Munk}). Then, the result follows from Lemma \ref{nonW}.

		(2) We can prove the result by the argument in the proof of Part 1 and an argument similar to that in \cite[Example 3.12]{CW}.

		(3) The result follows from Parts 1-2 and Corollary \ref{W}(3).
	\end{proof}
\end{exa}
\begin{rem}\label{earring} This remark relates to the $n$-dimensional Hawaiian earring $E^{n}$. We can easily see that $\widetilde{E^{n}}$ is homeomorphic to the topological subspace $E^{n}_{\rm top}$ of $\rbb^{n+1}$ defined by
	$$
E^{n}_{\rm top} = \overset{\infty}{\underset{k=1}{\cup}} \ \{ (x_0, \ldots , x_n) \in \rbb^{n+1} |
	\begin{array}{l}
	(x_0 - \frac{1}{k})^2 + x_1^2 + \cdots + x_n^2 = (\frac{1}{k})^2
	\end{array}
\},
$$
	which is called the $n$-dimensional {\sl topological Hawaiian earring}. See \cite{EKK,EKK2} for the exact calculations of $\pi_{n}(E^{n}_{\rm top})$.
\end{rem}
Next, we investigate the quotient of a diffeological group by a dense subgroup.
%A diffeological space $A$ is called {\it arcwise connected} if $\pi_0^\dcal (X) = 0$; recall that $A$ is arcwise connected if and only if $\widetilde{A}$ is arcwise connected (\cite{I0}).
\begin{lem}\label{homog}
	Let $G$ be a diffeological group, and $H$ a diffeological subgroup of $G$. If $H$ is dense in $\widetilde{G}$ and the inclusion $H \longhookrightarrow G$ is not a weak equivalence in $\dcal$, then $G / H$ is in $\ \widetilde{}\wcal_{\czero} \backslash \vcal_{\dcal}$.
	\begin{proof}
		By the assumption, $\widetilde{G / H}$ is an indiscrete space (\cite[Lemma 2.11]{origin}), and hence, contractible. On the other hand, we see that $\pi_\ast^\dcal (G/H) \neq 0$ from the homotopy exact sequence of the sequence
		$$
		H \longhookrightarrow G \longrightarrow G/H
		$$
		(\cite[8.15 and 8.21]{IZ}). Thus, we obtain the result by Corollary \ref{W}(3).
	\end{proof}
\end{lem}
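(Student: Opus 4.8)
The plan is to establish the two membership assertions separately, using Corollary \ref{W}(3) as the bridge between the smooth and topological invariants. First I would dispose of the easy half, $G/H \in \widetilde{}\wcal_{\czero}$, which concerns only the underlying arc-generated space $\widetilde{G/H}$. Since the functor $\widetilde{\cdot}:\dcal \longrightarrow \czero$ carries the $\dcal$-quotient map $G \longrightarrow G/H$ to a topological quotient map (\cite[Lemma 2.11]{origin}), the space $\widetilde{G/H}$ is the quotient of $\widetilde{G}$ by the coset relation. The density of $H$ in $\widetilde{G}$ forces every coset $gH$ to be dense, so the only saturated open subsets of $\widetilde{G}$ are $\emptyset$ and $\widetilde{G}$; hence $\widetilde{G/H}$ carries the indiscrete topology. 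Any function out of $\widetilde{G/H}\times I$ into an indiscrete space is automatically continuous, so the map equal to the identity for $t<1$ and constant at $t=1$ is a homotopy exhibiting $\widetilde{G/H}$ as contractible. A contractible arc-generated space is homotopy equivalent to a point, which is cofibrant, so $\widetilde{G/H}\in \wcal_{\czero}$ and therefore $G/H\in \widetilde{}\wcal_{\czero}$.

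Next I would show $G/H \notin \vcal_{\dcal}$. By Corollary \ref{W}(3) it suffices to exhibit a degree at which the natural homomorphism $\pi_p^{\dcal}(G/H)\longrightarrow \pi_p(\widetilde{G/H})$ fails to be an isomorphism; because $\widetilde{G/H}$ is contractible, $\pi_p(\widetilde{G/H})=0$ for all $p$, so the task reduces to proving $\pi_*^{\dcal}(G/H)\neq 0$. Here I would invoke that $\pi:G\longrightarrow G/H$ is a diffeological fibration with fibre $H$ (\cite[8.15 and 8.21]{IZ}), which yields the long exact sequence of smooth homotopy groups
$$
\cdots \longrightarrow \pi_n^{\dcal}(H)\xrightarrow{\ i_*\ }\pi_n^{\dcal}(G)\longrightarrow \pi_n^{\dcal}(G/H)\longrightarrow \pi_{n-1}^{\dcal}(H)\xrightarrow{\ i_*\ }\pi_{n-1}^{\dcal}(G)\longrightarrow\cdots .
$$
The hypothesis that $i:H\longhookrightarrow G$ is not a weak equivalence in $\dcal$ translates, via Definition \ref{WFC}, Theorem \ref{homotopygp}, and the fact that $S^{\dcal}H$ and $S^{\dcal}G$ are Kan complexes (Theorem \ref{originmain}), into the statement that $i_*:\pi_n^{\dcal}(H)\longrightarrow \pi_n^{\dcal}(G)$ fails to be an isomorphism for some $n$. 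A short diagram chase then finishes the argument: if $\pi_*^{\dcal}(G/H)$ vanished in every degree, exactness would make each $i_*$ both injective and surjective, hence an isomorphism in every degree; contrapositively, the failure of some $i_*$ to be an isomorphism forces $\pi_*^{\dcal}(G/H)\neq 0$. Combining the two halves gives $G/H\in \widetilde{}\wcal_{\czero}\backslash \vcal_{\dcal}$.

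The main obstacle is the justification of the long exact sequence itself in the diffeological setting, together with the precise reformulation of ``not a weak equivalence'' as ``not an isomorphism on some $\pi_n^{\dcal}$''. The former rests on Iglesias-Zemmour's theory of diffeological fibrations and on the fact that $G\longrightarrow G/H$ is such a fibration for a diffeological subgroup $H$; the latter requires care with basepoints and with the low-degree range. Since $G$ and $H$ are groups, left translation makes the choice of basepoint harmless, but I would state the exact sequence also in its pointed-set range ($n=0,1$) so as to cover the case in which $i_*$ fails to be a bijection only on path components or on $\pi_1$.
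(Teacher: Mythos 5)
Your proposal is correct and follows essentially the same route as the paper: both halves rest on the indiscreteness of $\widetilde{G/H}$ (hence contractibility and membership in $\widetilde{}\wcal_{\czero}$), the homotopy exact sequence of the diffeological fibration $H \longhookrightarrow G \longrightarrow G/H$ from \cite[8.15 and 8.21]{IZ} to get $\pi^{\dcal}_{\ast}(G/H)\neq 0$, and Corollary \ref{W}(3) to conclude. The paper's proof simply compresses the steps you spell out — the density argument for indiscreteness, the translation of ``not a weak equivalence'' into a failure of $i_{\ast}$ on some $\pi^{\dcal}_{n}$ via Theorem \ref{homotopygp}, and the diagram chase with its low-degree basepoint care — all of which are accurate elaborations rather than deviations.
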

For an irrational number $\theta$, the {\it irrational torus} $T_\theta^2$ {\it of slope} $\theta$ is defined to be the quotient diffeological group $T^2 / \rbb_\theta$, where $T^2 = \rbb^2 / \zbb^2$ is the usual $2$-torus and $\rbb_\theta$ is the image of the injective homomorphism $\rbb \longrightarrow T^2$ sending $x$ to $[x, \theta x]$ (\cite[8.38]{IZ}).
\begin{exa}\label{quotient}
	The quotient diffeological groups $\rbb / \qbb$ and the irrational torus $T_\theta$ are in $\widetilde{}\wcal_{\czero} \backslash \vcal_{\dcal}$.
	\begin{proof}
		The result is immediate from Lemma \ref{homog}.
	\end{proof}
\end{exa}
Lastly, we show the following lemma and then identify conditions under which a diffeological space of the form $RX$ is in $\vcal_{\dcal}$,\, $\widetilde{}\wcal_{\czero}$ and $\wcal_{\dcal}$ (see Proposition \ref{conven}(3) and Remark \ref{convenrem}(2) for the functor $R$).
\begin{lem}\label{R}
	Let $X$ be an arc-generated space and $Z$ a cofibrant diffeological space. Then, any smooth map $f : RX \longrightarrow Z$ is locally constant.
\end{lem}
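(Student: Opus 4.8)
The plan is to reduce the statement to a single separation property of cofibrant diffeological spaces, namely that smooth functions $Z\longrightarrow\rbb$ separate the points of $Z$, and then to exploit the fact that the diffeology $D_{RX}$ consists of \emph{all} continuous parametrizations. Since $\widetilde{RX}=X$ (Remark \ref{convenrem}(2)) and $X$ is arc-generated, hence locally arcwise connected, it suffices to show that $f$ is constant along every continuous curve $c:\rbb\longrightarrow X$; local constancy then follows because the arc-components of $X$ are open. Now, for any continuous curve $c:\rbb\longrightarrow X$ and any continuous map $h:\rbb\longrightarrow\rbb$, the composite $c\circ h$ is again a continuous parametrization, hence a plot of $RX$ (Remark \ref{convenrem}(2)); therefore $f\circ c\circ h$ is a plot of $Z$, i.e. a smooth curve. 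Consequently, for every smooth function $u:Z\longrightarrow\rbb$ the map $g:=u\circ f\circ c:\rbb\longrightarrow\rbb$ has the property that $g\circ h$ is an ordinary smooth function for \emph{every} continuous $h$.

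The engine of the argument is the elementary observation that such a $g$ must be constant. Indeed, taking $h=\mathrm{id}$ shows $g$ is smooth; if $g$ were non-constant there would be a point $s_{0}$ with $g'(s_{0})\neq 0$, so $g$ restricts to a diffeomorphism of a neighborhood of $s_{0}$ onto an interval, with smooth inverse $g^{-1}$. Choosing a continuous but non-smooth $h$ whose image lies in this neighborhood (e.g. $h(\sigma)=s_{0}+\varepsilon|\sigma|$ for small $\varepsilon$), the identity $h=g^{-1}\circ(g\circ h)$ would exhibit $h$ as smooth, a contradiction; hence $g'\equiv 0$ and $g$ is constant. Applying this to all smooth $u$, and invoking the separation property, we conclude that $f\circ c$ is constant, which is exactly what is needed.

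It remains to establish the separation property, which is the main point. First I would reduce to the case of an $\ical$-cell complex: if $Z$ is a retract of an $\ical$-cell complex $Z'$ (Theorem \ref{originmain}), then a smooth function on $Z'$ separating the images of two distinct points of $Z$ pulls back along the inclusion $Z\longhookrightarrow Z'$ to a smooth function separating them. For an $\ical$-cell complex, built by successively attaching cells $\Delta^{p}$ along $\dot\Delta^{p}$, I would construct separating smooth functions by induction over the cell filtration: on each simplex there are abundant smooth functions, since $id:\Delta^{p}\longrightarrow\Delta^{p}_{\sub}$ is smooth (Lemma \ref{simplex}(3)), so that restrictions of smooth functions on $\rbb^{p+1}$ are smooth on $\Delta^{p}$, and at each attaching step a smooth function given on $\dot\Delta^{p}$ extends smoothly over $\Delta^{p}$ (by the smooth extension technique used in the proof of Lemma \ref{homotopyequiv2}, cf. \cite[Lemma 9.11]{origin}). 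Assembling these compatibly over the colimit yields, for any two distinct points, a smooth function on $Z$ taking different values on them.

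The hard part will be this last inductive construction: one must produce the separating functions coherently across all the cells while respecting the non-subspace diffeology of the standard simplices, and then verify smoothness of the assembled function with respect to the colimit structure of the cell filtration. Once the separation property is in hand, the curve-plus-reparametrization reduction above finishes the proof at once, in fact giving the slightly stronger conclusion that $f$ is constant on each arc-component of $X$.
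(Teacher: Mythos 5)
Your proposal is correct in outline, but it takes a genuinely different route from the paper's proof. Both arguments share the same engine: every continuous parametrization is a plot of $RX$ (Remark \ref{convenrem}(2)), so precomposing with continuous reparametrizations forces smooth real-valued ``test outputs'' to be constant — you spell out the elementary local-inverse contradiction that the paper leaves implicit at the very end of its proof. The difference is in how one gets from a smooth curve in $Z$ to a real-valued test function. The paper never needs a globally defined function on $Z$: after reducing (as you do) to a retract of a sequential $\ical$-cell complex and to arcwise connected $X$, it takes a hypothetical nonconstant smooth curve $f\circ d$, places its image in a minimal finite stage $Z_m$ via \cite[Lemma 9.9]{origin}, restricts to the interior $\mathring{\Delta}^{p_\lambda}$ of a single cell — where the diffeology is the subspace diffeology of $\rbb^{p_\lambda+1}$ by Lemma \ref{simplex}(3) — and then uses a \emph{linear functional} as the test function. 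Your route instead proves the stronger standalone statement that smooth functions separate points on any cofibrant diffeological space (``cofibrant objects are smoothly Hausdorff''), by induction over the cell filtration using the extension lemma \cite[Lemma 9.11]{origin} that the paper invokes in the proof of Lemma \ref{homotopyequiv2}. This separation lemma is plausible and provable with exactly those tools (smoothness on pushouts and sequential colimits is checked legwise/stagewise, bump functions on cells come from $id:\Delta^p\longrightarrow\Delta^p_{\sub}$ being smooth), but it is real extra work that the paper's localization trick entirely sidesteps; on the other hand, your version produces a reusable fact of independent interest and gives the slightly sharper conclusion that $f$ is constant on arc-components, which matches what the paper's proof actually shows. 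Two small repairs: in your local-inverse argument the curve $h(\sigma)=s_{0}+\varepsilon|\sigma|$ has unbounded image, so either clamp it (e.g.\ $h(\sigma)=s_{0}+\varepsilon\min(|\sigma|,1)$) or note that the identity $h=g^{-1}\circ(g\circ h)$ need only hold near $\sigma=0$, where the kink of $|\sigma|$ already contradicts smoothness; and in the inductive separation argument you must handle the case of two points lying in the closure of a common cell by building the function skeleton-upward (separate on the stage carrying both carriers, then extend), not merely by extending one function per cell.
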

\begin{proof}
	Let $\gamma:\emptyset \longrightarrow Z$ be the canonical map from the initial object $\emptyset$ to $Z$. Consider the cofibration-trivial fibration factorization
	\[
	\begin{tikzcd}
		\emptyset \arrow{r}{i_\infty} \arrow[swap]{rd}{\gamma}& G^\infty(\mathcal{I},\gamma) \arrow{d}{p_\infty}\\
		& Z
	\end{tikzcd}
	\]
	as in \cite[the proof of Theorem 1.3]{origin} and recall that $i_\infty$ is a sequential relative $\ical$-cell complex. Then, by solving the lifting problem in $\dcal$
	\[
	\begin{tikzcd}
		\emptyset \arrow{r}{i_\infty} \arrow{d}& G^\infty(\mathcal{I},\gamma) \arrow{d}{p_\infty}\\
		Z \arrow{r}{1} \arrow[dashed]{ru} & Z,
	\end{tikzcd}
	\]
	we obtain the retract diagram in $\dcal$
	\[
		Z \longhookrightarrow G^\infty(\mathcal{I},\gamma) \longrightarrow Z.
	\]
	Thus, we may assume that $Z$ is the colimit of a sequence in $\dcal$
	\[
		Z_0 \overset{i_1}{\longhookrightarrow} Z_1 \overset{i_2}{\longhookrightarrow} Z_2 \longhookrightarrow \cdots
	\]
	such that $Z_0$ is a coproduct of copies of $\Delta^0$ and each $i_n$ fits into a pushout diagram of the form
	\[
	\begin{tikzcd}
		\underset{\lambda \in \Lambda_n}{\coprod} \dot{\Delta}^{p_\lambda} \arrow{d} \arrow[hook]{r} & \underset{\lambda \in \Lambda_n}{\coprod} \Delta^{p_\lambda} \arrow{d}\\
		Z_{n-1} \arrow[hook]{r}{i_n} & Z_n.
	\end{tikzcd}
	\]
	
	Since $X$ is locally arcwise connected, we may also assume that $X$ is arcwise connected. Suppose that there exists a nonconstant smooth map $f : RX \longrightarrow Z$. Then, we can choose $x_0, x_1 \in X$ and a continuous curve $d : \rbb \longrightarrow X$ satisfying the following conditions:
	\begin{itemize}
		\item $f(x_0) \neq f(x_1)$.
		\item $d((-\infty, 0]) = x_0$, $d([1, \infty)) = x_1$.
	\end{itemize}
	We see from Proposition \ref{conven}(3) that the composite
	$$
	\rbb \xrightarrow{\ \ d\ \ } RX \xrightarrow{\ \ f\ \ } Z
	$$
	is a nonconstant smooth curve.
	\par\indent
	Recall \cite[Lemma 9.9]{origin} and set
	$$
	m = \text{min}\ \{ n\ | \ (f \circ d)(\rbb) \subset Z_n \}.
	$$
	Then, we obtain the nonconstant smooth map
	$$
	\rbb \xrightarrow{\ \ f \circ d\ \ } Z_m.
	$$
	Since the inclusion $\mathring{\Delta}_\lambda^{p_\lambda} \longrightarrow Z_m$ is a $\dcal$-embedding for $\lambda \in \Lambda_m$, we regard $\mathring{\Delta}_\lambda^{p_\lambda}$ as a diffeological subspace of $Z_m$. Noticing that $(f \circ d)^{-1} (\mathring{\Delta}_\lambda^{p_\lambda})$ is a nonempty open set of $\rbb$ for some $\lambda \in \Lambda_m$, we may assume that the nonconstant smooth map $ f \circ d : \rbb \longrightarrow Z_m$ corestricts to $\mathring{\Delta}_\lambda^{p_\lambda}$. Then, we can choose a linear map $\ell:\mathbb{R}^{p_\lambda+1} \longrightarrow \mathbb{R}$ such that the composite
	\[
		\mathbb{R} \xrightarrow{\ \ f \circ d\ \ } \mathring{\Delta}^{p_\lambda}_\lambda \xrightarrow{\ \ \ell \ \ }\mathbb{R}
	\]
	is a nonconstant smooth map (see Lemma \ref{simplex}(3)). By the definition of the functor $R$, the composite
	$$
	\rbb \xrightarrow{\ \ c\ \ } \rbb \xrightarrow{\ \ \ell \circ f \circ d \ \ } \mathbb{R}
	$$
	is smooth for any continuous curve $c$, which is a contradiction.
\end{proof}
\begin{rem}\label{mfdtarget}
	Lemma \ref{R} remains true even if a cofibrant target $Z$ is replaced by a $C^\infty$-manifold $M$. This is proved as follows. Suppose that there exists a smooth map $f:RX\longrightarrow M$ which is not locally constant. Then, we can choose a continuous curve $d:\mathbb{R}\longrightarrow X$, a chart $E_\alpha \supset u_\alpha(U_\alpha) \xleftarrow[\cong]{u_\alpha} U_\alpha \longhookrightarrow M$, and a continuous linear functional $\ell:E_\alpha \longrightarrow \mathbb{R}$ such that
	\begin{itemize}
		\item $f\circ d:\mathbb{R}\longrightarrow M$ corestricts to $U_\alpha$.
		\item The composite
		\[
			\mathbb{R} \xrightarrow{f\circ d} U_\alpha \xrightarrow[\cong]{u_\alpha} u_\alpha(U_\alpha) \longhookrightarrow E_\alpha \xrightarrow{\ \ \ell\ \ } \mathbb{R}
		\]
		is a nonconstant smooth map.
	\end{itemize}
	(See \cite[p. 127]{Jar} and \cite[Corollary 2.11]{KM}). This yields a contradiction (see the proof of Lemma \ref{R}).
\end{rem}
We prove the following result; Lemma \ref{R} is used for the proof of Part 3.
\begin{lem}\label{RM}
	Let $X$ be an arc-generated space.
	\begin{itemize}
		\item[$(1)$] $RX$ is in $\vcal_\dcal$.
		\item[$(2)$] $RX$ is in $\widetilde{}\wcal_{\czero}$ if and only if $X$ is in $\wcal_{\czero}$.
		\item[$(3)$] $RX$ is in $\wcal_{\dcal}$ if and only if $X$ has the homotopy type of a discrete set.
		%In particular, if $X$ is arcwise connected but not contractible, then $RX$ does not have the $\dcal$-homotopy type of a cofibrant object.
	\end{itemize}
\end{lem}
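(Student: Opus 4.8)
The plan is to dispatch parts (1) and (2) immediately from the identity $\widetilde{\cdot}\circ R = Id_{\czero}$ (Remark \ref{convenrem}(2)), and to concentrate the real work on part (3). For part (1), since $\widetilde{RX}=X$ we have $R\widetilde{RX}=RX$, so the unit $id:RX\longrightarrow R\widetilde{RX}$ is literally the identity morphism of $RX$, hence a weak equivalence; thus $RX\in\vcal_\dcal$. For part (2), the definition of $\widetilde{}\wcal_{\czero}$ gives $RX\in\widetilde{}\wcal_{\czero}$ if and only if $\widetilde{RX}\in\wcal_\czero$, and $\widetilde{RX}=X$, so this is exactly the condition $X\in\wcal_\czero$.

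For the easy implication of part (3), I would first observe that $R$ carries $\czero$-homotopy equivalences to $\dcal$-homotopy equivalences: since $R$ preserves finite products and $id:I\longrightarrow R\widetilde{I}$ is smooth (Proposition \ref{conven}(3), cf.\ the proof of Corollary \ref{W}(2)), any $\czero$-homotopy $h:Y\times I\longrightarrow Y'$ yields a $\dcal$-homotopy $RY\times I\longrightarrow RY'$ by composing $Rh$ with $1\times(I\to RI)$. Hence if $X$ is $\czero$-homotopy equivalent to a discrete set $D$, then $RX\simeq_\dcal RD$; and $RD=\delta D$ is a coproduct of copies of $\Delta^0=\ast$, hence a cofibrant diffeological space, so $RX\in\wcal_\dcal$.

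The hard direction, that $RX\in\wcal_\dcal$ forces $X$ to have the homotopy type of a discrete set, is the main obstacle, and it is where Lemma \ref{R} does the essential work. Assume $RX\simeq_\dcal Z$ with $Z$ cofibrant, realized by $\phi:RX\to Z$ and $\psi:Z\to RX$ with $\psi\phi\simeq_\dcal 1_{RX}$. By Lemma \ref{R}, $\phi$ is locally constant, so its underlying continuous map $\widetilde\phi:X\to\widetilde{Z}$ is locally constant; applying $\widetilde{\cdot}$ (which sends a $\dcal$-homotopy $RX\times I\to RX$ to a $\czero$-homotopy via $\widetilde{RX\times I}=X\times\widetilde{I}$, the topological interval) yields $\widetilde\psi\widetilde\phi\simeq_\czero 1_X$. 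A locally constant map has open fibers, so $\widetilde\phi$ factors as $X\xrightarrow{\,q\,}D\xrightarrow{\,j\,}\widetilde{Z}$, where $D=X/{\sim_{\widetilde\phi}}$ carries the quotient topology; since each fiber is open, $D$ is discrete (hence arc-generated) and $q$ is continuous. Writing $g=\widetilde\psi j:D\to X$, I get $gq\simeq_\czero 1_X$, which exhibits $X$ as a homotopy retract of the discrete space $D$.

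Finally I would split the resulting homotopy idempotent. Set $e=qg:D\to D$; then $e^2=q(gq)g\simeq_\czero qg=e$, and because $\czero$-homotopic self-maps of a discrete space are equal (a homotopy out of $D\times I$ is constant on each slice $\{d\}\times I$), $e$ is a genuine idempotent. Splitting $e$ through $D'=e(D)$ via $i':D'\hookrightarrow D$ and $r':D\to D'$ with $i'r'=e$ and $r'i'=1_{D'}$, I set $g'=gi'$ and $q'=r'q$ and verify $g'q'=g(qg)q\simeq_\czero 1_X$ and $q'g'=r'ei'=1_{D'}$. This produces a $\czero$-homotopy equivalence $X\simeq_\czero D'$ with $D'$ discrete, completing the proof. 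The two delicate points, both of which hinge on the discreteness of $D$, are confirming that local constancy survives the passage to $\widetilde{\cdot}$ and that the homotopy idempotent splits honestly in $\czero$.
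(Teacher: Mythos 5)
Your proof is correct, and in the hard direction of part (3) it takes a genuinely different route from the paper's, although both hinge on Lemma \ref{R}. Parts (1) and (2) and the implication ($\Leftarrow$) of (3) match the paper exactly: the paper also dispatches (1)--(2) via $\widetilde{\cdot}\circ R = Id_{\czero}$ and proves ($\Leftarrow$) by noting that $R$ carries $\czero$-homotopies to $\dcal$-homotopies (the argument from the proof of Corollary \ref{W}(2)) and that $RD$ is a coproduct of copies of $\Delta^0$, hence cofibrant. For ($\Rightarrow$), the paper first uses the fact that every arc-generated space is locally arcwise connected to write $X=\coprod_i X_i$ in $\czero$, hence $RX=\coprod_i RX_i$ in $\dcal$, invokes the (easy) fact that membership in $\wcal_\dcal$ is detected on connected components to reduce to arcwise connected $X$, and then applies Lemma \ref{R}: the comparison map $RX\to Z$ is locally constant, hence constant on connected $X$, so $1_{RX}$ is $\dcal$-homotopic to a constant map and $RX\simeq_\dcal\ast$; assembling components gives the discrete homotopy type. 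You instead argue globally, with no componentwise reduction: you factor the locally constant map $\widetilde\phi$ through the discrete quotient $D$ (the fibers are open, so $D$ is discrete and arc-generated), exhibit $X$ as a homotopy retract of $D$, and split the homotopy idempotent $e=qg$, which is an honest idempotent because $\czero$-homotopic maps into a discrete space coincide. Your two flagged delicate points both check out: $\widetilde{\cdot}$ preserves finite products and $\widetilde{I}$ is the topological interval, so $\dcal$-homotopies pass to $\czero$-homotopies, and the idempotent splits set-theoretically through its image since $e^2=e$ fixes $e(D)$ pointwise. What each approach buys: the paper's reduction is shorter and cleaner, while yours avoids the componentwise detection lemma and produces the discrete model $D'$ explicitly as a retract; both are complete proofs.
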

\begin{proof}
	Parts 1 and 2 are immediate from the equality $\tilde{\cdot} \circ R = Id_{\ccal^0}$ (Remark \ref{convenrem}(2)).\par
	(3) \noindent($\Leftarrow$) Note that if $f, g : X \longrightarrow Y$ are $\ccal^0$-homotopic, then $Rf, Rg : RX \longrightarrow RY$ are $\dcal$-homotopic (see the argument in the proof of Corollary \ref{W}(2)). Then, the implication is obvious.\\
	($\Rightarrow$) Note that every arc-generated space is locally arcwise connected. Then, we see that $X=\underset{i}{\coprod} X_i$ in $\czero$, and hence that $RX= \underset{i}{\coprod}RX_i$ in $\dcal$, where $\{X_i\}$ is the set of arcwise connected components of $X$. Thus, we may assume that $X$ is arcwise connected.\par
	If $RX$ is in $\wcal_\dcal$, then $1_{RX}:RX\longrightarrow RX$ is $\dcal$-homotopic to a constant map (Lemma \ref{R}), and hence $RX\simeq_\dcal \ast$ holds.
\end{proof}
%In this appendix, we would like to avoid using the results of Sections 2-11, if possible. Thus, we did not use Corollary \ref{4equiv}, which makes the proof of Lemma \ref{RM}(3) simpler.
\begin{exa}\label{RX}
	\begin{itemize}
	\item[{\rm (1)}] $RE^{n}_{\rm top}$ is in $\vcal_{\dcal} \backslash\ \widetilde{}\wcal_{\czero}$ for $n \geq 1$, where $E^{n}_{\rm top}$ is the $n$-dimensional topological Hawaiian earring.
	\item[{\rm (2)}] $RS^{n}_{\mathrm{top}}$ is in $(\vcal_{\dcal} \cap\ \widetilde{}\wcal_{\czero}) \backslash \wcal_\dcal$ for $n \geq 1$, where $S^{n}_{\mathrm{top}}$ is the $n$-dimensional topological sphere.
	\end{itemize}
\end{exa}
\begin{proof} (1) Recall that $E^{n}_{\rm top}$ is an arc-generated space which is not in $\wcal_{\czero}$ (see Example \ref{Hawaiian}(1) and Remark \ref{earring}). Then, the result follows from Lemma \ref{RM}(1)-(2).\par
	(2) The result follows from Lemma \ref{RM}(1)-(3).
\end{proof}
	By Examples \ref{Hawaiian}, \ref{quotient}, and \ref{RX}, we have shown that all the inclusions in Venn diagram (A.1) are proper.
\begin{rem}\label{}
	Since $S^n$ is in $\wcal_\dcal$ (Theorem \ref{mfdhcofibrancy}), $[RS^n_{\rm top},S^n]_\dcal=\ast$ by Lemma \ref{R}. From Example \ref{RX}(2), we thus see that even if $A\in \vcal_\dcal \cap\ \widetilde{}\wcal_\czero$ and $X \in \wcal_\dcal$, $[A, X]_\dcal \longrightarrow [\widetilde{A}, \widetilde{X}]_\czero$ need not be bijective (cf. Theorem \ref{dmapsmoothing}).
\end{rem}
%%%%%%%%%%%%% Appendix B %%%%%%%%%%%%%%%
%\renewcommand{\thesection}{B}
\renewcommand{\thesection}{Appendix B}
\section{Keller's $C^{\infty}_{c}$-theory and diffeological spaces}
\renewcommand{\thesection}{B}
Keller's $C^{\infty}_{c}$-theory is a representative example of classical infinite dimensional calculi (Remark \ref{classical}) and is widely used by many authors (cf. \cite{Michor} and the articles of Gl\"{o}ckner, Neeb, and their coauthors); see \cite[1.3]{Gl} and references therein for the basics of Keller's $C^{\infty}_{c}$-theory. In this appendix, we discuss the relation between $C^{\infty}$-manifolds in Keller's $C^{\infty}_{c}$-theory, $C^{\infty}$-manifolds in convenient calculus, and diffeological spaces. In particular, we show that the $C^{\infty}$-manifolds in Keller's $C^{\infty}_{c}$-theory cannot be fully faithfully embedded into the category $\dcal$ (cf. Proposition \ref{dmfd}).

$C^{\infty}$-manifolds in Keller's $C^{\infty}_{c}$-theory are defined by gluing open sets of locally convex vector spaces via diffeomorphisms; we require that $C^{\infty}$-manifolds in Keller's $C^{\infty}_{c}$-theory are Hausdorff. Let $C^{\infty}_{c}$ denote the category of $C^{\infty}$-manifolds in Keller's $C^{\infty}_{c}$-theory. We call an object and a morphism of $C^{\infty}_{c}$ a $C^{\infty}_{c}$-manifold and a $C^{\infty}_{c}$-map to distinguish them from convenient ones.\par
Let $C^{\infty}_{c\ conv}$ denote the full subcategory of $C^{\infty}_{c}$ consisting of $C^{\infty}_{c}$-manifolds modeled on convenient vector spaces. We can then define the faithful functor
\[
J : C^{\infty}_{c\ conv} \longrightarrow C^{\infty}
\]
by assigning to $M \in C^{\infty}_{c\ conv}$ the object $JM$ of $C^{\infty}$ having the same underlying set and atlas as $M$.
\begin{rem}\label{classical}
In this paper, a classical infinite dimensional calculus is a word opposed to the word ``convenient calculus". A classical infinite dimensional calculus is assumed to satisfy the following conditions:
\begin{itemize}
	\item A smooth map is a continuous map between open sets of locally convex vector spaces (in some specified class).
	\item The smooth curves into a locally convex vector space are just the usual ones (see \cite[1.2]{KM}).
	\item Smooth maps are closed under composites.
\end{itemize}
Keller's $C^\infty_c$-theory is a classical infinite dimensional calculus in this sense. The definition of $C^{\infty}$-manifolds and that of the faithful functor $J$ apply to every classical infinite dimensional calculus.
\end{rem}
\begin{prop}\label{notfull}
	The faithful functor
	\[
	J : C^{\infty}_{c \ conv} \longrightarrow C^{\infty}
	\]
	is not full.
\end{prop}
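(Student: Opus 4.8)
The claim is that the faithful functor $J : C^{\infty}_{c\,conv} \longrightarrow C^{\infty}$ is not full. Concretely, I must exhibit two $C^{\infty}_{c}$-manifolds $M$ and $N$ modeled on convenient vector spaces together with a convenient smooth map $f : JM \longrightarrow JN$ that does not arise as $Jg$ for any $C^{\infty}_{c}$-map $g : M \longrightarrow N$. Since $J$ is the identity on underlying sets and atlases, this amounts to finding a map between (the underlying open sets of) convenient vector spaces that is smooth in the sense of convenient calculus — i.e. sends smooth curves to smooth curves — but is \emph{not} smooth in Keller's $C^{\infty}_{c}$-sense, which requires continuity (and the existence of continuous iterated directional derivatives) with respect to the original locally convex topologies.

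**The approach.** The plan is to reduce to the model vector spaces themselves, taking $M$ and $N$ to be (open subsets of) a single convenient vector space $E$ and $\rbb$ respectively, so that the question becomes whether every convenient-smooth function $E \longrightarrow \rbb$ is $C^{\infty}_{c}$-smooth. The decisive input is the remark in Section~2.2 of the excerpt that a convenient smooth map need not be continuous for the original locally convex topologies (\cite[Corollary 2.11]{KM}), because the $c^{\infty}$-topology can be strictly finer than the locally convex topology (Remark~\ref{semiclassical}). First I would choose a convenient vector space $E$ on which the $c^{\infty}$-topology is strictly finer than the locally convex topology — any non-normable Fr\'echet space fails to give such an example since there $c^{\infty}$ coincides with the locally convex topology (\cite[Theorem 4.11(1)]{KM}), so I would instead use a space like a non-metrizable product or an appropriate $LF$-type space where the two topologies genuinely differ. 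Then I would invoke \cite[Corollary 2.11]{KM} to produce a convenient-smooth function $f : E \longrightarrow \rbb$ that is discontinuous in the locally convex topology. By construction $f = Jf$ on underlying data and $f$ is a morphism $JE \longrightarrow \rbb$ in $C^{\infty}$, but a $C^{\infty}_{c}$-map must in particular be continuous for the locally convex topologies, so $f$ cannot be in the image of $J$.

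**Key steps in order.** (1) Recall from the excerpt that $J$ preserves underlying sets and atlases, so fullness is equivalent to the statement that convenient smoothness implies $C^{\infty}_{c}$-smoothness between the relevant charts. (2) Recall that every $C^{\infty}_{c}$-map is, by definition of Keller's theory, continuous for the original locally convex topologies (this is part of the standing hypotheses in \cite[1.3]{Gl}). (3) Select a convenient vector space $E$ whose $c^{\infty}$-topology is strictly finer than its locally convex topology, and invoke \cite[Corollary 2.11]{KM} to obtain a convenient-smooth map $f : E \longrightarrow \rbb$ that is not continuous for the locally convex topology of $E$. (4) Observe that $f$ defines a morphism $JE \longrightarrow \rbb$ in $C^{\infty}$ (it preserves smooth curves), while its failure of locally convex continuity shows that it is not $Jg$ for any $C^{\infty}_{c}$-map $g$. (5) Conclude that $J$ is not full.

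**The main obstacle.** The routine parts — that $J$ is the identity on underlying data and that $C^{\infty}_{c}$-maps are locally convex continuous — are immediate from the definitions. The genuine content, and the step I expect to require the most care, is Step~(3): pinning down an explicit convenient vector space together with an explicit convenient-smooth but locally-convex-discontinuous scalar function, rather than merely citing the existence result \cite[Corollary 2.11]{KM}. The subtlety is that I must avoid Fr\'echet (and more generally metrizable) models, where the two topologies agree and no such pathology exists; the example must live on a convenient vector space that is deliberately non-metrizable so that the $c^{\infty}$-topology strictly refines the locally convex one. Once such a space and function are fixed, the remaining argument is purely formal.
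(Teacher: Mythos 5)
Your overall strategy coincides with the paper's: both arguments rest on the fact that a conveniently smooth map (one preserving smooth curves) need not be continuous for the locally convex topologies, whereas every $C^{\infty}_{c}$-map is continuous by definition. However, there is a genuine gap at your Step (3), which you yourself flag as the crux but do not resolve, and the repair you sketch would not work as stated. First, \cite[Corollary 2.11]{KM} is not an existence theorem for smooth discontinuous functions; it says that the smooth curves of a locally convex space depend only on its bornology. Second, your proposed sufficient condition --- that the $c^{\infty}$-topology be strictly finer than the locally convex topology --- does not deliver the example: $\rbb^{(\nbb)}$ has strictly finer $c^{\infty}$-topology (by \cite[4.26]{KM}, $c^{\infty}\rbb^{(\nbb)}$ is not even a topological vector space), yet it is bornological, so every bounded (equivalently, conveniently smooth) linear functional on it is continuous and the natural linear candidates all fail. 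Third, even nonbornologicality of $E$ does not by itself produce a discontinuous scalar-valued example: an infinite-dimensional Banach space with its weak topology is a nonbornological convenient vector space on which every bounded linear functional is continuous. So your reduction to a function $f:E\longrightarrow \rbb$ requires a more delicate choice of $E$ than anything you specify.

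The paper's proof sidesteps exactly this difficulty by not taking the target to be $\rbb$. It chooses a nonbornological convenient vector space $E$ (existence is due to Valdivia \cite{Val}) and takes as target the bornologification $E_{\rm born}$, which is again convenient since Mackey-completeness depends only on the bornology. Because $E$ and $E_{\rm born}$ have the same bounded sets, they have the same smooth curves --- this is the correct use of \cite[Corollary 2.11]{KM} --- so the set-theoretic identity $JE_{\rm born}\longrightarrow JE$ is an isomorphism in $C^{\infty}$, and in particular $id: JE \longrightarrow JE_{\rm born}$ is a morphism of $C^{\infty}$. But $id: E \longrightarrow E_{\rm born}$ fails to be continuous by the very definition of nonbornologicality, hence is not a $C^{\infty}_{c}$-map, and $J: C^{\infty}_{c\ conv}(E, E_{\rm born}) \longrightarrow C^{\infty}(JE, JE_{\rm born})$ is not surjective. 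If you want to salvage your scalar-valued variant, you would need a convenient space whose continuous dual is strictly smaller than its bornological dual (e.g.\ a Banach space with the weak topology induced by a proper norming closed subspace of its dual), which is strictly more work than the paper's one-line choice.
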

\begin{proof}
	Choose a nonbornological convenient vector space $E$ (see \cite{Val}) and consider the bornologification $id : E_{\rm born} \longrightarrow E$ (\cite[Lemma 4.2]{KM}). Then, $J\ id: JE_{\rm born} \longrightarrow JE$ is an isomorphism in $C^\infty$ (see \cite[Corollary 2.11]{KM}). Noticing that $id: E \longrightarrow E_{\rm born}$ is not even continuous, we see that
	\[
		J:C^\infty_{c\ conv}(E,E_{\rm born}) \longrightarrow C^\infty(JE,JE_{\rm born})
	\]
	is not surjective.
\end{proof}
We define the faithful functor $I : C^{\infty}_{c} \longrightarrow \dcal$ similarly to the faithful functor $I : C^{\infty} \longrightarrow \dcal$ (see Section 2.2); explicitly, $I$ assigns to $M \in C^{\infty}_{c}$ the set $M$ endowed with the diffeology
\[
D_{IM} = \{ \text{$C^{\infty}_{c}$-parametrizations of $M$} \}.
\]
\begin{cor}\label{Inotfull}
	The faithful functor
	\[
	I : C^{\infty}_{c} \longrightarrow \dcal
	\]
	is not full
\end{cor}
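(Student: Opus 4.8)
The plan is to reduce the statement to Proposition \ref{notfull} by comparing the two functors named $I$ through the fully faithful embedding $I:C^{\infty}\longhookrightarrow\dcal$ of Proposition \ref{dmfd}. The first and essential step is to record that the triangle
\[
\begin{tikzcd}
C^{\infty}_{c\ conv} \arrow{r}{J} \arrow[swap]{rd}{I} & C^{\infty} \arrow[hook]{d}{I}\\
 & \dcal
\end{tikzcd}
\]
commutes, i.e. that the restriction to $C^{\infty}_{c\ conv}$ of the functor $I:C^{\infty}_{c}\longrightarrow\dcal$ agrees with the composite $I\circ J$. On underlying set maps both functors act as the identity, and a $C^{\infty}_{c}$-map is in particular convenient smooth, so on morphisms there is nothing to check; the content is the objectwise assertion that, for a $C^{\infty}_{c}$-manifold $M$ modeled on convenient vector spaces, the diffeology of $C^{\infty}_{c}$-parametrizations of $M$ equals the diffeology of convenient $C^{\infty}$-parametrizations of $JM$.

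Granting this, I would invoke the full faithfulness of $I:C^{\infty}\longhookrightarrow\dcal$ (Proposition \ref{dmfd}) to obtain, for $M,N\in C^{\infty}_{c\ conv}$, natural bijections
\[
\dcal(IM,IN)=\dcal(IJM,IJN)\cong C^{\infty}(JM,JN),
\]
under which the hom-set map $I:C^{\infty}_{c}(M,N)=C^{\infty}_{c\ conv}(M,N)\longrightarrow\dcal(IM,IN)$ is identified with $J:C^{\infty}_{c\ conv}(M,N)\longrightarrow C^{\infty}(JM,JN)$. Consequently, fullness of $I:C^{\infty}_{c}\longrightarrow\dcal$ would force $J$ to be full, contradicting Proposition \ref{notfull}. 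Concretely, with $E$ and $E_{\rm born}$ chosen as in the proof of Proposition \ref{notfull}, the set-theoretic identity $id:JE\longrightarrow JE_{\rm born}$ is an isomorphism in $C^{\infty}$ and hence determines an element (indeed a diffeomorphism) of $\dcal(IE,IE_{\rm born})\cong C^{\infty}(JE,JE_{\rm born})$; but it is not the image under $I$ of any $C^{\infty}_{c}$-map $E\longrightarrow E_{\rm born}$, since such a preimage would have to be the discontinuous map $id:E\longrightarrow E_{\rm born}$, which is not $C^{\infty}_{c}$. This single morphism witnesses the failure of fullness.

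The main obstacle is the objectwise comparison of diffeologies in the first step. Working in charts reduces it to the purely local statement that a parametrization $p:U\longrightarrow E$, with $U$ open in $\mathbb{R}^{n}$ and $E$ a convenient vector space, is $C^{\infty}_{c}$-smooth if and only if it is convenient smooth. Here the hypothesis that the model spaces are convenient (hence $c^{\infty}$-complete) is what makes the comparison go through: on finite-dimensional domains both calculi restrict to the usual notion of smoothness (cf. \cite[Corollary 3.14]{KM} and the discussion of smooth curves in Section 2.2). Once this coincidence of plots is established, the remaining steps are formal consequences of the commuting triangle, the fully faithful embedding $I:C^{\infty}\longhookrightarrow\dcal$, and Proposition \ref{notfull}.
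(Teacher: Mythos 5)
Your proposal is correct and follows essentially the same route as the paper's own two-step proof: establish the commutativity of the square relating $I\circ J$ and the restriction of $I:C^{\infty}_{c}\longrightarrow \dcal$ (reduced, exactly as in the paper, to the local equivalence of $C^{\infty}_{c}$-smoothness and convenient smoothness for parametrizations $U\longrightarrow E$ with $U\subset \rbb^{n}$ open), and then deduce non-fullness from Proposition \ref{notfull} via the fullness of $I:C^{\infty}\longhookrightarrow \dcal$ from Proposition \ref{dmfd}. Your explicit witness $id:JE\longrightarrow JE_{\rm born}$ is just the example already inside the paper's proof of Proposition \ref{notfull}, so it adds clarity but no new content.
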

\begin{proof}
	We prove the result in two steps.\vspace{0.1cm}\\
	Step 1. We show that the diagram of faithful functors
	\[
	\begin{tikzcd}
	C^{\infty}_{c \ conv} \arrow{r}{J} \arrow[hook']{d} & C^{\infty} \arrow[hook']{d}{I}\\
	C^{\infty}_{c} \arrow{r}{I} & \dcal
	\end{tikzcd}
	\]
	is commutative. For this, we have only to show that for a domain $U$ of $\rbb^{n}$ and a locally convex vector space $E$, the following equivalence holds:
	\[
	f: U \longrightarrow E \ \text{is a $C^{\infty}_{c}$-map}\ \Leftrightarrow f: U \longrightarrow E\ \text{is a $C^{\infty}$-map}.
	\]
	The implication $(\Rightarrow)$ is obvious. For $(\Leftarrow)$, use \cite[1.3]{Gl} and \cite[Definition 3.17, Theorem 3.18, the proof of Corollary 5.11, and Theorem 4.11(1)]{KM}.\vspace{0.1cm}\\
	Step 2. Consider the commutative diagram in Step 1. Since the two vertical functors are full (Proposition \ref{dmfd}), $I : C^{\infty}_{c} \longrightarrow \dcal$ is not full by Proposition \ref{notfull}.
\end{proof}
\begin{rem}\label{full}
	Let $C^\infty_{c\,F}$ (resp. $C^\infty_{c\,S}$) denote the full subcategory of $C^\infty_c$ consisting of $C^\infty_c$-manifolds modeled on Fr\'echet spaces (resp. Silva spaces). Similarly, let $C^\infty_F$ (resp. $C^\infty_S$) denote the full subcategory of $C^\infty$ consisting of $C^\infty$-manifolds modeled on Fr\'echet spaces (resp. Silva spaces). Since the class of Fr\'{e}chet spaces and that of Silva spaces are closed under finite products (\cite[Proposition 3]{Yo}), we see from \cite[Theorem 4.11(1)-(2)]{KM} that if $M$ is in $C^\infty_F$ or $C^\infty_S$, then $M$ is necessarily Hausdorff. Further, using an argument similar to that in the proof of Corollary \ref{Inotfull}, we can show that the functor $J: C^{\infty}_{c\ conv} \longrightarrow C^{\infty}$ restricts to the isomorphisms of categories $C^{\infty}_{c\ F} \xrightarrow[\cong]{} C^{\infty}_{F}$ and $C^\infty_{c\ S} \xrightarrow[\cong]{} C^\infty_S$ (see \cite{CGA} for details).
\end{rem}
Proposition \ref{dmfd} and Corollary \ref{Inotfull} show that convenient calculus also provides a good categorical setting for the purpose of investigating infinite dimensional $C^{\infty}$-manifolds and smooth maps via the smooth homotopy theory of diffeological spaces. However, Keller's $C^{\infty}_{c}$-theory plays an important role in supplying classical $C^{\infty}$-manifolds in convenient calculus (Lemma \ref{Kclassical}). 

We end this appendix with the following remark on convenient categories of smooth spaces.
\begin{rem}
	Kriegl-Michor \cite{KM} used the category $\fcal$ of Fr\"{o}licher spaces as a convenient category of smooth spaces. Using the adjoint pair $F: \dcal \rightleftarrows \fcal : I$ with I fully faithful (\cite{St}), we can construct a model structure on $\fcal$ for which $(F, I)$ is a pair of Quillen equivalences (see a forthcoming paper). However, we do not adopt $\fcal$ as a convenient category of smooth spaces since it cannot be ensured that $\fcal$ contains $C^{\infty}$ (see \cite[Lemma 27.5]{KM}).
\end{rem}
\renewcommand{\thesection}{Appendix C}
\section{Smooth regularity and smooth paracompactness}
\renewcommand{\thesection}{C}
In this appendix, we discuss smooth regularity and smooth paracompactness, clarifying points often misunderstood and overlooked in the literature (Remarks \ref{T-reg} and \ref{correction}). The results in this appendix are used in Sections 11.3 and 11.4 to establish the smooth paracompactness of all submanifolds of $\cfra^\infty(M,N)$, $C^\omega(M,N)$, $B(M,N)$, and $B^\omega(M,N)$ (see Section 11.4 for these $C^\infty$-manifolds).\par
Some results are discussed in a rather general context using the notation of Section 5; $\ccal$ denotes one of the categories $C^\infty$, $\dcal$, $\czero$ and $\tcal$, and $U:\ccal \longrightarrow \tcal$ denotes the underlying topological space functor for $\ccal$.\par
First, we prove the following result on $\ccal$-regularity and apply it to topological manifolds and $C^\infty$-manifolds. An object $X$ of $\ccal$ is called {\sl locally $\ccal$-regular} if any $x\in X$ has a $\ccal$-regular open neighborhood.
\begin{lem}\label{C-regular}
	For $X\in \ccal$ with $UX$ $T_1$-space, the following are equivalent:
	\begin{itemize}
		\item[{\rm (i)}] $X$ is $\ccal$-regular.
		\item[{\rm (ii)}] $UX$ is regular and $X$ is locally $\ccal$-regular.
	\end{itemize}
\begin{proof}
	$(\rm i) \Longrightarrow (\rm ii)$ See Remark \ref{regular}.\\
	$(\rm ii) \Longrightarrow (\rm i)$ Assume given a point $x\in X$ and an open neighborhood $U$ of $x$. We construct a $\ccal$-function $\phi:X \longrightarrow \mathbb{R}$ with $\phi(x)=1$, ${\rm carr}\,\phi \subset U$, and $\phi(X)\subset[0,1]$.\par
	First, we choose a $\ccal$-regular open neighborhood $U'$ of $x$ such that $U' \subset U$ (see Remark \ref{regular}). Then, we can choose disjoint open sets $W'$ and $V'$ of $X$ such that
	\[
	x \in W' \text{ and } U'^c \subset V'
	\]
	and a $\ccal$-function $\varphi:U'\longrightarrow \mathbb{R}$ such that $\varphi(x)=1$, carr $\varphi \subset W'$, and $\varphi(U')\subset [0,1]$.\par
	Since the function $\varphi:U'\longrightarrow \mathbb{R}$ and the zero function $0:V'\longrightarrow \mathbb{R}$ coincide on $U'\cap V'$, $\phi:= \varphi+0:X=U' \cup V' \longrightarrow \mathbb{R}$ is a $\ccal$-function on $X$, which is the desired bump function.	
\end{proof}
\end{lem}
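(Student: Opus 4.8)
The plan is to prove the two implications separately, with $(\rm ii)\Rightarrow(\rm i)$ carrying all the content. For $(\rm i)\Rightarrow(\rm ii)$, I would first observe that local $\ccal$-regularity is immediate: since $\ccal$-regularity is inherited by every open $\ccal$-subspace (Remark \ref{regular}), each open neighborhood of a point is itself $\ccal$-regular, so $X$ is a fortiori locally $\ccal$-regular. To see that $UX$ is regular, I would take $x\in X$ and a closed set $C$ with $x\notin C$, and apply $\ccal$-regularity to the open neighborhood $U=X\setminus C$; this yields a $\ccal$-function $f$ with $f(x)=1$ and ${\rm carr}\,f\subset U$, so $Uf:UX\longrightarrow\rbb$ is continuous, equals $1$ at $x$, and vanishes on $C$. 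The preimages of $(\tfrac12,\infty)$ and $(-\infty,\tfrac12)$ then separate $x$ from $C$, and together with the $T_1$ hypothesis this gives topological regularity.

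For $(\rm ii)\Rightarrow(\rm i)$, fix $x\in X$ and an open neighborhood $U$ of $x$; the goal is a $\ccal$-function $\phi:X\longrightarrow\rbb$ with $\phi(x)=1$ and ${\rm carr}\,\phi\subset U$. First I would shrink $U$ to a $\ccal$-regular neighborhood: choosing a $\ccal$-regular open neighborhood $U''$ of $x$ by local $\ccal$-regularity and setting $U'=U''\cap U$, heredity of $\ccal$-regularity makes $U'$ a $\ccal$-regular open neighborhood with $U'\subset U$. Next I would invoke the topological regularity of $UX$ for the point $x$ and the closed set $U'^{c}$, obtaining disjoint open sets $W'$ and $V'$ with $x\in W'$ and $U'^{c}\subset V'$. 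Disjointness of $W'$ and $V'$ combined with $U'^{c}\subset V'$ forces $W'\cap U'^{c}=\emptyset$, i.e.\ $W'\subset U'$; these two facts are exactly what the gluing below needs. Applying $\ccal$-regularity of $U'$ to the neighborhood $W'$ of $x$, I then obtain a $\ccal$-function $\varphi:U'\longrightarrow\rbb$ with $\varphi(x)=1$ and ${\rm carr}\,\varphi\subset W'$; a postcomposition with a suitable element of $C^{\infty}(\rbb,\rbb)$, under which $\ccal$-functions are closed, normalizes $\varphi$ to take values in $[0,1]$.

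The decisive step is the extension by zero. The sets $U'$ and $V'$ cover $X$, since $V'\supset U'^{c}$. On the overlap $U'\cap V'$ the function $\varphi$ vanishes, because ${\rm carr}\,\varphi\subset W'$ and $W'\cap V'=\emptyset$, so $\varphi$ agrees there with the zero function on $V'$. The locality (sheaf) property of $\ccal$-morphisms, which holds for each of $C^{\infty}$, $\dcal$, $\czero$, and $\tcal$, then permits gluing $\varphi$ and $0$ into a single $\ccal$-function $\phi:X=U'\cup V'\longrightarrow\rbb$, and this $\phi$ satisfies $\phi(x)=1$ and ${\rm carr}\,\phi={\rm carr}\,\varphi\subset W'\subset U'\subset U$, as required.

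I expect the main obstacle to lie in orchestrating this gluing correctly rather than in any single computation: the entire argument hinges on producing, via topological regularity, a carrier $W'$ for the local bump function that is kept clear of the open set $V'$ covering the complement $U'^{c}$, so that the two locally defined pieces match on the overlap and the sheaf condition applies. Topological regularity of $UX$ is used precisely to manufacture this separation; without it one could not guarantee that the support of the local function avoids the region where it must be patched by zero, and the extension could fail to be a $\ccal$-morphism.
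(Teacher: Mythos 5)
Your proposal is correct and follows essentially the same route as the paper: shrink $U$ to a $\ccal$-regular open neighborhood $U'$ via local $\ccal$-regularity and heredity, use topological regularity to separate $x$ from $U'^{c}$ by disjoint open sets $W'$, $V'$, produce a bump function on $U'$ carried in $W'$, and glue it with the zero function on $V'$. The only differences are that you spell out the (i)$\Rightarrow$(ii) direction and the $[0,1]$-normalization (via postcomposition with an element of $C^{\infty}(\rbb,\rbb)$), both of which the paper leaves implicit in Remark \ref{regular} and the definition of $\ccal$-regularity.
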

\begin{rem}\label{T-reg}
	In this remark, we specialize Lemma \ref{C-regular} to topological manifolds (see Remark \ref{topmfd} for the definition of a topological manifold and recall from Remark \ref{regular} that $\tcal$-regularity is just complete regularity). Since locally convex vector spaces are completely regular (\cite[Sections 2.1 and 2.9]{Jar}), every topological manifold is locally $\tcal$-regular. Hence, Lemma \ref{C-regular} specializes to the following statement:
	\begin{quote}
		A topological manifold is completely regular if and only if it is regular.
	\end{quote}
	This is a corrected version of Eells' statement that a topological manifold is completely regular \cite[p. 765]{Eells}. In fact, a separation condition strictly stronger than Hausdorff property is necessary to derive complete regularity of a topological manifold (see \cite[the example in Section 27.6]{KM}).
\end{rem}
Now we establish the relation between $C^\infty$-regularity of a $C^\infty$-manifold and $C^\infty$-regularity of its model vector spaces.
\begin{prop}\label{smregular}
	For a $C^\infty$-manifold $M$, the following are equivalent:
	\begin{itemize}
		\item[{\rm (i)}] $M$ is $C^\infty$-regular.
		\item[{\rm (ii)}] $M$ is regular as a topological space and is modeled on $C^\infty$-regular convenient vector spaces.
		\item[{\rm (iii)}] $M$ is regular as a topological space and admits an atlas consisting of charts onto $C^\infty$-regular convenient vector spaces.
	\end{itemize}
\begin{proof}
	${\rm (iii)}\Longrightarrow {\rm (ii)}$ Obvious.\\
	${\rm (ii)}\Longrightarrow {\rm (i)}$ The implication follows from Lemma \ref{C-regular}.\\
	${\rm (i)} \Longrightarrow {\rm (iii)}$ Since $M$ is obviously regular as a topological space, we have only to construct a chart onto a $C^\infty$-regular convenient vector space around any $x\in M$.\par
	Take a chart of $M$ around $x$
	\[
		M \supset U \xrightarrow[\cong]{\ \ u\ \ } V \subset E
	\]
	with $u(x)=0$, where $E$ is a convenient vector space and $V$ is a $c^\infty$-open set of $E$. We may assume that $V$ is radial by replacing $V$ with its star (see \cite[Theorem 16.21]{KM}). Set $V_{\frac{1}{2}}= \frac{1}{2}V$ and $U_{\frac{1}{2}}=u^{-1}(V_{\frac{1}{2}})$. By the $C^\infty$-regularity of $M$, we can choose a smooth function $\varphi:M \longrightarrow \mathbb{R}$ such that $\varphi(x)=1$, ${\rm supp}\,\varphi \subset U_{\frac{1}{2}}$ and $\varphi(M)\subset [0,1]$. Define the smooth function $\psi:V \longrightarrow \mathbb{R}$ to be the composite
	\[
		V \xrightarrow[\cong]{\ \ u^{-1}\ \ } U \xrightarrow[]{\ \ \varphi|_U\ \ } \mathbb{R}
	\]
	and set $V'={\rm carr}\, \psi$. Then, the $c^\infty$-open set $V'$ of $E$ and the smooth function $\psi|_{V'}$ satisfy the assumption of \cite[Theorem 16.21]{KM}, and hence $V'':={\rm star}\, V'$ is a $C^\infty$-regular $c^\infty$-open set of $E$ which is diffeomorphic to $E$. Thus, the composite
	\[
		M \supset U'' \xrightarrow[\cong]{\ \ u\ \ } V'' \xrightarrow[\ \ \cong\ \ ]{} E
	\]
	is the desired chart around $x$, where $U''= u^{-1}(V'')$.
\end{proof}
\end{prop}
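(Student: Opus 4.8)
The plan is to prove the cyclic chain of implications $(\mathrm{iii})\Rightarrow(\mathrm{ii})\Rightarrow(\mathrm{i})\Rightarrow(\mathrm{iii})$, reserving the real work for the last one. The implication $(\mathrm{iii})\Rightarrow(\mathrm{ii})$ is immediate: an atlas whose charts are diffeomorphisms onto $C^\infty$-regular convenient vector spaces is in particular an atlas whose model vector spaces are $C^\infty$-regular, and the topological regularity hypothesis is carried over verbatim.

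For $(\mathrm{ii})\Rightarrow(\mathrm{i})$ I would appeal directly to Lemma \ref{C-regular}. Since $M$ is separated as an arc-generated space, $\widetilde{M}$ is in particular a $T_1$-space, so the lemma applies. Condition $(\mathrm{ii})$ already gives that $\widetilde{M}$ is regular, so it remains only to check local $C^\infty$-regularity. But every point of $M$ has a chart neighborhood diffeomorphic to a $c^\infty$-open subset of a $C^\infty$-regular model vector space, and $C^\infty$-regularity is inherited by open $\dcal$-subspaces (Remark \ref{regular}); hence $M$ is locally $C^\infty$-regular, and Lemma \ref{C-regular} yields $(\mathrm{i})$.

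The substance is $(\mathrm{i})\Rightarrow(\mathrm{iii})$. Topological regularity of $\widetilde{M}$ is automatic, since $C^\infty$-bump functions are continuous and $M$ is $T_1$. The real point is to upgrade an arbitrary chart around a given $x\in M$ into a chart whose image is the \emph{whole} model vector space. First I would fix a chart $u\colon U\xrightarrow{\cong}V\subset E$ with $u(x)=0$, $E$ convenient and $V$ a $c^\infty$-open set, and, replacing $V$ by its star, arrange $V$ to be radial (by \cite[Theorem 16.21]{KM}). After shrinking to $V_{\frac{1}{2}}=\frac{1}{2}V$ and $U_{\frac{1}{2}}=u^{-1}(V_{\frac{1}{2}})$, hypothesis $(\mathrm{i})$ furnishes a smooth bump function $\varphi\colon M\to\rbb$ with $\varphi(x)=1$, $\mathrm{supp}\,\varphi\subset U_{\frac{1}{2}}$, and values in $[0,1]$. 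Transporting $\varphi$ to $E$ through $u^{-1}$ produces a smooth function $\psi$ on $V$ whose carrier $V'$, together with $\psi|_{V'}$, meets the hypotheses of \cite[Theorem 16.21]{KM}; that theorem then identifies the star $V''$ of $V'$ as a $C^\infty$-regular $c^\infty$-open set diffeomorphic to $E$. Restricting the original chart to $U''=u^{-1}(V'')$ and composing with this diffeomorphism gives the desired chart onto $E$ around $x$, assembling into the required atlas.

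The main obstacle is precisely this last construction: producing a chart \emph{onto} $E$ rather than onto a proper $c^\infty$-open subset. Everything hinges on the Kriegl--Michor result \cite[Theorem 16.21]{KM} that the star-shaped carrier of a suitable smooth bump is smoothly diffeomorphic to the ambient convenient vector space; the delicate preparatory steps---radializing $V$, scaling to $V_{\frac{1}{2}}$ so that the transported bump keeps its carrier safely inside $V$, and verifying the exact hypotheses of that theorem---are what make the argument go through. The remaining implications are formal consequences of Lemma \ref{C-regular} and the heredity of $C^\infty$-regularity recorded in Remark \ref{regular}.
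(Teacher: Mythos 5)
Your proposal is correct and follows essentially the same route as the paper: the formal implications via Lemma \ref{C-regular} and heredity of $C^\infty$-regularity, and for $(\mathrm{i})\Rightarrow(\mathrm{iii})$ the identical construction---radialize $V$ via \cite[Theorem 16.21]{KM}, shrink to $V_{\frac{1}{2}}$, transport a bump function to $\psi$ on $V$, and apply \cite[Theorem 16.21]{KM} again to identify the star of $\mathrm{carr}\,\psi$ as a $C^\infty$-regular $c^\infty$-open set diffeomorphic to $E$. The only difference is that you spell out the $T_1$ and local-regularity checks that the paper leaves implicit, which is harmless.
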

%Now we have the following theorem; The implication $(\rm iii) \Longrightarrow (\rm i)$ in Theorem \ref{smparacompact} is a useful smooth paracompactness criterion for $C^\infty$-manifolds, which is used in Sections 11.3-11.4.
Next, we discuss $\ccal$-paracompactness of a Lindel\"of $\ccal$-space, especially $C^\infty$-paracompactness of a Lindol\"of $C^\infty$-manifold. Recall the following result from \cite{KM}.
\begin{prop}\label{Lindelof}
	Let $X$ be an object of $\ccal$ with $UX$ Lindel\"of $T_1$-space. Then, $X$ is $\ccal$-paracompact if and only if $X$ is $\ccal$-regular.
	\begin{proof}
		See Remark \ref{regular}, the comment after it, and \cite[Theorem 16.10]{KM}.
	\end{proof}
\end{prop}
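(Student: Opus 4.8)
The plan is to prove the two implications of the biconditional separately, since they require quite different inputs. For the forward direction, that $\ccal$-paracompactness implies $\ccal$-regularity, I would simply invoke Remark \ref{regular}, which already records exactly this: if $X$ is $\ccal$-paracompact and $UX$ is a $T_{1}$-space, then $X$ is $\ccal$-regular. Note that this half uses neither the Lindel\"of hypothesis nor any deep input; given a point $x$ and an open neighborhood $U$, one takes a $\ccal$-partition of unity subordinate to the cover $\{U,\, X\setminus\{x\}\}$ (the latter open by $T_{1}$) and extracts the bump function $f$ with $f(x)=1$ and $\operatorname{carr} f\subset U$. So the real content lies entirely in the converse.

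For the converse, that $\ccal$-regularity implies $\ccal$-paracompactness under the Lindel\"of $T_{1}$ hypothesis, I would route everything through the smoothly ringed space formalism recalled in Section 5.1. The key first step is to upgrade the separation: by the comment after Remark \ref{regular}, under $\ccal$-regularity the $T_{1}$ axiom is equivalent to the Hausdorff condition, so $UX$ is in fact Hausdorff. This is precisely what is needed to form the associated smoothly ringed space $(UX,\ \ccal(\cdot,\rbb))$. I would then check the dictionary between the two languages: $\ccal$-regularity of $X$ says exactly that the sheaf $\ccal(\cdot,\rbb)$ supplies bump functions, i.e. that $(UX,\ \ccal(\cdot,\rbb))$ is smoothly regular in the sense of \cite[Section 16]{KM}, and $UX$ is Lindel\"of by hypothesis. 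Then \cite[Theorem 16.10]{KM}, which asserts that a Lindel\"of, smoothly regular smoothly ringed space is smoothly paracompact, applies directly and yields smooth paracompactness of $(UX,\ \ccal(\cdot,\rbb))$.

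The final step is to translate smooth paracompactness of $(UX,\ \ccal(\cdot,\rbb))$ back into $\ccal$-paracompactness of $X$. Given an open cover of $X$, Theorem 16.10 produces a subordinate partition of unity by sections of the structure sheaf; one must verify that these sections are genuine global $\ccal$-functions on $X$ (assembling local pieces, extended by zero off their supports, using local finiteness), so that they constitute a $\ccal$-partition of unity in the sense of Definition \ref{partition}. I expect this bookkeeping—confirming that the notions of regularity and of partition-of-unity in the smoothly ringed space $(UX,\ \ccal(\cdot,\rbb))$ correspond \emph{faithfully} to $\ccal$-regularity and $\ccal$-paracompactness of $X$—to be the main obstacle, rather than any hard analysis; once the correspondence is pinned down, the heavy lifting is entirely contained in the cited \cite[Theorem 16.10]{KM}. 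The specialization of this proposition to $C^{\infty}$-manifolds is then immediate, since there smooth regularity coincides with $C^{\infty}$-regularity of the associated smoothly ringed space.
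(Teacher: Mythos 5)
Your proposal is correct and follows exactly the paper's route: the paper's proof is precisely the citation chain you unpack — Remark \ref{regular} for the forward implication, the comment after it for the Hausdorff upgrade and the dictionary with smoothly ringed spaces $(UX,\ \ccal(\cdot,\rbb))$, and \cite[Theorem 16.10]{KM} for the converse. Your expanded bookkeeping (extracting the bump function from a partition of unity subordinate to $\{U,\,X\setminus\{x\}\}$, and verifying the correspondence of partitions of unity) is exactly the content the paper leaves implicit in its one-line proof.
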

The following is a refinement of the first statement of \cite[Corollary 27.4]{KM}.
\begin{thm}\label{smparacompact}
	Let $M$ be a Lindol\"of $C^\infty$-manifold. Then, the following conditions are equivalent:
	\begin{itemize}
		\item[{\rm (i)}] $M$ is $C^\infty$-paracompact.
		\item[{\rm (ii)}] $M$ is $C^\infty$-regular.
		\item[{\rm (iii)}] $M$ is regular as a topological space and is modeled on $C^\infty$-regular convenient vector spaces.
	\end{itemize}
\begin{proof}
	The equivalences ${\rm (i)} \Longleftrightarrow {\rm (ii)}$ and ${\rm (ii)} \Longleftrightarrow {\rm (iii)}$ follow from Propositions \ref{Lindelof} and \ref{smregular} respectively.
\end{proof}
\end{thm}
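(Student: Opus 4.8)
The plan is to recognize Theorem~\ref{smparacompact} as an immediate synthesis of the two preceding results, Propositions~\ref{Lindelof} and~\ref{smregular}, rather than an argument built from scratch. The three conditions split naturally into two independent equivalences: $\rmi \Leftrightarrow \rmii$ is a statement purely about the interplay of Lindel\"of property, $C^\infty$-paracompactness, and $C^\infty$-regularity, while $\rmii \Leftrightarrow \rmiii$ is a statement about how $C^\infty$-regularity of a manifold is detected on its model vector spaces. Each half matches a proposition already in hand, so essentially all the work is to check that the hypotheses line up and then invoke transitivity of $\Leftrightarrow$.

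First I would prove $\rmi \Leftrightarrow \rmii$ by applying Proposition~\ref{Lindelof} with $\ccal = C^\infty$. That proposition asserts that for $X \in \ccal$ whose underlying topological space $UX$ is a Lindel\"of $T_1$-space, $\ccal$-paracompactness and $\ccal$-regularity coincide. Here $UM = \widetilde{M}$ is Lindel\"of by hypothesis, and it is $T_1$ because the paper requires the underlying arc-generated space of a $C^\infty$-manifold to be separated, separatedness being stronger than the $T_1$ axiom (see the end of Section~2.1). Thus the hypotheses of Proposition~\ref{Lindelof} are met, and the equivalence follows verbatim.

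Next I would prove $\rmii \Leftrightarrow \rmiii$, which is precisely the equivalence of conditions $\rmi$ and $\rmii$ in Proposition~\ref{smregular}: a $C^\infty$-manifold is $C^\infty$-regular if and only if it is regular as a topological space and is modeled on $C^\infty$-regular convenient vector spaces. Since the two conditions are worded identically, no further verification is required, and transitivity then assembles $\rmi \Leftrightarrow \rmii \Leftrightarrow \rmiii$ into the full equivalence.

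The only genuinely substantive point---and the one I expect to be the sole potential obstacle---is the $T_1$ (indeed separatedness) hypothesis needed to apply Proposition~\ref{Lindelof}; without it the bridge from paracompactness to regularity is not available. This is supplied by the standing convention that $C^\infty$-manifolds have separated underlying arc-generated spaces, so in the end there is no real obstruction and the theorem is a clean corollary of the two propositions.
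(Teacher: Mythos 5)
Your proposal is correct and follows exactly the paper's own proof: the equivalence (i)$\Leftrightarrow$(ii) is Proposition \ref{Lindelof} applied with $\ccal = C^\infty$, and (ii)$\Leftrightarrow$(iii) is Proposition \ref{smregular}. Your extra check that the $T_1$ hypothesis of Proposition \ref{Lindelof} is supplied by the standing separatedness convention on $C^\infty$-manifolds is the right point to verify, even though the paper leaves it implicit.
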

\begin{rem}\label{correction}
	We can observe from \cite[Section 27.6]{KM} that in Theorem \ref{smparacompact}, the regularity condition on $M$ in condition {\rm (iii)} is necessary for the implication $(\rm iii)\Longrightarrow (\rm i)$ to hold.\par
	At first glance, the first statement of \cite[Corollary 27.4]{KM} does not require the regularity condition. However, Kriegl replied my email that they use the definition of Lindel\"of (e.g., \cite[3.8]{Engelking}) where regularity is assumed. Thus, the implication $(\rm iii)\Longrightarrow(\rm i)$ in Theorem \ref{smparacompact} is a variant of the first statement of \cite[Corollary 27.4]{KM}. I wrote down the details of the proof since they are not so trivial and contain points often misunderstood.\par
	As mentioned above, the regularity of a $C^\infty$-manifold $X$ must be verified to derive the $C^\infty$-paracompactness of $X$ from its Lindel\"{o}f property and the $C^\infty$-regularity of its model vector spaces. However, Kriegl informed me that they forgot to mention how to obtain the regularity of $\cfra^\infty(M,N)$ in the proof of $C^\infty$-paracompactness of $\cfra^\infty(M,N)$ (\cite[Proposition 42.3]{KM}). We give a regularity criterion for a manifold (Lemma \ref{regularity}) and apply it to establish the (hereditary) $C^\infty$-paracompactness not only of $\cfra^\infty(M,N)$ but also of $C^\omega(M,N),B(M,N),$ and $B^\omega(M,N)$ (see Section 11.4).
\end{rem}
%\begin{rem}\label{rem3}
%	From \cite[Theorem 16.15]{KM} and Lemma \ref{lem2}, we also have the following result: If a (Hausdorff) $C^\infty$-manifold $M$ is paracompact as a topological space and is modeled on $C^\infty$-normal Fr\'echet spaces, then $M$ is $C^\infty$-paracompact.　This is an analogue of the second statement of the corollary in \cite[Section 27.4]{KM}, which can be used as a smooth paracompactness criterion.
%\end{rem}
We end this appendix with the following useful criterion for a manifold to be regular as a topological space (see the proofs of Lemmas \ref{mfds of maps} and \ref{BMN}).
\begin{lem}\label{regularity}
	For a $T_1$-space $X$, the following are equivalent.
	\begin{itemize}
		\item[{\rm (i)}] $X$ is regular.
		\item[{\rm (ii)}] For any $x\in X$, there exists an open neighborhood $U_x$ of $x$ satisfying following conditions:
		\begin{itemize}
			\item[{\rm (a)}] $U_x$ is regular.
			\item[{\rm (b)}] There exist disjoint open sets $W_x$ and $V_x$ such that
			\[
			x \in W_x \text{ and } U^c_x \subset V_x.
			\]
		\end{itemize}
	\end{itemize}
	\begin{proof}
		${\rm (i)}\Rightarrow {\rm (ii)}$ Obvious.\\
		${\rm (ii)\Rightarrow {\rm (i)}}$ Assume given a point $x\in X$ and an open neighborhood $U$ of $x$. We choose an open neighborhood $U_x$ of $x$ satisfying conditions (a) and (b), and then choose disjoint open sets $W_x$ and $V_x$ such that
		\[
		x \in W_x \text{ and } U^c_x \subset V_x.
		\]
		Next, we use condition (a) to choose disjoint open sets $W'_x$ and $V'_x$ of $U_x$ such that
		\[
		x \in W_x' \text{ and } U_x\backslash U \cap U_x \subset V'_x.
		\]
		Then, we have
		\begin{equation*}
		\begin{split}
		x \in W_x \cap W'_x,\: U^c \subset V_x \cup V'_x,\\
		(W_x \cap W'_x)\cap (V_x \cup V'_x) = \emptyset.
		\end{split}
		\end{equation*}
	\end{proof}
\end{lem}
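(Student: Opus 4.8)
The plan is to prove the two implications separately, with essentially all of the work going into (ii) $\Rightarrow$ (i). The reverse implication (i) $\Rightarrow$ (ii) is immediate: if $X$ is regular, one takes $U_x = X$ for every $x$, so that condition (a) holds trivially and condition (b) holds with $W_x = X$ and $V_x = \emptyset$, since then $U_x^c = \emptyset$.

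For (ii) $\Rightarrow$ (i), I would verify regularity in its ``point versus open neighborhood'' form: given $x \in X$ and an open neighborhood $U$ of $x$, I must produce disjoint open sets $W \ni x$ and $V \supset U^c$ (equivalently, separate $x$ from the arbitrary closed set $U^c$). First I would invoke the hypothesis to pick an open neighborhood $U_x$ of $x$ satisfying (a) and (b), together with disjoint open sets $W_x, V_x$ with $x \in W_x$ and $U_x^c \subset V_x$; these handle the part of $U^c$ lying outside $U_x$.

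The second ingredient is the local regularity in (a). The set $U_x \setminus (U \cap U_x) = U_x \setminus U$ is closed in $U_x$ and does not contain $x$ (because $x \in U$), so regularity of $U_x$ yields disjoint open subsets $W_x', V_x'$ of $U_x$ with $x \in W_x'$ and $U_x \setminus U \subset V_x'$. Here I would stress the small but essential point that, since $U_x$ is open in $X$, any set open in $U_x$ is open in $X$; thus $W_x'$ and $V_x'$ are genuinely open in $X$. Setting $W = W_x \cap W_x'$ and $V = V_x \cup V_x'$, the verification is then purely set-theoretic: $x \in W$ is clear; the covering $U^c \subset U_x^c \cup (U^c \cap U_x) \subset V_x \cup V_x' = V$ uses $U_x^c \subset V_x$ together with $U^c \cap U_x = U_x \setminus U \subset V_x'$; and disjointness follows by distributing, $W \cap V = (W_x \cap W_x' \cap V_x) \cup (W_x \cap W_x' \cap V_x')$, where the first term is empty because $W_x \cap V_x = \emptyset$ and the second because $W_x' \cap V_x' = \emptyset$.

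I expect no serious obstacle: this is a patching argument turning local regularity plus a separation condition on the complement of $U_x$ into global regularity. The only place requiring care is the bookkeeping, namely confirming that $U^c$ is covered by the union $U_x^c \cup (U_x \setminus U)$, that $U_x \setminus U$ is exactly the closed-in-$U_x$ set to which (a) is applied, and that the sets produced inside $U_x$ remain open in $X$. The $T_1$ hypothesis plays no role in this argument and is carried along only to match the paper's standing conventions on separation axioms.
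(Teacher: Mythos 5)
Your proposal is correct and is essentially the paper's own argument: both choose $U_x$, $W_x$, $V_x$ from (ii), apply regularity of the open set $U_x$ to separate $x$ from $U_x\setminus U$ by sets $W'_x$, $V'_x$ (open in $X$ since $U_x$ is open), and conclude with $W = W_x\cap W'_x$, $V = V_x\cup V'_x$ via the same set-theoretic verification. Your write-up is in fact slightly more careful than the paper's, making explicit the openness-in-$X$ point, the covering $U^c \subset U_x^c \cup (U_x\setminus U)$, and the distribution step in the disjointness check.
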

\begin{rem}\label{modelreg}
As mentioned above, we apply Lemma \ref{regularity} mainly to manifolds, in which case $U_x$ is a specified coordinate open neighborhood of $x$. If $X$ is a topological manifold in the sense of Remark \ref{topmfd}, then condition (a) is automatically satisfied (see \cite[Section 2.1]{Jar}). If $X$ is a $C^\infty$-manifold,
then the underlying topological space $\widetilde{E}$ of its model vector space $E$ is a vector space in $\czero$ (see Proposition \ref{dmfd} and Remark \ref{suitable}) but need not be a topological vector space (\cite[4.16-4.26]{KM}). Thus, we do not know whether $\widetilde{E}$ is always regular. However, we are mainly concerned with the case where $E$ is $C^\infty$-regular, in which case $\widetilde{E}$ is regular. We also see that if $E$ is a Fr\'echet space or a Silva space, then $\widetilde{E}$ is regular (see \cite[Theorem 4.11(1)-(2)]{KM}).
\end{rem}

%\backmatter
%    Bibliography styles amsplain or author-year (using natbib) are
%    also acceptable.
\bibliographystyle{amsalpha}
% comparisonReference.tex

\samepage

%    See note above about multiple indexes.
%\printindex

\end{document}